\documentclass[11pt, a4paper, oneside]{Thesis} % Paper size, default font size and one-sided paper
\usepackage{wrapfig}
\usepackage{lscape}
\usepackage{rotating}
\usepackage{graphicx}
\usepackage{caption}
\usepackage{subcaption}
\usepackage{amscd,amssymb,amsmath,amsthm}
\usepackage{tikz}
\usepackage{tikz-network}
\usepackage[utf8]{inputenc}
\usepackage[english]{babel}
\usepackage{dsfont}
\usepackage{nccmath}
\usepackage{pdfpages}
\allowdisplaybreaks

\theoremstyle{plain}
\newtheorem*{theorem*}{Theorem}
\newtheorem*{cor*}{Corollary}

\usepackage{mathtools}

\newcommand{\cof}{\textup{cof }}

\usepackage{afterpage}

\newcommand\blankpage{%
	\null
	\thispagestyle{empty}%
	\addtocounter{page}{-1}%
	\newpage}

\def%%%%%%%%%%%%%%%%%%%%%%%
%Sample List of Symbols
%%%%%%%%%%%%%%%%%%%%%%%
\begin{tabbing}
% YOU NEED TO ADD THE FIRST ONE MANUALLY TO ADJUST THE TABBING AND SPACES
$\mathbb{N}$~~~~~~~~~~~~~~~~~~~~~\=\parbox{5in}{Set of natural numbers\dotfill \pageref{sym}}\\
%ADD THE REST OF SYMBOLS WITH THE HELP OF MACRO

\addsymbol \mathbb{Z}: {Set of integers}{sym}
\addsymbol \mathbb{R}: {Set of real numbers}{sym}
\addsymbol \mathbb{R}^n: {For any $n \in \mathbb{N}$ the set $\mathbb{R}^n$ consists of $n$ tuples of real numbers}{sym}
\addsymbol e_i: {Column vector of size $n$ with $1$ at the $i^{th}$ entry }{mbn}

\addsymbol I_n: {Identity matrix of size $n$}{mbn}

\addsymbol \mathds{1}_n: {Column vector of all ones of size $n$ in $\mathbb{R}^n$}{mbn}

\addsymbol J_{m\times n}: {Matrix of all ones of size $m \times n$}{mbn}

\addsymbol \mathbf{0}_{m\times n}: {Matrix of all zeros of size $m \times n$}{mbn}

\addsymbol A^T: {Transpose of the matrix $A$}{mbn}

\addsymbol A^{-1}: {Inverse of the matrix $A$}{invm}

\addsymbol A(i \mid j): {Submatrix of $A$ obtained by deleting the $i^{th}$ row and the $j^{th}$ column}{mbn}

\addsymbol \cof A: {Sum of all cofactors of $A$}{cof}

\addsymbol \det A: {Determinant of the matrix $A$}{den}

\addsymbol \textup{In}(A): {Inertia of a symmetric matrix $A$}{cof}

\addsymbol \sim: {Adjacency}{adj}

\addsymbol x \to y: {Directed edge from $x$ to $y$}{adj} \label{sym}

\addsymbol \delta_{x}: {Degree of the vertex $x$}{degree}

\addsymbol \kappa(G): {Vertex connectivity of $G$}{kappa}

\addsymbol dC_k: {Directed cycle of length $k$}{dc}

\addsymbol K_{n_1,n_2,\cdots,n_t}: {Complete $t$-partite graph with partitions of size $n_1,n_2,\cdots,n_t$}{cmg}

\addsymbol K_{t}: {Complete graph with $t$ vertices}{cmg}

\addsymbol S_{n,t}: {Complete Split graph with $n$ vertices and $t$ partitions}{cmg}

\addsymbol T_{n,t}: {Tur\'{a}n graph with $n$ vertices and $t$ partitions}{cmg}

\addsymbol G^W: {Weighted graph $G$ with a weight function $W$}{wg}

\addsymbol A(G): {Adjacency matrix of $G$}{adm}

\addsymbol Q(G): {Incidence matrix of $G$}{idm}

\addsymbol L(G): {Laplacian matrix of $G$}{lapmat}

\addsymbol D(G): {Distance matrix of $G$}{lapmat}

\addsymbol \Delta(G): {Squared distance matrix of $G$}{sdm}

\addsymbol \mathcal{L}: {Laplacian-like matrix}{llm}

\addsymbol T_n: {A graph consisting of $(n-2)$ triangles sharing a common base}{tng}

\vspace{3mm}

\addsymbol T_n^{(b)}: {A graph with $b$ blocks of $T_n$ with a central cut vertex}{tnb}

\vspace{3mm}

\addsymbol T_6 \circledcirc T_n^{(b)}: {A graph with one block as $T_6$ and $b \ (\geq 1)$ blocks of  $T_n$, $n \neq 6$ and with a central cut vertex, which is not a base vertex}{tnb6}

%\addsymbol \mathcal{L}^n_{+}: {$\{x = (x_1, \ldots, x_n)^t \in \reals^n$: 
%$x_n \geq 0, x_n^2 - \displaystyle \sum_{i=1}^{n-1} x_i^2 \geq 0\}$}{symbol:ln+}
%
%\addsymbol M_{m,n}(\reals): {the set of all $m \times n$ matrices over $\reals$}{symbol:mn}
%
%\addsymbol M_{n}(\reals): {the set of all $n \times n$ matrices over $\reals$}{symbol:n}
%
%
%\addsymbol K: {a proper cone in $\reals^n$}{symbol:K}
%
%%\addsymbol K^*: {it is the dual of $K$}{}
%
%\addsymbol K^\circ: {the interior of a cone $K$}{symbol:K0}
%
%
%\addsymbol S(K_1,K_2): {$\{ A \in M_{m,n}(\reals)$: there exists  $x \in K_1$ such that $Ax \in K_2^\circ \}$}{symbol:sk_1 K_2}
%
%
%
%%\addsymbol <\cdot,\cdot>: {the inner product defined on vector space $V$}{}
%
%\addsymbol A^t: {the transpose of matrix $A$}{symbol:t}
%
%
%
%\addsymbol S^{n}: {the set of all $n \times n$ symmetric matrices over $\reals$}{symbol:sn}
%
%\addsymbol S^n_+: {the set of all symmetric positive semidefinite matrices}{symbol:sn+}
%
%\addsymbol \mathcal{K}_A: {$\{ x \in K$: $Ax \in K\}$}{symbol:kA}
%
%\addsymbol \pi(K_1,K_2): {$\{ A \in M_{m,n}(\reals)$: $A(K_1) \subseteq K_2\}$}{symbol:pk_1 K_2} 
%
%\addsymbol MS(K_1,K_2): {$\{ A \in M_{m,n}(\reals)$:  $A \in  S(K_1,K_2)$ for which there exists $B\in \pi (K_2, K_1)$ such that $BA=I \}$ }{symbol:msk_1 K_2}

% .
% .
% .
% .
% .
% .
% ALWAYS KEEP THE FOLLOWING LINE
\end{tabbing}
 \cleardoublepage{%%%%%%%%%%%%%%%%%%%%%%%
%Sample List of Symbols
%%%%%%%%%%%%%%%%%%%%%%%
\begin{tabbing}
% YOU NEED TO ADD THE FIRST ONE MANUALLY TO ADJUST THE TABBING AND SPACES
$\mathbb{N}$~~~~~~~~~~~~~~~~~~~~~\=\parbox{5in}{Set of natural numbers\dotfill \pageref{sym}}\\
%ADD THE REST OF SYMBOLS WITH THE HELP OF MACRO

\addsymbol \mathbb{Z}: {Set of integers}{sym}
\addsymbol \mathbb{R}: {Set of real numbers}{sym}
\addsymbol \mathbb{R}^n: {For any $n \in \mathbb{N}$ the set $\mathbb{R}^n$ consists of $n$ tuples of real numbers}{sym}
\addsymbol e_i: {Column vector of size $n$ with $1$ at the $i^{th}$ entry }{mbn}

\addsymbol I_n: {Identity matrix of size $n$}{mbn}

\addsymbol \mathds{1}_n: {Column vector of all ones of size $n$ in $\mathbb{R}^n$}{mbn}

\addsymbol J_{m\times n}: {Matrix of all ones of size $m \times n$}{mbn}

\addsymbol \mathbf{0}_{m\times n}: {Matrix of all zeros of size $m \times n$}{mbn}

\addsymbol A^T: {Transpose of the matrix $A$}{mbn}

\addsymbol A^{-1}: {Inverse of the matrix $A$}{invm}

\addsymbol A(i \mid j): {Submatrix of $A$ obtained by deleting the $i^{th}$ row and the $j^{th}$ column}{mbn}

\addsymbol \cof A: {Sum of all cofactors of $A$}{cof}

\addsymbol \det A: {Determinant of the matrix $A$}{den}

\addsymbol \textup{In}(A): {Inertia of a symmetric matrix $A$}{cof}

\addsymbol \sim: {Adjacency}{adj}

\addsymbol x \to y: {Directed edge from $x$ to $y$}{adj} \label{sym}

\addsymbol \delta_{x}: {Degree of the vertex $x$}{degree}

\addsymbol \kappa(G): {Vertex connectivity of $G$}{kappa}

\addsymbol dC_k: {Directed cycle of length $k$}{dc}

\addsymbol K_{n_1,n_2,\cdots,n_t}: {Complete $t$-partite graph with partitions of size $n_1,n_2,\cdots,n_t$}{cmg}

\addsymbol K_{t}: {Complete graph with $t$ vertices}{cmg}

\addsymbol S_{n,t}: {Complete Split graph with $n$ vertices and $t$ partitions}{cmg}

\addsymbol T_{n,t}: {Tur\'{a}n graph with $n$ vertices and $t$ partitions}{cmg}

\addsymbol G^W: {Weighted graph $G$ with a weight function $W$}{wg}

\addsymbol A(G): {Adjacency matrix of $G$}{adm}

\addsymbol Q(G): {Incidence matrix of $G$}{idm}

\addsymbol L(G): {Laplacian matrix of $G$}{lapmat}

\addsymbol D(G): {Distance matrix of $G$}{lapmat}

\addsymbol \Delta(G): {Squared distance matrix of $G$}{sdm}

\addsymbol \mathcal{L}: {Laplacian-like matrix}{llm}

\addsymbol T_n: {A graph consisting of $(n-2)$ triangles sharing a common base}{tng}

\vspace{3mm}

\addsymbol T_n^{(b)}: {A graph with $b$ blocks of $T_n$ with a central cut vertex}{tnb}

\vspace{3mm}

\addsymbol T_6 \circledcirc T_n^{(b)}: {A graph with one block as $T_6$ and $b \ (\geq 1)$ blocks of  $T_n$, $n \neq 6$ and with a central cut vertex, which is not a base vertex}{tnb6}

%\addsymbol \mathcal{L}^n_{+}: {$\{x = (x_1, \ldots, x_n)^t \in \reals^n$: 
%$x_n \geq 0, x_n^2 - \displaystyle \sum_{i=1}^{n-1} x_i^2 \geq 0\}$}{symbol:ln+}
%
%\addsymbol M_{m,n}(\reals): {the set of all $m \times n$ matrices over $\reals$}{symbol:mn}
%
%\addsymbol M_{n}(\reals): {the set of all $n \times n$ matrices over $\reals$}{symbol:n}
%
%
%\addsymbol K: {a proper cone in $\reals^n$}{symbol:K}
%
%%\addsymbol K^*: {it is the dual of $K$}{}
%
%\addsymbol K^\circ: {the interior of a cone $K$}{symbol:K0}
%
%
%\addsymbol S(K_1,K_2): {$\{ A \in M_{m,n}(\reals)$: there exists  $x \in K_1$ such that $Ax \in K_2^\circ \}$}{symbol:sk_1 K_2}
%
%
%
%%\addsymbol <\cdot,\cdot>: {the inner product defined on vector space $V$}{}
%
%\addsymbol A^t: {the transpose of matrix $A$}{symbol:t}
%
%
%
%\addsymbol S^{n}: {the set of all $n \times n$ symmetric matrices over $\reals$}{symbol:sn}
%
%\addsymbol S^n_+: {the set of all symmetric positive semidefinite matrices}{symbol:sn+}
%
%\addsymbol \mathcal{K}_A: {$\{ x \in K$: $Ax \in K\}$}{symbol:kA}
%
%\addsymbol \pi(K_1,K_2): {$\{ A \in M_{m,n}(\reals)$: $A(K_1) \subseteq K_2\}$}{symbol:pk_1 K_2} 
%
%\addsymbol MS(K_1,K_2): {$\{ A \in M_{m,n}(\reals)$:  $A \in  S(K_1,K_2)$ for which there exists $B\in \pi (K_2, K_1)$ such that $BA=I \}$ }{symbol:msk_1 K_2}

% .
% .
% .
% .
% .
% .
% ALWAYS KEEP THE FOLLOWING LINE
\end{tabbing}
 \cleardoublepage}
\def\addsymbol #1: #2#3{$#1$ \> \parbox{5in}{#2 \dotfill \pageref{#3}}\\}

% prints author names as small caps

%\usepackage{subcaption} %incompatible with subfig
\graphicspath{{Pictures/}} % Specifies the directory where pictures are stored
\usepackage[square, numbers]{natbib} % Use the natbib reference package - read up on this to edit the reference style; if you want text (e.g. Smith et al., 2012) for the in-text references (instead of numbers), remove 'numbers' v
\usepackage{hyperref}
\hypersetup{
    colorlinks=true,
    linkcolor=blue,
    filecolor=darkmagenta,      
    urlcolor=black,
    citecolor=red
} 
\urlstyle{same}

\makeatletter
\tikzset{
    dot diameter/.store in=\dot@diameter,
    dot diameter=2pt,
    dot spacing/.store in=\dot@spacing,
    dot spacing=13pt,
    dots/.style={
        line width=\dot@diameter,
        line cap=round,
        dash pattern=on 0pt off \dot@spacing
    }
}
\makeatother

\makeatletter
\def\BState{\State\hskip-\ALG@thistlm}
\makeatother
\numberwithin{equation}{section}

\fboxrule0.0001pt \fboxsep0pt

\usepackage{amssymb}
\usepackage[mathscr]{eucal}
\usepackage{xypic}
\usepackage{amsmath}
\usepackage[all]{xy}
\usepackage{lineno}
\usepackage{epsfig}
\usepackage{amscd}
\usepackage{mathrsfs}
\usepackage{enumerate}

\usepackage{hyperref}
\usepackage[nameinlink]{cleveref}
\usepackage[thinc]{esdiff}
\usepackage{tikz-cd}
\usepackage[normalem]{ulem}

\usepackage{xcolor}

\theoremstyle{definition}

\newtheorem{lemma}[theorem]{Lemma}
\newtheorem{proposition}[theorem]{Proposition}
\newtheorem{corollary}[theorem]{Corollary}

\theoremstyle{definition}
\newtheorem{definition}[theorem]{Definition}

\newtheorem{example}[theorem]{Example}

\newtheorem{remark}[theorem]{Remark}

\numberwithin{equation}{section}

\newcommand{\mc}{\mathcal}
\newcommand{\mb}{\mathbb}

\newcommand{\xra}{\xrightarrow}
\newcommand{\ra}{\rightarrow}
\newcommand{\rra}{\rightrightarrows}
\newcommand{\ghta}{(G,H,\tau, \alpha)}

\DeclareMathOperator{\pr}{pr}

\def\og{\leavevmode\raise.3ex\hbox{$\scriptscriptstyle\langle\!\langle$~}}
\def\fg{\leavevmode\raise.3ex\hbox{~$\!\scriptscriptstyle\,\rangle\!\rangle$}}

\title{\ttitle} % Defines the thesis title - don't touch this

\begin{document}
\makeatletter
\renewcommand*{\NAT@nmfmt}[1]{\textsc{#1}}
\makeatother

% prints author names as small caps

\frontmatter % Use roman page numbering style (i, ii, iii, iv...) for the pre-content pages

\setstretch{1.6} % Line spacing of 1.6 (double line spacing)

% Define the page headers using the FancyHdr package and set up for one-sided printing
\fancyhead{} % Clears all page headers and footers
\rhead{\thepage} % Sets the right side header to show the page number
\lhead{} % Clears the left side page header

\pagestyle{fancy} % Finally, use the "fancy" page style to implement the FancyHdr headers

\newcommand{\HRule}{\rule{\linewidth}{0.5mm}} % New command to make the lines in the title page

% PDF meta-data
\hypersetup{pdftitle={\ttitle}}
\hypersetup{pdfsubject=\subjectname}
\hypersetup{pdfauthor=\authornames}
\hypersetup{pdfkeywords=\keywordnames}

%----------------------------------------------------------------------------------------
%	TITLE PAGE
%----------------------------------------------------------------------------------------

\begin{titlepage}
%\begin{center}
%
%% Horizontal line
%{\huge \bfseries \ttitle}\\[0.4cm] % Thesis title
%
%
%% Horizontal line
% 
%\large \textit{A thesis submitted in fulfillment of the requirements\\ for the degree of \degreename}\\[0.3cm] % University requirement text
%\textit{by}\\[0.4cm]
%
%\href{https://www.researchgate.net/profile/Joyentanuj-Das}{\authornames}
%
%\vfill
%\graphicspath{ {./Figures/} }
%\begin{figure}[hb]
%  \centering
%  \includegraphics[width=0.4\linewidth]{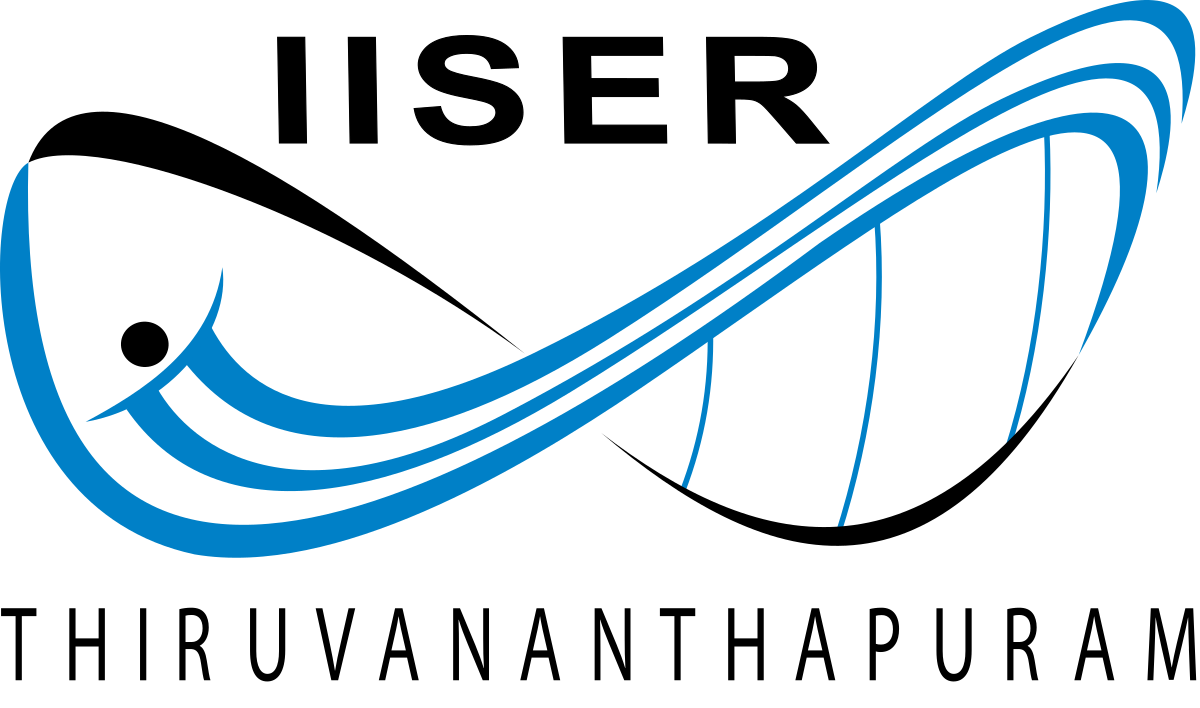}
%\end{figure}
%
%\DEPTNAME\\ % Research group name and department name
%\textsc{ \UNIVNAME}\\[1.5cm] % University name
%\large \today\\[2cm] % Date
%
%
%\end{center}
\thispagestyle{empty}
\begin{center}{
		\textbf{\LARGE{On gauge theory and parallel transport in principal 2-bundles over Lie groupoids}}
	}
\end{center}{ \par}

\vspace*{\fill}

\bigskip{}
\begin{center}{
		{\it {\large A thesis submitted for the degree of}}\\
		\vspace {0.5cm}
		{{\Large\bf Doctor of Philosophy}}\\
		\vspace {0.5cm}
		{\it {\large in}}\\
		\vspace {0.5cm} {\bf {\Large Mathematics}}}
\end{center}
\vspace*{\fill}
\begin{center}
	%{\it By}\\
\end{center}
\vspace*{\fill}
\begin{center}{\textbf{\Large {\bf Adittya Chaudhuri}}}\\
	IPHD15002
\end{center}{\Large \par}

\vspace*{\fill}

\begin{center}
	\includegraphics[width=0.3\linewidth]{Pictures/iisertvm.png}\\
	{\small SCHOOL OF MATHEMATICS}\\
	{\small INDIAN INSTITUTE OF SCIENCE EDUCATION AND RESEARCH}\\
	{\small THIRUVANANTHAPURAM - 695551}\\
	{\small INDIA}\end{center}{\small
	\par}

\begin{center}{\small November 2023}\end{center}{\small \par}
\clearpage

\end{titlepage}

\newpage

\afterpage{\blankpage}

\newpage

\clearpage % Start a new page

\afterpage{\blankpage}
\includepdf[pages=-]{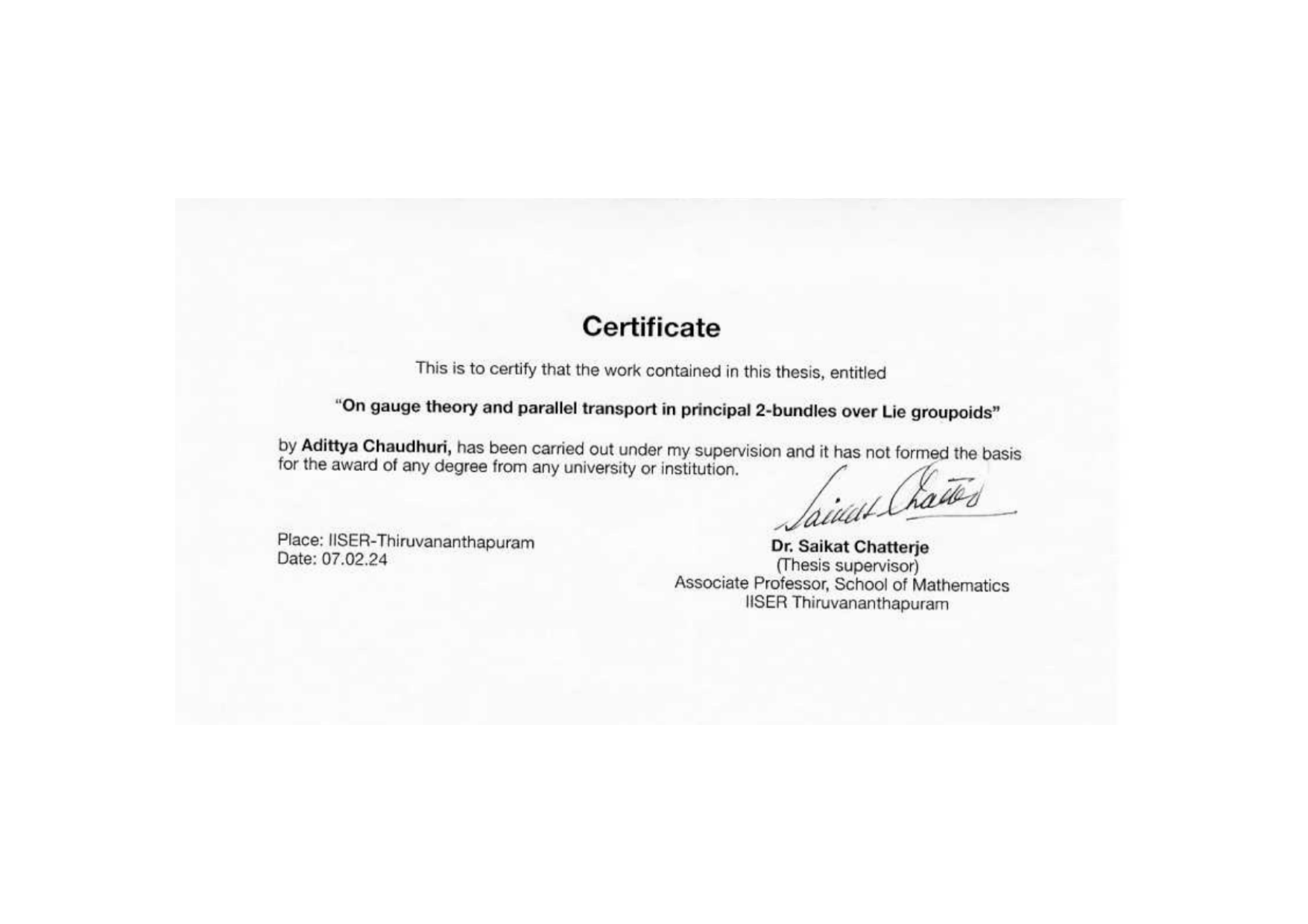}
\clearpage
\includepdf[pages=-]{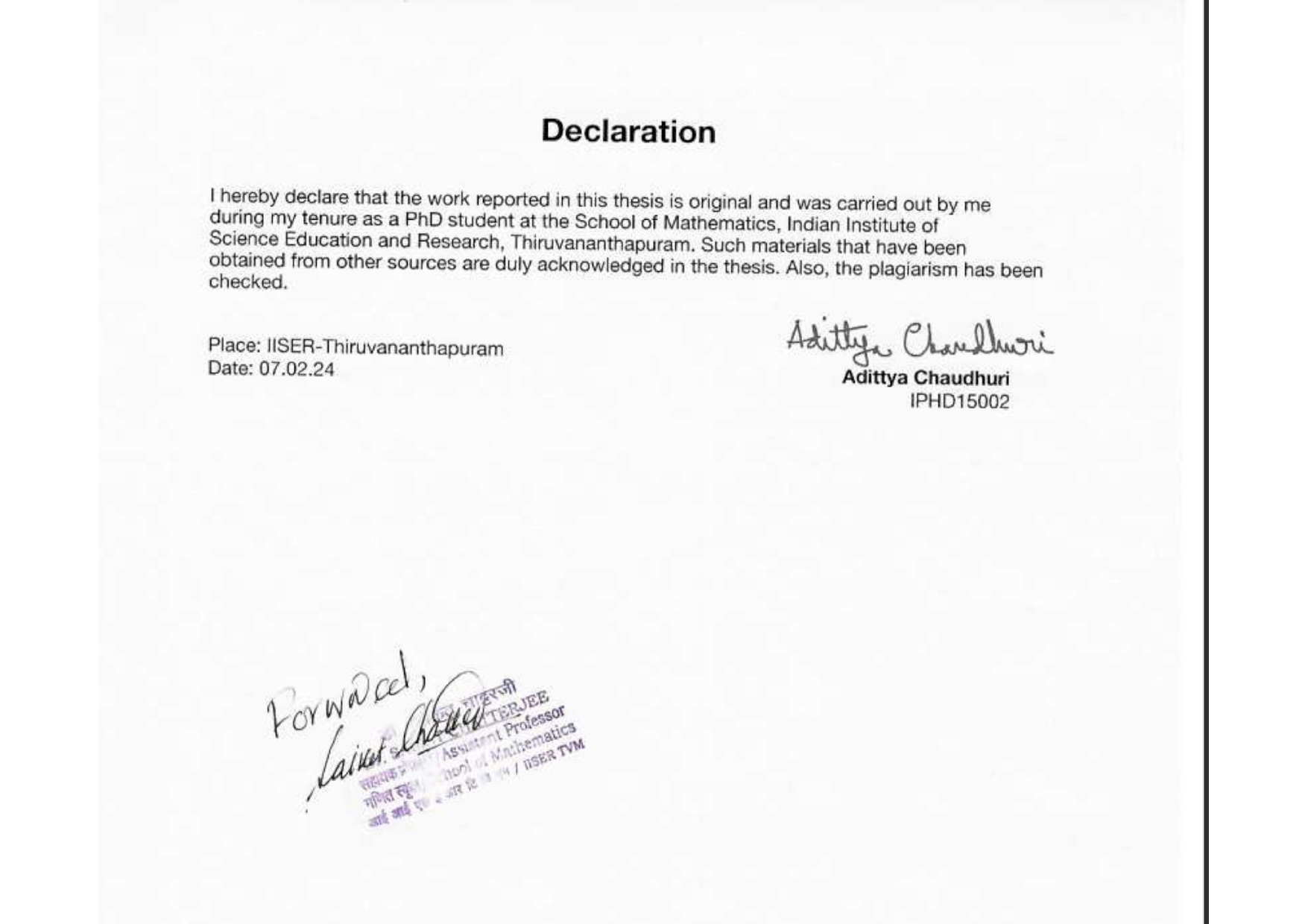}
\thispagestyle{empty}
\begin{figure}%[c]
	\begin{center}
		\LARGE{\textit{Dedicated to all who inspired me directly or indirectly to do Mathematics}}
	\end{center}
\end{figure}
\clearpage
\newpage
\thispagestyle{empty}
\bigskip{}
\vspace{1cm} \cleardoublepage

%\begin{minipage}{0.4\textwidth}
%	\begin{flushleft} \large
%		{\large February 2022}\\[2cm] % Date
%	\end{flushleft}
%\end{minipage}
%\begin{minipage}{0.65\textwidth}
%	{\begin{center} \large
%		\begin{center}
%			{Dr. Sachindranath Jayaraman\\ (Thesis Supervisor)\\ Associate Professor\\ 
%			\normalsize{\deptname\\
%			\univname.}}
%		\end{center}
%	\vspace{7mm}
%		\begin{center}
%		{Dr. Sumit Mohanty\\ (Thesis Co-supervisor) \\Assistant Professor\\ 
%			\normalsize{Department of Mathematics\\
%				GITAM Institute of Science, Visakhapatnam.}}
%	\end{center}
%	\end{center}}
%\end{minipage}
%}

\afterpage{\blankpage}

\clearpage % Start a new page

%----------------------------------------------------------------------------------------
%	ABSTRACT PAGE
%----------------------------------------------------------------------------------------

%\addtotoc{Abstract} % Add the "Abstract" page entry to the Contents
%
%\abstract{\addtocontents{toc}{\vspace{1em}} % Add a gap in the Contents, for aesthetics
%
%Enter Abstract Content here...
%}
%
%\clearpage % Start a new page

%----------------------------------------------------------------------------------------
%	ACKNOWLEDGEMENTS
%----------------------------------------------------------------------------------------

\setstretch{1.3} % Reset the line-spacing to 1.3 for body text (if it has changed)

\acknowledgements{\addtocontents{toc}{} % Add a gap in the Contents, for aesthetics

Completing my Ph.D. thesis work would not have been possible without numerous individuals' invaluable and substantial contributions.

 I express my heartfelt gratitude to Dr. Saikat Chatterjee, my Ph.D. supervisor, for his unwavering support and guidance and for providing unconditional freedom in my working style throughout my doctoral journey. 
 
 I thank my Doctoral committee members, Dr. Sarbeswar Pal and Dr. Geetha Thangavelu, for their valuable suggestions and support. 

 Every word falls short of describing the contributions of my family—Baba, Ma, Dadu, and late Didun. My doctoral years witnessed the horror of Covid-19, which shook the entire planet for around two years. Consequently, I completed a significant part of my Ph.D. thesis work at home in Kolkata. Without my family members' unconditional support and love, my thesis work would never have reached the current stage. I sincerely thank them from the bottom of my heart for their roles in every aspect of my life. 

 My ability to appreciate the inherent beauty in abstract Mathematical structures started developing while I was doing my undergraduate studies at the Institute of Mathematics and Applications (IMA), Bhubaneswar. In this regard, I sincerely thank Prof. Swadhin Pattanayak.

 My perspectives on Geometry and Topology underwent significant transformation during the workshop \textit{Characteristic Classes and Cobordism} (4th March to 15th March 2019) organized by the National Centre for Mathematics at the Indian Institute of Technology Bombay (IIT-Bombay), Mumbai, India. In this regard, I want to convey my utmost gratitude to Prof. M. S. Raghunathan, Prof. Nitin Nitsure, and Prof. Anant Shastri for their thought-provoking interactions.

My views on Higher category theory took a giant stride during my visit to the International Centre for Theoretical Sciences, a centre of the Tata Institute of Fundamental Research (I.C.T.S), Bangalore, India (from 6th to 10th January 2020). I am thankful to Dr. Rukmini Dey for arranging the Academic visit. I want to convey my profound gratitude to  Dr. Pranav Pandit for his time to discuss $\infty$-categories during my stay at I.C.T.S. Also, I am indebted to him for allowing me to participate actively in the online \textit{weekly $\infty$-category theory seminar series} he organized during the Covid-19 lockdown period. This seminar series enriched my entire perspective on Higher category theory.

Prof. John Baez plays a pivotal role in shaping my entire research outlook. I am deeply indebted to him for his time, invaluable conversations, and suggestions regarding my research during the whole phase of my Ph.D. life.

I sincerely thank Dr. David Michael Roberts and Dr. Dmitri Pavlov for their invaluable discussions on Higher differential geometry throughout my doctoral period.

I thank Dr. Rajan Mehta for his comments on my research and for inspiring my research work.

I am grateful to Prof. Siddhartha Gadgil for a two-hour-long thought-provoking discussion during his visit to IISER-TVM on 1st November 2019. This discussion shaped my research views significantly.

I want to thank Emilio Minichiello, Chetan Vuppulury, Dr. Praphulla Koushik, Srikanth Pai, Visakh Narayanan, Dr. Arun Debray, and Naga Arjun S J  for enriching my Mathematical knowledge during my Ph.D. tenure.

I convey my sincere gratitude to the MathOverflow community, CATEGORY THEORY-Zulip community, Math Twitter community, nLab, Algebraic Topology (Discord) community, Homotopy Theory (Discord) community and the Topological Quantum Field Theory Club for their invaluable help in expanding my knowledge in Mathematics.

I am thankful to the anonymous referees of my published paper for their valuable suggestions for improving some of the results in this thesis.

I thank Dr. Viji Z. Thomas, Dr. Bindusar Sahoo, Dr. Sarbeswar Pal, and Dr. Sreedhar B Dutta for their insightful discussions.

I am grateful to some people who inspired me to look at greater meanings in life. They include Anup Sen and Chandan Da.

I express my profound gratitude to Mr. Akhil Philip for giving me professional-level personal training in Bodybuilding. Three features of the sport- Consistency, self-control, and getting a sense of achievement from partial progress- helped me incorporate the same in my Ph.D. research.

IISER-TVM has given me a chance to meet some humans whose presence made me feel special. They include Akash bhayia, Tania di, Vatsal, Debopriya di, Ammu, Saurav bhayia, Sharvari, Atmaram, Chandan da, Soham, Atreyi, Atridev, Praphulla, Mahendranath, Rajatava, Adil, Anik, Krishanu, Aparajita, Shakul, Manan, Monami, Tuhin, Soumya, Rajiv, Keshav, Saviour, Ananthakrishna, Joyentanuj, Debojyoti, Pankaj, Abinash, Tathagata, Tahir, Subham, Ambika, Reena, Priyanka, Diksha, Naga Arjun, Abhilash Bhayia, Sabir Bhayia (including Tasty's staff), Thanal's (Uncle and aunty) and Partha da.

I also feel lucky that my campus is near the beach town of Varkala. Many of the crucial ideas in my thesis work originated during my frequent visits to Varkala.

 I want to convey my gratitude to the entire fraternity of IISER-TVM, particularly the School of Mathematics, for the infrastructure and financial support and for allowing me to pursue my doctoral research in one of the most scenic campuses in the country.

Last but not least, Ernest Hemingway's \textit{The Old Man and the Sea} and \textit{A Moveable Feast} instrumentally motivated me during some of the darkest times in my Ph.D. research.

}

%\afterpage{\blankpage}

%

\afterpage{\blankpage}

\clearpage
%----------------------------------------------------------------------------------------
%	LIST OF CONTENTS/FIGURES/TABLES PAGES
%----------------------------------------------------------------------------------------

 % The page style headers have been "empty" all this time, now use the "fancy" headers as defined before to bring them back

\lhead{\emph{Contents}} % Set the left side page header to "Contents"
\tableofcontents % Write out the Table of Contents

\clearpage
%\blankpage

\newpage
\pagestyle{fancy}

\chapter*{Abstract}
\lhead{\emph{Abstract}}
\addcontentsline{toc}{chapter}{Abstract}
We investigate an interplay between some ideas in traditional gauge theory and certain concepts in fibered categories. We accomplish the same by introducing a notion of a principal Lie 2-group bundle over a Lie groupoid and studying its connection structures, gauge transformations, and parallel transport. 

We state and prove a Lie 2-group torsor version of the one-one correspondence between fibered categories and pseudofunctors. This results in a classification of our principal 2-bundles based on their underlying fibration structures. Furthermore, this one-one association leads us to propose a weaker version of the principal Lie group bundle over a Lie groupoid, whose underlying action of the base Lie groupoid on the total space is replaced by a quasi-action. Also, as a consequence of this correspondence, we extend a particular class of our principal 2-bundles to be defined over differentiable stacks presented by the base Lie groupoids.

We construct a short exact sequence of VB-groupoids, namely, the \textit{Atiyah sequence} associated to our principal 2-bundles. As a splitting of the Atiyah sequence and a splitting up to a natural isomorphism, we obtain two notions of connection structures, viz. strict connections and semi-strict connections, respectively, on a principal $2$-bundle over a Lie groupoid. We describe strict and semi-strict connections in terms of Lie 2-algebra valued 1-forms on the total Lie groupoids. The underlying fibration structure of the 2-bundle provides an existence criterion for strict and semi-strict connections. We study the action of the $2$-group of gauge transformations on the groupoid of strict and semi-strict connections, and interestingly, we observe an extended symmetry of semi-strict connections.

We demonstrate an interrelationship between `differential geometric connection-induced horizontal path lifting property in traditional principal bundles' and the `category theoretic cartesian lifting of morphisms in fibered categories'. We illustrate this relationship by developing a theory of connection-induced parallel transport in our framework of principal 2-bundles. In particular, we introduce a notion of parallel transport on a principal 2-bundle along a particular class of Haefliger paths in the base Lie groupoid.  We show that the corresponding parallel transport functor enjoys certain smoothness properties and behaves naturally with fibered products and connection preserving bundle morphisms. Finally, we employ our results to introduce a notion of parallel transport along Haefliger paths in the setup of VB-groupoids.

\clearpage

\afterpage{\blankpage}

%----------------------------------------------------------------------------------------
%	DEDICATION
%----------------------------------------------------------------------------------------
%
%\setstretch{1.3} % Return the line spacing back to 1.3
%%
%\pagestyle{empty} % Page style needs to be empty for this page
%%
%\dedicatory{Dedicated to my Mother, Father and Sister(s).} % Dedication text
%%
%\addtocontents{toc}{\vspace{2em}} % Add a gap in the Contents, for aesthetics

%----------------------------------------------------------------------------------------
%	THESIS CONTENT - CHAPTERS
%----------------------------------------------------------------------------------------

\mainmatter % Begin numeric (1,2,3...) page numbering

\pagestyle{fancy} % Return the page headers back to the "fancy" style

% Include the chapters of the thesis as separate files from the Chapters folder
% Uncomment the lines as you write the chapters

% Chapter Template

\chapter{Introduction and overview} % Main chapter title

%\label{Chapter2}

\lhead{Chapter 1. \emph{Introduction and overview}} % 

Principal bundles and their connection structures play a cardinal role at the interface of geometry and physics. In particular, a connection structure on a principal bundle over a manifold describes the dynamics of a particle. Over the last few decades, higher gauge theories have been developed as frameworks to describe the dynamics of string-like extended `higher dimensional objects'. Higher gauge theory generally entails using appropriately categorified versions of `spaces.' For instance, instead of a manifold, one might consider a Lie groupoid or a category internal to smooth spaces. Similarly, a Lie group could be substituted with a Lie 2-group, a smooth map with a smooth functor, and so forth. This process involves adopting a suitable notion of connection that aligns with this categorification.
Typically, they consist of categorified versions of smooth fiber bundles with connection structures consistent with the categorification. Such suitably categorified connection structures are expected to induce a notion of parallel transport. However, the precise description of these categorified objects can vary significantly depending on the specific framework or context in which they are employed.

The goal of this thesis is to investigate some geometric relationships between gauge theory and the theory of fibrations/fibered categories, by studying 
\begin{enumerate}[(i)]
	\item the differential geometric connection structures,
	\item the gauge transformations,
	\item the action of gauge transformations on connections and
	\item the parallel transport
\end{enumerate}
on a  categorified principal bundle that lives in the world of Lie groupoids. More precisely, our categorified principal bundle is a groupoid object in the category of principal bundles. Explicitly, it consists of a morphism of Lie groupoids $\pi:=(\pi_1, \pi_0) \colon [E_1 \rra E_0] \ra [X_1 \rra X_0]$  equipped with a functorial action of a Lie 2-group $\mb{G}:=[G_1 \rra G_0]$ on $\mb{E}:=[E_1 \rra E_0]$ such that both $\pi_1 \colon E_1 \ra X_1$ and $\pi_0 \colon E_0 \ra X_0$ are classical principal $G_1$-bundle and principal $G_0$-bundle over $X_1$ and $X_0$ respectively.
%This thesis introduces a notion of a categorified principal bundle that lives within the category of Lie groupoids. To be more specific, it consists of a morphism of Lie groupoids $\pi:=(\pi_1, \pi_0) \colon [E_1 \rra E_0] \ra [X_1 \rra X_0]$  equipped with a functorial action of a Lie 2-group $\mb{G}:=[G_1 \rra G_0]$ on $\mb{E}:=[E_1 \rra E_0]$ such that both $\pi_1 \colon E_1 \ra X_1$ and $\pi_0 \colon E_0 \ra X_0$ are classical principal $G_1$-bundle and principal $G_0$-bundle over $X_1$ and $X_0$ respectively. The thesis investigates geometric relationships between certain standard notions in classical gauge theory and the theory of fibrations/fibered categories by studying connection structures, gauge transformations, and the parallel transport in these principal 2-bundles over Lie groupoids. 

The main results of this thesis mostly revolve around the following five topics:
\begin{enumerate}[(i)]
	\item Studying the underlying fibration structure on our principal Lie 2-group bundles over Lie groupoids that result in
	\begin{itemize}
		\item[(a)] a statement and the proof of a Lie 2-group torsor version of the well-known one-one correspondence (due to Grothendieck) between fibered categories and pseudofunctors;
		\item[(b)] Extending a subclass of our principal 2-bundles to be defined over the differentiable stack represented by the base Lie groupoid.
	\end{itemize}
	\item Studying differential geometric connection structures on our principal 2-bundles as
	\begin{itemize}
		\item[(a)] certain splittings on an associated short exact sequence of VB-groupoids and also as
		\item[(b)] certain Lie 2-algebra valued differential 1-forms on the total Lie groupoid of the Lie 2-group bundle.
	\end{itemize}
	\item Studying gauge transformations on these bundles and investigating their actions on our connection structures.
	\item Studying interrelations between the differential geometric connection structures and the underlying fibration structures on our principal Lie 2-group bundle over a Lie groupoid by developing a notion of parallel transport along certain classes of Haefliger paths in the base Lie groupoid of our 2-bundle.
	\item Finally, checking the sanity of our theory by deriving a notion of parallel transport along such Haefliger paths in certain VB-groupoids associated to our principal 2-bundles.
\end{enumerate}
%	Proving a Lie 2-group torsor version of the well-known one-one correspondence (due to Grothendieck) between fibered categories and pseudofunctors in a way that characterizes certain classes of our principal 2-bundles over Lie groupoids and extend a subclass to be defined over the differentiable stack represented by the base Lie groupoid;
%	\item Deriving a notion of parallel transport on a principal 2-bundle over a Lie groupoid along certain paths in the base Lie groupoid (Haefliger paths) induced from the connection structures and the underlying fibration structure of the categorified principal bundle;
%	\item Applying the above theory to develop a notion of parallel transport along Haefliger paths in VB-groupoids associated to such principal 2-bundles.
Before explaining our results, we overview some existing works in higher gauge theory. The list is, of course, far from complete. 

Baez\cite{baez2002higher}, Baez-Schreiber\cite{MR2342821, baez2004higher}, Mackaay-Picken \cite{MR1932333}, Bartels \cite{MR2709030}, Baez-Crans \cite{MR2068522}, Baez-Lauda \cite{MR2068521},  Picken-Martins\cite{MR2661492},  Schreiber-Waldorf's \cite{MR2520993, MR2803871, MR3084724} along with some other papers cited therein comprises some of the earliest works in this area. 

In a more recent time, Wockel in \cite{MR2805195}  introduced a notion of semi-strict principal $2$-bundle over a discrete smooth $2$-space and studies their classification up to Morita equivalence using non-abelian C\v ech cohomology. Schommer-Pries further generalizes Wockel's framework of principal 2-bundles in \cite{MR2800361}. Other approaches to principal $2$-bundles include $G$-gerbes \cite{MR2493616} of Laurent-Gengoux, Sti\'{e}non, and Xu and non-abelian bundle gerbes of Aschieri, Cantini, and Jur\v co \cite{MR2117631}.	\cite{MR3089401} studies the relation between non-abelian bundle gerbes of \cite{MR2117631} and principal $2$-bundles over a smooth manifold. In \cite{MR3480061},  Ginot and Sti\'{e}non introduced the notion of a principal $2$-bundle over a Lie groupoid $\mb{X}$ as 
a  Hilsum \& Skandalis generalized morphism of Lie $2$-groupoids $\mb{X}\ra \mb{G},$ where $\mb{G}$ is the strict structure Lie 2-group. They treated both $\mb{G}$ and $\mb{X}$ as Lie $2$-groupoids. Particularly, they show that a principal automorphism $2$-group bundle is the same as a $G$-gerbe up to a Morita equivalence.

The theory of connection structures on principal $2$-bundles over manifolds/discrete smooth 2-spaces is well studied. Several authors investigated its various aspects which include the works of Breen-Messing \cite{MR2183393},  Baez-Schreiber \cite{MR2342821, schreiber2005loop}, Jurco-Samann-Wolf \cite{MR3351282}, Aschieri-Cantini-Jurco  \cite{MR2117631}, Gengoux-Stienon-Xu  \cite{MR2493616}. More recently, Waldorf developed the notion of a connection on Wockel's principal $2$-bundle in \cite{MR3894086}. Likewise, the theory of parallel transport in a categorified framework is quite an active area of research in the current landscape of higher gauge theory. Below, we list some existing works in the higher gauge theory that discuss holonomy or parallel transport.

Schreiber-Waldorf introduced a model-independent axiomatic approach to the theory of parallel transport in \cite{MR2520993, MR2803871, MR3084724}. An important aspect of their approach is the axiomatic characterization of smoothness conditions for parallel transport functors/2-functors and illustrating its sanity with several existing models like classical principal bundles, associated vector bundles\cite{MR2520993}, connection on non-abelian gerbes\cite{MR2803871, MR3084724} and others to name a few. In \cite{MR3521476}, Collier, Lerman, and Wolbert introduced an alternative but equivalent notion of smoothness for transport functors [Definition 3.5,\cite{MR3521476}]. In \cite{MR3917427}, Waldorf derived a notion of parallel transport on Wockel's principal 2-bundle over a manifold from the global connection data he introduced in \cite{MR3894086}. Their framework permits only local horizontal lifts (that too non-unique) of paths and path homotopies, which extend to the construction of a transport 2-functor from a path 2-groupoid of the base manifold to the bicategory of Lie 2-group equivariant anafunctors. Also, he showed that this 2-functor satisfied the smoothness formalism established in \cite{MR2803871}. Again, in the framework of a principal 2-bundle over a discrete smooth 2-space, Kim and Saemann introduced a notion of generalized higher holonomy functor
via adjusted Weil algebra in \cite{MR4177087} and had been successful in keeping it free from the usual fake-flatness condition. An approach to parallel transport in terms of  Lie crossed module cocycles over a manifold can be found in \cite{MR3357822}, and \cite{MR3529236} investigates its relation to knot theory. The article \cite{MR3645839} provides a gluing algorithm for local 2-holonomies. For approaches in higher gauge theory through double categories, one can check Morton-Picken's \cite{MR4037666} and Zucchini-Soncini's \cite{MR3357822}. 

Nonetheless, as our framework allows for a categorified base space, specifically a Lie groupoid, it is imperative to acknowledge some relevant works in this particular direction. 

In \cite{MR2270285}, Gengoux, Tu, and Xu presented the concept of a holonomy map along a generalized pointed loop for a principal Lie group bundle over a Lie groupoid with a flat connection. In a more recent publication \cite{MR3521476} by Collier, Lerman, and Wolbert, the authors investigated parallel transport within the context of a principal Lie group bundle over a differentiable stack. Specifically, they introduced their definitions for principal bundles, connection structures, and parallel transports as morphisms of stacks, found in Definition 6.1, Definition 6.2, and Definition 6.3 of \cite{MR3521476}, respectively. Furthermore, they demonstrated that when the stack denoted as $\mathcal{X}$ is a quotient stack arising from a Lie groupoid $\mathbb{X}$, their concept of a principal Lie group bundle over $\mathcal{X}$ coincides with the definition of a principal Lie group bundle over $\mathbb{X}$ as given in \cite{MR2270285}. Given a Lie group $G$ and stack $\mc{X}$, their main result gives an equivalence of categories between the category of principal $G$-bundles over $\mc{X}$ and the category of parallel transport functors over $\mc{X}$ [Theorem 6.4, \cite{MR3521476}]. Papers such as \cite{MR3126940, MR3504595, MR3213404, MR2764890} discuss higher gauge theory over path space groupoids. In \cite{MR3566125}, the author considered the base as an affine 2-space. 

For frameworks extending beyond $2$-spaces, interested readers may look at the following references: \cite{MR3548195, bakovic2009simplicial, bakovic2008bigroupoid}, which delve into Kan simplicial manifolds, and \cite{fiorenza2011cech, MR3423073, MR3385700, schreiber2013differential}, which offer a broader perspective involving $\infty$-topos theory.

%To highlight the distinction in our approach, we want to emphasize that we are investigating parallel transport on a Lie 2-group bundle over a Lie groupoid along a particular class of Haefliger paths, and our motivation in the said approach is to find some geometric correlations between certain ideas in the theory of fibered categories and the classical gauge theory. 
%Surprisingly, despite its exciting features, this area appears to be uncharted territory thus far.

Let us now return to our work!!

We  introduce a notion of categorified principal bundle, namely a `principal 2-bundle over a Lie groupoid', defined as\\
\textbf{Definition.}[\Cref{Definition:principal $2$-bundle over Liegroupoid}]
	For a Lie 2-group $\mb{G}$, a \textit{principal $\mb{G}$-bundle over a Lie groupoid} $\mathbb{X}$ is defined as a morphism of Lie groupoids $\pi: \mb{E} \rightarrow \mb{X}$ along with a right action $\rho: \mb{E} \times \mb{G} \rightarrow \mb{E}$ of the Lie $2$-group $\mb{G}$ on the Lie groupoid $\mb{E}$ such that, 
	\begin{itemize}
		\item $\pi_0\colon E_0 \rightarrow X_0$ is a principal $G_0$-bundle over the manifold $X_0$,
		\item $\pi_1\colon E_1 \rightarrow X_1$ is a principal $G_1$-bundle over the manifold $X_1$.
	\end{itemize}
We study the underlying fibration structure of these principal 2-bundles and characterize them. With this goal in mind, we introduce two classes of such 2-bundles, which we call \textit{quasi-principal 2-bundles} (\Cref{Definition:Quasicategorical Connection}) and \textit{categorical-principal 2-bundles} (\Cref{Unital connection}). Their set-theoretic analogs correspond respectively to a fibered category equipped with cleavage and a fibered category equipped with a splitting cleavage. The corresponding notion of cleavages and splitting cleavages in our framework have been called by the names \textit{quasi connections} (\Cref{Definition:Quasicategorical Connection}) and \textit{categorical connections} (\Cref{Def:categorical connection}), respectively. We also obtain a weakened version of a principal Lie group bundle over a Lie groupoid (as studied in \cite{MR2270285}), whose underlying action of the base Lie groupoid on the total space is now replaced by a quasi-action (that is not closed under composition and unit map)  and `is an action upto a factor'. We call these geometric objects by the name \textit{pseudo-principal Lie crossed module-bundles over Lie groupoids} (\Cref{Definition:PseudoprincipalLiecrossedmodulebundle}). One can think of these objects as Lie 2-group torsors analogs of pseudofunctors. Our first main result provides a one-one correspondence between quasi-principal 2-bundles and pseudo-principal Lie crossed module-bundles. This result also proves a Lie 2-group torsor version of the well-known one-one correspondence between fibered categories and pseudofunctors, which, according to the best of our knowledge, is a new addition to the existing literature. More precisely, we obtain the following equivalence of categories:
\begin{theorem}[\Cref{Main Theorem 1}]
	For a Lie crossed module $(G,H,\tau, \alpha)$ and a Lie groupoid $\mb{X}$, the groupoid $\rm{Bun}_{\rm{quasi}}(\mb{X}$, $[H \rtimes_{\alpha}G \rra G])$ is equivalent to the groupoid \\${\rm{Pseudo}} \big(\mb{X}, (G,H,\tau, \alpha) \big)$, where $\rm{Bun}_{\rm{quasi}}(\mb{X}$, $[H \rtimes_{\alpha}G \rra G])$ is the groupoid of quasi-principal $[H \rtimes_{\alpha}G \rra G]$-bundles over $\mb{X}$ and ${\rm{Pseudo}} \big(\mb{X}, (G,H,\tau, \alpha) \big)$ is the groupoid of pseudo-principal $(G,H,\tau, \alpha)$-bundles over $\mb{X}$.  		
\end{theorem}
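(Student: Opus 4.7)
The plan is to adapt the classical Grothendieck correspondence between cloven fibered categories and pseudofunctors to the Lie 2-group torsor setting, by constructing functors $\Phi$ and $\Psi$ between the two groupoids and exhibiting natural isomorphisms $\Psi\circ\Phi\cong\mathrm{id}$ and $\Phi\circ\Psi\cong\mathrm{id}$.

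For $\Phi$, start from a quasi-principal 2-bundle $(\pi\colon\mb{E}\to\mb{X},\mathcal{C})$. The quasi-connection $\mathcal{C}$ furnishes, for each compatible pair $(\gamma,e)$ with $\pi_0(e)=t(\gamma)$, a distinguished smooth lift $\mathcal{C}(\gamma,e)\in E_1$ of $\gamma$ with target $e$. First I would use this to define smooth transport maps $\gamma^{\ast}\colon\pi_0^{-1}(t(\gamma))\to\pi_0^{-1}(s(\gamma))$ by $e\mapsto s(\mathcal{C}(\gamma,e))$, which are $G$-equivariant since $\mathcal{C}$ is $[\hrtag\rra G]$-equivariant. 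For a composable pair $(\gamma_2,\gamma_1)$, the arrows $\mathcal{C}(\gamma_2\circ\gamma_1,e)$ and $\mathcal{C}(\gamma_2,e)\circ\mathcal{C}(\gamma_1,\gamma_2^{\ast}(e))$ of $\mb{E}$ both cover $\gamma_2\circ\gamma_1$ and share target $e$, hence freeness of the principal $\hrtag$-action on $E_1$ identifies them by a unique element of $\hrtag$; since their $G$-torsor projections to $E_0$ coincide, this element lies in $H$. Smoothness of $\mathcal{C}$ together with smoothness of the action promotes these pointwise $H$-elements to a smooth cocycle, which together with the unit factors extracted from $\mathcal{C}$ applied to identity arrows assembles into a pseudo-principal $\ghta$-bundle in the sense of \Cref{Definition:PseudoprincipalLiecrossedmodulebundle}.

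For $\Psi$, start from a pseudo-principal $\ghta$-bundle consisting of a principal $G$-bundle $P\to X_0$ equipped with smooth transport maps $\gamma^{\ast}$ and a smooth $H$-valued cocycle. Set $E_0:=P$ and define $E_1$ as the smooth fibered product $X_1\times_{X_0} P\times H$ (taken along $s$ of $X_1$ and $\pi_0$ of $P$), with source $(\gamma,p,h)\mapsto p\cdot\tau(h)^{-1}$, target $(\gamma,p,h)\mapsto\gamma^{\ast}(p)$, composition encoded by the cocycle, and right $\hrtag$-action assembled from the $G$-action on $P$ and the crossed module structure via $\alpha$; the tautological quasi-connection is $(\gamma,p)\mapsto(\gamma,p,1_H)$. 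The pseudofunctorial identities of the input data translate directly into the groupoid axioms of $\mb{E}$ and the principality of $\pi_1\colon E_1\to X_1$.

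The natural isomorphism $\Psi\circ\Phi\cong\mathrm{id}$ follows from the fact that every arrow of $E_1$ in a quasi-principal 2-bundle decomposes uniquely as $\mathcal{C}(\gamma,e)\cdot h$ for some $h\in H$, while $\Phi\circ\Psi\cong\mathrm{id}$ is immediate from the construction; both assignments extend to morphisms of bundles in a functorial, natural way, yielding a genuine equivalence of groupoids. The main obstacle I anticipate is the smoothness bookkeeping: one must confirm globally that the $H$-valued cocycle extracted in $\Phi$ is smooth and that the fibered product defining $E_1$ in $\Psi$ inherits a smooth manifold structure making source, target, unit, composition, and $\hrtag$-action all smooth (with the structure maps smooth submersions). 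I plan to handle this by exploiting smoothness of the quasi-connection section, submersivity of the anchor maps of $\mb{X}$, and freeness and properness of the principal $\hrtag$-action on $E_1$, which upgrade the pointwise uniqueness of the $H$-factors into their smooth dependence on parameters.
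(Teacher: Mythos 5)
Your proposal is correct and follows essentially the same route as the paper's proof: you extract from the quasi-connection the underlying $G$-bundle with its quasi-action together with the $H$-valued unital and compositional deviations, reconstruct a quasi-principal 2-bundle via the twisted product $X_1\times_{X_0}P\times H$ with cocycle-twisted structure maps (the paper's quasi-decorated bundle), and use the unique decomposition $\delta=\mathcal{C}\bigl(\pi_1(\delta),s(\delta)\bigr)(h_\delta,e)$ to obtain the comparison isomorphisms. The only differences are cosmetic — you anchor lifts at the target where the paper uses source-anchored sections, and you package the result as an explicit quasi-inverse rather than as essential surjectivity plus full faithfulness of one functor — though be aware that the bulk of the paper's work consists of the coherence verifications (centrality and $\alpha$-invariance of the deviations, associator, left/right unitors, invertor, and the groupoid axioms of the reconstructed total groupoid) that your sketch asserts rather than checks.
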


An interesting consequence of the above theorem allows us to extend the categorical principal 2-bundles to be defined over the differentiable stack represented by the base Lie groupoid (\Cref{stack}). 

Moreover, as a side result, we also relate certain aspects of our Lie 2-group bundles theory to Lie groupoid $G$-extensions for a Lie group $G$ (\Cref{eta twisted to Lie groupoid extension}).

Our next step is to develop the theory of connection structures and gauge transformations on principal Lie 2-group bundles over Lie groupoids. 

We start with the construction of the \textit{Atiyah sequence associated to a principal 2-bundle over a Lie groupoid}:
\begin{proposition}[\Cref{Prop:AtiyahLie2gpd}]
	For a Lie 2-group $G$, let $\pi \colon \mb{E} \ra \mb{X}$ be a principal $\mb{G}$-bundle over $\mb{X}$.  Then we have a short exact sequence 
	\begin{equation}\nonumber
		\begin{tikzcd}
			0 \arrow[r, ""] & {\rm Ad}(\mb{E})    \arrow[r, "\delta^{/\mb{G}}"] & {\rm At}(\mb{E})  \arrow[r, "\pi_{*}^{/\mb{G}}"] & T \mb{X} \arrow[r, ""] & 0
		\end{tikzcd}
	\end{equation}
	of VB-groupoids  over $\mb{X}=[X_1\rra X_0],$ where ${\rm Ad}(\mb{E})$, ${\rm At}(\mb{E})$ and $T \mb{X} $ are \textit{Adjoint}, \textit{Atiyah }and tangent VB-groupoids over the Lie groupoid $\mb{X}$. Here, $\delta^{/\mb{G}}$ and $\pi_{*}^{/\mb{G}}$  are induced from the fundamenental vector field functor $\delta$ and the tangent functor $\pi_{*}$.
\end{proposition}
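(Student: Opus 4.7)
The plan is to build the Atiyah sequence of $\pi \colon \mb{E} \ra \mb{X}$ level-wise out of the classical Atiyah sequences of the two ordinary principal bundles $\pi_{0}\colon E_{0}\ra X_{0}$ and $\pi_{1}\colon E_{1}\ra X_{1}$, and then check that the resulting triples assemble coherently into VB-groupoids over $\mb{X}$. Writing $\mathfrak{g}_{i}=\mathrm{Lie}(G_{i})$, the classical construction gives short exact sequences
\begin{equation*}
0 \ra \mathrm{Ad}(E_{i}) \xrightarrow{\,\delta_{i}\,} \mathrm{At}(E_{i}) \xrightarrow{\,(\pi_{i})_{*}\,} TX_{i} \ra 0, \qquad i=0,1,
\end{equation*}
of vector bundles over $X_{i}$, where $\mathrm{Ad}(E_{i})=(E_{i}\times\mathfrak{g}_{i})/G_{i}$, $\mathrm{At}(E_{i})=TE_{i}/G_{i}$, $\delta_{i}$ is induced by the fundamental vector field map $(p,X)\mapsto X^{\sharp}_{p}$, and $(\pi_{i})_{*}$ is induced by $d\pi_{i}$. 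Exactness at each level is standard (freeness of the action gives injectivity of $\delta_{i}$, the vertical distribution equals the image, and $\pi_{i}$ being a surjective submersion gives surjectivity of $(\pi_{i})_{*}$).

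Next, I would promote each triple into a groupoid over $\mb{X}$. Since the action functor $\mb{E}\times\mb{G}\ra\mb{E}$ intertwines the source and target functors of $\mb{E}$ with those of $\mb{G}$, the maps $ds^{\mb{E}},dt^{\mb{E}}\colon TE_{1}\ra TE_{0}$ descend to smooth maps $\mathrm{At}(E_{1})\rra\mathrm{At}(E_{0})$, and the source and target on the Lie 2-algebra $\mathrm{Lie}(\mb{G})$ combined with the same equivariance give $\mathrm{Ad}(E_{1})\rra\mathrm{Ad}(E_{0})$; for $T\mb{X}$ one simply uses $ds^{\mb{X}},dt^{\mb{X}}$. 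The composition, unit, and inversion structure maps of $\mb{E}$ and $\mb{G}$ are all functorial and $G$-equivariant in the right sense, so the same descent argument yields the remaining structure maps, giving VB-groupoids $\mathrm{Ad}(\mb{E})$, $\mathrm{At}(\mb{E})$, $T\mb{X}$ over $\mb{X}$. Local trivializations of $\pi_{0}$ and $\pi_{1}$ (chosen compatibly with the source and target, which is possible since $\pi$ is a morphism of groupoids) furnish local trivializations $\mathrm{At}(\mb{E})\vert_{U}\cong TU\oplus(U\times\mathrm{Lie}(\mb{G}))$ and thereby show that the vector bundle and groupoid structures are compatible, confirming the VB-groupoid axioms.

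Finally, to conclude exactness in the category of VB-groupoids over $\mb{X}$, I would note that exactness at each level is already the classical statement, so it only remains to verify that $\delta^{/\mb{G}}$ and $\pi_{*}^{/\mb{G}}$ are morphisms of VB-groupoids, i.e.\ commute with source, target, composition, unit, and inversion. This compatibility is a direct consequence of the functoriality of $\pi=(\pi_{1},\pi_{0})$ and of the $\mb{G}$-action.

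The main obstacle I foresee is the bookkeeping around the Lie 2-algebra of $\mb{G}$ and the quotient constructions: one has to identify $\mathrm{Lie}(\mb{G})$ as a VB-groupoid so that the fundamental vector field assignment becomes an honest functor, and one has to argue that the quotients $TE_{i}/G_{i}$ and $(E_{i}\times\mathfrak{g}_{i})/G_{i}$ are smooth manifolds and that the descended source/target maps are smooth submersions. Both reductions ultimately go through the classical case via the compatible local trivializations mentioned above, but the interplay between the tangent functor, the Lie 2-group action, and the quotient will need careful unpacking in charts.
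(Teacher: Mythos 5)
Your strategy coincides with the paper's: take the two classical Atiyah sequences of $\pi_0\colon E_0\to X_0$ and $\pi_1\colon E_1\to X_1$, assemble the quotient bundles into VB-groupoids $\mathrm{Ad}(\mb{E})$, $\mathrm{At}(\mb{E})$, $T\mb{X}$ over $\mb{X}$, observe that $\delta^{/\mb{G}}$ and $\pi_*^{/\mb{G}}$ are the descended fundamental-vector-field and tangent functors (using $\mb{G}$-equivariance and functoriality of the action), and deduce exactness level-wise from the classical case.

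There is, however, one step where your argument as written would fail: the claim that the composition of $\mb{E}$ (and of $L(\mb{G})$) simply ``descends'' to the quotients $\mathrm{At}(\mb{E})$ and $\mathrm{Ad}(\mb{E})$ by the same equivariance argument used for source and target. Unlike $s$ and $t$, the composition is only defined on the fibered product $E_1\times_{E_0}E_1$, and the fibered product of the quotients is not the quotient of the fibered product: two classes $[(\widetilde\gamma_2,K_2)]$ and $[(\widetilde\gamma_1,K_1)]$ whose descended target and source agree need not admit composable representatives. The paper's construction deals with exactly this: one finds $\theta\in G_0$ with $s(\widetilde\gamma_2)\theta=t(\widetilde\gamma_1)$ and $\mathrm{ad}_\theta\bigl(s_{*,e}(K_2)\bigr)=t_{*,e}(K_1)$, replaces $(\widetilde\gamma_2,K_2)$ by $(\widetilde\gamma_2,K_2)\,1_\theta$ (which lies in the same class and is now composable), composes these representatives, and then verifies independence of all choices using the interchange law $(\widetilde\gamma_2 k_2)\circ(\widetilde\gamma_1 k_1)=(\widetilde\gamma_2\circ\widetilde\gamma_1)(k_2\circ k_1)$ for the $\mb{G}$-action (and its adjoint analogue on $L(\mb{G})$). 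This choice-plus-well-definedness argument is the real content of building $\mathrm{Ad}(\mb{E})$ and $\mathrm{At}(\mb{E})$ and must appear in your proof. A smaller point: you invoke local trivializations of $\pi_0$ and $\pi_1$ ``chosen compatibly with the source and target''; such compatibility is neither obvious nor needed --- the paper instead obtains smoothness and submersion-ness of the descended source and target maps simply by precomposing with the surjective submersive quotient maps $TE_i\to\mathrm{At}(E_i)$ and $E_i\times L(G_i)\to\mathrm{Ad}(E_i)$.
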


Then, we introduce two notions of connection structures viz. \textit{strict connections }and \textit{semi-strict connections on a principal $2$-bundle over a Lie groupoid } arising from a splitting of the Atiyah sequence and a splitting up to a natural isomorphism (\Cref{strict and semi-strict connection definition}). We describe these connection structures  in terms of $L(\mb{G})$-valued differential 1-forms $\omega \colon T\mb{E} \ra L(\mb{G})$ on the total Lie groupoid $\mb{E}$, where $L(\mb{G})$ is the Lie 2-algebra of the Lie 2-group $\mb{G}$ (\Cref{strict ans semi strict connetion 1-forms}). We construct categorical principal 2-bundles over Lie groupoids equipped with strict and semi-strict connections from the data of principal Lie group bundles (equipped with connection structures) over the base Lie groupoid, (\Cref{Prop:ConOnDeco}). An existential criterion for the strict and semistrict connections on a principal $2$-bundle over an orbifold is also proposed, \Cref{Existence}. Given a principal 2-bundle over a Lie groupoid, we construct respectively the category of strict and semi-strict connections in terms of both splittings of the associated Atiyah sequence and also as $L(\mb{G})$-valued differential 1-forms on $\mb{E}$ (\Cref{category of strict connection}, \Cref{Def:Semisemistriccat}). One of our primary findings results in a categorical one-one correspondence between connections as splittings of the associated Atiyah sequence  and connections as Lie 2-algebra valued differential 1-forms:
\begin{theorem}[\Cref{strict connection=strict forms}]
	For a Lie 2-group $\mb{G}$, let $\pi \colon \mb{E} \ra \mb{X}$ be a principal $\mb{G}$-bundle over a Lie gorupoid $\mb{X}$.
	\begin{enumerate}
		\item The categories $C^{\rm{semi}}_{\mb{E}}$ and $\Omega_{\mb{E}}^{\rm{semi}}$ are isomorphic. 	
		\item The categories $C^{\rm{strict}}_{\mb{E}}$ and $\Omega_{\mb{E}}^{\rm{strict}}$ are isomorphic. 	
	\end{enumerate}	
\end{theorem}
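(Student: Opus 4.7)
The plan is to construct mutually inverse functors $\Phi \colon C^{\rm{strict}}_{\mb{E}} \to \Omega^{\rm{strict}}_{\mb{E}}$ and $\Psi \colon \Omega^{\rm{strict}}_{\mb{E}} \to C^{\rm{strict}}_{\mb{E}}$, and analogously in the semi-strict setting, by lifting the classical vertical/horizontal decomposition level-wise to the Lie groupoid $\mb{E}=[E_1 \rra E_0]$. At the object level, a splitting $s$ of the Atiyah sequence consists of a compatible pair of smooth splittings $s_i \colon TX_i \to {\rm At}(\mb{E})_i$ for $i \in \{0,1\}$. Using the identification ${\rm At}(\mb{E})_i \cong TE_i/G_i$ together with the fundamental vector field map $\delta_i \colon L(G_i) \to \Gamma(TE_i)$, each $s_i$ determines a classical $L(G_i)$-valued connection $1$-form $\omega_i \colon TE_i \to L(G_i)$ by reading off the vertical component of a tangent vector relative to the horizontal distribution given by $s_i$. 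Setting $\Phi(s) := (\omega_0,\omega_1)$ defines the candidate functor on objects.

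Next I would verify that $\omega = (\omega_0,\omega_1)$ assembles into a strict morphism of Lie $2$-algebras, i.e., into an element of $\Omega^{\rm{strict}}_{\mb{E}}$. The key observation is that $\delta$ is itself a strict morphism of VB-groupoids intertwining the source, target, unit, and composition maps of $L(\mb{G})$ and ${\rm Ad}(\mb{E})$. Consequently, the strict compatibility of $s$ with the structure maps of ${\rm At}(\mb{E})$ translates, under the formula above, precisely into the strict groupoid compatibilities characterising $\Omega^{\rm{strict}}_{\mb{E}}$. The inverse $\Psi$ is constructed by reversing the recipe: given $\omega$, define $\ker \omega_i \subset TE_i$ to be the horizontal distribution, which descends to a splitting $s_i$ at level $i$; the strict compatibilities of $\omega$ ensure that $(s_0,s_1)$ is a strict splitting. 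A direct pointwise computation then gives $\Phi \circ \Psi = {\rm id}$ and $\Psi \circ \Phi = {\rm id}$ on objects. The morphism-level step is formally parallel: natural transformations between splittings (implemented by sections of ${\rm Ad}(\mb{E})$) correspond bijectively to natural transformations between connection forms via the same intertwining by $\delta$, yielding part (2).

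For part (1), the construction of $\Phi$ and $\Psi$ is formally identical, but the strict equality $\pi_* \circ s = {\rm id}_{T\mb{X}}$ is replaced by a natural isomorphism $\eta$, and correspondingly $\omega$ is a functor only up to a $2$-cell with values in the component $\tau \colon H \to G$ of the Lie crossed module underlying $\mb{G}$. The main obstacle in this part is the coherent bookkeeping of these $2$-cells: one must verify that, under $\Phi$, the natural isomorphism witnessing semi-strictness of the splitting $s$ is mapped exactly onto the natural transformation data appearing in the definition of $\Omega^{\rm{semi}}_{\mb{E}}$, and that this matching is compatible with the compositions of splittings and forms in both categories. Once the coherence is checked using the intertwining properties of $\delta$ and the action $\alpha$, the invertibility of $\Phi$ and $\Psi$ in the semi-strict setting reduces to the same pointwise calculation as in the strict case, completing the proof.
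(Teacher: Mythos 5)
There is a genuine gap, and it is concentrated in part (1). The paper's semi-strict notion (\Cref{strict and semi-strict connection definition}) is a VB-groupoid morphism $R\colon {\rm At}(\mb{E})\ra {\rm Ad}(\mb{E})$ which is a retraction of $\delta^{/\mb{G}}$ \emph{up to a $2$-isomorphism}, i.e.\ $R\circ\delta^{/\mb{G}}\simeq 1_{{\rm Ad}(\mb{E})}$; it is \emph{not} defined as a section $s\colon T\mb{X}\ra{\rm At}(\mb{E})$ of $\pi_*^{/\mb{G}}$ up to isomorphism, and the paper explicitly remarks that the passage from a semi-strict $R$ to a ``homotopy section'' is left unexplored. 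Likewise, an object of $\Omega_{\mb{E}}^{\rm{semi}}$ (\Cref{strict ans semi strict connetion 1-forms}, \Cref{Def:Semisemistriccat}) is an honest $\mb{G}$-equivariant \emph{functor} $\omega\colon T\mb{E}\ra L(\mb{G})$; the weakening sits entirely in the comparison $\kappa\colon\omega\circ\delta\Longrightarrow{\rm pr}_2$, a $\mb{G}$-equivariant fiberwise-linear natural isomorphism whose deviation is measured by $\tau(\kappa_\omega)$ as in \Cref{Prop:Semisirictnatural}. Your proposal instead weakens the section condition $\pi_*\circ s={\rm id}_{T\mb{X}}$ on one side and the functoriality of $\omega$ itself (``a functor only up to a $2$-cell valued in $\tau$'') on the other. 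So in the semi-strict case your $\Phi$ and $\Psi$ relate categories that are not the ones in the statement, and bridging them would require exactly the retraction-versus-homotopy-section equivalence that is not available; to prove the theorem you must work directly with $R$ and with $\kappa\colon\omega\circ\delta\Longrightarrow{\rm pr}_2$, as in \Cref{Prop:Corresconndiffform}, where the key step is extracting from the $2$-cell $\epsilon\colon R\circ\delta^{/\mb{G}}\Longrightarrow 1$ the unique element $\kappa_{(p,B)}\in L(G_1)$ with $\epsilon([p,B])=[(1_p,\kappa_{(p,B)})]$.

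For part (2) your object-level recipe (level-wise horizontal distributions / sections, intertwined by $\delta$) is a legitimate repackaging of the paper's object-level correspondence, but the actual content of the theorem is the morphism-level bijection, which you dispatch as ``formally parallel.'' Morphisms in $C^{\rm{strict}}_{\mb{E}}$ are $2$-morphisms $\eta\colon R\Longrightarrow R'$ taking values in ${\rm Ad}(E_1)$ and projecting to units in $X_1$; to produce from $\eta$ a morphism $\bar\eta\colon\omega\Longrightarrow\omega'$ of $\Omega^{\rm{strict}}_{\mb{E}}$ one must show each class $\eta_{[p,v]}$ has a \emph{canonical} representative of the form $(1_p,\bar\eta(p,v))$ — this uses the freeness of the $G_1$-action on $E_1$ together with the condition $\pi(\eta_{[p,v]})=1_{\pi(p)}$ — and then verify independence of the chosen representative, smoothness, the $\mb{G}$-equivariance condition $\bar\eta(p\,g,v\,g)={\rm{ad}}_{1_g^{-1}}\bar\eta(p,v)$, and naturality, before checking the two assignments are mutually inverse and functorial. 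None of this is automatic from ``the same intertwining by $\delta$,'' so as written part (2) is a sketch of the easy half of the argument with the hard half asserted; it is repairable, but the details above are precisely what the paper's proof supplies.
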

In the above theorem, $C^{\rm{semi}}_{\mb{E}}$ and $\Omega_{\mb{E}}^{\rm{semi}}$ denotes the category of semi-strict connections as splittings and as $L(\mb{G})$-valued differential 1-forms respectively. Notations $C^{\rm{strict}}_{\mb{E}}$ and $\Omega_{\mb{E}}^{\rm{strict}}$ denote likewise for strict connections.

 We study the action of the strict $2$-group of gauge transformations on the groupoid of strict and semi-strict connections (\Cref{Prop:Actiongaugeconnecat}). Our third main result observes an extended symmetry enjoyed by the category of semi-strict connections (\Cref{E:GengaugeactdefwithF}).

Next, we study an interplay between the differential geometric connections and the theory of fibered categories by developing a notion of connection-induced parallel transport on our principal 2-bundles over Lie groupoids.

We introduce a class of Haefliger paths, which we call  \textit{lazy Haefliger paths} in a Lie groupoid (\Cref{Definition:Haefliger path}), and a notion of thin homotopy between them (\Cref{Definition: Thin homotopy of X-paths}). We introduce a notion of the \textit{thin fundamental groupoid $\Pi_{{\rm{thin}}}(\mb{X})$ of a Lie groupoid }$\mb{X}$ (\Cref{Thin fundamental groupoid of a Lie groupoid}) and show it to be a diffeological groupoid (\Cref{Thin fundamental groupoid of a Lie groupoid is a diffeological groupoid}). The multiplicative nature of our connection structures and the underlying fibration structure of a quasi-principal 2-bundle leads us to a construction of the parallel transport functor for a quasi-principal 2-bundle.
\begin{theorem}[\Cref{Theorem: Parallel transport on 2-bundles}]
	Given a quasi-principal $\mb{G}:=[H \rtimes_{\alpha} G \rra G]$-bundle $(\pi: \mb{E} \ra \mb{X}, \mc{C})$  with a strict connection $\omega: T\mb{E} \ra L(\mb{G})$, there is a functor
	\begin{equation}\label{Parallel}
		\mc{T}_{\mc{C}, \omega} \colon  \Pi_{\rm{thin}}(\mb{X}) \ra \overline{\mb{G} \rm{-Tor}},
	\end{equation}
\end{theorem}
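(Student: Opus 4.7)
The plan is to build the functor $\mc{T}_{\mc{C},\omega}$ by handling separately the two kinds of basic data that make up a lazy Haefliger path in $\mb{X}$: honest smooth paths in $X_0$, and jumps given by arrows of $X_1$. On objects, I would send each $x \in X_0$ to the fiber $\pi_0^{-1}(x) \subset E_0$, viewed as a $\mb{G}$-torsor in $\overline{\mb{G}\text{-}\mathrm{Tor}}$ via the restriction of the $\mb{G}$-action on $\mb{E}$. The task is then to assign to each thin-homotopy class of lazy Haefliger paths an equivariant isomorphism of such fibers, and to check this is well-defined and functorial.

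On morphisms, given a lazy Haefliger path $\gamma$ presented as an alternating sequence of smooth pieces $\gamma_i \colon [t_i,t_{i+1}] \to X_0$ interleaved with arrows $\phi_j \in X_1$, I would first use the strict connection $\omega$ to produce, on each smooth piece, the classical horizontal lift in the ordinary principal $G$-bundle $\pi_0 \colon E_0 \to X_0$ associated with the component $\omega_0$ of $\omega$ on $E_0$; this yields a $G$-equivariant isomorphism of fibers, which upgrades to a $\mb{G}$-torsor isomorphism because the quasi-action of $\mb{G}=[H\rtimes_\alpha G \rra G]$ is determined by the $G$-action on $E_0$ together with the $\mc{C}$-cartesian data. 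Then, to each jump arrow $\phi_j \colon a \to b$, I would assign the $\mc{C}$-cartesian lift of $\phi_j$ provided by the quasi-connection; its source and target fibers give the required $\mb{G}$-equivariant transition. The image of $\gamma$ under $\mc{T}_{\mc{C},\omega}$ is then the composition of these elementary maps in order.

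The main verifications are the following. First, independence of presentation: any insertion of a trivial sub-path or a unit arrow $1_x$ (allowed in the lazy formalism) is absorbed by the fact that $\omega_0$-horizontal lift of a constant path is the identity and by the cleavage axiom $\mc{C}(1_x) = 1_{\mc{C}(x)}$ built into the definition of a quasi-connection. Second, thin-homotopy invariance: for homotopies whose image lies purely inside a single smooth piece this is the classical theorem that horizontal transport in $\pi_0 \colon E_0 \to X_0$ depends only on the thin-homotopy class of the underlying path (the rank-$\le 1$ condition forcing the pullback of $\omega_0$ to vanish along the interpolating direction); for homotopies that deform across a jump one invokes naturality of $\mc{C}$-cartesian lifts with respect to factorizations in $X_1$. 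Functoriality with respect to concatenation of Haefliger paths follows because both horizontal lifts and cartesian lifts compose strictly, and preservation of identities is immediate from the two clauses above.

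The hard part, I expect, will be the coherence between the differential-geometric datum $\omega$ and the purely categorical datum $\mc{C}$ across jumps. Concretely, one must show that if a thin homotopy modifies a jump $\phi_j \colon a \to b$ by pre- or post-composing it with a family of smooth paths in $X_0$ lifted horizontally by $\omega_0$, then the resulting square in $\mb{E}$ closes up on the nose. This is where the multiplicativity of $\omega$ (encoded in the strict-connection property, i.e.\ $\omega$ being a morphism of Lie groupoids into $L(\mb{G})$) becomes indispensable: it forces the horizontal lifts at the source and target of a bisection of $X_1$ to be intertwined by the cartesian lift $\mc{C}(\phi_j)$, so the two ways of travelling around the square produce the same arrow in $\mb{E}$. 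Once this coherence lemma is established, the remaining $\mb{G}$-equivariance and functoriality checks reduce to routine bookkeeping with the quasi-action and the strict-connection identities of \Cref{strict connection=strict forms}.
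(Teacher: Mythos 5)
Your overall decomposition is the same as the paper's: transport arrows of $X_1$ by the quasi connection $\mc{C}$, transport the smooth pieces in $X_0$ by the classical horizontal lift of $\omega_0$ (and of $\omega_1$ along unit paths at the morphism level), compose in order, and then check invariance under the lazy-path equivalences and thin deformations. However, there is a genuine gap in how you treat the categorical datum $\mc{C}$. You assert that ``the cleavage axiom $\mc{C}(1_x)=1_{\mc{C}(x)}$ is built into the definition of a quasi-connection'' and that ``cartesian lifts compose strictly.'' Neither is true for a quasi connection as defined in \Cref{Definition:Quasicategorical Connection}: unitality is exactly the extra condition defining a \emph{unital} connection (\Cref{Unital connection}), and strict compatibility with composition is the further condition defining a \emph{categorical} connection (\Cref{Proposition: Quasi-Cat}). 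For a general quasi connection, $\mc{C}(1_x,p)$ differs from $1_p$ and $\mc{C}(\gamma_2\circ\gamma_1,p)$ differs from $\mc{C}(\gamma_2,\cdot)\circ\mc{C}(\gamma_1,p)$ by elements of $H$ (the unital and compositional deviations $\mc{H}_{u,\mc{C}}$, $\mc{H}_{m,\mc{C}}$), so the assignment you describe is \emph{not} independent of the presentation of a lazy Haefliger path and is \emph{not} strictly functorial with values in $\mb{G}$-Tor. The same problem recurs in your ``coherence square'' across a jump: the square does not close on the nose; in the paper the identification $T_{\omega}^{\alpha_i}\cong T_{\mc{C}}(\gamma')\circ T_{\omega}^{t\circ\zeta_i}\circ T_{\mc{C}}(\gamma^{-1})$ is only a smooth $\mb{G}$-equivariant natural isomorphism, constructed explicitly from a uniquely determined element $h_p\in H$ (see \Cref{Proposition: Equivalence invariance of parallel transport}); multiplicativity of $\omega$ alone does not force equality.

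This is precisely why the target of the functor is the quotient category $\overline{\mb{G}\rm{-Tor}}$ of \Cref{Quotiented G-tor}, where morphisms are identified up to smooth $\mb{G}$-equivariant natural isomorphism --- a point your proposal never uses, and without which your construction would claim a functor into $\mb{G}$-Tor, which is false in general. The repair is the paper's route: organize the arrow-transports into the $\mb{G}$-Tor-valued pseudofunctor of \Cref{T_C}, whose structure isomorphisms $I_x$ and $\alpha_{\gamma_1,\gamma_2}$ (\Cref{Pseudonat1}, \Cref{pseudonat2}) absorb the unit-insertion and jump-composition moves, prove the equivalence- and thin-deformation-invariance statements up to $\mb{G}$-equivariant natural isomorphism, and only then pass to $\overline{\mb{G}\rm{-Tor}}$ to get a genuine functor. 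A minor further point: the object assignment should be the groupoid fiber $\pi^{-1}(x)=[\pi_1^{-1}(1_x)\rra\pi_0^{-1}(x)]$ rather than just $\pi_0^{-1}(x)$, since the transports must also act on the morphism level (via ${\rm{Tr}}_{\omega_1}^{u\circ\alpha}$ and conjugation by $\mc{C}$-lifts), and morphisms of $\overline{\mb{G}\rm{-Tor}}$ are classes of $\mb{G}$-equivariant functors between these groupoids.
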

In the above theorem, $\overline{\mb{G} \rm{-Tor}}$ is a `suitable'  quotient category of the category of $\mb{G}$-torsors.

We exhibit the naturality of the above parallel transport functor with respect to connection-preserving bundle morphisms (\Cref{Naturality of parallel transport}). 
%This observation leads us to our \textbf{fourth main result}. 
To be precise, fixing a Lie 2-group $\mb{G}$ and a Lie groupoid $\mb{X}$,  the said observation extends this parallel transport functor \Cref{Parallel} to define a functor 
\Cref{Equivalence of quasi and functors},
\begin{equation}\label{Main functor1}
	\mc{F} \colon \rm{Bun}_{\rm{quasi}}^{\nabla}(\mb{X}, \mb{G}) \ra \rm{Trans}(\mb{X},\mb{G}).
\end{equation}
In \Cref{Main functor1}, $\rm{Bun}_{\rm{quasi}}^{\nabla}(\mb{X}, \mb{G})$ is the category of quasi-principal $\mb{G}$-bundles equipped with strict connections over the Lie groupoid $\mb{X}$. On the other side, $\rm{Trans}(\mb{X},\mb{G})$ is the category of functors from the thin fundamental groupoid of $\mb{X}$ to the priorly mentioned quotiented category of $\mb{G}$-torsors. We also establish the naturality of \Cref{Parallel} with respect to constructions of strong fibered products of Lie groupoids (\Cref{Pullback naturality}). This parallel transport functor is also shown to have an appropriate smoothness property (\Cref{Smoothness of parallel transport functor}), such that in the classical case, this coincides with the one mentioned in \cite{MR3521476}.

 Finally, applying the parallel transport theory developed so far, we investigate parallel transports on VB-groupoids along lazy Haefliger paths.

\subsection*{Significance of our results}

To our knowledge, utilizing the Atiyah sequence approach in higher gauge theory, especially in the framework of Lie 2-group bundles, as presented by us in \cite{MR4403617}, appears to be a novel exploration. 
%The only mention we found before our work is a passing reference in the lecture notes \cite{stevenson}.
An interesting outcome of employing this method is the concept of semi-strict connections on a Lie $2$-group bundle over a Lie groupoid. Recently, the work of \cite{MR4529816} delved into the study of a generalized Atiyah sequence within the realm of $\infty$-categories. Notably, in the current year, Herrera-Carmona and Ortiz, as presented in \cite{herreracarmona2023chernweillecomte}, introduced a Chern-Weil map for our principal 2-bundles, utilizing a connection structure similar to ours. Simultaneously, in \cite{MR4598921}, there is a reference to our principal 2-bundles as principal bundle groupoids, and the associated nerves were studied. 

Other than a brief mention to a concept in (Subsection 4.1.3, \cite{MR2218759}), which discusses a distinct setup and context, to the best of our knowledge, we assert that our method of parallel transport (particularly along Haefliger paths in Lie groupoids) is new to the ones already present in the current higher gauge theory literature. We hope our approach will offer a fresh perspective on parallel transport theory (derived from appropriate connection data) on any geometric object with an underlying Lie groupoid fibration structure endowed with a suitable notion of cleavage. 

\subsection*{Limitations of our results}
 It is important to note that there are specific noteworthy topics that have not been addressed in this thesis, such as the construction of our principal 2-bundles with connection structures from the information of a parallel transport functor, and a concept of parallel transport along higher-dimensional entities such as surfaces, and so on. We intend to incorporate these investigations into an upcoming paper to enhance the clarity of the interrelation with other established notions in higher parallel transport theories.

\section{Organization of the thesis}
\subsection{Classical Set-up}
In \Cref{Chapter Classical setup}, we review some standard notions in classical gauge theory, whose categorified versions form the key players in this thesis.  In particular, we recall the following notions:
\begin{itemize}
	\item Fiber bundles viz: traditional principal bundles  and vector bundles over manifolds (Subsection \ref{fibre bundles});
	\item Atiyah sequence associated to a principal bundle over a manifold (Subsection \ref{Atiyah sequence associated to a principal bundle});
	\item Gauge group of a principal bundle (Subsection \ref{subsection: gauge group of a principal bundle});
	\item Connection structure on a principal bundle  as a splitting of the associated Atiyah sequence, as a Lie algebra valued differential 1-form and as a horizontal distribution (Subsection \ref{Subsection: Connection on a principal bundle and its characterizations});
	\item Induced Ehresmann connection on the associated fiber bundle (Subsection \ref{Induced Ehresmann connection on the associated bundle});
	\item Parallel transport of a connection on a principal bundle. This includes the construction of the corresponding parallel transport functor and the induced parallel transport on an associated bundle (Section \ref{Section: Connection structures on a principal bundle}).
\end{itemize}

\subsection{Preliminaries} 
In \Cref{A brief review of bundles over Lie groupoids and their gauge theory}, we recall some standard notions in category theory and higher gauge theory that are necessary or relevant for extending the ideas of \Cref{Chapter Classical setup} in a categorified framework. In particular, we recall the following:
\begin{itemize}
	\item 2-categories (Subsection \ref{2-categories});
	\item Fibered categories, pseudofunctors, and the Grothendieck construction (Subsection \ref{subsection Fibered categories});
	\item \underline{Lie groupoids}:  Basic definitions, properties and examples (Subsection \ref{Basic definitions}), fibered products in Lie groupoids (Subsection \ref{subsection pullbacks in Lie groupoids}), Lie groupoid G-extensions (Subsection \ref{subsection Lie groupoid G extension}), action and quasi-action of a Lie groupoid (Subsection \ref{Action and quasiaction of a Lie groupoid}), anafunctors and Morita equivalence of Lie groupoids (Subsection \ref{subsection Generalized homomorphisms and Morita equivalence of Lie groupoids});
	\item Principal Lie group bundles over Lie groupoids and their connection structures (Section \ref{Principal bundles over Lie groupoids and their connection structures});
	\item \underline{Lie 2-group and its Lie 2-algebra}: Correspondence between Lie 2-groups and Lie crossed modules (Subsection \ref{Lie 2 group from Lie crossed module}), the Lie 2-algebra of a Lie 2-group (Subsection \ref{Lie 2-algebra as Lie crossed module}), adjoint actions of a Lie 2-group (Subsection \ref{SS:adjointLie 2 group}) and an action of a Lie 2-group on a Lie groupoid (Subsection \ref{Action of a Lie 2-group on a Lie grouipoid});
	\item VB-groupoids (Section \ref{subsection VB groupoids});
	\item Baez-Crans 2-vector spaces (Section \ref{Section: 2-vector space});
	\item Haefliger paths and the fundamental groupoid of a Lie groupoid (Section \ref{Haefliger paths and fundamental groupoid of a Lie groupoid});
	\item \underline{Diffeology}: Definitions, basic properties and examples (Subsection \ref{Definitionsbasic properties and examples}) and a discussion on the smoothness property of parallel transport functor of a principal bundle over a manifold (Subsection \ref{Smoothness of traditional parallel transport}).
\end{itemize}

\subsection{Principal 2-bundles over  Lie groupoids and their characterizations}
\Cref{chapter 2-bundles} is mostly based on our papers \cite{MR4403617} and \cite{chatterjee2023parallel}.

Section \ref{A principal 2-bundle over a Lie groupoid} introduces the notion of a \textit{principal 2-bundle over a Lie groupoid} and discusses several examples.

Section \ref{Decorated principal 2-bundles and categorical connections} is splitted into two subsections. In subsection \ref{SS:Decorated}, we construct a principal 2-bundle, namely a \textit{decorted principal 2-bundle over a Lie groupoid} from the data of a Lie crossed module and a principal Lie group bundle over the same base Lie groupoid. We also discuss several examples of decorated principal 2-bundles over Lie groupoids. In subsection \ref{SS:Catconnection}, we introduce a structure called \textit{categorical connection} on our principal 2-bundles. We then characterize decorated principal 2-bundles with respect to these structures. We also relate categorical connections with the triviality of traditional principal bundles.

Section \ref{Section: A Quasi-principal 2-bundle over a Lie groupoid} is splitted into four subsections. Subsection \ref{A quasi-principal 2-bundle over a Lie groupoid} introduces the notion of a \textit{quasi connection }and a \textit{quasi-principal 2-bundle over a Lie groupoid}. Also, we characterize quasi connections in terms of categorical connections. In subsection \ref{Examples of quasiprincipal 2-bundles}, we construct some examples of quasi-connections and quasi-principal 2-bundles. In subsection \ref{Section Quasi-principal 2-bundle over a Lie groupoid as a Grothendieck construction}, we obtain the main result of this chapter \Cref{Main Theorem 1}, `a statement and proof of a Lie 2-group torsor version of the one-one correspondence between fibered categories and pseudofunctors'. As a byproduct of this result, we obtain the notion of a \textit{pseudo-principal Lie crossed module-bundle over a Lie groupoid} and characterize a quasi-principal 2-bundle in terms of it. Finally, in subsection \ref{Quasiconnections as retractions}, we characterize quasi connections in terms of certain retractions.

In section \ref{Towards a principal 2-bundle over a differentiable stack}, we extend a class of our principal 2-bundles to be defined over the differentiable stack that its base Lie groupoid represents.

We end the chapter by introducing a weaker version (in terms of the action of the structure 2-group) of a principal Lie 2-group bundle over a Lie groupoid, namely \textit{$\eta$-twisted principal 2-bundle over a Lie groupoid}, and relate it to a Lie groupoid $G$-extension for a Lie group $G$ (Section \ref{Twisted principal 2-bundles and Lie groupoid $G$-extensions}).

\subsection{Connection structures and gauge transformations on a principal 2-bundle over a Lie groupoid}
Most of the contents of \Cref{chapter 2-bundles copnnection} are borrowed from our paper \cite{MR4403617}.

The whole chapter is split into two sections. Section \ref{Connection structures on a principal 2-bundle over a Lie groupoid} introduces connection structures. While section \ref{Gauge 2-group and its action on the category of connections} studies gauge transformations and investigate their actions on connection structures.

Subsection \ref{Subsection: Atiyah sequence associated to a principal 2-bundle over a Lie groupoid} offers a construction of a short exact sequence of VB-groupoids associated to our principal Lie 2-group bundle over a Lie groupoid, which we call the \textit{Atiyah sequence}. Subsection \ref{subsectionStrict and semi-strict connections as splittings of the Atiyah sequence} introduces \textit{strict} (resp. \textit{semi-strict connection}) on a principal 2-bundle over Lie groupoid as a splitting of the associated Atiyah sequence (resp. splitting up to a natural isomorphism) and construct the corresponding \textit{groupoid of strict} (resp. \textit{semi-strict}) \textit{connections}. In subsection \ref{Strict and semi-strict connections as Lie 2-algebra valued 1-forms on Lie groupoids} we express the strict and semi-strict connections in terms of  Lie $2$-algebra valued differential forms on Lie groupoids and call them \textit{strict} (resp \textit{semi-strict connection 1-forms}) and construct the corresponding \textit{groupoid of strict }(resp. \textit{semi-strict}) \textit{connection 1-forms.} We found a way to measure how much a semi-strict connection 1-form deviates from being a strict connection 1-form. We also devise a procedure to construct semi-strict connections. In subsection \ref{Categorical correspondence strict and semistrict}, we obtain our first main result of this chapter (Subsection \ref{strict connection=strict forms}) by proving an isomorphism between the groupoid of strict (resp. semi-strict) connections and the groupoid of strict (resp. semi-strict) connection 1-forms. In subsection \ref{SS:Conndeco}, we provide a detailed construction of the connection structure on a decorated principal 2-bundle. Also, as a side result, we relate our construction to Cartan connections. Finally, we end the section by proposing an existential criterion for strict and semi-strict connections on a principal 2-bundle over an orbifold in subsection \ref{Existence of strict and semistrict connections}. 

In subsection \ref{Gauge 2-group of a principal 2-bundle over a Lie groupoid}, we introduce the notion of \textit{gauge 2-group of a principal 2-bundle over a Lie groupoid}. We studied the relation between gauge transformations and categorical connections. We also explicitly describe the gauge transformations on a decorated principal 2-bundle. In subsection \ref{Action of Gauge $2$-group on the category of Connections}, we investigate the actions of gauge 2-group on the groupoids of strict and semi-strict connections. In fact, we showed that semi-strict connections enjoy an extended gauge symmetry and finds its relation to the gauge transformations of the connection $1$-forms in higher BF theories (Subsection \ref{An extended symmetry of semi-strict connections}).

\subsection{Parallel transport on quasi-principal 2-bundles and associated VB-groupoids}

\Cref{Chapter: Parallel transport on quasi-principal 2-bundles} is mostly adapted from our paper \cite{chatterjee2023parallel}.

In subsection \ref{Lazy haefliger paths}, we introduce the definitions of a \textit{lazy Haefliger path on a Lie groupoid} and a thin homotopy, namely \textit{lazy $\mb{X}$-path thin homotopy} between them. Subsection \ref{Thin fundamental groupoid of a Lie groupoid} introduces a notion of \textit{thin fundamental groupoid of a Lie groupoid}. Then, the subsection \ref{Subsection Smoothness of thin fundamental groupoid of a Lie groupoid} shows it to be a diffeological groupoid.

Section \ref{Parallel transport on a principal 2-bundle over a Lie groupooid} introduces a notion of parallel transport along a lazy Haefliger path on a quasi-principal 2-bundle equipped with a strict connection. This notion is introduced in three steps. Step-1 uses the underlying quasi-connection structure and forms the content of subsection \ref{Step1}. Step-2 uses the strict connection structure,  and is the content of subsection \ref{Step2}. Finally, in subsection \ref{Step3}, we combine the results of Step-1 and Step-2  to arrive at our intended definition of parallel transport. We also argue that our definition is a reasonable one, as it generalizes the classical one and relates to the existing notion of parallel transport on a principal 2-bundle over a manifold.

Section \ref{Lazy Xpaththinhomotopy} establishes the lazy $\mb{X}$-path thin homotopy invariance of the parallel transport.

In section \ref{Parallel transport functor of a quasi-principal 2-bundle}, we construct the parallel transport functor of a quasi-principal 2-bundle, defined on the thin fundamental groupoid of the base Lie groupoid. Also, we checked its sanity by showing its naturality with respect to connection preserving bundle morphisms in subsection \ref{Naturality with respect to connection preserving morphisms} and strong fibered product constructions in subsection \ref{Naturality with respect to fibered products}. In subsection \ref{Naturality with respect to connection preserving morphisms}, we obtain the main result of this chapter, which extends the parallel transport functor construction to establish \Cref{Main functor1}.

Finally, in section \ref{Associated PAPER VERSION}, we construct a VB-groupoid associated to a quasi-principal 2-bundle and explore the parallel transport on this associated VB-groupoid along a lazy Haefliger path.

\subsection{Future directions of research}
 \Cref{Future} discusses potential future avenues of exploration grounded in the research undertaken throughout the thesis. 

\section{Notations and conventions}\label{Notation conventions}
Here,  we will fix some conventions and notations which will be followed throughout this manuscript.

All manifolds are typically assumed to be smooth, second countable, and Hausdorff. Let $\rm Man$ be the category of such manifolds. Though there are some interesting examples of Lie groupoids, particularly in foliation theory, where morphism space fails to satisfy the Hausdorff property. For the purpose of this theisis we are not seriously required to consider such Lie groupoids other than some cursory mention of some standard examples. We assume our Lie groups to be matrix groups for computational simplicity. For a Lie group $G$, we denote its Lie algebra by 
$ L(G)$. For a smooth map $f\colon M\to N,$ the differential at $m\in M$ will be denoted as $f_{*,m}\colon T_m M\ra T_{f(m)}N.$ A smooth right (resp. left) action of a Lie group $G$ on a smooth manifold $P$ will be denoted as $(p, g)\mapsto p g$ (resp. $(g, p)\mapsto g p$), for $g\in G$ and $p\in P$. The corresponding differentials $T_pP\ra T_{p g}P$ or $T_p P\ra T_{g p }P$ will be denoted respectively 	as $v\mapsto v\cdot g$ or $v\mapsto g\cdot v,$ for $v\in T_p P$. If $M$ and $N$ are manifolds with smooth right (resp. left) action of a Lie group $G$, then a map $f \colon M \ra N$ is said to be \textit{$G$-equivariant} if it satisfies $f(pg)=f(p)g$(resp. $f(gp)=f(g)p$) for all $p \in M$ and $g \in G$. A manifold with a smooth free and transitive right (resp. left) action of a Lie group $G$ will be called a \textit{$G$-torsor}. The collection of $G$-torsors and $G$-equivariant maps form a groupoid, which we denote by $G$-Tor. For a fixed $p\in P$, the differential $\delta_p\colon T_eG\ra T_pP$ of the map $G\ra P$, $g\mapsto pg$, at the identity element, defines a vector field on $P$, the so called fundamental vector field or vertical vector field corresponding to an element $B\in L(G)$. Evaluated at a point $p \in P$, we denote it by $\delta_p(B)$ or $B^{*}(p)$.

For any path $\alpha \colon [0,1] \ra M$ in a manifold $M$, we denote the path $t \mapsto \alpha(1-t)$ by $\alpha^{-1}$.

In any category,  \textit{structure maps}: source, target, unit, and composition will be denoted by the letters $s,t,u$, and $m$, respectively. The composition of a pair of morphisms $f_2, f_1$  is denoted as $f_2\circ f_1,$ where $t(f_1)=s(f_2).$ . For any object $p$ in a category, $1_P$ denotes the element $u(p)$. In a groupoid, the inverse map is denoted by $\mathfrak{i}$ and for any morphism $\gamma$, we use the notation  $\gamma^{-1}$ to denote $\mathfrak{i} (\gamma)$ for brevity. The notation $[C_1 \rra C_0]$ will denote a category $C$ whose object set is $C_0$ and morphism set is $C_1$.  Given a functor $F \colon C \ra D$, the notation $F_0$ and $F_1$ will represent the object level and morphism level map, respectively. The notation $(F_1, F_0)$ will denote the functor $F$. Blackboard bold notation will be used to denote Lie groupoids; that is, $\mb{E}$, $\mb{X}$, etc. unless otherwise stated, these notations will always denote Lie groupoids $[E_1 \rra E_0], [X_1 \rra X_0],..$ etc. respectively throughout the manuscript.

% Chapter Template

\chapter{Classical Set-up}\label{Chapter Classical setup}

% Main chapter title

% Change X to a consecutive number; for referencing this chapter elsewhere, use \ref{ChapterX}

\lhead{Chapter 2. \emph{Classical Set-up}} % Change X to a consecutive number; this is for the header on each page - perhaps a shortened title

%----------------------------------------------------------------------------------------
%	SECTION 1
%----------------------------------------------------------------------------------------
In this chapter, we provide a concise review of elements from \textit{Classical Gauge Theory}. Our goal is to later \textit{categorify} these concepts. To be more specific, we will revisit topics such as gauge transformations, connection structures, and parallel transport on principal bundles over smooth manifolds. This review is intended to serve as a classical foundation for the categorically enriched framework developed in this thesis.

%	In this chapter, we will briefly recall parts of \textit{Classical Gauge Theory}, that we intend to \textit{categorify}(see the begining of \Cref{A brief review of bundles over Lie groupoids and their gauge theory}) in this thesis. More precisely, we will recall about gauge transformations, connection structures and parallel transport on principal bundles over smooth manifolds, in a way that can be thought of as a classical treatement of the categorified picture that we deveop in this thesis. 

\section{A principal bundle, its Atiyah sequence and its gauge group}\label{Section: Gauge group of a principal bundle over a manifold}
This section will briefly recall fiber bundles, focusing on principal bundles, associated gauge groups, and vector bundles. The content covered in this section is predominantly conventional and can be found in any typical differential geometry textbook. For further details, we recommend consulting references such as \cite{MR3837560}, \cite{MR3585539}, \cite{MR1393940}, and \cite{MR896907}. We commence by recalling the definition of a fibre bundle.
\subsection{Fibre bundles}\label{fibre bundles}

\begin{definition}\label{Definition: fibre bundle}
	Let $M$ and $F$ be manifolds. A \textit{fibre bundle over $M$ with the fibre $F$} is a surjective differentiable map $\pi \colon E \ra M$ such that there exists a cover $\mc{U}:= \cup_{i \in \mc{I}} U_i$ of the manifold $M$ and diffeomorphisms $\phi_i \colon \pi^{-1}(U_i) \ra U_i \times F$ for an index set $\mc{I}$, such that for each $i \in \mc{I}$, the following diagram commutes 
	
	\[
	\begin{tikzcd}
		\pi^{-1}(U_i) \arrow[r, "\phi_i"] \arrow[d, "\pi"'] & U_i \times F \arrow[ld, "{\rm{pr}}_1"] \\
		M                              &                  
	\end{tikzcd}.\]	
\end{definition}
We call the family $(U_i, \phi_i)_{i \in \mc{I}}$ as a\textit{ local trivialization} of the fibre bundle $\pi \colon E \ra M$. The manifolds $M$ and $E$ are called the \textit{base space} and the \textit{total space} of the fiber bundle. Note that the map $\pi \colon E \ra M$ above is always a submersion.

Also, from the local trivialization maps $\phi_i, i \in I$ above, one obtains a family of diffeomorphims $\phi_{j} \circ \phi_i^{-1}|_{(U_i \cap U_j) \times F} \colon (U_i \cap U_j) \times F \ra (U_i \cap U_j) \times F$ called \textit{transition functions}, whenever $U_i \cap U_j \neq \emptyset, i,j \in I$. These transition functions define another family of diffeomorphims $\phi_{jx} \circ \phi^{-1}_{ix} \colon F \ra F$, for each $i,j \in I$ and $x \in U_i \cap U_j$, where $\phi_{jx}:= \phi_j|_{ \lbrace x\rbrace \times F}$ and $\phi_{ix}:= \phi_i|_{ \lbrace x\rbrace \times F}$. They, in turn define smooth maps $\phi_{ji} \colon U_i \cap U_j \ra {\rm{Aut}}(F)$, $x \mapsto \phi_{jx} \circ \phi^{-1}_{ix}$, $i,j \in I$, which satisfy the crucial \textit{cocycle condition} given as
\begin{itemize}
\item $\phi_{ii}(x)= {\rm{id}}_{F}$ for all $x \in U_i$,
\item $\phi_{ij}(x) \circ \phi_{ji}(x)= {\rm{id}}_{F}$ for all $x \in U_i \cap U_j$,
\item $\phi_{ik}(x) \circ \phi_{kj}(x) \circ \phi_{ji}(x)= {\rm{id}}_{F}$ for all $x \in U_i \cap U_j \cap U_k$.
\end{itemize}
These transition functions in fact completely characterize a fibre bundle. To elaborate this point, start with a pair of manifolds $M,F$, an open cover $\mc{U}:= \cup_{i \in \mc{I}} U_i$ of $M$ and a family of diffeomorphisms $\phi_{ji} \colon (U_i \cap U_j) \times F \ra (U_i \cap U_j) \times F$ on every non-empty intersections $U \cap U_j, i,j \in I$, such that ${\rm{pr}}_1 \circ \phi_{ji}= {\rm{pr}}_1$, where ${\rm{pr}}_1$ is the first projection map. If the family of maps $ \phi_{ji}(x):=\phi_{ji}(x,-) \colon F \ra F$, $i,j \in I$, $x\ \in U_i \cap U_j$ satisfy the the cocycle condition defined above, then we can construct a fibre bundle $\bar{\pi} \colon \bar{E} \ra M$ with fibre $F$, where $\bar{E}$ is the quotient space of $\sqcup_{i \in I} (U_i \times F)$ by the equivalence relation defined as $(i,x, \epsilon) \sim (j,x', \epsilon')$, if $x=x'$ and $\epsilon'= \phi_{ji}(x) \big(\epsilon \big)$. Moreover, the transition functions of $\bar{\pi} \colon \bar{E} \ra M$ coincide with $\phi_{ji}, i,j \in I$, we started with.

\begin{definition}\label{Definition: Morphism of fibre bundles}
	Let $\pi \colon E \ra M$  and $\pi' \colon E' \ra M'$ be fiber bundles over $M$ with fibres $F$ and $F'$ respectively. A \textit{morphism of fibre bundles} from  $\pi \colon E \ra M$ to $\pi' \colon E' \ra M'$ consists of a pair of smooth maps $$(f_E \colon E \ra E', f_M \colon M \ra M' )$$ such that the following diagram is commutative:
	\[
	\begin{tikzcd}
		E \arrow[d, "\pi"'] \arrow[r, "f_{E}"]                 & E' \arrow[d, "\pi'"] \\
		M \arrow[r, "f_M"']  & M'           
	\end{tikzcd}\]
\end{definition}
Mostly, we will restrict our attention to the case $M=M'$ and $f_M= {\rm{id}}$ and in that case, we say $f_E $ is a\textit{ morphism of fiber bundles over $M$}.  

We are specifically focused on two types of fiber bundles for our purposes.
\begin{enumerate}[(i)]
	\item A principal $G$-bundle, whose fibre is a Lie group $G$.
	\item A vector bundle, whose fibre is a finite-dimensional real vector space.
\end{enumerate}

\begin{definition}\label{Definition: Principal G-bundle}
	For a Lie group $G$, a \textit{principal $G$-bundle over a manifold $M$ with structure group G} is a fibre bundle $\pi \colon P \ra M$ with fibre $G$ satisfying the following conditions:
	\begin{enumerate}[(i)]
		\item $G$ has a smooth right action on $P$ such that the map $P \times G \ra P \times_{\pi,M, \pi} P$, defined as $(p,g) \mapsto (p,pg)$ is a diffeomorphism, redwhere $ P \times_{\pi,M, \pi} P = \lbrace (p,q) \in E \times E \colon \pi(p)=\pi(q) \rbrace $.
		\item There is a \textit{$G$-equivariant local trivialization} $(u_i, \phi_i)_{i \in \mc{I}}$ of $\pi \colon P \ra M$ i.e for each $i \in \mc{I}$,  $(u_i, \phi_i)$ is a local trivialization and satisfies $\phi_i(pg)= \phi_i(p)g$ for all $p \in \pi^{-1}(U_i)$, $g \in G$, where $G$ acts on the right-hand side as group multiplication.
	\end{enumerate}
\end{definition}
For each $x \in M$, one can check that $\pi^{-1}(x)$ is a $G$-torsor and is a closed submanifold of $P$, called the \textit{fibre over $x$}. Note that the first condition of the above definition also tell that every fiber is diffeomorphic to the structure group $G$. Same follows from the second condition too, by the restriction of the local trivialization maps on the fibres.

\begin{remark}\label{Automorphism group of the fibre}
	Given a Lie group $G$, a $G$-torsor $E$ (see \Cref{Notation conventions}) and a point $z \in E$, we have a group isomorphism defined as
	\begin{equation}\label{Canonical Lie group structure on Aut}
		\begin{split}
			\psi_z \colon & {\rm{Aut}}(E):= {\rm{Hom}}_{G {\rm{-}}{\rm{Tor}}} ( E, E) \ra G\\
			& f \mapsto \delta(z,f(z)),
		\end{split}
	\end{equation}
	where $\delta \colon E \times E \ra G$ is a smooth map defined implicitly as $x \cdot \delta(x,y)=y$, whose well-definedness follows from the freeness of the $G$-action.  Hence, $\rm{Aut}(E)$ is canonically a Lie group and in fact does not depend on the choice of $z$, (see \textbf{Lemma 3.4}, \cite{MR3521476}). Consequently, we see that given a principal $G$-bundle $\pi \colon P \ra M$, the automorphism group ${\rm{Aut}}(\pi^{-1}(x))$ is a Lie group and is isomorphic to $G$ for each $x \in M$, as every fibre of a principal $G$-bundle is a $G$-torsor.
	\end{remark}

\begin{definition}\label{Definition: Morphism of principal G-bundles}
	Let $\pi \colon P \ra M$  and $\pi' \colon P' \ra M'$ be  principal $G$-bundles over $M$ and principal $G'$ bundle over $M'$ respectively. A \textit{morphism of principal bundles} from  $\pi \colon P \ra M$ to $\pi' \colon P' \ra M'$ consists of a triple $$(f_P \colon P \ra P', f_G \colon G \ra G',f_M \colon M \ra M' )$$ such that 
	\begin{itemize}
		\item[(i)] $f_P(pg)=f_P(p)f_G(g)$ for all $p \in P$ and $g \in G$ and
		\item[(ii)] the following diagram is commutative:
		\[
		\begin{tikzcd}
			P \arrow[d, "\pi"'] \arrow[r, "f_{P}"]                 & P' \arrow[d, "\pi'"] \\
			M \arrow[r, "f_M"']  & M'           
		\end{tikzcd}.\]
	\end{itemize}
\end{definition}
We will primarily focus on situations where $G=G'$, $M=M'$, and $f_M$ is the identity. These mappings of principal bundles will be referred to as \textit{morphisms of principal $G$-bundles over $M$}. It is konown that any such morphism of principal $G$-bundles over a manifold $M$ is, in fact, an isomorphism. Thus, given a Lie group $G$ and a manifold $M$, the collection of principal $G$-bundles over $M$ forms a groupoid. We denote it by ${\rm{Bun}}(M, G)$. In particular, the automorphism group of an object $\pi \colon E \ra M$ in the groupoid ${\rm{Bun}}(M, G)$ holds a special name in the literature, which we will discuss in \Cref{subsection: gauge group of a principal bundle}.
\begin{definition}\label{Definition: real vector bundle}
	A \textit{(real) vector bundle of rank $r$ }is a fibre bundle $\pi \colon E \ra M$ with fibre $\mb{R}^{r}$ such that 
	\begin{enumerate}[(i)]
		\item[(i)] Every fibre $\pi^{-1}(x)$ over $x \in M$ is a real vector space of rank $r$;
		\item[(ii)] There is a local trivialization $(U_i, \phi_i)_{i \in I}$ such that for every $i \in \mc{I}$ and $x \in U_i$, the diffeomorphism $\phi_i \colon  \pi^{-1}(U_i) \ra U_i \times \mb{R}^{r}$ restricts to a linear isomorphism $\pi^{-1}(x) \ra \lbrace x \rbrace \times \mb{R}^{r}$.
	\end{enumerate}
	
\end{definition}

\begin{definition}\label{Definition: Morphism of vector bundles}
	Let $\pi \colon E \ra M$  and $\pi' \colon E' \ra M'$ be vector bundles over $M$ and $M'$ respectively. A \textit{morphism of vector bundles} from  $\pi \colon E \ra M$ to $\pi' \colon E' \ra M'$ consists of a pair of smooth maps$$(f_E \colon E \ra E',f_M \colon M \ra M' )$$ such that 
	\begin{itemize}
		\item[(i)] the following diagram commutes:
		\[
		\begin{tikzcd}
			E \arrow[d, "\pi"'] \arrow[r, "f_{E}"]                 & E' \arrow[d, "\pi'"] \\
			M \arrow[r, "f_M"']  & M'        
		\end{tikzcd}\]
		\item[(ii)] for each $x \in M$,  $f_E$ restricts to a linear map $f_x \colon \pi^{-1}(x) \ra \pi'^{-1}(f_M(x))$.
	\end{itemize}
\end{definition}
For the special case, when $M=M'$ and $f_M=id$, we will call these morphisms of principal bundles as \textit{morphisms of vector bundles over $M$}. 

We denote the category of vector bundles over $M$ by the notation ${\rm{Vect}}(M)$.

\begin{example}\label{Example: Product G-bundle}
	For a pair of manifolds $M$ and $F$, the projection map $M \times F \ra M$ given by $(m,f) \mapsto m$ is a fibre bundle over $F$ with fibre $F$ and we call it a \textit{product fibre bundle over $M$.} 
	
	\begin{enumerate}[(i)]
		\item When $F$ is a Lie group $G$ then it is principal $G$-bundle over $M$ with structure group $G$ and we call it a \textit{product $G$-bundle over M}.
		\item When $F$ is a finite-dimensional vector space of rank $r$ then it is a vector bundle over $M$ of rank $r$ and we call it a \textit{product vector bundle over $M$.}
	\end{enumerate}
\end{example}
Any fibre bundle over $M$ isomorphic to a product $G$-bundle over $M$ is called a\textit{ trivial fibre bundle over $M$.}
The following observation characterizes the triviality in principal bundles.

%	The following observation characterizes the triviality in principal bundles.
\begin{lemma}
	For a Lie group $G$, a principal $G$-bundle $\pi \colon P \ra M$ is trivial if and only if it admits a global section, i.e., a smooth map $\sigma \colon M \ra E$ such that $\pi \circ \sigma= {\rm{id}}$.
\end{lemma}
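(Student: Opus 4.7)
The plan is to prove the two directions separately, with the forward implication being immediate and the reverse implication requiring a construction from the torsor structure on the fibers.

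For the forward direction, I would suppose $\pi \colon P \ra M$ is trivial, meaning there exists an isomorphism of principal $G$-bundles $\phi \colon P \ra M \times G$ over $M$. Then I would define $\sigma \colon M \ra P$ by $\sigma(m) := \phi^{-1}(m, e)$, where $e \in G$ is the identity. Smoothness is inherited from $\phi^{-1}$, and the identity $\pi \circ \sigma = {\rm{id}}_M$ follows from the fact that $\phi$ commutes with projections to $M$.

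For the reverse direction, I would start with a global section $\sigma \colon M \ra P$ and construct a morphism $\Phi \colon M \times G \ra P$ of principal $G$-bundles over $M$ by $\Phi(m, g) := \sigma(m) \cdot g$. The essential steps are: (i) smoothness of $\Phi$ follows from smoothness of $\sigma$ and of the $G$-action; (ii) $\pi \circ \Phi = {\rm{pr}}_1$ since $\pi(\sigma(m)\cdot g) = \pi(\sigma(m)) = m$ because the $G$-action preserves fibers; (iii) $G$-equivariance is a direct computation using associativity of the action. Since every fiber $\pi^{-1}(m)$ is a $G$-torsor, the assignment $g \mapsto \sigma(m)\cdot g$ is a bijection $G \to \pi^{-1}(m)$, so $\Phi$ is a fiber-preserving bijection. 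At this point I can invoke the standard fact, recalled earlier in the excerpt, that any morphism of principal $G$-bundles over a fixed base is automatically an isomorphism, which immediately gives triviality.

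Alternatively, to avoid invoking this fact, I would explicitly write down the inverse as $\Phi^{-1}(p) = \bigl(\pi(p),\, \delta(\sigma(\pi(p)), p)\bigr)$, where $\delta \colon P \times_M P \ra G$ is the smooth division map implicitly defined by $p \cdot \delta(p,q) = q$ on common fibers, whose smoothness comes from the diffeomorphism $P \times G \ra P \times_{\pi, M, \pi} P$, $(p,g) \mapsto (p, pg)$ from \Cref{Definition: Principal G-bundle}(i). The main (and essentially only) subtlety is ensuring the smoothness of this inverse, which is why I would appeal to the torsor division map $\delta$ rather than just a pointwise bijection argument; beyond that, the argument is entirely routine.
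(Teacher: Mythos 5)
Your proof is correct; the paper states this lemma without proof (it is recalled as a standard fact), and your argument is exactly the canonical one it relies on. In particular, you handle the only real subtlety — smoothness of the inverse — properly, either by citing the paper's earlier remark that any morphism of principal $G$-bundles over $M$ is automatically an isomorphism, or by constructing $\Phi^{-1}$ explicitly via the division map $\delta$, whose smoothness indeed follows from the diffeomorphism $P \times G \ra P \times_{\pi,M,\pi} P$ in the definition of a principal bundle.
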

Unfortunately, a similar characterization of triviality does not exist for a vector bundle.  For example, a Möbius strip is the real tautological line bundle over the Real projective line, which is not trivial but always admits a global section.
\begin{example}
	For any manifold $M$ of dimension $n$, the tangent bundle $TM \ra M$ is a vector bundle of rank $2n$.
\end{example}

\begin{example}\label{Definition: pull-back principal G-bundle}
	Given a fiber bundle $\pi \colon E \ra M$ with fiber $F$ and a smooth map $f \colon N \ra M$, there is a fiber bundle $\pi^{*} \colon N \times_{f, M,\pi} E  \ra N$ with fiber $F$ over the manifold $M$ which we call the \textit{pull-back bundle of $\pi \colon E \ra M$ along $f$}.
	
	\[\begin{tikzcd}
		N \times_{f,M,\pi} E  \arrow[d, "\pi^{*}"'] \arrow[r, "{\rm{pr}}_{E}"]                 & E \arrow[d, "\pi'"] \\
		N \arrow[r, "f"']  & M           
	\end{tikzcd}\]
	
	The special cases of principal bundles and vector bundles are called \textit{pull-back principal bundles} and \textit{pull-back vector bundles along $f$}, respectively.	
\end{example}
\subsection*{Associated fibre bundle of a principal bundle}\label{Associate bundle}
Let $\pi \colon P \ra M$ be a principal $G$-bundle and $F$ be a manifold endowed with a smooth left action of $G$. Then consider the quotient space $\frac{P \times F}{G}$ consisting of the orbits of the right action of $G$ on $P \times F$ given by $\big( (p,f) ,g \big) \mapsto (pg, g^{-1}f)$ for all $g \in G$, $p \in P$, $f \in F$. One can check that $\pi^{F} \colon \frac{P \times F}{G} \ra M$, $[p,f] \mapsto \pi(p)$ defines a fibre bundle over $M$ with fibre $F$ and is known as an\textit{ associated fibre bundle of $\pi \colon P \ra M$ with fibre $F$.} 

In particular, when $F$ is a finite dimensional vector space $V$ and the action of $G$ on $V$ is a liner representation of $G$ on $V$, then we get a \textit{vector bundle over $M$}, which we call an \textit{associated vector bundle of $\pi \colon P \ra M$ with respect to the given linear representation of $G$ on $V$.}

%In fact, it is well known that every such vector bundle can be identified with an associated fiber bundle of a principal bundle, namely the \textit{frame bundle of a vector bundle}. 
%
%\textit{\underline{Frame bundle of a vector bundle:}} Let $\pi \colon P \ra M$ be a real vector bundle of rank $r$. For each $x \in M$, there is a right action of the general linear group $GL(r, \mb{R})$ on the set of all ordered basis of the rank $r$ real vector space $\pi^{-1}(x)$ by matrix multiplication, is free and transitive. Complete the description of the frame bundle! It can be shown that the associated bundle of frame bundle.......
\subsection{Atiyah sequence associated to a principal bundle}\label{Atiyah sequence associated to a principal bundle}
For a Lie group $G$, given a principal $G$-bundle $\pi \colon P \ra M$, one can associate a short exact sequence of vector bundles over $M$. 

\begin{equation}\label{Equation: Atiyah}
	\begin{tikzcd}
		0 \arrow[r, ""]  & {\rm{Ad}}(P) \arrow[r, "\delta^{/G}"] \arrow[d, ""'] & {\rm{At}}(P) \arrow[r, "\pi_{*}^{/G}"] \arrow[d, ""'] & TM \arrow[d, ""'] \arrow[r] & 0 \\
		0 \arrow[r, ""'] & M \arrow[r, "{\rm{id}}"']                & M \arrow[r, "{\rm{id}}"']                & M \arrow[r]                 & 0
	\end{tikzcd}
\end{equation}
called the \textit{Atiyah sequence} of $\pi \colon P \ra M$, whose `splittings' contain the data of `\textit{connection structures}', see \Cref{Section: Connection structures on a principal bundle}, on the principal bundle $\pi \colon P \ra M$. In \Cref{Equation: Atiyah}, $TM \ra M$ is the tangent bundle of $M$, whereas ${\rm{At}}(P) \ra M$ and ${\rm{Ad}}(P) \ra M$ are the \textit{Atiyah bundle }and the \textit{adjoint bundle }of $\pi \colon P \ra M$ and the maps $\delta^{/G}$ and $\pi_{*}^{/G}$ are canonical maps between them. The explicit details of the  Atiyah sequence (\Cref{Equation: Atiyah}) (i.e., defining Atiyah bundle, adjoint bundle, the maps $\delta^{/G}$ and $\pi_{*}^{/G}$) follows from the theory of \textit{quotient vector bundles} which we are going to recall next briefly. The reference for this subsection is mostly Appendix A of \cite{MR896907}

\subsection*{Quotient vector bundles}  Let $\pi \colon P \ra M$ be a principal $G$-bundle over a manifold $M$ and $\pi^{E} \colon E \ra P$ be a vector bundle over the manifold $P$  equipped with a smooth right action of the Lie group $G$ on $E$, such that the following conditions are staisfied:
\begin{enumerate}[(i)]
	\item[(i)] For each $g \in G$, the pair of right translation maps $(\delta_g^{E} \colon E \ra E ,  \delta_g^{P} \colon P \ra P)$ is an automorphism of vector bundles.
	\item[(ii)] There exists a local trivialization $(U_i, \phi_i)_{i \in I}$ of the vector bundle $\pi^{E} \colon  E \ra P$ such that 
	\begin{itemize}
		\item[(a)] each $U_{i}$ is $\pi$-saturated open set and is of the form $\pi^{-1}(V_i)$ for some open set $V_i \subseteq M$;
		\item[(b)] Diffeomorphisms $\phi_i$ are $G$-equivariant for each $i \in I$.
	\end{itemize}.
\end{enumerate}
Then, the quotient space $ \pi^{E/G} \colon E/G \ra M$ has a unique vector bundle structure over $M$ such that the quotient map $Q^{E} \colon E \ra E/G$ is a surjective submersion and the pair $(Q^{E}, \pi)$ is a morphism of vector bundles from $\pi^{E} \colon E \ra P$ to $\pi^{E/G} \colon E/G \ra M$ (\Cref{Definition: Morphism of vector bundles}). The following is a pull-back diagram:

\begin{equation}\nonumber
	\begin{tikzcd}
		E  \arrow[d, "\pi^{E}"'] \arrow[r, "Q^{E}"]                 & E/G \arrow[d, "\pi^{E/G}"] \\
		P \arrow[r, "\pi"']  & M           
	\end{tikzcd}
\end{equation}

Further, for each $p \in P$, the linear map $Q^{E}|_{(\pi^{E})^{-1}(p)} \colon (\pi^{E})^{-1}(p) \ra (\pi^{E/G})^{-1}(\pi(p))$ is an isomorphism of vector spaces. We will call the vector bundle $\pi^{E/G} \colon E/G \ra M$ as the \textit{quotient vector bundle of $\pi^{E} \colon E \ra P$ by the action of $G$.}

As a particular case of the above construction, we get the \textit{Atiyah bundle}  and the \textit{adjoint bundle} associated to a principal bundle:

\begin{example}\label{Example: Atiyah bundle}
	The \textit{Atiyah bundle of a principal $G$-bundle $\pi \colon P \ra M$} is defined as the quotient vector bundle of the tangent bundle $TP \ra P$ by the action of $G$ given by the right translation, i.e., $\big((p,v),g \big) \mapsto (pg,vg)$ for $(p,v) \in T_pP$ and $g \in G$. We denote it by $\pi^{{\rm{At}}(P)} \colon {\rm{At}}(P) \ra M$, see \Cref{Equation: Atiyah}.
\end{example}
\begin{example}\label{Example: Adjoint bundle}
	The \textit{adjoint bundle of a principal $G$-bundle} $\pi \colon P \ra M$ is defined as the quotient vector bundle of the trivial bundle $P \times L(G) \ra P $ by the adjoint action of $G$ i.e. $\big((p, A),g \big) \mapsto (pg, {\rm{ad}}_g^{-1}(A))$ for $p \in P$, $A \in L(G), g \in G$. We denote it by $\pi^{{\rm{Ad}}(P)} \colon {\rm{Ad}}(P) \ra M$, see \Cref{Equation: Atiyah}.
\end{example}

\begin{remark}
	Any associated vector bundle can be constructed as a quotient vector bundle. Precisely, given a principal $G$-bundle $\pi \colon P \ra M$ and linear representation $\rho$ of $G$ on a finite-dimensional vector space $V$, the associated bundle $\frac{E \times V}{G} \ra M$ is same as the quotient vector bundle of the product vector bundle $P \times V \ra P$ by the action of $G$ given by $ \big( (p,v),g \big) \mapsto (pg,  \rho(g^{-1})(v))$.		
\end{remark}

Next, we will construct the map $\delta^{/G} \colon {\rm{Ad}}(P) \ra {\rm{At}}(P)$ in \Cref{Equation: Atiyah}.  

Let $ \lbrace \pi_i \colon P_i \ra M_i \rbrace_{i=1,2}$ and $\lbrace \pi_i^{E_i} \colon E_i \ra P_i \rbrace_{i=1,2}$ be principal $G$-bundles and vector bundles with right action of $G$ respectively, such that we can construct their quotient bundles $\lbrace \pi^{E_i/G} \colon E_i/G \ra M_i \rbrace_{i=1,2}$ in the way as discussed above. Then given 
\begin{itemize}
	\item a morphism of principal $G$-bundles $(f^{\pi}_{P}, F^{\pi}_{M})$ from $\pi_1 \colon P_1 \ra M_1$ to $\pi_2 \colon P_2 \ra M_2$ and 
	\item a morphism of vector bundles $(f^{\pi^{E}}_{E}, f_{P}^{\pi^{E}})$ from $\pi_1^{E_1} \colon E_1 \ra P_1 $ to $\pi_2^{E_2} \colon E_2 \ra P_2 $
\end{itemize}
such that $f^{\pi^{E}}_{E}$ is $G$-equivariant,  there is a unique morphism of vector bundles $(f_{E/G}^{\pi^{E/G}}, f_M^{\pi})$ between the corresponding quotient vector bundles, such that the following diagram commutes: 
\[\begin{tikzcd}
	E_1  \arrow[d, "Q^{E_1}"'] \arrow[r, "f_E^{\pi^{E}}"]                 & E_2 \arrow[d, "Q^{E_2}"] \\
	E_1/G \arrow[r, "f_{E/G}^{\pi^{E/G}}"']  & E_2/G           
\end{tikzcd}\]

As a consequence, for a principal $G$-bundle $\pi \colon P \ra M$, we get the map 
\begin{equation}\nonumber
	\delta^{/G} \colon {\rm{Ad}}(P) \ra {\rm{At}}(P)
\end{equation}
in \Cref{Equation: Atiyah}, induced from the morphism of vector bundles $(\delta, 1_p) $ from $\pi^{P \times L(G)} \colon P \times L(G) \ra P$ to $\pi^{TP} \colon TP \ra P$, defined by $\delta(p,A):= \delta_p(A)$, where $\delta$ is the fundamental vector field on $P$ (see \Cref{Notation conventions}).

To construct the map $\pi_{*}^{/G} \colon {\rm{At}}(P) \ra TM$ in \Cref{Equation: Atiyah}, we will first recall a general fact about morphisms \textit{from} quotient vector bundles.

Given a principal $G$-bundle $\pi \colon P \ra M$, let $\pi^{E} \colon E \ra P$ be a vector bundle equipped with a right action of $G$ such that we can construct the quotient vector bundle $\pi^{E/G} \colon E/G \ra M$ in a way discussed before. Then, 
\begin{itemize}
	\item for any vector bundle $\pi^{E'} \colon E' \ra M'$ and 
	\item for any morphism of vector bundles $(\zeta, \eta)$ from $\pi^{E} \colon E \ra P$ to $\pi^{E'} \colon E' \ra M'$ such that both $\zeta$ and $\delta$ are $G$-invariant
\end{itemize}

there exists a unique morphism of vector bundles $(\zeta^{/G}, \eta^{/G})$ from the quotient vector bundle  $\pi^{E/G} \colon E/G \ra M$ to $\pi^{E'} \colon E' \ra M'$ such that the following diagrams commute:

\[\begin{tikzcd}
	P \arrow[r, "\pi"] \arrow[d, "\eta"'] & M \arrow[ld, "\eta^{/G}"] \\
	M'                              &                  
\end{tikzcd}
\begin{tikzcd}
	E \arrow[r, "Q^{E}"] \arrow[d, "\zeta"'] & E/G \arrow[ld, "\zeta^{/G}"] \\
	E'                             &                  
\end{tikzcd}\]
Hence, we readily obtain the map 
\begin{equation}\nonumber
	\pi_{*}^{G} \colon {\rm{At}}(P) \ra TM
\end{equation}
in the \Cref{Equation: Atiyah}, induced from the vector bundle morphism $(\pi_*, \pi)$ from the tangent bundle $TP \ra P$ on $P$ to the tangent bundle $TM \ra M$ on $M$ .
Now we have all the ingredients required to fill up the details of \Cref{Equation: Atiyah} and define the Atiyah sequence associated to $\pi \colon P \ra M$.
\begin{definition}\label{Definition: Atiyah sequence of a principal bundle}
	Given a principal $G$-bundle $\pi \colon P \ra M$, the short exact sequence of vector bundles over $M$
	
	\begin{equation}\nonumber
		\begin{tikzcd}
			0 \arrow[r, ""]  & {\rm{Ad}}(P) \arrow[r, "\delta^{/G}"] \arrow[d, ""'] & {\rm{At}}(P) \arrow[r, "\pi_{*}^{/G}"] \arrow[d, ""'] & TM \arrow[d, ""'] \arrow[r] & 0 \\
			0 \arrow[r, ""'] & M \arrow[r, "{\rm{id}}"']                & M \arrow[r, "{\rm{id}}"']                & M \arrow[r]                 & 0
		\end{tikzcd}
	\end{equation}
	is called the \textit{Atiyah sequence associated to the principal $G$-bundle $\pi \colon E \ra M$}.
\end{definition}
We denote the Atiyah sequence associated to $\pi \colon P \ra M$ by the notation ${\rm{At}}(\pi)$.

\subsection{Gauge group of a principal bundle}\label{subsection: gauge group of a principal bundle}
Recall that for a Lie group $G$ and a manifold $M$,  the collection of principal $G$-bundles over $M$ forms a groupoid ${\rm{Bun}}(M, G)$. Automorphism groups ${\rm{Hom}}(\pi \colon P \ra M, \pi \colon P \ra M )$ at each object $\pi \colon P \ra M$  of ${\rm{Bun}}(M, G)$ play significant roles in gauge theory.

\begin{definition}\label{Definoition: Gauge group of a principal bundle}
	The \textit{gauge group of a principal $G$-bundle $\pi \colon P \ra M$ over a manifold $M$ }is defined as the automorphism group of the object $\pi \colon P \ra M$ in the groupoid ${\rm{Bun}}(M, G)$.
\end{definition}
We will denote the gauge group of $\pi \colon P \ra M$ by the notation $\mc{G}(P)$. We will call the elements of $\mc{G}(P)$ as the \textit{gauge transformation of the principal $G$-bundle $\pi \colon P \ra M$.} The collection of smooth maps $\sigma \colon E \ra G$ satisfying the properties $$\sigma(pg)= g^{-1} \sigma(p)g$$ for all $p \in P$ and $g \in G$, forms a group under the point wise multiplication. We will denote this group by the notation $C^{\infty}(P, G)^{G}$. There is a group isomorphism $\mc{G}(P) \ra C^{\infty}(E, G)^{G}$ defined by $f \ra \sigma_F$ where for each $p \in P$,  $\sigma_f(p)$ is the unique element in $G$ such that $f(p)=p \sigma_f(p)$. Hence, we have $\mc{G}(P) \cong C^{\infty}(P, G)^{G}$.
\begin{remark}
	It is interesting to note that $\mc{G}(P)$ is usually an infinite-dimensional manifold.
\end{remark}
%	WRITE SEVERAL EXAMPLES
%\begin{example}
%	Gauge group of a trivial principal $G$-bundle is trivial.
%	\end{example}
%		\begin{example}[Gauge group of Pullback bundle]
%		
%		\end{example}

%	\subsection*{Action of a gauge group on an associated fibre bundle}
%			
%For a Lie group $G$, suppose $\pi \colon P \ra M$ be a principal $G$-bundle over a manifold $M$. Let $F$ be a manifold with a  smooth left action of $G$. Then the gauge group $\mc{G}(P)$ has a natural action on the associated bundle $\pi^{F} \colon \frac{P \times F}{G} \ra M$ (see \Cref{Associate bundle}) as follows:
%\begin{equation}
%	\begin{split}
%		\rho \colon & \mc{G}(P) \times \frac{P \times F}{G} \ra \frac{P \times F}{G} \ra M\\
%		& (f,[p,v]) \mapsto [f(p),v]
%		\end{split}
%	\end{equation}.

\section{Connection structures on a principal bundle}\label{Section: Connection structures on a principal bundle}
Connection structures on principal $G$-bundles are central objects of study in gauge theory. We begin this section by recalling the definition of a \textit{connection on a principal bundle} as a splitting of the Atiyah sequence, followed by a brief discussion on its equivalent characterizations as (i) an $L(G)$-valued 1-form and (ii) as a horizontal distribution. Finally, we end the section with a brief treatment of the action of the gauge group (\Cref{Definoition: Gauge group of a principal bundle}) on connections. Materials in this section are mostly standard and can be found in any standard textbook on differential geometry. Books such as \cite{MR1393940}, \cite{MR3837560}, and appendix A of \cite{MR896907} can be considered as references for this section.
\subsection{Connection on a principal bundle and its characterizations}\label{Subsection: Connection on a principal bundle and its characterizations} 

\begin{definition}\label{Definition: Connection on a principal bundle}
Let $\pi \colon P \ra M$ be a principal $G$-bundle. A \textit{connection on $\pi \colon P \ra M$} is a splitting of the associated Atiyah sequence $\rm{At}(\pi)$,
\begin{equation}\nonumber	
\begin{tikzcd}
	0 \arrow[r] & {\rm{Ad}}(P) \arrow[d] \arrow[r, "\delta^{/G}"] & {\rm{At}}(P) \arrow[d] \arrow[l, "R", bend left=49] \arrow[r, "\pi_{*}^{/G}"] & TM \arrow[r] \arrow[d] \arrow[l, "S", bend left=49] & 0 \\
	0 \arrow[r] & M \arrow[r, "\rm{id}"']          & M \arrow[r, "\rm{id}"']                                       & M \arrow[r]                                        & 0
\end{tikzcd}
\end{equation}	
\end{definition}
In otherwords, a \textit{retraction} of $\delta^{/G}$  i.e a morphism of vector bundles $ R \colon {\rm{At}}(P) \ra \rm{Ad}(P)$ such that $R \circ \delta^{/G}= {\rm{id}}$ or equivalently, a section $S \colon TM \ra {\rm{At}}(P)$ of $\pi_*^{/G}$.	

\begin{remark}
The existence of a connection on a principal bundle follows from the fact that a fibrewise surjective morphism of vector bundles over a  manifold admits a right-inverse.
\end{remark}

Let $R \colon {\rm{At}}(P) \ra {\rm{Ad}}(P)$ be a connection on a principal $G$-bundle $\pi \colon P \ra M$. Since we have 
\begin{equation}\nonumber
\pi^{{\rm{Ad}}(P)} \circ (R \circ Q^{TP})= \pi  \circ \pi^{TP},
\end{equation}
there exists a unique smooth map $\omega \colon TP \ra P \times L(G)$, such that it fits the following pull-back diagram in the usual way:

\[
\begin{tikzcd}
TP \arrow[rd, "\omega"', dotted] \arrow[rdd, "\pi^{TP}"', , bend right=49] \arrow[rrd, "R \circ Q^{TP} ", , bend left=49] &                                  &                  \\
& P \times L(G) \arrow[d, "\pi^{P \times L(G)}"'] \arrow[r, "Q^{P \times L(G)}"] & {\rm{Ad}}(P) \arrow[d, "\pi^{{\rm{Ad}}(P)}"] \\
& P \arrow[r, "\pi"']                & M              
\end{tikzcd}\]

One can show that the $L(G)$-valued 1-form $\omega \colon TP \ra P \times L(G)$ satisfy the following properties:
\begin{enumerate}
\item[(i)]$\omega(p)(A^{*}(p))=A$ for all $p \in P$ and $A \in L(G)$, where $A^{*}(p)$ is the fundamental vector field on $P$ evaluated at $p \in P$, (see \Cref{Notation conventions}).
\item[(ii)] $\omega(pg)(v \cdot g)= \rm{ad}(g^{-1})(\omega(p)(v))$ for all $g \in G, p \in P,v \in T_pP$. 
\end{enumerate}
Conversely, given a principal $G$-bundle $\pi \colon P \ra M$,  any  $L(G)$-valued 1-form $\omega \colon TP \ra L(G)$ satisfying the properties (i) and (ii) above,  canonically defines a connection given as
\begin{equation}\nonumber
\begin{split}
R \colon & {\rm{At}}(P) \ra {\rm{Ad}}(P)\\
& [(p,v)] \mapsto [(p, \omega(p)(v))].
\end{split}
\end{equation}

On the other hand, instead of a retraction $R \colon  {\rm{At}}(P) \ra {\rm{Ad}}(P)$, if we start with a section $S \colon TM \ra {\rm{At}}(P)$ of $\pi_*^{/G}$ in \Cref{Definition: Connection on a principal bundle}, then we get a distribution  $\mc{H}$ on $P$, which maps $p \in P$ to the subspace $\mc{H}_pP:=(Q^{TP})^{-1}(S(T_{\pi(p)}(M))) \subseteq T_pP$. This distribution satisfies the properties below:
\begin{enumerate}[(i)]
\item $T_pP = V_pP \oplus \mc{H}_pP$ for each $p \in P$;
\item $(\mc{H}_pP) \cdot g = \mc{H}_{pg}P$ for each $p \in P, g \in G$,
\end{enumerate}
where the subspaces $V_pP:=\rm{ker}(\pi_{*,p})$ and $\mc{H}_pP$ are called respectively the \textit{vertical subspace} and the \textit{horizontal subspace} of  $T_pP$ associated to the distribution $\mc{H}$. For any vector $v \in T_pP$, we denote by ${\rm{ver}}(v)$ and ${\rm{hor}}(v)$ for the components of $v$ lying in $V_pP$ and $\mc{H}_pP$ respectively.
\begin{itemize}
\item[(iii)] A vector field $X \colon P \ra TP$ is smooth if and only if the pair of vector fields ${\rm{ver}}(X) \colon P \ra TP$ and ${\rm{hor}}(X) \colon P \ra TP$ (defined by restriction of $X$ on vertical subspaces and horizontal subspaces) are smooth.
\end{itemize}
Conversely, let  $\mc{H}$ be a distribution on $P$ satisfying the properties (i),(ii) and (iii) above. Hence, we have a decomposition $TP= VP \oplus HP$ of the tangent bundle over $P$ into vertical and horizontal subbundles. This quotients to the decomposition ${\rm{At}}(P)=VP/G \oplus HP/G$. Since $\pi_{*} \colon {\rm{At}}(P) \ra TM$ is surjective and $\ker(\pi_*)= VP/G$, we get an isomorphism of vector bundles $\tilde{S} \colon HP/G \ra TM$,  whose inverse $S \colon TM \ra HP/G \subseteq {\rm{At}}(P)$ defines a splitting of the Atiyah sequence ${\rm{At}}(\pi)$. 

Summarising the whole discussion, we get the following three equiavlent definitions of a connection on a principal bundle over a manifold:

\subsection*{Three equivalent description of a connection structure on a principal bundle}\label{Equivalent characterisations of classical connection}
\begin{enumerate}[(i)]
\item Let $\pi \colon P \ra M$ be a principal $G$-bundle. A \textit{connection on $\pi \colon P \ra M$} is defined as a \textit{splitting} of ${\rm{At}}(\pi)$.
\begin{equation}\nonumber	
\begin{tikzcd}
	0 \arrow[r] & {\rm{Ad}}(P) \arrow[d] \arrow[r, "\delta^{/G}"] & {\rm{At}}(P) \arrow[d] \arrow[l, "R", bend left=49] \arrow[r, "\pi_{*}^{/G}"] & TM \arrow[r] \arrow[d] \arrow[l, "S", bend left=49] & 0 \\
	0 \arrow[r] & M \arrow[r, "\rm{id}"']          & M \arrow[r, "{\rm{id}}"']                                       & M \arrow[r]                                        & 0
\end{tikzcd}
\end{equation}

\item Let $\pi \colon P \ra M$ be a principal $G$-bundle. A \textit{connection on $\pi \colon P \ra M$} is defined as a $L(G)$-valued 1-form $\omega \colon TP \ra L(G)$ on $P$, satisfying the following propoerties:
\end{enumerate}

\begin{enumerate}
\item[(a)]$\omega(p)(A^{*}(p))=A$ for all $p \in P$ and $A \in L(G)$,
\item[(b)] $\omega(pg)(v \cdot g)$ for all $g \in G, p \in P,v \in T_pP$, where $\delta_g \colon P \ra P$, $p \mapsto pg$ is the right translation by $g$.

\item[(iii)] Let $\pi \colon P \ra M$ be a principal $G$-bundle. A \textit{connection on $\pi \colon P \ra M$} is defined as a distribution $\mc{H}$ on $P$, satisfying the following properties:

\begin{enumerate}
\item[(a)] $T_pP = V_pP \oplus \mc{H}_pP$ for each $p \in P$;
\item[(b)] $(\mc{H}_pP) \cdot g = \mc{H}_{pg}P$ for each $p \in P, g \in G$;
\item[(c)] A vector field $X \colon P \ra TP$ is smooth if and only if the pair of vector fields ${\rm{ver}}(X) \colon P \ra TP$ and ${\rm{hor}}(X) \colon P \ra TP$ (defined by restriction of $X$ on vertical subspaces and horizontal subspaces) are smooth.
\end{enumerate}
\end{enumerate}
The $L(G)$-valued 1-form $\omega \colon TP \ra L(G)$ in (ii) and the distribution $\mc{H}$ in (iii)  are called a \textit{connection 1-form} and an \textit{Ehresmann connection} respectively. Typically, conventional textbooks focus solely on connection 1-forms and Ehresmann connections. In contrast to our approach, the proofs demonstrating their equivalence often do not employ the Atiyah sequence version of the definition. To ensure a comprehensive treatment, here we briefly discuss the equivalence between (ii) and (iii). Readers can refer to standard textbooks such as \cite{MR1393940} for further elaboration.

(ii) $\Rightarrow$ (iii): Given a connection 1-form $\omega \colon TP \ra L(G)$, the assignment $\mc{H}_pP:= \ker(\omega(p))$ for $p \in P$, gives an Ehresmann connection.

(iii) $\Rightarrow$(ii): Given an Ehresmann connection $\mc{H}$, the associated  conection 1-from $\omega \colon TP \ra L(G)$ is given by $\omega(p,v):= i_p^{-1}({\rm{ver}}(v))$ for $p \in P, v \in T_pP$, where $i_p \colon L(G) \ra V_pP$ is the standard isomorphism defined as $A \mapsto A^{*}(p), A \in L(G)$.  

\begin{example}[Pullback-connection]\label{Pullbackconn}
Let $\pi \colon P \ra M$ and $\pi' \colon P' \ra M$ be a pair of principal $G$-bundles over a manifold $M$. Let $\omega \colon TP' \ra L(G)$ be a connection 1-form on $P'$. Then for any morphism of principal $G$ -bundles $f \colon P \ra P'$ over $M$ , $f^{*} \omega$ is a connection  1-form on $P$.
\end{example}
\begin{remark}
 The connection structures on principal bundles are not unique. Even for trivial principal bundles $M \times G \ra M$, this is the case when the dimension of $M$ and $G$ are greater than equal to one.
\end{remark}
Given a Lie group $G$ and a manifold $M$, the collection of principal $G$-bundles with connection structures over  $M$  form a groupoid $B^{{\rm{\nabla}}}G(M)$. To be more precise, objects of $B^{{\rm{\nabla}}}G(M)$ are pairs $(\pi \colon P \ra M, \omega \colon TP \ra L(G) )$ and a morphism from an object $( \pi \colon P \ra M, \omega \colon TP \ra L(G) )$ to another object $( \pi' \colon P' \ra M, \omega' \colon TP' \ra L(G) )$ is a \textit{connection preserving morphism of principal $G$-bundles}, i.e. a morphism of principal $G$-bundle $f \colon P \ra P'$ over $M$, satisfying $f^{*}\omega'= \omega$. \label{Groupoid of principal bundles with connection}
%\begin{example}
%	WRITE SEVERAL EXAMPLES}
%	\end{example}
\subsection{Induced Ehresmann connection on the associated fibre bundle}\label{Induced Ehresmann connection on the associated bundle}

Recall, in \Cref{Associate bundle}, we discussed the construction of associated fibered bundles. Given an Ehresmann connection on a principal bundle, there is a natural way to induce a \textit{Ehresmann connection on its associated fiber bundles} i.e., decomposition of the tangent bundle of the total space into vertical and horizontal subbundles. 

Let $\mc{H}$ be an Ehresmann connection on a principal $G$-bundle $\pi \colon P \ra M$. Suppose $G$ acts from the left on a manifold $F$. We define the vertical subspace $V_{[p,f]}^{F}$ of the associated bundle $\pi^{F} \colon \frac{P \times F}{G} \ra M$ at a point $[p,f] \in \frac{P \times F}{G}$ as $\ker(\pi^{F}_{*,[p,f]})$. The horizontal subspace $\mc{H}_{[p,f]}^F$ of $\pi^{F}$ at  $[p,f]$ is defined as the image of the horizontal subspace $\mc{H}_pP \subseteq T_pP$ under the  mapping $P \ra \frac{P \times F}{G}$ defined as $q \mapsto [q,f]$. Then, we have the required direct sum decomposition of tangent spaces at each point as follows:
\begin{equation}\nonumber
T_{[p,f]} \frac{P \times F}{G} = V_{[p,f]}^{F} \oplus H_{[p,f]}^F.
\end{equation}
Hence, given an Ehresmann connection $\mc{H}$ on $\pi \colon P \ra M$, the\textit{ induced Ehresmann connection} on the  associated fibered bundle $\pi^{F} \colon \frac{P \times F}{G} \ra M$ is defined as the distribution $\mc{H}_{[p,f]}^{F}, [p,f] \in \frac{P \times F}{G}$. 

In particular, if $F$ is a finite-dimensional vector space $V$, then an induced Ehresmann connection on an associated vector bundle $\pi^{V}$ with respect to a linear representation $\rho$ of $G$ on $V$ is called a \textit{linear conection} on $\pi^{V}$. The linear connection has the additional property of being compatible with the linear structure of $V$ in the usual way.

%	Now, if $\alpha \colon x \ra y$ is a path in $M$ and if $[p,f]$ is an element in  $(\pi^{F})^{-1}(x)$, then it is well known that there is a unique horizontal lift $\tilde{\alpha}_{A, F}^{[p,f]}(t)$ defined by $t \mapsto  [\tilde{\alpha}_{A}^{p}(t),f]$ of $\alpha$ in $\frac{E \times F}{G}$, such that $\tilde{\alpha}^{F}(0)=[p,f]$. Hence, given a path $\alpha \colon x \ra y$ in $M$, there is a smooth map $Tr_{A,F}^{\alpha} \colon (\pi^{F})^{-1}(x) \ra (\pi^{F})^{-1}(y)$ defined by $[p,f] \mapsto \tilde{\alpha}_{A,F}^{[p,f]}(1)= [Tr_A^{\alpha}(p), f]$. The map $Tr_{A, F}^{\alpha}$ is called the parallel transport on the associated bundle $\pi^{F}$ along the path $\alpha$ induced by the connection $A$ on $P$.
\subsection{Action of gauge group on connections}\label{Classical action of gauge group on connections}
Given a principal $G$-bundle $\pi \colon P \ra M$, we denote the set of all connections on $\pi \colon P \ra M$ by the notation $\Omega_{P}$. There is a natural action of gauge group $\mc{G}(P)$ on $\Omega_{P}$ given by
\begin{equation}\label{Classical gauge group action}
\begin{split}
\rho \colon & \mc{G}(P) \times \Omega(P) \ra \Omega(P)\\
&(f,\omega) \mapsto \omega \circ f^{-1}_{*} \colon TP \ra L(G)
\end{split}
\end{equation}
In \Cref{subsection: gauge group of a principal bundle}, recall that we showed $\mc{G}(P)$ is isomoprhic to $C^{\infty}(P,G)^{G}$, the group of $G$-valued smooth maps $\sigma \colon P \ra L(G)$ satisfying $\sigma(pg)=g^{-1}\sigma(p)g$ for all $p 
\in P, g \in G$. Equivalent action of $C^{\infty}(P,G)^{G}$ on $\Omega(P)$ is given as
\begin{equation}\nonumber
\begin{split}
\tilde{\rho} \colon & C^{\infty}(P,G)^{G} \times \Omega(P) \ra \Omega(P)\\
& (\sigma,\omega) \mapsto {\rm{Ad}}_{\sigma}(\omega) -(d \sigma)\sigma^{-1}.
\end{split}
\end{equation}

\section{Parallel transport on a principal bundle}\label{Parallel transport on a principal bundle}
In \Cref{Section: Connection structures on a principal bundle}, we discussed connection structures on a principal bundle, but we did not specify anything about what it does to the bundle!! This section aims to give a brief account of one of its key roles. In particular, a connection on a principal bundle defines a rule that  \textit{appropriately} identifies its fibers along paths in the base space. Such \textit{appropriate} identification is known as the \textit{parallel transport of the connection}. The meaning of the word `\textit{appropriate}' is basically the content of this section! We begin by briefly recalling the notion of \textit{unique horizontal path lifting property of a connection}; see \cite{MR1393940} for a more detailed treatment.

\subsection{Parallel transport of a connection along a path}\label{subsection:Parallel transport of a connection along a path}

Let $\pi \colon P \ra M$ be a principal $G$-bundle equipped with a connection $\omega \colon TP \ra L(G)$. Consider a smooth path $\alpha \colon [0,1] \ra M$ on the base space $M$ and let $p \in \pi^{-1}(\alpha(0))$. By the local triviality of $\pi: P \ra M$ there is a path $\bar{\alpha}: [0,1] \ra P$ in $P$, such that $\bar{\alpha}(0)=p$ and $\pi \circ \bar{\alpha}(t)=\alpha(t)$ for all $t \in [0,1]$.  Now let $\tilde{\alpha}: [0,1] \ra E_0$ be an arbitrary lift of $\alpha$, such that $\tilde{\alpha}(0)=p$. Since $G$ acts on $P$ \textit{nicely}(see (i) of \Cref{Definition: Principal G-bundle}), there is a unique path $a:[0,1] \ra G$, such that $\tilde{\alpha}(t)= \rho \big(\bar{\alpha}(t), a(t) \big)$ for all $t \in [0,1]$ and $a(0)=e$, where $ \rho : P \times G \ra P$ is the action map. Note that if we find such a curve $a: [0,1] \ra G$ such that $\tilde{\alpha}$ is horizontal i.e. $\tilde{\alpha}'(t)$ lies in $H_{\alpha(t)}P$ for all $t \in [0,1]$, then we are done. Applying Leibniz's formula (see \textbf{Proposition 1.4} of \cite{MR1393940}) to the action map $\rho : P \times G \ra P$, we get
$$\tilde{\alpha}'(t)=  \rho_{{a(t)}_{*, \bar{\alpha}(t)}}(\bar{\alpha}'(t)) +   \rho_{{\bar{\alpha}(t)}_{*, a(t)}(a'(t))}$$ where $\rho_{a(t)}: P \ra P$ is defined as $p \mapsto \rho(p,a(t))$ and $\rho_{\bar{\alpha}(t)}: G \ra P$ is defined as $g \mapsto \rho(\bar{\alpha}(t),g)$. Note that $\rho_{\bar{\alpha}(t)}$ is same  as $\delta_{\tilde{\alpha}(t)} \circ L_{a(t)^{-1}}$, where $\delta_{\tilde{\alpha}(t)}: G \ra P$ is defined as $g \mapsto \tilde{\alpha}(t) g$ and $L_{a(t)^{-1}} : G \ra G$ is given by $g \mapsto a(t)^{-1} g$. Hence,
$$\rho_{{\bar{\alpha}(t)}_{*, a(t)}(a'(t))}= \delta_{{\tilde{\alpha}(t)}_{*,e}}(L_{{a(t)^{-1}}_{*,a(t)}}(a'(t))).$$ Hence, we get 
\begin{equation}\label{Equation:1WH}
	\tilde{\alpha}'(t)= \rho_{{a(t)}_{*, \bar{\alpha}(t)}}(\bar{\alpha}'(t)) + \delta_{{\tilde{\alpha}(t)}_{*,e}}(L_{{a(t)^{-1}}_{*,a(t)}}(a'(t)))
\end{equation}
Applying $\omega$ on both sides of \Cref{Equation:1WH} we get,
\begin{equation}\nonumber
	\omega_{\tilde{\alpha}(t)}(\tilde{\alpha}'(t))= {\rm{ad}}_{{a(t)^{-1}}_{*,e}}(\omega_{\bar{\alpha}(t)}(\bar{\alpha}'(t))) + L_{{a(t)^{-1}}_{*,a(t)}}(a'(t)).
\end{equation}
Hence, $\tilde{\alpha} \colon [0,1] \ra P$ is horizontal (i.e $\tilde{\alpha}'(t) \in H_{\tilde{\alpha}(t)}P \subseteq T_{\tilde{\alpha}(t)}P$ for all $t \in [0,1]$) if and only if we have 
\begin{equation}\nonumber
	L_{{a(t)^{-1}}_{*,a(t)}} \big(R_{a(t)_{*,e}}(\omega_{\bar{\alpha}(t)}(\bar{\alpha}'(t)))+ a'(t) \big) \big) =0
\end{equation}
Using the notational convention mentioned in \Cref{Notation conventions}, $\tilde{\alpha} \colon [0,1] \ra P$ is horizontal if and only if 
\begin{equation}\label{Existence and uniqueness of horizontal lift}
	a'(t)a(t)^{-1}=- \omega_{\bar{\alpha}(t)}(\bar{\alpha}'(t))
\end{equation}
Uniqueness and the existence of the \Cref{Existence and uniqueness of horizontal lift} follows from the general fact (See \cite{MR1393940} for the proof) that if $Y \colon [0,1] \ra L(G)$ is a path in $L(G)$, then there exists a unique path $b \colon [0,1]  \ra G$,  such that $b(0)=e$ and $b'(t)b(t)^{-1}=Y(t)$. 

Summarising, we get the so-called \textit{unique horizontal path lifting property of a connection}, given as
\begin{itemize}
	\item Given a connection $\omega \colon TP \ra L(G)$ on a principal $G$-bundle $\pi \colon P \ra M$,
	\item  a smooth path $\alpha \colon[0,1] \ra M$ and
	\item a point $p \in \pi^{-1}(\alpha(0))$,
\end{itemize}
there exists a unique path $\tilde{\alpha}_{p,\omega} \colon [0,1] \ra P$ in $P$ such that 
\begin{itemize}
	\item  $\pi \circ \tilde{\alpha}_{\omega}^{p}=\alpha$,
	\item $\tilde{\alpha}_{\omega}(0)^{p}=p$ and
	\item $(\tilde{\alpha}_{\omega}^{p})'(t) \in H_{\tilde{\alpha}^{p}_{\omega}(t)}P$,
\end{itemize}
for all $t \in [0,1]$. 

More is true here! 

For each path $\alpha \colon [0,1] \ra M$, we get a $G$-equivariant diffeomorphism 
\begin{equation}\label{Parallel transport aloing a path}
	\begin{split}
		{\rm{Tr}}_{\omega}^{\alpha} \colon & \pi^{-1}(\alpha(0)) \ra \pi^{-1}(\alpha(1))\\
		& p \mapsto \tilde{\alpha}_{\omega}^{p}(1),
	\end{split}
\end{equation}
which is known as the \textit{parallel transport of $\omega$ along the path $\alpha$.}

Next, we observe the behaviour of \Cref{Parallel transport aloing a path} with respect to connection preserving morphims of principal $G$-bundles (see the end of \Cref{Section: Connection structures on a principal bundle}), or to be more precise, we have the following:
\begin{proposition}\label{Classical Parallel transport and connection preserving morphism}
	Let $\pi \colon P \ra M$ and $\pi' \colon P' \ra M$ be a pair of principal $G$-bundles, endowed with connection 1-forms $\omega$ and $\omega'$ respectively. Suppose $f \colon P \ra P'$ is a morphism of principal bundles over $M$, satisfying $\omega= f^{*} \omega'$ (see the end of \Cref{Section: Connection structures on a principal bundle}). Then, for any path $\alpha \colon [0,1] \ra M$, we have the following:
	$$f|_{\pi^{-1}(y)} \circ {\rm{Tr}}_{\omega}^{\alpha}=  {\rm{Tr}}_{\omega'}^{\alpha}  \circ f|_{\pi^{-1}(x)},$$
	where $\alpha(0)=x$ and $\alpha(1)=y$.
\end{proposition}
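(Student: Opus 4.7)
The plan is to invoke the uniqueness part of the horizontal path lifting property (as established in Subsection \ref{subsection:Parallel transport of a connection along a path}): it suffices to show that, for every $p \in \pi^{-1}(x)$, the curve $f \circ \tilde{\alpha}_{\omega}^{p} \colon [0,1] \to P'$ is the unique horizontal lift of $\alpha$ through $f(p)$ with respect to the connection $\omega'$. Evaluating at $t=1$ then yields $f\big(\mathrm{Tr}_{\omega}^{\alpha}(p)\big)=\mathrm{Tr}_{\omega'}^{\alpha}\big(f(p)\big)$, which is exactly the claimed identity.

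First I would verify that $f \circ \tilde{\alpha}_{\omega}^{p}$ is indeed a lift of $\alpha$ starting at $f(p)$. Since $f$ is a morphism of principal $G$-bundles over $M$ in the sense of \Cref{Definition: Morphism of principal G-bundles} (with $f_M = \mathrm{id}_M$), we have $\pi' \circ f = \pi$, and hence $\pi' \circ (f \circ \tilde{\alpha}_{\omega}^{p}) = \pi \circ \tilde{\alpha}_{\omega}^{p} = \alpha$. Also, $(f \circ \tilde{\alpha}_{\omega}^{p})(0) = f(\tilde{\alpha}_{\omega}^{p}(0)) = f(p)$.

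The key step is to verify horizontality of $f \circ \tilde{\alpha}_{\omega}^{p}$ with respect to $\omega'$. By the chain rule,
\begin{equation*}
(f \circ \tilde{\alpha}_{\omega}^{p})'(t) = f_{*,\tilde{\alpha}_{\omega}^{p}(t)}\big((\tilde{\alpha}_{\omega}^{p})'(t)\big).
\end{equation*}
Applying $\omega'$ and using the hypothesis $\omega = f^{*}\omega'$ together with horizontality of $\tilde{\alpha}_{\omega}^{p}$ (recall the equivalent description of a connection via its kernel in Subsection \ref{Subsection: Connection on a principal bundle and its characterizations}), we obtain
\begin{equation*}
\omega'_{f(\tilde{\alpha}_{\omega}^{p}(t))}\big(f_{*}((\tilde{\alpha}_{\omega}^{p})'(t))\big) = (f^{*}\omega')_{\tilde{\alpha}_{\omega}^{p}(t)}\big((\tilde{\alpha}_{\omega}^{p})'(t)\big) = \omega_{\tilde{\alpha}_{\omega}^{p}(t)}\big((\tilde{\alpha}_{\omega}^{p})'(t)\big) = 0,
\end{equation*}
for every $t \in [0,1]$. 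Thus $f \circ \tilde{\alpha}_{\omega}^{p}$ is $\omega'$-horizontal.

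By the uniqueness of horizontal lifts through a prescribed initial point, we conclude $f \circ \tilde{\alpha}_{\omega}^{p} = \tilde{\alpha}_{\omega'}^{f(p)}$, and evaluating at $t=1$ gives the desired relation. There is no real obstacle here: the entire content of the proposition is the observation that the defining ODE \Cref{Existence and uniqueness of horizontal lift} for horizontal lifts is preserved under pullback of connection $1$-forms along a bundle morphism, so the argument is essentially a one-line application of uniqueness once the horizontality computation above is carried out.
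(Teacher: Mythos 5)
Your proof is correct and complete: lifting via $\pi'\circ f=\pi$, checking $\omega'$-horizontality from $\omega=f^{*}\omega'$ and the kernel description of the horizontal distribution, and then invoking uniqueness of horizontal lifts is exactly the standard argument. The paper itself does not write out a proof but simply defers to \textbf{Lemma 3.11} of \cite{MR3521476}, which establishes the statement by the same reasoning, so your proposal fills in precisely the argument being cited and there is no gap.
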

\begin{proof}
	See \textbf{Lemma 3.11}, \cite{MR3521476}.
	\end{proof}

\subsection{ Parallel transport functor of a connection}\label{Subsection: Parallel transport functor of a connection}
In the previous subsection, we observed how the parallel transport map of a connection gives a prescription to identify the fibers of the principal bundle along paths in the base space. In this subsection, we give a brief account of its behavior with respect to the following operations on such paths: 
\begin{itemize}
	\item[(i)] \textit{Thin homotopy} of paths;
	\item[(ii)] Concatenation of paths.
\end{itemize}
To study such behavior, we need to briefly recall the notion of a `thin homotopy groupoid of the manifold', a refined version of the fundamental groupoid of a manifold. This notion is well-established and extensively covered in the existing literature. We refer \cite{MR3521476}, \cite{MR2520993} for a detailed treatement.
\subsection*{Thin fundamental groupoid of a smooth manifold}\label{Thin homotopy groupoid of a manifold} 
Before making the definition, we need to ensure that the concatenation of smooth paths in a manifold is smooth. To achieve this,  we restrict ourselves to only \textit{paths with sitting instants} i.e a smooth map $\alpha :[0,1] \rightarrow M$ to a manifold $M$, such that there exists an $\epsilon \in (0,1/2)$ satisfying $\alpha(t)= \alpha(0)$ for $t \in [0,\epsilon)$ and $\alpha(t)=\alpha(1)$ for $t \in (1- \epsilon, 1]$. In literature, often these paths are known by the name \textit{lazy paths} (for example see \cite{MR2825807}). We denote the set of lazy paths on a manifold $M$ by the notation $PM$.
\begin{definition}
	Let $\alpha , \beta \colon [0,1]  \ra M$ be elements of $PM$. Then $\alpha$ is said to be \textit{thin homotopic} to $\beta$ if there exists a smooth map $H:[0,1]^2 \rightarrow M$ with the following properties:
	
	\begin{itemize}
		\item[(i)] $H$ has \textit{sitting instants} i.e there exists an $\epsilon \in (0,1/2)$ with 
		\begin{itemize}
			\item[(a)] $H(s,t)=x$ for $t \in [0, \epsilon)$ and $H(s,t)=y$ for $t \in (1- \epsilon,1])$ and 
			\item[(b)] $H(s,t)= \alpha(t)$ for $s \in [0,\epsilon)$ and $H(s,t)= \beta(t)$ for $s \in (1- \epsilon, 1]$;
		\end{itemize}
		\item[(ii)] the differential of $H$ has a rank atmost 1 at all points.
	\end{itemize}
	The smooth map $H$ will be called a \textit{thin homotopy }from $\alpha$ to $\beta$.
\end{definition}
\begin{remark}
	Thin homotopy defines an equivalence relation $\sim$ on the set $PM$. We denote the quotient set by $\frac{PM}{\sim}$.
\end{remark}
\begin{definition}\label{Definition:Thin homotopy groupoid of a manifold}
For any smooth manifold $M$, the pair of spaces $M$ and $\frac{PM}{\sim}$ combine to form a \textit{groupoid} $\Pi_{{\rm{thin}}}(M):=[\frac{PM}{\sim} \rra M]$, known as the \textit{thin fundamental groupoid of the manifold $M$}, whose structure maps are given as follows:
\begin{itemize}
	\item Source map $s \colon \frac{PM}{\sim} \ra M$, $ [\alpha] \mapsto \alpha(0)$;
	\item Target map $t \colon \frac{PM}{\sim} \ra M$, $ [\alpha] \mapsto \alpha(1)$;
	\item Composition is defined by concatenation of paths;
	\item Unit map $u \colon M \ra \frac{PM}{\sim}$, $m \mapsto [c_{m}]$, where $c_m$ is the constant map at $m$;
	\item Inverse map $ \mathfrak{i} \colon \frac{PM}{\sim} \ra \frac{PM}{\sim}$, $[\alpha] \mapsto [\alpha^{-1}]$ (see \Cref{Notation conventions}).
\end{itemize}
\end{definition}

\subsection*{Functor from the thin fundamental groupoid}\label{subsectionFunctor from the thin homotopy groupoid}
The parallel transport map \Cref{Parallel transport aloing a path} behaves well with both concatenation of paths and thin homotopy. More precisely, 
\begin{itemize}
	\item[(i)]if $\alpha_2$ and $\alpha_1$ are two composable paths with sitting instants in the base space $M$, then ${\rm{Tr}}_{\omega}^{\alpha_2 * \alpha_1}= {\rm{Tr}}_{\omega}^{\alpha_2} \circ {\rm{Tr}}_{\omega}^{\alpha_1} $, where $*$ is the concatenation of paths;
	\item[(ii)] If $\alpha$, $\beta \in PM$, such that $\alpha$ is thin homotopic to $\beta$, then  \begin{equation}\label{thin homotopy invariance of classical transport}
		{\rm{Tr}}_{\omega}^{\alpha}= {\rm{Tr}}_{\omega}^{\beta}.
	\end{equation}
\end{itemize}
In turn, we get a functor 
\begin{equation}\label{Transport functor}
	\begin{split}
		T_{\omega} \colon & \Pi_{{\rm{thin}}}(M) \ra G {\rm{-}} {\rm{Tor}}\\
		& x \mapsto \pi^{-1}(x), x \in M\\
		& [\alpha] \mapsto {\rm{Tr}}_{\omega}^{\alpha}, \alpha \in \frac{PM}{\sim},
	\end{split}
\end{equation}
from the thin homotopy groupoid of the base space $M$ to the groupoid of $G$-torsors (see \Cref{Notation conventions}). The functor $T_{\omega}$ is called \textit{the parallel transport functor of the connection} $\omega$. Since the quotient space $\frac{PM}{\sim}$ has no natural finite-dimensional smooth manifold structure, to discuss the smoothness of $T_{\omega}$ we need the notion of `diffeology', a certain class of 
`generalized smooth spaces'. However, to maintain continuity and circumvent technical issues, we defer the discussion of the `smoothness condition' on the parallel transport functor until \Cref{Smoothness of traditional parallel transport}. Next, we see how a parallel transport on a principal bundle induces a notion of parallel transport on its associated bundles.

\subsection{Induced parallel transport on associated fibre bundles}\label{Induced parallel transport on associated fibre bundles}
A direct consequence of the induced Ehresmann connection (\Cref{Induced Ehresmann connection on the associated bundle}) is a notion of parallel transport on the associated fibre bundle. Let $\mc{H}$ be an Ehresmann connection on a principal $G$-bundle $\pi \colon P \ra M$. Suppose $G$ acts from the left on a manifold $F$ and let $\mc{H}^{F}$ be the induced Ehresmann connection on the associated bundle $\pi^{F} \colon \frac{P \times F}{G} \ra M$. Given a path $\alpha \colon [0,1] \ra M$ in $M$ and a point $[p,f]$ in  $(\pi^{F})^{-1}(x)$, it follows from the unique horizontal path lifting property of the connection $\mc{H}$ that there is a unique horizontal lift $\tilde{\alpha}_{\omega, F}^{[p,f]}(t)$ defined by $t \mapsto  [\tilde{\alpha}_{\omega}^{p}(t),f]$ of $\alpha$ in $\frac{E \times F}{G}$, such that $\tilde{\alpha}_{\omega, F}^{[p,f]}(0)=[p,f]$. Hence, given a path $\alpha \colon [0,1] \ra M$ in base space $M$, there is a smooth map ${\rm{Tr}}_{\omega,F}^{\alpha} \colon (\pi^{F})^{-1}(\alpha(0)) \ra (\pi^{F})^{-1}(\alpha(1))$ defined by $[p,f] \mapsto \tilde{\alpha}_{\omega,F}^{[p,f]}(1)= [{\rm{Tr}}_{\omega}^{\alpha}(p), f]$. The map ${\rm{Tr}}_{\omega,F}^{\alpha}$ is called the \textit{parallel transport on the associated bundle $\pi^{F}$ along the path $\alpha$ induced by the connection $\omega$ on $P$.} 

In particular, when $F$ is a finite-dimensional vector space $V$ with underlying scaler field $K$, then the induced linear connection (\Cref{Induced Ehresmann connection on the associated bundle}) on the associated vector bundle $\pi^{V}$ gives an analogue of \Cref{Transport functor}, that is a functor 
\begin{equation}\nonumber
	T_{\omega, V} \colon \Pi_{{\rm{thin}}}(M) \ra {\rm{Vect}}_{K}, 
\end{equation}
where ${\rm{Vect}}_{K}$ is the category of finite dimensional vector spaces with underlying scalar field $K$. 

With this, we end the chapter here. In the next chapter, we will build the necessary machinery to generalize the discussion made here in a categorified framework.

.
% Chapter Template

\chapter{Preliminaries}\label{A brief review of bundles over Lie groupoids and their gauge theory}

% Main chapter title
% Change X to a consecutive number; for referencing this chapter elsewhere, use \ref{ChapterX}

\lhead{Chapter 3. \emph{Preliminaries}}

The word \textit{categorification} was coined by Crane \cite{MR1355904, MR1295461}. Baez and Dolan have extensively discussed this in \cite{MR1664990}. According to \cite{MR1664990}, categorification is a way to find appropriate category-theoretic analogs of the existing concepts phrased in set-theoretic language. Usually, this is achieved by replacing sets, functions, and equations between functions by categories, functors, and natural isomorphisms between functors, respectively. Moreover, the choices of natural isomorphisms should satisfy some sort of equations known as \textit{coherence laws}. This process often transforms basic set-theoretic ideas into a more intricate and sophisticated version of the original idea, coinciding with the classical notion only in the simplest case. 
However, our intention is modest here. The purpose of this chapter is to briefly recall some well-established notions in the literature that we need for a particular kind of categorification done in this thesis to the ideas discussed in \Cref{Chapter Classical setup}.

\section{Some topics in category theory}
In this section, we briefly recall the notion of a \textit{2-category}, in particular a strict 2-category, a \textit{fibered category}, and a \textit{pseudofunctor}. Also, we outline the classic one-one correspondence between fibered categories and pseudofunctors (due to Grothendieck, \cite{grothendieck2004revetements}). We suggest \cite{MR1291599, leinster1998basic} as references for a detailed account on 2-categories and  \cite{MR2223406, MR1291599, MR4261588} for materials on fibered categories and pseudofunctors.
\subsection{2-categories}\label{2-categories}

We start by recalling the notion of a `strict 2-category'.

\begin{definition}\label{strict 2-category}[\textbf{Defintion 7.1.1}, \cite{MR1291599}]
	A \textit{ strict $2$-category}  $\mc{D}$, consists of the following:
	\begin{itemize}
		\item a class $|\mc{D}|$, \footnote{Although we use the term `class' here, a precise definition of a `class' requires involvement of foundations of set theory, which we have avoided intentionally to remain aligned with our purpose in this thesis. Informally, classes serve us a way to incorporate important `set-like' collections, while differing from sets, for avoiding paradoxes, especially Russel's paradox. For example, in commonly used categories Set, Grp, Top, the collection of objects is not a set, but a `class'. Readers interested in the foundations can look at standard references like \cite{MR1712872, MR1291599, MR2240597} for an elaborate treatement. However, assuming that the reader is familar with the definition of a category, for the sake of completeness, one can view any class $C$ as a discrete category $[C \rra C]$, whose only morphisms are identity arrows.}
		\item a small category $\mc{D}(x,y)$ for each pair of elements $x,y$ in $|\mc{D}|$,
		\item a functor $u_{x} \colon 1 \ra \mc{D}(x,x)$ for each element $x$ in $|\mc{D}|$, where $1$ is the terminal object in \textbf{Cat}, the category of small categories,
		\item a functor $c_{xyz}:\mc{D}(x,y)\times\mc{D}(y,z)\ra \mc{D}(x,z)$ for each triple $x,y,z$ of elements in $|\mc{D}|$,
		
		such that the following axioms are satisfied:
		\item \textit{Associativity axiom:} Given four elements $x,y,z,w$ in $|\mc{D}|$, the following diagram commutes on the nose:
		\begin{equation}\label{associativity axiom of strict 2-category}
			\begin{tikzcd}
				\mc{D}(x,y) \times \mc{D}(y,z) \times \mc{D}(z,w) \arrow[d, "c_{xyz} \times {\rm{id}}"'] \arrow[rrr, "{\rm{id}} \times c_{yzw}"] &  &  & \mc{D}(x,y) \times \mc{D}(y,w) \arrow[d, "c_{xyw}"] \\
				\mc{D}(x,z) \times \mc{D}(z,w) \arrow[rrr, "c_{xzw}"']                &  &  & \mc{D}(x,w)               
			\end{tikzcd}
		\end{equation}
		\item \textit{Unit Axiom:} Given two elements $x,y \in |\mc{D}|$, the following diagram commutes on the nose:
		\begin{equation}\label{unit axiom of strict 2-category}
			\begin{tikzcd}
				&  & \mc{D}(x,y) \arrow[lld, "\cong"'] \arrow[rrd, "\cong"] \arrow[ddd] &  &                    \\
				1 \times \mc{D}(x,y) \arrow[d, "u_{x} \times {\rm{id}}"']   &  & {} \arrow[d, "{\rm{id}}"]                               &  & \mc{D}(x,y) \times 1 \arrow[d, "{\rm{id}} \times u_{y}"]   \\
				\mc{D}(x,x) \times \mc{D}(x,y) \arrow[rrd, "c_{xxy}"'] &  & {}                                               &  & \mc{D}(x,y) \times \mc{D}(y,y) \arrow[lld, "c_{xyy}"] \\
				&  & \mc{D}(x,y) \arrow[uu]                                     &  &                   
			\end{tikzcd}
		\end{equation}
	\end{itemize}
\end{definition}

\subsection*{Some terminologies:}

Given a strict 2-category $\mc{D}$,
\begin{itemize}
	\item the elements of $|\mc{D}|$ are called \textit{objects} of $D$;
	\item for each pair of elements $x,y$ in $|\mc{D}|$, the objects and morphisms of the category $D(x,y)$ are called \textit{1-morphisms} and \textit{2-morphisms} of $D$ respectively;
	\item for any pair of elements $x,y$ in $|\mc{D}|$, a composition of 2-morphisms in the category $D(x,y)$ is called a \textit{vertical composition} and is denoted as $\circ$, the usual notation of composition in a category;
	\item For $x,y,z \in |\mc{D}|$, if $\alpha$ and $\beta$  are 2-morphisms in  $\mc{D}(x,y)$ and $\mc{D}(y,z)$ respectively, then we call the image $c_{xyz}(\alpha, \beta) \in \mc{D}(x,z)$ as the \textit{horizontal composition} of $\alpha$ and $\beta$, denoted by the notation $\beta \circ_h \alpha$. If $f$ and $g$ are objects of $\mc{D}(x,y)$ and $\mc{D}(y,z)$ respectively, then we denote the image $c_{xyz}(f,g)$ by the notation $g \circ f$, the same notation as vertical composition and the distinction will be understood from the context;
	\item For each $x \in |\mc{D}|$, $1_x$ denote the image $u_x(1) \in \mc{D}(x,x)$. The notation ${\rm{id}}_{1_x}$ denote the unit of $1_x$ in the category $\mc{D}(x,x)$;
\end{itemize}
\begin{remark}[Underlying category of a strict 2-category]\label{underlying category of strict 2-category}
	The collection of objects and  1-morphisms of $\mc{D}$ forms a category with $1_x$ as the identity morphism for each $x \in |\mc{D}|$. It is called the \textit{underlying category of} $\mc{D}$, which will also be denoted by $\mc{D}$, and the distinction should be understood from the context.
\end{remark}
For $x,y,z \in |\mc{D}|$,  let $f,g,h$ and $f',g',h'$ are 1-morphisms in $ \mc{D}(x,y)$ and $\mc{D}(y,z)$ respectively. Let $\alpha \colon f \Rightarrow g$, $\beta \colon g \Rightarrow h$ and  $\alpha' \colon f' \Rightarrow g'$, $\beta'  \colon g' \Rightarrow h'$ be 2-morphisms in $\mc{D}(x,y)$ and $\mc{D}(y,z)$ respectively. Then by functoriality of $c_{xyz}$ we have
\begin{equation}\nonumber
	(\beta' \circ_h \beta) \circ (\alpha' \circ_h \alpha)= (\beta' \circ \alpha')o_h (\beta \circ \alpha),
\end{equation}
which is called the \textit{interchange law} of $\mc{D}$.
\begin{definition}\label{Strict 2-groupoid}[Strict 2-groupoid] A \textit{strict 2-groupoid} is a strict 2-category $D$ such that 
	\begin{itemize}
		\item the underlying 1-category $C$ is a groupoid;
		\item For each $x,y \in |D|$, $D(x,y)$ is a groupoid.
	\end{itemize} 
\end{definition}
\begin{remark}\label{Weak 2-category}
	If we weaken \Cref{strict 2-category}
	by replacing the `commutativity on the nose' of the diagrams \Cref{associativity axiom of strict 2-category} and \Cref{unit axiom of strict 2-category} by `commutativity up to a choice of natural isomorphism' which being a part of the data, satisfying certain coherence conditions, then we obtain the notion of a \textit{weak 2-category} or a \textit{bicategory}. For our objectives, we will primarily be involved in strict 2-categories, we leave the details and refer \textbf{Definition 7.7.1} of \cite{MR1291599}
	for the precise definition of a bicategory.
\end{remark}
\begin{example}
	The prototypical example of a strict 2-category is Cat, whose objects are small categories, 1-morphisms are functors, and 2-morphisms are natural transformations. The usual vertical and horizontal compositions of natural transformations give the vertical and horizontal compositions of 2-morphisms.
\end{example}

We will see several examples of strict 2-categories throughout the course of the thesis.

\subsection{Fibered categories, pseudofuntors and Grothendieck construction}\label{subsection Fibered categories}
In this subsection, we recall the notion of a \textit{fibered category over a category}, a \textit{pseudofunctor over a category} and outline the well-known one-one correspondence (due to  Grothendieck) between them.

A category $\mathcal{E}$ endowed with a functor $\pi: \mathcal{E} \ra \mathcal{X}$ is known as \textit{category over $\mathcal{X}$.} A morphism $\delta \colon p \ra q$ in $\mathcal{E}$ is said to be \textit{cartesian} if for any morphism $\delta' \colon p' \ra q$ in $\mathcal{E}$ and any morphism $\gamma \colon \pi(p') \ra \pi(p)$ in $\mathcal{X}$ with $\pi(\delta')=\pi(\delta) \circ \gamma$ , there is a unique morphism $\tilde{\gamma}:p' \ra p$ satisfying $\pi(\tilde{\gamma})=\gamma$ and $\delta \circ \tilde{\gamma}= \delta'$.

The following is a standard property of cartesian morphisms. (see \textbf{Proposition 3.4}, \cite{MR2223406})

\begin{lemma}\label{lemma: Groupoid Cartesian}
	Suppose $\pi \colon \mc{E} \ra \mc{X}$ is a catoegory over $\mc{X}$. Let $\gamma$ be an arrow in $\mc{E}$ such that $\pi_1(\gamma)$ is invertible. Then  $\gamma$ is cartesian if and only if $\gamma$ is invertible.
 	\end{lemma}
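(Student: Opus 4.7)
The plan is a direct verification of both directions using the universal property of a cartesian arrow, combined with the hypothesis that $\pi_1(\gamma)$ admits an inverse in $\mc{X}$.

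For the ``invertible $\Rightarrow$ cartesian'' direction, I would start with an invertible arrow $\gamma \colon p \to q$ in $\mc{E}$ and unravel the definition of cartesian. Given any $\delta' \colon p' \to q$ in $\mc{E}$ and any $\sigma \colon \pi(p') \to \pi(p)$ in $\mc{X}$ with $\pi(\delta') = \pi(\gamma) \circ \sigma$, the required lift is forced: set $\tilde{\sigma} := \gamma^{-1} \circ \delta'$. A short check shows $\gamma \circ \tilde{\sigma} = \delta'$ and $\pi(\tilde{\sigma}) = \pi(\gamma)^{-1} \circ \pi(\gamma) \circ \sigma = \sigma$, and uniqueness is immediate by left-cancelling $\gamma$. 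Notably this direction does not actually need $\pi_1(\gamma)$ to be invertible.

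For the harder direction, ``cartesian (with $\pi_1(\gamma)$ invertible) $\Rightarrow$ invertible'', I would build the inverse using the cartesian property twice. First, apply the universal property of $\gamma \colon p \to q$ to the pair $(1_q, \pi(\gamma)^{-1})$: since $\pi(1_q) = 1_{\pi(q)} = \pi(\gamma) \circ \pi(\gamma)^{-1}$, there is a unique $\tilde{\sigma} \colon q \to p$ with $\pi(\tilde{\sigma}) = \pi(\gamma)^{-1}$ and $\gamma \circ \tilde{\sigma} = 1_q$. This gives a right inverse for $\gamma$.

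The main obstacle will be showing that $\tilde{\sigma}$ is also a left inverse. For this I would appeal again to the uniqueness clause in the cartesian property of $\gamma$, applied to the arrow $\gamma$ itself factored as $\gamma \circ 1_{\pi(p)}$ in the base: both $1_p$ and $\tilde{\sigma} \circ \gamma$ are morphisms $p \to p$ in $\mc{E}$ lying over $1_{\pi(p)}$ and satisfying $\gamma \circ (-) = \gamma$. Uniqueness forces $\tilde{\sigma} \circ \gamma = 1_p$, and so $\gamma^{-1} = \tilde{\sigma}$, completing the proof. The whole argument is formal and uses only the definitions of ``category over $\mc{X}$'' and ``cartesian morphism,'' with the invertibility of $\pi_1(\gamma)$ entering precisely to produce the base arrow $\pi(\gamma)^{-1}$ used to feed the universal property.
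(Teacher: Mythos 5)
Your proof is correct and complete: both directions are verified exactly by the standard universal-property argument, and your observation that the ``invertible $\Rightarrow$ cartesian'' direction needs no hypothesis on $\pi_1(\gamma)$ is accurate, since a functor automatically sends invertible arrows to invertible arrows. The paper itself gives no proof but cites the standard reference (Proposition 3.4 of Vistoli), whose argument is essentially the one you wrote, so there is nothing to add.
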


\begin{definition}\label{Pullback cartesian}
	For a category $\pi \colon \mc{E} \ra \mc{X}$ over $\mc{X}$, let  $\gamma \colon x \ra y$ be an arrow in $\mc{X}$ and $q \in \pi_0^{-1}(y)$. Then for any cartesian arrow $\delta$ in $\mc{E}$ such that $t(\delta)=p$ and $\pi_1(\delta)=\gamma$, we say $s(\delta)$ as the \textit{pullback of $q$ along $\gamma$} and we denote it by $\gamma_{\delta}^{*}(q)$.
\end{definition}

\begin{remark}\label{Uniqueness of cartesian pullback}
	From the definition of a cartesian morphism above, it is evident that in a category $\pi \colon \mc{E} \ra \mc{X}$ over $\mc{X}$, for a given arrow $\gamma \colon x \ra y$ in $\mc{X}$ and a given point $q \in \pi_0^{-1}(y)$, $\gamma_{\delta}^{*}(q)$ is unique upto a unique isomorphism, where $\delta$ is a cartesian morphism in $\mc{E}$ such that $\pi_1(\delta)=\gamma$ and $t(\delta)=q$.
\end{remark}

\begin{definition}\label{Definition of fibered categories}
	A category $\pi: \mathcal{E} \ra \mathcal{X}$ over $\mc{X}$ is called a \textit{ fibered category} or \textit{fibration over $\mc{X}$} if for any morphism $\gamma$ in $\mc{X}$ and an object $p$ in $\mc{E}$ satisfying $\pi(p)=t(\gamma)$, there exists a cartesian morphism $\tilde{\gamma} \in \mc{E}$ such that $\pi(\tilde{\gamma})= \gamma$ and $t(\tilde{\gamma})=p$.
\end{definition}
\begin{definition}\label{Definition of fibre}
	Let $\pi: \mathcal{E} \ra \mathcal{X}$ be a fibered category over $\mc{X}$. For each object $x$ in $\mc{X}$, there is a subcategory of $\mc{E}$, whose objects are the objects $p$ of $\mc{E}$ such that $\pi_0(p)=x$ and arrows consist of morphisms $\delta$ of $\mc{E}$ such that $\pi_1(\delta)=1_x$. Such a subcategory is called the \textit{fiber over $x$} and is denoted by $\pi^{-1}(x)$.
\end{definition}
\begin{definition}\label{Definition cleavage}
	A \textit{cleavage} on a fibered category $\pi: \mathcal{E} \ra \mathcal{X}$  is defined as a function $K \colon {\rm{Mor}}( \mc{X}) \times_{t, {\rm{Obj}}(\mc{X}), \pi_0} {\rm{Obj}}(\mc{E}) \ra {\rm{Cart}}(\mc{E})$ such that $\pi_1(K(\gamma,p)) = \gamma$ and $t(K(\gamma,p))=p$ for all $(\gamma,p) \in {\rm{Mor}}( \mc{X}) \times_{t, {\rm{Obj}}(\mc{X}), \pi_0}{\rm{Obj}}(\mc{E})$, where ${\rm{Cart}}(\mc{E})$ is the set of all cartesian morphisms in $\mc{E}$.
	
\end{definition}

\begin{definition}\label{Definition Colven fibration}
	A fibered category $\pi \colon \mc{E} \ra \mc{X}$ equipped with a cleavage $K$ is called a\textit{ cloven fibered category} or a \textit{cloven fibration over $\mc{X}$} and we denote it by a pair $(\pi \colon \mc{E} \ra \mc{X}, K)$.
\end{definition}	
\begin{definition}\label{Definition splitting cleavage}
	A cleavage $K$ on a fibered category $\pi: \mathcal{E} \ra \mathcal{X}$  is called a \textit{splitting cleavage} if it has all the identities and is closed under composition. More precisely, $K$ is a splitting cleavage if it satisfies the following two conditions:
 \begin{enumerate}[(i)]
\item $K(1_x,p)=1_p$  for any $x\in {\rm{Obj}}(\mc{X})$ and $p\in \pi_{0}^{-1}(x)$;
\item if $(\gamma_2, p_2), (\gamma_1, p_1) \in {\rm{Mor}}( \mc{X}) \times_{t, {\rm{Obj}}(\mc{X}), \pi_0} {\rm{Obj}}(\mc{E})$ such that ${s}(\gamma_2)={t}(\gamma_1)$ and $p_2=s\bigl(K(\gamma_1, p_1)\bigr),$ then $K(\gamma_2 \circ \gamma_1 , p_1)= K(\gamma_1, p_1) \circ K(\gamma_2, p_2)$.

 \end{enumerate}

\end{definition}
\begin{definition}
	A cloven fibration $(\pi \colon \mc{E} \ra \mc{X}, K)$ is said to be a \textit{split fibered category} or a \textit{split fibration over $\mc{X}$} if the cleavage $K$ is splitting.
\end{definition}

	\begin{definition}\label{Definition of morphism of fibered categories}
		Let $\pi \colon \mc{E} \ra \mc{X}$ and $\pi' \colon \mc{E}' \ra \mc{X}$ be two fibered categories over the category $\mc{X}$. Then a \textit{morphism from $\pi \colon \mc{E} \ra \mc{X}$  to $\pi' \colon \mc{E}' \ra \mc{X}$ }is defined as a functor $F \colon \mc{E} \ra \mc{E}'$ such that $\pi' \circ F= \pi$ and it maps cartesian moprphisms to cartesian morphims.
	\end{definition}

Next, we recall the notion of a pseudofunctor over a category.

\begin{definition}\label{pseudofunctor}
	A \textit{pseudofunctor $\mc{F} \colon \mc{X}^{\rm{op}} \ra \rm{Cat}$ over a category $\mc{X}$} consists of the  following data:
	\begin{itemize}
		\item[(i)] For each $x \in \mc{X}$, we have a category $\mc{F}(x)$;
		\item[(ii)] For each morphism $x \xrightarrow {\gamma} y$ in $\mc{X}$, we have a functor  $\gamma^{*}: \mc{F}(y) \ra \mc{F}(x)$;
		\item[(iii)] A natural isomorphism $I_x: id^{*}_x \Longrightarrow {\rm{id}}_{\mc{F}(x)}$ for each object  $x$ in $\mc{X}$;
		\item[(iv)] A natural isomorphism $\alpha_{\gamma_1,\gamma_2}: \gamma_1^{*} \gamma_2^{*} \Longrightarrow (\gamma_2 \gamma_1)^{*}$ for each pair of composable morphisms 
		\begin{tikzcd}
			x \arrow[r, "\gamma_{1}"] & y \arrow[r, "\gamma_2"] & z
		\end{tikzcd},
		where the adjacency of $\gamma_2, \gamma_1$ denotes the composition;
	\end{itemize}
	The above data satisfies the following \textit{coherence laws}:
	\begin{itemize}
		\item[(i)] For any morphism $x \xrightarrow {\gamma} y$ in $\mc{X}$, and an object $p$ in $F(y)$, we have
		\begin{equation}\label{Coherence diagram 1}
			\begin{split}
				& \alpha_{{\rm{id}}_x, \gamma}(p)= I_{x}(\gamma^{*}(p)) \colon {\rm{id}}^{*}_{x}(\gamma^{*}(p)) \Longrightarrow \gamma^{*}(p),\\
				& \alpha_{\gamma, {\rm{id}}_{y}}(p)= \gamma^{*}(I_{y}(p)) \colon \gamma^{*}{\rm{id}}^{*}_{y}(p) \Longrightarrow \gamma^{*}(p).
			\end{split}
		\end{equation}
		\item[(ii)] For any composable sequence of morphisms of the form
		\begin{tikzcd}
			x \arrow[r, "\gamma_3"] & y \arrow[r, "\gamma_2"] & z \arrow[r, "\gamma_1"] & w
		\end{tikzcd}
		and an object $p$ in $F(x)$, the following diagram commutes
		\begin{equation}\label{Coherence diagram 2}
			\begin{tikzcd}
				\gamma_3^{*}\gamma_2^{*}\gamma_1^{*}(p) \arrow[d, "\gamma_3^{*}\big( \alpha_{\gamma_2,\gamma_1}(p) \big)"'] \arrow[r, "\alpha_{\gamma_3,\gamma_2}(\gamma_1^{*}(p))"] & (\gamma_2  \gamma_3)^{*}\gamma_1^{*} (p) \arrow[d, "\alpha_{ \gamma_2 \gamma_3,\gamma_1}(p)"] \\
				\gamma_3^{*}(\gamma_1  \gamma_2)^{*} (p) \arrow[r, "\alpha_{\gamma_3,  \gamma_1  \gamma_2 }(p)"']                & (\gamma_1  \gamma_2  \gamma_3)^{*} (p)               
			\end{tikzcd}
		\end{equation}
	\end{itemize}
\end{definition}
An exciting feature of a pseudofunctor $\mc{F} \colon \mc{X}^{{\rm{op}}} \ra {\rm{Cat}}$ is the fact that one can encode its whole data in a fibered category over $\mc{X}$. This encoding, i.e., construction of the fibered category from a pseudofunctor, is often called  \textit{Grothendieck construction} (for example, see \textbf{Chapter 10} of \cite{MR4261588}). On the other hand, given a fibered category over $\mc{X}$, there is a canonical way of constructing a pseudofucntor over $\mc{X}$. This association, along with the Grothendieck construction, defines a one-one correspondence between fibered categories over $\mc{X}$ and pseudofunctors over $\mc{X}$. While the correspondence is entirely conventional, we will briefly outline it below. This presentation differs slightly from the existing ones and is tailored to the objectives of our thesis.

\subsection*{Construction of a pseudofunctor from a fibered category:}\label{Construction of a pseudofunctor from a fibered category}
	
Consider a fibered category $\pi \colon \mc{E} \ra \mc{X}$ over $\mc{X}$. Let us choose a cleavage $$K \colon {\rm{Mor}}( \mc{X}) \times_{t, {\rm{Obj}}(\mc{X}), \pi_0}{\rm{Obj}}(\mc{E}) \ra {\rm{Cart}}(\mc{E}).$$ Then the following data defines a pseudofunctor $\mc{F} \colon \mc{X}^{{\rm{op}}} \ra {\rm{Cat}}$ over $\mc{X}$:
\begin{itemize}
	\item[(i)] each $x \in {\rm{Obj}}(\mc{X})$ is assigned to the fibre $\pi^{-1}(x)$ over $x$;
	\item[(ii)] each arrow $x \xrightarrow {\gamma} y$ in $\mc{X}$ is assigned to the functor 
	\begin{equation}\nonumber
		\begin{split}
			\gamma^{*} \colon & \pi^{-1}(y) \ra \pi^{-1}(x)\\
			& q \mapsto \gamma_{K(\gamma,q)}^{*}(q),\\
			& (p\xrightarrow{\delta}q) \mapsto \gamma^{*}(\delta),
		\end{split}
	\end{equation}
	where $\gamma^{*}(\delta)$ is the unique arrow in $\pi^{-1}(x)$ such that the following diagram commutes:
	\begin{equation}\nonumber
		\begin{tikzcd}
			\gamma_{K(\gamma,p)}^{*}(p) \arrow[d, "\gamma^{*}(\delta)"'] \arrow[r, "K(\gamma {\rm{,}}p)"] & p \arrow[d, "\delta"] \\
			\gamma_{K(\gamma,q)}^{*}(q)  \arrow[r, "K(\gamma {\rm{,}}q)"']                & q           
		\end{tikzcd}
	\end{equation}
	\item[(iii)] For each $x \in {\rm{Obj}}(\mc{X})$, we have a natural isomorphism $I_x \colon {\rm{id}}^{*}_{x} \Longrightarrow {\rm{id}}_{\pi^{-1}(x)}, p \mapsto K(1_x,p)$ for all $p \in \pi^{-1}(x)$,
	\item[(iv)]  For each pair of composable morphisms 
	\begin{tikzcd}
		x \arrow[r, "\gamma_1"] & y \arrow[r, "\gamma_2"] & z
	\end{tikzcd} in $\mc{X}$ we have a natural isomorphism $\alpha_{\gamma_1,\gamma_2}: \gamma_1^{*} \gamma_2^{*} \Longrightarrow (\gamma_2 \circ \gamma_1)^{*}$, defined as $p \mapsto \alpha_{\gamma_1,\gamma_2}(p)$ where $\alpha_{\gamma_1,\gamma_2}(p)$ is the unique isomorphism (by \Cref{Uniqueness of cartesian pullback}) from $(\gamma_2 \circ \gamma_1)^{*}_{K \big(\gamma_2,p \big) \circ K\big(\gamma_1, \gamma_{2_{K(\gamma_2,p)}}^{*}(p)\big)}(p)$ to $(\gamma_2 \circ \gamma_1)_{K(\gamma_2 \circ \gamma_1,p)}^{*}(p)$. 
\end{itemize}			

\subsection*{Construction of a fibered category from a pseudofunctor (Grothendieck Construction):} \label{Classical Grothendieck construction}

Consider a pseudofunctor $\mc{F} \colon \mc{X}^{{\rm{op}}} \ra {\rm{Cat}}$ over a category $\mc{X}$. We construct a category $\mc{\mc{E}}:=[E_1 \rra E_0]$, whose object set is $$E_0:= \bigl\{ (p,x) : x \in {\rm{Obj}}(\mc{X}), p \in \mc{F}(x) \bigr\},$$ and the morphism set is $$E_1:= \bigl\{ \big( (p, x) \xrightarrow{(\Gamma, \gamma)} (q, y)  \big) : \, x \xrightarrow{\gamma} y, \, p \xrightarrow{\Gamma} \gamma^{*}(q) \bigr\}.$$
%(\Gamma, \gamma) \colon (p,x) \ra (q,y) \big) \, {\rm{such}} \, {\rm{that}} \, \gamma: x \ra y, \quad \Gamma: p \ra \gamma^{*}(q) \rbrace$. 
For a composable pair 
\begin{tikzcd}
	(p {\rm{,}} x) \arrow[r, "(\Gamma_1 {\rm{,}} \gamma_1)"] & (q {\rm{,}} y) \arrow[r, "(\Gamma_2 {\rm{,}} \gamma_2)"] & (r {\rm{,}} z)
\end{tikzcd}  of elements in $E_1$, the composition is defined as $(\Gamma_2, \gamma_2) \circ (\Gamma_1, \gamma_1):= \big( \alpha_{\gamma_1,\gamma_2}(r) \circ \gamma_{1}^{*}(\Gamma_2) \circ \Gamma_1, \gamma_2 \circ \gamma_1 \big)$ and the unital map is given by $(p,x) \mapsto (I_{x}^{-1}(p), 1_x)$. Verifying that $\mc{\mc{E}}$ is indeed a category is straightforward. 

The obvious projection map
\begin{equation}\nonumber
	\begin{split}
		\pi \colon & \mc{E} \ra \mc{X}\\
		& (p,x) \mapsto x, \\
		& (\Gamma, \gamma) \mapsto \gamma,
	\end{split}
\end{equation}
define a fibered category $\pi \colon \mc{E} \ra \mc{X}$ over $\mc{X}$ equipped with a cleavage
\begin{equation}\nonumber
	\begin{split}
		K \colon & {\rm{Mor}}( \mc{X}) \times_{t, {\rm{Obj}}(\mc{X}), \pi_0} {\rm{Obj}}(\mc{E}) \ra {\rm{Cart}}(\mc{E})\\ & (\gamma, (p,x)) \mapsto ({\rm{id}}_{{\gamma}^{*}(p)}, \gamma).
	\end{split}
\end{equation}

The two constructions above are inverses of each other up to an isomorphism, which leads to the following proposition:

\begin{proposition}\label{Fibered and Pseudo}
	Given a category $\mc{X}$, there is a one-one correspondence between fibered categories over $\mc{X}$ and pseudofunctors over $\mc{X}$ up to isomorphisms.
\end{proposition}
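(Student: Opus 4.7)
The plan is to verify that the two constructions just given assemble into maps between the class of cloven fibrations over $\mc{X}$ and the class of pseudofunctors $\mc{X}^{\rm op} \to {\rm Cat}$, and then to show that these maps are mutually inverse up to isomorphism of fibered categories (respectively equality of pseudofunctors). Throughout the argument the principle invoked is always the same: cartesian lifts of a fixed arrow into a fixed target are unique up to unique isomorphism in the fibre (\Cref{Uniqueness of cartesian pullback}).

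First, I would check that the data $(\gamma^{*}, I_{x}, \alpha_{\gamma_1, \gamma_2})$ extracted from a cloven fibration $(\pi \colon \mc{E} \ra \mc{X}, K)$ satisfies the axioms of \Cref{pseudofunctor}. Functoriality of $\gamma^{*}$ follows from the uniqueness clause in the definition of a cartesian morphism applied to the defining diagram for $\gamma^{*}(\delta)$. The transformation $I_x$ lands in isomorphisms by \Cref{lemma: Groupoid Cartesian}, since $K(1_x, p)$ is cartesian over an identity. The components $\alpha_{\gamma_1, \gamma_2}(p)$ are isomorphisms for exactly the same reason, supplied by \Cref{Uniqueness of cartesian pullback}. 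The coherence identities \Cref{Coherence diagram 1} and \Cref{Coherence diagram 2} are then verified by observing that both sides of each diagram, after postcomposition with the appropriate cartesian lift into the original object, produce the same cartesian arrow; uniqueness forces the two sides to agree.

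Second, I would verify that the category $\mc{E}$ produced by the Grothendieck construction is genuinely fibered over $\mc{X}$. Associativity and unitality of the composition in $\mc{E}$ rest on the coherence diagrams of the input pseudofunctor. That every morphism $({\rm id}_{\gamma^{*}(q)}, \gamma)$ is cartesian follows by writing out its universal property explicitly: given $(\Gamma', \gamma') \colon (p', x') \to (q, y)$ and a factorisation $\gamma' = \gamma \circ \tilde{\gamma}$, the required lift over $\tilde{\gamma}$ is forced using the functor $\tilde{\gamma}^{*}$ and the invertible structure cell $\alpha_{\tilde{\gamma}, \gamma}$. Third, to establish the correspondence, I would produce explicit comparison morphisms in both directions. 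Starting from $(\mc{E}, K)$, define $\Phi \colon \mc{E} \ra \tilde{\mc{E}}$ (with $\tilde{\mc{E}}$ the Grothendieck construction of the pseudofunctor associated to $(\mc{E}, K)$) by sending $p \in \pi_{0}^{-1}(x)$ to $(p, x)$ and an arrow $\delta \colon p \ra q$ with $\pi_{1}(\delta) = \gamma$ to $(\Gamma_{\delta}, \gamma)$, where $\Gamma_{\delta} \colon p \ra \gamma^{*}(q)$ is the unique factorisation of $\delta$ through $K(\gamma, q)$. Functoriality of $\Phi$ over $\mc{X}$, preservation of cartesian arrows, and bijectivity on objects and on morphisms then yield the required isomorphism of fibered categories. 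In the other direction, starting with a pseudofunctor $\mc{F}$, applying the Grothendieck construction, equipping it with the canonical cleavage $K(\gamma, (q, y)) = ({\rm id}_{\gamma^{*}(q)}, \gamma)$, and extracting the pseudofunctor returns $\mc{F}$ on the nose.

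The main obstacle will be the bookkeeping of the coherence cells on the fibration side: verifying that the $\alpha_{\gamma_1, \gamma_2}$ produced from a cleavage satisfies the pentagon diagram \Cref{Coherence diagram 2}, and dually that the comparison functor $\Phi$ preserves composition, both reduce to uniqueness arguments for cartesian lifts into a fixed target. These arguments are conceptually transparent but notationally dense, and a sizable fraction of the proof will be devoted to writing them out.
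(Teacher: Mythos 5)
Your proposal is correct and takes essentially the same route as the paper, which simply records the two constructions (cleavage $\leadsto$ pseudofunctor and the Grothendieck construction) and asserts they are mutually inverse up to isomorphism; your write-up just supplies the standard uniqueness-of-cartesian-lift verifications. One small caveat: the round trip pseudofunctor $\to$ Grothendieck construction $\to$ extracted pseudofunctor recovers the original only up to isomorphism, not ``on the nose'' (the fibre over $x$ has hom-sets of the form ${\rm{Hom}}(p, 1_x^{*}(q))$ rather than ${\rm{Hom}}(p,q)$ unless the pseudofunctor is strictly unital), which is exactly the tolerance the proposition allows.
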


In particular, if $K$ above is a splitting cleavage, the association above restricts between split fibered categories and Cat-valued contravariant functors over the category $\mc{X}$. To be more specific, we have the following characterization:
%	With the same notations as used in the above discussion on the association, for the particular case when the cleavage $K$ above is splitting, the above association yields a one-one correspondence between split fibered categories and Cat-valued contravariant functors on the category $\mc{X}$. More precisely, we have the following:
\begin{proposition}\label{Split and functor}
	The pseudofunctor associated to a cloven fibration is a functor if and only if the cleavage is splitting.
\end{proposition}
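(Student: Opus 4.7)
The plan is to reduce the statement to unpacking the definitions. By construction, the pseudofunctor $\mc{F}$ associated to a cloven fibration $(\pi\colon\mc{E}\to\mc{X},K)$ is a (strict) functor $\mc{X}^{\op}\to\Cat$ precisely when every coherence isomorphism $I_x\colon \id_x^{*}\Rightarrow \id_{\pi^{-1}(x)}$ and $\alpha_{\gamma_1,\gamma_2}\colon \gamma_1^{*}\gamma_2^{*}\Rightarrow(\gamma_2\circ\gamma_1)^{*}$ is the identity natural transformation. So the task is to show that these two families of natural isomorphisms are identities if and only if the cleavage $K$ satisfies the two conditions of Definition \ref{Definition splitting cleavage}.

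For the unit datum, recall that at $p\in\pi^{-1}(x)$ one has $I_x(p)=K(1_x,p)$ viewed as a morphism $\id_x^{*}(p)\to p$ in the fibre $\pi^{-1}(x)$. Hence $I_x$ is the identity transformation if and only if $K(1_x,p)=1_p$ for every $x\in\Obj(\mc{X})$ and $p\in\pi_0^{-1}(x)$, which is exactly condition (i) of Definition \ref{Definition splitting cleavage}. This handles the ``identity-on-identities'' half in both directions.

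For the composition datum, fix a composable pair $x\xrightarrow{\gamma_1}y\xrightarrow{\gamma_2}z$ and an object $p\in\pi^{-1}(z)$. Both morphisms
\[
K(\gamma_2\circ\gamma_1,p)\quad\text{and}\quad K(\gamma_2,p)\circ K\bigl(\gamma_1,\gamma_{2,K(\gamma_2,p)}^{*}(p)\bigr)
\]
are cartesian arrows in $\mc{E}$ whose $\pi$-image is $\gamma_2\circ\gamma_1$ and whose target is $p$; by the universal property of cartesian lifts they differ by a unique isomorphism in the fibre $\pi^{-1}(x)$, and by construction this isomorphism is precisely $\alpha_{\gamma_1,\gamma_2}(p)$. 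Therefore $\alpha_{\gamma_1,\gamma_2}(p)=\mathrm{id}$ for every $p$ if and only if
\[
K(\gamma_2\circ\gamma_1,p)=K(\gamma_2,p)\circ K\bigl(\gamma_1,\gamma_{2,K(\gamma_2,p)}^{*}(p)\bigr),
\]
which (after the obvious renaming $p_1:=p$, $p_2:=s(K(\gamma_2,p))$) is condition (ii) of Definition \ref{Definition splitting cleavage}.

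Combining these two equivalences, $\mc{F}$ is a strict functor if and only if $K$ satisfies both splitting conditions, i.e.\ if and only if the cleavage is splitting. The only subtlety, and the place where one has to be careful, is the argument that $\alpha_{\gamma_1,\gamma_2}$ constructed via the universal property really agrees with the comparison isomorphism between the two displayed cartesian lifts; this is immediate from the uniqueness clause in the definition of cartesian morphism (cf.\ Remark \ref{Uniqueness of cartesian pullback}), so no further work is required.
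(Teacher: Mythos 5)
Your argument is correct and is exactly the standard definition-unpacking that the paper leaves implicit (\Cref{Split and functor} is stated there without proof): reading ``is a functor'' as ``the coherence data are identity $2$-cells'' --- which is the intended reading, and the one under which the statement is actually true --- the components $I_x(p)=K(1_x,p)$ and the description of $\alpha_{\gamma_1,\gamma_2}(p)$ as the unique comparison between the cartesian arrows $K(\gamma_2\circ\gamma_1,p)$ and $K(\gamma_2,p)\circ K\bigl(\gamma_1,s(K(\gamma_2,p))\bigr)$ reduce the claim precisely to conditions (i) and (ii) of \Cref{Definition splitting cleavage}, as you say. One remark: the composition identity you write is the type-correct form of condition (ii); as printed in \Cref{Definition splitting cleavage} the two sides live over different arrows (the left-hand side over $\gamma_2\circ\gamma_1$, the right-hand composite $K(\gamma_1,p_1)\circ K(\gamma_2,p_2)$ over $\gamma_1\circ\gamma_2$), so your ``renaming'' in fact matches the evidently intended condition rather than the literal text --- this is a typo in the paper, not a gap in your proof.
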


\begin{remark}
Respective collection of fibered categories and pseudofunctors naturally form strict 2-categories (\Cref{strict 2-category}) and the above one-one correspondence (\Cref{Fibered and Pseudo}) infact extends to a 2-equivalence of 2-categories. We recommend \textbf{Theorem 8.3.1}, \cite{MR1313497} for readers interested in such a general treatement.
	\end{remark}

\begin{example}[Sujective group homomorphism]
	Considering groups as one object categories, every surjective group homomorphism $\phi \colon H \ra G$ is a fibered category over $G$. Any section $\psi \colon G \ra H$ of $\phi$ is a cleavage and is a splitting cleavage if and only in addition $\psi$ is a group homomorphism. But, one should note that such a splitting cleavage may not always exist.
\end{example}

\begin{example}[Arrow category over a category]
	Let $\mc{C}$ be a category with pullbacks. Consider the \textit{arrow category ${\rm{Arr}}(C)$ of $C$}, whose objects are morphisms of $C$ and an arrow from $a \xrightarrow{f} b$ to $a' \xrightarrow{f'} b'$ is a commutative diagram
	\begin{equation}\label{mor Arr}
		\begin{tikzcd}
			a  \arrow[d, "f"'] \arrow[r, ""]                 & a^{{\rm{'}}} \arrow[d, "f^{{\rm{'}}}"] \\
			b \arrow[r, ""']  & b^{{\rm{'}}}           
		\end{tikzcd}
	\end{equation}
	with obvious composition law. Then the functor $\pi _{C} \colon {\rm{Arr}}(C) \ra C$ sending an object $(a \xrightarrow{f} b)$ to its target $b$ and a morphism \Cref{mor Arr} to its bottom arrow $b \ra b'$ defines a fibered category, whose cartesian morphisms are precisely the cartesian squares.					
\end{example}
\begin{example}[Classifying stack of a Lie group]\label{Classifying stack of a Lie group}
	For a Lie group $G$, there is a category $BG$, whose objects are principal $G$-bundles  $P \ra M$ and arrows are morphisms of principal bundles. The projection functor $BG \ra {\rm{Man}}$, that sends a principal bundle $P \ra M$ to $M$ and a morphism 
	\[
	\begin{tikzcd}
		P \arrow[d, "\pi"'] \arrow[r, "f_{P}"]                 & P' \arrow[d, "\pi'"] \\
		M \arrow[r, "f_M"']  & M'           
	\end{tikzcd}\]
	to $f_M$, defines a fibered category over Man, and is known by the name \textit{classifying stack of the Lie group $G$}.
\end{example}

	\begin{remark}\label{Remark on classifying stack}
		A \textit{generalized version} of the above example in particular, provides the association between a \textit{Lie groupoid} and a \textit{differentiable stack}. A `stack on a Grothendieck site' categorify the traditional notion of a sheaf on a topological space, while it must have already been noticed that a pseudofunctor over a category (hence, a fibered category with a cleavage) categorifies the notion of a presheaf. In this sense, a \textit{morphism of stacks} is a just a morphism of fibered categories as defined in \Cref{Definition of morphism of fibered categories}. We suggest \cite{MR2817778} and \cite{MR2778793} for readers interested in a general treatement of stacks.	 
\end{remark}

\section{Lie groupoids}\label{Lie groupoids}
The concept of a Lie groupoid, introduced in the works of Charles Ehresmann (\cite{MR0116360}, \cite{MR0197529}) in the 1960s, represents a categorification of the notion of a smooth manifold.
In particular, these objects encompass smooth manifolds and Lie groups as their unified generalizations. From a more abstract point of view, it serves as a geometric object representing a differentiable stack, \cite{MR2817778}. Specifically, a differentiable stack is just a Morita equivalent class of Lie groupoids. Nowadays, these objects are integral parts of higher gauge theory literature, as evident from the works like \cite{MR2805195}, \cite{MR2270285}, \cite{MR3894086}, \cite{MR3917427}, \cite{MR3521476}, \cite{MR4592876} and many others.
The breadth of these entities is quite extensive, as demonstrated by their intimate relations with Poisson geometry (through the works of A. Weinstein et al (for example see \cite{MR0866024}, \cite{MR1103911})), quantization (\cite{MR1103911}, \cite{MR2417440}), generalized complex geometry (\cite{MR4612595}), Dirac structures (\cite{MR2068969}), to name a few.

Nevertheless, our approach to Lie groupoids in this section is modest and tailored to our specific objectives. The content covered here is mainly conventional, and we recommend consulting \cite{MR2157566}, \cite{MR2012261}, and \cite{MR2778793} for further details.

. 

\subsection{Basic definitions, properties and examples}\label{Basic definitions}
\begin{definition}[Lie groupoid, Definition 1.1.3, \cite{MR2157566} ]\label{Lie groupoids definition}
	A \textit{Lie groupoid} is a groupoid $\mb{X}:=[X_1 \rra X_0]$, where $X_1$ and $X_0$ are smooth manifolds, the source and target maps $s,t \colon X_1 \ra X_0$ are submersions and all other structure maps, unit $u \colon X_0 \ra X_1$, composition $m \colon X_1 \times_{X_0} X_1 \ra X_1$ and the inverse map $\mathfrak{i} \colon X_1 \ra X_1$ are smooth maps.
\end{definition}
\begin{remark}
	One can show that the inverse map of a Lie groupoid is a diffeomorphism; see \textbf{Proposition 1.1.5}, \cite{MR2157566}.
\end{remark}
Next, we will construct some natural examples of Lie groupoids arising from the classical manifold theory.
\begin{example}\label{Lie grouppoid Example: Lie group}[Lie group]
	The groupoid $[G \rra *]$ associated to a Lie group $G$ is a Lie groupoid.
\end{example}
\begin{example}\label{Lie groupoid example: manifold}[Manifold]
	For any manifold $M$, the groupoid $[M \rra M]$, whose morphisms are only identity arrows, is a Lie groupoid. These Lie groupoids are often called \textit{discrete Lie groupoids}.
\end{example}
\begin{example}\label{Lie groupoid example:Pair groupoid}[Pair groupoid]
	For any manifold $M$, there is a Lie groupoid ${\rm{Pair}}(M):=[M \times M \rra M]$, called the \textit{pair groupoid of }$M$, whose source, target maps are first, second projection respectively, and other structure maps are obvious.
\end{example}
\begin{example}\label{Lie groupoid example action groupoid}[Action groupoid]
	A  left action of a Lie group $G$ on a manifold $M$ gives rise to a Lie groupoid $[G \times M \rra M]$ called \textit{left translation}(or \textit{action}) \textit{groupoid} of the left action, whose structure maps are given by 
	\begin{itemize}
		\item $s \colon G \times M \ra M,(g,m) \mapsto m$,
		\item $t \colon G \times M \ra M,(g,m) \mapsto gm$, 
		\item Composition: if $s(g',m')=t(g,m)$, then $(g',m') \circ (g,m):=(gg',m)$,
		\item $u \colon M \ra G \times M, m \mapsto (e,m)$,
		\item $\mathfrak{i} \colon G \times M \ra G \times M, (g,m) \mapsto (g^{-1},gm)$.
	\end{itemize}
\end{example}
Similarly for a right action of a Lie groupoid there is a \textit{right translation}(or \textit{action}) \textit{groupoid}  $[M \times G \rra M]$.

\begin{example}\label{Fundamental groupoid of a manifold}[Fundamental groupoid of a manifold] For any manifold $M$, \textit{the fundamental groupoid} $\Pi_1(M)$ is a Lie groupoid, whose objects are points of $M$, morphisms are smooth homotopy classes of paths relative to endpoints. For more details on its smooth structure see \textbf{Example 5.1 (6)} of \cite{MR2012261}.
\end{example}
\begin{example}\label{Frame groupoid}[Frame groupoid or linear groupoid]
	For any vector bundle $ \pi \colon E \ra M$ over a manifold $M$, the \textit{frame groupoid} ${\rm{GL}}(\pi)$ is a Lie groupoid, whose objects are elements of $M$ and morphisms are the linear isomorphisms between the fibers, see \textbf{Example 5.1 (7)} \cite{MR2012261}.
\end{example}
\begin{example}\label{Atiyah groupoid}[Atiyah groupoid/Gauge groupoid]
	For any principal $G$-bundle $ \pi \colon P \ra M$, the \textit{gauge groupoid} or \textit{Atiyah groupoid} $\mc{G}(\pi)$ is a Lie groupoid whose objects are the element of $M$ and arrows are morphism of $G$-torsors between fibers. The set of morphisms can also be identified by the quotient manifold $\frac{P \times P}{G}$ by the diagonal action of $G$. We suggest \textbf{Example 2.35} of \cite{koushik2021geometric} for readers interested in the detailed description of its structure maps.
\end{example}
\begin{example}\label{Cover groupoid}[Cover Lie groupoid or Čech groupoid]
	For any manifold $M$ and an open cover $\mc{U}:= \lbrace U_{i} \rbrace_{i}$, there is a Lie groupoid $C( \mc{U}) :=\big[ \bigsqcup_{i,j }U_{ij} \rra \bigsqcup_{i }U_i \big]$, where $U_{ij}:= U_{i} \cap U_{j}$, whose source and target maps are given as  :
	\begin{itemize}
		\item $s \colon \bigsqcup_{i,j }U_{ij} \ra \bigsqcup_{i }U_i, (x,i,j) \mapsto (x,i)$,
		\item $t \colon \bigsqcup_{i,j }U_{ij} \ra \bigsqcup_{i }U_i, (x,i,j) \mapsto (x,j)$.
	\end{itemize}
	Composition and other structure maps are evident.					
\end{example}
\begin{example}\label{Tangent Lie groupoid}[Tangent Lie groupoid]
	Given a Lie groupoid $\mb{X}=[X_1\rra X_0],$ the tangent bundle over $X_1$ and $X_0$ combine to form a Lie groupoid, called the   \textit{tangent Lie groupoid} of $\mb{X}$, denoted $T\mb{X}:=[TX_1\rra TX_0]$. The differentials of the respective structure maps of $\mb{X}$ induce the structure maps of $T\mb{X}$. Particularly, if $(\gamma_2, \mc{X}_2)$, $(\gamma_1, \mc{X}_1)$ are composable morphisms in $T\mb{X}$, then we denote $(\gamma_2, \delta_2) \circ (\gamma_1,\delta_1)= \Big( m(\gamma_2, \gamma_1), m_{*,(\gamma_2,\gamma_1)}(\mc{X}_2,\mc{X}_1) \Big)$ as $(\gamma_2 \circ \gamma_1, \mc{X}_2 \circ \mc{X}_1)$, where $m$ is the composition map of the Lie groupoid $\mb{X}$.
\end{example}

\begin{definition}[Lie subgroupoid, Appendix A.1, \cite{MR3451921}]\label{Lie subgroupoid}
A \textit{Lie subgroupoid of a Lie groupoid} $\mb{X}=[X_1 \rra X_0]$	is a subgroupoid $\mb{Y}=[Y_1 \rra Y_0]$ of $\mb{X}$, defined by the restriction of the structure maps such that the following three conditions hold:
\begin{enumerate}[(i)]
	\item $Y_1 \subseteq X_1$ and  $Y_0 \subseteq X_0$ are submanifolds;
	\item The restriction of the source map in $\mb{X}$, to $Y_1$ is a submersion;
	\item The structure maps of the groupoid $\mb{Y}$ are smooth.
	\end{enumerate}
	\end{definition}

\begin{definition}[Morphism of Lie groupoids, Section 5.1, \cite{MR2012261}]\label{Definition: Morphism of Lie groupoids}
	Let $\mb{X}$ and $\mb{Y}$ be Lie groupoids. A \textit{morphism of Lie groupoids} \textit{from} $\mb{X}$ to $\mb{Y}$ is defined as a functor $F \colon \mb{X} \ra \mb{Y}$, such that $F_0 \colon X_0 \ra Y_0$ and $F_1 \colon X_1 \ra Y_1$ are smooth. 
\end{definition}
\begin{example}
	Let $G$ and $H$ be Lie groups. Any Lie group homomorphism $\phi \colon G \ra H$ induces an obvious morphism of Lie groupoids $[G \rra *] \ra [H \rra *]$.
\end{example}
\begin{example}
	Any smooth map $f \colon M \ra N$ of manifolds induces a morphism of Lie groupoids between the associated discrete Lie groupoids.
\end{example}
\begin{example}
	Any smooth map $f \colon M \ra N$ induces a morphism of Lie groupoids between the associated pair groupoids $(f \times f, f) \colon {\rm{Pair}}(M) \ra {\rm{Pair}}(N)$.
\end{example}

\begin{example}
	Let $\phi: G \ra G'$ be a Lie group homomorphism and  $G, G'$ acts on manifolds $M, M'$ respectively from the left. Then any smooth map $f \colon M \ra M'$ preserving the said Lie group actions induces a morphism of Lie groupoids $ \big((f \times \phi), f \big) \colon [G \times M \rra M] \ra [G' \times M' \rra M']$.
\end{example}
\begin{example}\label{Induced morphism between fundamental groupoid of manifolds}
	Any smooth map $f \colon M \ra N$ induces a morphism of Lie groupoids $$(\bar{f},f) \colon \Pi_1(M) \ra \Pi_1(N)$$ between the associated fundamental groupoids where $\bar{f}$ takes a homotopy class of paths $[\alpha]$ in $M$ to the homotopy class of paths $[f \circ \alpha]$ in $N$.
\end{example}
\begin{example}
	Let $\pi \colon E \ra M$ and $ \pi' \colon E' \ra M'$ be two vector bundles. Any morphism of vector bundles $(f_{E}, f_{M})$ from $\pi \colon E \ra M$ to   $ \pi' \colon E' \ra M'$ induces an obvious morphism of Lie groupoids ${\rm{GL}}(\pi) \ra {\rm{GL}}(\pi')$ between the associated frame Lie groupoids.
\end{example}
\begin{example}
	For a Lie group $G$, let $\pi \colon P \ra M$ and $\pi' \colon P' \ra M'$  be two principal $G$-bundles. Then any morphism of principal $G$-bundles $(f_P, f_M)$ from $\pi \colon P \ra M$ to $\pi' \colon P' \ra M'$ induces a morphism of Lie groupoids $(\bar{f}, f_M) \colon \mc{G}(\pi) \ra \mc{G}(\pi')$  between the associated Atiyah groupoids, where $\bar{f}$ takes an element $[p,q]$ to $[f_{E}(p),f_E(q)]$.
\end{example}
\begin{example}\label{induced morphism from Cech groupoid}
	Given a manifold $M$ with an open cover $\mc{U}$, there is an obvious morphism of Lie groupoids from the corresponding Čech groupoid $C(\mc{U})$ (\Cref{Cover groupoid}) to the discrete Lie groupoid $[M \rra M]$, defined as
	\begin{equation}\nonumber
		\begin{split}
	 \mc{U}^{*} \colon & C(\mc{U}) \ra [M \rra M] \\
	 & (x,i) \mapsto x,\\
	 & (x,i,j) \mapsto x.
	 \end{split}
	 \end{equation}
	\end{example}
\begin{example}
	Any morphism of Lie groupoids $F \colon \mb{X} \ra \mb{Y}$ induces a morphism of Lie groupoids $dF:=(dF_1, dF_0) \colon T\mb{X} \ra T\mb{Y}$ between the associated tangent Lie groupoids where $dF_1$ and $dF_0$ are differentials of $F_1$ and $F_0$ respectively.
\end{example}
\begin{definition}[Smooth natural transformation, Section 5.3, \cite{MR2012261}]
	Given two morphisms of Lie groupoids $\phi, \psi \colon \mb{X} \ra \mb{Y}$, a \textit{smooth natural transformation } from $\phi$ to $\psi$ is defined as a natural transformation $\eta \colon \phi \Rightarrow \psi$  such that the map $\eta \colon X_0 \ra Y_1$ is smooth.
\end{definition}

\begin{proposition}\label{Category and 2-category of Lie groupoids}
	The collection of Lie groupoids forms a strict 2-category, whose 1-morphisms are morphisms of Lie groupoids and 2-morphisms are smooth natural transformations.			
\end{proposition}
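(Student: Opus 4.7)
The plan is to verify that the collection of Lie groupoids fits the data and axioms of \Cref{strict 2-category}, largely by reducing to the corresponding verifications in $\mathbf{Cat}$ and then checking that all smoothness conditions are preserved. First I would take the class $|\mathcal{D}|$ to be the class of Lie groupoids, and for each pair $\mb{X},\mb{Y}$ define $\mathcal{D}(\mb{X},\mb{Y})$ to be the category whose objects are morphisms of Lie groupoids $F\colon \mb{X}\to \mb{Y}$ (\Cref{Definition: Morphism of Lie groupoids}) and whose arrows are smooth natural transformations between them. The vertical composition is the pointwise composition $(\eta'\circ\eta)(x) = m\bigl(\eta'(x),\eta(x)\bigr)$ for $x\in X_0$; since $\eta,\eta'\colon X_0\to Y_1$ are smooth and $m\colon Y_1\times_{Y_0} Y_1\to Y_1$ is a smooth map of a Lie groupoid, this composite is smooth, and the groupoid identity at $F$ is the smooth map $x\mapsto u_{Y_0}(F_0(x))$. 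Associativity and unitality of vertical composition come from the underlying groupoid $\mb{Y}$, so $\mathcal{D}(\mb{X},\mb{Y})$ is indeed a (small) category.

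Next I would construct the unit functor $u_{\mb{X}}\colon 1\to \mathcal{D}(\mb{X},\mb{X})$ sending the unique object to the identity morphism of Lie groupoids $\mathrm{id}_{\mb{X}}=(\mathrm{id}_{X_1},\mathrm{id}_{X_0})$, whose smoothness is obvious. Then I would define the horizontal composition functor $c_{\mb{X},\mb{Y},\mb{Z}}\colon \mathcal{D}(\mb{X},\mb{Y})\times \mathcal{D}(\mb{Y},\mb{Z}) \to \mathcal{D}(\mb{X},\mb{Z})$. On objects, $(F,G)\mapsto G\circ F$, where $(G\circ F)_i = G_i\circ F_i$ for $i=0,1$; smoothness of $G_i\circ F_i$ follows from smoothness of $F_i,G_i$. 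On morphisms, if $\eta\colon F\Rightarrow F'$ and $\theta\colon G\Rightarrow G'$ are smooth natural transformations, their horizontal composite is defined by the standard formula $(\theta\circ_h\eta)(x) = m\bigl(\theta(F'_0(x)),G_1(\eta(x))\bigr) = m\bigl(G'_1(\eta(x)),\theta(F_0(x))\bigr)$, which is smooth because $\eta$, $\theta$, $F_0'$, $G_1$, and $m$ are smooth. Functoriality of $c_{\mb{X},\mb{Y},\mb{Z}}$ (preservation of identities and vertical composition) is the classical interchange law in $\mathbf{Cat}$; the only thing to check in the smooth setting is that the manipulations never leave the smooth category, which holds because all the maps involved are compositions of smooth maps.

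Finally, I would verify the associativity diagram \Cref{associativity axiom of strict 2-category} and the unit diagram \Cref{unit axiom of strict 2-category}. On objects, these amount to the equalities $H\circ(G\circ F)=(H\circ G)\circ F$ and $F\circ \mathrm{id}_{\mb{X}}=F=\mathrm{id}_{\mb{Y}}\circ F$, which hold strictly since composition of smooth maps is associative and the identity functor is a strict two-sided unit. On morphisms, the same equalities propagate from $\mathbf{Cat}$, because the underlying horizontal composition of natural transformations in $\mathbf{Cat}$ satisfies these axioms on the nose and smoothness does not interfere with set-level equalities.

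The main obstacle is largely bookkeeping: making sure that every new map one writes down is smooth. No single step is conceptually hard; the potential pitfall is simply overlooking a composition that secretly relies on a non-smooth operation. In this case, every building block is manifestly smooth—the structure maps of a Lie groupoid are smooth by \Cref{Lie groupoids definition}, morphisms of Lie groupoids are pairs of smooth maps, and smooth natural transformations are by definition smooth maps $X_0\to Y_1$—so the verifications go through without obstruction, and the classical 2-category structure on $\mathbf{Cat}$ restricts to the claimed strict 2-category structure on Lie groupoids.
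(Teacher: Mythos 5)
Your verification is correct. Note, however, that the paper does not actually prove this proposition: it simply cites Section 5.3 of Moerdijk--Mr\v{c}un, so there is no in-paper argument to compare against; your write-up is essentially the standard verification one finds in that reference, namely restricting the strict 2-category structure of $\mathbf{Cat}$ to Lie groupoids and checking smoothness of every newly formed map. The only point worth making fully explicit is the smoothness of the vertical composite: the pair $x\mapsto\bigl(\eta'(x),\eta(x)\bigr)$ lands in the fibered product $Y_1\times_{Y_0}Y_1$, which is a smooth manifold precisely because the source and target of a Lie groupoid are submersions (\Cref{Lie groupoids definition}), and only then does composing with the smooth map $m$ give a smooth map $X_0\to Y_1$; the same remark applies to the horizontal composite. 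With that observation spelled out, all your steps go through, and the associativity and unit axioms hold on the nose exactly as you argue.
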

\begin{proof}
	See \textbf{Section 5.3}, \cite{MR2012261}.
	\end{proof}
We denote the strict 2-category of Lie groupoids by 2-LieGpd and its underlying category (\Cref{underlying category of strict 2-category}) by LieGpd. 

Below, we list some basic properties of Lie groupoids:

\begin{proposition}\label{Basic properties}
	Suppose $\mb{X}=[X_1 \rra X_0]$ is a Lie gropoid, and let $x,y \in X_0$, then the following holds:
	\begin{enumerate}[(i)]
		\item  ${\rm{Hom}}(x,y)$ is a closed submanifold of $X_1$.
		\item ${\rm{Aut}}_{\mb{X}}(x):= {\rm{Hom}}(x,x)$ is a Lie group.
		\item $t \big(s^{-1}(x) \big)$ is an immersed submanifold of $X_0$.
		\item $t_{x} \colon t|_{s^{-1}(x)} \ra t \big(s^{-1}(x) \big)$ is a principal ${\rm{Aut}}_{\mb{X}}(x)$-bundle over $ t \big(s^{-1}(x) \big)$.
	\end{enumerate}
	
\end{proposition}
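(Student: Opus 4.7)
The plan is to deduce these four structural assertions by systematically exploiting the submersivity of the structure maps of $\mb{X}$ together with the homogeneity afforded by local translations (bisections) in the groupoid. The statements are interconnected, so my strategy is to establish $(i)$ first, derive $(ii)$ as an immediate corollary, and then treat $(iii)$ and $(iv)$ together through a single orbit-space argument.

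For $(i)$, since $s \colon X_1 \to X_0$ is a submersion, the fibre $s^{-1}(x)$ is a closed embedded submanifold of $X_1$. The key step is to analyse the restricted map $t|_{s^{-1}(x)} \colon s^{-1}(x) \to X_0$ and to show that it has locally constant rank. I would carry this out as follows: given $\gamma_{0} \in s^{-1}(x)$ with $t(\gamma_{0}) = y_{0}$, submersivity of $s$ produces a local bisection of $\mb{X}$ through $\gamma_{0}$, and the associated local right-translation diffeomorphism of $X_1$ transports nearby $s$-fibres to $s^{-1}(x)$ while intertwining target values; consequently the rank of $t|_{s^{-1}(x)}$ at $\gamma_{0}$ coincides with the rank at its translates, so the rank is locally constant. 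The constant rank theorem then identifies ${\rm Hom}(x,y) = (t|_{s^{-1}(x)})^{-1}(y)$ as a closed embedded submanifold of $X_1$. Statement $(ii)$ follows at once: by $(i)$, ${\rm Aut}_{\mb{X}}(x)$ is an embedded submanifold of $X_1$, and the restrictions of the composition $m$ and the inversion $\mathfrak{i}$ furnish it with smooth group operations, giving it the structure of a Lie group.

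For $(iii)$ and $(iv)$, I would introduce the smooth right action of ${\rm Aut}_{\mb{X}}(x)$ on $s^{-1}(x)$ defined by $(\alpha, g) \mapsto \alpha \circ g$, well-defined since $t(g)=x=s(\alpha)$ and the composite again has source $x$. This action is free by left-cancellation in the groupoid, and its orbits coincide exactly with the fibres of $t|_{s^{-1}(x)}$: if $t(\alpha) = t(\beta) = y$ then $g := \alpha^{-1} \circ \beta \in {\rm Aut}_{\mb{X}}(x)$ satisfies $\alpha \circ g = \beta$. Properness is verified by observing that the graph map $s^{-1}(x) \times {\rm Aut}_{\mb{X}}(x) \to s^{-1}(x) \times s^{-1}(x)$, $(\alpha, g) \mapsto (\alpha, \alpha \circ g)$, is a smooth embedding whose image is the closed subset cut out by $t \circ {\rm pr}_{1} = t \circ {\rm pr}_{2}$. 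Hence the orbit space $s^{-1}(x)/{\rm Aut}_{\mb{X}}(x)$ inherits a smooth manifold structure, and the canonical bijection onto $t(s^{-1}(x))$ equips the latter with the structure of an immersed submanifold of $X_0$, proving $(iii)$. With respect to this smooth structure on the orbit, $t_{x}$ becomes the orbit projection of a free and proper smooth action and hence a principal ${\rm Aut}_{\mb{X}}(x)$-bundle, yielding $(iv)$.

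The principal obstacle I anticipate is the constant-rank argument in $(i)$, which hinges on the existence of sufficiently many local bisections of $\mb{X}$ and the compatibility of right-translation diffeomorphisms with the source and target maps; once this is in place, $(ii)$ is essentially routine, and the free-and-proper orbit-space picture of equivariant differential geometry delivers $(iii)$ and $(iv)$. A secondary subtlety is that the orbit in $(iii)$ is in general only immersed and not embedded, so care must be taken to distinguish the intrinsic manifold topology on $s^{-1}(x)/{\rm Aut}_{\mb{X}}(x)$ from the subspace topology inherited from $X_{0}$.
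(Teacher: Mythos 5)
The paper itself gives no proof here; it simply cites Theorem 5.4 of Moerdijk--Mr\v{c}un, whose argument has the same overall architecture as yours (constant rank of $t|_{s^{-1}(x)}$ for (i)--(ii), then the free and proper $\mathrm{Aut}_{\mathbb{X}}(x)$-action on $s^{-1}(x)$ for (iii)--(iv)). However, your justification of the crucial constant-rank step in (i) does not work as written. A local bisection $\sigma$ through $\gamma_0$ acts by right translation as $\beta\mapsto\beta\circ\sigma(u)$, carrying the source fibre $s^{-1}\bigl(t(\sigma(u))\bigr)$ onto $s^{-1}(u)$ and commuting with $t$; for $u=x$ this is just the diffeomorphism $R_{\gamma_0}\colon s^{-1}(y_0)\to s^{-1}(x)$, $\beta\mapsto\beta\circ\gamma_0$, with $t\circ R_{\gamma_0}=t$. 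All this yields is that the rank function of $t|_{s^{-1}(x)}$ is the pullback under a diffeomorphism of the rank function of $t|_{s^{-1}(y_0)}$ (equivalently, $\operatorname{rank}_{\gamma_0}t|_{s^{-1}(x)}=\operatorname{rank}_{1_{y_0}}t|_{s^{-1}(y_0)}$): it transfers the question from one source fibre to another, and compares ranks of \emph{different} maps at single points, but it does not show that the rank of the fixed map $t|_{s^{-1}(x)}$ is locally constant along $s^{-1}(x)$. Lower semicontinuity of rank gives only one inequality. Nor can you shortcut by noting that the rank is constant along the single fibre $\mathrm{Hom}(x,y)$: constancy of rank along a fibre does not force the fibre to be a submanifold (for $f(u,v)=u^2v^2$ on $\mathbb{R}^2$ the rank is $0$ along $f^{-1}(0)$, which is not a submanifold), so genuine local constancy in a neighbourhood is essential.

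The correct mechanism is a translation that moves points \emph{within} the same source fibre: given $g,h\in s^{-1}(x)$, choose a local bisection $\sigma$ through the arrow $h\circ g^{-1}$ (so $s\circ\sigma=\mathrm{id}$ near $t(g)$ and $\sigma(t(g))=h\circ g^{-1}$) and post-compose, $\Phi(\alpha)=\sigma\bigl(t(\alpha)\bigr)\circ\alpha$. Then $\Phi$ is a diffeomorphism from a neighbourhood of $g$ in $s^{-1}(x)$ onto a neighbourhood of $h$ in $s^{-1}(x)$, $\Phi(g)=h$, and $t\circ\Phi=(t\circ\sigma)\circ t$ with $t\circ\sigma$ a local diffeomorphism of $X_0$; hence the rank of $t|_{s^{-1}(x)}$ is in fact globally constant on $s^{-1}(x)$, and the constant rank theorem gives (i). With this repair the rest of your proposal is sound: (ii) is immediate, and your free-and-proper orbit argument delivers (iii)--(iv), with the caveats that the properness argument uses that $\mathrm{Aut}_{\mathbb{X}}(x)$ is \emph{embedded} (i.e.\ (i)), and that the injectivity of the differential of the induced map $s^{-1}(x)/\mathrm{Aut}_{\mathbb{X}}(x)\to X_0$, needed for ``immersed'' in (iii), again rests on the constant-rank fact, since the orbits of the action are exactly the fibres of $t|_{s^{-1}(x)}$.
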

\begin{proof}
	See \textbf{Theorem 5.4}, \cite{MR2012261}.
\end{proof}

\subsection*{Some important classes of Lie groupoids:}

\begin{definition}[Proper groupoid, Definition 2.18, \cite{MR2778793}]
	A Lie groupoid $\mb{X}$ is said to be \textit{proper}, if the map $(s,t) \colon X_1 \ra X_1 \times X_1,\gamma \mapsto \big(s(\gamma), t(\gamma) \big)$, is a proper map. 
\end{definition}

\begin{definition}[Etale groupoid, Definition 2.19, \cite{MR2778793}]
	A Lie groupoid $\mb{X}$ is said to be \textit{étale}, if the source and target maps are local diffeomorphisms
\end{definition}
\begin{definition}\label{proper etale Lie groupoid}
	A Lie groupoid $\mb{X}=[X_1\rra X_0]$  is called  \textit{proper \'etale} if it is both proper and étale. 	
\end{definition}
\begin{example}
	A cover Lie groupoid or Čech groupoid \Cref{Cover groupoid} is a proper \'etale groupoid.
\end{example}
\begin{example}
	An action groupoid \Cref{Lie groupoid example action groupoid} is proper if and only if the action of the Lie group is proper.
\end{example}
\begin{remark}
	A proper étale Lie groupoid is same as an \textit{orbifold} (see \cite{MR1466622}, \cite{MR1950948})
\end{remark} 
\begin{definition}[Transitive Lie groupoid]\label{Definition: Transitive Lie groupoid}
	A Lie groupoid $\mb{X}$ is said to be \textit{transitve} if for any pair of elements $x,y \in X_0$, there is an element $\gamma \in X_1$, such that $s(\gamma)=x$ and  $t(\gamma)=y$.
	\end{definition}
	\begin{example}
		Pair groupoid ${\rm{Pair}}(M)$ (\Cref{Lie groupoid example:Pair groupoid}) of any manifold $M$ is a transitive Lie groupoid.
		\end{example}
\begin{example}
	The fundamental groupoid $\Pi_1(M)$ (\Cref{Fundamental groupoid of a manifold}) of a manofold $M$ is transitive if and only if $M$ is connected.
	\end{example}
	
	\begin{example}
		Let $\rho$ be an action of a Lie group $G$ on a manifold $M$. Then the corresponding action Lie groupoid $[G \times M \rra M]$ is transitive if and only if the action $\rho$ is transitive.
		\end{example}
		\begin{example}
Given a Lie group $G$, for any principal $G$-budle $\pi \colon P \ra M$ the corresponding Atiyah groupoid $\mc{G}(\pi)$ (\Cref{Atiyah groupoid}) is a transitive Lie groupoid. 
			\end{example}
			
			Conversely, it is also known (see \cite{MR1001474}) that any transitive Lie groupoid $\mb{X}=[X_1 \rra X_0]$ is isomorphic to the Atiyah groupoid $\mc{G}(t|_{s^{-1}(x)})$ (\Cref{Atiyah groupoid}) of the prinicpal $\rm{Aut}(x)$-bundle $t_x \colon s^{-1}(x) \ra X_0$ over $X_0$ (\Cref{Basic properties}) for any $x \in X_0$ .

 	\subsection*{Nerve of a Lie groupoid:}
		\begin{definition}\label{Definition: Nerve of a Lie groupoid}[\cite{MR2270285}, \textbf{Section 2.1}]
 		The \textit{nerve} of a Lie groupoid $\mb{X}$ is  a simplicial manifold $N(\mb{X})$, defined by the following data:
 		\begin{equation}\nonumber
 			\begin{split}
 					& N(\mb{X})_0=X_0\\
 					& N(\mb{X})_1=X_1\\
 					& N(\mb{X})_n= \lbrace (\gamma_1,\gamma_2,...,\gamma_n) \in X_1^{n}: s(\gamma_i)=t(\gamma_{i+1}),i=1,2,...,n-1 \rbrace, n >1,
 				\end{split}			
 		\end{equation}
 		whose face maps $s_i^{n}$ and the degeneracy maps $d_i^{n}$,  are given as	
 		\begin{equation}\nonumber
 			\begin{split}
 					&d_0^{n}(\gamma_1,\gamma_2,...,\gamma_n)=(\gamma_2,...,\gamma_n), n>1, \\
 					&d_{n}^{n}(\gamma, \gamma_2,...,\gamma_n)=(\gamma_1,...,\gamma_{n-1}), n>1,  \\
 					&d_i^{n}(\gamma_1, \gamma_2,...,\gamma_n)=(\gamma_1,..\gamma_{i} \gamma_{i+1},...,\gamma_n), n>1, \\
 					&s_0^{n}(\gamma_1,..,\gamma_n)=(1_{t(\gamma_1)},\gamma_1,..,\gamma_n), n>0, \\
 					&s_i^{n}(\gamma_1,..,\gamma_n)=(\gamma_1,..\gamma_i,1_{s(\gamma_i)},\gamma_{i+1},..\gamma_n), 1 \leq i \leq n, n>0, 
 				\end{split}
 		\end{equation}
 		and 			
 		\begin{equation}\nonumber
 			\begin{split}
 					&d_0^{1}(\gamma)=s(\gamma) \\
 					&d_1^{1}(\gamma)=t(\gamma) \\
 					&s_0^{0}(x)=1_x, x \in X_0.
 				\end{split}.
 		\end{equation}

 		\end{definition}

 	\subsection{Fibred products in Lie groupoids}\label{subsection pullbacks in Lie groupoids}
 	This subsection quickly recalls two notions of fibered products in Lie groupoids, namely,
 	\begin{itemize}
 		\item strong fibered products;
 		\item weak fibered products.
 	\end{itemize}
 	We suggest \textbf{Section 5.3}, \cite{MR2012261} for a detailed treatment. In particular, our reference for the portion on strong fibered products is \textbf{Appendix A} of \cite{MR3451921}.

 	\subsection*{Strong fibred products}\label{strong fibered products}
 	
 	\begin{definition}
 		A pair of smooth maps $f_1 : M_1 \ra M$ and $f_2: M_2 \ra M$ is said to form a \textit{good pair} if they satify the following two conditions:
 		\begin{enumerate}[(i)]
 			\item The set-theoretic pullback $M_{12} := M_1 \times_{M} M_2$ is an embedded submanifold of $M_1 \times M_2$ and 
 			\item for all  $(p_1,p_2) \in M_{12}$ with $p=f_1(p_1)=f_2(p_2)$, the following is an exact sequence
 			
 			\begin{tikzcd}
 				0 \arrow[r] & T_{(p_1,p_2)}M_{12} \arrow[r] & T_{p_1}M_1 \times T_{p_2}M_2 \arrow[r, "df_1 - df_2"] & T_{p}M
 			\end{tikzcd}
 			
 		\end{enumerate}
 		
 	\end{definition}
 	\begin{example}
 		A pair of submersion and any smooth map is always a good pair.
 	\end{example}

 	\begin{lemma}\label{Proposition: Strict pullback of Lie groupoids}
 		
 		Let $\phi: \mb{X} \ra \mb{Z}$ and $\psi: \mb{Y} \ra \mb{Z}$ be a pair of morphisms of Lie groupoids such that $\phi_1, \psi_1$ form a good pair. Then the pullback manifolds  $X_0 \times_{\phi_0, Z_0, \psi_0} Y_0$ and $X_1 \times_{\phi_1, Z_1, \psi_1} Y_1$ forms a Lie subgroupoid (see \Cref{Lie subgroupoid}) $\mb{X} \times_{\phi, \mb{Z}, \psi} \mb{Y}$ of the product Lie groupoid $\mb{X} \times \mb{Y}$  with evident structure maps. Moreover, the Lie groupoid $\mb{X} \times_{\phi, \mb{Z}, \psi} \mb{Y}$ satisfies the universal property of the pullback in LieGpd,  \Cref{Category and 2-category of Lie groupoids}.
 		%	 such that $\big( \mb{X} \times_{\phi, Z, \psi} \mb{Y} \big)_{0}:= X_0 \times_{\phi_0,Z_0, \psi_0} Y_0$ and  $\big( \mb{X} \times_ {\phi, Z, \psi} \mb{Y} \big)_{1}:= X_1 \times_{\phi_1, Z_1, \psi_1} Y_1$ 
 	\end{lemma}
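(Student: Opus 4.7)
The plan is to verify three things in sequence: (i) the set-theoretic fibered products $X_0 \times_{Z_0} Y_0$ and $X_1 \times_{Z_1} Y_1$ are embedded submanifolds of $X_0 \times Y_0$ and $X_1 \times Y_1$ respectively; (ii) the groupoid structure of $\mathbb{X} \times \mathbb{Y}$ restricts to these submanifolds and the restricted source map is a submersion, so that we obtain a Lie subgroupoid; and (iii) the resulting Lie groupoid satisfies the universal property of the pullback in LieGpd.

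For (i), the morphism-level statement is immediate from the hypothesis that $(\phi_1,\psi_1)$ is a good pair. For the object level I would deduce that $(\phi_0,\psi_0)$ is also a good pair by exploiting the unit maps. Since $u_X, u_Y, u_Z$ are smooth embeddings and $\phi, \psi$ are functors, the condition $\phi_0(x)=\psi_0(y)$ is equivalent to $\phi_1(u_X(x))=\psi_1(u_Y(y))$. Thus $u_X \times u_Y$ restricts to an injective map $X_0 \times_{Z_0} Y_0 \hookrightarrow X_1 \times_{Z_1} Y_1$, and composing with the retraction $s_X \times s_Y$ (which is a submersion) identifies $X_0 \times_{Z_0} Y_0$ as an embedded submanifold of $X_0 \times Y_0$. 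The exactness of the tangent sequence at the object level is then obtained from the corresponding sequence at the morphism level by using that $(u_X)_{*}, (u_Y)_{*}, (u_Z)_{*}$ are linear embeddings that split $(s_X)_{*}, (s_Y)_{*}, (s_Z)_{*}$.

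For (ii), the fact that source, target, unit, inverse and composition of $\mathbb{X}\times\mathbb{Y}$ send the fibered-product submanifolds into each other follows from the functoriality of $\phi$ and $\psi$: for instance, if $(\gamma,\delta)\in X_1\times_{Z_1} Y_1$, then $\phi_0(s_X(\gamma))=s_Z(\phi_1(\gamma))=s_Z(\psi_1(\delta))=\psi_0(s_Y(\delta))$, so $(s_X(\gamma),s_Y(\delta))\in X_0\times_{Z_0}Y_0$; composition is handled similarly using $\phi_1(\gamma_2\circ\gamma_1)=\phi_1(\gamma_2)\circ\phi_1(\gamma_1)$. The crucial technical point is that the restricted source map $X_1\times_{Z_1}Y_1 \to X_0\times_{Z_0}Y_0$ is a submersion. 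I would prove this by identifying, via the exact sequence in the good pair condition, $T_{(\gamma,\delta)}(X_1\times_{Z_1}Y_1)$ with the kernel of $(\phi_1)_{*}-(\psi_1)_{*}$ in $T_\gamma X_1\oplus T_\delta Y_1$, and similarly at the object level; then surjectivity of the restricted $s_{*}$ reduces to lifting tangent vectors componentwise (using that $s_X$ and $s_Y$ are submersions) while preserving the pullback constraint, which can be arranged precisely because of the compatibilities $s_Z\circ\phi_1=\phi_0\circ s_X$ and $s_Z\circ\psi_1=\psi_0\circ s_Y$.

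For (iii), given any Lie groupoid $\mathbb{W}$ and morphisms $\alpha\colon\mathbb{W}\to\mathbb{X}$, $\beta\colon\mathbb{W}\to\mathbb{Y}$ with $\phi\circ\alpha=\psi\circ\beta$, the universal property of set-theoretic pullbacks of manifolds produces unique smooth maps $W_i\to X_i\times_{Z_i}Y_i$ for $i=0,1$ (smoothness using that the fibered products are embedded submanifolds). Functoriality and compatibility with structure maps transfer from $\alpha$ and $\beta$ by uniqueness, yielding the required morphism of Lie groupoids $\mathbb{W}\to\mathbb{X}\times_{\phi,\mathbb{Z},\psi}\mathbb{Y}$. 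I expect the main obstacle to be the submersion property of the restricted source map in step (ii); this is where the full strength of the exactness in the definition of a good pair is needed, rather than just the embedded-submanifold part.
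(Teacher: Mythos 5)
Your overall strategy (establish the object-level pullback from the morphism-level one via the unit embedding, check the structure maps restrict, prove the restricted source is a submersion, then get the universal property from the set-theoretic one) is reasonable; note that the paper itself does not prove this lemma but simply cites Proposition A.1.4 of the external reference, so you are attempting a direct argument. The genuine gap is exactly at the step you flag as the main obstacle, and the justification you offer there does not work as stated. Given $(\gamma,\delta)\in X_1\times_{Z_1}Y_1$ over $(x,y)$ and $(v,w)\in T_{(x,y)}(X_0\times_{Z_0}Y_0)$, choosing lifts $V\in T_\gamma X_1$ of $v$ and $W\in T_\delta Y_1$ of $w$ "componentwise" and invoking $s_Z\circ\phi_1=\phi_0\circ s_X$, $s_Z\circ\psi_1=\psi_0\circ s_Y$ only gives $ds_Z\bigl((d\phi_1)V-(d\psi_1)W\bigr)=0$; the two images agree merely modulo $\ker ds_Z$, not on the nose. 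To land in $T_{(\gamma,\delta)}(X_1\times_{Z_1}Y_1)=\ker(d\phi_1-d\psi_1)$ you must correct $(V,W)$ by source-vertical vectors, and this is possible only if the defect $(d\phi_1)V-(d\psi_1)W$ lies in $d\phi_1(\ker ds_X)_\gamma+d\psi_1(\ker ds_Y)_\delta$ — a statement that is not a consequence of the two compatibility identities and is precisely the nontrivial content of the lemma. A correct route is: observe that at unit points the lift $(du_X v,du_Y w)$ works on the nose (this also gives the tangent exactness at the object level); then, at a general point, use right translation by $(\gamma,\delta)$ together with functoriality of $\phi$ and $\psi$ to identify $d\phi_1(\ker ds_X)_\gamma+d\psi_1(\ker ds_Y)_\delta$ with the corresponding subspace at the units over the target, and combine this with the rank bookkeeping that the good-pair hypothesis provides (the dimensions of $\ker(d\phi_1-d\psi_1)$ along $X_1\times_{Z_1}Y_1$ and of $\ker(d\phi_0-d\psi_0)$ along $X_0\times_{Z_0}Y_0$ are locally constant) to conclude surjectivity of the restricted $ds$. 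Without some argument of this kind, step (ii) is unproven.

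A smaller, more easily repaired gloss occurs in step (i): saying that composing the injection $u_X\times u_Y$ with the retraction $s_X\times s_Y$ "identifies" $X_0\times_{Z_0}Y_0$ as an embedded submanifold of $X_0\times Y_0$ is only a hint. What actually does the work is that $(u_X\times u_Y)\circ(s_X\times s_Y)$ restricts to a smooth idempotent of the manifold $X_1\times_{Z_1}Y_1$ (it preserves the fibered product because $\phi$ and $\psi$ intertwine units and sources), whose fixed-point set is the image of $X_0\times_{Z_0}Y_0$; one then needs the standard lemma that the fixed-point set of a smooth idempotent is an embedded submanifold, and finally that a submanifold of $X_1\times Y_1$ contained in the embedded copy of $X_0\times Y_0$ is embedded therein. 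With that made explicit, your derivation of the object-level tangent exactness from the morphism-level one via $du$ and $ds$ is fine, and your treatment of step (iii) is unproblematic.
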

 	\[ \begin{tikzcd}
 		\mb{X} \times{_{\phi,\mb{Z}, \psi}} \mb{Y} \arrow[d, "{\rm{pr}}_1"'] \arrow[r, "{\rm{pr}}_2"]                 & \mb{X} \arrow[d, "\phi"] \\
 		\mb{Y} \arrow[r, "\psi"']  & \mb{Z}               
 	\end{tikzcd}\]
 	\begin{proof}
 		See \textbf{Proposition A.1.4} in \cite{MR3451921}.
 	\end{proof}
 	
 	The Lie groupoid $\mb{X} \times_{\phi, \mb{Z}, \psi} \mb{Y}$  is called the \textit{strong fibered product with respect to the pair of maps $\phi: \mb{X} \ra \mb{Z}$ and $\psi: \mb{Y} \ra \mb{Z}$}.
 	\subsection*{Weak Fibred products}
 	Given a pair of  morphisms of Lie groupoids $\phi \colon \mb{Y} \ra \mb{X}$ and $\psi \colon \mb{Z} \ra \mb{X}$, there exists a topological groupoid  $\mb{Y} \times_{\phi, \mb{X}, \psi}^{h} \mb{Z}$. An object of this groupoid is given by a triple $(y, \psi(z) \xrightarrow{\gamma} \phi(y) ,z )$ for $y \in Y_0, z \in Z_0$, and an arrow from $(y, \psi(z) \xrightarrow {\gamma} \phi(y), z )$ to $(y', \psi(z') \xrightarrow {\gamma'} \phi(y') ,z')$ is given by a pair $(y \xrightarrow{\zeta} y', z \xrightarrow{\delta} z')$
 	such that the following diagram commutes:
 	\[\begin{tikzcd}
 		\psi(z) \arrow[d, "\psi(\delta)"'] \arrow[r, "\gamma"]                 & \phi(y) \arrow[d, "\phi(\zeta)"] \\
 		\psi(z') \arrow[r, "\gamma'"']  & \phi(y')               
 	\end{tikzcd}\]
 	That is 
  \begin{equation}\label{Weakfibreproductmorphims}
  \phi(\zeta) \circ \gamma = \gamma' \circ \psi(\delta).	
  \end{equation}
 	
 	The groupoid $\mb{Y} \times_{\phi, \mb{X}, \psi}^{h} \mb{Z}$ has the usual universal property of a fiber product (but up to an isomorphism). If one of $\phi_0 \colon Y_0 \ra X_0$ or $\psi_0 \colon Z_0 \ra X_0$ is a submersion, then  $\mb{Y} \times_{\phi, \mb{X}, \psi}^{h} \mb{Z}$ is Lie groupoid and is called the  \textit{weak fibered product with respect to the pair of maps $\phi \colon \mb{Y} \ra \mb{X}$ and $\psi \colon \mb{Z} \ra \mb{X}$}. For a rigorous treatment, readers can look at
 	\textbf{Section 5.3}, \cite{MR2012261}. Below, we outline an interesting property these weak fibered products enjoy, which we will use in later chapters.
 	
 	Any morphism of Lie groupoids $F \colon \mb{X} \ra \mb{Y}$ has a canonical factorization (see \cite{MR3968895}) through 
 	
 	$\mb{Y} \times_{\mb{Y},F}^{h} \mb{X}:= \mb{Y} \times_{{\rm{id}}_{\mb{Y}}, \mb{Y}, F}^{h} \mb{X}$, 
 	\begin{equation}\label{factorization of morphisms}
 	\begin{tikzcd}
 		\mb{X} \arrow[d, "F"'] \arrow[r, "F_{\mb{X}}"] & \mb{Y} \times_{\mb{Y},F}^{h} \mb{X} \arrow[ld, "F_{\mb{Y}}"] \\
 		\mb{Y}                                           &                                                           
 	\end{tikzcd}
 	\end{equation}
 	and is given by
 	\begin{equation}\label{piE}
 		\begin{split}
 			F_{\mb{X}} \colon & \mb{X} \ra \mb{Y} \times_{\mb{Y},F}^{h} \mb{X}\\
 			& x \mapsto \big( F(x), {\rm{id}}_{F(x)}, x \big)\\
 			&(x \xrightarrow{\gamma} y) \mapsto \big(F(\gamma),\gamma\big) \in {\rm{Hom}}\Big( \big(F(x), {\rm{id}}_{F(x)}, x\big), \big(F(y), {\rm{id}}_{F(y)}, y \big)\Big).
 		\end{split}
 	\end{equation}
 	and 
 	\begin{equation}\label{piX}
 		\begin{split}
 		  F_{\mb{Y}} \colon & \mb{Y} \times_{\mb{Y},F}^{h} \mb{X} \ra \mb{Y} \\
 			& (y, \zeta,p) \mapsto y \\
 			&( y \xrightarrow{\eta}  y', p \xrightarrow{\delta}  p') \mapsto \eta.
 		\end{split}
 	\end{equation}

 	\subsection{Lie groupoid $G$-extensions}\label{subsection Lie groupoid G extension}
 	Here, we quickly recall the definition of a Lie groupoid $G$-extension \cite{MR3480061}, for a Lie group $G$ over the identity map on a manifold $M$. 
 	
 	Consider the action groupoid $[M\times G\rra M]$ as given in \Cref{Lie groupoid example action groupoid}. 	A \textit{Lie groupoid $G$-extension} is a short exact sequence of Lie groupoids  
 	of the following form
 	\begin{equation}\label{Dia:Gextension}
 		\begin{tikzcd}[sep=small]
 			1 \arrow[r] & M\times G  \arrow[dd,xshift=0.75ex]\arrow[dd,xshift=-0.75ex]  \arrow[rr, "i"] &  & \Gamma_2 \arrow[rr, "\phi"]  \arrow[dd,xshift=0.75ex]\arrow[dd,xshift=-0.75ex]   &  & \Gamma_1 \arrow[r] \arrow[dd,xshift=0.75ex]\arrow[dd,xshift=-0.75ex]  & 1 \\
 			&                                                                    &  &                                                       &  &                           &   \\
 			1 \arrow[r] & M \arrow[rr, "{\rm Id}"]                                                     &  & M \arrow[rr, "{\rm Id}"]                                        &  & M \arrow[r]             & 1
 		\end{tikzcd},
 	\end{equation} where $\phi$ is a surjective submersion and $i$ is an embedding (\cite[Chapter $4$]{Moerdijk2}). 
 	
 	% 	Recall the source, target maps both be projection at the first component $(m, g)\mapsto m,$  composition $(m, g_2)\circ (m, g_1)=(m, g_2 g_1),$ $1_m=(m, e)$ and $(m, g)^{-1}=(m, g^{-1}).$	

 	\subsection{Action and quasi-action of a Lie groupoid}\label{Action and quasiaction of a Lie groupoid}
 	In \Cref{Lie groupoid example action groupoid}, we demonstrated how an action of a Lie group on a manifold defines a Lie groupoid called the action groupoid. In this section, we construct a generalized version of such action groupoid arising from an `action of a Lie groupoid on a manifold'. This notion of action is standard in the current body of literature; for example, see \cite{MR2012261}, \cite{MR2270285}, \cite{MR2778793}. Furthermore, we also mention a weaker notion of action, called `quasi-action of a Lie groupoid on a manifold', comparatively a less standard notion.

 	% Our interest in this thesis revolves around a generalized version of a particular kind of Lie groupoid called "semi-direct product groupoid" arising from an "action of a Lie groupoid on a manifold" instead of an action of a Lie group. We begin by recalling the definition of an action of a Lie groupoid on a manifold. 
 	
 	\begin{definition}\label{Left action of a Lie groupoid on a manifold}[\cite{MR2270285}, \textbf{Section 2.3}]
 		A \textit{left action of a Lie groupoid $\mb{X}$ on a manifold} $E$ consists of a pair of smooth maps $a \colon E \ra X_0, \mu_{L} \colon X_1 \times_{s,X_0,a} E  \ra E$ such that 
 		\begin{itemize}
 			\item[(i)] $\mu_{L}(1_{p},p)=p$ for all $p \in E$,
 			\item[(ii)] $a(\mu_{L}(\gamma,p))=t(\gamma)$ for all $(\gamma,p) \in X_1 \times_{a,X_0,s} E$,
 			\item[(iii)] $\mu_{L}(\gamma_2, \mu_{L}(\gamma_1,p))= \mu_{L}(\gamma_2 \circ \gamma_1,p)$ for suitable $\gamma_2, \gamma_1,p$.
 		\end{itemize}
 	\end{definition}
 	Analogously, one can define a notion of a right action of a Lie groupoid on a manifold. \label{right action of a Lie groupoid}.
 	
 	The maps $a$, $\mu_{L}$ and $\mu_{R}$ (for the right action) are called the \textit{anchor map}, the \textit{left action map} and the \textit{right action map}, respectively. We will often use the notations $\gamma p$ and $p \gamma$  for $\mu_{L}(\gamma,p)$ and  $\mu_{R}(\gamma,p)$ respectively.
 	\begin{remark}\label{transport category}
 		
 		Consider a left action of a Lie groupoid $\mb{X}$ on a manifold $E$ given by  $a \colon E \ra X_0, \mu \colon X_1 \times_{s, X_0, a} E  \ra E$. There is an obvious category $C_{a}$ whose objects are fibers $a^{-1}(x)$ of $a$ over $x \in X_0$, and morphisms are functions between such fibers. The composition of such arrows is simply the composition of functions. Now, observe that the action of $\mb{X}$ on $E$ induces a functor
 		\begin{equation}\nonumber
 			\begin{split}
 				T_{a, \mu} \colon & \mb{X} \ra C_a\\
 				& x \mapsto a^{-1}(x)\\
 				& (x \xrightarrow{\gamma}y) \mapsto T_{\gamma} \colon a^{-1}(x) \ra a^{-1}(y),
 			\end{split}
 		\end{equation}
 		where the function $T_{\gamma} \colon a^{-1}(x) \ra a^{-1}(y)$ is given by $p \mapsto \mu(\gamma,p)$. On the other hand, observe that if $\mb{X}$ acts on $E$ from right, then we get an analogous functor $\mb{X}^{\rm{op}} \ra C_a$. We will call the category $C_a$ as the \textit{associated transport category of the action }and the functor $T_{a,\mu}$ as the \textit{associated transport functor of the action}.
 	\end{remark}	
 	\begin{example}[Lie group action]\label{Lie group action}
 		Let $G$ be a Lie group and $M$ be manifold. Any left (right) action of $G$ on $M$ defines an obvious left (right) action of the Lie groupoid $[G \rra *]$ (\Cref{Lie grouppoid Example: Lie group}) on $M$. Hence, any representation of $G$ on a vector space $V$ determines an action of $[G \rra *]$ on $V$.
 	\end{example}
 	\begin{example}[Smooth map]
 		Any smooth map $f  \colon N \ra M$ of manifolds induces both left and right action of the discrete Lie groupoid $[M \rra M]$ (\Cref{Lie groupoid example: manifold}) on $N$, where the anchor map is $f$ and both the left and right action maps are 2nd projection maps.
 	\end{example}
 	\begin{example}[Composition of arrows]
 		Given a Lie groupoid $\mb{X}$, the pair of smooth maps $(m,t)$ and $(m,s)$ defines a left action and a right action of $\mb{X}$ on $X_1$ respectively.
 	\end{example}
 	
 	\begin{example}[Trivializations of a fibre bundle]
 		Let $\pi \colon E \ra M$ be a fiber bundle over a manifold $M$ with fiber $F$. Any trivialization $\phi \colon E \ra M \times F$  induces an obvious left (right) action of the pair groupoid $[M \times M \rra M]$ (\Cref{Lie groupoid example:Pair groupoid}) on $E$, whose anchor map is $\pi$, left action is given by $\big((m,n),p\big) \mapsto \phi^{-1}(n, {\rm{pr}}_2(\phi(p)))$ and the right action map is defined by $\big((m,n),(n,f)  \mapsto \phi^{-1}(m, {\rm{pr}}_2(\phi(p))) $, where ${\rm{pr}}_2$ is the 2nd projection map.
 	\end{example}
 	\begin{example}[Flat connections]\label{Flat connection action}
 		For a Lie group $G$, let $\pi \colon P \ra M$ be a principal $G$-bundle over a manifold $M$. Any flat connection $A$ on $\pi \colon P \ra M$ induces a left action of the fundamental groupoid $\Pi_1(M)$ (\Cref{Fundamental groupoid of a manifold}) on $P$, whose anchor map is $\pi$ and the  left action map is given by $([\alpha],p) \mapsto {\rm{Tr}}^{\alpha}_{\omega}(p)$(\Cref{Parallel transport aloing a path}), the parallel transport of $p$ along $\alpha$.
 	\end{example}
 	\begin{example}[Representation of a Lie groupoid]\label{Representation of a Lie groupoid}
 		Given a Lie groupoid $\mb{X}$, let $\pi \colon E \ra X_0$ be a vector bundle over the manifold $X_0$. Then, any morphism of Lie groupoids $\mb{X} \ra {\rm{GL}}(\pi)$, where ${\rm{GL}}(\pi)$ is the frame groupoid (\Cref{Frame groupoid}), induces an obvious left action of $\mb{X}$ on $E$. This action is called the \textit{represntation of $\mb{X}$ on the vector bundle $\pi \colon E \ra X_0$.} Note that when $\mb{X}$, is a Lie groupoid, we recover the usual represntation of Lie groups on a vector space, \Cref{Lie group action}.
 	\end{example}
 	\begin{example}\label{Example Functor to the Atiyah groupoid}[Functor to the Atiyah groupoid \Cref{Atiyah groupoid}]
 		Given a Lie groupoid $\mb{X}$ and a Lie group $G$, let $\pi \colon P \ra X_0$ be a principal $G$-bundle over the manifold $X_0$. Then, any morphism of Lie groupoids $\mb{X} \ra \mc{G}(\pi)$ induces an obvious left action of $\mb{X}$ on $P$.		
 	\end{example}

 	Next, we obtain a generalization of \Cref{Lie groupoid example action groupoid}:  		
 	
 	\begin{example}\label{Semi-direct product groupoid}[Semi-direct product groupoid]
 		Any left action $(a, \mu_{L})$ of a Lie groupoid $\mb{X}$ on a manifold $E$ defines a Lie groupoid $\mb{X}\rtimes E:=[s^{*}E \rra E]$ called \textit{semi-direct product groupoid of the $\mb{X}$-action}, where $s^{*}E:= X_1 \times_{s,X_0, a} E $. The structure maps are given as 
 		\begin{itemize}
 			\item $s  \colon  s^{*}E \ra E, \quad (\gamma,p) \mapsto p$;
 			\item $t  \colon s^{*}E \ra E, \quad (\gamma,p) \mapsto \mu_{L}(\gamma,p)= \gamma p$;
 			\item $u \colon E \ra s^{*}E, \quad p \mapsto (1_{a(p)},p)$;
 			\item Let $(\gamma_2,p_2), (\gamma_1,p_1) \in s^{*}E$ such that $s(\gamma_2,p_2)=t(\gamma_1,p_1)$, then $(\gamma_2,p_2) \circ (\gamma_1,p_1):=(\gamma_2 \circ \gamma_1,p_1)$;
 			\item $\mathfrak{i} \colon s^{*}E \ra s^{*}E, \quad (\gamma,p) \mapsto (\gamma^{-1}, \gamma p)$.
 		\end{itemize}
 		Similarly, a right action of $\mb{X}$ on $E$ gives a semi-direct product groupoid. Note that in the particular case when $\mb{X}$ is a Lie group \Cref{Lie grouppoid Example: Lie group}, the semi-direct product groupoid coincides with that of action groupoid \Cref{Lie groupoid example action groupoid}. 
 	\end{example}
 	\begin{example}[Connection on a Lie groupoid]
 	Any connection on a Lie groupoid $\mb{X}$ (Definition 3.1, \cite{MR3150770}) induces an action of $\mb{X}$ on $TX_0$.
 		\end{example}
 	There is also a weaker notion of action called a `quasi action of a Lie groupoid on a manifold' (\cite{MR3968895}, \cite{MR3757473}), which we recall below:
 	\begin{definition}\label{Definition: quasi action on a manifold}
 		A \textit{left quasi-action of a Lie groupoid $\mb{X}$ on a manifold} $E$ consists of a pair of smooth maps $a_{L} \colon E \ra X_0, \mu_{L} \colon X_1 \times_{s,X_0,a_{L}} E  \ra E$ such that 
 		$a_{L}(\mu_{L}(\gamma,p))=t(\gamma)$  for all $(\gamma,p) \in X_1 \times_{s,X_0,a_{L}} E$.
   %Analogously, a  \textit{right quasi-action of a Lie groupoid $\mb{X}$ on a manifold} $E$ consists of a pair of smooth maps $a_{R} \colon : E \ra X_0, \mu_{R} \colon X_1 \times_{t,X_0,a} E  \ra E$ such that $a_{R}(\mu_{R}(\gamma,p))=s(\gamma)$ for all $(\gamma,p) \in X_1 \times_{t,X_0,a_{R}} E$.
   
 	\end{definition}
 	\begin{example}[Ehresmann connection on a Lie groupoid]
 		Any \textit{Ehresmann connection} $\sigma$ (\textbf{Definition 2.8}, \cite{MR3107517}) on a Lie groupoid $\mb{X}$ induces a quasi-action of $\mb{X}$ on $TX_0$ defined by the \textbf{Definition 2.11}, \cite{MR3107517}. Furthermore, $\sigma$ is an action  if the basic curvature of the connection (\textbf{Definition 2.12}, \cite{MR3107517}), vanishes.
 		\end{example}
 	We will see several examples of quasi actions in later chapters. In fact, a notion of semi-direct product groupoid arising from certain quasi-action of a Lie groupoid on a manifold will play a pivotal role in \Cref{chapter 2-bundles}.
 	\subsection{Anafunctors and Morita equivalence of Lie groupoids}\label{subsection Generalized homomorphisms and Morita equivalence of Lie groupoids}
 	Recall in \Cref{Category and 2-category of Lie groupoids} we saw that the collection of Lie groupoids, morphisms of Lie groupoids, and smooth natural transformations form a strict 2-category. However, as a consequence of the failure of the Axiom of Choice in Man, the category of smooth manifolds (see \cite{MR2778793} for details), a fully faithful essentially surjective morphism of Lie groupoids $F \colon \mb{X} 
 	\ra \mb{Y}$ may not induce the existence of a smooth map $\bar{F} \colon \mb{Y} \ra \mb{X}$ such that $F \circ \bar{F}$ and $\bar{F} \circ F$ are naturally isomorphic (smooth) to identities, (for example, this is case for \Cref{induced morphism from Cech groupoid}). As a result, this strict 2-category is often considered too strict. Nonetheless, there is a natural way to embed this strict 2-category 2-LieGpd into a bicategory (\Cref{Weak 2-category}), whose objects are Lie groupoids, 1-morphisms are `bibundles' or `anafunctors' and 2-morphisms are `transformations between such anafunctors'. Under this embedding, these `smooth versions of fully faithful essentially surjective morphism of Lie groupoids' (often called `Morita maps') are mapped to isomorphisms (often called `weak equivalences') in the said bicategory. This subsection briefly discusses this bicategory. For a detailed description, we recommend the readers to look at \cite{MR3894086}, \cite{MR2778793}, \cite{MR2270285}, and \cite{MR2012261}. 
 	
 	To define the notion of an anafucntor, we recall the definition of a `right principal Lie groupoid bundle over a manifold' below:

 	\begin{definition}[\cite{MR2270285}, \textbf{Section 2.3}]\label{principal Lie groupoid bundle}
 		Given a Lie groupoid $\mb{X}$ and a manifold $M$, a \textit{right principal $\mb{X}$-bundle over $M$} consists of
 		\begin{itemize}
 			\item[(i)] a smooth manifold $M$
 			\item[(ii)] a surjective submersion $\pi \colon E \ra M$
 			\item[(iii)] a right action of $\mb{X}$ on $E$ (\Cref{right action of a Lie groupoid}) such that
 			$\pi(p \gamma)= \pi(p)$ for all $(\gamma,p) \in X_1 \times_{a,X_0,t} E$  and the map $X_1 \times_{a,X_0,t} E \ra E \times_{M} E$ defined as $(\gamma,p) \mapsto (p \gamma,p)$ is a diffeomorphism.
 		\end{itemize}
 	\end{definition}
 	\begin{example}
 		For any Lie groupoid $\mb{X}$, the target map $t \colon X_1 \ra X_0$ is a right principal $\mb{X}$-bundle over $X_0$, whose underlying right action of $\mb{X}$ on $X_1$ are given by the pair of maps $s \colon X_1 \ra X_0,$ and $m \colon X_1 \times_{s, X_0, t} X_1 \ra X_1$.
 	\end{example}
 	
 	\begin{definition}\label{generalized homomorphism}[\cite{MR2270285}, \textbf{Definition 2.8}]
 		An \textit{anafunctor $(a_{\mb{X}}, F, a_{\mb{Y}})$ from a Lie groupoid $\mb{X}$ to a Lie groupoid $\mb{Y}$} consists of 
 		\begin{enumerate}[(i)]
 			\item a smooth manifold $F$, called the \textit{total space},
 			\item a left action of $\mb{X}$ on $F$ with the anchor map $a_{\mb{X}} \colon F \ra X_0$,
 			\item a right action of $\mb{Y}$ on $F$ with the anchor map $a_{\mb{Y}} \colon F \ra Y_0$,
 		\end{enumerate}
 		such that the following holds
 		\begin{itemize}
 			\item[(a)] the anchor map $a_{\mb{X}} \colon F \ra X_0$ is a right principal $\mb{Y}$-bundle over $X_0$, \Cref{principal Lie groupoid bundle},
 			\item[(b)] the left action of $\mb{X}$ and the right action of $\mb{Y}$ on $F$ commutes; that is $$(\gamma f) \delta= \gamma (f \delta)$$ with $s(\gamma)= a_{\mb{X}}(f)$ and $t(\delta)=a_{\mb{Y}}(f)$,
 			\item[(c)] the anchor map $a_{\mb{Y}} \colon F \ra Y_0$ is a $\mb{X}$-invariant map; that is $a_{\mb{Y}}(\gamma f)= a_{\mb{Y}}(f)$ for $f \in F$  with $s(\gamma)=a_{\mb{X}}(f)$, where $\gamma \in X_1$.
 		\end{itemize}
 	\end{definition}
 	\begin{definition}[Definition 2.3.1(b), \cite{MR3894086}]
 		A \textit{transformation from an anafunctor $(a_{\mb{X}}, F, a_{\mb{Y}})$ to an anafunctor $(a'_{\mb{X}}, F', a'_{\mb{Y}})$} is defined as a smooth map $\eta \colon F \ra F'$ such that they preserve the anchors and satisfy the following equivariancy condition: $$\eta(\gamma f h)=\gamma \eta(f) h,$$ for all appropriate triples $(\gamma, f,h)$.
 	\end{definition}
 	The collection of Lie groupoids forms a bicategory, whose 1-morphisms are anafunctors and 2-morphisms are transformations between anafunctors (see \textbf{Section 2.3}, \cite{MR3894086} for a detailed description of this bicategory).
 	\begin{example}\label{Functor induced anafunctor}
 		Every morphism of Lie groupoids $ \psi \colon \mb{X} \ra \mb{Y}$ induces an anafunctor $(a_{\mb{X}}, \bar{\psi},  a_{\mb{Y}})$, with total space $\bar{\psi}:= X_0 \times_{\psi, Y_0, t } Y_1$, anchors $a_{\mb{X}}$ and $a_{\mb{Y}}$ defined as $(x, \gamma) \mapsto x$ and $(x,\gamma) \mapsto s(\gamma)$, respectively. The underlying left action of $\mb{X}$ and the right action of $\mb{Y}$ on $\bar{\psi}$ are given as $\delta (x,\gamma) := (t(\delta), \psi(\delta) \circ \gamma )$ and  $ (x,\gamma) \delta:= (x, \gamma \circ \delta)$, respectively. Similarly, a smooth natural transformation $(\eta \colon \phi \Rightarrow \psi) \colon \mb{X} \ra \mb{Y}$ gives a transformation $\bar{\eta} \colon \bar{\phi} \Rightarrow \bar{\psi}$ between the corresponding induced anafunctors, defined by the smooth map $(x,\gamma) \mapsto (x, \eta(x) \circ \gamma)$. 
 	\end{example}
 	Given a pair of Lie groupoids $\mb{X}$ and $\mb{Y}$, let ${\rm{Hom}}(\mb{X}, \mb{Y})$ and ${\rm{Ana}}(\mb{X}, \mb{Y})$ denote the functor category of morphisms of Lie groupoids and the category of anafunctors, respectively. Then under the embedding mentioned  in \Cref{Functor induced anafunctor},  only a particular class of morphisms of Lie groupoids $F \colon \mb{X} \ra \mb{Y}$ maps to a \textit{weak equivalence} $(a_{\mb{X}}, \bar{F}, a_{\mb{Y}})$ i.e there exists another anafunctor $\mc{G}$ from $\mb{Y}$  to $\mb{X}$ such that $\bar{F} \circ G \cong {\rm{id}}$ and $G \circ \bar{F} \cong {\rm{id}}$, where the composition $\circ$ is as defined in the \textbf{Remark 2.3.2(a)} of \cite{MR3894086}. This particular class of morphisms of Lie groupoids is usually called \textit{Morita maps}, defined as follows:
 	
 	\begin{definition}[\cite{MR2012261},\textbf{Section 5.4}]
 		A morphism of Lie groupoids $F:\mb{X} \ra \mb{Y}$ is said to be a \textit{Morita map}, if it satisfies the following two conditions:
 		\begin{enumerate}[(i)]
 			\item the map $t \circ {\rm{pr}}_1:Y_1 \times{_{Y_0}} X_0 \ra Y_0$ is a surjective submersion
 			\[\begin{tikzcd} 
 				Y_1 \times{_{s,Y_0, F_0}} X_0 \arrow[d, "{\rm{pr}}_1"'] \arrow[r, "{\rm{pr}}_2"]                 & X_0 \arrow[d, "F_0"] \\
 				Y_1 \arrow[r, "s"'] \arrow[r, "t"', bend right=49] & Y_0               
 			\end{tikzcd};\]
 			\item
 			\[\begin{tikzcd}
 				X_1 \arrow[d, "{(s,t)}"'] \arrow[r, "F_1"] & Y_1 \arrow[d, "{(s,t)}"] \\
 				X_0 \times X_0 \arrow[r, "F_0 \times F_0"']                & Y_0 \times Y_0               
 			\end{tikzcd} \]
 			is a pullback square.
 		\end{enumerate}
 	\end{definition}
 	
 	\begin{definition}[\cite{MR2012261},\textbf{Section 5.4}]\label{Morita equivalence of Lie groupoids}
 		Two Lie groupoids $\mb{X}$ and $\mb{Y}$ are said to be \textit{Morita equivalent} if there is a third Lie groupoid $\mb{Z}$ and a pair of morphisms of Lie groupoids
 		\begin{tikzcd}
 			\mb{X} & \mb{Z} \arrow[l, "\epsilon_{\mb{X}}"'] \arrow[r, "\epsilon_{\mb{Y}}"] & \mb{Y}
 		\end{tikzcd} such that $\epsilon_{\mb{X}}$ and $\epsilon_{\mb{Y}}$ are Morita maps.
 	\end{definition}
 	
 	\begin{remark}\label{Morita equivalent imply stack}
 		It is well known that the bicategory of Lie groupoids defined above is equivalent to the 2-category of differentiable stacks as bicategories. In particular, under the correspondence, Morita equivalent Lie groupoids present the same differentiable stack; see \cite{MR2817778} for a detailed discussion. Given a Lie groupoid $\mb{X}$, the associated differentiable stack is its classifying stack, which maps a manifold $M$ to the category of principal $\mb{X}$-bundles over $M$, whose modest version has already been mentioned in the case of Lie groups, see \Cref{Remark on classifying stack}.
 	\end{remark}

\section{Principal bundles over Lie groupoids and their connection structures}\label{Principal bundles over Lie groupoids and their connection structures}
In this segment, we encounter a significant instance of a Lie groupoid's action on a manifold (\Cref{Left action of a Lie groupoid on a manifold}). Specifically, the action is such that the arrows within the associated transport category (see \Cref{transport category}) now are morphisms of Lie group torsors. This gives rise to a well-established concept in higher differential geometry, known as a `principal Lie group bundle over a Lie groupoid' (see \cite{MR2270285, MR2119241, MR1950948, MR2748598, MR4592876}). Also, there is an alternative way of seeing these objects as 1-morphisms of stacks (see \cite{MR3521476}).
These notions of principal bundles extend the conventional definition outlined in \Cref{Definition: Principal G-bundle}. We outline an existing notion of connection structure on these objects, especially the one studied in \cite{MR2270285}. There are also other notions of connections explored, particularly in \cite{MR4592876, MR3150770, biswas2023connections} through the splitting of its Atiyah sequence, with the assumption of a connection structure on the base Lie groupoid defined in \cite{MR3150770}. Moreover, as shown in \cite{MR3521476}, connection structures can also be described as morphisms of stacks. However, for our purpose in the thesis, here we restrict  ourselves to the one mentioned in \cite{MR2270285}.

\subsection{Principal bundles over Lie groupoids}
We start the subsection by recalling the definition of a principal bundle over a Lie groupoid.
\begin{definition}[Definition 2.2, \cite{MR2270285}]\label{Definition: Principal Lie group bundle over a Lie groupoid}
	Given a Lie group $G$, a \textit{principal $G$-bundle over a Lie groupoid} $\mb{X}$ consists of a principal $G$-bundle $\pi\colon E_G \rightarrow X_0$ endowed with a smooth 
	map $\mu\colon s^{*}E_G:= (X_1 \times_{s, X_0, \pi} E_G) \rightarrow E_G$, satisfying the following conditions:
	\begin{enumerate}[(i)]
		\item $\mu$ defines a left action of $\mb{X}$ on $E_G$ (\Cref{Left action of a Lie groupoid on a manifold}), that is
		\begin{itemize}
			\item[(a)] $\mu(1_{\pi(p)}, p)=p$ for all $p \in E_G$,
			\item[(b)] $\bigl(\gamma, \mu(\gamma,p)\bigr) \in t^{*}E_G$ for each $(\gamma, p) \in s^{*}E_G $,
			\item[(c)] $\mu(\gamma_2 \circ \gamma_1,p)=\mu(\gamma_2,\mu(\gamma_1,p))$ for all $\gamma_2 , \gamma_1 \in X_1$ satisfying $t(\gamma_1)=s(\gamma_2)$ and $(\gamma_1,p) \in s^{*}E_G$.
		\end{itemize}
		\item $\mu$ commutes with the right action of $G$ on $E_G$, that is $\mu(\gamma, p)g=\mu(\gamma, pg).$ for all $p \in E_G, g \in G$ and $\gamma \in X_1$.
	\end{enumerate}
\end{definition}
A principal $G$-bundle over the Lie groupoid $\mathbb{X}= [X_1 \rightrightarrows X_0]$ defined above, will be denoted by $\bigl(\pi\colon E_G \rightarrow X_0, \mu, \mb{X} \bigr)$. 

\begin{definition}\label{morphism of principal bundles}
	Given a Lie group $G$ and a Lie groupoid $\mb{X}$, \textit{a morphism of principal $G$-bundles over $\mb{X}$ from $(\pi\colon E_G \rightarrow X_0, \mu, \mb{X})$ to $(\pi'\colon E'_G \rightarrow X_0, \mu', \mb{X})$ }is defined as a morphism of principal $G$-bundles $f \colon E_G \ra E'_G$ over $X_0$ such that it is compatible with the Lie groupoid actions i.e  $f(\mu(\gamma,p))= \mu'(\gamma,f(p))$ for all $(\gamma,p) \in s^{*}E_G$.
\end{definition}
The collection of principal $G$-bundles over $\mb{X}$ along with the morphisms \Cref{morphism of principal bundles} naturally defines a groupoid, denoted as ${\rm{Bun}}(\mb{X}, G)$. 

\begin{example}[Traditional principal bundle]\label{Tradition principal bundle as bundle over  Lie groupoid}
	For a Lie group $G$ and a manifold $M$, a principal $G$-bundle $\pi \colon P \ra M$ is a principal $G$-bundle over the discrete Lie groupoid $[M \rra M]$, whose action map is given by $(1_x, p) \mapsto p$, for all $x,p$ satisfying $\pi(p)=x$.
\end{example}
\begin{example}[Principal bundle with a flat connection]
	Given a Lie group $G$, consider a principal $G$-bundle $\pi \colon P \ra M$ with a flat connection $\omega$, then the action of the fundamental groupoid $\Pi(M)$ on $P$ as given in \Cref{Flat connection action}, defines a principal $G$-bundle over $\Pi(M)$.
\end{example}
\begin{example}[Example 3.6, \cite{MR4592876}]
	For a pair of Lie group $G$ and $H$, a principal $G$-bundle over the Lie groupoid $[H \rra *]$ is the same as a left action of $H$ on $G$ such that it commutes with the right translation of $G$ on itself.
\end{example}

The following characterization that is equivalent to the notion defined in \Cref{Definition: Principal Lie group bundle over a Lie groupoid} holds particular importance for our objective.
\begin{definition}\label{Definition: Principal G-groupoids}[\cite{MR2270285}, \textbf{Section 2}]
	For a Lie group $G$ and Lie groupoid $\mb{X}$, a \textit{principal $G$-groupoid over the Lie groupoid $\mb{X}$} is defined as a Lie groupoid $\mb{E}$ together with a morphism of Lie groupoids $\pi \colon \mb{E} \ra \mb{X}$, such that both $\pi_1 \colon E_1 \ra X_1$ and $\pi_0 \colon E_0 \ra X_0$ are principal $G$-bundles and the source-target maps are morphisms of principal $G$-bundles.
\end{definition}

\begin{definition}\label{Morphism of principal G groupoids}
	For a Lie groupoid $\mb{X}$ and a Lie group $G$, let $\pi \colon \mb{E} \ra \mb{X}$ and $\pi' \colon \mb{E}' \ra \mb{X}$ be a pair of principal $G$-groupoids over the Lie groupoid $\mb{X}$. A \textit{morphism of principal $G$-groupoids over $\mb{X}$} is a morphism of Lie groupoids $F \colon \mb{E} \ra \mb{E}'$ such that $F_0$ and $F_1$ are morphisms of principal $G$-bundles over $X_0$ and $X_1$, respectively.
\end{definition}
Given a Lie group $G$ and a Lie groupoid $\mb{X}$, the collection of principal $G$-groupoids over $\mb{X}$ and the corresponding morphisms (\Cref{Morphism of principal G groupoids}) define a groupoid denoted as ${\rm{Bun}}(\mb{X}, [G \rra G])$.

This notion of principal bundle \Cref{Definition: Principal Lie group bundle over a Lie groupoid} can be extended over the differentiable stack presented by the base Lie groupoid. Specifically, we have the following:

\begin{proposition}[Corollary 2.12, \Cref{Definition: Principal Lie group bundle over a Lie groupoid}]\label{principal Lie group bundles over stacks}
	If $\mb{X}$ and $\mb{Y}$ are Morita equivalent (\Cref{Morita equivalence of Lie groupoids}), then ${\rm{Bun}}(\mb{X}, G)$ and ${\rm{Bun}}(\mb{Y}, G)$ are equivalent as categories.
	
\end{proposition}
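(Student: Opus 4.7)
The plan is to prove the proposition by exhibiting the equivalence through a zigzag, using that pullback along a Morita map is an equivalence of the corresponding groupoids of principal bundles. First I would set up the pullback construction: given any morphism of Lie groupoids $F\colon \mb{Z}\to\mb{X}$ and a principal $G$-bundle $(\pi\colon E_G\to X_0,\mu,\mb{X})$, define $F^{*}E_G := Z_0 \times_{F_0,X_0,\pi} E_G$. This is a principal $G$-bundle over $Z_0$ (pullback of a classical principal bundle along $F_0$), and it inherits a left $\mb{Z}$-action via $\delta\cdot(z,p):=(t(\delta),\mu(F_1(\delta),p))$ for $(\delta,(z,p))\in s^{*}(F^{*}E_G)$, which manifestly commutes with the right $G$-action. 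A routine check shows this assignment is functorial in morphisms of principal $G$-bundles over $\mb{X}$ and in compositions of morphisms of Lie groupoids, yielding $F^{*}\colon {\rm Bun}(\mb{X},G)\to{\rm Bun}(\mb{Z},G)$.

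Given the definition of Morita equivalence, it suffices to prove the following key claim: if $F\colon \mb{Z}\to\mb{X}$ is a Morita map, then $F^{*}\colon{\rm Bun}(\mb{X},G)\to{\rm Bun}(\mb{Z},G)$ is an equivalence of categories. Once this is established, taking the diagram $\mb{X}\xleftarrow{\epsilon_{\mb{X}}}\mb{W}\xrightarrow{\epsilon_{\mb{Y}}}\mb{Y}$ exhibiting the Morita equivalence, we obtain equivalences ${\rm Bun}(\mb{X},G)\xrightarrow{\epsilon_{\mb{X}}^{*}}{\rm Bun}(\mb{W},G)\xleftarrow{\epsilon_{\mb{Y}}^{*}}{\rm Bun}(\mb{Y},G)$, and composing $\epsilon_{\mb{X}}^{*}$ with a quasi-inverse of $\epsilon_{\mb{Y}}^{*}$ yields the desired equivalence ${\rm Bun}(\mb{X},G)\simeq{\rm Bun}(\mb{Y},G)$.

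To prove the key claim, I would check fully faithfulness and essential surjectivity separately. For fully faithfulness, given bundles $(\pi_i\colon E_i\to X_0,\mu_i,\mb{X})$, $i=1,2$, a morphism $\tilde{f}\colon F^{*}E_1\to F^{*}E_2$ of principal $G$-bundles over $\mb{Z}$ is $\mb{Z}$-equivariant and $G$-equivariant, and the first condition of a Morita map (that $t\circ{\rm pr}_1\colon X_1\times_{s,X_0,F_0}Z_0\to X_0$ is a surjective submersion) together with the second (the Cartesian square) allows one to descend $\tilde{f}$ uniquely to a morphism $f\colon E_1\to E_2$ over $\mb{X}$; the gluing compatibility follows from the second condition of a Morita map, which ensures that $\mb{X}$-equivariance is controlled by $\mb{Z}$-equivariance on fibre products. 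Essential surjectivity is the main obstacle: given $(\varpi\colon E\to Z_0,\nu,\mb{Z})$, I need to construct a principal $G$-bundle on $X_0$ equipped with a compatible $\mb{X}$-action whose pullback is isomorphic to it. The construction proceeds by forming $E':=(X_1\times_{s,X_0,F_0}Z_0)\times_{{\rm pr}_2,Z_0,\varpi}E$ and taking a quotient by the equivalence induced by the $\mb{Z}$-action together with the fibre-product identification; surjective-submersivity of $t\circ{\rm pr}_1$ and the Cartesian-square condition provide exactly the descent data needed to produce a manifold structure making the quotient into a principal $G$-bundle over $X_0$, and the $\mb{Z}$-action on $E$ provides the $\mb{X}$-action on the descended bundle.

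The technical heart of the argument is therefore a descent lemma along a surjective submersion, combined with the Cartesian-square axiom of a Morita map to guarantee compatibility of gluing with the groupoid actions on both sides; once descent is established, functoriality, essential surjectivity, and full faithfulness follow by direct verification, and the zigzag then yields the equivalence ${\rm Bun}(\mb{X},G)\simeq{\rm Bun}(\mb{Y},G)$ claimed.
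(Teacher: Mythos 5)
Your proposal is correct in outline, but note that the paper does not prove this statement itself: it is imported as Corollary 2.12 of the cited reference \cite{MR2270285}, where the argument runs through the dictionary between principal $G$-bundles over a Lie groupoid and generalized (Hilsum--Skandalis) homomorphisms to $[G\rra *]$, for which Morita invariance is essentially formal. Your route --- showing that pullback along a single Morita map $F\colon\mb{Z}\to\mb{X}$ gives an equivalence ${\rm Bun}(\mb{X},G)\to{\rm Bun}(\mb{Z},G)$ and then running the zigzag --- is the direct, descent-theoretic unpacking of that argument: your descended bundle $\big((X_1\times_{s,X_0,F_0}Z_0)\times_{Z_0}E\big)/\mb{Z}$ is precisely the bibundle composition of $E$ with the Morita bibundle associated to $F$. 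What the generalized-morphism approach buys is that smoothness of this quotient comes for free from the general composition of anafunctors; in your direct approach the one point that genuinely needs an argument, and which your sketch passes over quickly, is that the diagonal $\mb{Z}$-action on $(X_1\times_{s,X_0,F_0}Z_0)\times_{Z_0}E$ has a smooth (Hausdorff) quotient. The clean way to get this is to observe first that $X_1\times_{s,X_0,F_0}Z_0$, with anchor $t\circ{\rm pr}_1$ and the evident right $\mb{Z}$-action, is a right principal $\mb{Z}$-bundle over $X_0$: freeness and properness of the action are exactly what the Cartesian-square axiom of a Morita map delivers, and surjective submersivity of $t\circ{\rm pr}_1$ is the other axiom; the quotient is then an associated-bundle construction, after which the residual $G$-action, the $\mb{X}$-action via the $X_1$-factor, and the isomorphism of the pullback of the descended bundle back to $E$ are routine verifications. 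With that lemma made explicit, your full-faithfulness and essential-surjectivity arguments go through and the zigzag yields the claimed equivalence ${\rm Bun}(\mb{X},G)\simeq{\rm Bun}(\mb{Y},G)$.
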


\subsection*{Correspondence between principal G-bundles and principal G-groupoids over a Lie groupoid}\label{Correspondence between principal G-bundles and principal G-groupoids over a Lie groupoid}

For a Lie group $G$ and a Lie groupoid $\mb{X}$, let $(\pi_G \colon E_G \rightarrow X_0, \mu, \mb{X})$ be a principal $G$-bundle over $\mb{X}$. Let $[s^{*}E_G \rra E_G]$ denote the associated  semi-direct product groupoid (\Cref{Semi-direct product groupoid}). There is an obvious morphism of Lie groupoids $\pi \colon [s^{*}E_G \rra E_G] \ra [X_1 \rra X_0]$ defined as $\pi_1= {\rm{pr}}_1$  and $\pi_{0}= \pi_G$, where ${\rm{pr_1}}$ is the 1st projection map. Observe that we get a pair of principal $G$-bundles $\pi_1 \colon s^{*}E_G \ra X_1$ and $\pi_0 \colon E_G \ra X_0,$ such that source and target maps are morphisms of principal $G$-bundles. Conversely, given a principal $G$-groupoid $\pi \colon  \mb{E} \ra \mb{X}$ over a Lie groupoid $\mb{X}$, consider the pair of maps $\pi_0 \colon E_0 \ra X_0$ and the map $s^{*}E_0 \ra E_0$ defined by $(\gamma,p) \mapsto t(\delta)$, where $\delta$ is the unique element in $\pi_1^{-1}(\gamma)$ such that $s(\delta)=p$. They define a principal $G$-bundle over $\mb{X}$. The above correspondence is inverse of each other and can be extended to define an equivalence of categories between $\rm{Bun}(\mb{X}, G)$ and $\rm{Bun}(\mb{X}, [G \rra G])$.\label{Equivalence of principal G bundles and principal G groupoids}

\begin{remark}\label{Different characterizations of principal Lie bundles over Lie groupoids}
	Apart from the characterization discussed above, a principal $G$-bundle over a Lie groupoid $\mb{X}$ can also be characterized in other ways, such as 
	\begin{enumerate}[(i)]
		\item a \textit{principal $G$-bundle over the nerve $N(\mb{X})$ of the Lie groupoid $\mb{X}$} ( \Cref{Definition: Nerve of a Lie groupoid}) i.e a simplicial manifold $\mc{E}:=(E_n)_{n \in \mb{N} \cup \lbrace 0 \rbrace}$ such that 
		\begin{itemize}
			\item for every $n \in \mb{N} \cup \lbrace 0 \rbrace$, $E_n$ is a principal $G$-bundle over the manifold $N(\mb{X})_n$;
			\item The degeneracy and the face maps are morphisms of principal $G$-bundles.
		\end{itemize}
		\item An anafunctor from $\mb{X}$ to the Lie group $[G \rra *]$, (\Cref{generalized homomorphism}).
		\item A morphism of stacks $\mc{X} \ra BG$ (see \Cref{Remark on classifying stack}), where $\mc{X}$ is the stack representing the Lie groupoid $\mb{X}$ (see \Cref{Morita equivalent imply stack}) and $BG$ is the classifying stack of the Lie group $G$ (\Cref{Classifying stack of a Lie group}).
	\end{enumerate}
	Despite having their independent significances in higher differential geometry, we will not delve into the detailed exploration of the descriptions mentioned above, as they do not exactly align with the current objectives of this thesis. However, they are expected to play important roles in some future projects (\Cref{Future}) based on the research presented in this thesis. We suggest \cite{MR2270285} for readers interested in a detailed treatment  for (i) and (ii). For (iii), see \cite{MR3521476}.
\end{remark}

\subsection{Connections on principal bundles over Lie groupoids}\label{Section: Connection on principal bundles over Lie groupoids}

As proposed at the beginning of this section, here we recall the definition of `connection on a principal bundle over a Lie groupoid' as given in \cite{MR2270285}. Readers interested in definitions involving connections on the base Lie groupoids can look at \cite{MR4592876, MR3150770, biswas2023connections} and for a definition as a morphism of stacks, see \cite{MR3521476}.

\begin{definition}\label{Connections on a principal bundle over a Lie groupoid}[Definition 3.5, \cite{MR2270285}]
	For a Lie group $G$ and a Lie groupoid $\mb{X}$, the \textit{connection on a principal $G$-bundle $\bigl(\pi\colon E_G \rightarrow X_0, \mu, \mb{X} \bigr)$ over $\mb{X}$} is defined as a connection $\omega$ on $\pi \colon E_G \ra X_0$ such that $s^{*}\omega= t^{*}\omega$.
\end{definition}
Note that the above definition coincides with the traditional one in \Cref{Chapter Classical setup} for the particular case discussed in \Cref{Tradition principal bundle as bundle over  Lie groupoid}.

The following gives criteria for the existence of connection on a principal Lie group bundle over a Lie groupoid.

\begin{proposition}[Theorem 3.16, \cite{MR2270285}]\label{Existence of connection on a Lie group bundle over a groupoid}
	For a Lie group $G$, any principal $G$-bundle over a proper étale Lie groupoid (\Cref{proper etale Lie groupoid}) admits a connection as given in \Cref{Connections on a principal bundle over a Lie groupoid}.
\end{proposition}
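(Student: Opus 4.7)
The plan is to start from an arbitrary connection on the underlying principal $G$-bundle $\pi\colon E_G \to X_0$ and modify it by an averaging procedure adapted to the proper \'etale structure of $\mb{X}$. By the classical existence theorem (recalled in Subsection~\ref{Subsection: Connection on a principal bundle and its characterizations} and proved via partitions of unity on $X_0$), pick any connection 1-form $\omega_0 \colon TE_G \to L(G)$. In general $\omega_0$ fails the multiplicativity condition, and I would measure the obstruction by the $L(G)$-valued 1-form $\alpha := t^{*}\omega_0 - s^{*}\omega_0$ on $s^{*}E_G$. Using the vertical and equivariance axioms of a connection, one checks directly that $\alpha$ is tensorial (horizontal and $G$-equivariant) with respect to the principal $G$-bundle $\pi_1 \colon s^{*}E_G \to X_1$, so that any connection differing from $\omega_0$ by the pullback of a tensorial 1-form on $E_G$ is again a connection.

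The proper \'etale hypothesis now provides a cutoff function: a smooth map $c\colon X_0 \to [0,1]$ whose support meets each orbit of $\mb{X}$ in a set on which $s$ is proper, satisfying
\[
\sum_{\gamma \in s^{-1}(x)} c(t(\gamma)) = 1 \qquad \text{for every } x \in X_0,
\]
the sum being locally finite. Properness ensures that each such sum has only finitely many nonzero terms, while the \'etale property guarantees that each summand depends smoothly on $x$ via local sections of $s$. This is the standard averaging tool on proper \'etale groupoids and can be produced from a locally finite cover by slice charts together with a usual partition of unity.

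Using $c$, I would define an averaging operator sending $\alpha$ to a tensorial 1-form $\eta$ on $E_G$ by a weighted orbit sum, and set $\omega := \omega_0 + \eta$. Each summand is constructed by (i) lifting a tangent vector $v \in T_p E_G$ canonically to tangent vectors at the preimages of $p$ under the action, using local inverses of the \'etale map $s$, (ii) pairing these lifts against $\alpha$, and (iii) weighting by $c$ and summing. Since every summand is tensorial, so is $\eta$, and hence $\omega$ is a connection on $\pi \colon E_G \to X_0$. To verify $s^{*}\omega = t^{*}\omega$ one performs the change of variables $\gamma \mapsto \gamma \circ \delta^{-1}$ in the averaging sum for an arbitrary $\delta \in X_1$, and uses $\sum c = 1$ together with the coboundary identity $\alpha = t^{*}\omega_0 - s^{*}\omega_0$ in the groupoid cochain complex.

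The main obstacle is to make the averaging operator well-defined and smooth: one has to fix a canonical transport of tangent data along the groupoid action and show smoothness despite the (locally finite but genuinely infinite in general) summation. The \'etale hypothesis is essential here, as it provides the local inverses that define the transport, while properness is exactly what makes the sums locally finite and the cutoff function well-defined. A conceptually cleaner route, following the VB-groupoid framework developed later in the thesis (Subsection~\ref{Subsection: Atiyah sequence associated to a principal 2-bundle over a Lie groupoid}), would be to interpret a multiplicative connection as an invariant splitting of a suitable Atiyah-type VB-groupoid sequence and to recover the statement as the existence of a splitting followed by averaging onto the invariant subspace.
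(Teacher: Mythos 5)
The thesis does not actually prove this proposition itself — it is recalled with a citation to Theorem 3.16 of \cite{MR2270285} — and your averaging argument is essentially the proof given in that reference: fix an ordinary connection on $\pi\colon E_G\to X_0$, use a cutoff function for the proper \'etale groupoid, and average, with \'etaleness supplying the unique lifts of base tangent vectors through $s$ and properness supplying local finiteness. So your proposal is correct and follows essentially the same route as the cited source; the only points to tighten are the precise support condition on $c$ (one wants $t$ restricted to $s^{-1}(\mathrm{supp}(c))$ to be proper) and the routine verification that $t^{*}\eta-s^{*}\eta=-\alpha$ via the change of variables and the cocycle identity for $\alpha=t^{*}\omega_0-s^{*}\omega_0$.
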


\section{Lie 2-group and its Lie 2-algebra}
The notion of a categorified version of a group, namely a `2-group', first appeared in the 1970s through the works of Solian \cite{MR0316529}, Sính's Ph.D. thesis (supervised by Grothendieck) and Brown-Spencer's \cite{MR0419643}. A smooth version of the same (in terms of a crossed module description of the automorphism 2-group of a Lie group)  found its importance in the geometry of non-abelian gerbes through Breen-Messing's \cite{MR2183393}. However, the terminology, as a `Lie 2-group' and its explicit description as a group object in the category of Lie groupoids, first appeared in Baez's \cite{baez2002higher}. Since then the notion gathered a significant amount of importance in the Higher gauge theory community as evident from the works of Bartel's \cite{MR2709030}, Baez-Schreiber's \cite{baez2004higher, MR2342821}, Wockel's \cite{MR2805195}, Picken-Martin's \cite{MR2661492}, Baez-Lauda's \cite{MR2825807}, Jurco's \cite{MR3351282}, Chatterjee-Lahiri-Sengupta's \cite{MR3126940}, Waldorf's \cite{MR3894086, MR3917427} and many others, to name a few. There is also a corresponding notion of the Lie 2-algebra of a Lie 2-group, categorifying the traditional one. This notion has appeared in almost all references mentioned above concerning Lie 2-groups. In the present day, the Lie 2-group and its Lie 2-algebra are considered standard notions in Higher gauge theory literature.

This section overviews the concept of a Lie 2-group, its Lie 2-algebra, and their correspondence. Also, we discuss the actions of Lie 2-groups on Lie groupoids. Most of the materials in this section are standard, and we recommend \cite{MR2342821, MR2068521, MR2068522, baez2004higher, MR3126940, MR3504595} for further reading on these topics.

\begin{definition}\label{Definition: Lie 2 group}
	A \textit{Lie $2$-group} is a Lie groupoid $\mb{G}$ equipped with a morphism of Lie groupoids $\otimes: \mb{G} \times \mb{G} \rightarrow \mb{G}$ such that $\otimes$ induces Lie group structures on both ${\rm{Obj}}(\mb{G})$ and ${\rm{Mor}}(\mb{G})$.
\end{definition}

\label{Group object}Let $\mb{C}$ be a category with finite products and the terminal object 1. Recall,  a \textit{group object} in $\mb{C}$ is an object $G$ with morphisms
\begin{itemize}
	\item $m:G \times G \ra G$
	\item $e:1 \ra G$
	\item ${\rm{inv}}:G \ra G$
\end{itemize}
such that $m, e$ and ${\rm{inv}}$ satisfy the standard properties of the composition, the identity, and the inverse in a group, respectively. Hence, alternatively but equivalently, one can define a Lie 2-group  as a group object in LieGpd, the category of Lie groupoids. Also, it is evident that all its structure maps are Lie group homomorphisms. 

\begin{lemma}\label{Lemma: Interchange} Let the notations ${}^{-1}$ and $\circ$ denote the group inverse functor and the composition, respectively. Then we have the following:
	\begin{equation}\nonumber
		\begin{split}
			&(k_2\otimes k_1)\circ (k'_2\otimes k'_1)=(k_2\circ k'_2)\otimes (k_1\circ k'_1),\\
			&(k_2\circ k_1)^{-1}={k_2}^{-1}\circ {k_1}^{-1}.		
		\end{split}
	\end{equation}
\end{lemma}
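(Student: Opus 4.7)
The plan is to observe that both identities are purely formal consequences of the fact that $\mathbb{G}$ is a group object in the category \textup{LieGpd}, so every structure map of the group (the multiplication $\otimes$ and the inverse ${}^{-1}$) is, in particular, a \emph{functor} between the underlying groupoids. Once this is unwound, each equation becomes a direct application of functoriality, i.e.\ preservation of composition.

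For the first identity, I would first note that the product Lie groupoid $\mathbb{G}\times\mathbb{G}$ has component-wise composition: when $s(k_2)=t(k'_2)$ and $s(k_1)=t(k'_1)$, the composition in $\mathbb{G}\times\mathbb{G}$ is
\[
(k_2,k_1)\circ (k'_2,k'_1)=(k_2\circ k'_2,\,k_1\circ k'_1).
\]
Since $\otimes\colon\mathbb{G}\times\mathbb{G}\to\mathbb{G}$ is a morphism of Lie groupoids (by \Cref{Definition: Lie 2 group}), it is in particular a functor, so it preserves composition:
\[
\otimes\bigl((k_2,k_1)\circ (k'_2,k'_1)\bigr)=\otimes(k_2,k_1)\circ\otimes(k'_2,k'_1).
\]
Rewriting the left- and right-hand sides using infix notation gives $(k_2\circ k'_2)\otimes(k_1\circ k'_1)=(k_2\otimes k_1)\circ(k'_2\otimes k'_1)$, which is the first equation. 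The only subtlety worth flagging here is that the source and target of $k_2\otimes k_1$ in the groupoid $\mathbb{G}$ are respectively $s(k_2)\otimes s(k_1)$ and $t(k_2)\otimes t(k_1)$ (because source/target are group homomorphisms when $\otimes$ is a functor), which is precisely what one needs to make sure the two sides of the displayed equation are composable.

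For the second identity, I would invoke that the group-inverse map ${}^{-1}\colon\mathbb{G}\to\mathbb{G}$, being a structure map of the group object $\mathbb{G}$ in \textup{LieGpd}, is also a morphism of Lie groupoids, hence a functor. Now a functor $F$ between groupoids satisfies $F(k_2\circ k_1)=F(k_2)\circ F(k_1)$ whenever $k_2,k_1$ are composable; applying this to $F={}^{-1}$ yields $(k_2\circ k_1)^{-1}=k_2^{-1}\circ k_1^{-1}$, once one checks (again from functoriality) that $s(k_2^{-1})=s(k_2)^{-1}=t(k_1)^{-1}=t(k_1^{-1})$, so the right-hand side is defined.

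Neither step involves a genuine obstacle; the entire content of the lemma is in recognizing that the interchange law and the contravariance-free behaviour of the group inverse with respect to vertical composition are baked into the definition of a Lie 2-group as a group object in \textup{LieGpd}. If I were to anticipate a minor pitfall, it would be the notational collision between the two different ``inverses'' present in the structure (the group inverse under $\otimes$ versus the groupoid inverse $\mathfrak{i}$); I would make explicit at the outset that throughout this proof the symbol ${}^{-1}$ means the $\otimes$-inverse, so that the reader does not confuse the identity with the analogous (and distinct) statement about $\mathfrak{i}$.
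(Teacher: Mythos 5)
Your proposal is correct and follows exactly the paper's argument: the paper proves the lemma in one line by appealing to the functoriality of $\otimes$ and ${}^{-1}$, which is precisely what you unwind (with the composability checks made explicit). No gaps; yours is just a more detailed writeup of the same proof.
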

\begin{proof}
 Follows directly from the functoriality of ${}^{-1}$ and $\otimes$.
\end{proof}

\begin{remark}
	In the existing literature, there are more general versions of the concept of a Lie $2$-group, such as a semistrict Lie 2-group, a coherent Lie 2-group, a weak Lie 2-group, etc. obtained by weakening the axioms of a group object in varying degrees. However, \Cref{Definition: Lie 2 group} is sufficient for our purpose in this thesis. Readers interested in these more general notions of Lie 2-groups are referred to \cite{MR2068521}, \cite{MR3351282} and \cite{MR2709030}.
\end{remark}

An equivalent description of a Lie $2$-group is given by a Lie crossed module, defined below:

\begin{definition}\label{Definition:Lie crossed module}
	A \textit{Lie crossed module} is defined as a $4$-tuple 
	$(G,H, \tau, \alpha)$ such that
	\begin{enumerate}[(i)]
		\item $G$ and $H$ are Lie groups,
		\item $\alpha:G\times H\ra H$ is a smooth action of $G$ on $H$ such that $\alpha(g,-):H \ra H$ is a Lie group homomorphism for each $g \in G$, 
		\item $\tau:H\ra G$ is a homomorphism of Lie groups, 
	\end{enumerate}
	satisfying the \textit{Peiffer identites}:
	\begin{equation}\label{E:Peiffer}
		\begin{split}
			& \tau(\alpha(g,h))=g\tau(h)g^{-1} \, \textit {\rm for all} \, (g,h) \in G\times H,\\
			& \alpha(\tau(h),h')=hh'h^{-1} \, \textit {\rm for all}\, h,h'\in H.
		\end{split}
	\end{equation}

\end{definition}
\subsection{Correspondence between Lie $2$-groups and Lie crossed modules}\label{Lie 2 group from Lie crossed module}
Given a Lie crossed module $(G, H, \tau, \alpha)$, there is a canonical way to associate a  Lie $2$-group and is given by the Lie groupoid $ \mb{G}=[H \rtimes_{\alpha} G \rightrightarrows G],$ where $\rtimes$ denotes the semidirect product of groups $H$ and $G$ with respect to the action $\alpha$ of $G$ on $H.$ Structure maps on this Lie $2$-group are given below:
\begin{itemize}
	\item $s(h,g) :=g$, for all $(h,g) \in H \rtimes_{\alpha} G$;
	\item  $t(h,g) :=\tau(h)g$, for all $(h,g) \in H \rtimes_{\alpha} G$;
	\item The composition is defined as $m((h_2, g_2),(h_1, g_1)) :=(h_2 h_1,g_1)$, for all appropriate $h_2,h_1,g_2,g_1$;
	\item $1_g :=(e,  g),$ for all $g \in G$;
	\item $\mathfrak{i}(h,g) :=(h^{-1},\tau(h)g)$ for all $(h,g) \in  H \rtimes_{\alpha} G$;
	\item The group structure $H\rtimes_{\alpha} G$ is the standard semidirect product of the groups, that is 
	the morphism of Lie groupoids $\otimes: \mb{G} \times\mb{G} \rightarrow \mb{G}$ is given by
	\begin{equation}\label{E:grouppro}
		\begin{split}
			&\otimes_0: (g_1, g_2) \mapsto g_1g_2,
			\\
			&\otimes_{1}:  ((h_2, g_2), (h_1, g_1)) \mapsto (h_2 \alpha(g_2,h_1), g_2 g_1),
		\end{split}
	\end{equation}
\end{itemize}
while the group inverse and the identity elements are  given by $(h,g)^{-1}=(\alpha(g^{-1}, h^{-1}), g^{-1})$ and  $(e,e)$ respectively.
It is a straightforward verification that  $\mb{G}$ is a Lie $2$-group. We refer to $\mb{G}=[H \rtimes_{\alpha} G \rightrightarrows G]$ as the \textit{Lie $2$-group associated  to the Lie crossed module} $(G,H,\tau, \alpha)$.

Conversely, given a Lie 2-group  $\mb{G}=[G_1\rra G_0]$, there is a Lie crossed module given as \[(G_0, \ker(s), t|_{\ker(s)}:\ker(s)\ra G_0,\alpha:G_0\times \ker(s)\ra \ker(s)),\] where,
\begin{itemize}
	\item $\ker(s)=\{\gamma\in G_1: s(\gamma)=1_{G_0}\}$;
	\item the smooth map $\alpha:G_0\times \ker(s)\ra \ker(s)$ is defined as $(a,\gamma)\mapsto 1_a  \gamma 1_{a^{-1}}$.
\end{itemize}
The above Lie crossed module is said to be the \textit{Lie crossed module associated to the Lie $2$-group} $\mb{G}$.

The association above defines a one-one correspondence between Lie 2-groups and Lie crossed modules. We suggest the readers look at  \textbf{Section 2}, \cite{baez2002higher} for a detailed proof of the correspondence.

\begin{example}\label{Ex:Discretecrossedmodule}
	For any Lie group $G$, the associated discrete Lie groupoid $[G\rra G]$ is a Lie 2-group. The Lie crossed module associated to it is given by $(G, \{e\})$ with trivial $\tau$ and $\alpha.$
\end{example}

\begin{example}\label{Ex:ordinary}
	For an abelian Lie group $H$, the Lie groupoid $[H\rra {e}]$ is a Lie $2$-group. The Lie crossed module associated to it is given as $(\{e\}, H)$ with trivial $\tau$ and $\alpha :={\rm Id}\colon H\to H$.  		
\end{example}
\begin{example}\label{Ex:isocrossedmodule}
	For any simply connected Lie group $G$, there is a Lie crossed module  $\bigl({\rm Aut}(G), G, \tau, \alpha\bigr),$ where $\tau\colon G\ra {\rm Aut}(G)$ maps an element $g\in G$
	to its inner-automorphism, and $\alpha\colon {\rm Aut}(G)\ra {\rm Aut}(G)$ is the identity map. The Lie $2$-group associated to it is called the \textit{automorphism $2$-group} of $G$.	\end{example}

\begin{example}\label{Ex:singleobjlie2iso}
	Consider the Lie crossed module $(G, G, \tau, \alpha)$ where $\tau\colon G\ra G$ is the identity map, and $\alpha\colon G\times G\ra G$ is the action of $G$ on itself by conjugation. The Lie $2$-group associated to it is the pair groupoid $[G\times G\rra G]$, \Cref{Lie groupoid example:Pair groupoid}.
\end{example}

\subsection{The Lie 2-algebra of a Lie 2-group}\label{Lie 2-algebra as Lie crossed module}

Given a Lie $2$-group $\mb{G}=[G_1\rra G_0]$, there is a Lie groupoid $[L(G_1) \rra L(G_0)]$ whose structure maps are obtained by taking differentials of the structure maps of $\mb{G}$ at the identity. Particularly,  for the Lie $2$-group $\mb{G}=[G_1 \rra G_0]$ associated to the Lie crossed module  $(G,H,\tau:H\ra G, \alpha:G\times H\ra H)$, the Lie groupoid  $[L(G_1) \rra L(G_0)]$ is described as   $$[L(H)\oplus L(G)\rra L(G)],$$ whose structure maps are as follows:
\begin{itemize}
	\item $s(A,B) :=B$, for all $(A,B) \in L(H)\oplus L(G)$;
	\item  $t(A,B) :=\tau(A)+B$, for all $(A,B) \in L(H)\oplus L(G)$;
	\item The composition is given as $m((A_2,B_2),(A_1,B_1)) :=(A_2+A_1,B_1)$, where 
	$s(A_2,B_2)=t(A_1,B_1)$,
	\item $1_B :=(0, B)$, for all $B \in L(G)$;
	\item  $\mathfrak{i}(A,B)=(-A,\tau(A)+B)$, for all $(A,B) \in L(H)\oplus L(G)$.
\end{itemize}

\begin{remark}
	One must note that $L(G_1)$ splits into the direct sum $L(H)\oplus L(G)$ only as a vector space and is not a direct sum of Lie algebras.
\end{remark} 
Differential of the action map $\alpha$ induces an action $\alpha_{*,( e, e)}\colon L(G)\times L(H)\ra L(H)$ of $L(G)$ on $L(H)$ as a Lie algebra derivation, and the commutators on $L(G)$ and $L(H\rtimes G)$ are respectively given by
\begin{equation}\label{E:Commutator}
	\begin{split}
		(B_1, B_2)&\mapsto [B_1, B_2],\\
		\bigl((A_1, B_1), (A_2, B_2)  \bigr)&\mapsto [(A_1, B_1), (A_2, B_2)]\\
		& :=\bigl([A_1,A_2]+\alpha_{*,( e, e)} (A_1, B_2)-\alpha_{*,( e, e)} (A_2, B_1), [B_1, B_2]\bigr),
	\end{split}
\end{equation}
for all $A_1, A_2\in L(H),  B_1, B_2\in L(G)$. The Lie groupoid $[L(G_1)\rra L(G_0)]$ defined above, is the standard strict Lie $2$-algebra of the Lie $2$-group $[G_1\rra G_0]$. 
Although we won't explicitly state the general definition of a Lie $2$-algebra here, for an in-depth exploration of Lie 2-algebras and the corresponding Lie 2-algebra of a Lie 2-group, which also include some weaker notions of the same, we refer to \cite{MR2068522}.

For a Lie crossed module $(G,H,\tau, \alpha)$, consider the smooth map 
$\alpha: G\times H\ra H.$ Fixing $g\in G$, the map $\alpha(g)\colon H\to H$ defined by $h\mapsto \alpha(g, h)$, is a smooth map and its differential at the identity element of $H$ gives the following Lie algebra homomorphism
\begin{equation}\label{E:alphadifffixedg}
	\alpha(g)_{*, e_H}\colon L(H)\to L(H).
\end{equation}
While for a fixed $h\in H,$ we get the smooth map ${\bar \alpha}(h)\colon G\to H, g\mapsto \alpha(g, h)$, and its differential at the identity element of $G$
is given by the following linear map
\begin{equation}\label{E:alphadifffixedh}
	{\bar \alpha}(h)_{*, e_G}\colon L(G)\to T_hH.
\end{equation}
Another way to interpret  the map ${\bar \alpha}(h)_{*, e_G}$	is by considering the right action of $(h, e_G)$ on 
$H \rtimes_{\alpha}G,$ $(h',  g')\mapsto (h'\alpha_{g'}(h), g').$ Then differential of this map at the identity $(e_H, e_G),$ for an element
$B\in L(G) \subset L(H\rtimes G)$ gives $(0, B)\mapsto \bigl({\bar \alpha}(h)_{*, e_G}(B), B\bigr)\in T_hH\oplus L(G)=T_{(h,\, e_G)}(H \rtimes_{\alpha}G).$ On the other hand the left action of $(h, e)$ on $H \rtimes_{\alpha}G$ gives the  map $(0, B)\mapsto (0, B).$

\begin{remark} \label{Remark:notations}
	From this point forward, we will often adhere to the following conventions to simplify our notations and reduce the complexity of symbols.
	\begin{itemize}
		
		\item We will denote $\tau_{*, e_H}$  as $\tau.$ Likewise, we will denote $\alpha(g)_{*, h}$, $\alpha(g)_{*, e_H}$ and ${\bar \alpha}(h)_{*, e_G}$ as $\alpha(g)_h, \alpha(g)$ and ${\bar \alpha}(h),$ respectively.
		\item To avoid the cluttering of parentheses, many times we write $\alpha(g)$ etc as $\alpha_g$ etc.
		\item Unless necessary, we do not make a notational distinction between the identity elements of various groups. The typical notation for these will be $e.$

	\end{itemize} 
\end{remark}

The differential versions of the  Peiffer identities in \eqref{E:Peiffer} then read as follows,
\begin{equation}\label{E:Peifferdiff}
	\begin{split}
		&\tau(\alpha(g)(A))={\rm ad}_{g}\tau(A), \textit {\rm for all} \, g\in G, A\in L(H),\\
		&\tau({\bar \alpha}(h)(B))=B\cdot \tau(h)-\tau(h)\cdot B,\textit {\rm for all} \, h\in G, B\in L(G),\\
		&\alpha(\tau(h))(A)={\rm ad}_{h}A,  \textit {\rm for all} \, h\in H, A\in L(H),\\
		%&\alpha(\tau(h))(B)={\rm ad}_{h}B,  \textit {\rm for all} \, h\in H, B\in L(H)\\
		&{\bar \alpha}(h')(\tau(A))=h'\cdot A-A\cdot h',\, \textit {\rm for all} \, h'\in H, A\in L(H),
	\end{split}
\end{equation}
Note in the left-hand side of the second equation $\tau$ means $\tau_{*,  h}.$

\subsection{Adjoint actions of a Lie $2$-group }\label{SS:adjointLie 2 group}\
For the convenience in later calculations, we express the adjoint actions in terms of a Lie crossed module.

Consider the Lie 2-group $[H \rtimes_{\alpha}G \rra G]$ associated to a Lie crossed module $(G, H, \tau, \alpha)$.
Now, observe that identifying $H$ and $G$ respectively with $H\times\{e\}$ and $\{e\}\times G,$ we have 
\begin{equation}\label{E:product}
	H \rtimes_{\alpha}G=HG.
\end{equation}
Action on the $G$ is obvious and is given by the usual adjoint action ${{\rm Ad}}_g\colon g' \mapsto gg'g^{-1}$, $g, g' \in G$. While, to compute the adjoint action for $H \rtimes_{\alpha}G,$ we simplify our computation using \Cref{E:product}, by writing ${{\rm Ad}}_{(h, g)}(h', g')$ as ${{\rm Ad}}_{(h, g)}(h', g')=[{{\rm Ad}}_{(h, g)}(h', e)][{{\rm Ad}}_{(h, g)}(e, g')]$, and then calculate the bracketed terms on the right-hand side individually.  Then, \eqref{E:grouppro} yields
\begin{equation}\label{E:Adjongroups}
	\begin{split}
		&{{\rm Ad}}_{(h,\, g)}(h', e)=\biggl({\rm Ad}_{h}\bigl(\alpha_g (h ')\bigr),\, e\biggr), \\
		&{{\rm Ad}}_{(h,\, g)}(e, g')=\biggl(h\, \bigl({\alpha}_{({\rm Ad}_g(g '))}(h^{-1})\bigr) , \, {\rm Ad}_g (g')\biggr)=\biggl(h\, \bigl({\bar \alpha}_{h^{-1}}{({\rm Ad}_g(g '))}\bigr) , \, {\rm Ad}_g (g')\biggr),\\
		&{{\rm Ad}}_{(h, g)}(h', g')=[{{\rm Ad}}_{(h, g)}(h', e)][{{\rm Ad}}_{(h, g)}(e, g')].
	\end{split}
\end{equation}

For an element $(A, B)\in L(H)\oplus L(G)$, we write $(A, B)=(A, 0)+(0, B).$ The  adjoint actions of $H\rtimes G$ on $A\in L(H)$ and $B\in L(G)$ are then respectively calculated by  ${\rm{ad}}_{(h, g)}(A):=\frac{{\rm{d}}}{{\rm{d}}t}{\rm{Ad}}_{(h, g)}{{\rm e}}^{t A}\big|_{t=0}$ and  ${\rm{ad}}_{(h, g)}(B):=\frac{{\rm{d}}}{{\rm{d}}t}{\rm{Ad}}_{(h, g)}{{\rm e}}^{t B}\big|_{t=0},$	
\begin{equation}\label{E:Adjonalgebras}
	\begin{split}
		&{\rm{ad}}_{(h,\, g)}(A, 0)=\biggl({\rm ad}_{h}\bigl(\alpha_g (A)\bigr),\, 0\biggr), \\
		&{\rm{ad}}_{(h,\, g)}(0, B)=\biggl(h\,\cdot \bigl({\bar \alpha}_{h^{-1}}{({\rm ad}_g(B))}\bigr) , \, {\rm ad}_g (B)\biggr),\\
		&{\rm{ad}}_{(h, g)}(A, B)=[{\rm{ad}}_{(h, g)}(A, 0)]+[{\rm{ad}}_{(h, g)}(0, B)].
	\end{split}
\end{equation}

The symbols $\alpha_g, \bar \alpha_{h^{-1}}$ in \eqref{E:Adjonalgebras} should be understood in accordance with the \Cref{Remark:notations} (1st and 2nd bullet points). 

Using \eqref{Lemma: Interchange} one can give a functorial description of the adjoint actions.
\begin{lemma}
	Let $\mb{G}=[G_1\rra G_0]$ be a Lie $2$-group and $L(\mb{G})=[L(G_1)\rra L(G_0)]$ the associated Lie $2$-algebra. Then we have the following morphisms of Lie groupoids:
	\begin{equation}\nonumber
		\begin{split}
			{{\rm Ad}}\colon &\mb{G}\times  \mb{G}\ra  \mb{G},\\
			& (g, g')\mapsto {{\rm Ad}}_g(g'),  \, \forall g, g'\in G_0,\\
			& (k, k')\mapsto {{\rm Ad}}_k(k'), \, \forall k, k'\in G_1\\
			& \hskip 3 cm {\rm and} \\
			{\rm{ad}}\colon &\mb{G}\times L(\mb{G})  \ra  L(\mb{G}),\\
			& (g, B)\mapsto {{\rm ad}}_g(B),  \, \forall g\in G_0, B\in L(G_0),\\
			&(k, D)\mapsto {\rm{ad}}_k(K), \, \forall k\in G_1, K\in L(G_1).
		\end{split}
	\end{equation}
	
\end{lemma}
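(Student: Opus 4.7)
The plan is to establish the two assertions in sequence: first show that $\rm{Ad}$ is a morphism of Lie groupoids by direct verification using \Cref{Lemma: Interchange}, and then deduce the second statement by differentiating $\rm{Ad}$ at the identity of the second factor, so that the Lie groupoid structure on $\rm{ad}$ is inherited via functoriality of the tangent construction applied to $\rm{Ad}$.

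For the first assertion, smoothness of $\rm{Ad}$ at both the object and morphism levels is immediate from smoothness of multiplication and inversion in $G_0$ and $G_1$. Since each of $s,t,u$ is a Lie group homomorphism on the underlying Lie 2-group, source, target, and unit preservation reduce to identities like $s(k\, k'\, k^{-1})=s(k)\,s(k')\,s(k)^{-1}$ and are automatic. The only nontrivial axiom is compatibility with composition: for composable pairs $(k_2,k_2')$ and $(k_1,k_1')$ in $\mb{G}\times\mb{G}$ one must show
\begin{equation*}
(k_2\circ k_1)\,(k_2'\circ k_1')\,(k_2\circ k_1)^{-1}=(k_2\, k_2'\, k_2^{-1})\circ(k_1\, k_1'\, k_1^{-1}).
\end{equation*}
The interchange law and the inverse-composition identity recorded in \Cref{Lemma: Interchange}, applied two or three times, collapse the left-hand side to the right-hand side; that is the whole content of the first assertion.

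For the second assertion I would begin by noting that smoothness of $\rm{ad}\colon G_1\times L(G_1)\to L(G_1)$ is immediate from differentiating the smooth map $\rm{Ad}\colon G_1\times G_1\to G_1$ in the second variable at the identity. Source and target preservation at the level of $L(\mb{G})$ follow by commuting $\tfrac{d}{dt}$ past the linear maps $s_*,t_*$ in $\tfrac{d}{dt}\big|_{t=0}\rm{Ad}_k(\gamma(t))$, together with the source/target preservation already established for $\rm{Ad}$ at the object level. Unit preservation uses that $u\colon G_0\to G_1$ is a group homomorphism: choosing $\gamma(t)=u(\beta(t))$ for a curve $\beta$ in $G_0$ with $\beta'(0)=B$ yields $\rm{ad}_{1_g}(1_B)=1_{\rm{ad}_g(B)}$ directly. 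The composition axiom is the only substantive step. Given composable $(k_2,D_2)$ and $(k_1,D_1)$ in $\mb{G}\times L(\mb{G})$, with $D_i=(A_i,B_i)\in L(H)\oplus L(G)$ subject to $B_2=\tau(A_1)+B_1$, I would explicitly exhibit composable smooth curves $\gamma_i(t)$ in $H\rtimes_\alpha G$ through the identity with $\gamma_i'(0)=D_i$ and $s(\gamma_2(t))=t(\gamma_1(t))$ for all small $t$; one concrete choice is $\gamma_1(t)=(e^{tA_1},e^{tB_1})$ and $\gamma_2(t)=(e^{tA_2},\tau(e^{tA_1})\,e^{tB_1})$. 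Differentiating the functoriality identity
\begin{equation*}
\rm{Ad}_{k_2\circ k_1}\bigl(\gamma_2(t)\circ\gamma_1(t)\bigr)=\rm{Ad}_{k_2}(\gamma_2(t))\circ\rm{Ad}_{k_1}(\gamma_1(t))
\end{equation*}
at $t=0$, and using that groupoid composition in $\mb{G}$ differentiates to groupoid composition in $T\mb{G}$ (restricted to $L(\mb{G})$), then yields $\rm{ad}_{k_2\circ k_1}(D_2\circ D_1)=\rm{ad}_{k_2}(D_2)\circ\rm{ad}_{k_1}(D_1)$.

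Neither assertion presents a genuine obstacle. The most delicate bookkeeping is the explicit construction of composable curves realising the prescribed tangent data $(D_1,D_2)$; this uses the Lie crossed module description of $\mb{G}$, in particular the compatibility between the source/target formulas on $H\rtimes_\alpha G$ and the source/target on $L(\mb{G})=[L(H)\oplus L(G)\rra L(G)]$, which is precisely what makes $B_2=\tau(A_1)+B_1$ the right composability condition both on the groupoid and on the tangent level.
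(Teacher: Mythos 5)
Your proof is correct and takes essentially the paper's intended route: the paper states this lemma with no written argument beyond the remark that it follows from the interchange law of \Cref{Lemma: Interchange}, which is precisely how you verify functoriality of ${\rm Ad}$, collapsing ${\rm Ad}_{k_2\circ k_1}(k_2'\circ k_1')$ via two applications of interchange together with $(k_2\circ k_1)^{-1}=k_2^{-1}\circ k_1^{-1}$. Your treatment of ${\rm ad}$ by differentiating the functoriality identity along the explicit composable curves $\gamma_1(t)=(e^{tA_1},e^{tB_1})$, $\gamma_2(t)=(e^{tA_2},\tau(e^{tA_1})e^{tB_1})$ correctly fills in the half the paper leaves implicit, and the composability condition $B_2=\tau(A_1)+B_1$ you use matches the source/target maps of $L(\mb{G})$ as described in the paper.
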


\subsection{Action of a Lie 2-group on a Lie groupoid}\label{Action of a Lie 2-group on a Lie grouipoid}
This subsection recalls the notion of an action of a Lie 2-group on a Lie groupoid. This notion categorifies the action of a Lie group on a manifold. The concept is well known in the current body of Higher gauge theory literature and has appeared in various forms; see \cite{MR2342821} \cite{MR3894086}, \cite{MR2805195}, \cite{MR3126940},\cite{MR3917427}, to name a few.

\begin{definition}[Action of a Lie $2$-group on a Lie groupoid]\label{Definition:Lie 2 groupaction}
	For a Lie 2 group $\mb{G}$, an \textit{action of $\mb{G}$ on a Lie groupoid $\mb{X}$} is defined as a morphism of Lie groupoids $\rho: \mb{X} \times \mb{G} \rightarrow \mb{X}$ such that following pair of maps
	\begin{itemize}
		\item $\rho_0: X_0 \times G_0 \rightarrow X_0$,
		\item $\rho_1:X_1 \times G_1 \rightarrow X_1$,
	\end{itemize}
	define Lie group actions on manifolds $X_0$ and $X_1$ respectively.
\end{definition} 
Functoriality of the above action immediately induces the following property:
\begin{lemma}
	Given an action of a Lie 2-group $\mb{G}=[G_1 \rra G_0]$ on a Lie groupoid $\mb{X}$, we have the following identity:
	\begin{equation}\label{E:Identitiesechangeinverse}
		(\gamma_2\phi_2)(\gamma_1\phi_1)=(\gamma_2 \circ \gamma_1)(\phi_2 \circ \phi_1),
		\end{equation}
		for such that $\gamma_2, \gamma_1 \in X_1$ and $\phi_2, \phi_1 \in G_1$ are respectively composable.
	\end{lemma}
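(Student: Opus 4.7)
The plan is to observe that the stated identity is essentially the functoriality of the action morphism $\rho \colon \mb{X} \times \mb{G} \to \mb{X}$ combined with the componentwise composition rule of the product Lie groupoid $\mb{X} \times \mb{G}$. I will first unpack the notation on both sides of the equation in terms of the structure maps of $\rho$.

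Specifically, by the notational convention introduced just after \Cref{Left action of a Lie groupoid on a manifold}, I would read $\gamma_i \phi_i$ as the morphism $\rho_1(\gamma_i, \phi_i) \in X_1$, and read the juxtaposition $(\gamma_2 \phi_2)(\gamma_1 \phi_1)$ on the left-hand side as the composition $\rho_1(\gamma_2, \phi_2) \circ \rho_1(\gamma_1, \phi_1)$ in $\mb{X}$; this composition is well-defined because the hypothesis that $\gamma_2, \gamma_1$ and $\phi_2, \phi_1$ are respectively composable together with the fact that $s, t$ in a product groupoid act componentwise ensures that $s(\gamma_2 \phi_2) = \rho_0\bigl(s(\gamma_2), s(\phi_2)\bigr) = \rho_0\bigl(t(\gamma_1), t(\phi_1)\bigr) = t(\gamma_1 \phi_1)$. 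Analogously, the right-hand side $(\gamma_2 \circ \gamma_1)(\phi_2 \circ \phi_1)$ is the morphism $\rho_1(\gamma_2 \circ \gamma_1, \phi_2 \circ \phi_1) \in X_1$.

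The main step is to invoke the functoriality of $\rho$. In the product Lie groupoid $\mb{X} \times \mb{G}$, the composition law is componentwise, so
\begin{equation}\nonumber
(\gamma_2, \phi_2) \circ (\gamma_1, \phi_1) = (\gamma_2 \circ \gamma_1,\, \phi_2 \circ \phi_1).
\end{equation}
Applying the functor $\rho$ at the morphism level and using that $\rho_1$ preserves composition yields
\begin{equation}\nonumber
\rho_1(\gamma_2, \phi_2) \circ \rho_1(\gamma_1, \phi_1) = \rho_1\bigl((\gamma_2, \phi_2) \circ (\gamma_1, \phi_1)\bigr) = \rho_1(\gamma_2 \circ \gamma_1,\, \phi_2 \circ \phi_1),
\end{equation}
which, after translating back into the shorthand notation, is exactly the desired identity. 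There is no real obstacle here; the content of the lemma is simply to isolate the interchange-type identity that will be used repeatedly in later chapters, and the proof is a one-line consequence of $\rho$ being a morphism of Lie groupoids together with the componentwise structure of $\mb{X} \times \mb{G}$.
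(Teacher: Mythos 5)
Your proof is correct and matches the paper's argument: the identity is precisely the statement that $\rho_1$ preserves composition, with composition in the product Lie groupoid $\mb{X}\times\mb{G}$ taken componentwise, which is exactly how the paper justifies it (the lemma is stated as an immediate consequence of functoriality of the action). Your additional check of source–target compatibility is a harmless elaboration of the same one-line argument.
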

\begin{example}\label{Adj action}
	For any Lie 2-group $\mb{G}$, there is an action of the Lie $2$-group $\mb{G}$ on the
	Lie groupoid  $L(\mb{G})=[L(G_1) \rightrightarrows L(G_0)]$ (\Cref{Lie 2-algebra as Lie crossed module}), defined by the adjoint actions of $G_1$ and $G_0$.
\end{example}
\begin{example}\label{Tangent action}
	Any action of a Lie 2-group $\mb{G}$ on a Lie groupoid $\mb{X}$ induces an action of $\mb{G}$ on the tangent Lie groupoid $T\mb{X}=[TX_1 \rightrightarrows TX_0]$ (\Cref{Tangent Lie groupoid}), given by the differential of respective actions.
\end{example}
\begin{definition} \label{Equivariant morphism of Lie gorupoid}
	Let $\mb{G}$ be a Lie 2-group acting on a pair of Lie groupoids $\mb{X}$ and  $\mb{Y}.$ Then a morphism of Lie groupoids $F:=(F_1, F_0): \mb{X} \rightarrow \mb{Y}$ is defined as \textit{$\mb{G}$-equivariant morphism of Lie groupoids} if $ F_i $ is $G_i$ equivariant for each $i=0,1$.
\end{definition}

\begin{definition}\label{Equivariant 2-morphism of Lie groupoid}
	Suppose a Lie $2$-group $\mb{G}$ acts on a pair of Lie groupoids $\mb{X}$ and  $\mb{Y}$ and let $F, F' \colon \mb{E} \ra \mb{E}'$ be two $\mb{G}$-equivariant morphisms of Lie groupoids. A smooth natural transformation $\eta \colon F \longrightarrow F'$  from $F$ to $F'$
	is called a \textit{$\mb{G}$-equivariant smooth natural transformation} if $\eta(x g)=\eta(x)1_g$ for all $x \in E_0$ and $g \in G_0$.
\end{definition}
\begin{remark}\label{Lie 2-group equivariant anafunctor}
	As we have seen in \Cref{Category and 2-category of Lie groupoids}, for a fixed Lie 2-group $\mb{G}$, the collection of Lie groupoids (equipped with an action of $\mb{G}$), $\mb{G}$-equivariant morphisms of Lie groupoids and $\mb{G}$-equivariant smooth natural transformations, naturally forms a strict 2-category. Also, the bicategory of  Lie groupoids, anafunctors, and transformations (\Cref{subsection Generalized homomorphisms and Morita equivalence of Lie groupoids}) has a $\mb{G}$-equivariant version, (see \cite{MR3894086} for details) and the corresponding anafunctors (\Cref{generalized homomorphism}) are called `$\mb{G}$-equivariant anafunctors'. Although the study conducted in the thesis does not involve such generality, we believe some ideas discussed here may have some interesting consequences when appropriately weakened to the set up of anafunctors.

\end{remark}
\begin{remark}
	Compared to the one given in \Cref{Definition:Lie 2 groupaction}, a weaker version of the action of a Lie 2-group on a Lie groupoid also exists in literature; for example, in  \cite{MR2805195}, the identity and compatibility axioms of a group action at the object level hold up to an isomorphism. The action can be generalized by replacing the category $\mb{X}\times \mb{G}$ 
	by a `twisted product  category' $\mb{X}\rtimes_{\eta}\mb{G}$ of $\mb{X}$ and $\mb{G}$ with respect to a specific map $\eta$, introduced in \cite{MR3213404}. 
	Although we will stick to the definition of the action given in \Cref{Definition:Lie 2 groupaction} for this thesis and will not delve deeply into either of the other two generalizations, however, we recall the definition of a twisted product to relate with the notion of Lie groupoid $G$-extensions (\Cref{subsection Lie groupoid G extension}), later in this thesis.
\end{remark}

Consider a Lie 2-group $\mb{G}$ and a Lie groupoid $\mb{X}$. Let $\eta\colon {\rm{Mor}}(\mb{X})\times {\rm{Mor}}(\mb{G})\ra {\rm{Mor}}(\mb{X})$ be a smooth map satisfying,
\begin{equation}\label{E:Contwist}
	\begin{split}
		\eta(\gamma, k)\in {\rm Hom}_{\mb{X}}(x, y), 	& \,\,\forall \gamma\in {\rm Hom}_{\mb{X}}(x, y), \\
		\eta(\gamma_2\circ \gamma_1, k)&=\eta(\gamma_2, k)\circ \eta(\gamma_1, k),\\
		\eta(1_x, k)&=1_x,\\
		\eta(\gamma, k_2\circ k_1)&=\eta(\eta(\gamma, k_2), k_1), \\ 
		\eta(\gamma, 1_g)&=\gamma.
	\end{split}
\end{equation}
Then  $\mb{X}\rtimes_{\eta}\mb{G}$ is a category internal to Man, the category of smooth manifolds, with the  description below, ( \cite[Proposition $5.1$]{MR3213404}):
\begin{equation}\label{EquationE:twisted category}
	\begin{split}
		&\rm{Obj}(\mb{X}\rtimes_{\eta}\mb{G})= \rm{Obj}(\mb{X})\times \rm{Obj}(\mb{G}),\\
		&\rm{Mor}(\mb{X}\rtimes_{\eta}\mb{G})= \rm{Mor}(\mb{X})\times \rm{Mor}(\mb{G}),\\
		&s(\gamma, k)=\bigl(s(\gamma), s(k)\bigr), t(\gamma, k)=\bigl(t(\gamma), t(k)\bigr)\\
		&(\gamma_2, k_2)\circ_{\eta} (\gamma_1, k_1)=\bigl(\gamma_2\circ \eta(\gamma_1, k_2), k_2\circ k_1\bigr).
	\end{split} 
\end{equation}
The category $\mb{X}\rtimes_{\eta}\mb{G}$ is called by the name \textit{$\eta$-twisted category}.

\begin{definition}[Twisted action of a Lie $2$-group on a Lie groupoid]\label{Definition:Lie 2 twistgroupaction}
	For a Lie 2-group $\mb{G}$, a Lie groupoid $\mb{X}$ and a smooth map $\eta\colon {\rm{Mor}}(\mb{X})\times {\rm{Mor}}(\mb{G})\ra {\rm{Mor}}(\mb{X})$ satisfying conditions in \Cref{E:Contwist}, an \textit{$\eta$-twisted action of  $\mb{G}$ on the Lie groupoid $\mb{X}$ } is defined as a smooth functor  $\rho\colon \mb{X} \rtimes_{\eta} \mb{G} \rightarrow \mb{X}$  such that 
	\begin{itemize}
		\item $\rho_0: X_0 \times G_0 \rightarrow X_0$,
		\item $\rho_1:X_1 \times G_1 \rightarrow X_1$,
	\end{itemize}
	are respectively Lie group actions on manifolds $X_0$ and $X_1$.
\end{definition}

\begin{remark}\label{eta twisted and usual} We observe that the product Lie groupoid $\mb{X} \times G$ is a particualar case of the $\eta$-twisted category $\mb{X}\rtimes_{\eta}\mb{G}$. More precisely, if we take $\eta={\rm pr}_1$ in \Cref{Definition:Lie 2 twistgroupaction}, we recover the standard direct product of categories.
\end{remark}
\begin{remark}\label{Remark:functorialitytwisted}
	Functoriality of $\rho: \mb{X} \rtimes_{\eta} \mb{G} \rightarrow \mb{X}$	implies $\bigl(\gamma_2\circ \eta(\gamma_1, k_2)\bigr) (k_2\circ k_1)=(\gamma_2 k_2)\circ (\gamma_1 k_1)$.
	
		%\item $\rho\bigl(\gamma_2\circ \eta(\gamma_1, k_2), k_2\circ k_1\bigr)=\rho (\gamma_2, k_2)\circ \rho(\gamma_1, k_1)$,

\end{remark}

	\section{VB-groupoids}\label{subsection VB groupoids}
	The notion of `VB-groupoid' is a categorification of traditional vector bundles (\Cref{Definition: real vector bundle}) and a resident in the world of Lie groupoids. 
	During the 1980s, Pradines \cite{MR0941624} introduced these objects in relation to the study of symplectic groupoids \cite{MR0866024, MR0854594, MR0996653}. In the later years, Mackenzie et al. discovered their crucial roles in their study of double structures \cite{MR2103015, MR2831518}, and Mehta et al. demonstrated their significance in the representation theory of Lie groupoids \cite{MR3696590}. Our interest lies in seeing VB-groupoids as Lie groupoid objects in the category of vector bundles and their underlying Lie groupoid fibration structures. In this section, we recall the definition and some basic facts about them. For a detailed treatment, we suggest \cite{MR3451921, MR3696590, MR3744376}.
	
\begin{definition}[VB-groupoid {\cite[Definition $3.1.$]{MR3696590}}]\label{Definition of VB groupoids}
	A \textit{VB-groupoid} over  a Lie groupoid $\mb{X}$ is defined as a morphism of  Lie groupoids $\pi \colon \mb{D} \ra \mb{X}$
	\[
	\begin{tikzcd}[sep=small]
		D_1 \arrow[rr,"\pi_1"] \arrow[dd,xshift=0.75ex,"t_D"]
		\arrow[dd,xshift=-0.75ex,"s_D"'] &  & X_1 \arrow[dd,xshift=0.75ex,"t_X"].  		\arrow[dd,xshift=-0.75ex,"s_X"'] \\
		&  &                \\
		D_0 \arrow[rr,"\pi_0"]            &  & X_0           
	\end{tikzcd},\]
	satisfying the following conditions
	\begin{enumerate}
		\item[(i)] $\pi_1\colon D_1\ra X_1$ and $\pi_0\colon D_0\ra X_0$ are vector bundles,
		\item[(ii)]  $(s_D,s_X)$ and $(t_D, t_X)$ are morphisms of vector bundles,
		\item[(iii)]  for suitable $\gamma_1,\gamma_2,\gamma_3,\gamma_4\in V_1$, we have 
		$(\gamma_3\circ \gamma_1)+(\gamma_4\circ \gamma_2)=
		(\gamma_3+\gamma_4)\circ (\gamma_1+\gamma_2)$.
	\end{enumerate}
	\begin{proposition}
		A VB-groupoid is a Lie groupoid object in the category of vector bundles and, equivalently, a vector bundle object in the category of Lie groupoids.
	\end{proposition}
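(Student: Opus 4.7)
The plan is to establish the two equivalences separately by unpacking what ``Lie groupoid object in $\mathrm{Vect}$'' and ``vector bundle object in $\mathrm{LieGpd}$'' each require, and then matching those requirements term-by-term against conditions (i)--(iii) of \Cref{Definition of VB groupoids}. The crucial observation underlying both directions is that the interchange identity
\[
(\gamma_3 \circ \gamma_1) + (\gamma_4 \circ \gamma_2) = (\gamma_3 + \gamma_4) \circ (\gamma_1 + \gamma_2)
\]
is precisely the compatibility that lets either structure pass through the other; it plays the role of a ``functoriality'' or ``linearity'' condition depending on the viewpoint.

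For the first equivalence, I would describe a Lie groupoid object in $\mathrm{Vect}$ as a pair of vector bundles $\pi_1\colon D_1\to X_1$ and $\pi_0\colon D_0\to X_0$ together with source, target, unit, inverse, and composition maps that are morphisms of vector bundles and satisfy the groupoid axioms. The requirement that $(s_D,s_X)$ and $(t_D,t_X)$ are vector bundle morphisms recovers condition (ii). Since these source and target maps are submersions of vector bundles over submersions, the fibred product $D_1\times_{D_0} D_1$ inherits the structure of a vector bundle over $X_1\times_{X_0} X_1$, so asking that the composition $m\colon D_1\times_{D_0} D_1\to D_1$ be a vector bundle morphism unfolds exactly to the linearity equation in condition (iii). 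The unit and inverse being vector bundle morphisms either follow from the groupoid axioms together with (ii) and (iii), or amount to direct checks.

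For the second equivalence, I would realize a vector bundle object in $\mathrm{LieGpd}$ as a morphism of Lie groupoids $\pi\colon \mb{D}\to\mb{X}$ equipped with functors implementing fibrewise addition, scalar multiplication, and a zero section, all satisfying the usual vector bundle axioms internally to $\mathrm{LieGpd}$. At the object and morphism levels these functors restrict to ordinary operations, promoting $\pi_0$ and $\pi_1$ to vector bundles and forcing $s_D,t_D$ to respect them; this recovers (i) and (ii). The functoriality of the addition functor $+\colon \mb{D}\times_{\mb{X}}\mb{D}\to\mb{D}$ with respect to the composition operation in $\mb{D}\times_{\mb{X}}\mb{D}$ is exactly the interchange identity (iii), and functoriality of scalar multiplication and the zero section give the compatibilities with the unit and inverse.

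The main obstacle I anticipate is the careful treatment of the two a priori different fibred products that appear: the pullback $D_1\times_{D_0} D_1$ in $\mathrm{Vect}$ used to define composition as a VB morphism, versus the strong fibred product $\mb{D}\times_{\mb{X}}\mb{D}$ in $\mathrm{LieGpd}$ used to define addition as a functor. One must verify that the two constructions yield canonically isomorphic objects carrying both structures at once. The existence of $\mb{D}\times_{\mb{X}}\mb{D}$ as a Lie subgroupoid of $\mb{D}\times\mb{D}$ follows from \Cref{Proposition: Strict pullback of Lie groupoids}, since $(\pi_1,\pi_1)$ is a good pair (both being submersions of vector bundles). Once this identification is in hand, the interchange law translates cleanly between ``composition is linear'' and ``addition is a functor'', and the remaining verifications reduce to routine bookkeeping of structure maps.
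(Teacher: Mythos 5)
Your argument is correct: the interchange identity is exactly the pivot between ``composition is fibrewise linear'' and ``addition is a functor'', and the points you flag (identifying the Vect-pullback $D_1\times_{D_0}D_1$ with the strong fibred product, and linearity of the unit and inverse) are genuine but routine --- for instance $1_a+1_b$ is idempotent and hence a unit, inverses are handled by uniqueness of inverses, and the fibrewise surjectivity of $s_D$ needed for that pullback to be a vector bundle over $X_1\times_{X_0}X_1$ follows from $s_D$ being simultaneously a submersion and a morphism of vector bundles. The paper gives no proof of its own, deferring to Proposition 3.5 of \cite{MR3696590}, and your unpacking is essentially the standard argument found there.
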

	\begin{proof}
		See \textbf{Proposition 3.5}, \cite{MR3696590}.
	\end{proof}
\end{definition}
\begin{example}\label{Tangent VB groupoid}
	For any Lie groupoid $\mb{X}:=[X_1 \rra X_0]$, the pair of tangent bundles $TX_1 \ra X_1$ and $TX_0 \ra X_0$ combine to form a VB-groupoid $T\mb{X} \ra \mb{X}$ over $\mb{X}$, where $T\mb{X}$ is the tangent Lie groupoid of $\mb{X}$, (see \Cref{Tangent Lie groupoid}.)  The VB -groupoid $T\mb{X} \ra \mb{X}$ is called the \textit{tangent VB-groupoid of the Lie groupoid } $\mb{X}$.
\end{example}
\begin{example}
	For any representation of a Lie groupoid $\mb{X}$ on a vector bundle $\pi \colon E_0 \ra X_0$	(\Cref{Representation of a Lie groupoid}), the corresponding semi-direct product groupoid $\mb{X}\rtimes E:=[s^{*}E \rra E]$ (\Cref{Semi-direct product groupoid}), naturally defines a VB-groupoid over $\mb{X}$.
\end{example}
Recall we have seen a notion of cleavage \Cref{Definition cleavage} on a fibered category. Here, we see a similar notion exists in the framework of VB-groupoids:
\begin{definition}[\cite{MR4126305}]\label{Linear Cleavage}
	A \textit{(linear) cleavage} on a VB-groupiod $\pi \colon \mb{D} \ra \mb{X}$ is a smooth section $\mc{C}$ of the map $P^{\mb{D}} \colon D_1 \ra X_1 \times_{s,X_0, \pi_0} D_0$ given as $\delta \mapsto (\pi_1(\delta), s(\delta))$, such that $\mc{C}$ is a morphism of vector bundles.
\end{definition}
A linear cleavage satisfying the condition $\mc{C}(1_{\pi(p)},p)=1_p$ for all $p \in D_0$ is sometimes called by the name \textit{unital} (see \cite{MR4126305}) and sometimes by \textit{right-horizontal lifts} (for example see \cite{MR3696590}). A linear cleavage is known as \textit{flat} (see \cite{MR4126305}) if it satisfies the condition that if $(\gamma_2, p_2), (\gamma_1, p_1) \in X_1 \times_{s,X_0, \pi_0} D_0$ such that ${s}(\gamma_2)={t}(\gamma_1)$ and $p_2=t\bigl({\mathcal C}(\gamma_1, p_1)\bigr),$ then $\mathcal{C}(\gamma_2 \circ \gamma_1 , p_1)= \mathcal{C}(\gamma_2, p_2) \circ \mathcal{C}(\gamma_1, p_1)$.

\begin{proposition}\label{Definition:Lie groupoid fibration}
	Any VB-groupoid $\pi \colon \mb{D} \ra \mb{X}$ satifies the following two conditions:
	\begin{itemize}
		\item[(i)]  $\pi_0 \colon D_0 \ra X_0$ is a surjective submersion;
		\item[(ii)] the map $P \colon D_1 \ra s^{*}E_0, \delta \mapsto \big(\pi_1(
		\delta), s(\delta) \big)$, is a surjective submersion.
	\end{itemize}
\end{proposition}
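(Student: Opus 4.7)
Part~(i) is immediate: Definition 3.1(i) requires $\pi_0 \colon D_0 \to X_0$ to be a vector bundle, and every vector bundle projection is automatically a surjective submersion.

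For part~(ii), my first step is to observe that the codomain $X_1 \times_{s, X_0, \pi_0} D_0$ is the pullback vector bundle $s_X^{*} D_0$ over $X_1$, and that $P$ is then a morphism of vector bundles \emph{over} $X_1$ covering the identity $\mathrm{id}_{X_1}$. This uses only that $s_D$ is a vector bundle morphism over $s_X$, so that $P(\delta) = (\pi_1(\delta), s_D(\delta))$ is linear on each fibre $\pi_1^{-1}(\gamma)$. For such a morphism of vector bundles over the identity, being a surjective submersion is equivalent to being fibrewise surjective, so the task reduces to showing that, for every $\gamma \in X_1$, the linear map
\[
s_D\big|_{\pi_1^{-1}(\gamma)} \colon \pi_1^{-1}(\gamma) \longrightarrow \pi_0^{-1}(s_X(\gamma))
\]
is surjective.

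I would first dispatch the unit case $\gamma = 1_x$: the unit section $u_D \colon D_0 \to D_1$ of the Lie groupoid $\mb{D}$ is itself a vector bundle morphism over $u_X$, and the identity $s_D \circ u_D = \mathrm{id}_{D_0}$ exhibits $u_D|_{\pi_0^{-1}(x)}$ as a linear section of $s_D|_{\pi_1^{-1}(1_x)}$, proving surjectivity for free in this case.

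For a general arrow $\gamma$---which I expect to be the main obstacle---my plan is to combine the Lie groupoid axioms on $\mb{D}$ with the vector bundle morphism condition on $s_D$. At any $\delta \in \pi_1^{-1}(\gamma)$, the vertical short exact sequences associated with the vector bundles $\pi_1$ and $\pi_0$ fit into a commutative ladder whose middle and right vertical arrows $d s_D$ and $d s_X$ are surjective, because $s_D$ and $s_X$ are submersions (as source maps of the Lie groupoids $\mb{D}$ and $\mb{X}$). A standard snake-lemma chase then reduces the fibrewise surjectivity to showing that $d\pi_1$ maps $\ker(d s_D)_\delta$ onto $\ker(d s_X)_\gamma$, equivalently that the restriction $\pi_1 \colon s_D^{-1}(s_D(\delta)) \to s_X^{-1}(s_X(\gamma))$ is a submersion at~$\delta$. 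This last submersion is the crux of the argument, and my plan is to establish it by combining the interchange law from Definition 3.1(iii)---which encodes the linearity of the groupoid composition---with local triviality of $\pi_1$ and lower semi-continuity of the fibrewise rank of $s_D|_{\pi_1^{-1}(-)}$, together with the maximality of this rank already established on units, to propagate surjectivity from units to all of $X_1$.
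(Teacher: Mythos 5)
Your part (i) and your reduction of part (ii) are fine: $P$ is a vector bundle morphism over $\mathrm{id}_{X_1}$ from $\pi_1\colon D_1\ra X_1$ to the pullback bundle $s_X^{*}D_0\ra X_1$, and fibrewise surjectivity of such a morphism does imply that it is a surjective submersion, so everything hinges on the surjectivity of $s_D|_{\pi_1^{-1}(\gamma)}\colon \pi_1^{-1}(\gamma)\ra\pi_0^{-1}(s_X(\gamma))$ for every $\gamma\in X_1$. (A small inaccuracy along the way: the paper's definition of a VB-groupoid only requires $(s_D,s_X)$ and $(t_D,t_X)$ to be vector bundle morphisms, so linearity of $u_D$ is not an axiom but something to be derived from the interchange law; for the unit case, however, you only need $s_D\circ u_D=\mathrm{id}_{D_0}$ and $\pi_1\circ u_D=u_X\circ\pi_0$, so that case is fine without linearity.) The genuine gap is at your declared crux. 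Your snake-lemma ladder correctly shows that fibrewise surjectivity over $\gamma$ is \emph{equivalent} to $d\pi_1$ mapping $\ker(ds_D)_\delta$ onto $\ker(ds_X)_\gamma$, but this is a reformulation of the problem, not progress, and the mechanism you propose to settle it --- lower semicontinuity of the fibrewise rank of $s_D|_{\pi_1^{-1}(-)}$ together with maximality of that rank on units --- cannot work as stated: semicontinuity only makes the maximal-rank locus an \emph{open} neighbourhood of the unit section, and nothing prevents the rank from dropping on a closed set away from the units (nor is $X_1$ assumed connected). To rule that out you must actually use the interplay of the groupoid structure with linearity, and no such argument is supplied.

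There is a one-line way to close the gap that makes most of your scaffolding unnecessary: since $\mb{D}$ is a Lie groupoid, $s_D\colon D_1\ra D_0$ is a submersion everywhere, in particular at the zero element $0_\gamma\in\pi_1^{-1}(\gamma)$. Because $(s_D,s_X)$ is a morphism of vector bundles, in local trivializations one has $s_D(u,v)=\bigl(s_X(u),A(u)v\bigr)$ with $A(u)$ linear, so $ds_D$ at $0_\gamma$ is block diagonal, equal to $ds_X|_\gamma$ on base directions and to $A(\gamma)=s_D|_{\pi_1^{-1}(\gamma)}$ on fibre directions; surjectivity of $ds_D$ at $0_\gamma$ therefore forces surjectivity of $s_D|_{\pi_1^{-1}(\gamma)}$ for every $\gamma$, with no appeal to units, rank propagation, or the snake lemma. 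For comparison, the paper does not argue this at all: it simply cites Lemma 2 of Appendix A of Gracia-Saz--Mehta \cite{MR2806566}, which establishes exactly this kind of automatic surjectivity statement by a linearity argument of the above type.
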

\begin{proof}
	A direct consequence of \textbf{Lemma 2}, \textbf{Appendix A}, \cite{MR2806566}.
\end{proof}

\begin{remark}\label{Remark on Lie groupoid fibration}
	From the surjectivity of the map $P \colon D_1 \ra s^{*}E_0, \delta \mapsto \big(\pi_1(
	\delta), s(\delta) \big)$, it follows that the underlying functor of any VB-groupoid $\pi \colon \mb{D} \ra \mb{X}$ is a fibered category (\Cref{Definition of fibered categories}) over the underlying category of $\mb{X}$. In fact, a VB-groupoid is a specific kind of \textit{Lie groupoid fibration} (any morphism of Lie groupoids satisfying the conditions (i) and (ii) of \Cref{Definition:Lie groupoid fibration}, see \textbf{Section 2}, \cite{MR3968895} for details).
\end{remark}

\begin{definition}[Section 2.4, \cite{MR3744376}]\label{1morphism of VB groupids}
	A \textit{$1$-morphism of VB-groupoids} 
	$[V_1\rra V_0]\ra [V_1'\rra V_0']$ over the base Lie groupoid $\mb{X}$ is definied as a morphism of Lie groupoids $\Phi:=(\Phi_1, \Phi_0)\colon [V_1\rra V_0]\ra [V_1'\rra V_0']$ such that $(\Phi_1, {\rm{id}}_{X_1})$  and $(\Phi_0, {\rm{id}}_{X_0})$, are morphisms of vector bundles. 
\end{definition}
\begin{definition}
	Let $\Phi,\Phi' \colon [V_1\rra V_0]\ra [V_1'\rra V_0']$  be a pair of  $1$-morphisms of VB-groupoids over $\mb{X}$. Then, a \textit{$2$-morphsim}  $\eta\colon \Phi \longrightarrow  \Phi '$ is defined as a smooth natural transformation such that $(\eta, 1)$ is a morphism of vector bundles from $V_0\ra X_0$ to $V'_1\ra X_1$.
\end{definition}

The following is obvious:

\begin{proposition}	\label{Prop:naturaltransformationinVBgroupoids}
	The collection of VB-groupoids forms a strict 2-category, with $1$-morphisms and $2$-morphisms as defined above.  
\end{proposition}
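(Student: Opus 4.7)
The plan is to exhibit VB-groupoids, together with the $1$-morphisms of \Cref{1morphism of VB groupids} and the $2$-morphisms introduced immediately after, as a sub-2-category of the strict $2$-category $2$-LieGpd of Lie groupoids (\Cref{Category and 2-category of Lie groupoids}). Once this is done, the associativity diagram \Cref{associativity axiom of strict 2-category}, the unit diagram \Cref{unit axiom of strict 2-category}, and the interchange law will follow automatically by restriction, so the real content lies in verifying that the various composition operations stay inside the class of morphisms that are simultaneously smooth functors and vector bundle morphisms.

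First I would verify closure of $1$-morphisms. If $\Phi\colon [V_1\rra V_0]\ra [V_1'\rra V_0']$ and $\Phi'\colon [V_1'\rra V_0']\ra [V_1''\rra V_0'']$ are $1$-morphisms of VB-groupoids over $\mb{X}$, then $\Phi'\circ \Phi$ is certainly a morphism of Lie groupoids; and for $i=0,1$, the pair $(\Phi'_i\circ \Phi_i, {\rm{id}}_{X_i})$ is a composition of two morphisms of vector bundles over $X_i$, hence a morphism of vector bundles. The identity functor on a VB-groupoid is trivially both. This makes the collection of VB-groupoids and $1$-morphisms a (sub)category of $\rm{LieGpd}$.

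Next I would handle $2$-morphisms. For vertical composition, if $\eta\colon \Phi\Longrightarrow \Phi'$ and $\eta'\colon \Phi'\Longrightarrow \Phi''$ are $2$-morphisms of VB-groupoids, then $(\eta'\circ \eta)(p)=\eta'(p)\circ \eta(p)$ uses the composition map of $[V_1''\rra V_0'']$, which by axiom (iii) of \Cref{Definition of VB groupoids} is bilinear over composable elements; combined with the fact that $\eta$ and $\eta'$ are vector bundle morphisms over the unit section $X_0\ra X_1$, this shows $(\eta'\circ \eta, 1)$ is again a morphism of vector bundles. For horizontal composition of $\eta_1\colon \Phi_1\Longrightarrow \Phi_1'$ and $\eta_2\colon \Phi_2\Longrightarrow \Phi_2'$ between composable $1$-morphisms, I would use the formula $(\eta_2\circ_h\eta_1)(p)=\eta_2(\Phi_{1,0}'(p))\circ (\Phi_{2,1}\eta_1(p))$ and combine the linearity of $\Phi_{2,1}$, of $\eta_2$, and of the composition in $[V_1''\rra V_0'']$ to conclude. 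The identity $2$-morphism ${\rm{id}}_\Phi\colon p\mapsto 1_{\Phi_0(p)}$ is $u\circ \Phi_0$, which is a composition of vector bundle morphisms by the VB-groupoid axioms.

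Finally, having shown that the structural operations preserve the VB-compatibility, the associativity, unit, and interchange laws are inherited from $2$-LieGpd (\Cref{Category and 2-category of Lie groupoids}) at the level of underlying functors and smooth natural transformations, since the forgetful assignment to $2$-LieGpd is faithful on $1$- and $2$-cells. I do not expect a serious obstacle here; the mild bookkeeping nuisance is keeping track of which base ($X_0$ or $X_1$) each linearity claim lives over when verifying the horizontal composition, but this is a direct check using axiom (iii) of \Cref{Definition of VB groupoids}.
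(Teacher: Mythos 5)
Your proposal is correct: the paper gives no argument at all (it simply declares the proposition obvious), and your verification — showing that the VB-compatible $1$- and $2$-cells are closed under identities, vertical, and horizontal composition inside the strict $2$-category $2$-LieGpd, so that associativity, unit, and interchange laws are inherited by restriction — is exactly the routine check the paper leaves implicit. One wording nitpick: the composition map of a VB-groupoid is additive (indeed linear) on the fiber product of composable arrows, not ``bilinear''; your computations only use this linearity together with the linearity of $\eta$, $\eta'$ and $\Phi_{2,1}$ over the appropriate base maps, so nothing in the argument breaks.
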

We denote the strict 2-category of VB-groupoids by $2$-${\rm {VBGpd}}(\mb{X})$.
\begin{definition}\label{Short exact sequence of VB-groupoids}[Section 2.4, \cite{MR3744376}]
	A \textit{short exact sequence of VB-groupoids over a Lie groupoid $\mb{X}$ }consists of three VB-groupoids say $\pi \colon \mb{D} \ra \mb{X}$, $\pi' \colon \mb{D}' \ra \mb{X}$ and $\pi'' \colon \mb{D}'' \ra \mb{X}$  and two connecting 1-morphisms say $F \colon \mb{D} \ra \mb{D}'$ and  $F' \colon \mb{D}' \ra \mb{D}''$ of VB-groupoids, as given in the diagram below: 	
	\begin{equation}\nonumber
		\begin{tikzcd}
			0 \arrow[r, ""]  & \mb{D} \arrow[r, "F"] \arrow[d, ""'] & \mb{D}' \arrow[r, "F'"] \arrow[d, ""'] & \mb{D}'' \arrow[d, ""'] \arrow[r] & 0 \\
			0 \arrow[r, ""'] & \mb{X} \arrow[r, "\rm{id}"']                & \mb{X} \arrow[r, "\rm{id}"']                & \mb{X} \arrow[r]                 & 0
		\end{tikzcd}
	\end{equation}
	such that the following is a short exact sequence of vector bundles over $X_1$
	\begin{equation}\nonumber
		\begin{tikzcd}
			0 \arrow[r, ""]  & D_1 \arrow[r, "F_1"] \arrow[d, ""'] & D_1' \arrow[r, "F_1'"] \arrow[d, ""'] & D_1'' \arrow[d, ""'] \arrow[r] & 0 \\
			0 \arrow[r, ""'] & X_1 \arrow[r, "\rm{id}"']                & X_1 \arrow[r, "\rm{id}"']                & X_1 \arrow[r]                 & 0
		\end{tikzcd},
	\end{equation}
	
\end{definition}		
A consequence of the above definition is the intuitive one, which we state below:
\begin{proposition}
	If the following is a short exact sequence of VB-groupoids over a Lie groupoid $\mb{X}$,
	\begin{equation}\nonumber
		\begin{tikzcd}
			0 \arrow[r, ""]  & \mb{D} \arrow[r, "F"] \arrow[d, ""'] & \mb{D}' \arrow[r, "F'"] \arrow[d, ""'] & \mb{D}'' \arrow[d, ""'] \arrow[r] & 0 \\
			0 \arrow[r, ""'] & \mb{X} \arrow[r, "\rm{id}"']                & \mb{X} \arrow[r, "\rm{id}"']                & \mb{X} \arrow[r]                 & 0
		\end{tikzcd}
	\end{equation}
	then the following is a short exact sequence of vector bundles over $X_0$
	\begin{equation}\nonumber
		\begin{tikzcd}
			0 \arrow[r, ""]  & D_0 \arrow[r, "F_1"] \arrow[d, ""'] & D_0' \arrow[r, "F_1'"] \arrow[d, ""'] & D_0'' \arrow[d, ""'] \arrow[r] & 0 \\
			0 \arrow[r, ""'] & X_0 \arrow[r, "\rm{id}"']                & X_0 \arrow[r, "\rm{id}"']                & X_0 \arrow[r]                 & 0
		\end{tikzcd}.
	\end{equation}
\end{proposition}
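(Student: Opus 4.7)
The plan is to leverage the fact that $F$ and $F'$, being morphisms of Lie groupoids, are compatible with the unit and source maps of the three VB-groupoids. Denote the unit maps by $u_D \colon D_0 \to D_1$, $u_{D'} \colon D_0' \to D_1'$, $u_{D''} \colon D_0'' \to D_1''$, and the source maps by $s_D$, $s_{D'}$, $s_{D''}$, respectively. The unit maps are injective vector bundle morphisms covering the unit section $u \colon X_0 \to X_1$ of $\mb{X}$, satisfying $s_D \circ u_D = \mathrm{id}_{D_0}$ (and analogously for the other two), while the functoriality of $F$ and $F'$ supplies the identities
\[
F_1 \circ u_D = u_{D'} \circ F_0, \quad F_1' \circ u_{D'} = u_{D''} \circ F_0', \quad s_{D'} \circ F_1 = F_0 \circ s_D, \quad s_{D''} \circ F_1' = F_0' \circ s_{D'}.
\]

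First, I would establish fiberwise exactness of the proposed object-level sequence. Injectivity of $F_0$ is immediate: if $F_0(p_1) = F_0(p_2)$, then $F_1(u_D(p_1)) = u_{D'}(F_0(p_1)) = u_{D'}(F_0(p_2)) = F_1(u_D(p_2))$, so by injectivity of $F_1$ and $u_D$, $p_1 = p_2$. For surjectivity of $F_0'$, given $p'' \in D_0''$, apply surjectivity of $F_1'$ to obtain $\gamma' \in D_1'$ with $F_1'(\gamma') = u_{D''}(p'')$; then $p' := s_{D'}(\gamma')$ satisfies $F_0'(p') = s_{D''}(F_1'(\gamma')) = s_{D''}(u_{D''}(p'')) = p''$. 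The inclusion $\mathrm{im}(F_0) \subseteq \ker(F_0')$ follows since $F_0' \circ F_0 = s_{D''} \circ F_1' \circ F_1 \circ u_D$ and $F_1' \circ F_1 = 0$. For the reverse inclusion, if $F_0'(p') = 0$, then $F_1'(u_{D'}(p')) = u_{D''}(F_0'(p')) = 0$, so by arrow-level exactness, $u_{D'}(p') = F_1(\gamma)$ for some $\gamma \in D_1$; setting $p := s_D(\gamma)$ yields $F_0(p) = s_{D'}(F_1(\gamma)) = s_{D'}(u_{D'}(p')) = p'$.

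Finally, I would promote this pointwise exactness to an exact sequence of vector bundles. Since $F_0$ and $F_0'$ are smooth vector bundle morphisms that are fiberwise injective and surjective respectively, the standard fact that a fiberwise injective (resp.\ surjective) vector bundle morphism is automatically of constant rank and its image (resp.\ kernel) is a smooth subbundle guarantees that $\mathrm{im}(F_0) = \ker(F_0')$ is a smooth subbundle of $D_0'$, producing the desired short exact sequence of vector bundles over $X_0$.

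The main subtlety I anticipate is that $u_D(D_0)$ is generally a proper subbundle of $D_1|_{u(X_0)}$ (the orthogonal piece being the so-called core), so one cannot simply obtain the object-level sequence by restricting the arrow-level sequence along $u \colon X_0 \to X_1$. Instead, the argument must shuttle back and forth between $D_1$ and $D_0$ via the pair $(u_D, s_D)$, using $u_D$ to transport object-level elements into the arrow-level sequence where exactness is known, and $s_D$ to return the resulting lifts to the object level, at each step invoking the four compatibility identities listed above.
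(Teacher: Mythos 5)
Your argument is correct, and it is genuinely self-contained, whereas the thesis does not prove this statement at all — it simply cites Proposition 2.6(i) of the reference \cite{MR3744376}. Your diagram chase is the natural one: every step (fiberwise injectivity of $F_0$, surjectivity of $F_0'$, $\operatorname{im}F_0=\ker F_0'$ fiberwise, then the standard constant-rank facts to get an exact sequence of bundles) checks out, and you correctly use that $F_0,F_0'$ cover $\mathrm{id}_{X_0}$ so that all comparisons happen in a single fiber over $1_x$. Two small remarks. First, the paper's literal Definition of a VB-groupoid only demands that the source and target be vector bundle morphisms, so your reliance on $u_D,u_{D'},u_{D''}$ being linear bundle morphisms over $u\colon X_0\to X_1$ deserves a one-line justification; it is harmless, since the paper also records (quoting Gracia-Saz--Mehta) that a VB-groupoid is a groupoid object in vector bundles, which gives linearity of the unit maps, and in any case additivity of $u_D$ follows from the interchange law together with the fact that units are the only idempotent arrows of a groupoid. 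Second, the argument usually given in the literature packages your back-and-forth between $(u_D,s_D)$ slightly differently: restricting the arrow-level exact sequence along $u\colon X_0\to X_1$ gives an exact sequence of bundles $0\to u^*D_1\to u^*D_1'\to u^*D_1''\to 0$ over $X_0$, and the splitting $s_D\circ u_D=\mathrm{id}$ identifies $u^*D_1\cong D_0\oplus C$ (core $C=\ker s_D|_{u(X_0)}$) compatibly with $F,F'$, so the $D_0$-summands form an exact sequence; this is the same mechanism as yours, just organized as a direct-sum decomposition, and it has the bonus of simultaneously proving exactness of the core sequence (which is part (ii) of the cited proposition).
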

\begin{proof}
	See \textbf{Proposition 2.6(i)}, \cite{MR3744376}.
\end{proof}

\section{2-Vector spaces}\label{Section: 2-vector space}
In the current literature, there is no particular standard way to categorify the notion of a vector space (see \cite{nlab:2-vector_space} for a detailed discussion on the issue). We restrict our attention to the concept of a categorified vector space introduced by Baez and Crans in \cite{MR2068522}. Usually, they are known by the name `Baez-Crans 2-vector spaces'. Among other existing notions of 2-vector spaces,  \textit{Kapranov-Voevodsky 2-vector spaces} \cite{MR1278735} is a significant one; however, we will not pursue them here.

\begin{definition}[Definition 3.1, \cite{MR2068522}]\label{Vector 2-spaces}
	A \textit{2-vector space} is defined as a category such that both $V_1$ and $V_0$ are vector spaces and all structure maps are linear. 	
\end{definition}
In other words, a 2-vector space is a category $\mb{V}:=[V_1 \rra V_0]$ internal to Vect, the category of finite dimensional vector spaces over a field $\mb{K}$. Similarly, there is a notion of a functor internal to Vect between a pair of vector 2-spaces and a natural transformation internal to Vect between such a pair of functors internal to Vect. These data form a strict 2-category naturally, and we denote it by 2Vect.
We suggest \textbf{Section 3} of \cite{MR2068522} for readers interested in a detailed account on 2Vect. 
\begin{example}
	For any VB-groupiod $\pi \colon \mb{D} \ra \mb{X}$, the groupoid $\pi^{-1}(x):=[\pi_1^{-1}(1_x) \rra \pi_0^{-1}(x)]$ is a 2-vector space for each $x \in X_0$.
\end{example}
\begin{example}\label{Lie 2-algebra}
	For any Lie 2-group $\mb{G}$, the Lie groupoid $L(\mb{G}):=[L(G_1) \rra L(G_0)]$ is a 2-vector space.
\end{example}
Next, we define an action of Lie 2-group on a 2-vector space. This notion is adapted from the one in \textbf{Section 11}, \cite{MR3126940}.
\begin{definition}[Section 11, \cite{MR3126940}]\label{Action of a Lie 2-group on a vector 2-space}
	Let $\mb{G}:=[G_1 \rra G_0]$ be a Lie 2-group and $\mb{V}:=[V_1 \rra V_0]$ a 2-vector space. An \textit{action of $\mb{G}$ on  $\mb{V}$} is given by a functor $\rho \colon \mb{G} \times \mb{V} \ra \mb{V}$, such that the maps $\rho_1 \colon G_1 \times V_1 \ra V_1$ and $\rho_0 \colon G_0 \times V_0 \ra V_0$ are traditional Lie group actions on $V_1$ and $V_0$ respectively, inducing linear representations of $G_1$ and $G_0$ on $V_1$ and $V_0$ respectively.
\end{definition}
We denote $\rho_i(g,v)$ by $gv$ for all $g \in G_i, v \in V_i$ and $i=0,1$, (assuming $\mb{G}$ is acting form the left).

A weaker version of this action was explored in \cite{MR3556124, MR2978538}, and for the representation theory of 2-groups we suggest
\cite{MR2331238, MR3213404, huan20222representations}.

\section{Haefliger paths and the fundamental groupoid of a Lie groupoid}\label{Haefliger paths and fundamental groupoid of a Lie groupoid}
This section recalls a notion of a path in a Lie groupoid, introduced by Haefliger (\cite{MR0285027, MR1086659, MR2218759}). Also, we quickly review the existing notion of homotopy between such paths that naturally extends the notion of fundamental groupoid of a manifold to the setting of Lie groupoids (see \cite{MR2772614, MR2861783, MR2166083, MR2166453}).

\begin{definition}[Section 4.1, \cite{MR2772614}]\label{Haefliger definition}
	Suppose $\mb{X}$ is a Lie groupoid and $x, y \in X_0$. An \textit{$\mb{X}$-path} or a \textit{Haefliger path from $x$ to $y$ over a subdivision $0=t_0 \leq t_1 \leq \cdots \leq t_n=1$}, is defined as a sequence $(\gamma_0, \alpha_1, \gamma_1, \cdots , \alpha_n, \gamma_n)$, where
	\begin{enumerate}[(i)]
		\item $\alpha_i \colon [t_{i-1}, t_i] \ra X_0$ is a path for all $1 \leq i \leq n$ and 
		\item $\gamma_i \in X_1$ for each $i$, such that
		\begin{itemize}
			\item $s(\gamma_0)=x$ and $t(\gamma_n)=y$;
			\item $s(\gamma_i)= \alpha_i(t_i)$ for all $0 < i \leq n$;
			\item $t(\gamma_i)= \alpha_{i+1}(t_i)$ for all $0 \leq i < n$,
		\end{itemize}
		as illustrated in the diagram below:
		\[\begin{tikzcd}
			x \arrow[r, "\gamma_0"] & \cdot \arrow[r, "\alpha_1", dotted] & \cdot \arrow[r, "\gamma_1"] & \cdot \arrow[r, no head, dotted] & {} \arrow[r, no head, dotted] & \cdot \arrow[r, "\alpha_n", dotted] & \cdot \arrow[r, "\gamma_n"] & y .
		\end{tikzcd}\]
	\end{enumerate}
\end{definition}
The following operations define an equivalence relation on the set of all $\mb{X}$-paths, (\textbf{Section 4.1}, \cite{MR2772614}) : \label{Equivalence Haefliger}
\begin{enumerate}[(i)]
	\item Adding a new point $z \in [t_{i-1}, t_i]$ to the subdivision, followed by taking the restrictions $\alpha'_i, \alpha''_i$ of the associated $\alpha_i$ to the newly formed intervals $[t_{i-1},z]$ and $[z, t_i]$ and then adding the identity arrow $1_{\alpha(z)}$, as illustrated below:
	\begin{equation}\label{classical equivalence 1}
		\begin{tikzcd}
			\cdot \arrow[r, "\alpha^{{\rm{'}}}_i", dotted] & \cdot \arrow["1_{\alpha(z)}"', loop, distance=2em, in=305, out=235] \arrow[r, "\alpha^{{\rm{''}}}_i", dotted] & \cdot
		\end{tikzcd}
	\end{equation}
	\item Given a smooth map $\zeta_i \colon [t_{i-1}, t_i] \ra X_1$, by making the following replacements:
	\begin{itemize}
		\item $\alpha_i$ by $t \circ \zeta_i$,
		\item $\gamma_{i-1}$ by $\zeta_{i}(t_{i-1}) \circ \gamma_{i-1}$ and
		\item $\gamma_i$ by $\gamma_i \circ \big(\zeta_i(t_i) \big)^{-1}$,
	\end{itemize}
	as illustrated below:
	\begin{equation}\label{classical equivalence 2}
		\begin{tikzcd}
			\cdot \arrow[r, "\gamma_{i-1}"] & \cdot \arrow[r, "\alpha_i", dotted] \arrow[d, "\zeta_i(t_{i-1})"'] & \cdot \arrow[r, "\gamma_i"]  & . \\
			& \cdot \arrow[r, "t \circ \zeta_i"', dotted]                & \cdot \arrow[u, "\zeta_i(t_i)^{-1}"'] &  
		\end{tikzcd}
	\end{equation}
\end{enumerate}
Next, we recall the notion of \textit{deformation between $\mb{X}$-paths}, as given in \textbf{Section 4.1}, \cite{MR2772614}.

\begin{definition}[Section 4.1, \cite{MR2772614}]\label{deformation}
	A \textit{deformation from an $\mb{X}$-path $(\gamma_0, \alpha_1, \gamma_1,..., \alpha_n, \gamma_n)$ to another one $(\gamma'_0, \alpha'_1, \gamma'_1, \cdots , \alpha'_n, \gamma'_n)$} \textit{from $x$ to $y$} is given by 
	\begin{itemize}
		\item a sequence of homotopies $H_i \colon [t_{i-1}, t_i] \times [0,1] \ra X_0$, with $H_i(0)= \alpha_i$ and $H_i(1)= \alpha'_i$ for $i=1,2,...,n$, and 
		\item a sequence of smooth paths $\zeta_i \colon [0,1] \ra X_1$ with $\zeta_i(0)= \gamma_i$ and $\zeta_i(1)=\gamma'_i$ for $i=1,...,n-1$,
	\end{itemize}
	such that $(\gamma_0, H_1(s), \zeta_1(s), \cdots, \zeta_{n-1}(s), H_n(s), \gamma_n)$ is an $\mb{X}$-path for each $s \in [0,1]$.
\end{definition}

\begin{definition}[Definition 4.2, \cite{MR2772614}]\label{homotopy of Xpaths}
	A pair of $\mb{X}$ paths between $x$ and $y$ is said to be \textit{homotopic} if one can be obtained from the other by a finite sequence of the following operations:
	\begin{itemize}
		\item \Cref{classical equivalence 1}, 
		\item \Cref{classical equivalence 2} and 
		\item deformations.
	\end{itemize}
\end{definition}
The above notion of homotopy between $\mb{X}$-paths naturally yields a groupoid $\Pi_1(\mb{X})$, whose objects are the elements of $X_0$ and arrows are the homotopy class of $\mb{X}$-paths, and is called the \textit{fundamental groupoid of the Lie groupoid $\mb{X}$}, \cite{MR2772614, MR2861783, MR2166083, MR2166453}. Furthermore, one can show that $\Pi_1(\mb{X})$ has a natural Lie groupoid structure, (see \cite{MR2166453}). We refer to \textbf{Section 4.1}, \cite{MR2772614} for a detailed description of its structure maps. The following proposition is a generalisation of the \Cref{Induced morphism between fundamental groupoid of manifolds}:
\begin{proposition}[Section 4.1, \cite{MR2772614}]\label{Induced morphism between haefliger groupoids}
	Any morphism of Lie groupoids $F \colon \mb{X} \ra \mb{Y}$ induces a morphisms of Lie groupoids $F^{*} \colon \Pi_1(\mb{X}) \ra \Pi_1(\mb{Y})$ between the corresponding fundamental groupoids, defined as $F^{*}_{0}: =\phi$ and $F^{*}_{1}([\gamma_0, \alpha_1, \cdots \alpha_n, \gamma_n]):=[F_1 (\gamma_0), F_0 \circ \alpha_1, \cdots, F_0 \circ \alpha_n, F_1(\gamma_n)]$.
\end{proposition}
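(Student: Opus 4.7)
The plan is to verify that the assignments $F^{*}_{0}:=F_{0}$ and $F^{*}_{1}([\gamma_0,\alpha_1,\ldots,\alpha_n,\gamma_n]):=[F_1(\gamma_0),F_0\circ\alpha_1,\ldots,F_0\circ\alpha_n,F_1(\gamma_n)]$ are well defined, functorial, and smooth. First I would check on representatives that the tuple $(F_1(\gamma_0),F_0\circ\alpha_1,F_1(\gamma_1),\ldots,F_0\circ\alpha_n,F_1(\gamma_n))$ is indeed a $\mb{Y}$-path from $F_0(x)$ to $F_0(y)$ over the same subdivision $0=t_0\le t_1\le\cdots\le t_n=1$. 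This is immediate from the functoriality of $F$: since $F$ commutes with source and target, the identities $s(\gamma_i)=\alpha_i(t_i)$ and $t(\gamma_i)=\alpha_{i+1}(t_i)$ transport to the analogous identities for $(F_1(\gamma_i),F_0\circ\alpha_i)$, while the endpoint conditions at $x$ and $y$ are preserved by $F_0$.

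Next I would verify that the definition descends to homotopy classes by checking invariance under the three generating operations of the equivalence relation from Section~\ref{Haefliger paths and fundamental groupoid of a Lie groupoid}. For operation \eqref{classical equivalence 1}, inserting a new subdivision point $z$ and the identity arrow $1_{\alpha_i(z)}$ corresponds under $F$ to inserting $F_1(1_{\alpha_i(z)})=1_{F_0(\alpha_i(z))}$, because $F$ preserves units. For operation \eqref{classical equivalence 2}, given a smooth map $\zeta_i\colon[t_{i-1},t_i]\to X_1$, the replacements $\alpha_i\mapsto t\circ\zeta_i$, $\gamma_{i-1}\mapsto \zeta_i(t_{i-1})\circ\gamma_{i-1}$, $\gamma_i\mapsto\gamma_i\circ\zeta_i(t_i)^{-1}$ are sent, via the composite $F_1\circ\zeta_i\colon[t_{i-1},t_i]\to Y_1$, to the corresponding replacements in the $\mb{Y}$-path; again functoriality of $F$ (commuting with $t$, composition, and inversion) does all the work. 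For deformations (Definition~\ref{deformation}), given homotopies $H_i$ and paths $\zeta_i$, the composites $F_0\circ H_i$ and $F_1\circ\zeta_i$ are smooth and form a deformation between the image $\mb{Y}$-paths, so homotopy is preserved.

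Functoriality of $F^{*}$ then follows from the description of composition in $\Pi_1(\mb{X})$ by concatenation of $\mb{X}$-paths (after adjusting subdivisions), which is clearly preserved by the componentwise application of $F_0$ and $F_1$; units $[1_x]$ at $x\in X_0$ are sent to $[1_{F_0(x)}]$, and inverses are preserved because $F$ preserves inversion. Finally, smoothness of $F^{*}_{0}=F_0$ is given, and smoothness of $F^{*}_{1}$ reduces to the smoothness of $F_0$ and $F_1$ on the concrete manifold of representatives used to chart $\mathrm{Mor}(\Pi_1(\mb{X}))$ (as recalled in Section~\ref{Haefliger paths and fundamental groupoid of a Lie groupoid}, the Lie groupoid structure on $\Pi_1(\mb{X})$ is detailed in \cite{MR2166453, MR2772614}); one transfers charts along $F$. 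The main obstacle, though essentially routine, is bookkeeping the invariance under the equivalence relation uniformly across the three operations, since one must carefully match subdivisions before applying $F$; everything else follows from functoriality of $F$ and smoothness of $F_0,F_1$.
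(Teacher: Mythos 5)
Your sketch is correct, and it is the expected argument: the thesis states this proposition as a recalled result, citing Section 4.1 of \cite{MR2772614}, and gives no proof of its own, so there is nothing internal to compare against. Your verification covers exactly the points that matter: the image tuple is an $\mb{Y}$-path because $F$ intertwines $s$, $t$ and endpoints; the assignment descends to classes because $F$ preserves units (operation \eqref{classical equivalence 1}), preserves targets, composition and inverses (operation \eqref{classical equivalence 2}, via $F_1\circ\zeta_i$), and sends deformations to deformations (compose $H_i$ with $F_0$ and $\zeta_i$ with $F_1$); functoriality then follows from compatibility of componentwise application of $F$ with concatenation, units and inverses. (You also correctly read the statement's $F^{*}_{0}:=\phi$ as $F^{*}_{0}:=F_0$; the $\phi$ is a typo in the thesis.) The one place where your sketch leaves genuine work implicit is the smoothness of $F^{*}_{1}$: the manifold structure on $\mathrm{Mor}\bigl(\Pi_1(\mb{X})\bigr)$ is not recalled in the thesis (it is delegated to \cite{MR2166453, MR2772614}) and is built from specific charts on spaces of representatives, so "one transfers charts along $F$" should be expanded by exhibiting $F^{*}_{1}$ in those charts as a map assembled from $F_0$, $F_1$ and the quotient maps; this is routine but is the only step that actually uses the construction of the smooth structure rather than bare functoriality of $F$.
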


\section{Diffeology}
Recall, we mentioned in \Cref{subsectionFunctor from the thin homotopy groupoid} that the quotient space $\frac{PM}{\sim}$, in general, has no finite-dimensional smooth manifold structure. Despite this, we can still talk about its smoothness through \textit{generalized smooth spaces} such as \textit{diffeological spaces}.
They can be thought of as spaces that subsume the notion of smooth manifolds but also encompass spaces like path spaces, spaces of smooth maps between two manifolds, quotient spaces, pullback spaces, etc. In other words, they capture the smoothness of many naturally occurring spaces that one would like to think of as smooth but do not possess finite dimensional manifold structures. 

This section contains some basic notions in diffeology and a brief discussion on the smoothness of parallel transport functor on a traditional principal bundle \Cref{Transport functor}. Readers interested in various kinds of generalized smooth spaces can look at \cite{MR2817410}. Our references for this portion are \cite{MR3025051} and the  \textbf{Appendix A} of \cite{MR3521476}.
\subsection{Definitions, basic properties and examples}\label{Definitionsbasic properties and examples}
\begin{definition}
	A \textit{diffeology} on a set $S$ is a collection of functions $D_{S} \subseteq \lbrace p \colon U \ra S : U \subseteq \mb{R}^{n}$, where $U$ is an open subset of $\mb{R}^{n}, n \in \mb{N} \rbrace$ satisfying the following conditions:
	\begin{itemize}
		\item[(i)]Every constant function lies in $D_S$;
		\item[(ii)] If $V \subseteq \mb{R}^{n}$ is open, $p \colon U \ra S$ is in $D_S$ and $f \colon V \ra U$ is a smooth map, then we have $p  \circ f \colon V \ra S$ is in $D_S$;
		\item[(iii)] If $\lbrace U_i \rbrace_{i \in I}$ is an open cover of $U \subseteq \mb{R}^{n}$ and $p \colon U \ra S$ is a function satisfying $p |_{U_i} \colon U_i \ra S$ is in $D_S$ for all  $i \in I$, then $p  \colon U \ra X$ is in $D_S$.
	\end{itemize}
\end{definition}
The pair $(S, D_S)$ is known as \textit{diffeological space} and the elements of $D_S$ are called \textit{plots}. 
\begin{definition}
	A \textit{map of diffeological spaces} from a diffeological space $(X, D_X) $ to a diffeological space  $(Y, D_Y)$ is defined as a map of sets $f \colon X \ra Y$,  such that for any  $p \in D_X$, $f \circ p \in D_Y$.
\end{definition}
The collection of diffeological spaces, along with the maps of diffeological spaces between them, form a category naturally and is denoted by \textbf{Diffeol}.
\begin{example}[Smooth manifiold]\label{manifold diffeology}
	Every smooth manifold $M$ is a diffeological space, with diffeology $D_M:= \lbrace p \colon U \ra M : U$ is an open subset of $\sqcup^{\infty}_{n=0}\mb{R}^{n}$ and $p$ is smooth$\rbrace$. Hence, every smooth map $f \colon M \ra N$ between manifolds is a map of diffelogical spaces $f \colon (M, D_M) \ra (N, D_N)$.
\end{example}
\begin{example}[Path space diffeology]\label{Path space diffeology}
	Given a smooth manifold $M$, the set of paths with sitting instances $PM$ (\Cref{Subsection: Parallel transport functor of a connection}) is a diffeological space, with diffeology $D_{PM}:= \lbrace p \colon U \ra PM : \bar{p} \colon U \times [0,1] \ra M, (u,x) \mapsto p(u)(x)$, is smooth.$\rbrace$. The diffeology $D_{PM}$ is known as the \textit{path space diffeology}. Furthermore, the evaluation maps $ev_0,ev_1 \colon PM \ra M$ at $0$ and $1$ are maps of diffeological spaces.
\end{example}
\begin{example}[Fibre product diffeology]\label{Fibre product diffeology}
	Given a pair of maps of diffeological spaces $f \colon (Y,D_Y) \ra (X,D_X)$ and $g \colon (Z,D_Z) \mapsto (X,D_X)$, the set theoretic fibre product $Y \times_{f,X,g} Z$ is a diffeological space with \textit{fiber product diffeology} $D_{Y \times_{f,X,g} Z}:= \lbrace(p_Y,p_Z) \in D_Y \times D_Z : f \circ p_Y= g \circ p_Z \rbrace$.  Observe that the projections are maps of diffeological spaces.
\end{example}
\begin{example}\label{subsapce diffeology}
	For any diffeological $(X, D_{X})$ and a subset $S \subseteq X$, $D_{S}:= \lbrace (p \colon U \ra X) \in D_{X} : p(U) \subseteq S \rbrace$ defines the \textit{subspace diffeology} on $S$.
\end{example}
\begin{example}[Quotient diffeology ]\label{quotient diffeology}
	For a diffeological space $(X, D_{X})$ and an equivalence relation $\sim$ on $X$, the quotient $q \colon X \ra \frac{X}{\sim}$ defines a diffeological structure with the following diffeology, (\textbf{Construction A.15}, \cite{MR3521476}):
	
	$D_{\frac{X}{\sim}}:= \lbrace p \colon U \ra \frac{X}{\sim}$: $U \subseteq  \mb{R}^{n}$ is ${\rm{open}}, n \in \mb{N}, p $ is a function such that for every $u \in U$, there exists an open neighbourhood $V$ of $u$ in $U$ and a plot $\bar{p} \colon V \ra X$ with $q \circ \bar{p}=p|_{V}\rbrace$.
	
	$D_{\frac{X}{\sim}}$ is known as the \textit{quotient diffeology}. Then, the quotient map naturally becomes a map of diffeological spaces.
\end{example}
\begin{example}\label{sum diffeology}
	Suppose $(S_i, D_{S_i})_{i \in I}$ is an arbitrary family of diffeological spaces. Then the disjoint union $S=\sqcup_{i \in I} S_i$ is a diffeological space with the diffeology given by $D:= \lbrace p \colon U \ra  S: U \subseteq  \mb{R}^{n}$ is ${\rm{open}}, n \in \mb{N}, p $ is a function such that for any $x \in U$ there exists an open neighborhood $U_x$ of $x$ and an index $i \in I$, with $P|_{U_x} \in D_{S_i}.  \rbrace$. The diffeology $D$ is known as the \textit{sum diffeology} on the family $\lbrace S_{i} \rbrace_{i 
		\in I}$, (see \textbf{Section 1.39} of \cite{MR3025051}).
\end{example}
\begin{proposition}\label{Technical 1}
	Suppose $q \colon (A, D_A) \ra (B, D_B)$ is a quotient map between two diffeological spaces and  $(C, D_C)$ is another diffeological space. Then a map $f \colon (B, D_B) \ra (C, D_C)$ is a map of diffeological spaces if and only if for any plot $p \colon U \ra A$, the composite $f \circ q \circ p$ is in $D_C$.
\end{proposition}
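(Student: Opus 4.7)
The plan is to prove both directions by unwinding the definition of the quotient diffeology $D_B$ given in Example \ref{quotient diffeology} and by invoking the locality axiom in the definition of a diffeology.

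For the forward direction, suppose $f \colon (B, D_B) \ra (C, D_C)$ is a map of diffeological spaces. First I would observe that the quotient map $q \colon (A, D_A) \ra (B, D_B)$ is itself a map of diffeological spaces: indeed, for any plot $p \colon U \ra A$ in $D_A$, the composite $q \circ p$ is a plot in $D_B$ because it serves as its own local lift against itself via the defining condition of the quotient diffeology. Hence for any plot $p \in D_A$, we have $q \circ p \in D_B$, and then applying the hypothesis that $f$ is a map of diffeological spaces yields $f \circ (q \circ p) = f \circ q \circ p \in D_C$, as desired.

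For the backward direction, assume that $f \circ q \circ p \in D_C$ for every plot $p \colon U \ra A$ in $D_A$. I would take an arbitrary plot $\bar{p} \colon U \ra B$ in $D_B$ and show $f \circ \bar{p} \in D_C$. By the definition of the quotient diffeology recalled in Example \ref{quotient diffeology}, for every $u \in U$ there exists an open neighborhood $V_u \subseteq U$ of $u$ and a plot $p_u \colon V_u \ra A$ in $D_A$ such that $q \circ p_u = \bar{p}|_{V_u}$. Composing with $f$ gives
\begin{equation}\nonumber
(f \circ \bar{p})|_{V_u} \;=\; f \circ (q \circ p_u) \;=\; f \circ q \circ p_u,
\end{equation}
which lies in $D_C$ by the standing hypothesis applied to the plot $p_u$. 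Thus $f \circ \bar{p}$ is locally a plot around every point of $U$, and the third axiom in the definition of a diffeology (the sheaf-like locality condition) then forces $f \circ \bar{p} \in D_C$. Since $\bar{p} \in D_B$ was arbitrary, $f$ is a map of diffeological spaces.

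No step is especially delicate; the only thing to be careful about is packaging the local lifts $p_u$ correctly and then quoting the locality axiom at the end. I do not anticipate any obstacle, as the statement is essentially the universal property characterizing the quotient diffeology as a colimit in $\mathbf{Diffeol}$.
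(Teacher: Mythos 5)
Your proof is correct: the forward direction rightly notes that $q\circ p$ is its own local lift, so $q$ is a map of diffeological spaces, and the backward direction correctly combines the local lifts $p_u$ from the quotient diffeology with the locality axiom (iii) applied to the open cover $\{V_u\}_{u\in U}$. The paper itself gives no argument here — it simply cites \textbf{Lemma A.16} of \cite{MR3521476} — and your write-up is precisely the standard argument that citation encapsulates, so nothing is missing.
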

\begin{proof}
	See \textbf{Lemma A.16} in \cite{MR3521476}.
\end{proof}
\begin{definition}[Section 7.1, \cite{MR3025051}]\label{Definition:Diffeological group}
	A \textit{diffeological group} is a group $G$ endowed with a diffeology, such that the multiplication and the inversion are maps of diffeological spaces.
\end{definition}
\begin{example}\label{Lie group diffeology}
	Any Lie group is naturally a diffeological group with the diffeology defined in \Cref{manifold diffeology}.
\end{example}
\begin{example}\label{subgroup diffeology}
	Any subgroup of a diffeological group is a diffeological group equipped with the subspace diffeology, \Cref{subsapce diffeology}.
\end{example}
\begin{example}\label{quotient group diffeology}
	Any quotient group is a diffeological group with the quotient diffeology, \Cref{quotient diffeology}.
\end{example}
\begin{definition}[Section 8.3, \cite{MR3025051}]\label{Definition: Diffeological groupoid}
	A \textit{diffeological groupoid} is defined as a groupoid $\mb{X}=[X_1 \rra X_0]$, such that both $X_1$ and $X_0$ are diffeological spaces and all the structure maps of $\mb{X}$ are maps of diffeological spaces.
\end{definition}
\begin{example}
	For any manifold $M$, the corresponding thin fundamental groupoid $\Pi_{{\rm{thin}}}(M)=[\frac{PM}{\sim} \rra M]$ (\Cref{Definition:Thin homotopy groupoid of a manifold}) of $M$ is a diffeological groupoid. A detailed proof is available in \textbf{Proposition A.25}, \cite{MR3521476}.
\end{example}
\begin{example}\label{Automorphism diffeological group}
	Given a diffeological groupoid $\mb{X}=[X_1 \rra X_0]$ and any element $x \in X_0$, ${\rm{Aut}(x)}:={\rm{Hom}}(x,x)$ is a diffeological group (\Cref{Definition:Diffeological group}).
\end{example}	
\begin{definition}[Section 8.3, \cite{MR3025051}]\label{Definition:Morphism of diffeological grouopoids}
	A \textit{morphism of diffeological groupoids from a diffeological groupoid $\mb{X}=[X_1 \rra X_0]$ to a diffeological groupoid $\mb{Y}=[Y_1 \rra Y_0]$} is deifned as a functor $F \colon \mb{X} \ra \mb{Y}$ such that $F_1 \colon X_1 \ra Y_1$ and $F_0 \colon X_0 \ra Y_0$ are maps of diffeological spaces. 
\end{definition}
The collection of diffeological groupoids, along with the morphisms defined in \Cref{Definition:Morphism of diffeological grouopoids}, naturally define a category, denoted by \textbf{D-groupoids}, see Section 8.3, \cite{MR3025051}. It is worth mentioning that a theory concerning a bicategory of diffeological groupoids has been recently developed in  \cite{MR4535273}, which introduced notions analogous to the ones already discussed (\Cref{subsection Generalized homomorphisms and Morita equivalence of Lie groupoids}), in the framework of diffeological groupoids.

\subsection{On the smoothness of parallel transport functor of a traditional principal bundle}\label{Smoothness of traditional parallel transport}
This subsection briefly discusses the smoothness property of the parallel transport functor of a principal bundle over a manifold (\Cref{Transport functor}). In particular, we have the following smoothness condition:
\begin{proposition}\label{Proposition:Smoothness of traditional parallel transport}
	For a Lie group $G$, let $\pi \colon P \ra M$ be a principal $G$-bundle over a manifold $M$, equipped with a connection $\omega$. Then for every $x \in M$, the function $$T_{\omega}|_{\Pi_{\rm{thin}}(x)} \colon \Pi_{\rm{thin}}(x)  \ra {\rm{Aut}}(\pi^{-1}(x))$$ is a map of diffeological spaces from the diffeological group $\Pi_{\rm{thin}}(x)$, the automorphism group of the diffeological groupoid $\Pi_{\rm{thin}}(M)$ at $x$ (see \Cref{Automorphism diffeological group}), to the Lie group ${\rm{Aut}}(\pi^{-1}(x)) \cong G$ (see \Cref{Automorphism group of the fibre}), where $T_{\omega}$ is the corresponding parallel transport functor (\Cref{Transport functor}).
\end{proposition}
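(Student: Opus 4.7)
The plan is to use the diffeology descent criterion \Cref{Technical 1}. The group $\Pi_{\rm{thin}}(x)$ is the subspace of $\frac{PM}{\sim}$ consisting of classes of loops based at $x$, and thus inherits (as a subspace of a quotient, see \Cref{subsapce diffeology} and \Cref{quotient diffeology}) a diffeology whose plots locally factor through $P_xM := \{\alpha \in PM : \alpha(0)=\alpha(1)=x\}$ with its path-space diffeology (\Cref{Path space diffeology}). Fixing $z \in \pi^{-1}(x)$ and using the canonical isomorphism $\psi_z \colon {\rm{Aut}}(\pi^{-1}(x)) \to G$ of \Cref{Automorphism group of the fibre}, it then suffices to show that for every plot $p \colon U \to P_xM$ the assignment
\begin{equation}\nonumber
U \longrightarrow G, \qquad u \longmapsto \psi_z\bigl({\rm{Tr}}_{\omega}^{p(u)}\bigr)
\end{equation}
is smooth in the ordinary sense. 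Well-definedness on the quotient side is already granted by the thin-homotopy invariance recalled in \Cref{thin homotopy invariance of classical transport}, so no further work is needed there.

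Next, I would unwind ${\rm{Tr}}_{\omega}^{p(u)}$ via the construction from \Cref{subsection:Parallel transport of a connection along a path}. Since $\bar p \colon U \times [0,1] \to M$ (the adjoint of $p$) is smooth and $\pi$ is locally trivial, around any $u_0 \in U$ there exists a neighbourhood $V$ and a smooth map $\tilde q \colon V \times [0,1] \to P$ with $\pi \circ \tilde q = \bar p|_{V\times[0,1]}$ and $\tilde q(u,0)=z$ for all $u \in V$. The horizontal lift of $p(u)$ starting at $z$ has the form $t \mapsto \tilde q(u,t) \cdot a_u(t)$, where $a_u \colon [0,1]\to G$ is the unique solution of the parameter-dependent ODE
\begin{equation}\nonumber
a_u'(t)\, a_u(t)^{-1} = - \omega_{\tilde q(u,t)}\!\Bigl(\tfrac{\partial \tilde q}{\partial t}(u,t)\Bigr), \qquad a_u(0)=e,
\end{equation}
as derived in \Cref{Existence and uniqueness of horizontal lift}. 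Because the right-hand side is smooth in $(u,t) \in V \times [0,1]$, the standard theorem on smooth dependence of solutions of time-dependent ODEs on parameters yields that $(u,t) \mapsto a_u(t)$ is smooth, and in particular so is $u \mapsto a_u(1)$.

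The step I expect to be the main obstacle is the passage from these local smooth descriptions to a globally smooth function $U \to G$. The lift $\tilde q$ is typically only defined over some $V \subseteq U$, and even on $V$ the condition $\tilde q(u,1) = z$ generally fails, so $a_u(1)$ is not quite $\psi_z({\rm{Tr}}_\omega^{p(u)})$ on the nose. The remedy is a cocycle-style correction: by freeness of the $G$-action on the fibre there is a unique function $b \colon V \to G$ with $\tilde q(u,1) = z \cdot b(u)$, which is smooth because it is produced by the smooth torsor-difference map $\delta$ appearing in \Cref{Canonical Lie group structure on Aut}, and then
\begin{equation}\nonumber
\psi_z\bigl({\rm{Tr}}_{\omega}^{p(u)}\bigr) = b(u)\, a_u(1).
\end{equation}
Since the left-hand side is intrinsically defined, the locally smooth expressions on each $V$ of a cover of $U$ automatically agree on overlaps and glue to a globally smooth function $U \to G$. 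Once this gluing is in place the proposition is established; the rest is the routine verification that the subspace/quotient diffeologies interact as expected, which is handled by \Cref{Technical 1}.
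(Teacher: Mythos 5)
Your proof is correct, and since the paper offers no argument of its own here (it simply cites Theorem 3.9 of \cite{MR3521476}), your route — locally lifting a plot of the quotient to $P_xM$, lifting the adjoint map to $\tilde q$, solving the parameter-dependent ODE of \Cref{Existence and uniqueness of horizontal lift} with smooth dependence on parameters, correcting by the torsor-difference $b(u)=\delta(z,\tilde q(u,1))$, and descending through the quotient diffeology via \Cref{Technical 1} — is essentially the argument of that reference (cf.\ its Lemma 3.13). The one step to tighten is the existence of $\tilde q$ on all of $V\times[0,1]$ with $\tilde q(u,0)=z$: local triviality of $\pi$ only gives lifts over small open subsets of $M$, so either subdivide $[0,1]$ and glue, or shrink $V$ to a ball, use the sitting instants to extend $\bar p$ and note that the pullback bundle $\bar p^{\,*}P\to V\times[0,1]$ is trivial, then translate the resulting section by the unique smooth $c(u)\in G$ with $\tilde q(u,0)\,c(u)=z$.
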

\begin{proof}
	See the proof of \textbf{Theorem 3.9}, \cite{MR3521476}.
\end{proof}
On the other hand, for a manifold $M$ and a Lie group $G$,  a functor $$T \colon \Pi_{\rm{thin}}(M) \ra  G {\rm{-}} {\rm{Tor}}$$ that satisfies the above \textit{smoothness property} i.e for each $x \in M$, $$T|_{\Pi_{\rm{thin}}(x)} \colon \Pi_{\rm{thin}}(x)  \ra {\rm{Aut}}(T(x))$$ is a map of diffeological spaces from $\Pi_{\rm{thin}}(x)$ to the Lie group $T(x) \cong G$, has been defined as a \textit{transport functor} in \cite{MR3521476}. Transport functors and natural isomorphisms between them form a groupoid ${\rm{Trans}}_{G}(M)$. Interestingly, such a transport functor contains the data of a principal bundle equipped with a connection structure. Particularly, it has been shown in \cite{MR3521476} that for any manifold $M$ and a Lie group $G$, the groupoid of transport functors ${\rm{Trans}}_{G}(M)$ is equivalent to the groupoid $B^{\rm{\nabla}}G(M)$ (see the end of \Cref{Groupoid of principal bundles with connection}). It is worth mentioning that a result similar to the one in \cite{MR3521476} has also been proved earlier by \textit{Schreiber} and \textit{Waldorf} in \cite{MR2520993} by introducing a notion called \textit{smooth descent data of a functor}.

\chapter{Principal 2-bundles over Lie groupoids and their characterizations}\label{chapter 2-bundles} 

% Main chapter title

 % Change X to a consecutive number; for referencing this chapter elsewhere, use \ref{ChapterX}

\lhead{Chapter 4. \emph{Principal 2-bundles over Lie groupoids and their characterizations}} % Change X to a consecutive number; this is for the header on each page - perhaps a shortened title

%----------------------------------------------------------------------------------------
%	SECTION 1
%----------------------------------------------------------------------------------------

In this chapter, we introduce a `categorified version' of a traditional principal bundle (\Cref{Definition: Principal G-bundle}), whose structure group is now replaced by a Lie 2-group (\Cref{Definition: Lie 2 group}), its total space and the base are replaced by Lie groupoids (\Cref{Lie groupoids definition}) and the action map is replaced by an action functor (\Cref{Definition:Lie 2 groupaction}), resulting in an object that one can view as a groupoid object in the category of principal bundles. We call this object a `\textit{principal Lie 2-group-bundle over a Lie groupoid}'. One primary motivation for such a definition is to study `purely differential geometric' relationships between the theory of fibered categories/fibrations (\Cref{subsection Fibered categories}) and the concepts in classical gauge theory discussed in \Cref{Chapter Classical setup}. The purpose of this chapter is to investigate the underlying fibration structure of these objects in a way that characterizes certain classes of these Lie 2-group bundles with respect to the kind of underlying fibered category structure they possess. In particular, we introduce the following classes of principal Lie 2-group bundles over Lie groupoids, namely
\begin{enumerate}[(i)]
	\item a  `categorical-principal 2-bundle over a Lie groupoid'. 
	\item a `quasi-principal 2-bundle over a Lie groupoid' and
	\item a `unital-principal 2-bundle over a Lie groupoid',	\end{enumerate}
classified based on underlying cloven fibration structures (\Cref{Definition Colven fibration}). As a result of this investigation, we obtain the following interesting consequence that we consider our main achievement in this chapter:

`\textit{A statement and the proof of a Lie 2-group torsor version of the one-one correspondence between fibered categories and pseudofunctors' (\Cref{subsection Fibered categories})}.

The following two are important byproducts of the above correspondence:
\begin{enumerate}[(i)]
	\item We obtain a `weakened version' of a principal Lie group bundle over a Lie groupoid as mentioned in \Cref{Definition: Principal Lie group bundle over a Lie groupoid}. In this weakened form, the underlying left action of the base Lie groupoid on the total space is now replaced by a left quasi-action (\Cref{Definition: quasi action on a manifold}), and it deviates from being an action (a principal Lie group bundle over the base Lie groupoid) upto a factor coming from a Lie crossed module (\Cref{Definition:Lie crossed module}). We call this object a `pseudo-principal Lie crossed module bundle over a Lie groupoid'. It is a suitable analog of a psuedofunctor (\Cref{pseudofunctor}).
	\item We could extend the notion of a categorical-principal 2-bundle to be defined over the differentiable stack presented by the base Lie groupoid (see \Cref{Morita equivalent imply stack}).
\end{enumerate}

Apart from the main results stated above, in this chapter, we also related certain aspects of our 2-bundle theory to notions like connections on Lie groupoids and Lie groupoid $G$-extensions for a Lie group $G$, as side results.

The content of this chapter is mainly based on our papers \cite{chatterjee2023parallel} and \cite{MR4403617}.

\section{A principal 2-bundle over a Lie groupoid}\label{A principal 2-bundle over a Lie groupoid}
This section introduces the notion of a `principal Lie 2-group bundle over a Lie groupoid' and discusses some examples. The contents of this section are based on our paper \cite{MR4403617}. 
\begin{definition}[Principal $\mb{G}$-bundle over a Lie groupoid] \label{Definition:principal $2$-bundle over Liegroupoid}
	For a Lie 2-group $\mb{G}$, a \textit{principal $\mb{G}$-bundle over a Lie groupoid} $\mathbb{X}$ is defined as a morphism of Lie groupoids $\pi: \mb{E} \rightarrow \mb{X}$ along with a right action $\rho: \mb{E} \times \mb{G} \rightarrow \mb{E}$ of the Lie $2$-group $\mb{G}$ on the Lie groupoid $\mb{E}$ such that, 
	\begin{itemize}
		\item $\pi_0\colon E_0 \rightarrow X_0$ is a principal $G_0$-bundle over the manifold $X_0$,
		\item $\pi_1\colon E_1 \rightarrow X_1$ is a principal $G_1$-bundle over the manifold $X_1$.
	\end{itemize}
	We will call the Lie 2-group $\mb{G}$ as the \textit{structure $2$-group of $\pi\colon\mb{E} \rightarrow \mb{X}$}. We will denote the above principal $\mb{G}$-bundle over the Lie groupoid $\mb{X}$  by $\pi \colon \mb{E} \ra \mb{X}$.
\end{definition}
\begin{remark}\label{Our 2-bundle and Waldrorf}
Our definition of principal $2$-bundle over a Lie groupoid $[M \rra M]$ satisfies the one given in \textbf{Definition 3.1.1}, \cite{MR3894086}. To see this, note that given any principal $\mb{G}=[G_1 \rra G_0]$-bundle $\pi: \mb{E} \ra [M \rra M]$, the map $\mb{E} \times \mb{G} \ra \mb{E} \times_{[M \rra M]} \mb{E}$ sending $(p_i,g_i) \ra (p_i,p_ig_i)$ for $i=0,1$ is an isomorphism of Lie groupoids and hence a \textit{weak equivalence} in the sense of \cite{MR3894086}.
\end{remark}

\begin{remark}\label{Remark:OtherdefofLie2grp bundles}
	A little weaker version of a principal $2$-bundle over a Lie groupoid has been studied by Ginot and Sti\'{e}non in \textbf{Definition 2.20}, \cite{MR3480061}. They defined a principal $\mb{G}$-bundle over a Lie groupoid $\mb{X}$ in terms of a Hilsum \& Skandalis generalized morphism of Lie $2$-groupoids $\mb{X}\ra \mb{G}$, where both $\mb{G}$ and $\mb{X}$ were treated as Lie $2$-groupoids and represented by double Lie groupoid structures.
\end{remark}	

\begin{definition}\label{Definition: Morphism of principal 2-bundles}[Morphism of principal $2$-bundles over a Lie groupoid]
	Let $\mb{G}$ be a  Lie $2$-group. Let $\pi \colon \mb{E} \ra \mb{X}$ and $\pi' \colon \mb{E} \ra \mb{X}$ be a pair of principal $\mb{G}$-bundles over a Lie groupoid $\mb{X}$. A \textit{morphism of principal $\mb{G}$-bundles from $\pi \colon \mb{E} \ra \mb{X}$ to $\pi' \colon \mb{E} \ra \mb{X}$ over $\mb{X}$} is defined as a smooth $\mb{G}$-equivariant morphism $F:\mb{E} \rightarrow \mb{E'}$ of Lie groupoids such that the following diagram commutes on the nose
	\[
	\begin{tikzcd}
		\mb{E} \arrow[r, "F"] \arrow[d, "\pi"'] & \mb{E'} \arrow[ld, "\pi'"] \\
		\mb{X}                               &                  
	\end{tikzcd}.\]
\end{definition}
\begin{definition}\label{2-morphism of principal 2-bundles}
	Let $F, F' \colon \mb{E} \ra \mb{E}'$ be a pair of morphisms of prinicpal $\mb{G}$-bundles from $\pi \colon \mb{E} \ra \mb{X}$ to $\pi' \colon \mb{E}' \ra \mb{X}$ over a Lie groupoid $\mb{X}$. A \textit{$2$-morphism from $F$ to $F'$} is defined as a smooth natural isomorphism $\eta\colon F \Longrightarrow F'$ satisfying  $\eta(pg)=\eta(p)1_g$ and $\pi_1(\eta(p))=1_{\pi_0(p)}$ for all $p \in E_0$ and $g \in G_0$. 
\end{definition}
\label{strict 2-groupoid of principal 2-bundles}Since a morphism of principal $\mb{G}$-bundles over a Lie groupoid $\mb{X}$ is, in particular, given by a pair of morphisms of traditional principal bundles, the collection of principal $\mb{G}$-bundles over $\mb{X}$ forms a strict $2$-groupoid (\Cref{Strict 2-groupoid}), whose 1-morphisms and 2-morphisms are as defined in \Cref{Definition: Morphism of principal 2-bundles} and \Cref{2-morphism of principal 2-bundles} respectively. We denote this 2-groupoid by $\rm{Bun}(\mb{X},\mb{G})$.
%\begin{definition}\label{Definition:2-groupoid of principal 2-bundle over a Lie groupoid}
	%Given a Lie 2-group $\mb{G}$ and a Lie grouipoid $\mb{X}$, we define the \textit{2-groupoid of principal $\mb{G}$-bundles over the Lie groupoid $\mb{X}$} as the strict 2-groupoid $\rm{Bun}(\mb{X},\mb{G})$.
%\end{definition}

  	%  	\begin{definition}[Gauge $2$-group]\label{Remark:Gauge2-group}
%  		For an object $\mb{E}$ in $2$-groupoid ${\rm{Bun}}(\mb{X},\mb{G})$, $\mc{G}(\mb{E})$ will denote the strict automorphism $2$-group of  $\mb{E}$, and will be called the  \textit{Gauge $2$-group} of the principal $\mb{G}$-bundle $\mb{E}\ra \mb{X}$.
%  	\end{definition}

  \begin{example}
  	For a Lie group $G$, a classical principal $G$-bundle $\pi\colon P\to M$ (see \Cref{Definition: Principal G-bundle}) over a manifold $M$ is same as a principal $[G\rra G]$-bundle $[P \rra P]$ over the Lie groupoid $[M \rra M].$
  \end{example}
  \begin{example}\label{Principal 2-bundle with discrete Lie 2 group}
  	For a Lie group $G$, any principal $G$-groupoid \Cref{Definition: Principal G-groupoids} over a Lie groupoid $\mb{X}$ is a principal $[G \rra G]$-bundle over $\mb{X}$ and vice-versa, (see \Cref{Correspondence between principal G-bundles and principal G-groupoids over a Lie groupoid}).
  \end{example}
  \begin{remark}
  	It is straightforward to observe that given a Lie group $G$ and a Lie groupoid $\mb{X}$, the groupoid of principal $[G \rra G]$-bundles over $\mb{X}$  is equivalent to the groupoid of principal $G$-groupoids over $\mb{X}$ and hence is equivalent to the groupoid of principal $G$-bundles over $\mb{X}$, (see \Cref{Equivalence of principal G bundles and principal G groupoids}).
  \end{remark}
  
  \begin{example}\label{E:Example of product bundle}
  	Given a Lie groupoid $\mb{X}=[X_1\rra X_0]$ and a Lie 2-group $\mb{G}=[G_1\rra G_0]$, we have the product principal $\mb{G}$-bundle $\mb{X}\times \mb{G}=[X_1\times G_1\rra X_0\times G_0]$ over $\mb{X}$ consisting of a product $G_1$-bundle $X_1 \times G_1 \ra X_1$ over $X_1$ and product $G_0$-bundle $X_0 \times G_0 \ra X_0$ over $X_0$.
  \end{example}
  
  \begin{example}\label{E:Exampleprincipalpairlie2}
  	Given a Lie groupoid  $\mb{X}$ and a classical principal $G$-bundle $\pi\colon E_0\ra X_0,$ we define a Lie groupoid $[E_1=\big\{(p, \gamma, q)| \gamma\in X_1, p\in \pi^{-1}(s(\gamma)), q\in \pi^{-1}(t(\gamma))\big\}\rra E_0]$ whose source, target and composition maps are respectively given as $s(p, \gamma, q)=p, t(p, \gamma, q)=q$ and $(q, \gamma_2, r)\circ (p, \gamma_1, q)=(p, \gamma_2\circ \gamma_1, r)$. The Lie $2$-group $[G\times G\rra G]$ has a natural action on  $[E_1\rra E_0]$ given by $(p, g)\mapsto p g$
  	and $\bigl((p, \gamma, q), (g_1, g_2)\bigr)=(p g_1, \gamma, q g_2).$	
  	Then we have a principal $[G\times G\rra G]$-bundle $[E_1\rra E_0]$ over the Lie groupoid $\mb{X}$ with the obvious projection functor. 	
  \end{example}

  \begin{example}\label{E:Example of principal 2-bundle ordinary}
  	Consider the single object Lie $2$-group $[G\rra e]$ (see \Cref{Ex:ordinary}) of an abelian Lie group $G$.
  	Suppose $\mb{E}=[E_1\rra E_0]$ is a principal $[G\rra e]$-bundle over the Lie groupoid ${\mb X}$.	Then it is obvious from the definition of a principal $2$-bundle that  $E_0=X_0$ and $E_1$ is a principal $G$-bundle over $X_1$ such that the following diagram commutes.
  	\[
  	\begin{tikzcd}[sep=small]
  		E_1 \arrow[rr,"\pi_1"] \arrow[dd,xshift=0.75ex,"t"]
  		\arrow[dd,xshift=-0.75ex,"s"'] &  & X_1 \arrow[dd,xshift=0.75ex,"t"]
  		\arrow[dd,xshift=-0.75ex,"s"'] \\
  		&  &                \\
  		X_0 \arrow[rr,"{\rm Id}"]            &  & X_0           
  	\end{tikzcd},\]	
  	Also, the functoriality of the action of $[G\rra e]$ on $E_1\rra X_0$ implies that the action $E_1\times G\ra E_1$ preserves the hom sets; that is, the restriction gives 
  	\begin{equation}\label{E:restricthom}
  		{\rm Hom}_{\mb{E}}(x, y)\times G \ra {\rm Hom}_{\mb{E}}(x, y)
  	\end{equation}
  	for all $x, y\in X_0$  and for each pair of composable morphisms $\gamma_2, \gamma_1 \in E_1,$ and $g, g'\in G$ we have the following identity: \begin{equation}\label{E:compoequi}
  		(\gamma_2 g)\circ (\gamma_1 g')=(\gamma_2\circ \gamma_1) g g'.
  	\end{equation}

  	For the inverse operation, suppose $[E\rra X_0]$ is a Lie groupoid, and a Lie group $G$ has a smooth, free, and proper action on $E$. 
  	Let us assume that the source-target maps are $G$ invariant; that is, the condition in \Cref {E:restricthom} and the action satisfies \Cref{E:compoequi}. Thus we have a principal $G$-bundle $E\ra E/G =: X_1,$ which defines a  principal $[G\rra e]$-bundle $\mb{E}=[E\rra X_0]$   over the Lie groupoid ${\mb X}=[E/G\rra X_0].$

  \end{example}
  The principal $2$-bundle in \Cref{E:Example of principal 2-bundle ordinary} is related to Lie groupoid $G$-extensions ( \Cref{subsection Lie groupoid G extension}), that we see later in this chapter in \Cref{Twisted principal 2-bundles and Lie groupoid $G$-extensions}.

\section{Decorated principal 2-bundles and categorical connections}\label{Decorated principal 2-bundles and categorical connections}
Most of the content of this section is based on our paper \cite{MR4403617}. Here, we are going to construct an important example of a principal Lie 2-group bundle over a Lie groupoid from the data of a Lie crossed module (\Cref{Definition:Lie crossed module}) and a principal Lie group bundle over a Lie groupoid (\Cref{Definition: Principal Lie group bundle over a Lie groupoid}). We call this class of principal 2-bundles `decorated principal 2-bundles'. The intended construction is a generalization of the construction of a principal $G$-groupoid over a Lie groupoid from a principal $G$-bundle over the base Lie groupoid (\Cref{Equivalence of principal G bundles and principal G groupoids}). We characterize these decorated principal 2-bundles in terms of a notion that we call a `categorical connection'. On the one hand, a categorical connection can be considered as an abstraction of the notion of horizontal lifting of paths in traditional principal bundles (\Cref{subsection:Parallel transport of a connection along a path}) and on the other, is an adaptation of the concept of splitting cleavage (\Cref{Definition splitting cleavage}) in the framework of principal 2-bundles over Lie groupoids.

\subsection{Decorated principal $2$-bundles}\label{SS:Decorated}
Let $\mb{G}=[H \rtimes_{\alpha} G \rra G]$ be the Lie $2$-group associated to a Lie crossed module $(G, H, \tau, \alpha)$ (\Cref{Lie 2 group from Lie crossed module}). Now, given a principal $G$-bundle over a Lie groupoid $\mb{X}$ (\Cref{Definition: Principal Lie group bundle over a Lie groupoid}), we will construct a principal $\mb{G}$-bundle over $\mb{X}$, which we call a \textit{decorated principal $\mb{G}$-bundle over $\mb{X}$}. A similar notion for a principal $2$-bundle over a `path space groupoid' is already present in the literature, introduced in \cite{MR3126940} by decorating the space of ${\bar A}$-horizontal paths ${\mathcal P}_{\bar A}P,$ for a connection   $\bar A$ on a principal $G$-bundle $P\to M$.  
		
For brevity, in our paper \cite{MR4403617},  we skipped some technical details in constructing decorated principal 2-bundles. Here, we provide a detailed version of the construction:	
\begin{proposition}\label{Prop:Decoliegpd}
	Let $\mb{G}=[H \rtimes_{\alpha} G \rra G]$ be the Lie $2$-group associated to a Lie crossed module $(G, H, \tau, \alpha)$. Consider  $\bigl(\pi\colon E_G \rightarrow X_0, \mu \colon s^{*} E_G \ra X_0, \mb{X}\bigr)$, a principal $G$-bundle over the Lie groupoid $\mb{X}$. Let us denote $s^{*}E_G \times H= (X_1 \times_{s, X_0, \pi} E_G) \times H$ by $(s^{*}E_G)^{\rm{dec}}$. 
	\begin{enumerate}[(i)]
		\item	The manifolds $(s^{*}E_G)^{\rm{dec}}$ and $E_G$ determines a Lie groupoid $[(s^{*}E_G)^{\rm{dec}} \rightrightarrows E_G]$ whose structure maps are defined as follows: 
		\begin{itemize}
			\item source map $s \colon (\gamma, p, h) \mapsto p$,
			\item target map $t \colon (\gamma, p, h) \mapsto \mu(\gamma, p) \tau(h^{-1})$,
			\item composition map $m \colon \big((\gamma_2, p_2, h_2), (\gamma_1, p_1, h_1) \big) \mapsto (\gamma_2 \circ \gamma_1, p_1 ,h_2h_1)$, 
			\item unit map $u \colon  : p \mapsto (1_{\pi(p)},p,e)$,
			\item $\mathfrak{i} \colon \bigl(\gamma, p, h) \mapsto (\gamma^{-1}, \mu(\gamma,p)\tau(h^{-1}), h^{-1}\bigr)$.
		\end{itemize}
		\item The Lie groupoid  $\mb{E}^{\rm{dec}}:=[(s^{*}E_G)^{\rm{dec}} \rightrightarrows E_G]$ forms a principal $\mb{G}$-bundle over the Lie groupoid $\mb{X}$. The action of $[H \rtimes_{\alpha} G \rra G]$ on $\mb{E}^{\rm{dec}}$ and the bundle projection are  given respectively, by 
		\begin{equation}\label{E:Actionondeco}
			\begin{split}
				\rho\colon &\mb{E}^{\rm dec}\times \mb{G}\ra \mb{E}^{\rm dec}\\
				&(p, g) \mapsto p\, g\\
				\bigl((\gamma, p, h)&, (h', g)\bigr)\mapsto \bigl(\gamma, p g, \alpha_{g^{-1}}(h'^{-1}\, h)\bigr),
			\end{split}
		\end{equation}
		and
		\begin{equation}\label{E:Projondeco}
			\begin{split}
				\pi ^{\rm{dec}} \colon &\mb{E}^{\rm dec}\ra \mb{X}\\
				& p \mapsto \pi(p)\\
				\bigl(\gamma,&  p, h\bigr) \mapsto \gamma.
			\end{split}
		\end{equation}
	\end{enumerate}
\end{proposition}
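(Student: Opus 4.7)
The plan is to verify parts (i) and (ii) by direct inspection, with the Peiffer identities of the Lie crossed module and the equivariance axiom for principal bundles over Lie groupoids (\Cref{Definition: Principal Lie group bundle over a Lie groupoid}(ii)) doing most of the work. First, for part (i), I would check that the stated source and target maps are submersions (they factor through the first projection $(s^{*}E_G)^{\rm dec} \to s^{*}E_G$ and the surjective submersion $\pi$), so that the composition is defined on an honest fibered product manifold. The only non-routine point is to check that $t\circ m=t\circ \pr_2$ and $s\circ m=s\circ \pr_1$ on composable pairs. The target computation amounts to showing
\[
\mu(\gamma_2\circ\gamma_1,p_1)\,\tau\bigl((h_2h_1)^{-1}\bigr)=\mu(\gamma_2,p_2)\,\tau(h_2^{-1})
\]
whenever $p_2=\mu(\gamma_1,p_1)\tau(h_1^{-1})$. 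This drops out of the associativity of the $\mb{X}$-action together with $G$-equivariance of $\mu$ (property (ii) in \Cref{Definition: Principal Lie group bundle over a Lie groupoid}). Associativity of $m$ reduces to associativity of $H$ and of $m_{\mb{X}}$, while the unit and inverse axioms follow directly from $\tau(e)=e$ and the Peiffer identities.

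Next, for part (ii), the claim splits into three sub-claims: (a) $\pi_1\colon (s^{*}E_G)^{\rm dec}\to X_1$, $(\gamma,p,h)\mapsto\gamma$, is a principal $H\rtimes_\alpha G$-bundle; (b) $\rho$ in \Cref{E:Actionondeco} defines a smooth right action of the Lie $2$-group $\mb{G}$ on the Lie groupoid $\mb{E}^{\rm dec}$; and (c) the projection $\pi^{\rm dec}$ in \Cref{E:Projondeco} is a morphism of Lie groupoids making $\pi_0=\pi$ and $\pi_1$ simultaneously into principal bundles with the correct structure groups. For (a), local trivializations of $\pi_1$ pull back from local trivializations of $\pi\colon E_G\to X_0$ along $s\colon X_1\to X_0$, and freeness of the $H\rtimes_\alpha G$-action on fibers is inherited from the freeness of the $G$-action on $E_G$ (the $H$-coordinate is absorbed purely algebraically). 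For (b), I would first verify set-theoretically that $\rho_1$ is a right action; this uses the semidirect-product multiplication $(h_1',g_1)(h_2',g_2)=(h_1'\alpha_{g_1}(h_2'),g_1g_2)$ and the fact that $\alpha_g$ is a homomorphism of $H$.

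The substantive computation, and the one step I expect to be the main obstacle, is verifying functoriality of $\rho\colon \mb{E}^{\rm dec}\times \mb{G}\to \mb{E}^{\rm dec}$, i.e.\ that it respects composition of $2$-arrows. Concretely, for composable pairs $(\gamma_i,p_i,h_i)$ in $\mb{E}^{\rm dec}$ and $(h_i',g_i)$ in $\mb{G}$, the composability condition in $\mb{G}$ forces $g_2=\tau(h_1')g_1$, and I would need to show
\[
\alpha_{g_1^{-1}}\!\bigl(h_1'^{-1}h_2'^{-1}h_2h_1\bigr)=\alpha_{g_2^{-1}}\!\bigl(h_2'^{-1}h_2\bigr)\cdot\alpha_{g_1^{-1}}\!\bigl(h_1'^{-1}h_1\bigr).
\]
The key manipulation is rewriting $\alpha_{g_2^{-1}}=\alpha_{g_1^{-1}}\circ\alpha_{\tau(h_1'^{-1})}$ and then applying the second Peiffer identity $\alpha(\tau(h))(h'')=hh''h^{-1}$ from \Cref{E:Peiffer} to collapse the $h_1'$-conjugation. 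The same Peiffer identity, together with $G$-equivariance of $\mu$, is also what guarantees that the target of the composed arrow on the left matches the source on the right, so that the computation even makes sense.

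Finally, once functoriality of $\rho$ is established, checking that $\pi^{\rm dec}$ is a smooth functor and is $\mb{G}$-invariant in the horizontal direction is immediate from the formulas, and smoothness of all maps is clear from smoothness of the structure maps of $\mb{X}$, of $\mu$, of $\tau$, and of the semidirect-product operations. Combined with (a), this verifies that $(\mb{E}^{\rm dec}\to \mb{X},\rho)$ satisfies \Cref{Definition:principal $2$-bundle over Liegroupoid}.
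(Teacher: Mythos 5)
Your proposal is correct and follows essentially the same route as the paper: direct verification of the groupoid axioms for part (i), then for part (ii) checking that $\rho_1$ is a group action via the semidirect-product multiplication and the homomorphism property of $\alpha_g$, establishing functoriality of $\rho$ through the rewriting $\alpha_{g_2^{-1}}=\alpha_{g_1^{-1}}\circ\alpha_{\tau(h_1'^{-1})}$ plus the Peiffer identities, and obtaining the principal $H\rtimes_\alpha G$-bundle structure on $\pi_1$ by pulling back $G$-equivariant trivializations of $E_G\to X_0$ along $s$ and inheriting freeness from the $G$-action. One small correction: the source/target-matching (and target consistency of $\rho$) actually uses the first Peiffer identity $\tau(\alpha(g,h))=g\tau(h)g^{-1}$ from \Cref{E:Peiffer} together with $G$-equivariance of $\mu$, not the second identity you invoke for collapsing the conjugation in the main functoriality computation.
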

\begin{proof}
	\begin{enumerate}[(i)]
		\item It is easy to verify that  $[(s^{*}E_G)^{\rm{dec}} \rightrightarrows E_G]$   is indeed a groupoid. As $ \pi \colon E_G\ra X_0$ is a surjective submersion,  $(s^{*}E_G)^{\rm{dec}}$ is a manifold. Note that the source of an element $(\gamma,p, h)$ can be computed as $(\gamma,p,h) \mapsto (\gamma,p) \mapsto p$, i.e., a composition of surjective submersions; thus $s$ is a surjective submersion (and hence the target $t$ is too a surjective submersion).
		The smoothness of other structure maps follows quickly from the smoothness of $\mu$ and the smoothness of the structure maps of the Lie groupoid $\mb{X}$. 
		\item Note that it is obvious from the definition itself that $(\gamma,p,h),(e,e) \mapsto (\gamma,p,h)$ for all $(\gamma,p,h) \in (s^{*}E_G)^{\rm{dec}}$. Now let $(h_2,g_2), (h_1,g_1) \in H \rtimes_{\alpha} G$. Consider the following:
		\begin{equation}\label{Decor action E1}
			\begin{split}
				& (\gamma,p,h) \big((h_2,g_2)(h_1,g_1) \big)\\
				&=(\gamma,p,h) \big(h_2\alpha_{g_2}(h_1),g_2g_1 \big)\\
				&=\Big( \gamma, pg_2g_1, \alpha_{g^{-1}_1g^{-1}_2} \big(\alpha_{g_2}(h_1) \big)^{-1}h^{-1}_2h \big) \Big)\\
				&=\Big(\gamma, pg_2g_1, \alpha_{g^{-1}_1g^{-1}_2} \big(\alpha_{g_2}(h^{-1}_1) \big)h^{-1}_2h \big)   \Big) \,\, [\alpha_{g_2} \colon H \ra H \,\textit{is a homomorphism}]\\
				&= \Big( \gamma, pg_2g_1, \alpha_{g^{-1}_1g^{-1}_2} \big(\alpha_{g_2}(h^{-1}_1) \big)\alpha_{g^{-1}_1g^{-1}_2}(h^{-1}_2h) \Big) \,\,[\alpha_{g_1^{-1}g_2^{-1}} \colon H \ra H \,\textit{is a homomorphism}]\\
				&=\Big( \gamma, pg_2g_1, \alpha_{g^{-1}_1}(h^{-1}_1) \alpha_{g^{-1}_1g^{-1}_2}(h^{-1}_2h) \Big)
			\end{split}
		\end{equation}
		On the other hand, consider 
		\begin{equation}\label{Decor action E2}
			\begin{split}
				&\big((\gamma,p,h)(h_2,g_2) \big)(h_1,g_1)\\
				&= \big(\gamma,pg_2, \alpha_{g^{-1}_2}(h^{-1}_2h) \big)(h_1,g_1)\\
				&=\Big(\gamma,pg_2g_1, \alpha_{g^{-1}_1}\big(h_1^{-1}\alpha_{g^{-1}_2}(h^{-1}_2h) \big) \Big)\\
				&= \underbrace{\Big(\gamma,pg_2g_1, \alpha_{g^{-1}_1}\big(h_1^{-1} \big)\alpha_{g_1^{-1}g^{-1}_2} \big( h^{-1}_2h) \big) \Big)}_{[\alpha_{g_1^{-1}} \colon H \ra H \,\, \textit{and}\,\, \alpha \colon G \ra {\rm{Aut}}(H)\,\, \textit{are homomorphisms}]}
			\end{split}
		\end{equation}			
		Comparing \Cref{Decor action E1} and \Cref{Decor action E2}, we get $$(\gamma,p,h) \big((h_2,g_2)(h_1,g_1) \big)=\big((\gamma,p,h)(h_2,g_2) \big)(h_1,g_1).$$ Hence, the map $$\rho_1 \colon (s^{*}E_G)^{\rm{dec}} \times (H \rtimes_{\alpha} G) \ra (s^{*}E_G)^{\rm{dec}},$$ given by $\bigl((\gamma, p, h), (h', g)\bigr)\mapsto \bigl(\gamma, p g, \alpha_{g^{-1}}(h'^{-1}\, h)\bigr)$ is indeed a Lie group action as the smoothness of the action is obvious from the definition itself. Observe that as a direct consequence of the free action of $G$ on $E_G$, the action of $H\rtimes_{\alpha}G$ on $(s^{*}E_G)^{\rm{dec}}$ is free. 
		
		\subsubsection*{Verification of the functoriality of $\rho$:}\label{verification of functoriality of decorated action}
		Source consistency is obvious. Consistency with the target map is shown below:
		\begin{equation}\nonumber
			\begin{split}
				&t \big( (\gamma,p,h)(h'g) \big)\\
				&=t \bigl(\gamma, p g, \alpha_{g^{-1}}(h'^{-1}\, h)\bigr)\\
				&= \mu(\gamma,p)g \Big(\tau \big( \alpha_{g^{-1}}(h'^{-1}\, h) \big) \Big)^{-1}\\
				&=\mu(\gamma,p)g  \underbrace{\Big(g^{-1}\tau(h'^{-1}h)g \Big)^{-1}}_{[\textit{by}\,\, \Cref{E:Peiffer}]} \\
				&=\mu(\gamma,p)\tau(h^{-1}h')g\\
				&=\mu(\gamma,p)\tau(h^{-1}) \tau(h')g\\
				&=t(\gamma,p,h)t(h',g).
			\end{split}
		\end{equation}
		Consistency with the unit map is straightforward. Now let $(\gamma_2,p_2,h_2), (\gamma_1,p_1,h_1) \in (s^{*}{E_G})^{\rm{dec}}$ and $(h'_2,g_2),(h'_1,g_1) \in H \rtimes_{\alpha}G$, such that we have the following:
		\begin{equation}\label{Decor Ac E3}
			\begin{split}
				& p_2= \mu(\gamma_1,p_1)\tau(h^{-1}_1),\\
				&g_2= \tau(h'_1)g_1.
			\end{split}
		\end{equation}
 By direct computation, we have 
\begin{equation}\label{Decor Ac E4}
\rho \Big( \big((\gamma_2,p_2,h_2),(h'_2,g_2) \big) \circ \big((\gamma_1,p_1,h_1),(h'_1,g_1)  \big) \Big)= \Big( \gamma_2 \circ \gamma_1, p_1g_1, \alpha_{g^{-1}_1}(h'^{-1}_1h'^{-1}_2h_2h_1) \Big)
\end{equation}
On the other hand we have,
		\begin{equation}\label{Decor Ac E5}
			\begin{split}
				& \rho \Big((\gamma_2,p_2,h_2),(h'_2,g_2) \Big) \circ \rho \Big((\gamma_1,p_1,h_1),(h'_1,g_1) \Big)\\
				&= \Big(\big( \gamma_2,p_2g_2, \alpha^{-1}_{g_2}(h'^{-1}_2h_2) \big) \Big) \circ  \Big(\big( \gamma_1,p_1g_1, \alpha_{g^{-1}_1}(h'^{-1}_1h_1) \big) \Big)\\
				&=\Big( \gamma_2 \circ \gamma_1, p_1g_1, \alpha_{g^{-1}_2}(h'^{-1}_2h_2) \big)\alpha_{g^{-1}_1}(h'^{-1}_1h_1) \big) \Big)\\
				&= \Big( \gamma_2 \circ \gamma_1, p_1g_1, \alpha_{g^{-1}_1} \big(\alpha_{\tau(h'^{-1}_1)}(h'^{-1}_2h_2) \big)\alpha_{g^{-1}_1}(h'^{-1}_1h_1) \big)\Big) \quad [{\rm{using}} \,\, \Cref{Decor Ac E3}]\\
				&= \Big( \gamma_2 \circ \gamma_1, p_1g_1, \alpha_{g^{-1}_1}(h'^{-1}_1h'^{-1}_2h_2h_1) \big) \Big) \quad [{\rm{using}} \,\, \Cref{E:Peiffer}].
			\end{split}
		\end{equation}
		Comparing \Cref{Decor Ac E4} and \Cref{Decor Ac E5}, it follows that $\rho$ is consistent with the composition. Hence, we proved that	$\rho\colon \mb{E}^{\rm dec}\times \mb{G}\ra \mb{E}^{{\rm{dec}}}$ defines a Lie 2-group action of $[H \rtimes_{\alpha}G \rra G]$ on the Lie groupoid $\mb{E}^{\rm{dec}}$.
		\subsubsection*{To show that $\pi^{\rm{dec}}_1 \colon s^{*}E_{G}^{\rm{dec}} \ra X_1$ is a principal $H \rtimes_{\alpha}G$-bundle over $X_1$:}
		Note that according to \Cref{Definition: Principal G-bundle}, it is sufficient to prove the following:
		\begin{itemize}

  \item[(a)] Existence of an $H \rtimes_{\alpha}G$-equivariant local  trivialization of $\pi^{\rm{dec}}_1 \colon s^{*}E_{G}^{\rm{dec}} \ra X_1$.
  
	\item[(b)] the map $\psi \colon s^{*}E_G^{\rm{dec}} \times (H \rtimes_{\alpha}G) \ra s^{*}E_G^{\rm{dec}} \times_{X_1} s^{*}E_G^{\rm{dec}}$ defined by  $$\big((\gamma,p,h), (h',g) \big) \mapsto \big((\gamma,p,h), (\gamma,pg,\alpha_{g^{-1}}(h'^{-1}h)) \big)$$ is a diffeomorphism;
			
		\end{itemize}
  	\subsubsection*{Proof of (a):}
		In order to prove (a), we claim that any $G$-equivariant local trivialization $(u_i, \phi_i)_{i}$ on the principal $G$-bundle $\pi\colon E_G\ra X_0$ induces a $H \rtimes_{\alpha}G$-equivariant local  trivialization $(s^{-1}(u_i), \tilde{\phi}_{i})_{i}$ on $\pi_1^{\rm{dec}}\colon (s^{*}E_G)^{\rm{dec}}\ra \mb{X}_1$ given by
		\begin{equation}\nonumber
			\begin{split}
				\tilde{\phi}_i \colon &(\pi_1^{\rm{dec}})^{-1}\bigl(s^{-1}(u_i)\bigr)\ra s^{-1}(u_i)\times (H\rtimes_{\alpha} G)\\
				&(\gamma, p, h)\mapsto \big(\gamma,  \alpha_{g_{p}}(h^{-1}), g_{p} \big),
			\end{split}
		\end{equation}
		where $g_{p}={\rm{pr}}_2\circ \phi_i(p)\in G.$
		
		Observe that the injectivity of $\tilde{\phi}_i$ is a direct consequence of the injectivity of $\phi_i$. To see $\tilde{\phi}_i$ is surjective, consider an element $ \big(\gamma, (h, g) \big) \in s^{-1}(u_i)\times (H\rtimes_{\alpha} G)$ and then it follows that
		\begin{equation}\nonumber
			\tilde{\phi}_i \big(\gamma, \phi^{-1}_i(s(\gamma),g), \alpha_{g^{-1}}(h^{-1}) \big)=\big( \gamma,(h,g) \big).
		\end{equation}
		To verify the $H \rtimes_{\alpha} G$-equivariance of $\tilde{\phi_i}$, consider $(\gamma,p,h) \in (\pi_1^{\rm{dec}})^{-1}\bigl(s^{-1}(u_i)\bigr)$ and $(h',g) \in H \rtimes_{\alpha} G$. Then, we have
		\begin{equation}\label{Decor Ac7}
			\begin{split}
				& \tilde{\phi}_i\big((\gamma,p,h)(h',g) \big)\\
				&= \tilde{\phi}_i \big( \gamma, pg, \alpha_{g^{-1}}(h'^{-1}h) \big)\\
				&= \Bigg( \gamma, \Big(\alpha_{g_pg}\big((\alpha_{g^{-1}}(h'^{-1}h))^{-1} \big), g_pg \Big) \Bigg)\\
				&=\Bigg(\gamma, \alpha_{g_p}(h^{-1}h'),g_pg \Big) \Bigg)
			\end{split}
		\end{equation}
		On the other hand, we have 
		\begin{equation}\label{Decor Ac8}
			\begin{split}
				& \big(\tilde{\phi}_i\big((\gamma,p,h) \big)(h',g)\\
				&=\Big(\gamma, \alpha_{g_p}(h^{-1}),g_p  \Big)(h',g)\\
				&= \Bigg(\gamma, \alpha_{g_p}(h^{-1}) \alpha_{g_p}(h'),g_pg  \Bigg)\\
				&= \Bigg(\gamma,  \alpha_{g_p}(h^{-1}h'),g_pg  \Bigg).
			\end{split}
		\end{equation}
		
		From \Cref{Decor Ac7} and \Cref{Decor Ac8}, it follows that $\tilde{\phi}_i$ is $H \rtimes_{\alpha}G$-equivariant. The smoothness of $\tilde{\phi_i}$ is obvious from the definition. Moreover, inverse $\tilde{\phi}^{-1}$ can be easily seen to be as follows:
		\begin{equation}\nonumber
			\begin{split}
				\tilde{\phi}^{-1}_i \colon &  s^{-1}(u_i)\times (H\rtimes_{\alpha} G) \ra (\pi_1^{\rm{dec}})^{-1}\bigl(s^{-1}(u_i))\\
				& \Big(\gamma, (h,g) \Big) \mapsto \Big( \gamma, \phi^{-1}_{i}(s(\gamma),g), \alpha_{g^{-1}}(h^{-1}) \Big),
			\end{split}
		\end{equation}
		which is clearly smooth. So, we showed that  $(s^{-1}(u_i), \tilde{\phi}_{i})_{i}$  is an $H \rtimes_{\alpha}G$-equivariant local trivialization on $\pi^{\rm{dec}}_1 \colon s^{*}E_{G}^{\rm{dec}} \ra X_1$, and thus completing the proof of condition (a). 
		\subsubsection*{Proof of (b):}
      Injectivity of $\psi$ follows from the freeness of the action of  $H \rtimes_{\alpha}G$ on $s^{*}E_{G}^{\rm{dec}}$. To show the surjectivity of $\psi$, consider an element $\big((\gamma,p,h),(\gamma,q, \bar{h}) \big) \in s^{*}E_G^{\rm{dec}} \times_{X_1} s^{*}E_G^{\rm{dec}}$. Now, by direct computation one can verify that $\psi \big((\gamma, p,h), (h\alpha_g(\bar{h}^{-1}),g) \big)= ((\gamma,p,h),(\gamma,q, \bar{h}) \big)$, where $g$ is the unique element in $G$ such that $q=pg$. Then, the smoothness  the action map $\rho_1 \colon (s^{*}E_G)^{\rm{dec}} \times (H \rtimes_{\alpha} G) \ra (s^{*}E_G)^{\rm{dec}}$ and \textbf{(a)} show that $\psi$ is a bijective local diffeomorphism and hence, a diffeomorphism.

		Hence, $\pi^{\rm{dec}}_1 \colon s^{*}E_{G}^{\rm{dec}} \ra X_1$ is a principal $H \rtimes_{\alpha}G$-bundle over $X_1$. Moreover, since the functoriality of $\pi^{\rm{dec}}$ is obvious, we proved that $\pi^{\rm{dec}} \colon \mb{E}^{\rm{dec}}\ra \mb{X}$ is a principal $[H \rtimes_{\alpha} G \rra G]$-bundle over $\mb{X}$ and thus completing the proof of the main proposition.
	\end{enumerate}
\end{proof}
We call $\pi^{\rm{dec}} \colon \mb{E}^{\rm{dec}}\ra \mb{X}$ as the \textit{decorated $[H \rtimes_{\alpha} G \rra G]$-bundle associated to $\bigl(\pi\colon E_G \rightarrow X_0, \mu \colon s^{*} E_G \ra X_0, \mb{X}\bigr)$ and the Lie crossed module $(G, H, \tau, \alpha)$.}	
\begin{example}
	The product  Lie $2$-group $\mb{G}=[G_1\rra G_0]$-bundle $\mb{X}\times \mb{G}$ over a Lie groupoid $\mb{X}$
	in \Cref{E:Example of product bundle} can be seen as a decorated bundle. Observe that $s^{*}(X_0\times G_0)=X_1\times G_0.$ The action of $\mb{X}$ on $X_0\times G_0$ is given as $s^* (X_0\times G_0)\ra X_0\times G_0$, $(\gamma, g)\mapsto (t(\gamma), g).$ 
	Thus, the Lie groupoid $[X_1\times G_0\rra X_0\times G_0]$ is the pullback Lie groupoid, and by decoration (i.e., \Cref{Prop:Decoliegpd}), we get back $\mb{X}\times \mb{G}.$
\end{example}

\begin{example}\label{Ex:EoXodecobundle}
	Consider the Lie $2$-group $\mb{G}:=[H \rtimes_{\alpha}G \rra G]$ associated to the Lie crossed module $(G, H, \tau, \alpha)$. Let $ \pi \colon E\ra M$  be a principal $G$-bundle over a manifold $M$. The discrete Lie groupoid $[M \rra M]$ trivially acts on $E$ by $(x, p)\mapsto p.$ Then the  Lie  groupoid ${E}^{\rm dec}:=[E\times H\rra E]$ defines the associated decorated principal $\mb{G}$-bundle. 
\end{example}

	\subsection{Categorical connections}\label{SS:Catconnection}
	In this subsection, we introduce the notion of a `categorical connection' on a principal 2-bundle over a Lie groupoid. It serves as a tool that prescribes a way to lift morphisms in the base Lie groupoid of the bundle to the total Lie groupoid. Moreover, the said prescription behaves well with the underlying categorical structure of the base Lie groupoid and the action of the structure Lie 2-group on the total Lie groupoid. While the initial inspiration for the definition of categorical connection stemmed from the concept of the horizontal lifting of a path by a connection in the traditional set-up of principal bundles (\Cref{subsection:Parallel transport of a connection along a path}), in the subsequent section \Cref{Section: A Quasi-principal 2-bundle over a Lie groupoid}, we will see that one can view this notion as an analog of a splitting cleavage \Cref{Definition splitting cleavage} in a fibered category. 
	\begin{definition}\label{Def:categorical connection}
		For a Lie 2-group $\mb{G}$, a \textit{categorical connection} $\mathcal{C}$ on a principal $\mb{G}$-bundle $\pi\colon \mb{E} \ra \mb{X}$ over $\mb{X}$ is defined as a smooth map $\mathcal{C}\colon {s}^{*}E_0 \ra E_1$, satisfying the following propoerties:
		\begin{enumerate}[(i)]
			\item $s(\mathcal{C}(\gamma,p))=p$, for all $(\gamma,p) \in s^{*}E_0$,
			\item $\pi_1(\mathcal{C}(\gamma,p))= \gamma$, for all $(\gamma,p) \in s^{*}E_0$,
			\item $\mathcal{C}(\gamma, p. g)= \mathcal{C}(\gamma, p) \cdot 1_g$, for all $(\gamma, p) \in {s}^{*}E_0$ and $g \in G_0$,
			\item $\mathcal{C}(1_x,p)=1_p$, for any $x\in X_0$ and $p\in \pi^{-1}(x)$,
			\item if $(\gamma_2, p_2), (\gamma_1, p_1) \in {s}^{*}E_0$, such that ${s}(\gamma_2)={t}(\gamma_1)$ and $p_2=t\bigl({\mathcal C}(\gamma_1, p_1)\bigr),$ then we have $$\mathcal{C}(\gamma_2 \circ \gamma_1 , p_1)= \mathcal{C}(\gamma_2, p_2) \circ \mathcal{C}(\gamma_1, p_1).$$
		\end{enumerate}
	\end{definition}

	\begin{remark}
		An idea akin to categorical connection has been investigated earlier within the context of path space groupoids in \cite{MR3126940}. There is also a variant of this concept in the setup of Lie groupoid fibrations as \textit{flat cleavages} \cite{MR3968895}, and in the VB-groupoids framework as flat linear cleavages (see \Cref{subsection VB groupoids}). Moreover, in a different context, \textit{Martins} and \textit{Picken} used the terminology `categorical connection' in \cite{MR2661492}. In particular, their notion of categorical connection consists of a Lie crossed module $(G, H, \tau, \alpha)$ and 
		pair $(\omega, \Omega)$, where $\omega$ is a usual $L(G)$-valued connection 1-form on a traditional principal $G$-bundle $P \ra M$ and $\Omega$ is a $L(H)$-valued 2-form on $P$, satisfying certain conditions.
	\end{remark}
	\begin{remark}\label{Categorical connection is a smooth embedding}
		Every categorical connection on a principal 2-bundle over a Lie groupoid is a smooth embedding, i.e., an immersion such that it is also a topological embedding.
	\end{remark}

	\begin{example}\label{Unique categorical connection of [GG] bundle}
		A unique categorical connection exists on a  principal $[G\rra G]$-bundle over a Lie groupoid.  To see this, observe that one can identify a principal $[G \rra G]$-bundle $\pi \colon \mb{E} \ra \mb{X}$ with the decorated principal $[G \rra G]$-bundle $ \pi^{{\rm{dec}}} \colon s^{*}E_0 \ra \mb{X}$ (\Cref{Prop:Decoliegpd}), with a unique categorical connection given by the identity map $s^{*}E_0 \ra s^{*}E_0$.
	\end{example}	
	\begin{example}\label{Ex:CatconnDeco}
		Let  $\pi^{\rm{dec}} \colon s^{*}\mb{E}^{\rm{dec}} \ra \mb{X}$ be  the decorated principal $\mb{G}$-bundle over a Lie groupoid $\mb{X}$  constructed in \Cref{Prop:Decoliegpd}. Then, any smooth map $\beta\colon E_G\ra H$ that satisfies the condition $$\beta(p g)=\alpha_{g^{-1}}(\beta(p)),$$ for $p\in E_G, g\in G$, defines  a categorical connection
		${\mathcal C}\colon (\gamma, p)\mapsto \bigl(\gamma, p, 
		\beta(p)\beta(\mu(\gamma,p))^{-1} \bigr)$. 
		In particular, for the trivial map $\beta\colon p\mapsto e$, we call the categorical connection $(\gamma, p)\mapsto (\gamma, p, e)$ as the \textit{canonical categorical connection} on the decorated principal $\mb{G}$-bundle  $\pi^{\rm{dec}} \colon s^{*}\mb{E}^{\rm{dec}} \ra \mb{X}$.
	\end{example}
	
	On the otherhand, for any categorical connection  $\mathcal C$ on a principal $\mb{G}:=[H \rtimes_{\alpha}G \rra G]$-bundle  $\pi\colon \mb{E} \ra \mb{X}$  over a Lie groupoid $\mb{X}$, we can associate a principal $G$-bundle $(\pi_0 \colon E_0 \ra X_0, \mu_{\mc{C}}, \mb{X})$ over $\mb{X}$, where the action $\mu_{\mc{C}}$ is defined as
	\begin{equation}\label{actionwithcatcon}
		\begin{split}
			\mu\colon s^{*}E_0&\ra E_0,\\
			(\gamma, p)&\mapsto t\bigl({\mathcal C}(\gamma, p)\bigr).
		\end{split}
	\end{equation}
	Now, given any ${\widetilde \gamma}\in E_1$, since we have $\pi(\widetilde \gamma)=\pi \bigl({\mathcal C}\bigl(\pi(\widetilde \gamma), s(\widetilde \gamma)\bigr)\bigr)$ and $s(\widetilde \gamma)=s\bigl({\mathcal C}\bigl(\pi(\widetilde \gamma), s(\widetilde \gamma)\bigr)$,
	there is a unique $h\in H$ such that $[{\mathcal C}\bigl(\pi(\widetilde \gamma), s(\widetilde \gamma)\bigr)](h, e)=\widetilde \gamma$. This induces an isomorphism of principal $\mb{G}$-bundles over $\mb{X}$ defined as
	\begin{equation}\nonumber
		\begin{split}
			&\theta\colon s^*E_0\times H \ra E_1\\
			&((\gamma, p), h\bigr)\ra {\mathcal C}(\gamma, p) (h^{-1}, e).
		\end{split}
	\end{equation}
	\begin{remark}
		Observe that $\mc{C}$ idenitifies the semi-direct product groupoid $[s^{*}E_0 \rra E_0]$ (see \Cref{Semi-direct product groupoid}) as the Lie subgroupoid (\Cref{Lie subgroupoid}) of the Lie groupoid $\mb{E}=[E_1\rra E_0]$. 
		\begin{equation}\nonumber
			\begin{tikzcd}[sep=small]
				s^*E_0 \arrow[rr,"\mathcal C"] \arrow[dd,xshift=0.75ex]
				\arrow[dd,xshift=-0.75ex] &  & E_1 \arrow[dd,xshift=0.75ex]
				\arrow[dd,xshift=-0.75ex] \\
				&  &                \\
				E_0 \arrow[rr,"{\rm Id}"]            &  & E_0          
			\end{tikzcd}.	
		\end{equation}
	\end{remark}
	Thus, after identifying $G$ as a subgroup of $G_1=H\rtimes G,$ we see that \Cref{Categorical connection is a smooth embedding} implies that  $[s^*E_0\rra E_0]\simeq [{\mathcal C}(s^*E_0)\rra E_0]$ is a sub bundle over $\mb{X}$ with a reduced structure Lie $2$-group $[G\rra G]$.

	\begin{lemma}\label{Lem:isodecgeneral} 
		The map $(\theta,\rm{Id})$ defines an isomorphism of principal $[H \rtimes_{\alpha}G \rra G]$-bundles over $\mb{X}$, from  $\pi^{\rm{dec}} \colon \mb{E}^{\rm{dec}} \ra \mb{X}$ to $\pi \colon \mb{E} \ra \mb{X}$.
		\begin{proof}
			The composition law in the Lie groupoid $s^{*}\mb{E}^{\rm{dec}}$ (\Cref{Prop:Decoliegpd}) and the functoriality of the action of $\mb{G}$, together imply $\theta$ is a functor. Below, we give a detailed verification
			\begin{equation}\nonumber
				\begin{split}
					&\theta\bigl((\gamma_2, p_2), h_2)\bigr)\circ \theta\bigl((\gamma_1, p_1), h_1\bigr)\\
					&=\bigl({\mathcal C}(\gamma_2, p_2) (h_2^{-1}, e)\bigr)\circ\bigl({\mathcal C}(\gamma_1, p_1) (h_1^{-1}, e)\bigr)\\
					&=\bigl({\mathcal C}(\gamma_2, p_2) (h_2^{-1}, e)\bigr)\circ\bigl({\mathcal C}(\gamma_1, p_1)(e, \tau(h_1^{-1})) (e, \tau(h_1)) (h_1^{-1}, e)\bigr)\\
					&=\bigl({\mathcal C}(\gamma_2, p_2) (h_2^{-1}, e)\bigr)\circ\bigl({\mathcal C}(\gamma_1, p_1\tau(h_1)^{-1}) (e, \tau(h_1)) (h_1^{-1}, e)\bigr)\\
					&=\bigl({\mathcal C}(\gamma_2, p_2) (h_2^{-1}, e)\bigr)\circ\bigl({\mathcal C}(\gamma_1, p_1\tau(h_1)^{-1}) (e, \tau(h_1)) (h_1^{-1}, e)\bigr)\\
					&=\underbrace{\bigl({\mathcal C}(\gamma_2, p_2) \circ {\mathcal C}(\gamma_1, p_1\tau(h_1)^{-1})    \bigr)\bigl((h_2^{-1}, e) \circ \bigl((e, \tau(h_1)) (h_1^{-1}, e)\bigr)\bigr)}_{[\textit{using} \,\, \Cref{E:Identitiesechangeinverse}]}\\
					&={\mathcal C}\bigl(\gamma_2\circ \gamma_1, p_1\tau(h_1)^{-1}   \bigr)\bigl((h_2^{-1}, e) \circ (h_1^{-1}, \tau(h_1))\bigr)\,\, [\textit{using}\,\, \textit{condition}\,\, (v) \,\, \textit{in}\,\, \Cref{Def:categorical connection}]\\
					&={\mathcal C}\bigl(\gamma_2\circ \gamma_1, p_1\tau(h_1)^{-1}\bigr) \bigl(h_2^{-1} h_1^{-1}, \tau(h_1)\bigr)\\
					&={\mathcal C}\bigl(\gamma_2\circ \gamma_1, p_1\bigr) \bigl(e, \tau(h_1^{-1})\bigr)   \bigl(h_2^{-1} h_1^{-1}, \tau(h_1)\bigr)\\
					&={\mathcal C}\bigl(\gamma_2\circ \gamma_1, p_1\bigr) \bigl(h_1^{-1} h_2^{-1}, e \bigr) \\
					&=\theta\bigl((\gamma_2\circ \gamma_1, p_1), h_2 h_1\bigr).
				\end{split}
			\end{equation}
			To see the $H \rtimes_{\alpha} G$ -equivariancy of $\theta$, note that by the action defined in \Cref{Prop:Decoliegpd}, we have 
			\begin{equation}\nonumber
				\begin{split}
					&\theta\bigl((\gamma, p, h)(h', g')\bigr)\\
					&={\mathcal C}(\gamma, p g')\bigl(\alpha_{g'^{-1}}(h^{-1}h'), e\bigr)\\
					&={\mathcal C}(\gamma, p)(e, g')\bigl(\alpha_{g'^{-1}}(h^{-1}h'), e\bigr)\\
					&={\mathcal C}(\gamma, p)(h^{-1}, e)(h, e)(e, g')\bigl(\alpha_{g'^{-1}}(h^{-1}h'), e\bigr)\\
					&=\theta \bigl((\gamma, p), h\bigr)(h', g').
				\end{split}
			\end{equation}
			Hence, obviously  $(\theta, \rm Id)$ is a morphism of principal $[H \rtimes_{\alpha} G] \rra G$-bundles over $\mb{X}$
		\end{proof}
	\end{lemma}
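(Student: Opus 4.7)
\textbf{Proof proposal for Lemma \ref{Lem:isodecgeneral}.} The plan is to verify four things in the following order: (a) $(\theta, \mathrm{Id})$ commutes with the bundle projections $\pi^{\rm dec}$ and $\pi$; (b) it is $\mb{G}$-equivariant on morphisms; (c) the pair $(\theta, \mathrm{Id})$ is a functor, i.e.\ a morphism of Lie groupoids; and (d) $\theta$ is a diffeomorphism. Items (a), (b), (d) are mostly bookkeeping, while the functoriality in (c), specifically the preservation of composition, is where most of the work lies.

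First I would dispose of (a). Because $\pi_1\colon E_1\ra X_1$ is invariant under the $G_1$-action, property~(ii) of a categorical connection gives $\pi_1\bigl(\mathcal C(\gamma, p)(h^{-1}, e)\bigr)=\pi_1\bigl(\mathcal C(\gamma, p)\bigr)=\gamma=\pi_1^{\rm dec}\bigl((\gamma, p), h\bigr)$. For (b), the key identity is property~(iii), $\mathcal C(\gamma, pg)=\mathcal C(\gamma, p)\cdot 1_g=\mathcal C(\gamma, p)(e, g)$. Combined with the group law in $H\rtimes_{\alpha}G$, a direct computation yields
\begin{equation*}
\theta\bigl((\gamma, p, h)(h', g)\bigr)=\mathcal C(\gamma, p)(h^{-1}h', g)=\theta(\gamma, p, h)\cdot (h', g),
\end{equation*}
which is the equivariance.

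For (c) the source and unit axioms are immediate from (i) and (iv) of Definition~\ref{Def:categorical connection} together with the functoriality of the $\mb{G}$-action (so $s(\mathcal C(\gamma,p)(h^{-1},e))=p\cdot e=p$, and $\theta(u(p))=\mathcal C(1_{\pi(p)}, p)(e,e)=1_p$). The target axiom reduces to $t(\mathcal C(\gamma,p))\cdot\tau(h^{-1})=\mu_{\mathcal C}(\gamma,p)\tau(h^{-1})$, which matches the target map of $\mb{E}^{\rm dec}$ in Proposition~\ref{Prop:Decoliegpd}. The hard part will be verifying that $\theta$ respects the composition $\bigl((\gamma_2, p_2, h_2), (\gamma_1, p_1, h_1)\bigr)\mapsto (\gamma_2\circ\gamma_1, p_1, h_2 h_1)$. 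My intended tactic is to insert $(e,\tau(h_1^{-1}))(e,\tau(h_1))$ between the two factors on the right, use property~(iii) to absorb $(e,\tau(h_1^{-1}))$ into $\mathcal C(\gamma_1, p_1)$, then apply the interchange law \Cref{E:Identitiesechangeinverse} to separate the $\mathcal C$-factors from the $G_1$-factors; property~(v) then collapses $\mathcal C(\gamma_2, p_2)\circ\mathcal C(\gamma_1, p_1\tau(h_1^{-1}))$ to $\mathcal C(\gamma_2\circ\gamma_1, p_1)$, and the remaining $G_1$-computation uses only the semidirect product multiplication and a Peiffer identity \Cref{E:Peiffer} to yield $(h_2 h_1)^{-1}$. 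This is the only step that genuinely uses all five axioms of a categorical connection simultaneously, and I expect the balance of $h$'s and $\tau(h)$'s to require some care.

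Finally, for (d), smoothness of $\theta$ is clear, so it suffices to exhibit a smooth inverse. The paragraph preceding the lemma already establishes, using condition~(ii) of the categorical connection and freeness of the $G_1$-action on $E_1$, that every $\widetilde\gamma\in E_1$ admits a unique $h\in H$ with $\mathcal C\bigl(\pi_1(\widetilde\gamma), s(\widetilde\gamma)\bigr)(h, e)=\widetilde\gamma$. The assignment $\widetilde\gamma\mapsto \bigl((\pi_1(\widetilde\gamma), s(\widetilde\gamma)), h^{-1}\bigr)$ is the set-theoretic inverse; its smoothness follows from the smoothness of the difference map $E_1\times_{\pi_1, X_1, \pi_1}E_1\ra G_1$, combined with \Cref{Categorical connection is a smooth embedding}. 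Composing $(\theta,\mathrm{Id})$ with the obvious inverse on the object level $\mathrm{Id}_{E_0}$ then furnishes the required isomorphism of principal $\mb{G}$-bundles over $\mb{X}$.
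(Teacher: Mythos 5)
Your proposal is correct and follows essentially the same route as the paper: the composition check via inserting $(e,\tau(h_1^{-1}))(e,\tau(h_1))$, absorbing it with condition~(iii), applying the interchange law and condition~(v), and finishing with the semidirect-product/Peiffer computation is exactly the paper's verification, as is the equivariance argument through $\mathcal C(\gamma,pg)=\mathcal C(\gamma,p)(e,g)$. You merely make explicit the routine items (projection compatibility, source/target/unit axioms, and the smooth inverse via the unique $h$ with $\mathcal C(\pi_1(\widetilde\gamma),s(\widetilde\gamma))(h,e)=\widetilde\gamma$) that the paper leaves to the preceding discussion and the general fact that bundle morphisms over a fixed base are isomorphisms.
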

	In conclusion, we obtain a characterization of principal $2$-bundles over Lie groupoids in terms of the existence of categorical connections, stated below:
	\begin{proposition}\label{prop:Characterisdecorated}
		A principal $2$-bundle over a Lie groupoid is a decorated principal 2-bundle if and only if it admits a categorical connection.
	\end{proposition}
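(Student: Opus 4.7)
The proof will split naturally into the two directions of the biconditional, with the harder ``if'' direction already essentially carried out in \Cref{Lem:isodecgeneral} and the preceding discussion, so the task is mainly to organize the pieces.

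For the ``if'' direction, starting from a categorical connection $\mathcal{C}\colon s^{*}E_0\to E_1$ on a principal $\mb{G}=[H\rtimes_{\alpha}G\rra G]$-bundle $\pi\colon\mb{E}\to\mb{X}$, I will first produce a principal $G$-bundle $(\pi_0\colon E_0\to X_0, \mu_{\mathcal{C}}, \mb{X})$ over $\mb{X}$ via the action $\mu_{\mathcal{C}}(\gamma,p):=t(\mathcal{C}(\gamma,p))$ introduced in \Cref{actionwithcatcon}. I would verify the axioms of \Cref{Definition: Principal Lie group bundle over a Lie groupoid}: the identity axiom $\mu_{\mathcal{C}}(1_{\pi(p)},p)=p$ follows from (iv) of \Cref{Def:categorical connection}, the associativity $\mu_{\mathcal{C}}(\gamma_2\circ\gamma_1,p)=\mu_{\mathcal{C}}(\gamma_2,\mu_{\mathcal{C}}(\gamma_1,p))$ follows from (v), and $G$-equivariance follows from (iii). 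Then the decorated principal $\mb{G}$-bundle $\pi^{\mathrm{dec}}\colon\mb{E}^{\mathrm{dec}}\to\mb{X}$ associated to $(\pi_0,\mu_{\mathcal{C}},\mb{X})$ and the Lie crossed module $(G,H,\tau,\alpha)$ is given by \Cref{Prop:Decoliegpd}, and \Cref{Lem:isodecgeneral} directly yields the required isomorphism $(\theta,\mathrm{Id})\colon\pi^{\mathrm{dec}}\to\pi$, exhibiting $\pi\colon\mb{E}\to\mb{X}$ as a decorated principal 2-bundle.

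For the ``only if'' direction, assume $\pi\colon\mb{E}\to\mb{X}$ is a decorated principal 2-bundle; by definition there exists a principal $G$-bundle $(\pi'_0\colon E'_G\to X_0,\mu',\mb{X})$ over $\mb{X}$ and an isomorphism $(F,\mathrm{Id})$ of principal $\mb{G}$-bundles from the associated decoration $\pi^{\mathrm{dec}}\colon\mb{E}^{\mathrm{dec}}\to\mb{X}$ to $\pi\colon\mb{E}\to\mb{X}$. By \Cref{Ex:CatconnDeco} (taking $\beta\equiv e$), the decorated bundle carries the canonical categorical connection $\mathcal{C}^{\mathrm{can}}\colon(\gamma,p)\mapsto(\gamma,p,e)$. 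I would then define $\mathcal{C}\colon s^{*}E_0\to E_1$ by
\begin{equation}\nonumber
\mathcal{C}(\gamma,p):=F_1\bigl(\mathcal{C}^{\mathrm{can}}(\gamma,F_0^{-1}(p))\bigr),
\end{equation}
and verify it satisfies (i)--(v) of \Cref{Def:categorical connection}; each property transports along $F$ since $F$ is a $\mb{G}$-equivariant morphism of Lie groupoids respecting source, target, composition, units and projection to $\mb{X}$.

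The main conceptual obstacle is the ``if'' direction, where one must check that the proposed map $\mu_{\mathcal{C}}$ really is a (smooth) $\mb{X}$-action compatible with the $G$-action on $E_0$, and then that the associated decoration reproduces the original bundle. Fortunately, the five axioms of a categorical connection have been tailored precisely to supply each of the required properties, and the decoration-to-bundle isomorphism is already packaged in \Cref{Lem:isodecgeneral}; so the proof reduces to a short, mostly bookkeeping argument stringing these pieces together. The ``only if'' direction is then essentially immediate from transport of structure along the bundle isomorphism.
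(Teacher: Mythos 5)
Your proposal is correct and follows essentially the same route as the paper: the forward direction is exactly the construction of $\mu_{\mathcal{C}}=t\circ\mathcal{C}$ from \Cref{actionwithcatcon} together with the isomorphism $(\theta,\mathrm{Id})$ of \Cref{Lem:isodecgeneral}, and the converse is the canonical categorical connection of \Cref{Ex:CatconnDeco} transported along the defining bundle isomorphism. No gaps; the verification of the action axioms from (iii)--(v) of \Cref{Def:categorical connection} is the same bookkeeping the paper relies on.
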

	
	\begin{corollary}\label{Corollary:discreteisdecorated}
		For a Lie crossed module $(G,H, \tau, \alpha)$, any principal $\mb{G}=[H \rtimes_{\alpha}G \rra G]$-bundle over a discrete Lie groupoid is a decorated principal $\mb{G}$-bundle as given in \Cref{Ex:EoXodecobundle}.
	\end{corollary}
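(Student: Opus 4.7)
The strategy is to deduce the corollary directly from \Cref{prop:Characterisdecorated} by exhibiting an explicit categorical connection on any principal $\mb{G}$-bundle $\pi \colon \mb{E} \ra [M \rra M]$. Since the morphism space of the discrete Lie groupoid $[M \rra M]$ consists only of the identities $\{1_x : x \in M\}$, the fibred product $s^{*}E_0$ collapses: every element has the form $(1_{\pi_0(p)}, p)$, giving a canonical diffeomorphism $s^{*}E_0 \cong E_0$ via $(1_{\pi_0(p)}, p) \mapsto p$.

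The plan is to set
\begin{equation}\nonumber
\mathcal{C}\colon s^{*}E_0 \ra E_1, \qquad (1_{\pi_0(p)}, p) \mapsto 1_p,
\end{equation}
and verify conditions (i)--(v) of \Cref{Def:categorical connection}. Smoothness is immediate from the smoothness of the unit map of $\mb{E}$. Properties (i) and (ii) are built into $\mathcal C$ since $s(1_p)=p$ and $\pi_1(1_p)=1_{\pi_0(p)}$, using functoriality of $\pi$. Property (iv) is the defining equation of $\mathcal C$. For property (iii), the functoriality of the Lie $2$-group action gives $1_{p g}=1_p\cdot 1_g$ for all $p\in E_0$ and $g\in G_0$, so $\mathcal C(1_x, pg)=\mathcal C(1_x,p)\cdot 1_g$. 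For the composition law (v), the only composable pairs of morphisms in $[M \rra M]$ are of the form $(1_x, 1_x)$ with $1_x \circ 1_x = 1_x$; the corresponding check $1_p \circ 1_p = 1_p$ follows from $\mb{E}$ being a Lie groupoid.

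Having established that $\mathcal C$ is a categorical connection, I would then invoke \Cref{prop:Characterisdecorated} (equivalently, the explicit isomorphism of \Cref{Lem:isodecgeneral}) to conclude that $\pi\colon \mb{E} \ra [M \rra M]$ is isomorphic, as a principal $\mb{G}$-bundle over $[M \rra M]$, to the decorated principal $\mb{G}$-bundle associated to the principal $G$-bundle $\pi_0 \colon E_0 \ra M$ and the Lie crossed module $(G, H, \tau, \alpha)$. Finally, comparing the construction of the decorated bundle (\Cref{Prop:Decoliegpd}) with the trivial action of $[M \rra M]$ on $E_0$, I would note that $(s^{*}E_0)^{\mathrm{dec}} = E_0 \times H$ with source and target both equal to the projection onto $E_0$ (since $\mu$ is the identity on $E_0$ and $\tau(h^{-1})$ acts by right $G$-translation), which recovers the precise description $\mb{E}^{\mathrm{dec}} = [E_0 \times H \rra E_0]$ given in \Cref{Ex:EoXodecobundle}, completing the proof.

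There is essentially no obstacle here: the discrete nature of $[M \rra M]$ forces the categorical connection to be canonically defined on the identities, and all five axioms are trivialities once one observes the identification $s^{*}E_0 \cong E_0$. The only substantive content is already packaged inside \Cref{prop:Characterisdecorated} and \Cref{Lem:isodecgeneral}.
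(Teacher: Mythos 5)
Your proposal is correct and follows essentially the same route as the paper: the paper's proof simply observes that $(1_x,p)\mapsto 1_p$ defines a categorical connection and then appeals to \Cref{prop:Characterisdecorated}, exactly as you do, with your axiom checks and the comparison with \Cref{Ex:EoXodecobundle} just spelling out details the paper leaves implicit.
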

	\begin{proof}
		The map $(1_x, p) \mapsto 1_p$ for $p\in E, x=\pi(p)$, defines a categorical connection on such a principal $\mb{G}$-bundle.
	\end{proof}

Thr following is obvious from the proof of above corollary.	
\begin{corollary}\label{unique Cat connection on bundle over discrete space}
For a Lie 2-group $\mb{G}$, any principal $\mb{G}$-bundle over a discrete Lie groupoid (\Cref{Lie groupoid example: manifold}) admits a unique categorical connection.
	\end{corollary}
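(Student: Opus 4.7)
The plan is to observe that the statement follows almost directly from the defining axioms of a categorical connection (\Cref{Def:categorical connection}) once one notes what the base Lie groupoid looks like. Since the base is a discrete Lie groupoid $[M \rra M]$ as in \Cref{Lie groupoid example: manifold}, every morphism has the form $1_x$ for some $x \in M$. Consequently, the fibered product $s^{*}E_0 = X_1 \times_{s, X_0, \pi_0} E_0$ reduces to
\[
s^{*}E_0 = \bigl\{(1_{x}, p) : x \in M,\, p \in \pi_0^{-1}(x)\bigr\},
\]
and via the projection $(1_x, p) \mapsto p$ it is naturally identified with $E_0$.

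Given this description of the domain of $\mc{C}$, the uniqueness is forced by condition (iv) of \Cref{Def:categorical connection}: any categorical connection must satisfy $\mc{C}(1_x, p) = 1_p$ for every $x \in M$ and $p \in \pi_0^{-1}(x)$. Since every element of $s^{*}E_0$ is of this form, the assignment $\mc{C}(1_x, p) := 1_p$ is the only possible categorical connection.

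For existence, I would simply verify that this canonical assignment indeed satisfies conditions (i)--(v) of \Cref{Def:categorical connection}: conditions (i), (ii), and (iv) are immediate from the identity axiom of the Lie groupoid $\mb{E}$; condition (iii) follows from the fact that the unit map of $\mb{E}$ is $G_0$-equivariant (as source and target are morphisms of principal bundles in \Cref{Definition:principal $2$-bundle over Liegroupoid}), giving $1_{p \cdot g} = 1_p \cdot 1_g$; and the composition condition (v) holds trivially because $1_x \circ 1_x = 1_x$ and $1_p \circ 1_p = 1_p$. Smoothness is automatic since the unit map of a Lie groupoid is smooth. Alternatively, existence is already available from \Cref{Corollary:discreteisdecorated}.

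There is no genuine obstacle here; the content of the corollary is entirely axiomatic, and the proof amounts to unpacking the definition in the degenerate case where the base has only identity arrows.
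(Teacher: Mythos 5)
Your proof is correct and follows essentially the same route as the paper, which obtains existence from the canonical assignment $(1_x,p)\mapsto 1_p$ of \Cref{Corollary:discreteisdecorated} and treats uniqueness as immediate from axiom (iv) of \Cref{Def:categorical connection}, exactly as you argue after identifying $s^{*}E_0$ with pairs $(1_x,p)$. One tiny remark: the identity $1_{p\cdot g}=1_p\cdot 1_g$ used for condition (iii) is most directly a consequence of the functoriality of the action $\rho\colon \mb{E}\times\mb{G}\to\mb{E}$ (it preserves units), rather than of the source and target maps being bundle morphisms.
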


	\begin{example}\label{E:Exampleprincipalpairlie2deco}
		The principal $[G\times G\rra G]$-bundle $[E_1\rra E_0]$ over the Lie groupoid $\mb{X}=[X_1\rra X_0]$ in \Cref{E:Exampleprincipalpairlie2} admits a categorical connection if and only if $E_0\ra X_0$ is a principal $G$-bundle over the Lie groupoid $\mb{X}$.	Let $E_0\ra X_0$ be a principal $G$-bundle over the Lie groupoid $\mb{X}$ with respect to the action map $\mu\colon X_1\times_s E_0\ra E_0.$ Then the isomorphism with the corresponding decorated principal bundle is given by $(p, \gamma, q)\mapsto\bigl (( \gamma,p), g\bigr),$ where $\mu(\gamma, p)=q\, g.$
	\end{example}
	
	\begin{example}	
		Consider a Lie crossed module $(G,H, \tau, \alpha)$, a principal $G$-bundle  $\pi: P \ra M$ over a smooth manifold $M$ and the Čech groupoid $C(\mc{U})=\big[ \bigsqcup_{i,j }U_{ij} \rra \bigsqcup_{i }U_i \big]$ (\Cref{Cover groupoid}) associated to an open cover $\mc{U}:= \lbrace U_{i} \rbrace_{i \in I }$ of $M$, where $U_{ij}:= U_{i} \cap U_{j}$. Suppose $ \lbrace P_i \ra U_i \rbrace_{i \in I}$ and $ \lbrace P_{ij} \ra U_{ij}\rbrace_{i,j \in I}$ be the families of restricted principal $G$-bundles  of $\pi: P \ra M$. Then it is easy to see that $ \Big( \big[ \bigsqcup_{i,j} P_{ij} \rra \bigsqcup_{i}P_i \big] \longrightarrow \big[ \bigsqcup_{i,j}U_{ij} \rra \bigsqcup_{i}U_i \big] \Big)$ is a principal  $[G \rra G]$-bundle with respect to the obvious projection. Observe that the action   $\big( (i, j, x), (i, p) \big) \mapsto (j, p)$ of $\big[ \bigsqcup_{i,j }U_{ij} \rra \bigsqcup_{i }U_i \big]$ on $\bigsqcup_{i}P_i $	turns $\bigsqcup_{i}P_i $ into a principal $G$-bundle over  $\big[ \bigsqcup_{i,j }U_{ij} \rra \bigsqcup_{i }U_i \big]$. Thus, $\big[ \bigsqcup_{i,j}P_{ij} \rra \bigsqcup_{i} P_i \big]$  is the principal  $[G \rra G]$-bundle associated to the principal $G$-bundle $\bigsqcup_{i }P_i $ over the Čech groupoid. We construct the corresponding decorated $[H \rtimes_{\alpha} G \rra G]$-bundle $\big[ \bigsqcup_{i,j}P_{ij} \times H \rra \bigsqcup_{i}P_i \big]$  over $\big[ \bigsqcup_{i,j}U_{ij} \rra \bigsqcup_{i}U_i \big]$ using  \Cref{Prop:Decoliegpd}.	 
	\end{example}
	The following were not presented in either of our papers \cite{chatterjee2023parallel} or \cite{MR4403617} arising out of this thesis.
	\subsection*{Relation between the existence of categorical connections and the triviality of traditional principal bundles}
	For a Lie 2-group $\mb{G}$, let $\pi \colon \mb{E} \ra \mb{X}$ be a principal $\mb{G}$-bundle over a Lie groupoid $\mb{X}$. Here, we will see how the existence of a categorical connection on $\pi \colon \mb{E} \ra \mb{X}$  is related to the triviality of the underlying pair of classical principal bundles $\pi_{1} \colon E_1 \ra X_1$ and $\pi_0 \colon  E_0 \ra X_0$.
	
	\begin{proposition}
		For a Lie 2-group $\mb{G}$, let $\pi \colon \mb{E} \ra \mb{X}$ be a principal $\mb{G}$-bundle over a Lie groupoid $\mb{X}$ such that $\pi_1 \colon E_1 \ra X_1$ is a trivial principal $G_1$-bundle, then $\pi_0 \colon E_0 \ra X_0$ is also a trivial principal $G_0$-bundle and $\pi \colon \mb{E} \ra \mb{X}$ admits a categorical connection.
	\end{proposition}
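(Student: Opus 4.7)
The plan is first to derive the triviality of $\pi_0$ as a direct consequence of a section of $\pi_1$, and then to construct a categorical connection by a careful normalization of that section using the right $G_1$-action, which preserves the fibers of $\pi_1$.

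For the triviality of $\pi_0$, the hypothesis yields a smooth section $\sigma_1 \colon X_1 \to E_1$ of $\pi_1$. Setting $\sigma_0(x) := s(\sigma_1(1_x))$, functoriality of $\pi$ gives $\pi_0(\sigma_0(x)) = s(\pi_1(\sigma_1(1_x))) = s(1_x) = x$, so $\sigma_0$ is a global section of $\pi_0$. Hence $\pi_0 \colon E_0 \to X_0$ is trivial.

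Next, I would normalize $\sigma_1$ in two stages. Since the $G_0$-action on $E_0$ is free, for every $\gamma\in X_1$ there is a unique $g(\gamma)\in G_0$ with $s(\sigma_1(\gamma))\cdot g(\gamma) = \sigma_0(s(\gamma))$, and replacing $\sigma_1(\gamma)$ by $\sigma_1(\gamma)\cdot 1_{g(\gamma)}$ (using $u\colon G_0 \hookrightarrow G_1$) produces a section still lifting $\mathrm{id}_{X_1}$ but now additionally satisfying $s\circ \sigma_1=\sigma_0\circ s$. A second adjustment within $\ker(s)\subset G_1$, chosen so as to vanish at identities, then enforces $\sigma_1(1_x)=1_{\sigma_0(x)}$ for all $x\in X_0$. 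With such a normalized $\sigma_1$, define
\[
\mathcal{C}(\gamma, p) := \sigma_1(\gamma)\cdot 1_{g_p},\qquad (\gamma, p)\in s^{*}E_0,
\]
where $g_p\in G_0$ is the unique element with $p=\sigma_0(\pi_0(p))\cdot g_p$. Conditions (i)--(iv) of \Cref{Def:categorical connection} then follow directly from the normalizations: (i) and (ii) from $s\circ\sigma_1=\sigma_0\circ s$ and $\pi_1\circ \sigma_1=\mathrm{id}$; (iii) from $g_{p\cdot g}=g_p\cdot g$ and the equivariance of the $G_1$-action; and (iv) from $\sigma_1(1_x)=1_{\sigma_0(x)}$ combined with the $G_0$-equivariance of $u\colon E_0\to E_1$, which yields $1_{\sigma_0(x)}\cdot 1_{g_p}=1_{p}$.

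The main obstacle is condition (v), the compatibility with composition. Expanding both sides with the interchange law \Cref{E:Identitiesechangeinverse} and the freeness of the actions, (v) reduces to the combined requirements $t\circ\sigma_1=\sigma_0\circ t$ and $\sigma_1(\gamma_2\circ\gamma_1)=\sigma_1(\gamma_2)\circ \sigma_1(\gamma_1)$ for composable $\gamma_2,\gamma_1$. I would handle this by a final normalization of $\sigma_1$ using elements of $\ker(s)\subset G_1$ with prescribed targets, thereby producing a groupoid-functorial section of $\pi$; the resulting cocycle-type identity between the modifying factors and the composition on $\mb{E}$ is where the bulk of the verification lies. Once achieved, the associated bundle becomes an instance of the decorated construction of \Cref{Prop:Decoliegpd}, and the categorical connection just constructed is the canonical one of \Cref{Ex:CatconnDeco}, consistent with \Cref{prop:Characterisdecorated}.
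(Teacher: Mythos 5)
Your first half is exactly the paper's argument: $\sigma_0:=s\circ\sigma_1\circ u$ is a smooth global section of $\pi_0$, so $\pi_0$ is trivial. Your normalizations for conditions (i)--(iv) of \Cref{Def:categorical connection} are also fine and can be made precise: first replace $\sigma_1(\gamma)$ by $\sigma_1(\gamma)\cdot 1_{a(\gamma)^{-1}}$, where $a(\gamma)\in G_0$ is determined by $s(\sigma_1(\gamma))=\sigma_0(s(\gamma))\,a(\gamma)$, and then by $\sigma_1(\gamma)\cdot k(s(\gamma))$, where $k(x)\in\ker(s)\subset G_1$ is the unique element with $\sigma_1(1_x)k(x)=1_{\sigma_0(x)}$; after this, $\mc{C}(\gamma,p)=\sigma_1(\gamma)\cdot 1_{g_p}$ satisfies (i)--(iv). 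The genuine gap is precisely the step you defer: the ``final normalization'' forcing $t\circ\sigma_1=\sigma_0\circ t$ and $\sigma_1(\gamma_2\circ\gamma_1)=\sigma_1(\gamma_2)\circ\sigma_1(\gamma_1)$ is not a cocycle bookkeeping exercise but an honest obstruction. A unital, multiplicative, $u$-equivariant section is, by \Cref{prop:Characterisdecorated}, the same as exhibiting $\mb{E}$ as a decorated bundle, and that is strictly stronger than triviality of the principal $G_1$-bundle $\pi_1$; no modification of $\sigma_1$ within the fibres of $\pi_1$ can create it in general.

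Concretely, take the crossed module with $G=\{e\}$, $H=\mathbb{R}$, so $\mb{G}=[\mathbb{R}\rra e]$ as in \Cref{Ex:ordinary}, and let $\mb{E}=[H_3\rra *]\ra\mb{X}=[\mathbb{R}^2\rra *]$, where $H_3$ is the Heisenberg group of unipotent upper-triangular $3\times 3$ matrices, $\pi_1$ is the quotient by the centre $\mathbb{R}$, and the centre acts on $E_1=H_3$ by right translation. This satisfies \Cref{E:compoequi}, hence is a principal $[\mathbb{R}\rra e]$-bundle in the sense of \Cref{E:Example of principal 2-bundle ordinary}, and $\pi_1\colon H_3\ra\mathbb{R}^2$ is a trivial principal $\mathbb{R}$-bundle. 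Here (i) and (iii) of \Cref{Def:categorical connection} are vacuous, and (ii), (iv), (v) say exactly that $\mc{C}$ is a smooth homomorphic section of $H_3\ra\mathbb{R}^2$; such a section would make $H_3=Z(H_3)\cdot\mc{C}(\mathbb{R}^2)$ abelian, a contradiction. So condition (v) cannot be achieved by any normalization, your route cannot be completed, and in fact the statement as it stands fails. Note that the paper's own proof elides the same point: after choosing bundle trivializations $E_1\cong X_1\times G_1$ and $E_0\cong X_0\times G_0$ it declares $(\gamma,x,g)\mapsto(\gamma,1_g)$ to be a categorical connection, but (i), (iv), (v) refer to the groupoid structure of $\mb{E}$, which under these identifications agrees with the product structure of $\mb{X}\times\mb{G}$ only when the trivializing section is already functorial---which is exactly the missing step. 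Both arguments do go through under the stronger hypothesis that $\mb{E}$ is trivial as a principal $\mb{G}$-bundle over $\mb{X}$ (equivalently, that $\pi$ admits a functorial equivariant section), in which case $\mc{C}$ is the canonical categorical connection of \Cref{Ex:CatconnDeco}.
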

	\begin{proof}
		Since $\pi_1 \colon E_1 \ra X_1$ is trivial, it has a smooth global section $\sigma_1 \colon X_1 \ra E_1$. Define, $\sigma_0 \colon X_0 \ra E_0$ as $x \mapsto s(\sigma_1(1_x))$. Note that by definition, $\sigma_0$ is smooth. Also, $\pi_0(s(\sigma_1(1_x)))=s(\pi_1(\sigma_1(1_x)))=s(1_x)=x$. Since $\sigma_0$ is a smooth global section of $\pi_0 \colon E_0 \ra X_0$, and therefore $\pi_0 \colon E_0 \ra X_0$ is trivial. So, $E_1 \cong X_1 \times G_1$ and $E_0 \cong X_0 \times G_0$. It is easy to verify that the map $\mc{C} \colon s^{*} (X_0 \times G_0) \ra X_1 \times G_1$ defined by $(\gamma, x,g) \mapsto (\gamma, 1_g)$ is a categorical connection.
	\end{proof}

	\begin{proposition}
		For a Lie 2-group $\mb{G}$, let $\pi \colon \mb{E} \ra \mb{X}$ be a principal $\mb{G}$-bundle over a Lie groupoid $\mb{X}$ such that $\pi_0 \colon E_0 \ra X_0$ is a trivial principal $G_0$-bundle and there is a smooth map $\mc{C} \colon s^*{E_0} \ra E_1$ such that $\pi_1\big(\mc{C}(\gamma,p)\big)= \gamma$ for all $(\gamma,p) \in s^{*}E_0$. Then $\pi_1 \colon E_1 \ra X_1$ is also a trivial principal $G_1$-bundle and hence, $\pi \colon \mb{E} \ra \mb{X}$ admits a categorical connection.
	\end{proposition}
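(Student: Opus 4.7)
The plan is to reduce the existence of a categorical connection to the previous proposition by first establishing the triviality of $\pi_1$, and the whole argument really boils down to a direct construction of a global smooth section of $\pi_1$. The key observation is that the assumed smooth map $\mc{C}$, combined with any smooth section of $\pi_0$, automatically produces such a section of $\pi_1$.

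First I would pick any smooth global section $\sigma_0 \colon X_0 \to E_0$ of $\pi_0$, which exists by the assumed triviality of $\pi_0$. For every $\gamma \in X_1$ the identity $\pi_0(\sigma_0(s(\gamma))) = s(\gamma)$ ensures that $\bigl(\gamma, \sigma_0(s(\gamma))\bigr) \in s^{*}E_0$, so the assignment
\[
\sigma_1 \colon X_1 \to E_1, \qquad \gamma \longmapsto \mc{C}\bigl(\gamma,\, \sigma_0(s(\gamma))\bigr),
\]
is a well-defined smooth map, being a composition of smooth maps (the smoothness of $\gamma \mapsto (\gamma, \sigma_0(s(\gamma)))$ into $s^{*}E_0$ uses that $s$ and $\sigma_0$ are smooth and that $s^{*}E_0$ is a pullback along a submersion). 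The hypothesis $\pi_1\bigl(\mc{C}(\gamma,p)\bigr) = \gamma$ then immediately yields $\pi_1(\sigma_1(\gamma)) = \gamma$, so $\sigma_1$ is a smooth global section of $\pi_1$. Consequently $\pi_1 \colon E_1 \to X_1$ is a trivial principal $G_1$-bundle, which is the first assertion.

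For the second assertion, I would invoke the previous proposition: since $\pi_1$ is now known to be trivial, $\pi \colon \mb{E} \to \mb{X}$ admits a categorical connection. The triviality of $\pi_0$, which is the other conclusion of that proposition, is already part of our assumption, so nothing is lost. Concretely, using the trivialization $E_1 \cong X_1 \times G_1$ coming from $\sigma_1$ and the trivialization $E_0 \cong X_0 \times G_0$ coming from the section $x \mapsto s(\sigma_1(1_x))$, the categorical connection produced by the previous proposition is the map $(\gamma, x, g) \mapsto (\gamma, 1_g)$ in these trivializations.

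The proof is straightforward and does not present a genuine obstacle; the only point that deserves attention is noticing that the map $\mc{C}$, although not itself a categorical connection, nevertheless provides exactly the data needed to promote any section $\sigma_0$ of $\pi_0$ to a section $\sigma_1$ of $\pi_1$. Once $\pi_1$ is trivialized, the verification of the five defining conditions of a categorical connection is entirely subsumed in the previous proposition.
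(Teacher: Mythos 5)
Your proof is correct and follows essentially the same route as the paper: the paper takes a section of the trivial pullback bundle $\pi_0^{*} \colon s^{*}E_0 \ra X_1$ and composes it with $\mc{C}$ to get a section of $\pi_1$, which is exactly what your $\gamma \mapsto \mc{C}\bigl(\gamma, \sigma_0(s(\gamma))\bigr)$ does, with the section of the pullback written out explicitly via $\sigma_0$. The final appeal to the preceding proposition for the categorical connection matches the paper as well.
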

	\begin{proof}
		As $\pi_0 \colon E_0 \ra X_0$ is trivial, the pull-back principal $G_0$-bundle $\pi_o^* \colon s^{*}E_0 \ra X_1$ (\Cref{Definition: pull-back principal G-bundle}) is trivial. Suppose $\sigma \colon X_1 \ra s^{*}E_0$ is a smooth global section of $\pi_0^{*}$. Now, consider the map $\mc{C}  \circ \sigma \colon X_1 \ra E_1$. Then, the triviality of $\pi_1 \colon E_1 \ra X_1$ follows from the observation that for $\gamma \in X_1$, we have $\pi_1 \circ \mc{C}  \circ \sigma(\gamma)= \gamma$.
	\end{proof}
	The following corollary is an immediate consequence of the above two propositions:
	\begin{corollary}
		For a Lie 2-group $\mb{G}$, let $\pi \colon \mb{E} \ra \mb{X}$ be a principal $\mb{G}$-bundle over a Lie groupoid $\mb{X}$ such that there is a smooth map $\mc{C} \colon s^*{E_0} \ra E_1$ satisfying $\pi_1\big(\mc{C}(\gamma,p)\big)= \gamma$ for all $(\gamma,p) \in s^{*}E_0$. If either of  $\pi_0 \colon E_0 \ra X_0$ or $\pi_1 \colon E_1 \ra X_1$ is trivial, then $\pi \colon \mb{E} \ra \mb{X}$ admits a categorical conection.
	\end{corollary}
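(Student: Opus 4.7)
Since the corollary is presented as an immediate consequence of the two preceding propositions, the plan is to carry out a short case analysis based on which of the two underlying classical principal bundles is trivial, and in each case invoke the appropriate proposition.

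First I would dispose of the case when $\pi_1 \colon E_1 \to X_1$ is trivial. Here, the previous proposition (the one whose hypothesis is exactly the triviality of $\pi_1$) applies directly: it already asserts that $\pi_0 \colon E_0 \to X_0$ is then trivial and, moreover, that $\pi \colon \mb{E} \to \mb{X}$ admits a categorical connection. Notably, this case does not even require the use of the hypothesised lift $\mc{C}$; the triviality of $\pi_1$ alone supplies a global section from which one constructs the explicit categorical connection $(\gamma, x, g) \mapsto (\gamma, 1_g)$ already exhibited in the proof of that proposition. Thus no additional argument is needed in this case.

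Next I would treat the case when $\pi_0 \colon E_0 \to X_0$ is trivial. In this case the second proposition applies, since its hypotheses are exactly: (i) $\pi_0$ is trivial, and (ii) the existence of a smooth map $\mc{C} \colon s^{*}E_0 \to E_1$ with $\pi_1 \circ \mc{C} = \mathrm{pr}_1$, which is precisely the data assumed in the corollary. That proposition then yields the triviality of $\pi_1$ and the existence of a categorical connection on $\pi \colon \mb{E} \to \mb{X}$, finishing this case as well.

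Combining the two cases, we conclude that in either alternative a categorical connection exists, and the corollary is proved. I do not anticipate any technical obstacle here: both cases reduce to direct citation of the preceding propositions, with the only mild subtlety being that in the first case the auxiliary map $\mc{C}$ is superfluous, while in the second case it is essential (it is what allows one to transfer triviality from $\pi_0$ upstairs to $\pi_1$ via pullback along $\pi_0^{*}$).
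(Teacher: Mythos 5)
Your proposal is correct and follows exactly the route the paper intends: the corollary is stated as an immediate consequence of the two preceding propositions, and your case split (triviality of $\pi_1$ invoking the first proposition without needing $\mc{C}$, triviality of $\pi_0$ invoking the second with the hypothesised $\mc{C}$) is precisely that argument. Nothing further is needed.
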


	\section{Quasi-principal 2-bundles over Lie groupoids and their characterizations}\label{Section: A Quasi-principal 2-bundle over a Lie groupoid}
This section introduces the notion of a \textit{quasi-principal 2-bundle over a Lie groupoid} and a \textit{pseudo-principal Lie crossed module-bundle over a Lie groupoid}. The main result of the section (\Cref{Main Theorem 1}) shows that the respective categories are equivalent via a proof of Lie 2-group torsor version of the classical Grothendieck construction (\Cref{subsection Fibered categories}). The content of this section is mainly adapted from our paper \cite{chatterjee2023parallel}.

\subsection{A quasi-principal 2-bundle over a Lie groupoid}\label{A quasi-principal 2-bundle over a Lie groupoid}
Consider a Lie 2-group $\mb{G}:=[G_1 \rra G_0]$. Now, given a principal $\mb{G}$-bundle $\pi \colon \mb{E} \ra \mb{X}$ over a Lie groupoid $\mb{X}$, there is a canonical morphism $P \colon E_1 \ra s^{*}E_0$ of principal bundles, from the pull-back principal $G_0$-bundle $\pi_0^{*} \colon s^{*}E_0 \ra X_1$ (\Cref{Definition: pull-back principal G-bundle}) to the principal $G_1$-bundle $\pi_1 \colon E_1 \ra X_1$, defined as $\delta \mapsto (\pi_1(\delta), s(\delta)).$ Adhering to the same notations as above, we define the following:
\begin{definition}\label{Definition:Quasicategorical Connection}
	For a Lie 2-group $\mb{G}$, a \textit{quasi connection} on a principal $\mb{G}$-bundle $\pi: \mb{E} \ra \mb{X}$ over a Lie groupoid $\mb{X}$ is defined as a smooth section $\mc{C}: s^{*}E_0 \ra E_1$ of the morphism of principal bundles $P : E_1 \ra s^{*}E_0$, such that $\mc{C}$ is a morphism of principal bundles over $X_1$ along the unit map $u \colon G_0 \ra G_1$. 
	\[
	\begin{tikzcd}
		E_1 \arrow[d, "\pi_1"'] \arrow[r, "P", bend left] & s^{*}E_0 \arrow[ld, "\pi_0^{*}", bend left] \arrow[l, "\mc{C}"', bend left=49] \\
		X_1                                          &                                                           
	\end{tikzcd}\]
	The pair $(\pi: \mb{E} \ra \mb{X}, \mc{C})$, will be called as a \textit{quasi-principal $\mb{G}$-bundle over $\mb{X}$}.
\end{definition}
\begin{remark}
	A notion analogous to a quasi connection in VB-groupoid setup has already been discussed in \Cref{Linear Cleavage}. A key distinctive characteristic of our setup is the Lie 2-group equivariance.
\end{remark}
The following observation is obvious:
\begin{proposition}\label{Proposition: Quasi-Cat}
	For a Lie 2-group $\mb{G}$, let $\pi \colon \mb{E} \ra \mb{X}$ be a principal $\mb{G}$-bundle over a Lie groupoid $\mb{X}$. Then, every categorical connection $\mc{C} \colon s^*E_0 \ra E_1$ is a quasi connection and conversely, any quasi connection $\mc{C} \colon s^{*}E_0 \ra E_1$, satisfying the following two properties
	\begin{itemize}
		\item[(i)] $\mathcal{C}(1_x,p)=1_p$  for any $x\in X_0$ and $p\in \pi^{-1}(x)$,
		\item[(ii)]  if $(\gamma_2, p_2), (\gamma_1, p_1) \in {s}^{*}E_0$ such that ${s}(\gamma_2)={t}(\gamma_1)$ and $p_2=t\bigl({\mathcal C}(\gamma_1, p_1)\bigr),$ then $\mathcal{C}(\gamma_2 \circ \gamma_1 , p_1)= \mathcal{C}(\gamma_2, p_2) \circ \mathcal{C}(\gamma_1, p_1)$,
	\end{itemize}
	is a categorical connection (\Cref{Def:categorical connection}).
\end{proposition}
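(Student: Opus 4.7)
The plan is to prove this proposition by direct unpacking of the two definitions; both directions reduce to a routine bookkeeping check, and the content of the statement is really the observation that conditions (i)--(iii) of \Cref{Def:categorical connection} together are precisely equivalent to the two defining conditions of a quasi connection in \Cref{Definition:Quasicategorical Connection}, while conditions (iv) and (v) of \Cref{Def:categorical connection} are exactly the two extra hypotheses imposed on the quasi connection in the converse statement.

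For the forward direction, suppose $\mathcal{C}\colon s^{*}E_0\to E_1$ is a categorical connection. I would first verify that $\mathcal{C}$ is a section of $P\colon E_1\to s^{*}E_0$. By definition, $P(\mathcal{C}(\gamma,p))=(\pi_1(\mathcal{C}(\gamma,p)),s(\mathcal{C}(\gamma,p)))$, and by conditions (i) and (ii) of \Cref{Def:categorical connection} this equals $(\gamma,p)$, so $P\circ\mathcal{C}=\mathrm{id}_{s^{*}E_0}$. Next, to see that $\mathcal{C}$ is a morphism of principal bundles over $X_1$ along $u\colon G_0\to G_1$, observe that commutativity with the bundle projections is again condition (ii), while equivariance along $u$ is precisely the identity $\mathcal{C}(\gamma,pg)=\mathcal{C}(\gamma,p)\cdot 1_g=\mathcal{C}(\gamma,p)\cdot u(g)$, which is condition (iii). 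Smoothness is built into \Cref{Def:categorical connection}. Hence $\mathcal{C}$ is a quasi connection.

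For the converse, let $\mathcal{C}\colon s^{*}E_0\to E_1$ be a quasi connection satisfying the two extra properties. I would recover conditions (i)--(iii) of \Cref{Def:categorical connection} as follows: since $P\circ\mathcal{C}=\mathrm{id}$, applying the two components of $P$ gives $\pi_1(\mathcal{C}(\gamma,p))=\gamma$ and $s(\mathcal{C}(\gamma,p))=p$, which are (ii) and (i) respectively. Since $\mathcal{C}$ is a morphism of principal bundles over $X_1$ along the unit map $u$, one has $\mathcal{C}(\gamma,pg)=\mathcal{C}(\gamma,p)\cdot u(g)=\mathcal{C}(\gamma,p)\cdot 1_g$, which is (iii). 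Finally, the two extra hypotheses on $\mathcal{C}$ are literally conditions (iv) and (v) of \Cref{Def:categorical connection}. Therefore all five conditions hold, and $\mathcal{C}$ is a categorical connection.

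Because the proof is a direct matching of definitions there is no genuine obstacle; the only thing to be mildly careful about is the interpretation of ``morphism of principal bundles along the unit map $u$'' in \Cref{Definition:Quasicategorical Connection}, which must be read as equivariance $\mathcal{C}(\gamma,pg)=\mathcal{C}(\gamma,p)\cdot u(g)$ over the base identity map on $X_1$. Once this is made explicit, the equivalence is transparent. The conceptual value of the proposition, which is worth highlighting after the proof, is that it cleanly identifies categorical connections as the \emph{unital and composition-preserving} quasi connections, mirroring the relationship between splitting cleavages and cleavages (\Cref{Definition cleavage}, \Cref{Definition splitting cleavage}) on an ordinary fibered category.
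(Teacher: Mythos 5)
Your proof is correct and is exactly the definitional unpacking the paper has in mind — the paper states this proposition without proof, labelling it an obvious observation, and your matching of conditions (i)–(iii) of the categorical-connection definition with the section/equivariance data of a quasi connection (and of (iv)–(v) with the two extra hypotheses) is precisely that omitted argument. Nothing further is needed.
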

\begin{definition}\label{Unital connection}
	A quasi connection $\mc{C} \colon s^{*}E_0 \ra E_1$ 
	is said to be a \textit{unital connection} if it satisfies the condition (i) of \Cref{Proposition: Quasi-Cat}. We will call a principal 2-bundle equipped with a unital connection a \textit{unital-principal 2-bundle}. Likewise, we will call a principal 2-bundle equipped with a categorical connection a \textit{categorical-principal 2-bundle}. \\ Notationally, we will not distinguish between quasi, unital, or categorical-principal 2-bundles. 
\end{definition}
\begin{remark}
	Note that a	quasi, unital, or a categorical-principal 2-bundle can be viewed as a suitable adaptation of a  cloven fibration on a fibered category (\Cref{Definition Colven fibration}) in the framework of principal Lie 2-group bundles over Lie groupoids. In \Cref{Chapter: Parallel transport on quasi-principal 2-bundles}, we will touch upon this aspect in a little detail. 
\end{remark}
Given a Lie 2-group $\mb{G}$ and a Lie groupoid $\mb{X}$, the collection of quasi-principal $\mb{G}$-bundles over $\mb{X}$ natutally defines a groupoid, as we see below:
\begin{proposition}\label{Groupoid of quasi principal 2-bundles}
	For a Lie 2-group $\mb{G}$ and a Lie groupoid $\mb{X}$, the category $\rm{Bun}_{\rm{quasi}}(\mb{X}, \mb{G})$, whose objects are quasi-principal $\mb{G}$-bundles $(\pi \colon \mb{E} \ra \mb{X}, \mc{C})$ over $\mb{X}$ and an arrow from $(\pi \colon \mb{E} \ra \mb{X}, \mc{C})$ to $(\pi' \colon \mb{E}' \ra \mb{X}, \mc{C}')$ is a morphism of principal $\mb{G}$-bundles $F \colon \mb{E} \ra \mb{E}'$ that satisfies the condition $$F_1 \big(\mc{C}(\gamma,p) \big)= \mc{C}' \big(\gamma, F_0(p) \big),$$ for all $(\gamma,p) \in s^{*}E_0$, forms a groupoid. In a similar way, the collection of unital principal $\mb{G}$-bundles and categorical principal $\mb{G}$-bundles over $\mb{X}$ forms the respective groupoids $\rm{Bun}_{\rm{unital}}(\mb{X}, \mb{G})$ and $\rm{Bun}_{\rm{Cat}}(\mb{X}, \mb{G})$.
\end{proposition}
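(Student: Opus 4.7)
The plan is to verify the category axioms first and then show every morphism is invertible with inverse still satisfying the compatibility with the connection; the argument for the three cases (quasi, unital, categorical) will be essentially uniform, since the only additional data beyond an ordinary morphism of principal 2-bundles is the compatibility identity $F_1\bigl(\mc{C}(\gamma,p)\bigr)=\mc{C}'\bigl(\gamma,F_0(p)\bigr)$, and the unitality/compositionality of the $\mc{C}$'s are properties of the objects alone, not the morphisms.

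First I would verify that these data form a category. Compatibility is closed under composition: given $F\colon(\mb{E},\mc{C})\to(\mb{E}',\mc{C}')$ and $F'\colon(\mb{E}',\mc{C}')\to(\mb{E}'',\mc{C}'')$, the identity
\[
(F'\circ F)_1\bigl(\mc{C}(\gamma,p)\bigr)=F'_1\bigl(F_1(\mc{C}(\gamma,p))\bigr)=F'_1\bigl(\mc{C}'(\gamma,F_0(p))\bigr)=\mc{C}''\bigl(\gamma,(F'\circ F)_0(p)\bigr)
\]
is immediate, and the identity morphism of $\mb{E}$ trivially satisfies $\mathrm{id}_1\circ\mc{C}=\mc{C}\circ(\mathrm{id}_{X_1}\times\mathrm{id}_0)$. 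Associativity and the unit axiom are inherited from the ambient 2-groupoid $\mathrm{Bun}(\mb{X},\mb{G})$ of principal $\mb{G}$-bundles (\Cref{strict 2-groupoid of principal 2-bundles}).

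Second, I would show that any morphism $F\colon(\mb{E},\mc{C})\to(\mb{E}',\mc{C}')$ is invertible. By \Cref{Definition: Morphism of principal 2-bundles}, $F$ is a $\mb{G}$-equivariant morphism of Lie groupoids whose underlying pair $(F_0,F_1)$ consists of morphisms of classical principal $G_0$- and $G_1$-bundles over $X_0$ and $X_1$ respectively. As recalled after \Cref{Definition: Morphism of principal G-bundles}, any such morphism of classical principal bundles over a fixed base is automatically an isomorphism; hence $F_0$ and $F_1$ are diffeomorphisms, $F^{-1}:=(F_1^{-1},F_0^{-1})$ is a morphism of principal $\mb{G}$-bundles over $\mb{X}$, and it remains only to check that $F^{-1}$ preserves the quasi connection. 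This is a direct substitution: given $(\gamma,q)\in s^{*}E'_0$, write $q=F_0(p)$ for the unique $p=F_0^{-1}(q)$; then
\[
F_1^{-1}\bigl(\mc{C}'(\gamma,q)\bigr)=F_1^{-1}\bigl(\mc{C}'(\gamma,F_0(p))\bigr)=F_1^{-1}\bigl(F_1(\mc{C}(\gamma,p))\bigr)=\mc{C}(\gamma,p)=\mc{C}\bigl(\gamma,F_0^{-1}(q)\bigr).
\]
Thus $F^{-1}$ lives in $\mathrm{Bun}_{\mathrm{quasi}}(\mb{X},\mb{G})$, and every morphism of this category is an isomorphism, making it a groupoid.

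Finally, for $\mathrm{Bun}_{\mathrm{unital}}(\mb{X},\mb{G})$ and $\mathrm{Bun}_{\mathrm{Cat}}(\mb{X},\mb{G})$, the additional axioms of \Cref{Unital connection} and \Cref{Def:categorical connection} concern only the section $\mc{C}$ itself and therefore only restrict the class of objects, not morphisms. Consequently, both collections are full subgroupoids of $\mathrm{Bun}_{\mathrm{quasi}}(\mb{X},\mb{G})$, so the same closure-under-composition and invertibility arguments apply verbatim. I expect no genuine obstacle here; the only subtlety worth flagging is the observation that morphisms of classical principal bundles over a fixed base are isomorphisms, which is what makes the inverse argument work at all and thereby guarantees the groupoid (rather than merely category) structure.
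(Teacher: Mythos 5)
Your proof is correct. The paper states this proposition without proof, treating it as routine, and your argument supplies exactly the intended one: closure of the compatibility condition under composition and identities is immediate, and invertibility reduces to the fact (recalled in the paper after \Cref{Definition: Morphism of principal G-bundles}) that a morphism of classical principal bundles over a fixed base is an isomorphism, after which the substitution check that $F^{-1}$ preserves the quasi connection and the observation that the unital and categorical cases are full subcategories of $\rm{Bun}_{\rm{quasi}}(\mb{X},\mb{G})$ finish the claim.
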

With a similar intention, we propose a weaker version of \Cref{Definition: Principal Lie group bundle over a Lie groupoid}:
\begin{definition}\label{quasi-principal G-bun dle}
	For a Lie group $G$, a \textit{quasi-principal $G$-bundle over a Lie groupoid} $\mb{X}$ is defined as a principal $G$-bundle $\pi\colon E_G \rightarrow X_0$ equipped with a  smooth 
	map $\mu\colon s^{*}E_G \rightarrow E_G$ that satisfies the following conditions:
	\begin{enumerate}[(i)]
		\item for each $(\gamma, p) \in s^{*}E_G$, we have $\bigl(\gamma, \mu(\gamma,p)\bigr) \in X_1 \times_{t, X_0, \pi}E_G$,
		\item for all $p \in E_G, g \in G$ and $\gamma \in X_1$ we have $\mu(\gamma, p)g=\mu(\gamma, pg).$			
	\end{enumerate}
\end{definition}
The notation $\bigl(\pi\colon E_G \rightarrow X_0, \mu, \mb{X} \bigr)$ may either denote a quasi-principal $\mb{G}$-bundle or a principal $G$-bundle (\Cref{Definition: Principal Lie group bundle over a Lie groupoid}) and should be understood from the context.

Observe that the condition (i) in the definition above says that $\mu$ defines a left quasi-action of $\mb{X}$ on $E_G$ (see \Cref{Definition: quasi action on a manifold}), and the condition (ii) ensures that this quasi-action commutes with the right action of $G$ on $E$.

\begin{example}\label{underlying quasi-principal bundle}
	For any quasi-principal $\mb{G}$-bundle $(\pi \colon \mb{E} \ra \mb{X},\mc{C})$ over a Lie groupoid $\mb{X}$,  $(\pi_0 \colon E_0 \ra X_0, \mu_{\mc{C}}:=t \circ \mc{C}, \mb{X})$ is a quasi-principal $G_0$-bundle over $\mb{X}$, which we call the \textit{underlying quasi-principal $G_0$-bundle of the quasi-principal $\mb{G}$-bundle $\pi \colon \mb{E} \ra \mb{X}$.}
\end{example}
\begin{remark}\label{underlying principal G-bundle}
	Observe that if $\mc{C}$ is a categorical connection in \Cref{underlying quasi-principal bundle}, then  $(\pi_0 \colon E_0 \ra X_0, \mu_{\mc{C}}:=t \circ \mc{C}, \mb{X})$ is a principal $G$-bundle over $\mb{X}$, see \Cref{actionwithcatcon}.
\end{remark}

\subsection{Examples of quasi-principal 2-bundles}\label{Examples of quasiprincipal 2-bundles}
\Cref{Proposition: Quasi-Cat} says that any categorical principal 2-bundle is a quasi-principal 2-bundle. In this subsection, we construct some non-trivial examples of quasi-principal 2-bundles which fail to be categorical-principal 2-bundles.

\begin{lemma}\label{detailed QuasiExamples}
	For a Lie crossed module $(G,H,\tau, \alpha)$, let $(\pi \colon \mb{E} \ra \mb{X}, \mc{C})$ be a categorical principal $[H \rtimes_{\alpha} G \rra G]$-bundle over a Lie groupoid $\mb{X}$. If there exists a smooth map $\mc{H} \colon s^{*}E_0 \ra H$ satisfying $\alpha_{g}(\mc{H}(\gamma,pg))= \mc{H}(\gamma,p)$ for all $(\gamma,p) \in s^{*}E_0$ and $g \in G$, then for $\mc{C}_{\mc{H}}(\gamma,p):= \mc{C}(\gamma,p) \big( \mc{H}(\gamma,p) ,e\big)$, the pair $(\pi \colon \mb{E} \ra \mb{X},\mc{C}_{\mc{H}})$ defines a quasi-principal $[H \rtimes_{\alpha} G \rra G]$-bundle over $\mb{X}$. Furthermore, $\mc{C}_{\mc{H}}$ is a categorical connection if and only if the following two conditions are satisfied:
	\begin{enumerate}[(i)]
		\item $\mc{H}\big(1_{\pi(p)},p \big)=e_H$ for all $p \in E_0$ and 
		\item $\mc{H}(\gamma_2 \circ \gamma_1,p)= \mc{H}(\gamma_2,t(\mc{C}(\gamma_1,p))) \mc{H}(\gamma_1,p_1)$ for all $\gamma_2,\gamma_1 \in X_1$, such that $s(\gamma_2)=t(\gamma_1)$ and $(\gamma_1,p) \in s^{*}E_0$.
	\end{enumerate}
\end{lemma}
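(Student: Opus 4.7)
The plan is to establish the two assertions of the lemma by direct verification, decomposing the required identities in the Lie 2-group $[H\rtimes_\alpha G\rra G]$ via the semidirect product multiplication \eqref{E:grouppro} and the Peiffer identities \eqref{E:Peiffer}.

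First I would check that $\mc{C}_{\mc{H}}$ is a quasi connection. Since $(h,e)\in H\rtimes_\alpha G$ has source $e_G$, and $\pi_1$ is $\mb{G}$-equivariant while $\pi_1(u(e_G))=1_{\pi_0 (p)}$ acts trivially on the base, the identities $s\bigl(\mc{C}_{\mc{H}}(\gamma,p)\bigr)=p$ and $\pi_1\bigl(\mc{C}_{\mc{H}}(\gamma,p)\bigr)=\gamma$ follow from the corresponding properties of $\mc{C}$. The main computation is the $G$-equivariance along $u\colon G_0\to G_1$: starting with $\mc{C}_{\mc{H}}(\gamma,pg)=\mc{C}(\gamma,pg)(\mc{H}(\gamma,pg),e)$, I would use property (iii) of $\mc{C}$ to write $\mc{C}(\gamma,pg)=\mc{C}(\gamma,p)\cdot 1_g=\mc{C}(\gamma,p)(e,g)$, then apply \eqref{E:grouppro} to obtain $(e,g)(\mc{H}(\gamma,pg),e)=\bigl(\alpha_g(\mc{H}(\gamma,pg)),g\bigr)$, and finally invoke the hypothesis $\alpha_g(\mc{H}(\gamma,pg))=\mc{H}(\gamma,p)$ to rewrite this as $(\mc{H}(\gamma,p),e)(e,g)$. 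This yields $\mc{C}_{\mc{H}}(\gamma,pg)=\mc{C}_{\mc{H}}(\gamma,p)\cdot 1_g$, which, combined with the smoothness of $\mc{C}$ and $\mc{H}$, concludes the first part.

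For the second part I would invoke \Cref{Proposition: Quasi-Cat}, which reduces the categorical condition to two separate checks. The unit condition $\mc{C}_{\mc{H}}(1_{\pi(p)},p)=1_p$ simplifies to $1_p\cdot(\mc{H}(1_{\pi(p)},p),e)=1_p$ because $\mc{C}(1_{\pi(p)},p)=1_p$; since the right action of $G_1$ on $E_1$ is free, this is equivalent to $\mc{H}(1_{\pi(p)},p)=e_H$, i.e.\ condition (i). For the multiplicativity condition, put $q_2:=t(\mc{C}(\gamma_1,p_1))$ and $p_2:=t(\mc{C}_{\mc{H}}(\gamma_1,p_1))$. Since $t$ is a morphism of principal bundles along $\tau\colon H\rtimes G\to G$, we get $p_2=q_2\cdot \tau(\mc{H}(\gamma_1,p_1))$. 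Using property (iii) of $\mc{C}$ this gives $\mc{C}(\gamma_2,p_2)=\mc{C}(\gamma_2,q_2)\cdot 1_{\tau(\mc{H}(\gamma_1,p_1))}$, so that
\[
\mc{C}_{\mc{H}}(\gamma_2,p_2)=\mc{C}(\gamma_2,q_2)\cdot\bigl(e,\tau(\mc{H}(\gamma_1,p_1))\bigr)\bigl(\mc{H}(\gamma_2,p_2),e\bigr).
\]
Combining the last two factors by \eqref{E:grouppro} and applying the Peiffer identity $\alpha_{\tau(h)}(h')=hh'h^{-1}$, the bracket becomes $\bigl(\mc{H}(\gamma_1,p_1)\mc{H}(\gamma_2,p_2)\mc{H}(\gamma_1,p_1)^{-1},\,\tau(\mc{H}(\gamma_1,p_1))\bigr)$.

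Finally I would apply the interchange law \eqref{E:Identitiesechangeinverse} to compose $\mc{C}_{\mc{H}}(\gamma_2,p_2)\circ \mc{C}_{\mc{H}}(\gamma_1,p_1)$. The groupoid composition in $H\rtimes G$ gives the cancellation
\[
\bigl(\mc{H}(\gamma_1,p_1)\mc{H}(\gamma_2,p_2)\mc{H}(\gamma_1,p_1)^{-1},\tau(\mc{H}(\gamma_1,p_1))\bigr)\circ \bigl(\mc{H}(\gamma_1,p_1),e\bigr)=\bigl(\mc{H}(\gamma_1,p_1)\mc{H}(\gamma_2,p_2),e\bigr),
\]
while property (v) of $\mc{C}$ yields $\mc{C}(\gamma_2,q_2)\circ \mc{C}(\gamma_1,p_1)=\mc{C}(\gamma_2\circ\gamma_1,p_1)$. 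Using the hypothesis on $\mc{H}$ once more, with $g=\tau(\mc{H}(\gamma_1,p_1))$, together with Peiffer, rewrites $\mc{H}(\gamma_1,p_1)\mc{H}(\gamma_2,p_2)$ as $\mc{H}(\gamma_2,q_2)\mc{H}(\gamma_1,p_1)$. Comparing the resulting expression with $\mc{C}_{\mc{H}}(\gamma_2\circ\gamma_1,p_1)=\mc{C}(\gamma_2\circ\gamma_1,p_1)(\mc{H}(\gamma_2\circ\gamma_1,p_1),e)$ and using freeness of the $G_1$-action, I get the desired equivalence with condition (ii). The main subtlety, and the step I expect to require the most care, is keeping track of which base point of $\mc{H}$ appears ($p_2$ versus $q_2=t(\mc{C}(\gamma_1,p_1))$) and exploiting the $\alpha$-equivariance of $\mc{H}$ in combination with the differential Peiffer relation to reconcile the two; once this bookkeeping is fixed, the rest is routine.
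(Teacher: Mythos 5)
Your proposal is correct and follows essentially the same route as the paper's proof: a direct verification of the quasi-connection axioms using property (iii) of $\mc{C}$ together with the hypothesis on $\mc{H}$, and then the unit/composition criteria of \Cref{Proposition: Quasi-Cat} via the semidirect-product multiplication, the Peiffer identity, the interchange law \Cref{E:Identitiesechangeinverse}, property (v) of $\mc{C}$, and freeness of the $G_1$-action. The only (cosmetic) difference is bookkeeping: the paper expands $\mc{C}_{\mc{H}}(\gamma_2,p_2)$ through the already-established equivariance of $\mc{C}_{\mc{H}}$, so that $\mc{H}\bigl(\gamma_2,t(\mc{C}(\gamma_1,p_1))\bigr)$ appears immediately, whereas you work with $\mc{H}(\gamma_2,p_2)$ and convert at the end using the $\alpha$-equivariance of $\mc{H}$ plus Peiffer — both yield the same final expression.
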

\begin{proof}
	Observe that  for any $(\gamma,p) \in s^{*}E_0$, we have
	\begin{equation}\nonumber
		\Bigg(\pi \Big(\mc{C}(\gamma,p) \big( \mc{H}(\gamma,p) ,e\big), s \Big( \mc{C}(\gamma,p) \big( \mc{H}(\gamma,p) ,e\big) \Big) \Big) \Bigg)= (\gamma,p).
	\end{equation}
	Now, for $(\gamma,p) \in s^{*}E_0$ and $g \in G$, 
	\begin{equation}\nonumber
		\begin{split}
			& \mc{C}_{\mc{H}}(\gamma,pg)\\
			&= \mc{C}(\gamma,pg) \big( \mc{H}(\gamma,pg) ,e\big)\\
			&= \mc{C}(\gamma,p)(e_H,g) \big( \mc{H}(\gamma,pg) ,e\big)\\
			&=\mc{C}(\gamma,p)\Big(\alpha_{g}\big(\mc{H}(\gamma,pg)  \big),g\Big).
		\end{split}
	\end{equation}
	On the other hand, consider
	\begin{equation}\nonumber
		\begin{split}
			& \mc{C}_{\mc{H}}(\gamma,p)(e_H,g)\\
			&=\mc{C}(\gamma,p) \big( \mc{H}(\gamma,p) ,e\big) (e_H,g)\\
			&= \mc{C}(\gamma,p) \Big( \mc{H}(\gamma,p),g\Big).	
		\end{split}
	\end{equation}
	But since $\alpha_{g}(\mc{H}(\gamma,pg))= \mc{H}(\gamma,p)$ , we have 
	\begin{equation}\nonumber
		\mc{C}_{\mc{H}}(\gamma,pg)=\mc{C}_{\mc{H}}(\gamma,p)(e_H,g).
	\end{equation}
	Hence, $(\pi \colon \mb{E} \ra \mb{X}, \mc{C}_{\mc{H}})$ is a quasi-principal $[H \rtimes_{\alpha} G \rra G]$-bundle over $\mb{X}$. 
	
	For $p \in E_0$, consider,
	\begin{equation}\label{16}
		\begin{split}
			& \mc{C}_{\mc{H}}(1_{\pi(p)},p)\\
			&=\mc{C}(1_{\pi(p)},p) \big(\mc{H}(1_{\pi(p)},p)\,,e \big)\\
			&=1_p\big(\mc{H}(1_{\pi(p)},p)\,,e \big).
		\end{split}
	\end{equation}
	Let $\gamma_2,\gamma_1 \in X_1$, such that $s(\gamma_2)=t(\gamma_1)$ and $(\gamma_1,p) \in s^{*}E_0$. Consider,
	\begin{equation}\label{17}
		\begin{split}
			& \mc{C}_{\mc{H}}\Big(\gamma_2, t \big(\mc{C}_{\mc{H}}(\gamma_1,p) \big) \Big) \circ \mc{C}_{\mc{H}}(\gamma_1,p)\\			
			&= \mc{C}_{\mc{H}}\Big(\gamma_2, t \big(\mc{C}(\gamma_1,p) \big( \mc{H}(\gamma_1,p),e) \big)  \Big) \circ \mc{C}_{\mc{H}}(\gamma_1,p)\\
			&= \mc{C}_{\mc{H}}\Big(\gamma_2, t \big(\mc{C}(\gamma_1,p) \Big) \big(e,\tau(\mc{H}(\gamma_1,p)\big) \circ \mc{C}_{\mc{H}}(\gamma_1,p)\\
			&= \mc{C}\Big(\gamma_2, t \big(\mc{C}(\gamma_1,p) \Big) \Big(\mc{H}\big( \gamma_2,t(\mc{C}(\gamma_1,p_1)) \big), \tau(\mc{H}(\gamma_1,p)) \Big) \circ \mc{C}(\gamma_1,p) \big(\mc{H}(\gamma_1,p),e \big)\\
			&= \underbrace{\mc{C}(\gamma_2 \circ \gamma_1,p) \Big(\mc{H}\big( \gamma_2,t(\mc{C}(\gamma_1,p_1)) \big) \mc{H}(\gamma_1,p), e \Big)}_{[\text{by}\,\, \Cref{E:Identitiesechangeinverse} \,\, \text{and}\,\, \textit{the}\,\, \textit{condition}\,\, (v) \,\,\textit{in}\,\, \Cref{Def:categorical connection}]}
		\end{split}
	\end{equation}
	From \Cref{16} and \Cref{17} it is evident that $\mc{C}_{\mc{H}}$ is a categorical connection if and only if we have
	\begin{enumerate}[(i)]
		\item $\mc{H}\big(1_{\pi(p)},p \big)=e_H$ for all $p \in E_0$ and 
		\item $\mc{H}(\gamma_2 \circ \gamma_1,p)= \mc{H}(\gamma_2,t(\mc{C}(\gamma_1,p))) \mc{H}(\gamma_1,p_1)$ for all $\gamma_2,\gamma_1 \in X_1$, such that $s(\gamma_2)=t(\gamma_1)$ and $(\gamma_1,p) \in s^{*}E_0$.
	\end{enumerate}
\end{proof}
Note that for conciseness, only an outline of the above proof was provided in our paper \cite{chatterjee2023parallel}. Here, we have given its detailed version.

With the help of \Cref{detailed QuasiExamples}, next, we proceed to construct some concrete examples of quasi-principal 2-bundles, which are not categorical-principal 2-bundles. 
\begin{example}\label{concrete family of example}
	For a Lie crossed module $(G, H, \tau, \alpha)$, suppose $\pi^{\rm{dec}} \colon \mb{E}^{\rm{dec}} \ra \mb{X}$ is a decorated principal $[H \rtimes_{\alpha}G \rra G ]$-bundle over a Lie groupoid $\mb{X}$, constructed from a Lie crossed module  $(G,H,\tau,\alpha)$ and a principal $G$-bundle $(\pi_G \colon E_G \ra X_0, \mu, \mb{X})$. Let us assume that there exists a non-identity element $h$ in $H$ satisfying $\alpha(g)(h)=h$ for all $g \in G$. Now, define a map $\mc{H} \colon  s^{*}E_0 \ra H$ as $(\gamma,p) \mapsto h$ for all $(\gamma,p) \in s^{*}E_0$. As the assignment $(\gamma,p) \mapsto (\gamma,p,e)$ for all $(\gamma,p) \in s^{*}E_G$ defines a categorical connection on $\pi^{\rm{dec}} \colon \mb{E}^{\rm{dec}} \ra \mb{X}$, it immediately follows from \Cref{detailed QuasiExamples} that  $\mc{C}_h \colon s^{*}E_G \ra s^{*}E_G \times H$, $(\gamma,p) \mapsto (\gamma,p, e)(h,e)$ defines a quasi connection on $\pi^{\rm{dec}} \colon \mb{E}^{\rm{dec}} \ra \mb{X}$. Since $h \neq e$, $\mc{C}_h$ is not a categorical connection.
\end{example}
As a particular case of \Cref{concrete family of example}, we get the following example:
\begin{example}\label{Hquasi}
	Let $\mb{X}$ be a Lie groupoid. Note that the identity map $ {\rm{id}} \colon X_0 \ra X_0$ defines a principal $\lbrace e \rbrace$-bundle over $X_0$ under the natural action of the trivial Lie group $\lbrace e \rbrace $. Now, the following smooth map
	\begin{equation}\nonumber
		\begin{split}
			& \mu \colon s^{*} X_0 \ra X_0\\
			& (\gamma,p) \mapsto t(\gamma).
		\end{split}
	\end{equation}
	defines a principal $\lbrace e \rbrace$-bundle $({\rm{id}} \colon X_0 \ra X_0, \mu, \mb{X})$ over $\mb{X}$. Fix an abelian Lie group $H \neq \lbrace e \rbrace$. Consider the decorated principal $[H \rra \lbrace e \rbrace]$-bundle over $\mb{X}$ (see \Cref{E:Example of principal 2-bundle ordinary}), constructed from the Lie crossed module $(\lbrace e \rbrace, H, \tau ,\alpha)$ (where $\tau$ is trivial and $\alpha$ is ${\rm{id}}_{H}$) and the principal  $\lbrace e \rbrace$-bundle $({\rm{id}} \colon X_0 \ra X_0, \mu, \mb{X})$. As $H$ is not trivial and $\alpha$ is ${\rm{id}}_H$, it follows from \Cref{concrete family of example} that for any non-identity $h$ in $H$, the map $\mc{C}_{h} \colon s^{*}X_0 \ra X_0$ defined by $(\gamma,p) \mapsto (\gamma,p,e)(h,e)$ is a quasi connection, but not a categorical connection.
	
\end{example}

\begin{example}\label{qusi bundle over discrete groupoid}
	For a Lie crossed module $(G, H, \tau, \alpha)$, consider a principal $[H \rtimes_{\alpha}G \rra G]$-bundle $\pi \colon \mb{E} \ra [M \rra M]$ over a discrete Lie groupoid $[M \rra M]$, such that there exists $h \in H$, $h \neq e$ and $\alpha(g)(h)=h$ for all $g \in G$. Then \Cref{detailed QuasiExamples} implies that the map $\mc{C}_{h} \colon s^{*}E_0 \ra E_1, (1_x,p) \mapsto 1_p(h,e)$ defines a quasi connection, which is not a categorical connection. Hence, contrary to a unique categorical connection (see \Cref{unique Cat connection on bundle over discrete space}), it may admit many quasi connections. 	
\end{example}
	\subsection{A Lie 2-group torsor version of the Grothendieck construction}\label{Section Quasi-principal 2-bundle over a Lie groupoid as a Grothendieck construction}
In this subsection, we will obtain the first main result (\Cref{Main Theorem 1}) in this thesis. For that, we start by observing some properties of the underlying quasi-principal Lie group bundle (\Cref{underlying quasi-principal bundle}) of a quasi-principal Lie 2-group bundle.

\begin{proposition}\label{Lemma: cohenrence of canonical quasi action}
	For a Lie crossed module $(G, H, \tau, \alpha)$, let $(\pi \colon  \mb{E} \ra \mb{X}, \mc{C})$ be a quasi-principal $[H \rtimes_{\alpha}G \rra G]$-bundle over a Lie groupoid $\mb{X}$. Consider the underlying quasi-principal $G$-bundle $(\pi_0 \colon E_0 \ra X_0, \mu_{\mc{C}}:=t \circ \mc{C}, \mb{X})$ over $\mb{X}$.
	Then there exist smooth maps $\mc{H}_{u,\mc{C}} \colon E_0 \ra H$ and $\mc{H}_{m ,\mc{C}} \colon X_1 \times_{s,X_0,t} X_1 \ra H$ which satisfy the following properties:
	\begin{itemize}
		\item[(a)] $\mu_{\mc{C}} \big(1_{\pi(p)},p \big)=p \tau(\mc{H}_{u,\mc{C}}(p))$ for all $p \in E_0$.
		\item[(b)] $\mu_{\mc{C}}\big(\gamma_2, \mu_{\mc{C}}(\gamma_1,p)\big)= \mu_{\mc{C}}(\gamma_2 \circ \gamma_1,p) \tau \big(\mc{H}_{m,\mc{C}}(\gamma_2, \gamma_1) \big)$ for all appropriate $\gamma_2, \gamma_1 \in X_1, p \in E_0$.
		\item[(c)] [\textit{Right unitor}] $\mc{H}_{m,\mc{C}}(\gamma, 1_{\pi(p)})=  \mc{H}_{u,\mc{C}}(p)$ for all $\gamma \in X_1$ such that $s(\gamma)= \pi(p)$.
		\item[(d)][\textit{Left unitor}] $\mc{H}_{m,\mc{C}} \big( 1_{\pi(\mu_{\mc{C}}(\gamma,p))},\gamma \big)=  \mc{H}_{u, \mc{C}} \big( \mu_{\mc{C}}(\gamma,p) \big)$ for $(\gamma,p) \in s^{*}E_0$.
		\item[(e)] $\mc{H}_{u,\mc{C}}$ is $G$ invariant.
		\item[(f)] $\alpha_{g^{-1}} \big(\mc{H}_{u,\mc{C}}(p) \big)=\mc{H}_{u,\mc{C}}(p)$ for all $g \in G$ and $p \in E_0$.
		\item[(g)] $\mc{H}_{u,\mc{C}}(p) \in Z(H)$ for all $p \in E_0$, where $Z(H)$ is the centre of $H$.
		\item[(h)] $\alpha_{g^{-1}}(\mc{H}_{m,\mc{C}}^{-1}(\gamma_2, \gamma_1))= \mc{H}_{m,\mc{C}}^{-1}(\gamma_2, \gamma_1)$ for all composable $\gamma_2, \gamma_1 \in X_1$.
		\item[(i)]  $\mc{H}_{m,\mc{C}}(\gamma_2, \gamma_1) \in Z(H)$ for all $\gamma_2, \gamma_1 \in X_1 \times_{s,X_0,t} X_1$.
		\item[(j)] [\textit{Associator}] For $\gamma_3, \gamma_2, \gamma_1 \in X_1$ such that $s(\gamma_3)= t(\gamma_2)$ and $s(\gamma_2)= t(\gamma_1)$, we have 
		$$\mc{H}_{m,\mc{C}}^{-1}(\gamma_3,\gamma_2) \mc{H}_{m,\mc{C}}^{-1}(\gamma_3 \circ \gamma_2, \gamma_1)= \mc{H}_{m,\mc{C}}^{-1}(\gamma_2, \gamma_1) \mc{H}_{m,\mc{C}}^{-1}(\gamma_3, \gamma_2 \circ \gamma_1).$$
		\item[(k)][\textit{Invertor}] If $(\gamma,p) \in s^{*}E_0$, then we have
		$$\mc{H}_{m,\mc{C}}({\gamma^{-1}, \gamma}) \mc{H}_{m,\mc{C}}(\gamma,\gamma^{-1})^{-1}= \mc{H}_{u,\mc{C}}(p)^{-1}\mc{H}_{u,\mc{C}}(\mu_{\mc{C}}(\gamma,p)).$$
	\end{itemize}
\end{proposition}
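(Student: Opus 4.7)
The plan is to construct both maps by isolating the failure of the cleavage $\mc{C}$ to be a splitting cleavage, and then to verify the eleven listed properties by combining the interchange law in the Lie $2$-group $[H\rtimes_\alpha G\rra G]$ (see \Cref{E:Identitiesechangeinverse}) with the Peiffer identities \Cref{E:Peiffer}. The key observation is the following torsor uniqueness principle: if two elements of $E_1$ lie in the same fibre $\pi_1^{-1}(\gamma)$ for some $\gamma\in X_1$ and share the same source in $E_0$, then the unique element of $H\rtimes_\alpha G$ relating them via the right action must be of the form $(h,e)$ for a uniquely determined $h\in H$, because the source map intertwines the right $(H\rtimes_\alpha G)$-action with the right $G$-action along the group homomorphism $(h,g)\mapsto g$.

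I would first construct $\mc{H}_{u,\mc{C}}$ by applying this principle to the pair $1_p$ and $\mc{C}(1_{\pi(p)},p)$ in $\pi_1^{-1}(1_{\pi(p)})$, both having source $p$: this uniquely determines $\mc{H}_{u,\mc{C}}(p)\in H$ by $\mc{C}(1_{\pi(p)},p)=1_p\cdot(\mc{H}_{u,\mc{C}}(p),e)$, and (a) follows by applying $t$. Similarly, for composable $\gamma_2,\gamma_1\in X_1$ and $p\in\pi^{-1}(s(\gamma_1))$, the arrows $\mc{C}(\gamma_2\circ\gamma_1,p)$ and $\mc{C}(\gamma_2,\mu_{\mc{C}}(\gamma_1,p))\circ\mc{C}(\gamma_1,p)$ lie in $\pi_1^{-1}(\gamma_2\circ\gamma_1)$ with common source $p$, and produce a pointwise element $h(p)\in H$ defined by
\[
\mc{C}(\gamma_2,\mu_{\mc{C}}(\gamma_1,p))\circ\mc{C}(\gamma_1,p)=\mc{C}(\gamma_2\circ\gamma_1,p)\cdot(h(p),e).
\]
Combining the quasi-connection equivariance $\mc{C}(\gamma,p g)=\mc{C}(\gamma,p)\cdot(e,g)$ with the interchange law yields $h(pg)=\alpha_{g^{-1}}(h(p))$; once one knows $h(p)\in H^G$ (which is exactly property (h)), $h$ is constant on each fibre of $\pi_0$ and descends to the desired map $\mc{H}_{m,\mc{C}}(\gamma_2,\gamma_1):=h(p)$, with (b) following by applying $t$. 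Smoothness of both maps is inherited from local $(H\rtimes_\alpha G)$-equivariant trivializations of $\pi_1$.

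The remaining properties split into two clusters. The coherence statements (c), (d), (j), (k) are read off from unit and associativity laws in $\mb{E}$ together with the torsor uniqueness principle: (c) and (d) arise from expanding $\mc{C}(\gamma\circ 1_{\pi(p)},p)$ and $\mc{C}(1_{\pi(\mu_{\mc{C}}(\gamma,p))}\circ\gamma,p)$ via the defining identities and using interchange; (j) is a pentagon-style comparison obtained by applying the defining identity of $\mc{H}_{m,\mc{C}}$ to a triple of composable morphisms in two different bracketings; and (k) is obtained by substituting $(\gamma^{-1},\gamma)$ and $(\gamma,\gamma^{-1})$ into the defining identity and combining with (a). The equivariance and centrality statements form the technical core: the $G$-equivariance (e) is equivalent to the $G$-invariance (f) via the identity $\mc{H}_u(pg)=\alpha_{g^{-1}}(\mc{H}_u(p))$ derived exactly as for $h(p)$, and once (f) is in hand the centrality (g) is immediate from the second Peiffer identity, which gives $\alpha(\tau(h'))(\mc{H}_u(p))=h'\mc{H}_u(p)h'^{-1}=\mc{H}_u(p)$ for every $h'\in H$; (h) and (i) are derived analogously for $\mc{H}_{m,\mc{C}}$. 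The main obstacle throughout is establishing the $G$-invariance statements (f) and (h), since these are precisely what permit the pointwise constructions to descend from $s^*E_0$ to $E_0$ and from the $p$-parametrized family to $X_1\times_{s,X_0,t}X_1$; I expect them to require a careful use of the interchange law together with the morphism-of-principal-bundles property of $\mc{C}$, after which all remaining verifications reduce to algebraic manipulations in the crossed module.
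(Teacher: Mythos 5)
Your construction is the same as the paper's: you define $\mc{H}_{u,\mc{C}}(p)$ and the pointwise element $h(p)$ by exactly the two relations the paper uses, you get the trivial $G$-component of the relating element by applying the source map and using freeness, you obtain (a) and (b) by applying $t$, and you get the centrality statements (g),(i) from the Peiffer identities \Cref{E:Peiffer} once the $\alpha$-invariance is available. The genuine gap is that the $\alpha$-invariance statements (e),(f),(h) — which you yourself identify as the technical core, and on which your descent of $h(p)$ to a map $\mc{H}_{m,\mc{C}}$ on $X_1\times_{s,X_0,t}X_1$ rests — are never proved; you only record the expectation that they follow from "careful use of the interchange law together with the morphism-of-principal-bundles property of $\mc{C}$". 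But those two inputs, used as you use them, deliver precisely the covariance relations $\mc{H}_{u,\mc{C}}(pg)=\alpha_{g^{-1}}\big(\mc{H}_{u,\mc{C}}(p)\big)$ and $h(pg)=\alpha_{g^{-1}}\big(h(p)\big)$, which you have already derived; invariance is strictly stronger and cannot be squeezed out of the same computation. There is also a circularity in the plan as written: you invoke (h) to justify that $h(p)$ is constant on the fibre and hence descends, but (h) is a statement about the descended map $\mc{H}_{m,\mc{C}}$; to break the circle you must prove the invariance of the element $h(p)$ at a fixed base point (equivalently, its independence of the chosen $p$), and no argument for this is offered.

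For comparison, the paper does not try to extract invariance from the equivariance computation at all: it routes it through the unitor identities. Expanding $\mc{C}\big(\gamma,\mu_{\mc{C}}(1_{\pi(p)},p)\big)\circ\mc{C}(1_{\pi(p)},p)$ using the defining relation for $\mc{H}_{u,\mc{C}}$, the identity $\mc{C}(\gamma,pg)=\mc{C}(\gamma,p)1_g$, the interchange law \Cref{E:Identitiesechangeinverse} and freeness shows that the relating element for the pair $(\gamma,1_{\pi(p)})$ based at $p$ equals $\mc{H}_{u,\mc{C}}(p)$, i.e. property (c); taking $\gamma$ itself to be an identity arrow identifies $\mc{H}_{u,\mc{C}}(p)$ with $\mc{H}_{m,\mc{C}}(1_{\pi(p)},1_{\pi(p)})$, a quantity depending only on $\pi(p)$, which yields the $G$-invariance (e); then (f) follows by acting with $1_{g^{-1}}$ on the defining relation for $\mc{H}_{u,\mc{C}}$ and comparing it with the relation at $pg^{-1}$, and (g) is then immediate from the second Peiffer identity, exactly as you say. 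Property (h) is obtained in the paper by the analogous comparison of the defining relation for $\mc{H}_{m,\mc{C}}$ at the base points $p$ and $pg^{-1}$, and (i) again follows by Peiffer. Be aware that this comparison only produces the invariance once one has granted that the same element $h$ serves every base point of the fibre over $s(\gamma_1)$ — the statement of (b) and the paper's definition of $\mc{H}_{m,\mc{C}}$ build this base-point independence in — so the issue you flagged is exactly where a complete write-up must do real work; your proposal, as it stands, defers that work rather than carrying it out.
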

\begin{proof}
	Let us define $\mc{H}_{u,\mc{C}} \colon E_0 \ra H$  as $p \mapsto h_p$ and $\mc{H}_{m,\mc{C}} \colon X_1 \times_{s,X_0,t} X_1 \ra H$ as $(\gamma_2, \gamma_1) \mapsto h_{\gamma_2, \gamma_1}$, where $h_p$ and  $h_{\gamma_2, \gamma_1}$  are respectively unique elements in $H$ which satisfy
	\begin{equation}\label{Equation: Construction 1}
		\mc{C}(1_{\pi(p)},p) = 1_p (h_p,e)
	\end{equation}
	for $p \in E_0$, and
	\begin{equation}\label{Equation: Construction 2}
		\mc{C} \big(\gamma_2 , \mu_{\mc{C}}(\gamma_1,p) \big) \circ \mc{C}(\gamma_1,p)= \mc{C}(\gamma_2 \circ \gamma_1, p) (h_{\gamma_2, \gamma_1}, e) 
	\end{equation}
	for composable $\gamma_2, \gamma_1$. Note that the uniqueness of $h_p$ and $h_{\gamma_2,\gamma_1}$ follow from the freeness of the action of $[H \rtimes_{\alpha}G \rra G]$ on $E_1$, as the source of $\mc{C}\big( 1_{\pi(p)}, p \big)$, $1_p$, $\mc{C}(\gamma_1,p)$ and $\mc{C}(\gamma_2 \circ \gamma_1, p)$ are all equal to $p$.

	\subsubsection*{Proof of (a) and (b):} 
	Properties (a) and (b) follow immediately by applying target map $t$ on both sides of \Cref{Equation: Construction 1} and \Cref{Equation: Construction 2} respectively.

	\subsubsection*{Proof of (c):}
	
	To prove (c), observe that from \Cref{Equation: Construction 2}, immediately we obtain $$\mc{C}(\gamma,p) \big( h_{\gamma, 1_{\pi(p)}}, e \big)=\mc{C} \big( \gamma, \mu_{\mc{C}}(1_{\pi(p)},p) \big) \circ \mc{C} \big(1_{\pi}(p),p \big)$$ for all $(\gamma,p) \in s^{*}E_0$. Then (c) follows straightforwardly from (a), \Cref{Equation: Construction 1} and the freeness of the action of $H \rtimes_{\alpha}G$ on $E_1$. 
	
	\subsubsection*{Proof of (d):}
	(d) can be proven using similar techniques as in (c), as we see in the calculation below:
	
	For $(\gamma,p) \in s^{*}E_0$, we have
	\begin{equation}\nonumber
		\begin{split}
			& \mc{C}(\gamma,p) \big( h_{1_{\pi(\mu_{\mc{C}}(\gamma,p))}, \gamma}, e \big)\\
			&=  \mc{C} \big(1_{\pi(\mu_{\mc{C}}(\gamma,p))}, \mu_{\mc{C}}(\gamma,p) \big) \circ \mc{C}(\gamma,p)  \\
			&= 1_{\mu_{\mc{C}}(\gamma,p)}(h_{\mu_{\mc{C}}(\gamma,p)},e) \circ \mc{C}(\gamma,p)(e, e) \\
			&= \mc{C}(\gamma,p)(h_{\mu_{\mc{C}}(\gamma,p)},e).
		\end{split}
	\end{equation}
	Hence, $\mc{H}_{m,\mc{C}}(1_{\pi(\mu_{\mc{C}}(\gamma,p))},\gamma)=  \mc{H}_{u,\mc{C}}(\mu_{\mc{C}}(\gamma,p))$.
	
	\subsubsection*{Proof of (e):}
	Follows from $(c)$ i.e $\mc{H}_{u,\mc{C}}(pg)= \mc{H}_{m,\mc{C}}(1_{\pi(pg)}, 1_{\pi(pg)})= \mc{H}_{m,\mc{C}}(1_{\pi(p)}, 1_{\pi(p)})= \mc{H}_{u,\mc{C}}(p)$ for all $p \in E_0, g \in G$.
	
	\subsubsection*{Proof of (f):}
	For $p \in E_0$ and $g \in G$, we have $\mc{C}(1_{\pi(pg^{-1})},pg^{-1})= 1_{pg^{-1}}(h_{pg^{-1}},e)$. Hence, using (e), we have
	\begin{equation}\nonumber
		1p(h_p,e) (e, g^{-1})=1_p(e, g^{-1})(h_p,e).
	\end{equation}
	Hence, $\alpha_{g^{-1}}(\mc{H}_{u,\mc{C}}(p))=\mc{H}_{u,\mc{C}}(p)$.

	\subsubsection*{Proof of (g):}
	From Peiffer identity in \Cref{E:Peiffer}, we have $\alpha_{\tau(h)}(h_ph)=hh_p$ for all $h \in H, p \in E_0$. Hence using (f), we get $h_ph=hh_p$.

	\subsubsection*{Proof of (h):}
	For composable $\gamma_2, \gamma_1 \in X_1$, $g \in G$ and  $(\gamma_1,p) \in s^{*}E_0$, 
	we have the following:
	$$\mc{C}(\gamma_2 , \mu_{\mc{C}} \big(\gamma_1,pg^{-1}) \big) \circ \mc{C}(\gamma_1,pg^{-1})= \mc{C}(\gamma_2 \circ \gamma_1, pg^{-1}) (h_{\gamma_2, \gamma_1}, e).$$ Then it is easy to see that (h) follows from \Cref{Equation: Construction 2}. 
	\subsection*{Proof of (i):}
	(i) follows from the Peiffer identity \Cref{E:Peiffer} and (h).

	\subsubsection*{Proof of  (j):}
	To prove (j), consider a sequence of composable morphisms $\gamma_3, \gamma_2, \gamma_1 \in X_1$,  such that $(\gamma_1,p) \in s^{*}E_0$. Then, we  have
	\begin{equation}\nonumber
		\begin{split}
			& \mc{C}(\gamma_3 \circ \gamma_2 \circ \gamma_1,p)(h_{\gamma_3 \circ \gamma_2, \gamma_1},e) \\
			&= \mc{C}(\gamma_3 \circ \gamma_2, \mu_{\mc{C}}(\gamma_1,p)) \circ \mc{C}(\gamma_1,p) [{\textit{by \Cref{Equation: Construction 2}}}].  \\
			&= \underbrace{\Big( \mc{C}(\gamma_3, \mu_{\mc{C}}(\gamma_2 \circ \gamma_1,p)\tau(h_{\gamma_2,\gamma_1})\Big) \circ \mc{C} \big(\gamma_2,\mu_{\mc{C}}(\gamma_1,p) \big)(h^{-1}_{\gamma_3,\gamma_2},e)}_{\mc{C}(\gamma_3 \circ \gamma_2, \mu_{\mc{C}}(\gamma_1,p))[{\textit{by \Cref{Equation: Construction 2}}}]} \circ \big( \mc{C}(\gamma_1,p)(e,e) \big) \\
			&= \Big( \mc{C}(\gamma_3, \mu_{\mc{C}}(\gamma_2 \circ \gamma_1,p)\tau(h_{\gamma_2,\gamma_1}) \Big) \circ \underbrace{ \Big( \mc{C}\big(\gamma_2,\mu_{\mc{C}}(\gamma_1,p) \big) \circ \mc{C}(\gamma_1,p) \Big) (h^{-1}_{\gamma_3,\gamma_2},e)}_{[\textit{using} \,\, \Cref{E:Identitiesechangeinverse}]}  \\
			&= \underbrace{\big( \mc{C}(\gamma_3, \mu_{\mc{C}}(\gamma_2 \circ \gamma_1,p)) 1_{\tau(h_{\gamma_2,\gamma_1})}}_{[{\textit{by (iii), \Cref{Def:categorical connection}}}]} \circ \underbrace{\mc{C}(\gamma_2 \circ \gamma_1,p)(h_{\gamma_2,\gamma_1},e) \circ \mc{C}(\gamma_1,p)^{-1}}_{\mc{C}(\gamma_2,\mu_{\mc{C}}(\gamma_1,p))[{\textit{by \Cref{Equation: Construction 2}}}]}  \circ \mc{C}(\gamma_1,p) \big) (h^{-1}_{\gamma_3,\gamma_2},e) \\
			&= \big( \mc{C}(\gamma_3, \mu_{\mc{C}}(\gamma_2 \circ \gamma_1,p)) 1_{\tau(h_{\gamma_2,\gamma_1})} \circ \mc{C}(\gamma_2 \circ \gamma_1,p)(h_{\gamma_2,\gamma_1},e) \big)(h^{-1}_{\gamma_3,\gamma_2},e)  \\ 
			&= \underbrace{\Big(\mc{C}\big(\gamma_3, \mu_{\mc{C}}(\gamma_2 \circ \gamma_1,p) \big) \circ \mc{C}(\gamma_2 \circ \gamma_1,p)(h_{\gamma_2,\gamma_1},e) \Big)}_{[\textit{using} \,\, \Cref{E:Identitiesechangeinverse}]}(h^{-1}_{\gamma_3,\gamma_2},e)  \\
			&= \underbrace{\bigg( \mc{C}(\gamma_3 \circ \gamma_2 \circ \gamma_1,p) (h_{\gamma_3,\gamma_2\circ \gamma_1 },e)\bigg)}_{[\textit{{by \Cref{Equation: Construction 2}}}]} (h_{\gamma_2,\gamma_1},e)(h^{-1}_{\gamma_3,\gamma_2},e),
		\end{split}
	\end{equation}
	which completes the proof of (j).
		
	\subsubsection*{Proof of (k):}
	To prove (k), observe that we have $\mc{C}\big(\gamma,\mu_{\mc{C}}(\gamma^{-1}, \mu_{\mc{C}}(\gamma, p)) \big)\circ \mc{C}(\gamma^{-1}, \mu_{\mc{C}}(\gamma, p))= 1_{\mu_{\mc{C}}(\gamma,p)}(h_{\mu_{\mc{C}}(\gamma,p)}h_{\gamma,\gamma^{-1}},e)$ for $(\gamma,p) \in s^{*}E_0$. On the other hand, consider 
	\begin{equation}\label{Equation inverse 2}\nonumber
		\begin{split}
			& \mc{C}\Big(\gamma,\mu_{\mc{C}}\big(\gamma^{-1}, \mu_{\mc{C}}(\gamma,p) \big) \Big) \circ \mc{C}\big(\gamma^{-1}, \mu_{\mc{C}}(\gamma,p)\big)\\
			&= \mc{C} \big( \gamma, \underbrace{\mu_{\mc{C}}(1_{\pi(p)},p)\tau(h_{\gamma^{-1},\gamma})}_{[{\textit{by (b), \Cref{Lemma: cohenrence of canonical quasi action}}}]} \big) \circ \mc{C}\big(\gamma^{-1}, \mu_{\mc{C}}(\gamma,p)\big) \\
			&= \underbrace{\mc{C} \big( \gamma, p.\tau(h_ph_{\gamma^{-1},\gamma}) \big)}_{[{\textit{by (a), \Cref{Lemma: cohenrence of canonical quasi action}}}]} \circ \mc{C}\big(\gamma^{-1}, \mu_{\mc{C}}(\gamma,p)\big)\\
			&= \underbrace{\mc{C}(\gamma,p)1_{\tau(h_ph_{\gamma^{-1},\gamma})}}_{[{\textit{by (iii), \Cref{Def:categorical connection}}}]} \circ \underbrace{\big(\mc{C}(1_{\pi(p)},p)(h_{\gamma^{-1},\gamma}, e) \circ \mc{C}(\gamma,p)^{-1} \big)}_{[\textit{by \Cref{Equation: Construction 2}}].}\\
			&= \mc{C}(\gamma,p)1_{\tau(h_ph_{\gamma^{-1},\gamma})} \circ \underbrace{\big(1_p(h_ph_{\gamma^{-1},\gamma},e) \big)}_{[{\textit{by \Cref{Equation: Construction 1}}}].} \circ \mc{C}(\gamma,p)^{-1}\\
			&= \underbrace{\big( \mc{C}(\gamma,p)(h_ph_{\gamma^{-1},\gamma},e) \big)}_{[\textit{using} \,\, \Cref{E:Identitiesechangeinverse}]}\circ  \mc{C}(\gamma,p)^{-1}\\
			&= 1_{(\mu_{\mc{C}}(\gamma,p))}(h_ph_{\gamma^{-1},\gamma},e)\,\, [\textit{using} \,\, \Cref{E:Identitiesechangeinverse}].
		\end{split}
	\end{equation}
	which concludes the proof of (k).
\end{proof}
We will call the \textbf{Properties (a)-(k) listed above} \textit{coherence properties}.

Although \Cref{Lemma: cohenrence of canonical quasi action} may look a little technical from the first look, it contains all the necessary vital ingredients to construct a quasi-principal 2-bundle over a Lie groupoid, by \textit{twisting} a decorated principal 2-bundle \Cref{Prop:Decoliegpd} in a suitable sense. To be more precise, we have the following significant result that played a crucial role in the proof of our first main result (\Cref{Main Theorem 1}).

\begin{proposition}\label{Theorem:quasi-bundle construction}
	For a Lie group $G$, let $(\pi \colon E_G \ra X_0, \mu, \mb{X})$ be a quasi-principal $G$-bundle over a Lie groupoid $\mb{X}$. Consider a Lie crossed module $(G,H, \tau, \alpha)$ and a pair of smooth maps $\mc{H}_u \colon E_G \ra H$ and $\mc{H}_{m} \colon X_1 \times_{s,X_0,t} X_1 \ra H$, satisfying the coherence properties in \Cref{Lemma: cohenrence of canonical quasi action}. Then we have the following:

	%	 Let $(G, H, \tau, \alpha)$  be a Lie crossed module along with a pair of smooth maps $\mc{H}_u \colon E_G \ra H$ and $\mc{H}_{m} \colon X_1 \times_{s, X_0,t} X_1 \ra H$ satisfying the coherence properties in \Cref{Lemma: cohenrence of canonical quasi action}, then we have the following:
	\begin{enumerate}
		\item	The manifolds $(s^{*}E_G)^{q-\rm{dec}}:= s^{*}E_G \times H $ and $E_G$ determines a Lie groupoid $[(s^{*}E_G)^{\rm{q-dec}} \rightrightarrows E_G]$, whose structure maps are defined as
		\begin{itemize}
			\item[(i)]  $s$: $(\gamma, p, h) \mapsto p$,
			\item[(ii)]  $t$: $(\gamma, p, h) \mapsto \mu(\gamma, p) \tau(h^{-1})$,
			\item[(iii)]  $m \colon \bigg((\gamma_2, p_2, h_2), (\gamma_1, p_1, h_1) \bigg) \mapsto \bigg( \gamma_2 \circ \gamma_1, p_1 ,h_2h_1\mc{H}^{-1}_m({\gamma_2,\gamma_1}) \bigg)$, 
			\item[(iv)]  $u : p \mapsto \big(1_{\pi(p)},p, \mc{H}_{u}(p) \big)$,
			\item[(v)]  $ \mathfrak{i} \colon \bigl(\gamma, p, h \bigr) \mapsto \bigg(\gamma^{-1}, \mu(\gamma,p)\tau(h^{-1}), \mc{H}_{u}(p)\mc{H}_m({\gamma^{-1}, \gamma})h^{-1}\biggr)$.
		\end{itemize}
		\item The Lie groupoid  $\mb{E}^{\rm{q-dec}}:=[(s^{*}E_G)^{\rm{q-dec}} \rightrightarrows E_G]$ defines a quasi-principal $[H \rtimes_{\alpha}G \rra G]$-bundle $\pi^{\rm{q-dec}}\colon \mb{E}^{\rm q-dec}\ra \mb{X}$, equipped with the quasi connection $$\mc{C}^{\rm{q-dec}} \colon s^{*}E_G \ra (s^{*}E_G)^{\rm{q-dec}}, \, \, (\gamma,p) \mapsto (\gamma,p, e).$$ The bundle projection $\pi^{\rm{q-dec}}$ and the action of $[H \rtimes_{\alpha}G \rra G]$ on $\mb{E}^{\rm{q-dec}}$, coincide with that of the decorated case (See \Cref{Prop:Decoliegpd}).
		\item The quasi connection $\mc{C}^{\rm{q-dec}}$ is a categorical connection if and only if the maps $\mc{H}_m$ and $\mc{H}_u$ are constant maps to $e$.
	\end{enumerate}		
	
\end{proposition}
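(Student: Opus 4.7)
\medskip

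\noindent\textbf{Proof proposal.} The plan is to treat the three parts of the statement in order, since each subsequent part uses the preceding one, and to rely systematically on the coherence properties (a)--(k) of Proposition~\ref{Lemma: cohenrence of canonical quasi action} as the essential input. For part (1), I would first verify smoothness of the five candidate structure maps (which is immediate from the smoothness of $\mu$, $\tau$, the group operations, and of $\mathcal{H}_u, \mathcal{H}_m$), and the source/target submersion property (as in the proof of \Cref{Prop:Decoliegpd}, since the underlying $s$ and $t$ factor as the decorated ones followed by a right $G$-translation by $\tau(h^{-1})$). The non-trivial algebraic axioms to check are: source/target consistency with $m$, associativity of $m$, the left and right unit laws, and the inverse laws.

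For associativity, applied to a composable triple $(\gamma_3,p_3,h_3), (\gamma_2,p_2,h_2), (\gamma_1,p_1,h_1)$, both sides reduce to an expression with first component $\gamma_3\circ\gamma_2\circ\gamma_1$, second $p_1$, and third an explicit word in $h_1, h_2, h_3$ and the $\mathcal{H}_m^{-1}$'s. Using the centrality $\mathcal{H}_m^{-1}(\gamma_i,\gamma_j)\in Z(H)$ (property (i)) one can freely commute the $\mathcal{H}_m^{-1}$'s through the $h_i$'s, and the associator identity (j) gives the desired equality. For the right unit axiom $(\gamma,p,h)\circ u(p)=(\gamma,p,h)$, the third component becomes $h\,\mathcal{H}_u(p)\,\mathcal{H}_m^{-1}(\gamma,1_{\pi(p)})$, which equals $h$ by the right unitor (c) and centrality (g); the left unit axiom is symmetric, using the left unitor (d), invariance under $G$ (e), and (g). For the inverse axiom, a direct computation shows that $t(\mathfrak{i}(\gamma,p,h))=p$ using (a) and (b), and then $(\gamma,p,h)\circ\mathfrak{i}(\gamma,p,h)$ produces in the third slot the expression $\mathcal{H}_u(p)\,\mathcal{H}_m(\gamma^{-1},\gamma)\,\mathcal{H}_m^{-1}(\gamma,\gamma^{-1})$ (after applying centrality to cancel $h$ with $h^{-1}$), which by the invertor identity (k) becomes $\mathcal{H}_u(\mu(\gamma,p))$, recovering $u(t(\gamma,p,h))$.

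For part (2), the underlying manifolds $s^*E_G\times H$ and $E_G$ already carry the same actions and bundle projections as in the decorated case \Cref{Prop:Decoliegpd}, so the principal $G_1=H\rtimes_\alpha G$-bundle structure on $\pi_1^{q\text{-}\mathrm{dec}}$ and the principal $G_0=G$-bundle structure on $\pi_0^{q\text{-}\mathrm{dec}}$ are obtained verbatim from that argument; the new content is only to check that the modified $s$, $t$, $u$, $m$, $\mathfrak{i}$ are equivariant with respect to the Lie $2$-group action \Cref{E:Actionondeco}, i.e.\ that $\pi^{q\text{-}\mathrm{dec}}$ is a functor and the $\mathbb{G}$-action is functorial. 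Source/target equivariance follows from property (ii) of \Cref{quasi-principal G-bun dle} together with the Peiffer identity; for the unit and composition one uses the $G$-invariance (e) and (h) to move the $\alpha_{g^{-1}}$'s past $\mathcal{H}_u$ and $\mathcal{H}_m^{-1}$. That $\mathcal{C}^{q\text{-}\mathrm{dec}}(\gamma,p)=(\gamma,p,e)$ is a quasi connection in the sense of \Cref{Definition:Quasicategorical Connection} is then immediate: it is a smooth section of $P$, and it is $G$-equivariant along $u\colon G\to H\rtimes_\alpha G$ by construction of the action. Part (3) is a direct unwinding: the categoricity conditions (i) and (ii) in \Cref{Proposition: Quasi-Cat} force $(1_{\pi(p)},p,e)=u(p)=(1_{\pi(p)},p,\mathcal{H}_u(p))$ and $(\gamma_2\circ\gamma_1,p,e)=(\gamma_2,\mu_{\mathcal{C}}(\gamma_1,p),e)\circ(\gamma_1,p,e)=(\gamma_2\circ\gamma_1,p,\mathcal{H}_m^{-1}(\gamma_2,\gamma_1))$, yielding $\mathcal{H}_u\equiv e$ and $\mathcal{H}_m\equiv e$, and conversely these imply that $\mathbb{E}^{q\text{-}\mathrm{dec}}$ is precisely the decorated bundle of \Cref{Prop:Decoliegpd} with its canonical categorical connection.

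The bookkeeping in the associativity check of part (1) is the main obstacle: it is where the full strength of the coherence data is put to use, and one must be careful to invoke the centrality properties (g) and (i) before commuting any $h_i$ past an $\mathcal{H}$-factor, so that the associator identity (j) is applied in the correct form. Once that calculation is laid out cleanly, the remaining verifications (unit, inverse, functoriality of $\pi^{q\text{-}\mathrm{dec}}$ and equivariance of the $\mathbb{G}$-action) reduce to short manipulations using (c), (d), (e), (f), (h), (k) one at a time.
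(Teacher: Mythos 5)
Your proposal is correct and takes essentially the same route as the paper's proof: verify the groupoid axioms from the coherence data (associativity from the centrality condition (i) and the associator (j), the unit laws from (c), (d), (e) and the centrality of $\mathcal{H}_u$, the inverse law from (k)), inherit the bundle projection and the $[H\rtimes_\alpha G\rightrightarrows G]$-action verbatim from the decorated construction so that only the compatibility of the new unit and composition with the action remains to be checked, and obtain part (3) by unwinding the unit and composition relations satisfied by $\mathcal{C}^{\mathrm{q\text{-}dec}}$. The only deviations are minor bookkeeping (the right unit law cancels without needing centrality, and the unit-equivariance of the action really uses both the $G$-invariance (e) and the $\alpha$-invariance (f) of $\mathcal{H}_u$), none of which affects correctness.
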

\begin{proof}:	
	\subsubsection*{Proof of (1)}
	From \Cref{Prop:Decoliegpd}, it readily follows that the source and target maps are surjective submersions.
	Consider a pair of composable morphisms  $(\gamma_2,p_2,h_2),(\gamma_1,p_1,h_1)$. To show that the source is compatible with the composition, note that $$s((\gamma_2,p_2,h_2) \circ (\gamma_1,p_1,h_1))= s \big( \gamma_2 \circ \gamma_1, p_1, h_2h_1 \mc{H}^{-1}_m({\gamma_2,\gamma_1}) \big)= p_1= s(\gamma_1, p_1, h_1).$$ To check the target map consistency,  consider
	\begin{equation}\nonumber
		\begin{split}
			& t \big( \gamma_2 \circ \gamma_1, p_1, h_2h_1 \mc{H}^{-1}_m({\gamma_2,\gamma_1}) \big)\\
			&=  \mu(\gamma_2 \circ \gamma_1,p_1) \tau(H_m(\gamma_2,\gamma_1)) \tau(h_1^{-1}) \tau(h_2^{-1}) \\
			&= \underbrace{\mu(\gamma_2, \mu(\gamma_1,p_1))\tau(\mc{H}_m(\gamma_2,\gamma_1)^{-1})}_{ [\textit{from} \,\, (b), \,\, \Cref{Lemma: cohenrence of canonical quasi action}]} \tau(H_m(\gamma_2,\gamma_1)) \tau(h_1^{-1}) \tau(h_2^{-1}) \\
			&= \mu(\gamma_2, \mu(\gamma_1,p_1))\tau(h_1^{-1}) \tau(h_2^{-1})\\
			&= \mu(\gamma_2,p_2)\tau(h^{-1}_2)\\
			&= t(\gamma_2,p_2,h_2).
		\end{split}
	\end{equation}
	To make sense of the unit map, note that
	$$t(u(p))= t(1_{\pi(p)},p, \mc{H}_u(p))=\mu(1_{\pi(p)},p) \tau(\mc{H}_u(p)^{-1})=p \tau(\mc{H}_u(p))\tau(\mc{H}_u(p)^{-1})=p.$$ Then the compatibility of $u$ with the composition follows from the right and left unitor (conditions (c) and (d) in \Cref{Lemma: cohenrence of canonical quasi action}, respectively). In particular, we have $$\big(\gamma,p,h \big) \circ (1_{\pi(p)},p, \mc{H}_u(p))= \Big(\gamma, p, h \mc{H}_u(p) \mc{H}^{-1}_m(\gamma,1_{\pi(p)}) \Big).$$ Then, from the right unitor property (condition (c) in \Cref{Lemma: cohenrence of canonical quasi action}), we get $$\big((\gamma,p),h \big) \circ \big(1_{\pi(p)},p, \mc{H}_u(p) \big)= \big((\gamma,p),h \big).$$ Whereas, $G$-invariance of $\mc{H}_u$ (condition (e) in \Cref{Lemma: cohenrence of canonical quasi action} ) and the left unitor property show 
	\begin{equation}\nonumber
		\begin{split}
			& \Big( 1_{\pi \big(\mu(\gamma,p)\tau(h^{-1}) \big)}, \mu(\gamma,p)\tau(h^{-1}), \mc{H}_u \big( \mu(\gamma,p)\tau(h^{-1}) \big) \circ \big(\gamma,p,h \big) \Big)\\
			& = \big(\gamma,p, \mc{H}_u\big(\mu(\gamma,p)\big)h \mc{H}^{-1}_m(1_{\pi(\mu(\gamma,p)}, \gamma) \big)\\
			&= \big(\gamma,p,h \big).
		\end{split}
	\end{equation}
	To verify the associativity of composition, consider a sequence of composable morphisms $(\gamma_3,p_3,h_3), (\gamma_2,p_2,h_2), (\gamma_1,p_1,h_1) \in (s^{*}E_G)^{\rm{q-dec}}$. Now, we have
	\begin{equation}\nonumber
		\begin{split}
			& \Big( (\gamma_3,p_3,h_3) \circ(\gamma_2,p_2,h_2) \Big) \circ (\gamma_1,p_1,h_1)\\
			& = \big( \gamma_3 \circ \gamma_2, p_2, h_3h_2 \mc{H}_m(\gamma_3, \gamma_2)^{-1} \big) \circ(\gamma_1, p_1 ,h_1)\\
			&= \Big( \gamma_3 \circ \gamma_2 \circ \gamma_1, \, p_1, \, h_3h_2 \mc{H}_m(\gamma_3, \gamma_2)^{-1}h_1 \mc{H}_m^{-1}(\gamma_3 \circ  \gamma_2,\gamma_1) \Big).
		\end{split}
	\end{equation}
	On the other hand, 
	\begin{equation}\nonumber
		\begin{split}
			& (\gamma_3,p_3,h_3) \circ \big((\gamma_2,p_2,h_2)  \circ  (\gamma_1,p_1,h_1) \big)\\
			&= (\gamma_3,p_3,h_3) \circ \big( \gamma_2 \circ \gamma_1, p_1, h_2h_1 \mc{H}^{-1}_m(\gamma_2, \gamma_1) \big)\\
			&= \bigg( \gamma_3 \circ \gamma_2 \circ \gamma_1, \, p_1, \, h_3h_2h_1 \mc{H}^{-1}_m(\gamma_2, \gamma_1) \mc{H}^{-1}_m(\gamma_3,\gamma_2 \circ \gamma_1) \bigg).
		\end{split}
	\end{equation}
	Then the associativity follows from the associatior (condition (j)) and condition (i) in \Cref{Lemma: cohenrence of canonical quasi action}.

	The following straightforward calculation shows the compatibility of the inverse map $\mathfrak{i}$ with the target.
	\begin{equation}\nonumber
		\begin{split}
			&  t(\mathfrak{i}(\gamma, p,h)) \\
			&=  t \Big( \gamma^{-1}, \mu(\gamma,p)\tau(h^{-1}), \mc{H}_{u}(p)\mc{H}_m({\gamma^{-1}, \gamma})h^{-1}\Big)\\
			&= \mu \Big(\gamma^{-1},\mu(\gamma,p)\tau(h^{-1})\Big) \tau(h) \tau(\mc{H}^{-1}_m(\gamma^{-1},\gamma)) \tau(\mc{H}^{-1}_u(p))\\
			&= \mu(\gamma^{-1},\mu(\gamma,p))\tau(\mc{H}^{-1}_m(\gamma^{-1},\gamma)) \tau(\mc{H}^{-1}_u(p))\\
			&= \underbrace{\mu(1_{\pi(p)},p) \tau(\mc{H}_m(\gamma^{-1},\gamma))}_{[\textit{from}\,\, (b), \,\,\Cref{Lemma: cohenrence of canonical quasi action} ]}\tau(\mc{H}^{-1}_m(\gamma^{-1},\gamma)) \tau(\mc{H}^{-1}_u(p))\\
			&=p \,\, \,\,\, [\textit{from}\,\, (a), \,\,\Cref{Lemma: cohenrence of canonical quasi action}].
		\end{split}
	\end{equation}
	To verify $\mathfrak{i}$ is indeed the inverse, observe
	$$\biggl( \gamma^{-1}, \mu(\gamma,p)\tau(h^{-1}), \mc{H}_{u}(p)\mc{H}_m({\gamma^{-1}, \gamma})h^{-1}\biggr) \circ \big(\gamma,p,h \big)= \big(1_{\pi(p)},p, \mc{H}_u(p) \big)$$ and on the other hand,  the condition (k) in \Cref{Lemma: cohenrence of canonical quasi action} implies $\big(\gamma,p,h \big) \circ \mathfrak{i}(\gamma,p,h)= u\big({\pi(\mu(\gamma,p) \tau(h^{-1}))} \big)$. Finally, as its structure maps are smooth by definition, it follows $\mb{E}^{\rm{q-dec}}$ is indeed a Lie groupoid.
	
	\subsubsection*{Proof of (2)}
	As the action 
	\begin{equation}\nonumber
		\begin{split}
			\rho\colon &\mb{E}^{\rm q-dec}\times [H \rtimes_{\alpha}G \rra G] \ra \mb{E}^{\rm q-dec}\\
			&(p, g) \mapsto pg,\\
			& \bigl((\gamma, p, h), (h', g)\bigr)\mapsto \bigl(\gamma, p g, \alpha_{g^{-1}}(h'^{-1}\, h)\bigr),
		\end{split}
	\end{equation}
	coincides with the decorated case as defined in \Cref{E:Actionondeco}. To prove $\rho$ is a right action of $[H \rtimes_{\alpha}G \rra G]$ on $\mb{E}^{\rm q-dec}$, we only need to check the compatibility of $\rho$ with unit maps and the composition. However, these are immediate consequences of condition (f) and condition (h) in \Cref{Lemma: cohenrence of canonical quasi action} respectively, combined with the functoriality of the action in the decorated case (see \Cref{verification of functoriality of decorated action}). Moreover, as the bundle projection functor coincides with the decorated case as given in \Cref{E:Projondeco}, it directly follows that $\pi \colon \mb{E}^{\rm{q-dec}} \ra \mb{X}$ is a principal $[H \rtimes_{\alpha}G \rra G]$-bundle over $\mb{X}$.

	Now, since for each $p \in E_G$ we have
	\begin{equation}\label{F1}
		\mc{C}^{\rm{q-dec}}(1_{\pi(p)},p)= u(p)(\mc{H}_u(p),e),
	\end{equation}
	and for any composable pair of morphisms $\gamma_2, \gamma_1 \in X_1$ satisfying $(\gamma_1,p) \in s^{*}E_G$, we have
	\begin{equation}\label{E2}
		\mc{C}^{\rm{q- dec}}(\gamma_2,t(\mc{C}(\gamma_1,p))) \circ \mc{C}^{\rm{q- dec}}(\gamma_1,p)= \mc{C}^{\rm{q- dec}}(\gamma_2 \circ \gamma_1,p)(\mc{H}_m(\gamma_2,\gamma_1),e),
	\end{equation}
	it follows that $\mc{C}^{\rm{q-dec}}$ defines a quasi connection.
	
	\subsubsection*{Proof of (3)}

	Direct consequences of \Cref{F1} and \Cref{E2}.
\end{proof}
We will call the quasi-principal $[H \rtimes_{\alpha}G \rra G]$-bundle $( \pi^{\rm{q-dec}} \colon \mb{E}^{\rm{q-dec}}\ra \mb{X}, \mc{C}^{\rm{q-dec}})$ in \Cref{Theorem:quasi-bundle construction} as the \textit{quasi-decorated principal $[H \rtimes_{\alpha}G \rra G]$-bundle over $\mb{X}$ associated to a pseudo-principal $(G,H, \tau, \alpha)$-bundle $(\pi \colon E_G\ra X_0,\mu, \mc{H}_u, \mc{H}_m,\mb{X})$}.
\begin{definition}\label{Definition:PseudoprincipalLiecrossedmodulebundle}
	Given a Lie crossed module $(G,H, \tau, \alpha)$, a \textit{pseudo-principal $(G,H, \tau, \alpha)$-bundle over a Lie groupoid $\mb{X}$} is defined as a quasi-principal $G$-bundle $(\pi_G \colon E_G \ra X_0, \mu, \mb{X})$ over the Lie groupoid $\mb{X}$ (\Cref{quasi-principal G-bun dle}), equipped with a pair of smooth maps $\mc{H}_{u} \colon E_0 \ra H$   and $\mc{H}_{m} \colon X_1 \times_{s,X_0,t} X_1 \ra H$, which satisfies the following coherence properties: 
	\begin{itemize}
		\item[(a)] $\mu(1_{\pi(p)},p)=p \tau(\mc{H}_u(p))$ for all $p \in E_G$.
		\item[(b)] $\mu(\gamma_2, \mu(\gamma_1,p))= \mu(\gamma_2 \circ \gamma_1,p) \tau(\mc{H}_m(\gamma_2, \gamma_1))$ for all appropriate $\gamma_2, \gamma_1 \in X_1, p \in E_G$. 
		\item[(c)] [\textit{Right unitor}] $\mc{H}_m(\gamma, 1_{\pi(p)})=  \mc{H}_{u}(p)$ for all $\gamma \in X_1$ such that $s(\gamma)= \pi(p)$.
		\item[(d)][\textit{Left unitor}] $\mc{H}_m(1_{\pi(\mu(\gamma,p))},\gamma)=  \mc{H}_{u}(\mu(\gamma,p))$.
		\item[(e)] $\mc{H}_u$ is $G$ invariant.
		\item[(f)] $\alpha_{g^{-1}}(\mc{H}_u(p))=\mc{H}_u(p)$ for all $g \in G$ and $p \in E_G$.
		\item[(g)] $\mc{H}_u(p) \in Z(H)$ for all $p \in E_0$, where $Z(H)$ is the centre of $H$.
		\item[(h)] $\alpha_{g^{-1}}(\mc{H}_{m}^{-1}(\gamma_2, \gamma_1))= \mc{H}_{m}^{-1}(\gamma_2, \gamma_1)$ for composable $\gamma_2, \gamma_1$.
		\item[(i)]  $\mc{H}_m(\gamma_2, \gamma_1) \in Z(H)$ for all $\gamma_2, \gamma_1 \in X_1 \times_{s,X_0,t} X_1$.
		\item[(j)] [\textit{Associator}] For $\gamma_3, \gamma_2, \gamma_1 \in X_1$ such that $s(\gamma_3)= t(\gamma_2)$ and $s(\gamma_2)= t(\gamma_1)$, we have 
		$$\mc{H}_m^{-1}(\gamma_3,\gamma_2) \mc{H}_{m}^{-1}(\gamma_3 \circ \gamma_2, \gamma_1)= \mc{H}_{m}^{-1}(\gamma_2, \gamma_1) \mc{H}_{m}^{-1}(\gamma_3, \gamma_2 \circ \gamma_1).$$
		\item[(k)][\textit{Invertor}] If $(\gamma,p) \in s^{*}E_0$, then we have
		$\mc{H}_m({\gamma^{-1}, \gamma}) \mc{H}_{m}(\gamma,\gamma^{-1})^{-1}= H_{u}(p)^{-1}H_{u}(\mu(\gamma,p))$.
	\end{itemize}
\end{definition}
We use the notation  $(\pi \colon E_G\ra X_0,\mu, \mc{H}_u, \mc{H}_m,\mb{X})$ to denote a pseudo-principal $(G,H, \tau, \alpha)$-bundle, and call the smooth maps $\mc{H}_m$ and $\mc{H}_{u}$ as \textit{compositional deviation} and \textit{unital deviation} respectively. 

\begin{example}\label{Definition makes sense}
	A direct consequence of \Cref{Lemma: cohenrence of canonical quasi action} is that the underlying quasi-principal $G$-bundle $(\pi_0 \colon E_0 \ra X_0,  \mu_{\mc{C}}, \mb{X})$ (\Cref{underlying quasi-principal bundle})  of a quasi-principal-$[H \rtimes_{\alpha}G \rra G]$-bundle $(\pi \colon \mb{E} \ra \mb{X}, \mc{C})$, endowed with the pair of smooth maps $\mc{H}_{u,\mc{C}}$ and $\mc{H}_{m,\mc{C}}$ (as defined in \Cref{Lemma: cohenrence of canonical quasi action}) is a pseudo-principal $(G,H, \tau, \alpha)$-bundle over the Lie groupoid $\mb{X}$. \\ We call $(\pi_0 \colon E_0 \ra X_0,  \mu_{\mc{C}}, \mc{H}_{u,\mc{C}}, \mc{H}_{m,\mc{C}}, \mb{X})$ as the \textit{underlying  pseudo-principal $(G,H, \tau, \alpha)$-bundle of the quasi-principal $[H \rtimes_{\alpha}G \rra G]$-bundle $(\pi \colon \mb{E} \ra \mb{X}, \mc{C})$.} 
\end{example}
\begin{remark}
	Observe that in \Cref{Definition makes sense}, the unital deviation $\mc{H}_{u,\mc{C}}$ and compositional deviation $\mc{H}_{m,\mc{C}}$ combinedly evaluate the extent to which the quasi connection $\mc{C}$ differs from being a categorical connection.
\end{remark}
\begin{remark}
	One may view \Cref{Definition:PseudoprincipalLiecrossedmodulebundle} as a suitable adaptation of the idea presented in a usual pseudofunctor \Cref{pseudofunctor}.
\end{remark}

\begin{remark}
	In \Cref{Theorem:quasi-bundle construction}, the pair of smooth maps  $\mc{H}_{u}$ and $\mc{H}_m$ together precisely measure the amount by which a quasi-decorated principal $[H \rtimes_{\alpha}G \rra G]$-bundle deviates from being a decorated principal $[H \rtimes_{\alpha}G \rra G]$-bundle in \Cref{Prop:Decoliegpd}. 
\end{remark} 

The next corollary is an easy but important consequence of \Cref{Theorem:quasi-bundle construction}:		

\begin{corollary}\label{Corollary: Essential surjectivity}
	Every pseudo-principal $(G,H, \tau, \alpha)$-bundle $(\pi \colon E_G\ra X_0,\mu, \mc{H}_u, \mc{H}_m,\mb{X})$ is same as the underlying pseudo-principal-$(G,H, \tau, \alpha)$-bundle of the quasi-decorated $[H \rtimes_{\alpha}G \rra G]$-bundle $( \pi^{\rm{q-dec}} \colon \mb{E}^{\rm{q-dec}}\ra \mb{X}, \mc{C}^{\rm{q-dec}})$ associated to $(\pi \colon E_G\ra X_0,\mu, \mc{H}_u, \mc{H}_m,\mb{X})$. So, in particular we have $\mu_{\mc{C^{\rm{q-dec}}}}=\mu$, $\mc{H}_{u,\mc{C}^{\rm{q-dec}}}=\mc{H}_u$ and $\mc{H}_{m, \mc{C}^{\rm{q-dec}}}= \mc{H}_m$.
\end{corollary}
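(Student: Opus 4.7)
My plan is to prove this corollary by direct verification: starting from the pseudo-principal $(G,H,\tau,\alpha)$-bundle $(\pi \colon E_G \to X_0, \mu, \mc{H}_u, \mc{H}_m, \mb{X})$, I form the associated quasi-decorated principal $\mb{G}$-bundle $(\pi^{\rm{q-dec}} \colon \mb{E}^{\rm{q-dec}} \to \mb{X}, \mc{C}^{\rm{q-dec}})$ via \Cref{Theorem:quasi-bundle construction}, and then compute the three invariants $\mu_{\mc{C}^{\rm{q-dec}}}$, $\mc{H}_{u,\mc{C}^{\rm{q-dec}}}$, and $\mc{H}_{m,\mc{C}^{\rm{q-dec}}}$ assigned by \Cref{Lemma: cohenrence of canonical quasi action} and \Cref{Definition makes sense}, showing they coincide with $\mu$, $\mc{H}_u$, and $\mc{H}_m$ respectively. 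The underlying classical principal $G$-bundle $\pi_0 \colon E_G \to X_0$ is unchanged by construction, so the only content is matching these three pieces of data.

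First, for the quasi-action, I simply use the target map of $\mb{E}^{\rm{q-dec}}$. Since $\mc{C}^{\rm{q-dec}}(\gamma,p) = (\gamma, p, e)$, the definition $\mu_{\mc{C}^{\rm{q-dec}}} := t \circ \mc{C}^{\rm{q-dec}}$ from \Cref{underlying quasi-principal bundle} together with formula (ii) of (1) in \Cref{Theorem:quasi-bundle construction} immediately gives $\mu_{\mc{C}^{\rm{q-dec}}}(\gamma,p) = \mu(\gamma,p)\tau(e^{-1}) = \mu(\gamma,p)$. Next, for the unital deviation, I use the characterising equation from \Cref{Equation: Construction 1}, namely $\mc{C}^{\rm{q-dec}}(1_{\pi(p)}, p) = 1_p \cdot (\mc{H}_{u,\mc{C}^{\rm{q-dec}}}(p), e)$. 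The left-hand side equals $(1_{\pi(p)}, p, e)$, while using the unit map $u(p) = (1_{\pi(p)}, p, \mc{H}_u(p))$ from (1)(iv) of \Cref{Theorem:quasi-bundle construction} together with the right $\mb{G}$-action \eqref{E:Actionondeco}, the right-hand side becomes $(1_{\pi(p)}, p, \mc{H}_{u,\mc{C}^{\rm{q-dec}}}(p)^{-1} \mc{H}_u(p))$; comparison forces $\mc{H}_{u,\mc{C}^{\rm{q-dec}}}(p) = \mc{H}_u(p)$.

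For the compositional deviation, I apply \Cref{Equation: Construction 2}: for composable $\gamma_2,\gamma_1$ with $(\gamma_1, p) \in s^{*}E_G$, the defining equation reads
\begin{equation}\nonumber
\mc{C}^{\rm{q-dec}}\bigl(\gamma_2, \mu_{\mc{C}^{\rm{q-dec}}}(\gamma_1, p)\bigr) \circ \mc{C}^{\rm{q-dec}}(\gamma_1, p) = \mc{C}^{\rm{q-dec}}(\gamma_2 \circ \gamma_1, p)\cdot (\mc{H}_{m,\mc{C}^{\rm{q-dec}}}(\gamma_2,\gamma_1), e).
\end{equation}
Using the composition formula (1)(iii) of \Cref{Theorem:quasi-bundle construction}, the left-hand side evaluates to $(\gamma_2 \circ \gamma_1, p, \mc{H}_m^{-1}(\gamma_2, \gamma_1))$, whereas the right-hand side, via the action, equals $(\gamma_2 \circ \gamma_1, p, \mc{H}_{m,\mc{C}^{\rm{q-dec}}}(\gamma_2,\gamma_1)^{-1})$. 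Freeness of the $H \rtimes_{\alpha} G$-action then yields $\mc{H}_{m,\mc{C}^{\rm{q-dec}}} = \mc{H}_m$.

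This is essentially a bookkeeping argument, and I do not foresee any serious obstacle beyond keeping track of inverses and the convention for the right action in \eqref{E:Actionondeco}; the only subtle point is that the characterisation of $\mc{H}_{u,\mc{C}}$ and $\mc{H}_{m,\mc{C}}$ uses the full $(h,g)$-action while the elements appearing here only move in the $H$-direction, so one must be careful to use $\alpha_{e^{-1}} = \rm{id}$ and the fact that the action of $(h',e)$ sends $(\gamma, p, h) \mapsto (\gamma, p, h'^{-1}h)$, exactly matching the normalisation built into \Cref{Equation: Construction 1} and \Cref{Equation: Construction 2}.
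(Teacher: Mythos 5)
Your proposal is correct and is essentially the paper's own argument: the paper treats this corollary as an immediate consequence of the identities established in the proof of \Cref{Theorem:quasi-bundle construction} (its equations \Cref{F1} and \Cref{E2}), which are exactly the comparisons with \Cref{Equation: Construction 1} and \Cref{Equation: Construction 2} that you carry out, together with the observation $\mu_{\mc{C}^{\rm{q-dec}}}=t\circ\mc{C}^{\rm{q-dec}}=\mu$. Your bookkeeping of the right action and of the inverses is accurate, so no gaps remain.
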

Given a Lie groupoid $\mb{X}$ and a Lie crossed module $(G, H, \tau, \alpha)$ the collection of pseudo-principal $(G, H, \tau, \alpha)$-bundles over $\mb{X}$ has a natural groupoid structure, as evident from the proposition below.

\begin{proposition}\label{Groupoid of pseudo crossed module principal bundle}
	For a Lie groupoid $\mb{X}$ and a Lie crossed module $(G,H, \tau, \alpha)$, there is a groupoid ${\rm{Pseudo}} \big(\mb{X}, (G,H,\tau, \alpha) \big)$, whose objects are pseudo-principal   $(G,H, \tau, \alpha)$-bundles over $\mb{X}$ and arrows are defined in the following way:
	
	For any pair of pseudo-principal $(G,H, \tau, \alpha)$-bundles $(\pi \colon E_G\ra X_0,\mu, \mc{H}_u, \mc{H}_m,\mb{X})$ and $(\pi' \colon E'_G\ra X_0,\mu', \mc{H}'_u, \mc{H}'_m,\mb{X})$,
	\begin{itemize}
		\item[(i)]if $\mc{H}_{m} \neq \mc{H}'_{m}$, then $${\rm{Hom}}\Big((\pi \colon E_G\ra X_0,\mu, \mc{H}_u, \mc{H}_m,\mb{X}),(\pi' \colon E'_G\ra X_0,\mu', \mc{H}'_u, \mc{H}'_m,\mb{X}) \Big) = \emptyset ,$$
		\item[(ii)] if $\mc{H}_{m}=\mc{H}'_{m}$, 
		
		then an element of $ {\rm{Hom}}\Big((\pi \colon E_G\ra X_0,\mu, \mc{H}_u, \mc{H}_m,\mb{X}),(\pi' \colon E'_G\ra X_0,\mu', \mc{H}'_u, \mc{H}'_m,\mb{X}) \Big)$ is defined as a morphism of principal $G$-bundles $f \colon E_G \ra E'_G$ over $X_0$ (\Cref{Definition: Morphism of principal G-bundles}), satisfying the following pair of conditions:
		\begin{itemize}
			\item[(a)] 
			\begin{equation}\label{fcompatible}
			f \big(\mu(\gamma,p) \big)= \mu' \big(\gamma,f(p) \big)
			\end{equation}
			and
			\item[(b)] 
			\begin{equation}\label{funitcompatible}
			\mc{H}_u= \mc{H}'_u \circ f.
			\end{equation}
		\end{itemize}
	\end{itemize}
\end{proposition}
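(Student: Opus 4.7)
The plan is to verify the groupoid axioms step by step, drawing on the classical fact that every morphism of principal $G$-bundles over a fixed base is automatically an isomorphism. The argument splits into four checks: well-definedness of composition, existence of identities, associativity, and existence of inverses.

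First I would check that composition is well-defined. Suppose
\[
f\colon(\pi\colon E_G\to X_0,\mu,\mc{H}_u,\mc{H}_m,\mb{X})\longrightarrow (\pi'\colon E'_G\to X_0,\mu',\mc{H}'_u,\mc{H}_m,\mb{X})
\]
and
\[
f'\colon (\pi'\colon E'_G\to X_0,\mu',\mc{H}'_u,\mc{H}_m,\mb{X})\longrightarrow (\pi''\colon E''_G\to X_0,\mu'',\mc{H}''_u,\mc{H}_m,\mb{X})
\]
are morphisms (the compositional deviations automatically coincide by transitivity, so the hom-sets involved are non-empty). The map $f'\circ f\colon E_G\to E''_G$ is a morphism of principal $G$-bundles over $X_0$ in the classical sense. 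Its compatibility with the quasi-actions, that is \Cref{fcompatible}, follows by direct substitution: $(f'\circ f)(\mu(\gamma,p))=f'(\mu'(\gamma,f(p)))=\mu''(\gamma,(f'\circ f)(p))$. The compatibility \Cref{funitcompatible} with the unital deviations follows from $\mc{H}''_u\circ(f'\circ f)=(\mc{H}''_u\circ f')\circ f=\mc{H}'_u\circ f=\mc{H}_u$. Thus $f'\circ f$ is indeed a morphism in the proposed category.

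Next, the identity ${\rm id}_{E_G}\colon E_G\to E_G$ is trivially a morphism of principal $G$-bundles, and \Cref{fcompatible} and \Cref{funitcompatible} hold tautologically; it clearly acts as the identity under composition. Associativity is inherited directly from associativity of composition of set-theoretic maps.

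The main content is showing that every morphism is invertible, which promotes the category to a groupoid. Here I use the classical fact (\Cref{morphism of principal bundles} and the discussion following \Cref{Definition: Morphism of principal G-bundles}) that any morphism $f\colon E_G\to E'_G$ of principal $G$-bundles over the same base $X_0$ is automatically a diffeomorphism, hence a principal $G$-bundle isomorphism with smooth inverse $f^{-1}$. It then suffices to verify that $f^{-1}$ satisfies the two coherence conditions. For \Cref{fcompatible}, given $(\gamma,q)\in s^{*}E'_G$, write $q=f(f^{-1}(q))$ and compute
\[
f^{-1}\bigl(\mu'(\gamma,q)\bigr)=f^{-1}\bigl(\mu'(\gamma,f(f^{-1}(q)))\bigr)=f^{-1}\bigl(f(\mu(\gamma,f^{-1}(q)))\bigr)=\mu(\gamma,f^{-1}(q)).
\]
For \Cref{funitcompatible}, applying $\mc{H}_u=\mc{H}'_u\circ f$ to $f^{-1}(q)$ gives $\mc{H}_u(f^{-1}(q))=\mc{H}'_u(q)$, i.e.\ $\mc{H}'_u=\mc{H}_u\circ f^{-1}$, which is precisely the required condition for $f^{-1}$. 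The condition on compositional deviations is symmetric and trivially preserved.

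The main (and really the only) obstacle I anticipate is keeping track of the direction in which the coherence data are pulled back across a morphism and confirming that both \Cref{fcompatible} and \Cref{funitcompatible} are genuinely symmetric under inversion; the calculation above resolves this. No interaction with the data $\mc{H}_m$ is needed beyond observing that equality is an equivalence relation, which is exactly what forces morphisms between bundles with distinct compositional deviations to be empty while leaving the remaining structure consistent.
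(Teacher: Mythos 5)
Your proof is correct, and it follows exactly the routine verification the paper leaves implicit (the proposition is stated without proof): the only substantive point is invertibility, which you rightly reduce to the standard fact, quoted in the paper after \Cref{Definition: Morphism of principal G-bundles}, that a morphism of principal $G$-bundles over a fixed base is automatically an isomorphism, and your checks that $f^{-1}$ inherits \Cref{fcompatible} and \Cref{funitcompatible} are exactly as needed.
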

At this point, we have equipped ourselves with the necessary machinery to state and prove the first main result of this thesis.
		
	\begin{theorem}\label{Main Theorem 1}
	Given a Lie crossed module $(G,H,\tau, \alpha)$ and a Lie groupoid $\mb{X}$, the groupoid $\rm{Bun}_{\rm{quasi}}(\mb{X}$, $[H \rtimes_{\alpha}G \rra G])$ is equivalent to the groupoid ${\rm{Pseudo}} \big(\mb{X}, (G,H,\tau, \alpha) \big)$.
\end{theorem}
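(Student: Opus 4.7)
The plan is to construct functors $\Phi$ and $\Psi$ in both directions and exhibit them as mutually quasi-inverse, leaning heavily on the constructions already established in the preceding subsections. Define
\[
\Phi \colon \rm{Bun}_{\rm{quasi}}(\mb{X}, [H \rtimes_{\alpha}G \rra G]) \to {\rm{Pseudo}} \big(\mb{X}, (G,H,\tau, \alpha) \big)
\]
by sending an object $(\pi \colon \mb{E} \to \mb{X}, \mc{C})$ to its underlying pseudo-principal bundle from \Cref{Definition makes sense}, and a morphism $F \colon \mb{E} \to \mb{E}'$ to the underlying map $F_0$. To check well-definedness on morphisms, the compatibility $F_1 \circ \mc{C} = \mc{C}' \circ (\mathrm{id} \times F_0)$ (from \Cref{Groupoid of quasi principal 2-bundles}), combined with the defining equations \eqref{Equation: Construction 1} and \eqref{Equation: Construction 2} for $\mc{H}_{u,\mc{C}}$ and $\mc{H}_{m,\mc{C}}$ together with freeness of the $H \rtimes_{\alpha} G$-action on $E_1$, forces $\mc{H}_{u,\mc{C}} = \mc{H}_{u,\mc{C}'} \circ F_0$ and $\mc{H}_{m,\mc{C}} = \mc{H}_{m,\mc{C}'}$, so that $F_0$ is indeed a legitimate morphism in ${\rm{Pseudo}}$.

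In the opposite direction, define $\Psi$ on objects via the quasi-decoration construction of \Cref{Theorem:quasi-bundle construction}, sending $(\pi_G, \mu, \mc{H}_u, \mc{H}_m, \mb{X})$ to $(\pi^{\rm{q-dec}} \colon \mb{E}^{\rm{q-dec}} \to \mb{X}, \mc{C}^{\rm{q-dec}})$. On a morphism $f \colon E_G \to E_G'$, set $\Psi(f)_0 = f$ and $\Psi(f)_1(\gamma, p, h) = (\gamma, f(p), h)$; the conditions \eqref{fcompatible}, \eqref{funitcompatible}, and the equality $\mc{H}_m = \mc{H}'_m$ (built into the definition of morphisms in ${\rm{Pseudo}}$, \Cref{Groupoid of pseudo crossed module principal bundle}) together guarantee functoriality with respect to the twisted composition, $H \rtimes_{\alpha} G$-equivariance, and preservation of the canonical quasi-connection. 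The composition $\Phi \circ \Psi$ then equals the identity on the nose by \Cref{Corollary: Essential surjectivity}.

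The main technical step is constructing a natural isomorphism $\Psi \circ \Phi \cong \mathrm{Id}$. For a quasi-principal bundle $(\pi \colon \mb{E} \to \mb{X}, \mc{C})$, let $\mb{E}^{\rm{q-dec}}$ denote the quasi-decoration of its underlying pseudo-principal bundle, and define $\theta \colon \mb{E}^{\rm{q-dec}} \to \mb{E}$ by $\theta_0 = \mathrm{id}_{E_0}$ and $\theta_1(\gamma, p, h) = \mc{C}(\gamma, p)(h^{-1}, e)$, generalizing the isomorphism of \Cref{Lem:isodecgeneral}. The hard part will be verifying compatibility with the twisted composition law in $\mb{E}^{\rm{q-dec}}$: for composable $(\gamma_2, p_2, h_2), (\gamma_1, p_1, h_1)$ one rewrites $\mc{C}(\gamma_2, p_2) = \mc{C}(\gamma_2, \mu_{\mc{C}}(\gamma_1, p_1)) \cdot (e, \tau(h_1^{-1}))$ using condition (iii) of \Cref{Def:categorical connection}, applies the interchange law \eqref{E:Identitiesechangeinverse} together with the semidirect-product relations and the Peiffer identities \eqref{E:Peiffer}, and finally invokes the defining equation \eqref{Equation: Construction 2} for $\mc{H}_{m,\mc{C}}$ to arrive at
\[
\mc{C}(\gamma_2 \circ \gamma_1, p_1)\bigl(\mc{H}_{m,\mc{C}}(\gamma_2, \gamma_1)\, h_1^{-1} h_2^{-1}, e\bigr),
\]
which matches $\theta_1$ applied to the twisted composite $(\gamma_2 \circ \gamma_1, p_1, h_2 h_1 \mc{H}_{m,\mc{C}}^{-1}(\gamma_2, \gamma_1))$. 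Equivariance under $H \rtimes_{\alpha} G$, compatibility with source, target, and unit, and the identity $\theta_1 \circ \mc{C}^{\rm{q-dec}} = \mc{C}$ are direct calculations mirroring those of \Cref{Lem:isodecgeneral}. Bijectivity of $\theta_1$ follows from freeness of the $H \rtimes_{\alpha} G$-action on $E_1$ and the fact that every element of $E_1$ has the form $\mc{C}(\pi_1(\delta), s(\delta))(h^{-1}, e)$ for a unique $h \in H$, exactly as in \Cref{Lem:isodecgeneral}; naturality in morphisms is immediate from the definition, completing the equivalence.
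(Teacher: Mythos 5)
Your proposal is correct, and all the verifications you sketch do go through (the well-definedness of $\Phi$ on morphisms via freeness, the functoriality and equivariance of $\Psi(f)$, the strict equality $\Phi\circ\Psi=\mathrm{Id}$ from \Cref{Corollary: Essential surjectivity}, and the twisted-composition computation for $\theta$, which I checked and which lands exactly on $\mc{C}(\gamma_2\circ\gamma_1,p_1)\bigl(\mc{H}_{m,\mc{C}}(\gamma_2,\gamma_1)h_1^{-1}h_2^{-1},e\bigr)$ as you claim). The route differs from the paper's in its decomposition rather than its ingredients: the paper defines only the forgetful functor $\mc{F}=\Phi$ and proves the equivalence by the essentially-surjective-plus-fully-faithful criterion, where the heavy step is fullness — lifting a morphism $f$ of pseudo-principal bundles to $F_1(\delta)=\mc{C}'(\pi_1(\delta),f(s(\delta)))(h_\delta,e)$ and verifying functoriality through the identity $\mc{H}_{m,\mc{C}}(\gamma_2,\gamma_1)h_{\delta_1}h_{\delta_2}=h_{\delta_2\circ\delta_1}$. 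You instead build an explicit quasi-inverse $\Psi$ out of \Cref{Theorem:quasi-bundle construction}, where the lift of $f$ is the trivial map $(\gamma,p,h)\mapsto(\gamma,f(p),h)$ (functorial precisely because morphisms in \Cref{Groupoid of pseudo crossed module principal bundle} require $\mc{H}_m=\mc{H}'_m$), and the computational burden moves entirely into showing that $\theta$ intertwines the twisted composition with the composition of $\mb{E}$ — which is precisely the computation the paper carries out separately, after the theorem, in the proof of \Cref{Detailed quasicharacterisation}. So your argument buys a cleaner fullness step, a strict (rather than merely pseudo) equality $\Phi\circ\Psi=\mathrm{Id}$, and it yields \Cref{Detailed quasicharacterisation} as an immediate by-product rather than a corollary extracted from abstract equivalence data; the paper's organization, in exchange, keeps the theorem's proof self-contained in terms of the single functor $\mc{F}$ and defers the canonical isomorphism $\theta$ to where it is needed. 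One small bookkeeping remark: invertibility of each $\theta_{\mb{E}}$ does not really need your bijectivity argument, since ${\rm{Bun}}_{\rm{quasi}}(\mb{X},[H\rtimes_\alpha G\rra G])$ is already a groupoid by \Cref{Groupoid of quasi principal 2-bundles}, so it suffices that $\theta_{\mb{E}}$ is a morphism there.
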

\begin{proof}
	Define
	\begin{equation}\nonumber
		\begin{split}
			\mc{F}\colon &{\rm{Bun}}_{\rm{quasi}}(\mb{X}, [H \rtimes_{\alpha}G \rra G])\ra {\rm{Pseudo}} \big(\mb{X}, (G,H,\tau, \alpha) \big),\\
			& (\pi \colon \mb{E} \ra \mb{X}, \mc{C}) \mapsto (\pi_0 \colon E_0 \ra X_0,  \mu_{\mc{C}}, \mc{H}_{u,\mc{C}}, \mc{H}_{m,\mc{C}}, \mb{X}), \\
			&(F \colon \mb{E} \ra \mb{E}') \mapsto (F_0 \colon E_0 \ra E'_0).
		\end{split}
	\end{equation}
	We want to show that $\mc{F}$ is an essentially surjective, faithful, and full functor.

	\subsubsection*{Well-definedness of $\mc{F}$:}
 
	Let  $(\pi \colon \mb{E} \ra \mb{X}, \mc{C})$ be a quasi-principal $[H \rtimes_{\alpha}G \rra G])$-bundle over $\mb{X}$. Then the \Cref{Definition makes sense} implies that $(\pi_0 \colon E_0 \ra X_0,  \mu_{\mc{C}}, \mc{H}_{u,\mc{C}}, \mc{H}_{m,\mc{C}}, \mb{X})$ is indeed a pseudo-principal $(G,H, \tau, \alpha)$-bundle over $\mb{X}$.  Now, consider $F \in {\rm{Hom}}\Big((\pi \colon \mb{E} \ra \mb{X}, \mc{C}), (\pi' \colon \mb{E}' \ra \mb{X}), \mc{C}') \Big)$, then as an immediate consequence of the identities 
	$F_1\big(\mc{C}(\gamma,p) \big)= \mc{C}' \big(\gamma, F_0(p) \big)$ and  $F_1 \big( \mc{C}(1_{\pi(p)},p) \big)= 1_{F_0(p)} \Big(\mc{H}_{u,\mc{C}'}\big(F(p)\big),e \Big)$, we get a well-defined $\mc{F}(F)$. 
	
 \subsubsection*{Functoriality of $\mc{F}$:}
 
	A straightforward verification.
	
	\subsubsection*{Essential surjectivity of $\mc{F}$:}	 
	
	A direct consequence of \Cref{Corollary: Essential surjectivity}.

	\subsubsection*{Faithulness of $\mc{F}$:}
	
	Consider $F,\bar{F} \in {\rm{Hom}} \Big( (\pi \colon \mb{E} \ra \mb{X}, \mc{C}) ,(\pi' \colon \mb{E}' \ra \mb{X}, \mc{C}')  \Big)$. Let $\mc{F}(F)= \mc{F}(\bar{F})$, that is $F_0=\bar{F}_0$. Now, for  any $\delta \in E_1$, there is a unique $h_{\delta} \in H$, satisfying 
	\begin{equation}\label{hdelta}
		\delta= \mc{C}\big(\pi_1(\delta),s(\delta)\big)(h_{\delta},e).
	\end{equation}
	Then the equality $F_1(\delta)=\bar{F}_1(\delta)$ follows from the compatiblity condition  of $F$ and $\bar{F}$ with quasi connections $\mc{C}$ and $\mc{C}'$ (see \Cref{Groupoid of quasi principal 2-bundles}). Hence, $\mc{F}$ is a faithful functor.

	\subsubsection*{Fullness of $\mc{F}$:}

	Consider an element  $$f \in {\rm{Hom}}\Big((\pi_0 \colon E_0 \ra X_0,  \mu_{\mc{C}}, \mc{H}_{u,\mc{C}}, \mc{H}_{m,\mc{C}}, \mb{X}), (\pi'_0 \colon E'_0 \ra X_0,  \mu_{\mc{C}'}, \mc{H}_{u,\mc{C}'}, \mc{H}_{m,\mc{C}'}, \mb{X}) \Big),$$ for a pair of quasi-principal $[H \rtimes_{\alpha}G \rra G]$-bundles $(\pi \colon \mb{E} \ra \mb{X}, \mc{C})$ and $(\pi' \colon \mb{E}' \ra \mb{X}, \mc{C}')$ over $\mb{X}$, and define 
	\begin{equation}\label{Definition of F}
		\begin{split}
			&F \colon \mb{E} \ra \mb{E}'\\
			& p \mapsto f(p), \, \, p \in E_0,\\
			&\delta \mapsto \mc{C}'\Big(\pi_1(\delta), f(s(\delta)) \Big) \Big(h_{\delta},e \Big), \, \, \delta \in E_1,
		\end{split}
	\end{equation}
	where $h_{\delta}$ is the unique element in $H$, such that $\delta= \mc{C}\Big(\pi_1(\delta),s(\delta)\Big)(h_{\delta},e)$. We will show $F:=(F_1,F_0)$ is a morphism of quasi-principal $[H \rtimes_{\alpha}G \rra G])$-bundles over $\mb{X}$.

	Observe that $\pi'_0 \circ F_0= \pi'_0 \circ f=\pi_0$ and for any $\delta \in E_1$,  we have $$\pi'_1 \circ F_1(\delta)= \pi'_1 \Big(\mc{C}'\big(\pi_1(\delta), f(s(\delta)) \big)(h_{\delta},e) \Big)= \pi_1(\delta).$$ Now, as $f$ is $G$-equivariant,  $F_0$ is $G$-equivariant. To, verify the $H \rtimes_{\alpha}G$-equivaeriancy of $F_1$, consider an element $\delta \in E_1$ and an element $(h,g) \in H \rtimes_{\alpha} G$. Observe that $\pi'_0 \circ F_0= \pi'_0 \circ f=\pi_0$ and for any $\delta \in E_1$, $\pi'_1 \circ F_1(\delta)= \pi'_1 \Big(\mc{C}'\big(\pi_1(\delta), f(s(\delta)) \big)(h_{\delta},e) \Big)= \pi_1(\delta)$. $G$-equivariancy of $f$ induces the same for $F_0$. Observe that there exists a unique element $\bar{h} \in H$ such that $\mc{C}\Big(\pi_1(\delta),s(\delta)\Big)(h_{\delta},e)(h,g)= \mc{C} \Big(\pi_1(\delta), s(\delta) \Big)(e,g)(\bar{h},e)$ for $\delta \in E_1$ and $(h,g) \in H \rtimes_{\alpha} G$. Next, we see
	\begin{equation}\nonumber
		\begin{split}
			&F_1 \big( \delta(h,g) \big)\\
			&= F_1 \big( \mc{C}\big(\pi_1(\delta),s(\delta)\big)(h_{\delta},e)(h,g)  \big)\\
			&= \mc{C}' \big( \pi_1(\delta), f(s(\delta))\big)(e_H,g)(\bar{h},e) \\
			&=\mc{C}' \big( \pi_1(\delta), f(s(\delta))\big)(\bar{h},g),
		\end{split}
	\end{equation}
	From the observation $\bar{h}=h_{\delta}h$, we obtain
	$$F_1 \big( \delta(h,g) \big)= \mc{C}' \Big( \pi_1(\delta), f(s(\delta))\Big)(h_{\delta},e)(h,g)= F_1(\delta)(h,g).$$ Thus, $F_1$ is $H \rtimes_{\alpha}G$-equivariant. Note that from the definition itself, it follows that $$F_1\big(\mc{C}(\gamma,p) \big)= \mc{C}' \big(\pi_1(\mc{C}(\gamma,p)), f(s(\mc{C}(\gamma,p) \big) \big(h_{\mc{C}(\gamma,p)},e \big)= \mc{C}'\big(\gamma, F_0(p)\big),$$ for all $(\gamma,p) \in s^{*}E_0$. As both $F_0$ and $F_1$ are smooth by definition, in order to prove $\mc{F}$ is full, it suffices to show that $F \colon \mb{E} \ra \mb{E}'$ is a functor. Source map consistency is immediate. Let $\delta \in E_1$. Then, the following calculations establish the target consistency:
 \begin{equation}
 \begin{split}
    &F_0\big(t(\delta)\big)\\
    &= F_0 \Bigg(t\Big(\mc{C}\Big(\pi_1(\delta),s(\delta)\Big)(h_{\delta},e) \Big)\Bigg)\\
    &= F_0\Bigg(\mu_{C} \Big( \pi_1(\delta),s(\delta) \Big)\Bigg)\tau(h_{\delta})\\
    &=\mu_{\mc{C}'}\Big( \pi_1(\gamma), f(s(\gamma) \Big)\tau(h_{\delta})\,\, [\textit{using \Cref{fcompatible}}]\\
    &= t \Bigg( \mc{C}'\Big( \pi_1(\gamma), f(s(\gamma)\Big)(h_{\delta},e)  \Bigg)\\
    &= t\big(F_1(\delta)\big).
 \end{split}
 \end{equation}
  Now, for any $p \in E_0$, we have $F_1(1_p)= \mc{C}' \Big(1_{\pi'_0 \circ f(p)}, f(p) \Big) \Big((\mc{H}_{u,\mc{C}}(p))^{-1},e \Big)$. By suitably using \Cref{Equation: Construction 1} and \Cref{funitcompatible}, we get the compatibility of $\mc{F}$ with unit maps.
	
	Let $\delta_2, \delta_1 \in E_1$ such that $s(\delta_2)=t(\delta_1)$. 
	To verify that $F$ is compatible with the composition map, we need to workout the following lengthy but straightforward calculation:
	\begin{equation}\nonumber
		\begin{split}
			&F_1(\delta_2) \circ F_1(\delta_1)\\
			&= \underbrace{\mc{C}'\big(\pi_1(\delta_2), f(s(\delta_2)) \big)(h_{\delta_2},e)}_{[F_1(\delta_2)]} \circ \underbrace{\mc{C}'\big(\pi_1(\delta_1), f(s(\delta_1)) \big)(h_{\delta_1},e)}_{[F_1(\delta_1)]}\\
			&= \underbrace{\mc{C}'\big(\pi_1(\delta_2), f(\mu_{\mc{C}}(\pi_1(\delta_1),s(\delta_1)))\big)\big(e,\tau(h_{\delta_{1}})\big)}_{[\textit{using} \,\, \Cref{hdelta}]} (h_{\delta_2},e) \circ \mc{C}'\big(\pi_1(\delta_1), f(s(\delta_1)) \big)(h_{\delta_1},e) \\
			&= \underbrace{\mc{C}'\big(\pi_1(\delta_2), \mu_{\mc{C}'}(\pi_1(\delta_1),f(s(\delta_1)))\big)}_{[\textit{using} \,\, \Cref{fcompatible}]}(e,\tau(h_{\delta_{1}}))  (h_{\delta_2},e) \circ \mc{C}'\big(\pi_1(\delta_1), f(s(\delta_1)) \big)(h_{\delta_1},e)\\
			&= \mc{C}'\Big(\pi_1(\delta_2), \mu_{\mc{C}'}\big(\pi_1(\delta_1),f(s(\delta_1))\big)\Big)\underbrace{\big(\alpha_{\tau(h_{\delta_1})}(h_{\delta_2}), \tau(h_{\delta_1}) \big)}_{[\textit{using} \,\, \Cref{E:Peiffer}]} \circ \mc{C}'\big(\pi_1(\delta_1), f(s(\delta_1)) \big)(h_{\delta_1},e)\\
			&= \mc{C}' \Big( \pi_1(\delta_2 \circ \delta_1), f(s(\delta_{1}))\Big) \Big(\mc{H}_{m, \mc{C}'}(\gamma_2, \gamma_1)h_{\delta_1}h_{\delta_2},e \Big) \,\, [\textit{Using}\,\, \Cref{E:Identitiesechangeinverse} \textit{and}\,\, \Cref{Equation: Construction 2}]\\
			&= \mc{C}' \Big( \pi_1(\delta_2 \circ \delta_1), f(s(\delta_2 \circ \delta_{1}))\Big) \Big(\mc{H}_{m, \mc{C}}(\gamma_2, \gamma_1)h_{\delta_1}h_{\delta_2},e \Big)
		\end{split}
	\end{equation}
	Thus, \Cref{Definition of F} implies that in order to show $F_1(\delta_2) \circ F_1(\delta_1)=F(\delta_2 \circ \delta_1)$, it is suffcient to prove the follwoing:
	\begin{equation}\label{657}
		\mc{H}_{m, \mc{C}}(\gamma_2, \gamma_1)h_{\delta_1}h_{\delta_2}=h_{\delta_2 \circ \delta_1}.
	\end{equation}
	
	To prove the above identity, consider
	\begin{equation}\nonumber
		\begin{split}
			&\delta_2 \circ \delta_1\\
			&= \underbrace{\mc{C}\Big(\pi_1(\delta_2),s(\delta_2)\Big)(h_{\delta_2},e)}_{\delta_2} \circ \underbrace{\mc{C}\Big(\pi_1(\delta_1),s(\delta_1)\Big)(h_{\delta_1},e)}_{\delta_1} \\
			&= \underbrace{\mc{C}\Big(\pi_1(\delta_2), \underbrace{\mu_{\mc{C}} \big( \pi_1(\delta_1),s(\delta_1)}_{s(\delta_2)=t(\delta_1)} \big)\Big)\big(e, \tau(h_{\delta_1})\big)}_{[{\textit{by (iii), \Cref{Def:categorical connection}}}]}(h_{\delta_2},e) \circ \mc{C}\Big(\pi_1(\delta_1),s(\delta_1)\Big)(h_{\delta_1},e)\\
			&= \underbrace{ \bigg( \mc{C}\Big(\pi_1(\delta_2), \mu_{\mc{C}} \big( \pi_1(\delta_1),s(\delta_1) \big)\Big)\circ  \mc{C}\big(\pi_1(\delta_1),s(\delta_1)\big) \bigg) \bigg(\Big(\alpha_{\tau(h_{\delta_1})}(h_{\delta_2}),\tau(h_{\delta_1}) \Big) \circ(h_{\delta_1},e) \bigg)}_{[\textit{using} \,\, \Cref{E:Identitiesechangeinverse}]} \\
			&= \bigg( \mc{C}\Big(\pi_1(\delta_2), \mu_{\mc{C}} \big( \pi_1(\delta_1),s(\delta_1) \big)\Big)\circ  \mc{C}\big(\pi_1(\delta_1),s(\delta_1)\big) \bigg)  \bigg( \underbrace{\Big(h_{\delta_1}h_{\delta_2}h_{\delta_1}^{-1}, \tau(h_{\delta_1}) \Big)}_{[{\textit{by \Cref{E:Peiffer}}}]}  \circ \Big(h_{\delta_1}, e \Big) \bigg)\\
			&= \mc{C}\Big( (\pi_1(\delta_2 \circ \delta_1),s(\delta_2 \circ \delta_1) \big)\Big) \Big(\mc{H}_{m, \mc{C}}(\gamma_2, \gamma_1)h_{\delta_1}h_{\delta_2},e \Big) [{\textit{by \Cref{Equation: Construction 2}}}].
		\end{split}	
	\end{equation}
	Thus, $\mc{H}_{m, \mc{C}}(\gamma_2, \gamma_1)h_{\delta_1}h_{\delta_2}=h_{\delta_2 \circ \delta_1}$. 
	
	Hence, we proved that $\mc{F}$ is an equivalence of categories.
				
				\end{proof}
\begin{remark}
	One can regard \Cref{Main Theorem 1} as a  Lie 2-group torsor version of the classical correspondence between fibered categories with cleavage and pseudofunctors (\Cref{subsection Fibered categories}) where, in particular, the essential surjectivity of $\mc{F}$ in \Cref{Main Theorem 1}, is a Grothendieck construction in our framework.
\end{remark}
A crucial consequence of \Cref{Main Theorem 1} is a complete characterization of quasi-principal 2-bundles over Lie groupoids. In \cite{chatterjee2023parallel}, we skipped its proof for brevity. Here, we decided to include the proof.

\begin{corollary}\label{Detailed quasicharacterisation}
	For a Lie crossed module $(G,H, \tau, \alpha)$, any quasi-principal $[H \rtimes_{\alpha}G \rra G]$-bundle $(\pi \colon \mb{E} \ra \mb{X}, \mc{C})$ over a Lie groupoid $\mb{X}$ is canonically isomorphic to the quasi-decorated $[H \rtimes_{\alpha}G \rra G]$-bundle $(\pi^{\rm{q-dec}} \colon \mb{E}^{\rm{q-dec}} \ra \mb{X}, \mc{C}^{\rm{q-dec}})$ (\Cref{Theorem:quasi-bundle construction}) associated to the underlying pseudo-principal $(G,H,\tau,\alpha)$-bundle $(\pi_0 \colon E_0 \ra X_0,  \mu_{\mc{C}}, \mc{H}_{u,\mc{C}}, \mc{H}_{m,\mc{C}}, \mb{X})$ (\Cref{Definition makes sense}). The canonical  isomorphism is explicitly given as
	\begin{equation}\nonumber
		\begin{split}
			\theta_{\mb{E}} \colon & \mb{E}^{\rm{q-dec}} \ra \mb{E}\\
			& p \mapsto p, \, \, p \in E_0 \\
			&(\gamma,p,h ) \mapsto \mc{C}(\gamma,p)(h^{-1},e), \, \, (\gamma,p,h ) \in s^{*}E_0 \times H.
		\end{split}
	\end{equation}
	
\end{corollary}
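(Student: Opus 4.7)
The plan is to deduce this corollary directly from \Cref{Main Theorem 1}, avoiding any fresh groupoid-theoretic computation. First I would observe that, by the definition of the equivalence $\mc{F}\colon \rm{Bun}_{\rm quasi}(\mb{X}, [H\rtimes_{\alpha}G\rra G]) \to \rm{Pseudo}(\mb{X},(G,H,\tau,\alpha))$ and by \Cref{Corollary: Essential surjectivity}, both quasi-principal $2$-bundles $(\pi \colon \mb{E} \ra \mb{X}, \mc{C})$ and $(\pi^{\rm q-dec}\colon \mb{E}^{\rm q-dec}\ra \mb{X},\mc{C}^{\rm q-dec})$ are sent by $\mc{F}$ to the same pseudo-principal $(G,H,\tau,\alpha)$-bundle, namely the underlying pseudo-principal bundle of $(\pi,\mc{C})$. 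Consequently, the identity map on $E_0$ is an isomorphism in $\rm{Pseudo}(\mb{X},(G,H,\tau,\alpha))$ between these two images, and by the fullness and faithfulness of $\mc{F}$ established inside the proof of \Cref{Main Theorem 1}, there is a unique morphism $\theta_{\mb E}\colon \mb{E}^{\rm q-dec}\to \mb{E}$ in $\rm{Bun}_{\rm quasi}$ lifting this identity.

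Next I would identify $\theta_{\mb E}$ explicitly by reading off the formula used in the fullness portion of that proof. There, given a morphism $f$ between underlying pseudo-principal bundles, the lifted morphism on arrows was prescribed by \Cref{Definition of F} as $\delta\mapsto \mc{C}_{\text{target}}(\pi_1(\delta),f(s(\delta)))(h_\delta,e)$, where $h_\delta$ is the unique element of $H$ satisfying $\delta=\mc{C}_{\text{source}}(\pi_1(\delta),s(\delta))(h_\delta,e)$. Specializing the source to $\mb{E}^{\rm q-dec}$ equipped with $\mc{C}^{\rm q-dec}$, the target to $\mb{E}$ equipped with $\mc{C}$, and $f=\mathrm{id}_{E_0}$, a direct use of the action formula \Cref{E:Actionondeco} gives $(\gamma,p,e)(h_\delta,e)=(\gamma,p,h_\delta^{-1})$, so $h_\delta=h^{-1}$ for $\delta=(\gamma,p,h)$, yielding exactly $\theta_{\mb E}(\gamma,p,h)=\mc{C}(\gamma,p)(h^{-1},e)$ as in the statement. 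That $\theta_{\mb E}$ is a morphism of quasi-principal $2$-bundles (i.e. preserves the bundle projection, is $\mb{G}$-equivariant, is a functor, and intertwines $\mc{C}^{\rm q-dec}$ with $\mc{C}$) is then inherited line-for-line from the fullness verification in \Cref{Main Theorem 1}.

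Finally, I would invoke the standard fact that a fully faithful functor reflects isomorphisms: since $\mc{F}(\theta_{\mb E})=\mathrm{id}$ is an isomorphism in $\rm{Pseudo}$, it follows at once that $\theta_{\mb E}$ is an isomorphism in $\rm{Bun}_{\rm quasi}$. I expect the main obstacle, should one wish to make the argument self-contained rather than piggy-backing on \Cref{Main Theorem 1}, to lie in producing the inverse smoothly by hand: for $\delta\in E_1$ the freeness of the $H\rtimes_{\alpha}G$-action on $E_1$ and the smoothness of the section $\mc{C}$ determine a smooth function $E_1\ra H$, $\delta\mapsto h_\delta$, and one sets $\theta_{\mb E}^{-1}(\delta)=(\pi_1(\delta),s(\delta),h_\delta^{-1})$; checking the functoriality of this inverse on composable pairs reduces to the identity \Cref{657} that was already derived inside the fullness argument, and checking smoothness reduces to a local trivialization argument on the principal $G_1$-bundle $\pi_1\colon E_1\ra X_1$. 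Once these two routine checks are in place the corollary is complete.
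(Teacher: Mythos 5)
Your argument is correct, and it reaches the same destination as the paper by a more categorical route. The paper's own proof does not invoke the fullness construction: it re-verifies by hand that the explicit map $\theta_{\mb{E}}$ is a morphism of quasi-principal $2$-bundles (source/target/unit/composition checks, $\mb{G}$-equivariance, compatibility of $\mc{C}^{\rm q-dec}$ with $\mc{C}$), and then — this is how it cashes out the word ``canonical'' — it additionally verifies that the family $\eta \colon (\pi\colon\mb{E}\ra\mb{X},\mc{C}) \mapsto \theta_{\mb{E}}$ is a natural transformation $\mc{G}\circ\mc{F} \Longrightarrow \mathrm{id}$, i.e.\ that $F\circ\theta_{\mb{E}} = \theta_{\mb{E}'}\circ(\mc{G}\circ\mc{F})(F)$ for every bundle morphism $F$. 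You instead lift $\mathrm{id}_{E_0}$ through the fully faithful $\mc{F}$, read the formula off \Cref{Definition of F} (your computation $h_{(\gamma,p,h)}=h^{-1}$ via \Cref{E:Actionondeco} matches the observation the paper itself makes), and conclude invertibility because fully faithful functors reflect isomorphisms — which is fine, and is even automatic here since $\rm{Bun}_{\rm{quasi}}(\mb{X},\mb{G})$ is a groupoid by \Cref{Groupoid of quasi principal 2-bundles}. What your write-up does not address is the naturality of $\theta_{\mb{E}}$ in $\mb{E}$, which the paper proves as its part (ii); if you want it, it drops out of the same faithfulness argument, since $\mc{F}(\theta_{\mb{E}})=\mathrm{id}$, $\mc{F}(\theta_{\mb{E}'})=\mathrm{id}$ and $\mc{F}\bigl((\mc{G}\circ\mc{F})(F)\bigr)=F_0=\mc{F}(F)$, so both legs of the naturality square have the same image under the faithful $\mc{F}$. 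Your closing remarks on the hand-built inverse are also sound: its functoriality is exactly the identity \Cref{657} (using that $\mc{H}_{m,\mc{C}}$ takes central values), so the self-contained variant would go through as you sketch. In short, your proof buys brevity by reusing the fullness construction and abstract nonsense, while the paper's buys an explicit, self-contained verification plus the naturality statement.
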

\begin{proof}
	The existence of the required canonical natural isomorphism follows directly from the definition of an equivalence of categories. To show $\theta$ is a canonical natural isomorphism, observe that it is sufficient to prove the following:
	\begin{enumerate}[(i)]
		\item $\theta_{\mb{E}}$ is a morphism of quasi-principal $[H \rtimes_{\alpha}G \rra G ]$-bundles over $\mb{X}$ and
		\item the map $\eta \colon {\rm{Obj}(\rm{Bun}_{\rm{quasi}}}(\mb{X},[H \rtimes_{\alpha}G \rra G])) \ra {\rm{Mor}\big(\rm{Bun}_{\rm{quasi}}}(\mb{X},[H \rtimes_{\alpha}G \rra G])\big)$ given by 
		$(\pi \colon \mb{E} \ra \mb{X}, \mc{C}) \mapsto \theta_{\mb{E}}$, defines a natural transformation from the functor $\mc{G} \circ \mc{F}$ to the functor $\rm{id}_{\mb{E}}$, where $\mc{G}$ is the weak inverse of $\mc{F}$ in \Cref{Main Theorem 1}.
	\end{enumerate}
 
Proof of (i):

	By definition, $\pi_0 \circ \theta_{\mb{E}}(p)= \pi^{\rm{q-dec}}(p)$ and $\pi_1 \circ \theta_{\mb{E}}\big(\gamma,p,h \big)= \pi^{\rm{q-dec}}\big(\gamma,p,h \big)$ for all $p \in E_0$ and $\big((\gamma,p),h \big) \in s^{*}E_0 \times H$. Also $[H \rtimes_{\alpha}G \rra G ]$-equivariance of $\theta_{\mb{E}}$ is a straightforward verification. Moreover,  by definition, $\theta_{\mb{E}}$ is smooth at both object and morphism level. Thus, in order to show $\theta_{\mb{E}}$ is a moprhism of principal $[H \rtimes_{\alpha}G \rra G ]$-bundles over $\mb{X}$,  we just need to prove $\theta_{\mb{E}}$ is a functor. Source-target consistency of $\theta_{\mb{E}}$ is obvious. The compatibility with unit maps follows from the following observation
	$$\theta_{\mb{E}} \big((1_{\pi_0(p)},p), \mc{H}_{u,\mc{C}}(p) \big) =1_p \big(\mc{H}_{u,\mc{C}}(p),e \big)\big((\mc{H}_{u,\mc{C}}(p))^{-1},e \big)$$
	for each $p \in E_0$. 
	
	To show $\theta_{\mb{E}}$ is compatible with the composition, consider a composable pair $(\gamma_2,p_2,h_2\big)$, $(\gamma_1,p_1, h_1\big) \in s^{*}E_0 \times H$. Then, we have
	\begin{equation}\nonumber
		\begin{split}
			&\theta_{\mb{E}}\Big(\big(\gamma_2,\mu(\gamma_1,p_1)\tau(h_1^{-1}),h_2\Big) \circ \theta_{\mb{E}}\Big((\gamma_1,p_1),h_1\Big)  \\
			&= \Bigg( \underbrace{\mc{C}\Big(\gamma_2, \mu(\gamma_1,p_1)\Big) \big(e, \tau(h_1^{-1}) \big)}_{[{\textit{by (iii), \Cref{Def:categorical connection}}}]} (h_2^{-1},e) \Bigg) \circ \Bigg(\mc{C}(\gamma_1,p_1)(h_1^{-1},e) \Bigg)\\
			&=\underbrace{\Big(\mc{C}\big(\gamma_2, \mu(\gamma_1,p_1)\big)\circ \mc{C}(\gamma_1,p_1) \Big)(h_1^{-1}h_2^{-1},e)}_{[\textit{using} \,\, \Cref{E:Identitiesechangeinverse}]}\\
			&= \underbrace{\mc{C}(\gamma_2 \circ \gamma_1,p_1) \big(\mc{H}_{m,\mc{C}}(\gamma_2,
				\gamma_1)h_1^{-1}h_2^{-1},e \big)}_{\textit{by}\,\, [\Cref{Equation: Construction 2}]}\\
			&= \theta_{\mb{E}}\Big( \big( \gamma_2 \circ \gamma_1,p_1 \big),h_2h_1 \big( \mc{H}_{m,\mc{C}}(\gamma_2,\gamma_1)\big)^{-1} \Big)\\
			&= \theta_{\mb{E}}\Big(\big((\gamma_2,p_2),h_2\big) \circ \big((\gamma_1,p_1),h_1 \big)  \Big).
		\end{split}
	\end{equation}
	Thus, $\theta_{\mb{E}}$ is a functor. Note that it is obvious that $\theta_{\mb{E}}\big( \mc{C}^{\rm{q-dec}}(\gamma,p) \big)= \mc{C} \big(\gamma,\theta_{\mb{E}}(p) \big)$ for each $(\gamma,p) \in s^{*}E_0$. Hence, $\theta_{\mb{E}}$ is a morphism of quasi-$[H \rtimes_{\alpha}G \rra G ]$-bundles over $\mb{X}$.
 
	Proof of (ii):
 
	To show $\eta$ is a natural transformation, we need to prove that for any morphism $F \colon  \mb{E} \ra \mb{E}'$ of quasi-principal-$[H \rtimes_{\alpha}G \rra G ]$-bundles over $\mb{X}$, we have
	\begin{equation}\label{Canonical iso}
		F \circ \theta_{\mb{E}}= \theta_{\mb{E}'} \circ \big(\mc{G} \circ \mc{F}\big)(F)
	\end{equation}
	Note that for each $p \in E_0$, we have
	$F_0 \circ \theta_{\mb{E}'}(p)=F_0(p)= \big(\mc{G} \circ \mc{F}\big)(F)\big(p \big)= \theta_{\mb{E}'} \circ \big(\mc{G} \circ \mc{F}\big)(F)\big(p \big)$. Hence,  we get the equality of   \Cref{Canonical iso} at the object level. The equality at morphism level follows easily by observing $h_{\gamma,p,h}=h^{-1}$, where $h_{\gamma,p,h}$ is the unique element in $H$ such that $(\gamma,p, h )= \mc{C}^{\rm{q-dec}} \Big( \pi^{\rm{q-dec}}_1 (\gamma,p, h), s\big( \gamma,p, h \big) \Big)\Big(h_{(\gamma,p,h)},e \Big)$.
\end{proof}
An immediate consequence of \Cref{Detailed quasicharacterisation} is the following:

\begin{corollary}\label{Splitting fibration}
	For a Lie crossed module $(G, H,\tau,\alpha)$ and a Lie groupoid $\mb{X}$, the functor $\mc{F}$ in \Cref{Main Theorem 1} restricts to an essentially surjective, full and faithful functor to the subcategory ${\rm{Bun}}_{{\rm{Cat}}}(\mb{X}, [H \rtimes_{\alpha}G \rra G]) )$ of ${\rm{Bun}}_{{\rm{quasi}}}(\mb{X}, [H \rtimes_{\alpha}G \rra G])$ and hence yielding an equivalence of categories between ${\rm{Bun}}_{{\rm{Cat}}}(\mb{X}, [H \rtimes_{\alpha}G \rra G]) )$ and ${\rm{Bun}}(\mb{X}, G)$.
\end{corollary}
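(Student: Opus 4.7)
The plan is to deduce the corollary directly from the equivalence $\mathcal{F}$ in \Cref{Main Theorem 1} by identifying the subcategory $\mathrm{Bun}_{\mathrm{Cat}}(\mathbb{X}, [H \rtimes_{\alpha}G \rra G])$ with a distinguished full subcategory of $\mathrm{Pseudo}(\mathbb{X}, (G,H,\tau,\alpha))$, and then recognising the latter as nothing but $\mathrm{Bun}(\mathbb{X}, G)$. First I would establish that $\mathcal{F}$ sends categorical-principal $2$-bundles to pseudo-principal bundles whose unital and compositional deviations are trivial. Indeed, if $\mathcal{C}$ is a categorical connection, then conditions (iv) and (v) of \Cref{Def:categorical connection} force the defining equations \eqref{Equation: Construction 1} and \eqref{Equation: Construction 2} of $h_p$ and $h_{\gamma_2,\gamma_1}$ in the proof of \Cref{Lemma: cohenrence of canonical quasi action} to read $\mathcal{C}(1_{\pi(p)},p)=1_p$ and $\mathcal{C}(\gamma_2,\mu_{\mathcal{C}}(\gamma_1,p))\circ\mathcal{C}(\gamma_1,p)=\mathcal{C}(\gamma_2\circ\gamma_1,p)$, so by the freeness of the $H$-action one obtains $\mathcal{H}_{u,\mathcal{C}}\equiv e$ and $\mathcal{H}_{m,\mathcal{C}}\equiv e$. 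Conversely, if $\mathcal{F}(\pi\colon\mathbb{E}\to\mathbb{X},\mathcal{C})$ has trivial deviations, the same equations immediately recover conditions (iv) and (v), showing $\mathcal{C}$ is categorical. Thus $\mathrm{Bun}_{\mathrm{Cat}}$ is precisely the $\mathcal{F}$-preimage of the full subcategory $\mathrm{Pseudo}^{e}(\mathbb{X},(G,H,\tau,\alpha))\subseteq\mathrm{Pseudo}(\mathbb{X},(G,H,\tau,\alpha))$ consisting of objects with trivial unital and compositional deviations.

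Next I would identify $\mathrm{Pseudo}^{e}(\mathbb{X},(G,H,\tau,\alpha))$ with $\mathrm{Bun}(\mathbb{X}, G)$. When $\mathcal{H}_u\equiv e$ and $\mathcal{H}_m\equiv e$, conditions (a) and (b) of \Cref{Definition:PseudoprincipalLiecrossedmodulebundle} collapse to $\mu(1_{\pi(p)},p)=p$ and $\mu(\gamma_2,\mu(\gamma_1,p))=\mu(\gamma_2\circ\gamma_1,p)$, so $\mu$ becomes an honest left action of $\mathbb{X}$ on $E_G$; combined with $G$-equivariance this is exactly \Cref{Definition: Principal Lie group bundle over a Lie groupoid}. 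At the level of arrows, condition \eqref{funitcompatible} becomes vacuous when both deviations are trivial, while \eqref{fcompatible} is just the morphism condition in $\mathrm{Bun}(\mathbb{X},G)$. The forgetful assignment $(\pi\colon E_G\to X_0,\mu,e,e,\mathbb{X})\mapsto(\pi\colon E_G\to X_0,\mu,\mathbb{X})$ is therefore an isomorphism of groupoids $\mathrm{Pseudo}^{e}(\mathbb{X},(G,H,\tau,\alpha))\cong\mathrm{Bun}(\mathbb{X},G)$.

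Composing with the equivalence of \Cref{Main Theorem 1}, one obtains the desired equivalence $\mathrm{Bun}_{\mathrm{Cat}}(\mathbb{X},[H\rtimes_{\alpha}G\rra G])\simeq\mathrm{Bun}(\mathbb{X},G)$. Essential surjectivity can be seen explicitly: given a principal $G$-bundle $(\pi\colon E_G\to X_0,\mu,\mathbb{X})$ over $\mathbb{X}$, regard it as an object of $\mathrm{Pseudo}^{e}$ with trivial deviations and apply the quasi-decoration construction of \Cref{Theorem:quasi-bundle construction}; by part (3) of that proposition, the resulting quasi connection $\mathcal{C}^{\mathrm{q\text{-}dec}}$ is in fact categorical, and $\mathcal{F}$ sends it back to the original principal $G$-bundle. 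Fullness and faithfulness are inherited from $\mathcal{F}$ together with the observation above that \eqref{funitcompatible} imposes no extra condition in the categorical case, so the hom-sets on the two sides coincide.

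I do not anticipate a genuine obstacle here: the statement is a clean corollary of \Cref{Main Theorem 1}, and the only thing to verify carefully is the equivalence of the notion of ``categorical connection'' with the vanishing of both deviation maps, which is built into the constructions \eqref{Equation: Construction 1}--\eqref{Equation: Construction 2}. The mild technical point worth writing out is that under $\mathcal{F}$, the subcategory $\mathrm{Bun}_{\mathrm{Cat}}$ is genuinely full inside $\mathrm{Bun}_{\mathrm{quasi}}$, so the restriction of a full, faithful functor remains full and faithful; this is immediate from the compatibility condition $F_1(\mathcal{C}(\gamma,p))=\mathcal{C}'(\gamma,F_0(p))$ defining morphisms in $\mathrm{Bun}_{\mathrm{quasi}}$.
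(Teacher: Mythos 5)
Your proposal is correct and follows essentially the route the paper intends: the paper records this corollary as an immediate consequence of \Cref{Detailed quasicharacterisation}, the point being exactly your identification of categorical connections with vanishing unital and compositional deviations (via \eqref{Equation: Construction 1}--\eqref{Equation: Construction 2} and part (3) of \Cref{Theorem:quasi-bundle construction}) and of the trivial-deviation pseudo-principal bundles with objects of ${\rm{Bun}}(\mb{X},G)$. Your write-up merely spells out the details (including fullness of ${\rm{Bun}}_{\rm{Cat}}$ inside ${\rm{Bun}}_{\rm{quasi}}$) that the paper leaves implicit, so there is nothing to correct.
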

The following observation is partly motivated by the \textbf{Lemma 4.20}, \cite{MR3521476} and is a consequence of \Cref{Splitting fibration}. The construction below was not presented in either of our papers \cite{MR4403617} or \cite{chatterjee2023parallel} arising out of this thesis.
\subsection*{Construction of decorated principal 2-bundles (\Cref{Prop:Decoliegpd}) from functors}
Consider a functor $T \colon \mb{X} \ra G {\rm{-}} {\rm{Tor}}$, from a transitive Lie groupoid $\mb{X}$ (\Cref{Definition: Transitive Lie groupoid}) to the category of $G$-torsors, such that for each $x \in X_0$, the restriction $T|_{{\rm{Aut}_{\mb{X}}}(x)} \colon {\rm{Aut}_{\mb{X}}}(x) \ra T(x)$ is smooth.	Now, fix a point $z \in X_0$. Then by \Cref{Basic properties}, $t_z \colon s^{-1}(z) \ra X_0$ is a principal ${\rm{Aut}}_{\mb{X}}(z)$-bundle over $X_0$, where the Lie group ${\rm{Aut}}_{\mb{X}}(z)$ acts on $s^{-1}(z)$ by the composition, and the projection $t_x$ is given by the restriction of the target map $t|_{s^{-1}(z)}$.	 Observe that there is a left action of ${\rm{Aut}}_{\mb{X}}(z)$ on $T(z)$, given by $(\delta,p) \mapsto (T(\delta)(p))$ for $\delta \in {\rm{Aut}}_{\mb{X}}(z), p \in T(z)$. This induces a left action of ${\rm{Aut}}_{\mb{X}}(z)$ on $s^{-1}(z) \times T(z)$, $(\delta,(\gamma,p)) \mapsto (\gamma \circ \delta^{-1}, T(\delta)(p))$. Then consider the associated bundle $\pi_z \colon E_{z} := \frac{s^{-1}(z) \times T(z)}{{\rm{Aut}}_{\mb{X}}(z)} \ra X_0$, whose fibre is $T(z) \cong G$. Note that the right action of $G$ on $E_z$, $([\gamma,p],g) \mapsto (\gamma, pg)$, defines a principal $G$-bundle $\pi_z \colon E_z \ra X_0$ over $X_0$. The map $\mu_z \colon s^{*}E_z \ra E_z$ defined as $(\Gamma, [\gamma,p]) \mapsto [\Gamma \circ \gamma, p]$, is well defined and gives an action of the Lie groupoid $\mb{X}$ on the manifold $E_z$. Thus, we get a a principal $G$-bundle $(\pi_z \colon E_z \ra X_0, \mu_z, [X_1 \rra X_0])$ over $\mb{X}$, with respect to the point $z \in X_0$. Now let us fix another point $y \in X_0$, and consider the associated principal $G$-bundle $(\pi_y \colon E_y \ra X_0, \mu_y, [X_1 \rra X_0])$ over $\mb{X}$. Then, it is straighforward to show that for any $\theta \in {\rm{Hom}}(z,y)$, the map $\theta_{z,y} \colon E_z \ra E_y$, given by $[\gamma,p] \mapsto [\gamma \circ \theta^{-1}, T(\theta)(p)]$, is an isomorphism of principal $G$-bundles over the Lie groupoid $\mb{X}$ (\Cref{morphism of principal bundles}). Hence, summarizing the above discussion, we obtain the following result:
\begin{proposition}
	If a functor $T \colon \mb{X} \ra G {\rm{-}} {\rm{Tor}}$, from a transitive Lie groupoid $\mb{X}$ to the category of $G$-torsors, satisfies the property that for each $x \in X_0$, the restriction $$T|_{{\rm{Aut}_{\mb{X}}}(x)} \colon {\rm{Aut}_{\mb{X}}}(x) \ra T(x)$$ is smooth, then it determines a principal $G$-bundle $(\pi:E_G \ra X_0, \mu, \mb{X})$ over the Lie groupoid $\mb{X}$ and the association is unique upto an isomorphism.
\end{proposition}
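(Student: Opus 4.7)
The construction preceding the statement already indicates the blueprint: fix a base point $z \in X_0$, and build the principal $G$-bundle $(\pi_z \colon E_z \to X_0, \mu_z, \mb{X})$ as the associated bundle of the principal ${\rm{Aut}}_{\mb{X}}(z)$-bundle $t_z \colon s^{-1}(z) \to X_0$ (guaranteed by \Cref{Basic properties} and transitivity of $\mb{X}$) with respect to the left ${\rm{Aut}}_{\mb{X}}(z)$-action on the $G$-torsor $T(z)$ induced by $T$. The smoothness of $T|_{{\rm{Aut}}_{\mb{X}}(z)} \colon {\rm{Aut}}_{\mb{X}}(z) \to {\rm{Aut}}(T(z)) \cong G$ (see \Cref{Automorphism group of the fibre}) is exactly what is needed to make this associated bundle construction work in the smooth category. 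So the first step is to carefully verify that the quotient $E_z = \frac{s^{-1}(z) \times T(z)}{{\rm{Aut}}_{\mb{X}}(z)}$ is a smooth manifold and that, together with the obvious projection $\pi_z$ and the right $G$-action $([\gamma,p],g) \mapsto [\gamma, pg]$, it forms a principal $G$-bundle over $X_0$. This is essentially the classical associated bundle construction (\Cref{Associate bundle}), with the only subtlety being the use of the hypothesised smoothness to get a Lie group action of ${\rm{Aut}}_{\mb{X}}(z)$ on $T(z)$.

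Next I would verify that $\mu_z \colon s^{*}E_z \to E_z$, $(\Gamma, [\gamma, p]) \mapsto [\Gamma \circ \gamma, p]$, is well defined (independent of the representative $(\gamma, p)$ chosen), smooth, and satisfies the axioms of \Cref{Definition: Principal Lie group bundle over a Lie groupoid}. Well-definedness is a direct check against the equivalence relation defining $E_z$; the Lie groupoid action axioms follow from associativity and unitality of composition in $\mb{X}$; and commutativity with the right $G$-action is immediate since $G$ acts on the second coordinate while $\mu_z$ acts on the first. Smoothness of $\mu_z$ descends from the smoothness of composition in $\mb{X}$ together with the fact that the quotient map is a surjective submersion.

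For the uniqueness up to isomorphism, I would pick a second base point $y \in X_0$ and exploit transitivity to choose some $\theta \in {\rm{Hom}}_{\mb{X}}(z,y)$, and then consider the map $\theta_{z,y} \colon E_z \to E_y$ defined by $[\gamma, p] \mapsto [\gamma \circ \theta^{-1}, T(\theta)(p)]$ sketched above. I would check (a) well-definedness with respect to the two different equivalence relations (using functoriality of $T$ and the relation $T(\theta \circ \delta \circ \theta^{-1}) = T(\theta)\circ T(\delta) \circ T(\theta)^{-1}$), (b) smoothness and $G$-equivariance, (c) compatibility with $\pi_y \circ \theta_{z,y} = \pi_z$, and (d) compatibility with the groupoid actions $\mu_z, \mu_y$. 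A direct inverse $(\theta^{-1})_{y,z}$ is then provided by the same formula with $\theta$ replaced by $\theta^{-1}$, showing $\theta_{z,y}$ is an isomorphism of principal $G$-bundles over $\mb{X}$ in the sense of \Cref{morphism of principal bundles}. Independence from the choice of $\theta$ itself is not required, since the statement only claims uniqueness up to (some) isomorphism.

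The main obstacle, I expect, is not any single axiom but rather the bookkeeping around the associated bundle quotient: making sure that the ${\rm{Aut}}_{\mb{X}}(z)$-action on $s^{-1}(z) \times T(z)$ is smooth, free, and proper, so that the quotient is a genuine smooth manifold and $\pi_z$ is a principal $G$-bundle. Freeness comes from freeness of the ${\rm{Aut}}_{\mb{X}}(z)$-action on $s^{-1}(z)$, properness from properness of that same action (itself coming from $t_z$ being a principal bundle), and smoothness of the action on $T(z)$ from the hypothesis on $T|_{{\rm{Aut}}_{\mb{X}}(x)}$. Once this is in place, all the remaining verifications are essentially functorial calculations, and the desired principal $G$-bundle over $\mb{X}$, unique up to isomorphism, is obtained.
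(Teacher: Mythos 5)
Your proposal follows exactly the paper's route: fix a base point $z$, use the principal ${\rm{Aut}}_{\mb{X}}(z)$-bundle $t_z \colon s^{-1}(z) \to X_0$ and the ${\rm{Aut}}_{\mb{X}}(z)$-action on $T(z)$ to form the associated bundle $E_z$, endow it with the right $G$-action and the map $\mu_z(\Gamma,[\gamma,p])=[\Gamma\circ\gamma,p]$, and obtain uniqueness up to isomorphism via $\theta_{z,y}\colon [\gamma,p]\mapsto[\gamma\circ\theta^{-1},T(\theta)(p)]$ for a chosen $\theta\in{\rm{Hom}}_{\mb{X}}(z,y)$. This is correct and matches the paper's construction, with your extra remarks on freeness and properness of the quotient being a reasonable amplification of the same argument.
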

Now, as an immediate consequence of \Cref{Splitting fibration}, we obtain the following result:

\begin{corollary}
	Let $(G, H, \tau, \alpha)$ be a Lie crossed module.	If a functor $T \colon \mb{X} \ra G {\rm{-}} {\rm{Tor}}$, from a transitive Lie groupoid $\mb{X}$ to the category of $G$-torsors, satisfies the property that for each $x \in X_0$, the restriction $$T|_{{\rm{Aut}_{\mb{X}}}(x)} \colon {\rm{Aut}_{\mb{X}}}(x) \ra T(x)$$ is smooth, then it determines a decorated principal  $[H \rtimes_{\alpha} G 
	\rra G]$-bundle (\Cref{Prop:Decoliegpd}) over $\mb{X}$ and the association is unique upto an isomorphism.
\end{corollary}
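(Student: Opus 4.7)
The plan is to combine the preceding proposition with the decoration construction of \Cref{Prop:Decoliegpd} in a two-step fashion, and then derive uniqueness by invoking the functoriality of decoration (equivalently, the equivalence of categories supplied by \Cref{Splitting fibration}). The statement has the flavour of a corollary: all the serious work was already absorbed into the preceding proposition and into \Cref{Prop:Decoliegpd}; what is left is to assemble these pieces.

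First, I would apply the preceding proposition to the functor $T \colon \mb{X} \to G\text{-}{\rm Tor}$, obtaining a principal $G$-bundle $(\pi \colon E_G \to X_0, \mu, \mb{X})$ over the transitive Lie groupoid $\mb{X}$, well-defined up to an isomorphism of principal $G$-bundles over $\mb{X}$ (in the sense of \Cref{morphism of principal bundles}). Concretely, one fixes a point $z \in X_0$, forms the associated principal $G$-bundle $\pi_z \colon E_z = \frac{s^{-1}(z) \times T(z)}{{\rm Aut}_{\mb{X}}(z)} \to X_0$, and equips it with the $\mb{X}$-action $\mu_z(\Gamma, [\gamma, p]) = [\Gamma \circ \gamma, p]$; a different choice of basepoint $y \in X_0$ together with any $\theta \in {\rm Hom}(z, y)$ yields an isomorphism $\theta_{z,y} \colon E_z \to E_y$ of principal $G$-bundles over $\mb{X}$.

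Second, I would feed $(\pi \colon E_G \to X_0, \mu, \mb{X})$ together with the Lie crossed module $(G, H, \tau, \alpha)$ into \Cref{Prop:Decoliegpd} to produce the decorated principal $[H \rtimes_{\alpha} G \rra G]$-bundle $\pi^{\rm dec} \colon \mb{E}^{\rm dec} \to \mb{X}$ canonically attached to $T$.

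For the uniqueness clause, I would argue that the decoration construction of \Cref{Prop:Decoliegpd} is functorial on the groupoid ${\rm Bun}(\mb{X}, G)$: any isomorphism $f \colon E_G \to E'_G$ of principal $G$-bundles over $\mb{X}$ induces an isomorphism $f^{\rm dec} \colon \mb{E}^{\rm dec} \to \mb{E}'^{\rm dec}$ of the associated decorated $[H \rtimes_{\alpha} G \rra G]$-bundles via $(\gamma, p, h) \mapsto (\gamma, f(p), h)$, which is immediate from the explicit source, target, composition and action formulas in \Cref{Prop:Decoliegpd}. Alternatively, one can appeal directly to \Cref{Splitting fibration}: decoration is precisely (a weak inverse to) the equivalence ${\rm Bun}_{\rm Cat}(\mb{X}, [H \rtimes_{\alpha} G \rra G]) \simeq {\rm Bun}(\mb{X}, G)$, so isomorphic principal $G$-bundles over $\mb{X}$ yield isomorphic decorated 2-bundles. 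Combined with the uniqueness up to isomorphism in the preceding proposition, this gives the desired uniqueness. There is no real obstacle here; the only minor bookkeeping is to verify that the bundle-isomorphism $\theta_{z,y}$ arising from the choice of basepoint interacts correctly with the decoration, which is a routine check from the formulas defining $\mb{E}^{\rm dec}$.
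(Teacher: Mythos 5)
Your proposal is correct and follows essentially the same route as the paper: the paper obtains the corollary by combining the preceding proposition (which yields the principal $G$-bundle over $\mb{X}$, unique up to isomorphism) with the equivalence of \Cref{Splitting fibration}, i.e.\ with the decoration construction of \Cref{Prop:Decoliegpd}. Your extra explicit check that an isomorphism $f$ of principal $G$-bundles over $\mb{X}$ induces $(\gamma,p,h)\mapsto(\gamma,f(p),h)$ on the decorated bundles is a correct unwinding of that same equivalence, not a different argument.
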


		\subsection{Quasi-connections as retractions}\label{Quasiconnections as retractions}
Recall, in \Cref{factorization of morphisms}, we saw that any morphism of Lie groupoids $F \colon \mb{X} \ra \mb{Y}$ has a canonical factorization through its weak fibered product $\mb{Y} \times_{\mb{Y},F}^{h} \mb{X}:= \mb{Y} \times_{{\rm{id}}_{\mb{Y}}, \mb{Y}, F}^{h} \mb{X}$. In this subsection, we will see one of its consequences. Precisely, for a Lie 2-group $\mb{G}$, given a principal $\mb{G}$-bundle $ \pi \colon \mb{E} \ra \mb{X}$, we will show a one-one correspondence between the following two:

`\textit{Set of all unital connections on $ \pi \colon \mb{E} \ra \mb{X}$' (see \Cref{Unital connection}}) $${\rm{and}}$$ `\textit{Set of all morphisms of Lie groupoids $r \colon \mb{X} \times_{\mb{X}, \pi}^{h} \mb{E} \ra \mb{E}$ (see \Cref{subsection pullbacks in Lie groupoids})
, satisfying }
\begin{enumerate}[(i)]
	\item \textit{$r \circ \pi_{\mb{E}}= \rm{id}_{\mb{E}}$ (i.e $r$ is a retraction of $\pi_{\mb{E}}$)},
	\item  \textit{$r$ is a morphism of $\mb{G}$-torsors and}
	\item  \textit{$\pi \circ r= \pi_{\mb{X}}$}.'
\end{enumerate}

Furthermore, we will also obtain a characterization of categorical connections in terms of such retractions. A similar result in the framework of general fibered categories (\Cref{subsection Fibered categories}) is available in \cite{del2008homotopy}, and a cursory mention of an analogous result in the set up of general Lie groupoid fibrations can be found in \cite{MR3968895}. However, to the best of our knowledge, the corresponding result in the framework of a Lie groupoid fibration equipped with an action of a Lie 2-group has not been explored in the literature. So, here we decide to provide the proof with some details, even though we have omitted the proof in either of our papers \cite{MR4403617} or \cite{chatterjee2023parallel} arising out of this thesis.

\begin{proposition}
	Given a Lie 2-group $\mb{G}$, let $\pi \colon \mb{E} \ra \mb{X}$ be a principal $\mb{G}$-bundle over a Lie groupoid $\mb{X}$. Then we have the following:
	\begin{itemize}
		\item[(a)] there is an induced right action of $\mb{G}$ on the Lie groupoid $\mb{X} \times_{\mb{X}, \pi}^{h} \mb{E}$, defined as
		\begin{equation}\nonumber
			\begin{split}
				\rho\colon &(\mb{X} \times_{\mb{X}, \pi}^{h} \mb{E}) \times \mb{G} \ra \mb{X} \times_{\mb{X}, \pi}^{h} \mb{E}\\
				&\big((x, \gamma, p), g \big) \mapsto \big(x, \gamma, pg \big), \\
				& \big( x \xrightarrow{\zeta} x', p \xrightarrow{\delta} p', \phi \big) \mapsto (\zeta, \delta \phi),
			\end{split}
		\end{equation}
		\item[(b)] the set of $\mb{G}$-equivariant morphisms of Lie groupoids $r \colon \mb{X} \times_{\rm{id}_{\mb{X}}, \mb{X}, \pi}^{h} \mb{E} \ra \mb{E}$ that satisfy $\pi \circ r= \pi_{\mb{X}}$, $r \circ \pi_{\mb{E}}= \rm{id}_{\mb{E}}$, is in one-one correspondence with the set of unital connections $\mc{C}$ on $\pi \colon  \mb{E} \ra \mb{X}$, where $\pi_{\mb{E}}$ and $\pi_{\mb{X}}$ are as defined in \Cref{piE} and \Cref{piX} respectively,
		\item[(c)] an unital connection $\mc{C}$ on $\pi \colon \mb{E} \ra \mb{X}$ is a categorical connection if and only if the image of morphisms of the form  $(\Gamma, 1_p)$ under the associated map $r_{\mc{C}} \colon \mb{X} \times_{\mb{X}, \pi}^{h} \mb{E} \ra \mb{E}$, lies in the image of $\mc{C}$.
	\end{itemize}
\end{proposition}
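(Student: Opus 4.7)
\smallskip

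My approach is to treat the three parts in sequence, building the entire correspondence explicitly and then reading off (c) as a coherence check. For (a), I would take the candidate action as written and verify it is a well-defined morphism of Lie groupoids. The key observation is that since $\pi_1\colon E_1\to X_1$ is a principal $G_1$-bundle, one has $\pi_1(\delta\phi)=\pi_1(\delta)$, so the weak-pullback compatibility $\zeta\circ\gamma=\gamma'\circ \pi(\delta)$ is preserved under $(\zeta,\delta)\mapsto(\zeta,\delta\phi)$; the interchange law \eqref{E:Identitiesechangeinverse} then ensures compatibility with composition. Smoothness follows from smoothness of the action $\rho\colon\mb{E}\times\mb{G}\to\mb{E}$ and of the submersion structure of the weak fibered product.

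For (b) I would exhibit the two mutually inverse assignments. Given a unital connection $\mc{C}$, define
\begin{equation}\nonumber
r_{\mc{C}}(x,\gamma,p):=t\bigl(\mc{C}(\gamma,p)\bigr),\qquad r_{\mc{C}}(\zeta,\delta):=\mc{C}(\gamma',p')\circ\delta\circ \mc{C}(\gamma,p)^{-1}.
\end{equation}
One checks: well-definedness of the target/source (using $\pi_1(\mc{C}(\gamma,p))=\gamma$); projection $\pi_1\circ r_{\mc{C}}=\pi_{\mb{X}}$ (from $\zeta\circ\gamma=\gamma'\circ\pi(\delta)$); functoriality (the $\mc{C}(\gamma',p')^{-1}\circ\mc{C}(\gamma',p')$ telescoping); the retraction property $r_{\mc{C}}\circ\pi_{\mb{E}}=\mathrm{id}_{\mb{E}}$ (using $\mc{C}(1_{\pi(p)},p)=1_p$); and $\mb{G}$-equivariance (from the interchange law and $1_{g_2}\circ 1_{g_1}=1_{g_2\circ g_1}$). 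In the reverse direction, given $r$ define $\mc{C}_{r}(\gamma,p):=r(\gamma,1_p)$, where $(\gamma,1_p)$ is viewed as the arrow $(\pi(p),1_{\pi(p)},p)\to(t(\gamma),\gamma,p)$; then $\mc{C}_r$ is a unital connection because of $r\circ\pi_{\mb{E}}=\mathrm{id}_{\mb{E}}$ and $\mb{G}$-equivariance of $r$. The round trip $\mc{C}\mapsto r_{\mc{C}}\mapsto\mc{C}_{r_{\mc{C}}}$ is immediate from unitality. The harder round trip $r\mapsto\mc{C}_r\mapsto r_{\mc{C}_r}$ is the main obstacle: I would handle it by the decomposition $(\zeta,\delta)=(1_{x'},\delta)\circ(\zeta,1_p)$ and the further identity $(1_{x'},\delta)\circ(\gamma'\circ\pi(\delta),1_p)=(\gamma',1_{p'})\circ\pi_{\mb{E}}(\delta)$ in the weak fibered product, which, upon applying the functor $r$, forces $r(1_{x'},\delta)=\mc{C}_r(\gamma',p')\circ\delta\circ\mc{C}_r(\gamma'\circ\pi(\delta),p)^{-1}$ and hence $r=r_{\mc{C}_r}$.

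For (c), I would compute $r_{\mc{C}}(\Gamma,1_p)$ directly on an arrow $(x,\gamma,p)\to(x',\Gamma\circ\gamma,p)$:
\begin{equation}\nonumber
r_{\mc{C}}(\Gamma,1_p)=\mc{C}(\Gamma\circ\gamma,p)\circ\mc{C}(\gamma,p)^{-1}.
\end{equation}
This morphism has source $t(\mc{C}(\gamma,p))$ and projection $\Gamma$, so the only candidate element of the image of $\mc{C}$ it can coincide with is $\mc{C}\bigl(\Gamma,t(\mc{C}(\gamma,p))\bigr)$, by freeness of the $G_1$-action on $E_1$. Thus $r_{\mc{C}}(\Gamma,1_p)\in\mathrm{Im}(\mc{C})$ for all such arrows precisely when
\begin{equation}\nonumber
\mc{C}(\Gamma\circ\gamma,p)=\mc{C}\bigl(\Gamma,t(\mc{C}(\gamma,p))\bigr)\circ\mc{C}(\gamma,p),
\end{equation}
which is exactly condition (ii) of \Cref{Proposition: Quasi-Cat}. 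Combined with unitality, this is the defining property of a categorical connection. I expect the principal technical nuisance throughout to be the bookkeeping of source/target objects in the weak fibered product and the smoothness verifications; the conceptual content is standard fibered-category reasoning transported to the principal-$2$-bundle setting.
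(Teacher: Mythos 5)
Your proposal follows essentially the paper's own route: the formulas $r_{\mc{C}}(x,\gamma,p)=t(\mc{C}(\gamma,p))$, $r_{\mc{C}}(\zeta,\delta)=\mc{C}(\gamma',p')\circ\delta\circ\mc{C}(\gamma,p)^{-1}$ and $\mc{C}_r(\gamma,p)=r(\gamma,1_p)$, the list of verifications in (a) and (b), and the computation for (c) all coincide with the paper's argument (for (c) the paper likewise identifies $\bar\gamma$ and $q$ by applying $\pi_1$ and $s$; your appeal to freeness of the $G_1$-action is unnecessary, since $\mc{C}$ being a section of $P$ already does this, but it is harmless).

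The one place where your argument as written does not close is the round trip $r\mapsto\mc{C}_r\mapsto r_{\mc{C}_r}$. The decomposition $(\zeta,\delta)=(1_{x'},\delta)\circ(\zeta,1_p)$ together with your identity only pins down $r$ on the factors of the form $(1_{x'},\delta)$; it says nothing yet about $r(\zeta,1_p)$ when the source object carries a non-identity arrow $\gamma$, and $\mc{C}_r$ by definition only records $r$ on arrows $(\eta,1_p)$ based at $(\pi(p),1_{\pi(p)},p)$, so "hence $r=r_{\mc{C}_r}$" does not yet follow. The fix is one more identity of exactly the same kind, namely $(\zeta,1_p)=(\zeta\circ\gamma,1_p)\circ(\gamma,1_p)^{-1}$ viewed with source $(x,\gamma,p)$, which upon applying $r$ gives $r(\zeta,1_p)=\mc{C}_r(\zeta\circ\gamma,p)\circ\mc{C}_r(\gamma,p)^{-1}$; combined with your formula for $r(1_{x'},\delta)$, the middle terms cancel because $\zeta\circ\gamma=\gamma'\circ\pi_1(\delta)$, yielding $r(\zeta,\delta)=\mc{C}_r(\gamma',p')\circ\delta\circ\mc{C}_r(\gamma,p)^{-1}=r_{\mc{C}_r}(\zeta,\delta)$. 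The paper does this in a single stroke: using $\delta=r(\pi_{\mb{E}}(\delta))$ one writes $r_{\mc{C}_r}(\zeta,\delta)=r\bigl((\gamma',1_{p'})\circ\pi_{\mb{E}}(\delta)\circ(\gamma,1_p)^{-1}\bigr)=r\bigl(\gamma'\circ\pi_1(\delta)\circ\gamma^{-1},\delta\bigr)=r(\zeta,\delta)$, the last step by the compatibility relation \Cref{Weakfibreproductmorphims} of the weak fibered product; either patch completes your proof.
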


\begin{proof}

	\subsubsection*{Proof of (a):}
 
	A straightforward verification. 
 
	\subsubsection*{Proof of (b):}
 
	Suppose $r \colon \mb{X} \times_{\mb{X}, \pi}^{h} \mb{E}\ra \mb{E}$  is a $\mb{G}$-equivariant morphism of Lie grouipoids satisfying  $\pi \circ r= \pi_{\mb{X}}$ and $r \circ \pi_{\mb{E}}= \rm{id}_{\mb{E}}$.
	\[
	\begin{tikzcd}
		\mb{E} \arrow[d, "\pi"'] \arrow[r, "\pi_{\mb{E}}", bend left] & \mb{X} \times_{\mb{X}, \pi}^{h} \mb{E} \arrow[ld, "\pi_{\mb{X}}", bend left] \arrow[l, "r"', bend left=49] \\
		\mb{X}                                           &                                                           
	\end{tikzcd}.\]
	Define a map $\mc{C}_r \colon s^{*}E_0 \ra E_1$, taking $(\gamma,p) \in s^{*}E_0$ to the image of the element $(\gamma,1_p) \in {\rm{Hom}}\big( (s(\gamma),1_\pi(p),p), (t(\gamma),\gamma,p) \big)$ under $r$. Now, define $P \colon E_1 \ra s^{*}E_0$, $\gamma \mapsto (\pi_1(\gamma),s(\gamma))$. Then, we have	
	% We denote the element $\mc{C}_r(\gamma,p)$ by the notation $r(\gamma,1_p)_{1_{\pi(p)},\gamma}$. 
	%	Recall, we defined a map $P \colon E_1 \ra s^{*}E_0$ by $\gamma \mapsto (\pi_1(\gamma),s(\gamma))$ in 	\Cref{Definition:Quasicategorical Connection}.	Now consider,
	\begin{equation}\nonumber
		\begin{split}
			%			&P \circ \mc{C}_r(\gamma,p)\\
			&P \circ r(\gamma,1_p)\\
			%			&= \Big( \pi \big(r(\gamma,1_p)_{1_{\pi(p)},\gamma} \big), s\big( r(\gamma,1_p)_{1_{\pi(p)},\gamma}\big) \Big) \\
			&= \big( \pi_{\mb{X}}(\gamma, 1_p), r \circ s(\gamma,1_p) \big)\\
			&= \Big( \gamma, r\big(s(\gamma),1_{\pi(p)},p \big) \Big)\\
			&= \big( \gamma, r \circ \pi_{\mb{E}}(p) \big)\\
			&= \big( \gamma,p \big).
		\end{split}
	\end{equation}	
	Hence, $\mc{C}_r$ is a section of $P$ . Now, as $r$ is $\mb{G}$-equivariant, it follows $\mc{C}_r(\gamma, pg)= \mc{C}_r(\gamma,p)1_g$ for $(\gamma,p) \in s^{*}E_0$, $g \in G_0$. Note that from the definition in \Cref{piE}, we have $\pi_{\mb{E}}(1_p)= (1_{\pi(p)},1_p) \in {\rm{Hom}} \big( (\pi(p), 1_{\pi(p)},p), (\pi(p),1_{\pi(p)},p) \big)$ for each $p \in E_0$. So, $\mc{C}_r(1_{\pi}(p),p)= r(1_{\pi(p)},1_p)=r \circ \pi_{\mb{E}}(1_p)=1_p$, which concludes that $\mc{C}_r$ is a unital connection.
	
	Conversely, let $\mc{C}$ be a unital  connection on $\pi \colon \mb{E} \ra \mb{X}$. Now, define a map $r_{\mc{C}} \colon \mb{X} \times_{\mb{X}, \pi}^{h} \mb{E} \ra \mb{E}$ as
	\begin{equation}\label{definition of rC}
		\begin{split}
			%			& r_{\mc{C}} \colon \mb{X} \times_{\mb{X}, \pi}^{h} \mb{E} \ra \mb{E}\\
			& (x, \gamma, p) \mapsto t(\mc{C}(\gamma,p))\\
			&\big((x, \gamma, p)\xrightarrow{(\Gamma, \delta)} (x', \gamma', p')  \big) \mapsto \mc{C}(\gamma',p') \circ \delta \circ (\mc{C}(\gamma,p))^{-1}.
		\end{split}
	\end{equation}
	It is clear from the definition that $r_{\mc{C}}$ is a morphism of Lie groupoids. We will show $r_C$ has all the desired properties, that is 
	\begin{itemize}
		\item[(i)]$\pi \circ r_{\mc{C}}= \pi_{\mb{X}}$,
		\item[(ii)]$r_{\mc{C}} \circ \pi_{\mb{E}}= \rm{id}_{\mb{E}}$ and
		\item[(iii)]$\mb{G}$-equivariance of $r_{\mc{C}}$.
	\end{itemize}
 
Proof of (i):

	To prove the equality at object level, consider $\pi \circ r_{\mc{C}}(x, \gamma, p)= t(\gamma)=x= \pi_{\mb{X}}(x, \gamma,p)$. For the morphism level, consider $\pi \circ r_{\mc{C}}\big( (x, \gamma, p)\xrightarrow{(\Gamma, \delta)} (x', \gamma', p') \big)= \pi(\mc{C}(\gamma',p') \circ \delta \circ (\mc{C}(\gamma,p))^{-1})= \Gamma= \pi_{\mb{X}}(\Gamma, \delta)$.	
	
Proof of (ii):

	Follows easily from the fact that $\mc{C}(\pi(p),p)=1_p$ for all $p \in E_0$.
 
	Proof of (iii):
 
	Since $\mu_{\mc{C}}(\gamma,pg)= \mu_{\mc{C}}(\gamma,p)g$ for each $(\gamma,p) \in s^{*}E_0$ and $g \in G_0$, we have the $G_0$-equivariance at the object level. Now, consider a morphism $\big( (x, \gamma, p)\xrightarrow{(\Gamma, \delta)} (x', \gamma', p') \big)$ in $\mb{X} \times_{\rm{id}_{\mb{X}}, \mb{X}, \pi}^{h} \mb{E}$ and a moprhism $\phi$ in $\mb{G}$. Then, we have
	\begin{equation}\nonumber
		\begin{split}
			& r_{\mc{C}} \Big((\Gamma, \delta)\phi \Big)  \\
			%			&= r_{\mc{C}} \Big( (\Gamma,\delta \phi) \colon (x,\gamma, ps(\phi)) \ra (x',\gamma', p't(\phi))) \Big)\\
			&= \mc{C}(\gamma', p't(\phi)) \circ \delta \phi \circ (\mc{C}(\gamma, ps(\phi)))^{-1}\\
			&=\mc{C}(\gamma',p') 1_{t(\phi)} \circ \delta \phi \circ (\mc{C}(\gamma,p))^{-1}1_{s(\phi)}\\
			%			&= \big( \mc{C}(\gamma',p') \circ \delta \circ (\mc{C}(\gamma,p))^{-1} \big) \big(1_{t(\phi)} \circ \phi \circ 1_{s(\phi)} \big)\\
			&=r_{\mc{C}}\big( (\Gamma, \delta) \big)  \phi \,\, [\textit{using}\,\, \Cref{E:Identitiesechangeinverse}].
		\end{split}
	\end{equation}
 $\mc{C}_r$ and $r_{\mc{C}}$ are in fact mutual inverses of each other. To see this, consider a unital connection $\mc{C}$. Then by definition (see \Cref{definition of rC}), $\mc{C}_{r_{\mc{C}}} \colon s^{*}E_0 \ra E_1$ is given by $(\gamma,p) \mapsto r_{\mc{C}}(\gamma, 1_p)= \mc{C}(\gamma,p) \circ 1_p \circ  \big(\mc{C}(1_{\pi(p)},p) \big)^{-1}=\mc{C}(\gamma,p)$. On the other hand, consider a $\mb{G}$-equivariant morphism of Lie groupoids $r \colon \mb{X} \times_{\mb{X}, \pi}^{h} \mb{E}\ra \mb{E}$ satisfying  $\pi \circ r= \pi_{\mb{X}}$ and $r \circ \pi_{\mb{E}}= \rm{id}_{\mb{E}}$, where $\pi_{\mb{E}}$ and $\pi_{\mb{X}}$ are as defined in \Cref{piE} and \Cref{piX} respectively. Then by definition, $r_{\mc{C}_{r}} \colon \mb{X} \times_{\mb{X}, \pi}^{h} \mb{E}\ra \mb{E}$ is given by $(x, \gamma,p) \mapsto t \big( \mc{C}_{r}(\gamma,p)\big)$\\ $=t \big( r(\gamma, 1_p) \big)=r\big(t(\gamma,1_p) \big)=r(x,\gamma,p)$, for each object $(x, \gamma,p)$ in $\mb{X} \times_{\mb{X}, \pi}^{h} \mb{E}$. Now, for a morphism $\big( (x, \gamma, p)\xrightarrow{(\Gamma, \delta)} (x', \gamma', p') \big)$ in $\mb{X} \times_{\rm{id}_{\mb{X}}, \mb{X}, \pi}^{h} \mb{E}$, we have by defnition (see \Cref{definition of rC}), $r_{\mc{C}_r}(\Gamma, \delta)= \mc{C}_{r}(\gamma',p') \circ \delta \circ (\mc{C}_r(\gamma,p))^{-1}= r(\gamma',1_{p'}) \circ \delta \circ \big( r(\gamma,1_p) \big)^{-1}$. Since $\delta= r(\pi_{\mb{E}}(\delta))$, we get $r_{\mc{C}_r}(\Gamma, \delta)= r \Big( (\gamma',1_{p'}) \circ \pi_{\mb{E}}(\delta) \circ (\gamma, 1_p)^{-1}  \Big)$. As $\pi_{\mb{E}} (\delta)= \big(\pi_1(\delta), \delta \big)$ (see \Cref{piE}), we obtain $r_{\mc{C}_r}(\Gamma, \delta)= r \Big( \gamma' \circ \pi_1(\delta) \circ \gamma^{-1}, \delta \Big)$. Then, \Cref{Weakfibreproductmorphims} implies $r_{\mc{C}_r}(\Gamma, \delta)= r(\Gamma, \delta)$, which establishes the required one-one correspondence and concludes the proof of (b). 

\subsubsection*{Proof of (c):}

	Let $\mc{C} \colon s^{*}E_0 \ra E_1$ be a categorical connection and $r_{\mc{C}} \colon \mb{X} \times_{\mb{X}, \pi}^{h} \mb{E}\ra \mb{E}$ its associated map, as defined in \Cref{definition of rC}). Consider a morphism of the form $(x, \gamma, p) \xrightarrow{(\Gamma, 1_p)}(x', \gamma', p)$ in $\mb{X} \times_{\mb{X}, \pi}^{h} \mb{E}$. Then, we have
	\begin{equation}\nonumber
		\begin{split}
			& r_{\mc{C}}(\Gamma, 1_p) \\
			&=\mc{C}(\gamma',p) \circ 1_p \circ (\mc{C}(\gamma,p))^{-1}\\
			&=\mc{C}(\gamma',p) \circ \mc{C}(\gamma^{-1},p)\\
			&= \underbrace{\mc{C}(\gamma' \circ \gamma^{-1},p)}_{[\textit{by}\,\, \textit{the}\,\, \textit{condition}\,\,(v)\,\, \textit{in} \,\, \Cref{Def:categorical connection}]}.
		\end{split}
	\end{equation}
	Hence, $r_{\mc{C}}(\Gamma,p)$ lies in the image of $\mc{C}$.
	
	Converesely, let us assume that $r_{\mc{C}}(\Gamma, 1_p) \in \mc{C}(s^{*}E_0)$ for all morphisms of the form  $$(x, \gamma, p) \xrightarrow{(\Gamma, 1_p)}(x', \gamma', p)$$ in $\mb{X} \times_{\mb{X}, \pi}^{h} \mb{E}$, where $r_{\mc{C}}$ is the map associated to the unital categorical connection $\mc{C}$.  For composable $\gamma_2, \gamma_1 \in X_1$ such that $(\gamma_1,p) \in s^{*}E_0$, suppose $$r_{\mc{C}} \Big( (s(\gamma_2), \gamma_1, p) \xrightarrow{(\gamma_2, 1_p)} (t(\gamma_2),\gamma_2 \circ \gamma_1,p) \Big)= \mc{C}(\bar{\gamma},q)$$ for some $(\bar{\gamma},q) \in s^{*}E_0$. However, by definition of $r_{\mc{C}}$ in \Cref{definition of rC}, we have $r_{\mc{C}}(\gamma_2, 1_p)= \mc{C}(\gamma_2 \circ \gamma_1,p) \circ 1_p \circ \mc{C}(\gamma_1,p)^{-1}$.
	Thus, we have
	\begin{equation}\label{Equation:a}
		\mc{C}(\bar{\gamma},q)= \mc{C}(\gamma_2 \circ \gamma_1,p) \circ \mc{C}(\gamma_1,p)^{-1}.
	\end{equation}
	Applying $s$ and $\pi_1$ on both side of \Cref{Equation:a}, we get $q= t(\mc{C}(\gamma,p))$ and $\bar{\gamma}=\gamma_2$. Then, substituting $q$ and $\bar{\gamma}$ in \Cref{Equation:a}, we conclude
	\begin{equation}\nonumber
		\mathcal{C}(\gamma_2 \circ \gamma_1 , p_1)= \mathcal{C}(\gamma_2, t(\mc{C}(\gamma,p))) \circ \mathcal{C}(\gamma_1, p_1).
	\end{equation}
	Hence, $\mc{C}$ is a categorical connection, completing the proof of (c).
\end{proof}

\section{Towards a principal 2-bundle over a differentiable stack}\label{Towards a principal 2-bundle over a differentiable stack}
The material of this section is based on our paper \cite{	chatterjee2023parallel}.

Recall, according to  \Cref{principal Lie group bundles over stacks}, if the Lie groupoids $\mb{X}$ and $\mb{Y}$ are Morita equivalent, then for any Lie group $G$, the categories ${\rm{Bun}}(\mb{X},G)$ and ${\rm{Bun}}(\mb{Y},G)$ are equivalent. Then, \Cref{Splitting fibration} gives us the following result:
\begin{proposition}\label{stack}
	For a Lie 2-group $\mb{G}$, if Lie groupoids $\mb{X}$ and $\mb{Y}$ are Morita equivalent, then the category $\rm{Bun}_{\rm{Cat}}(\mb{X}, \mb{G})$ is equivalent to the category $\rm{Bun}_{\rm{Cat}}(\mb{Y}, \mb{G})$.
\end{proposition}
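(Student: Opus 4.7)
The plan is to prove \Cref{stack} by stringing together two results already established in the excerpt, after first translating the structure $2$-group $\mb{G}$ into the language of Lie crossed modules. By the correspondence recalled in \Cref{Lie 2 group from Lie crossed module}, any Lie $2$-group $\mb{G}=[G_1\rra G_0]$ is canonically isomorphic, as a Lie $2$-group, to $[H\rtimes_{\alpha} G_0\rra G_0]$, where $H=\ker(s)$ and $(G_0, H, \tau, \alpha)$ is the associated Lie crossed module. Because isomorphic Lie $2$-groups induce isomorphisms on the respective groupoids of categorical-principal $2$-bundles, there is no loss in assuming $\mb{G}=[H\rtimes_{\alpha} G_0\rra G_0]$ from the outset.

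With $\mb{G}$ in this form, I would first invoke \Cref{Splitting fibration}, which produces, for any Lie groupoid $\mb{Z}$, an equivalence of categories
\[
\mc{F}_{\mb{Z}}\colon \rm{Bun}_{\rm{Cat}}\bigl(\mb{Z},[H\rtimes_{\alpha} G_0\rra G_0]\bigr)\xrightarrow{\simeq} \rm{Bun}(\mb{Z}, G_0),
\]
sending a categorical-principal $2$-bundle $(\pi\colon \mb{E}\to\mb{Z},\mathcal{C})$ to its underlying principal $G_0$-bundle over $\mb{Z}$ (\Cref{underlying principal G-bundle}). Applying this to $\mb{Z}=\mb{X}$ and $\mb{Z}=\mb{Y}$ yields equivalences
\[
\rm{Bun}_{\rm{Cat}}(\mb{X},\mb{G})\simeq \rm{Bun}(\mb{X}, G_0)\quad\text{and}\quad \rm{Bun}_{\rm{Cat}}(\mb{Y},\mb{G})\simeq \rm{Bun}(\mb{Y}, G_0).
\]

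The Morita equivalence hypothesis is then fed into \Cref{principal Lie group bundles over stacks}, which gives an equivalence $\rm{Bun}(\mb{X},G_0)\simeq \rm{Bun}(\mb{Y},G_0)$ implemented by pullback along the zig-zag of Morita maps $\mb{X}\leftarrow \mb{Z}\rightarrow \mb{Y}$ witnessing the Morita equivalence. Composing the three equivalences produces the desired equivalence $\rm{Bun}_{\rm{Cat}}(\mb{X},\mb{G})\simeq \rm{Bun}_{\rm{Cat}}(\mb{Y},\mb{G})$.

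The main conceptual obstacle, although modest, is to confirm that \Cref{Splitting fibration} is natural enough that a composition of these equivalences makes categorical sense; this is essentially automatic because the functor of \Cref{Main Theorem 1} (and its restriction in \Cref{Splitting fibration}) is defined purely in terms of the underlying principal $G_0$-bundle $(\pi_0, \mu_{\mathcal{C}},\mb{X})$, and both the Morita-pullback and the decoration construction (\Cref{Prop:Decoliegpd}) respect morphisms in the corresponding groupoids. No extra coherence check is required because we are only asserting equivalence, not equality, of groupoids, so composing three equivalences delivers the statement without any further work.
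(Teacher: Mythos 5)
Your proposal is correct and follows essentially the same route as the paper: the paper's proof is precisely the combination of \Cref{principal Lie group bundles over stacks} (Morita invariance of $\rm{Bun}(-,G_0)$) with \Cref{Splitting fibration} (the equivalence $\rm{Bun}_{\rm{Cat}}(\mb{Z},\mb{G})\simeq \rm{Bun}(\mb{Z},G_0)$). The only extra detail you supply is the explicit reduction of an arbitrary Lie $2$-group to its crossed-module form, which the paper leaves implicit.
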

Hence, according to  \Cref{Morita equivalent imply stack}, the \Cref{stack} enables us to extend the notion of a principal 2-bundle over a Lie groupoid, equipped with a categorical connection, to be defined over the differentiable stack presented by its base Lie groupoid. In an upcoming project, we are trying to extend the notion of a quasi-principal 2-bundle to be defined over a differentiable stack.

\section{$\eta$-twisted principal 2-bundles over Lie groupoids}\label{Twisted principal 2-bundles and Lie groupoid $G$-extensions}
This section is based on our paper \cite{MR4403617}. Here, we discuss a correspondence between a weaker notion of principal 2-bundles over Lie groupoids and the Lie groupoid $G$-extensions (\Cref{subsection Lie groupoid G extension}) for a Lie group $G$. Such notion of principal 2-bundles 
are obtained by replacing the action functor $\rho\colon\mb{E} \times \mb{G} \rightarrow \mb{E}$ in \Cref{Definition:principal $2$-bundle over Liegroupoid}	by an $\eta$-twisted action $\rho\colon\mb{E} \rtimes_{\eta} \mb{G} \rightarrow \mb{E}$ (\Cref{Definition:Lie 2 twistgroupaction}) determined by the smooth map $\eta\colon  \rm{Mor}(\mb{E})\times \rm{Mor}(\mb{G})\ra \rm{Mor}(\mb{E})$. We call this weaker notion a `$\eta $-twisted principal $\mb{G}$-bundle over the Lie groupoid $\mb{X}$'.
\begin{definition}\label{eta twisted principal 2-bundle}
	Let there be an $\eta$-twisted action $\rho\colon\mb{E} \rtimes_{\eta} \mb{G} \rightarrow \mb{E}$ of a Lie 2-group $\mb{G}$ (\Cref{Definition:Lie 2 twistgroupaction}) on a Lie groupoid $\mb{E}$ for a smooth map $\eta\colon  \rm{Mor}(\mb{E})\times \rm{Mor}(\mb{G})\ra \rm{Mor}(\mb{E})$. An\textit{ $\eta $-twisted principal $\mb{G}$-bundle over the Lie groupoid $\mb{X}$} is given by a morphism of Lie groupoids $\pi: \mb{E} \rightarrow \mb{X}$ such that 
	\begin{itemize}
		\item $\pi_0\colon E_0 \rightarrow X_0$ is a principal $G_0$-bundle over $X_0$,
		\item $\pi_1\colon E_1 \rightarrow X_1$ is a principal $G_1$-bundle over $X_1$.
	\end{itemize}
	
\end{definition}
\begin{remark}
	\Cref{eta twisted and usual} implies that a prinicipal 2-bundle over a Lie groupoid (\Cref{Definition:principal $2$-bundle over Liegroupoid}) is a special case of the above definition.
\end{remark}

\subsection{Correspondence between $\eta$-twisted principal 2-bundles and Lie groupoid $G$-extensions}
\label{eta twisted to Lie groupoid extension}For an abelian Lie group $G$, let us consider a principal $[G\rra e]$-bundle $\mb{E}=[E_1\rra X_0]$ over the Lie groupoid $\mb{X}=[X_1\rra X_0]$, see \Cref{E:Example of principal 2-bundle ordinary} . Now, define the map $i\colon X_0\times G \ra E_1$ as
$$(x, g)\mapsto  1^{\mb{E}}_{x} \,g,$$	
where $1^{\mb{E}}$	is the identity assigning map in the Lie groupoid $\mb{E}=[E_1\rra X_0].$  As $1^{\mb E}\colon X_0\ra E_1$ is a diffeomorphism onto its image, by \Cref{E:restricthom}, the map $i$ defines an embedding. Since $E_1\ra X_1$ is a principal $G$-bundle, the following
\[
\begin{tikzcd}[sep=small]
	1 \arrow[r] & X_0\times G  \arrow[dd,xshift=0.75ex]\arrow[dd,xshift=-0.75ex]  \arrow[rr, "i"] &  & E_1 \arrow[rr, "\pi_1"]  \arrow[dd,xshift=0.75ex]\arrow[dd,xshift=-0.75ex]   &  & X_1 \arrow[r] \arrow[dd,xshift=0.75ex]\arrow[dd,xshift=-0.75ex]  & 1 \\
	&                                                                    &  &                                                       &  &                           &   \\
	1 \arrow[r] & X_0 \arrow[rr, "{\rm Id}"]                                                     &  & X_0 \arrow[rr, "{\rm Id}"]                                        &  & X_0 \arrow[r]             & 1
\end{tikzcd},\]
is a Lie groupoid $G$-extension by definition, see \Cref{subsection Lie groupoid G extension}. 

Conversely, consider a Lie groupoid $G$-extension as in Diagram \ref{Dia:Gextension}.  Note that as the second square from the left commutes, we have $$i(x, g)\in {\rm Hom}_{\Gamma_2}(x, x).$$

\begin{lemma}\label{L:Almost}
	
	\begin{itemize}
		\item[(i)] There is a smooth free action $\Gamma_2\times G\ra \Gamma_2$ defined as   \begin{equation}\label{E:ActionGamma2}
			(x \xrightarrow{\gamma}y, g)\mapsto \gamma\circ i(x, g).
		\end{equation} 
		
		\item[(ii)] The action is transitive on the fibers $\phi^{-1}(\lambda),$ for each $\lambda\in \Gamma_1$ 
		
		\item[(iii)]  $\phi$ is constant on the orbits of the action. 
		
		\item[(iv)] The action satisfies the condition in \Cref{E:restricthom};
		\begin{equation}\label{E:Restrichomgamma}
			{\rm Hom}_{\Gamma_2}(x, y)\times G \ra {\rm Hom}_{\Gamma_2}(x, y)
		\end{equation}
		for all $x, y\in M.$  
	\end{itemize}
\end{lemma}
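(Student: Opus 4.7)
The plan is to verify the four assertions in order, extracting everything from the structure of the short exact sequence of Lie groupoids in Diagram \ref{Dia:Gextension}. The key observation driving all four items is that, because both $X_0\times G$ and $\Gamma_1$ have $X_0$ as their object manifold and all vertical arrows in the diagram are $\mathrm{Id}$, the functors $i$ and $\phi$ are the identity on objects; hence $i(x, g)\in\mathrm{Aut}_{\Gamma_2}(x)$ and $\phi(i(x, g))=1_x$. Moreover, since in the product groupoid $[X_0\times G\rra X_0]$ composition is $(x, g_2)\circ (x, g_1)=(x, g_1 g_2)$, functoriality of $i$ yields the identity $i(x, g_2)\circ i(x, g_1)=i(x, g_1 g_2)$ in $\Gamma_2.$

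For (i), I would first note that $\gamma\circ i(x, g)$ is a well-defined composition whenever $\gamma\in\mathrm{Hom}_{\Gamma_2}(x, y)$, since $t(i(x, g))=x=s(\gamma)$, so the formula \eqref{E:ActionGamma2} makes sense as a smooth map using smoothness of the composition $m_{\Gamma_2}$ and of $i$. The action axioms $\gamma\cdot e=\gamma$ and $(\gamma\cdot g_1)\cdot g_2=\gamma\cdot(g_1g_2)$ then follow immediately from $i(x, e)=1_x$ and the functoriality identity recorded above, together with associativity of $\circ$ in $\Gamma_2$. Freeness is then the simplest step: if $\gamma\circ i(x, g)=\gamma$, cancel $\gamma$ using the groupoid inverse to conclude $i(x, g)=1_x=i(x, e)$, and since $i$ is an embedding (hence injective), $g=e$.

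For (ii), given $\gamma_1,\gamma_2\in\phi^{-1}(\lambda)$ with $\lambda\in\mathrm{Hom}_{\Gamma_1}(x, y)$, the fact that $\phi$ is the identity on objects forces $s(\gamma_j)=x, t(\gamma_j)=y$ for $j=1, 2$; therefore $\gamma_1^{-1}\circ\gamma_2\in\mathrm{Aut}_{\Gamma_2}(x)$ and $\phi(\gamma_1^{-1}\circ\gamma_2)=1_x$. Exactness of \eqref{Dia:Gextension} at $\Gamma_2$ then produces a (unique, by (i)) $g\in G$ with $\gamma_1^{-1}\circ\gamma_2=i(x, g)$, i.e., $\gamma_2=\gamma_1\cdot g$. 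Item (iii) is equally short: $\phi(\gamma\cdot g)=\phi(\gamma)\circ\phi(i(x, g))=\phi(\gamma)\circ 1_x=\phi(\gamma)$. Finally for (iv), since $i(x, g)\in\mathrm{Aut}_{\Gamma_2}(x)$, we have $s(\gamma\cdot g)=s\bigl(i(x, g)\bigr)=x=s(\gamma)$ and $t(\gamma\cdot g)=t(\gamma)$; hence the action restricts to an action on each hom-set $\mathrm{Hom}_{\Gamma_2}(x, y)\times G\to\mathrm{Hom}_{\Gamma_2}(x, y)$ as required in \eqref{E:Restrichomgamma}.

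I do not expect a serious obstacle here: the argument is essentially the standard extract-a-principal-action-from-a-group-extension routine, transported into the groupoid setting. The one place demanding slight care is (i), where smoothness relies on $i\colon X_0\times G\to\Gamma_2$ being a smooth embedding (given) and $\phi\colon\Gamma_2\to\Gamma_1$ being a surjective submersion, so that the composable-pair manifold $\Gamma_2\times_{X_0}(X_0\times G)$ is smooth and the composition $m_{\Gamma_2}$ restricts to a smooth map on it; after that, associativity of composition and injectivity of $i$ do all the work.
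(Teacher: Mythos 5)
Your proposal is correct and follows essentially the same route as the paper's proof: functoriality of $i$ (together with $i(x,e)=1_x$ and its injectivity) gives the action axioms and freeness, exactness of Diagram~\ref{Dia:Gextension} at $\Gamma_2$ gives fiberwise transitivity, $\phi(i(x,g))=1_x$ gives invariance of $\phi$, and $i(x,g)\in{\rm Hom}_{\Gamma_2}(x,x)$ gives the hom-set condition \eqref{E:Restrichomgamma}. Your added remarks on smoothness and the explicit cancellation step for freeness only flesh out details the paper leaves implicit.
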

\begin{proof}
	
	\begin{itemize}
		\item[(i)] As $i$ is a functor, we have
		\[	\gamma (gg')=\gamma \circ i(x, g\, g')
		=\gamma \circ \bigl(i(x, g)\circ i(x, g')\bigr)
		=\bigl(\gamma \circ i((x, g)\bigr)\circ i(x, g')
		=(\gamma g) g'.\]
		Observe that as $i$ is injective, it follows that the action is free.
		\item[(ii)] If $\phi(\gamma_2)=\phi(\gamma_1).$ Then $\phi(\gamma_2^{-1}\circ \gamma_1)=1^{\Gamma_1}_x.$ Hence, by the exactness of Diagram \ref{Dia:Gextension} we have $(x, g)\in M \times G$ such that $\gamma_2^{-1}\circ \gamma_1=i(x, g).$ Thus $\gamma_1=\gamma_2 g.$  
		
		\item[(iii)] $\phi(\gamma g)=\phi(\gamma\circ i(x, g))=\phi(\gamma)\circ \phi (i(x, g)).$ As \textbf{Diagram} \ref{Dia:Gextension} is a short exact sequence, $\phi (i(x, g))=1^{\Gamma_1}_x.$ Hence, $\phi(\gamma g)=\phi(\gamma).$
		\item[(iv)] Directly follows from the observation that $i(x, g)\in {\rm Hom}_{\Gamma_2}(x, x).$
	\end{itemize}
\end{proof}

Summarising,  since $\phi$ is a surjective submersion, the Lie groupoid $G$-extension in Diagram \ref{Dia:Gextension} yields a  principal $G$-bundle $\phi\colon \Gamma_2\ra \Gamma_1$ such that the diagram
\[
\begin{tikzcd}[sep=small]
	\Gamma_2 \arrow[rr,"\phi"] \arrow[dd,xshift=0.75ex,"t"]
	\arrow[dd,xshift=-0.75ex,"s"'] &  & \Gamma_1 \arrow[dd,xshift=0.75ex,"t"]
	\arrow[dd,xshift=-0.75ex,"s"'] \\
	&  &                \\
	M \arrow[rr,"{\rm Id}"]            &  & M          
\end{tikzcd},\]	
commutes, and the action satisfies the \Cref{E:Restrichomgamma}. In general, the action defined in \Cref{L:Almost}  \textit{does not} satisfy the other functoriality condition in \Cref{E:compoequi}! In order to see this, consider a pair of composable morphisms $y\xrightarrow{\gamma_2}z$ and 	$x\xrightarrow{\gamma_1}y$ and a pair of elements $g,g' \in G$. By \Cref{E:ActionGamma2}, we get  $(\gamma_2 \, g') \circ (\gamma_1 \, g)= \gamma_2 \circ i(y, g') \circ \gamma_1 \circ i(x, g).$ To remain consistent with  \Cref{E:compoequi}, we would require the right-hand side of the last equation to be equal to $ \big( \gamma_2 \circ \gamma_1 \circ i(x,g'g) \big)$, which is in general, of course, may not be true. Hence, although the principal $G$-bundle $\phi \colon \Gamma_2\ra \Gamma_1$ \textit{does not} define a  $[G\rra e]$-bundle $ [\Gamma_2\rra M]$  over  the Lie groupoid $[\Gamma_1\rra M],$ however it indeed defines a {twisted principal $[G\rra e]$-bundle over the Lie groupoid} $[\Gamma_1\rightrightarrows M]$. To see this, let us define
\begin{equation}\label{E:TwistGextension}
	\begin{split}
		\eta\colon & \rm{Mor}(\Gamma_2)\times G \ra \rm{Mor}(\Gamma_2)\\
		&(x\xrightarrow{\gamma} y, g)\mapsto i(y, g)\circ \gamma\circ i(x, g^{-1}).
	\end{split}
\end{equation}	
One can easily verify that $\eta$ satisfies all the necessary conditions in \Cref{E:Contwist}. So, now we have the twisted product category $\bigl([\Gamma_2\rra M]\rtimes_{\eta} [G\rra e]\bigr).$ Next, we verify the following:
\begin{equation}\label{E:Veriftwisfunc}
	\begin{split}
		\bigl(y\xrightarrow{\gamma_2}z\,  {g'}\bigr)\circ \bigl(x\xrightarrow{\gamma_1}y\,g\bigr)
		= &\bigl({\gamma_2}\circ  i(y, g')\bigr)\circ \bigl({\gamma_1}\circ i(x, g)\bigr)\\
		= &\bigl({\gamma_2}\circ  (i(y, g')\circ {\gamma_1}\circ i(x, g'^{-1})\bigr)\circ i(x, g' g)\\
		= & \bigl({\gamma_2}\circ  \eta (\gamma_1, g')\bigr)\circ i(x, g' g)
		= \bigl({\gamma_2}\circ  \eta (\gamma_1, g')\bigr)g' g, 
	\end{split}
\end{equation} 
which is exactly the second condition in \Cref{Remark:functorialitytwisted}, needed for the action in \Cref{E:ActionGamma2} to be an $\eta$-twisted action
$$\bigl([\Gamma_2\rra M]\rtimes_{\eta} [G\rra e]\bigr)\ra [\Gamma_2\rra M].$$
Also, note that in the other direction, the construction proceeds for a twisted $[G\rra e]$-bundle the same way as was for a  \textit{non-twisted} $[G\rra e]$-bundle (as we have seen in \Cref{eta twisted to Lie groupoid extension}) using the third property in  \Cref{E:Contwist}.

In conclusion, we obtain the following proposition:	
\begin{proposition}
	For a Lie group $G$, a twisted principal $[G\rra e]$-bundle defines a Lie groupoid $G$-extension and vice-versa. 
\end{proposition}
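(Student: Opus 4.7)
The plan is to establish the correspondence in two directions, exploiting the fact that most of the technical groundwork has already been laid earlier in this section. Much of the essential data (the embedding $i$, the candidate action, and the candidate twisting map $\eta$) has been constructed; the proof amounts to assembling these pieces into mutually inverse constructions.

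For the forward direction (twisted bundle $\Rightarrow$ $G$-extension), I would mimic the construction at the start of Subsection~\ref{eta twisted to Lie groupoid extension}, but now for the twisted case. Given a twisted principal $[G\rra e]$-bundle $\mb{E}=[E_1\rra X_0]$ over $\mb{X}=[X_1\rra X_0]$, define $i\colon X_0\times G\ra E_1$ by $(x,g)\mapsto 1^{\mb{E}}_x\, g$. The functoriality of the action at the object level forces $i(x,g)\in {\rm Hom}_{\mb{E}}(x,x)$, and $i$ is a smooth embedding because $1^{\mb{E}}$ is a diffeomorphism onto its image and the $G$-action on $E_1$ is free (principality). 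Since $\pi_1\colon E_1\ra X_1$ is a principal $G$-bundle (hence a surjective submersion) and $\pi_1\circ i$ sends $(x,g)$ to $1^{\mb{X}}_x$, the resulting sequence
\[
1\to X_0\times G\xrightarrow{i} E_1\xrightarrow{\pi_1} X_1\to 1
\]
together with the identity on $X_0$, is a Lie groupoid $G$-extension in the sense of Diagram~\ref{Dia:Gextension}. Note that the twisting map $\eta$ plays no role here: it only governs composition in $E_1$, and the short-exact-sequence data depends solely on $i$ and $\pi_1$.

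For the reverse direction ($G$-extension $\Rightarrow$ twisted bundle), essentially everything has been done already. Starting from Diagram~\ref{Dia:Gextension}, Lemma~\ref{L:Almost} gives a smooth, free $G$-action on $\Gamma_2$ via $\gamma\cdot g=\gamma\circ i(x,g)$, transitive on fibres of $\phi$ with $\phi$ constant on orbits, so $\phi\colon \Gamma_2\ra \Gamma_1$ is a principal $G$-bundle satisfying the hom-preservation condition \eqref{E:Restrichomgamma}. Defining $\eta$ as in \eqref{E:TwistGextension}, the computation in \eqref{E:Veriftwisfunc} shows that the action satisfies the $\eta$-twisted functoriality from Remark~\ref{Remark:functorialitytwisted}. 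The remaining task is to verify the five axioms of \eqref{E:Contwist} for this $\eta$: hom-preservation is immediate since $i(y,g)$ and $i(x,g^{-1})$ are endomorphisms; compatibility with composition in $\Gamma_2$, with units, and with products in $G$ all follow from the functoriality of $i$ and the observation that $i(x,g)\circ i(x,g')=i(x,gg')$ (derived from $i$ being a functor on the discrete groupoid $X_0\times G$); compatibility with identities $1_g$ is trivial. Thus we obtain a twisted principal $[G\rra e]$-bundle $[\Gamma_2\rra M]$ over $[\Gamma_1\rra M]$.

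Finally, to close the proposition, I would check that the two constructions are mutually inverse. Starting from a twisted bundle, building the extension, and then running the reverse construction recovers $\Gamma_2=E_1$ with the same embedding $i(x,g)=1^{\mb{E}}_x g$; the associated $\eta$ from \eqref{E:TwistGextension} agrees with the original twisting by the second identity in Remark~\ref{Remark:functorialitytwisted} applied to $\gamma_1=1^{\mb{E}}_y,\gamma_2=\gamma$, together with uniqueness from freeness of the action. Conversely, starting from an extension and rebuilding the sequence yields the original $i$ tautologically. The main obstacle I anticipate is the bookkeeping surrounding $\eta$: verifying all of \eqref{E:Contwist} and checking that the $\eta$ recovered from the extension coincides with the original twist requires repeated use of the Peiffer-type identity $i(y,g)\circ\gamma=\gamma\circ i(x,g)\cdot(\text{correction})$ and careful tracking of source/target data. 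Once this is done, the correspondence follows.
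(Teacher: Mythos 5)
Your reverse direction follows the paper's route verbatim and is fine, but your forward direction contains a genuine error of reasoning: the claim that ``the twisting map $\eta$ plays no role'' is wrong. A Lie groupoid $G$-extension as in Diagram~\ref{Dia:Gextension} is a short exact sequence of \emph{Lie groupoids}, so $i\colon [X_0\times G\rra X_0]\to[E_1\rra X_0]$ must be a morphism of Lie groupoids; in particular you must verify $i(x,g)\circ i(x,g')=i(x,gg')$, i.e. $(1^{\mb{E}}_x\,g)\circ(1^{\mb{E}}_x\,g')=1^{\mb{E}}_x\,(gg')$. In the untwisted case this is exactly the interchange law \Cref{E:compoequi}; in the twisted case it is not available, and the identity instead follows from the twisted functoriality of \Cref{Remark:functorialitytwisted} applied with $\gamma_2=\gamma_1=1^{\mb{E}}_x$ \emph{together with} the third condition $\eta(1_x,k)=1_x$ of \Cref{E:Contwist}. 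This is precisely the step the paper highlights (``using the third property in \Cref{E:Contwist}''), and your proposal omits it entirely: you check that $i(x,g)$ is an endomorphism of $x$, that $i$ is an embedding, and that $\pi_1\circ i$ lands in the unit section, but never that $i$ respects composition, which is the only place the twist enters and the only point at which the twisted case differs from the untwisted one.

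Your closing paragraph also overshoots the statement. The proposition only asserts that each structure produces one of the other kind; it does not claim the constructions are mutually inverse, and the paper explicitly remarks immediately afterwards that the correspondence is \emph{not} one-to-one. Your sketch of mutual inverseness (recovering $\eta$ via $\eta(\gamma,g)=i(y,g)\circ\gamma\circ i(x,g^{-1})$, etc.) is in any case only an outline, and asserting a bijection is both unnecessary for the proposition and in tension with the authors' own caveat; it should be dropped or, if you believe it, proved separately and stated with the precise sense of ``inverse'' (on structures, not merely on isomorphism classes) made explicit.
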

 The above correspondence is not one-to-one.	It is worth mentioning here that \cite{MR3480061} also discussed the relation between Morita equivalent classes of Lie groupoid $G$-extensions and Morita equivalence classes of what the authors called $[G\rra {\rm Aut} (G)]$-bundles (see \textbf{Theorem 3.4}, \cite{MR3480061}).

% Chapter Template

\chapter{Connection structures and gauge transformations on a principal 2-bundle over a Lie groupoid}\label{chapter 2-bundles copnnection} 

% Change X to a consecutive number; for referencing this chapter elsewhere, use \ref{ChapterX}

\lhead{Chapter 5. \emph{Connection structures and gauge transformations on a principal 2-bundle over a Lie groupoid}} % Change X to a consecutive number; this is for the header on each page - perhaps a shortened title

%----------------------------------------------------------------------------------------
%	SECTION 1
%----------------------------------------------------------------------------------------
The contents of this chapter are mainly based on our paper \cite{MR4403617}. 

This chapter develops a theory of connection structures and gauge transformations on our principal 2-bundles over Lie groupoids. Our theory encompasses both the classical theory discussed in \Cref{Chapter Classical setup} and the one available for a principal Lie group bundle over a Lie groupoid, recalled in \Cref{Principal bundles over Lie groupoids and their connection structures}.Our definition allows us to construct a categorified version of the Atiyah sequence (\Cref{Definition: Atiyah sequence of a principal bundle}). As a result, we obtain a weaker version of the differential geometric connection structure. Interestingly, this `weakened notion', has no traditional counterpart. More precisely, for a Lie 2-group $\mb{G}$, given a principal $\mb{G}$-bundle $\pi \colon \mb{E} \ra \mb{X}$ over a Lie groupoid $\mb{X}$, we associate a short exact sequence of VB groupoids (\Cref{Short exact sequence of VB-groupoids}) over $\mb{X}$, that we call the \textit{Atiyah sequence of VB-groupoids associated to the principal $\mb{G}$-bundle $\pi \colon \mb{E} \ra \mb{X}$}. We then introduce two notions of connection structures on $\pi \colon \mb{E} \ra \mb{X}$, one arising from a retraction of the Atiyah sequence and the other one arising from a retraction up to a natural isomorphism. We call them, respectively, a \textit{strict connection} and the `weaker one' a \textit{semi-strict connection}.

We obtain our first main result in this chapter by extending the traditional one-one correspondence between connections as splittings and connections as 1-forms (\Cref{Equivalent characterisations of classical connection}) to the level of isomorphism of categories by describing strict and semi-strict connections as $L(\mb{G})$-valued 1-forms on the Lie groupoid $\mb{E}$.

In the final part of this chapter, we investigate the action of the 2-group of gauge transformations on strict and semi-strict connections. As the second main result of this chapter, we discovered an extended symmetry in the category of semi-strict connections.

Moreover, as a part of the primary purpose of this thesis (studying `differential geometric relationships' between classical gauge theory and fibered categories), we studied
\begin{enumerate}[(i)]
	\item relation between connection structures on decorated principal 2-bundles and connections on principal Lie group bundles over Lie groupoids (\Cref{Connections on a principal bundle over a Lie groupoid}),
	\item proposed an existence criterion for the strict and semi-strict connections on a principal $2$-bundle over a proper, \'etale Lie groupoid,
	\item investigated an action of gauge transformations on categorical connections and 
	\item studied `extended gauge transformations' on decorated principal 2-bundles.
\end{enumerate}
Having said that, we are yet to explore the above four aspects in the general context of quasi-principal 2-bundles (\Cref{Definition:Quasicategorical Connection}) and pseudo-principal Lie crossed module-bundles (\Cref{Definition:PseudoprincipalLiecrossedmodulebundle}). However, in the \Cref{Chapter: Parallel transport on quasi-principal 2-bundles}, we will see a beautiful interplay between strict connections and quasi connections.

	\section{Connection structures on a principal 2-bundle over a Lie groupoid}\label{Connection structures on a principal 2-bundle over a Lie groupoid}

This section studies connection structures on our categorified principal bundles (\Cref{Definition:principal $2$-bundle over Liegroupoid}) in two equivalent ways:

\begin{enumerate}[(i)]
	\item as `splittings of an associated Atiyah sequence of VB-groupoids' and 
	\item as `differential 1-forms on Lie groupoids'.
\end{enumerate}
We begin with the Atiyah sequence approach.
\subsection{Atiyah sequence associated to a principal 2-bundle over a Lie groupoid}\label{Subsection: Atiyah sequence associated to a principal 2-bundle over a Lie groupoid}
For a Lie 2-group $\mb{G}$, let $\pi \colon \mb{E} \ra \mb{X}$ be a principal $\mb{G}$-bundle over a Lie groupoid $\mb{X}$. Consider the pair of Atiyah sequences (\Cref{Definition: Atiyah sequence of a principal bundle}) $\rm{At}(\pi_1)$ and $\rm{At}(\pi_0)$, associated respectively to the principal $G_1$-bundle $\pi_1 \colon E_1 \ra X_1$ and the principal $G_0$-bundle $\pi_0 \colon E_0 \ra X_0$ over the manifold $X_1$ and $X_0$ respectively:
\begin{equation}\label{Atiyah Objects}
	\begin{tikzcd}
		0 \arrow[r, ""]  & \rm{Ad}(E_i) \arrow[r, "\delta_i^{/G_i}"] \arrow[d, ""'] & \rm{At}(E_i) \arrow[r, "\pi_{{1}{*}}^{/G_i}"] \arrow[d, ""'] & TX_i \arrow[d, ""'] \arrow[r] & 0 \\
		0 \arrow[r, ""'] & X_i \arrow[r, "\rm{id}"']                & X_i \arrow[r, "\rm{id}"']                & X_i \arrow[r]                 & 0
	\end{tikzcd}
\end{equation}
for $i=0,1$.

Now, consider the following pair of vector bundles:
\begin{enumerate}
	\item Pair of tangent bundles $\{TX_i \ra X_i\}_{i\in\{0, 1\}}$;
	\item Pair of adjoint bundles $\{ \pi^{\rm{Ad}(E_i)} \colon {\rm Ad}(E_i) \ra X_i\}_{i\in\{0, 1\}} $ (\Cref{Example: Adjoint bundle});
	\item Pair of Atiyah bundles $\{\pi^{\rm{At}(E_i)} \colon {\rm At}(E_i) \ra X_i\}_{i\in\{0, 1\}} $ (\Cref{Example: Atiyah bundle}).
	 
\end{enumerate}
We will show that these three pairs of vector bundles combine appropriately to produce a short exact sequence of VB-groupoids over the Lie groupoid $\mb{X}$ (\Cref{Short exact sequence of VB-groupoids}).
\subsection*{Construction of the tangent VB-groupoid $T\mb{X} \ra \mb{X}$}

See \Cref{Tangent VB groupoid}.

\subsection*{Construction of the adjoint VB-groupoid ${\rm{Ad}}(\mb{E}) \ra \mb{X}$}\label{Adjoint VB construction}

Manifolds ${\rm{Ad}}(E_1) $ and ${\rm{Ad}}(E_0)$ combine to form a Lie groupoid ${\rm{Ad}(\mb{E})}:=[{\rm Ad}(E_1)\rra {\rm Ad}(E_0)]$, whose structure maps are defined below:
\begin{itemize}
	\item the source map $s^{/\sim}\colon {\rm Ad}(E_1)\ra {\rm Ad}(E_0)$  as \[[(\widetilde \gamma, K)]\mapsto [(s(\widetilde \gamma),s_{*,e}(K))],\]
	\item the target map $t^{/\sim}\colon {\rm Ad}(E_1)\ra {\rm Ad}(E_0)$  as \[[(\widetilde \gamma, K)]\mapsto [(t(\widetilde\gamma),t_{*,e}(K))],\]
\end{itemize}
where $\widetilde \gamma\in E_1$ and $K\in L(G_1).$
To define the composition, we need to make an observation. Let $[(\widetilde \gamma_2 ', K_2')], [(\widetilde \gamma_1, K_1)] \in {\rm{Ad}}(E_1)$, such that
\begin{equation}\label{E:Sourcetargetad}
	t^{/\sim}([(\widetilde \gamma_2 ', K_2')])=s^{/\sim}([(\widetilde \gamma_1, K_1)]).
\end{equation} 
Then there exists an element $\theta$ in $G_0$ such that $s(\widetilde \gamma_2')\theta=t(\widetilde \gamma_1)$ and ${\rm{ad}}_{\theta}(s_{*, e}(K_2'))=t_{*, e}(K_1).$ It means $(\widetilde \gamma_2', K_2')1_{\theta}=\bigl(\widetilde \gamma_2'\, 1_{\theta}, \rm{ad}_{1_{\theta}}(K_2')\bigr)$ is composable with $(\widetilde\gamma_1, K_1).$ It is easy to verify that  $(\widetilde \gamma_2', K_2')1_{\theta}\in [(\widetilde \gamma'_2, K'_2)].$ Hence, whenever the condition in \Cref{E:Sourcetargetad}	is satisfied, there exists a composable pair belonging to $ [(\widetilde \gamma'_2, K'_2)]$ and $[(\widetilde \gamma_1, K_1)]$ respectively. Choosing such a pair define the composition as follows:
\begin{equation}\label{E:Compoad}
	\bigl([(\widetilde \gamma_2, K_2)]\bigr)\circ \bigl([(\widetilde \gamma_1, K_1)]\bigr)=[\bigl(\widetilde \gamma_2\circ \widetilde \gamma_1, K_2\circ K_1\bigr)].
\end{equation} 
To check that the composition is well defined, observe that if $(\widetilde \gamma_2', K_2')$ and $(\widetilde \gamma_1', K_1')$ are	another pair of such elements, then there exist composable $k_2, k_1\in G_1$ such that
$(\widetilde \gamma_2', K_2')=\bigl(\widetilde \gamma_2 k_2, {\rm{ad}}_{k_2}(K_2)\bigr)$ and $(\widetilde \gamma_1', K_1')=\bigl(\widetilde \gamma_1 k_1, {\rm{ad}}_{k_1}(K_1)\bigr).$
Then  we have \begin{equation}\nonumber
	\begin{split}
		&(\widetilde \gamma_2' \circ \widetilde \gamma_1', K_2'\circ K_1')\\
		&=\bigl((\widetilde \gamma_2 k_2)\circ (\widetilde \gamma_1 k_1),   ({\rm{ad}}_{k_2}(K_2))\circ   ({\rm{Ad}}_{k_1}(K_1)) \bigr)\\
		&=\bigl((\widetilde \gamma_2\circ \widetilde \gamma_1)(k_2 k_1),   ({\rm{ad}}_{k_2 k_1}(K_2\circ K_1)) \bigr)\\
		&=(\widetilde \gamma_2 \circ \widetilde \gamma_1, K_2\circ K_1) (k_2 k_1),
	\end{split}
\end{equation}
where in the third step, the functoriality of the Lie $2$-group action is used. The inverse and unit maps are obvious.

Now consider the commutative diagram below:
\[\begin{tikzcd}
	E_1 \times L(G_1) \arrow[rr, "\delta_0"] \arrow[d, "s_{\mb{E}} \times s_{{\mb{G}}_{*,e}}"'] &  & {\rm Ad}(E_1) \arrow[d, "s^{/\sim}", dotted] \\
	E_0 \times L(G_0) \arrow[rr, "\delta_1"']                &  & {\rm Ad}(E_0)
\end{tikzcd},\]
Observe that as $s_{\mb E}$ and $s_{{\mb{G}}_{*,e}}$ are surjective submersions, $s^{/\sim} \circ \delta_0$  a surjective submersion. As $\delta_0$ is a surjective submersion, it follows immediately that $s^{/\sim}$ is also a surjective submersion.  $t^{/\sim}$ can also be shown to be a surjective submersion using a similar argument. Hence, ${\rm{Ad}}{(\mb{E})}=[{\rm{Ad}}(E_1)\rra {\rm{Ad}}(E_0)]$ is a Lie groupoid. Now, it is a straightforward but lengthy verification that the vector bundles ${\rm Ad}(E_1)\ra X_1$, and ${\rm Ad}(E_0)\ra X_0$ combine to form a  VB-groupoid $[{\rm Ad}(E_1)\rra {\rm Ad}(E_0)]\ra [X_1\rra X_0]$. We denote this VB-groupoid by $\pi^{\rm{Ad}}(\mb{E}) \colon {\rm{Ad}}(\mb{E}) \ra \mb{X}$.

\subsection*{Construction of the Atiyah VB-groupoid ${\rm At}(\mb{E}) \ra \mb{X}$}

Technical details of the construction of the Lie groupoid  ${\rm{At}(\mb{E})}:=[{\rm At}(E_1)\rra {\rm At}(E_0)]$ are almost similar to the construction  of ${\rm{Ad}}{(\mb{E})}$. The Lie groupoid structure of ${\rm{At}}(\mb{E})$ is provided below:
\begin{itemize}
	\item the source $s_*^{/\sim}\colon {\rm At}(E_1)\ra {\rm At}(E_0),$ \[[(\widetilde \gamma, \widetilde X)]\mapsto [(s(\widetilde \gamma),s_{*,\widetilde \gamma}(\widetilde X))],\]
	\item the target $t_*^{/\sim}\colon {\rm At}(E_1)\ra {\rm At}(E_0),$ \[[(\widetilde \gamma, \widetilde X)]\mapsto [(t(\widetilde \gamma),t_{*,\widetilde \gamma}(\widetilde X))]\]	
\end{itemize}
The composition  is defined as 
\begin{equation}\label{E:AtCompo}
	[(\widetilde \gamma_2, \widetilde X_2)]\circ [(\widetilde \gamma_1, \widetilde X_1)]=[(\widetilde \gamma_2\circ \widetilde \gamma_1, \widetilde X_2\circ \widetilde X_1)]
\end{equation}
for suitable choices of representative elements of the equivalence classes. The unit map and the inverse map are obvious. Similar to the case of adjoint VB-groupoid, the pair of vector bundles ${\rm At}(E_1)\ra X_1$ and ${\rm At}(E_0)\ra X_0$ combine to give a VB-groupoid $[{\rm At}(E_1)\rra {\rm At}(E_0)]\ra [X_1\rra X_0]$ over $\mb{X}$ and we denote it by $\pi^{\rm{At}}(\mb{E}) \colon {\rm{Ad}}(\mb{E}) \ra \mb{X}$

In \Cref{Atiyah Objects}, the pair of maps $\delta_1^{/G_1}$ and $\delta_0^{/G_0}$ combine to form a morphism of Lie groupoids $\delta^{/G}:=(\delta_1^{/G_1}, \delta_0^{/G_0}) \colon {\rm{Ad}(\mb{E})} \ra {\rm{At}(\mb{E})}$ and hence, a 1-morphism of VB-groupoids (\Cref{1morphism of VB groupids}) from the adjoint VB-groupoid to the Atiyah VB-groupoid over $\mb{X}$. To see it, we need to make the following observation:
\begin{lemma}\label{Vertical vector field generating functor}
	For a Lie 2-group $\mb{G}$, let $\pi \colon \mb{E} \ra \mb{X}$ be a principal $\mb{G}$-bundle over a Lie groupoid $\mb{X}$. The generating maps $\delta_p\colon L(G_0)\ra T_p E_0$ and $\delta_{(p\xra{\widetilde \gamma} q)}\colon L(G_1)\ra T_{\widetilde \gamma}E_1$ for vertical vector fields define a morphism of Lie groupoids $\delta\colon \mb{E}\times L(\mb{G})\ra T(\mb{E}).$ Moreover the 
	functor $\delta$ is $\mb{G}$-equivariant in the sense that, $\delta \bigl(p g, {\rm{ad}}_{g^{-1}}(B)\bigr)=\delta (p, B) \cdot g$ and $\delta \bigl(\widetilde \gamma k, {\rm{ad}}_{k^{-1}}(K)\bigr)=\delta (\widetilde \gamma, K) \cdot k,$
	for any $p\in E_0, g\in G_0, B\in L(G_0)$ and $\widetilde \gamma\in E_1, k\in G_1, K\in L(G_1).$
\end{lemma}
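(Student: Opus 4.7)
The plan is to construct $\delta$ level-wise from the differentials of the action maps $\rho_0\colon E_0\times G_0\to E_0$ and $\rho_1\colon E_1\times G_1\to E_1$ coming from the Lie $2$-group action, and then to verify that the classical fundamental vector field assignments assemble into a morphism of Lie groupoids which is additionally $\mb{G}$-equivariant. Concretely, for $(p,B)\in E_0\times L(G_0)$ I would set $\delta(p,B)=\tfrac{d}{dt}\big|_{t=0}\, p\cdot e^{tB}\in T_pE_0$, and for $(\widetilde\gamma,K)\in E_1\times L(G_1)$ I would set $\delta(\widetilde\gamma,K)=\tfrac{d}{dt}\big|_{t=0}\,\widetilde\gamma\cdot e^{tK}\in T_{\widetilde\gamma}E_1$. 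Smoothness of these two assignments is immediate from the smoothness of $\rho_0$ and $\rho_1$, and the total map $\delta$ is a smooth map between the object and morphism manifolds of $\mb{E}\times L(\mb{G})$ and $T\mb{E}$.

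Next I would verify the source/target compatibility. Functoriality of the action (\Cref{Definition:Lie 2 groupaction}) gives the identities $s_{\mb{E}}\circ\rho_1=\rho_0\circ(s_{\mb{E}}\times s_{\mb{G}})$ and $t_{\mb{E}}\circ\rho_1=\rho_0\circ(t_{\mb{E}}\times t_{\mb{G}})$. Differentiating each of these at $(\widetilde\gamma,1_e)$ in the $L(G_1)$-direction produces exactly $s_{\mb{E},*}\bigl(\delta(\widetilde\gamma,K)\bigr)=\delta\bigl(s(\widetilde\gamma),s_{\mb{G},*,e}(K)\bigr)$, and similarly for the target; this is precisely the source/target compatibility for the tangent groupoid $T\mb{E}=[TE_1\rightrightarrows TE_0]$ (cf.~\Cref{Tangent Lie groupoid}). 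Compatibility with units is automatic: an identity arrow in $\mb{E}\times L(\mb{G})$ has the form $(1_p,(0,B))$, and differentiating $\rho_1\circ(u_{\mb{E}}\times u_{\mb{G}})=u_{\mb{E}}\circ\rho_0$ yields $\delta(1_p,(0,B))=u_{\mb{E},*,p}\bigl(\delta(p,B)\bigr)$.

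The main obstacle, as I expect, is compositional compatibility. Writing $(\widetilde\gamma_2,K_2)$ and $(\widetilde\gamma_1,K_1)$ for composable morphisms of $\mb{E}\times L(\mb{G})$, the claim is
\[
\delta\bigl(\widetilde\gamma_2\circ\widetilde\gamma_1,\,K_2\circ K_1\bigr)=\delta(\widetilde\gamma_2,K_2)\circ\delta(\widetilde\gamma_1,K_1),
\]
where the right-hand composition lives in $TE_1$ and is governed by $m_{\mb{E},*,(\widetilde\gamma_2,\widetilde\gamma_1)}$ (\Cref{Tangent Lie groupoid}). The idea is to apply the interchange law for the action functor to $t\mapsto e^{tK_2}$ and $t\mapsto e^{tK_1}$, which are composable in $G_1$ for small $t$ whenever $K_2\circ K_1$ is defined in $L(\mb{G})$, obtaining
\[
(\widetilde\gamma_2\circ\widetilde\gamma_1)\cdot(e^{tK_2}\circ e^{tK_1})=(\widetilde\gamma_2\cdot e^{tK_2})\circ(\widetilde\gamma_1\cdot e^{tK_1}),
\]
and then differentiating at $t=0$. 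The left-hand derivative is $\delta(\widetilde\gamma_2\circ\widetilde\gamma_1,K_2\circ K_1)$, while the right-hand derivative is computed by the Leibniz rule applied to the composition functor $m_{\mb{E}}$, yielding exactly the claimed composition in $T\mb{E}$.

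Finally, $\mb{G}$-equivariance is the classical conjugation identity for fundamental vector fields, applied on the object and morphism levels separately. For the object level, the curve computation
\[
(pg)\cdot e^{t\,{\rm{ad}}_{g^{-1}}(B)}=(pg)\cdot\bigl(g^{-1}e^{tB}g\bigr)=(p\cdot e^{tB})\cdot g
\]
differentiated at $t=0$ gives $\delta\bigl(pg,\,{\rm{ad}}_{g^{-1}}(B)\bigr)=\delta(p,B)\cdot g$, and the identical computation on $E_1$ using $k\in G_1$, $K\in L(G_1)$, together with the Lie group conjugation in $G_1$, yields the morphism-level equivariance $\delta\bigl(\widetilde\gamma k,\,{\rm{ad}}_{k^{-1}}(K)\bigr)=\delta(\widetilde\gamma,K)\cdot k$. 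Together these verifications complete the lemma.
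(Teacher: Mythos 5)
Your proposal is correct and follows essentially the same route as the paper, which disposes of the lemma as "a straightforward consequence of the functoriality of the Lie $2$-group action": you simply make that explicit by differentiating the source/target/unit identities and the interchange law $(\widetilde\gamma_2 k_2)\circ(\widetilde\gamma_1 k_1)=(\widetilde\gamma_2\circ\widetilde\gamma_1)(k_2\circ k_1)$ along exponential curves, and the conjugation identity $g^{-1}e^{tB}g=e^{t\,{\rm{ad}}_{g^{-1}}(B)}$ for equivariance. All the individual verifications (including the compositional one via $m_{\mb{E},*}$ in the tangent groupoid) are sound, so this is just a fully written-out version of the paper's argument.
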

\begin{proof}
	A straightforward consequence of the functoriality of the Lie $2$-group action. 
\end{proof}
On the other hand, it is obvious that in \Cref{Atiyah Objects}, the pair of maps $\pi_{1_{*}}^{/G_1}$ and $\pi_{0_{*}}^{/G_0}$ combine to form a morphism of Lie groupoids  $\pi^{/G} \colon {\rm{At}}(\mb{E}) \ra T\mb{X}$ and hence induce a 1-morphism of VB-groupoids from the Atiyah VB-groupoid to the tangent VB-groupoid over $\mb{X}$.

Summarising the discussion above, we obtain the following: 
\begin{proposition}\label{Prop:AtiyahLie2gpd}
	For a Lie 2-group $\mb{G}$, let $\pi \colon \mb{E} \ra \mb{X}$ be a principal $\mb{G}$-bundle over a Lie groupoid $\mb{X}$. Then, there is a short exact sequence 
	\begin{equation}\label{Atiyah sequece of 2-bundles proposition}
		\begin{tikzcd}
			0 \arrow[r, ""]  & {\rm{Ad}}(\mb{E}) \arrow[r, "\delta^{/ \mb{G}}"] \arrow[d, ""'] & {\rm{At}}(\mb{E}) \arrow[r, "\pi_{*}^{/ \mb{G}}"] \arrow[d, ""'] & T\mb{X} \arrow[d, ""'] \arrow[r] & 0 \\
			0 \arrow[r, ""'] & \mb{X} \arrow[r, "\rm{id}"']                & \mb{X} \arrow[r, "\rm{id}"']                & \mb{X} \arrow[r]                 & 0
		\end{tikzcd}
	\end{equation}
	of VB-groupoids  over $\mb{X}$ (\Cref{Short exact sequence of VB-groupoids}).	
\end{proposition}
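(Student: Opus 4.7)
The plan is to build the sequence in \Cref{Atiyah sequece of 2-bundles proposition} levelwise from the two classical Atiyah sequences ${\rm At}(\pi_1)$ and ${\rm At}(\pi_0)$ recorded in \Cref{Atiyah Objects}, and then glue them into a sequence of VB-groupoids. All three VB-groupoids $T\mb{X}\ra\mb{X}$, ${\rm Ad}(\mb{E})\ra\mb{X}$ and ${\rm At}(\mb{E})\ra\mb{X}$ have already been produced in the construction paragraphs preceding the statement, so nothing remains on that side; what is left is to promote the morphisms $\delta^{/\mb{G}}:=(\delta_1^{/G_1},\delta_0^{/G_0})$ and $\pi_*^{/\mb{G}}:=(\pi_{1*}^{/G_1},\pi_{0*}^{/G_0})$ to $1$-morphisms of VB-groupoids (\Cref{1morphism of VB groupids}) and then check exactness in the sense of \Cref{Short exact sequence of VB-groupoids}.

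First I would verify that $\delta^{/\mb{G}}$ is a morphism of Lie groupoids. The component maps $\delta_i^{/G_i}$ are quotients of the fundamental vector field maps $\delta_i\colon E_i\times L(G_i)\ra TE_i$. By \Cref{Vertical vector field generating functor}, the generating maps combine into a $\mb{G}$-equivariant functor $\delta\colon\mb{E}\times L(\mb{G})\ra T\mb{E}$; passing to the quotient by the $\mb{G}$-action identifies this functor with $(\delta_1^{/G_1},\delta_0^{/G_0})$ and therefore forces compatibility with the source, target, unit and (using the composition rule \Cref{E:Compoad}) composition on ${\rm Ad}(\mb{E})$ and ${\rm At}(\mb{E})$. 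The verification for $\pi_*^{/\mb{G}}$ is easier since it is induced, levelwise, by the tangent functor $\pi_*\colon T\mb{E}\ra T\mb{X}$ factored through the $\mb{G}$-quotient, and functoriality follows from the functoriality of $\pi_*$ combined with the composition formula \Cref{E:AtCompo}. Because each $\delta_i^{/G_i}$ and $\pi_{i*}^{/G_i}$ is a morphism of vector bundles covering ${\rm id}_{X_i}$, the pairs assemble into $1$-morphisms of VB-groupoids in the sense of \Cref{1morphism of VB groupids}.

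Next I would check exactness. By the definition of a short exact sequence of VB-groupoids (\Cref{Short exact sequence of VB-groupoids}), it suffices to exhibit exactness of the vector bundle sequence
\begin{equation*}
0\ra {\rm Ad}(E_1)\xra{\delta_1^{/G_1}} {\rm At}(E_1)\xra{\pi_{1*}^{/G_1}} TX_1\ra 0
\end{equation*}
over $X_1$; this is precisely the classical Atiyah sequence ${\rm At}(\pi_1)$ recalled in \Cref{Definition: Atiyah sequence of a principal bundle}, hence exact. The companion statement on objects then comes for free, as it is the classical sequence ${\rm At}(\pi_0)$ over $X_0$.

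The main obstacle I foresee is not exactness but the book-keeping in the first step: one must check that the source/target/composition identities hold at the level of equivalence classes $[(\widetilde\gamma,K)]$ and $[(\widetilde\gamma,\widetilde X)]$ used to define ${\rm Ad}(\mb{E})$ and ${\rm At}(\mb{E})$, and that the representative choices needed to form \Cref{E:Compoad} and \Cref{E:AtCompo} are preserved by $\delta^{/\mb{G}}$ and $\pi_*^{/\mb{G}}$. The key leverage is the $\mb{G}$-equivariance in \Cref{Vertical vector field generating functor}, which guarantees that choosing composable lifts on the $E_1\times L(G_1)$ and $TE_1$ sides is compatible with the chosen composable lifts on the quotient side; once this is laid out carefully, all remaining verifications are routine.
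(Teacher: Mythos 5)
Your proposal is correct and follows essentially the same route as the paper: the proposition is obtained by summarising the preceding constructions of $T\mb{X}$, ${\rm Ad}(\mb{E})$ and ${\rm At}(\mb{E})$, promoting $\delta^{/\mb{G}}$ and $\pi_*^{/\mb{G}}$ to $1$-morphisms of VB-groupoids via the $\mb{G}$-equivariant fundamental vector field functor of \Cref{Vertical vector field generating functor} (respectively the tangent functor $\pi_*$), and reducing exactness to the classical Atiyah sequences ${\rm At}(\pi_1)$ and ${\rm At}(\pi_0)$ of \Cref{Atiyah Objects}, exactly as required by \Cref{Short exact sequence of VB-groupoids}.
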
 
Now, we are ready to define a higher analog of \Cref{Definition: Atiyah sequence of a principal bundle}.
\begin{definition}
	For a Lie 2-group $\mb{G}$, let $\pi \colon \mb{E} \ra \mb{X}$ be a principal $\mb{G}$-bundle over a Lie groupoid $\mb{X}$. The associated short exact sequence of VB-groupoids \Cref{Atiyah sequece of 2-bundles proposition} is defined as the \textit{Atiyah sequence associated to $\pi \colon \mb{E} \ra \mb{X}$}, which we denote by ${\rm{At}}(\pi)$.
\end{definition}
Note that the notation we used to denote an Atiyah sequence of a principal 2-bundle over a Lie groupoid is the same as we used to denote a classical one \Cref{Definition: Atiyah sequence of a principal bundle}. One should understand the distinction from the context.

\subsection{Strict and semi-strict connections as splittings of the Atiyah sequence}\label{subsectionStrict and semi-strict connections as splittings of the Atiyah sequence}
Recall in \Cref{Definition: Connection on a principal bundle}, we observed that in the setup of classical principal bundles, every splitting of the associated Atiyah sequence could be viewed as connection data on the principal bundle. In a similar spirit, we introduce a notion of connection on a principal 2-bundle over a Lie groupoid. We will see that the categorical structure involved in these bundles will naturally broaden the differential geometric connection structure.
\begin{definition}[strict connection and semi-strict connection] \label{strict and semi-strict connection definition}
	For a Lie 2-group $\mb{G}$, let  $\pi\colon \mb{E} \rightarrow \mb{X}$ be 
	a principal $\mb{G}$-bundle over a Lie groupoid $\mb{X}$, and 	
	\begin{equation}\label{Definition: strict and semistrict connection}
		\begin{tikzcd}
			0 \arrow[r] & {\rm{Ad}}(\mb{E}) \arrow[d] \arrow[r, "\delta^{/\mb{G}}"] & {\rm{At}}(\mb{E}) \arrow[d] \arrow[l, "R", bend left=49] \arrow[r, "\pi_{*}^{/\mb{G}}"] & T\mb{X} \arrow[r] \arrow[d] \arrow[, "", bend left=49] & 0 \\
			0 \arrow[r] & \mb{X} \arrow[r, "\rm{id}"']          & \mb{X} \arrow[r, "\rm{id}"']                                       & \mb{X} \arrow[r]                                        & 0
		\end{tikzcd}
	\end{equation}
	its associated Atiyah sequence. A 1-morphism $R: {\rm At}(\mb{E}) \ra {\rm Ad}(\mb{E})$ of VB-groupoids is said to be a \textit{strict connection on $\pi \colon \mb{E} \ra \mb{X}$} if
	$$ R \circ \delta^{/\mb{G}}=1_{{\rm Ad}(\mb{E})}.$$
	
	A 1-morphism $R\colon  {\rm At}(\mb{E}) \ra {\rm Ad}(\mb{E})$ of VB-groupoids is called a  \textit{ semi-strict connection on $\pi \colon \mb{E} \ra \mb{X}$}  if  $R \circ \delta^{/\mb{G}}$ is  $2$-isomorphic to $1_{{\rm Ad}(\mb{E})},$
	$$R \circ \delta^{/\mb{G}}\simeq 1_{{\rm Ad}(\mb{E})}$$
	in the strict 2-category 2-$\rm{VBGpd}(\mb{X})$  (\Cref{Prop:naturaltransformationinVBgroupoids}).
\end{definition}
An immediate consequence of the categorification is the existence of the following groupoid of connection structures:
\begin{definition}[Groupoid of strict and semi-strict connections] \label{category of strict connection}
	For a Lie 2-group $\mb{G}$, let $\pi\colon \mb{E} \rightarrow \mb{X}$ be a principal $\mb{G}$-bundle over a Lie groupoid $\mb{X}$. We define \textit{the groupoid of  strict (resp. semi-strict) connections  for  $\pi \colon \mb{E} \ra \mb{X}$}  as a category whose objects are $R:{\rm At}(\mb{E}) \ra {\rm Ad}(\mb{E})$ such that $R$ is a strict (resp. semi-strict) connections on $\pi \colon \mb{E} \ra \mb{X}$ and 
	morphisms are  $2$-morphisms  $\eta\colon R \Longrightarrow R'$ of 2-$\rm{VBGpd}(\mb{X}).$	
	We denote the groupoid of strict and semi-strict connections respectively as  $C_{\mb{E}}^{\rm{strict}}$ and $C_{\mb{E}}^{\rm{semi}}.$
\end{definition}
\begin{remark}
	It is almost evident that a strict connection induces a functorial section  $\begin{tikzcd}
		T \mb{X} 	   \arrow[r, "\Sigma"] & {\rm{At}}(\mb{E})		\end{tikzcd},$	
	with respect to the VB-groupoid morphism, $\begin{tikzcd}
		{\rm At}(\mb{E})  \arrow[r, "\pi_{*}^{/\mb{G}}"] & T \mb{X} 
	\end{tikzcd},$
	resulting in splitting of the tangent bundles $TE_i\ra E_i, i\in\{0, 1\}$ into horizontal and vertical subbundles. For a semi-strict connection, the natural isomorphism $R \circ j^{/\mb{G}} \Longrightarrow 1_{{\rm Ad}(\mb{E})}$ poses an obstruction for the corresponding VB-groupoid morphism  $\begin{tikzcd}
		T \mb{X} 	   \arrow[r, "\Sigma"] & {\rm At}(\mb{E})	\end{tikzcd}$ to be a section of $\begin{tikzcd}
		{\rm At}(\mb{E})  \arrow[r, "\pi_{*}^{/\mb{G}}"] & T \mb{X}
	\end{tikzcd}$
	in a strict sense. But, clearly the natural isomorphism $R \circ \delta^{/\mb{G}}\Rightarrow 1_{{\rm Ad}(\mb{E})}$ defines a natural isomorphism between $\pi_{*}^{/\mb{G}} \circ \Sigma $ and $1_{T\mb{X}}$. Hence,  $\Sigma$ can be viewed as a section in a weaker sense. The reviewer of our paper \cite{MR4403617} suggested the name \textit{homotopy section} for such $\Sigma$.
	However, this issue is not explored in this thesis and will be pursued in future work.
\end{remark}

\subsection{Strict and semi-strict connections as Lie 2-algebra valued 1-forms on Lie groupoids}\label{Strict and semi-strict connections as Lie 2-algebra valued 1-forms on Lie groupoids}

In the classical framework \Cref{Section: Connection structures on a principal bundle}, we already observed that for a traditional principal bundle over a manifold, the splittings of its associated Atiyah sequence are in one-one correspondence with the connection 1-forms. To extend this association in the framework of principal 2-bundles over Lie groupoids, we begin by defining a notion of a Lie 2-algebra valued differential form on a Lie groupoid, whose description is tailored to our set-up.
\begin{definition}\label{Definition:LGvaluedformOnLiegroupoi}
	Given a Lie 2-group $\mb{G}$ and a Lie groupoid $\mb{E}$, an \textit{$L(\mb{G})$-valued $1$-form on $\mb{E}$} is defined as a morphism of Lie groupoids 
	$\omega:=(\omega_1,\omega_0)\colon T\mb{E} \rightarrow L(\mb{G})$ such that  $\omega_i$ is an $L(G_i)$-valued differential $1$-form on $E_i,$ for $i\in\{0, 1\}.$
	If $\mb{G}$ acts on $\mb{E}$ and $\omega\colon T\mb{E} \ra L(\mb{G})$ is  $\mb{G}$-equivariant with respect to \Cref{Adj action} and \Cref{Tangent action}, then $\omega$ said to be a \textit{$\mb{G}$-equivariant $1$-form on $\mb{E}$}.
\end{definition}

The notion of a differential form 
on a Lie groupoid, called a \textit{multiplicative form}, does exists in literature (for example, see \cite{MR2565034}). It would be convenient for later calculations if we express an $L(\mb{G})$-valued differential $1$-form  in terms of a Lie crossed module data (\Cref{Lie 2-algebra as Lie crossed module}). Moreover, such a description will make the relation between our definition and the notion of multiplicative forms more apparent. To be more precise, we have the following result.
\begin{lemma}\label{Multiplicative form condition}
	For a Lie crossed module $(G, H, \tau, \alpha)$ and a Lie groupoid $\mb{E}$, let $\omega_0 \colon TE_0 \ra L(G) $ and $\omega_1 \colon TE_1 \ra L(H \rtimes_{\alpha}G)$ be an $L(G)$-valued 1-form on $E_0$ and an $L(H \rtimes_{\alpha}G )$-valued 1-form on $E_1$ respectively. Then the following two are equivalent:
	\begin{itemize}
		\item[(i)] The pair $\omega_1, \omega_0$ defines a morphism of Lie groupoids $$\omega:=(\omega_1, \omega_0)\colon T\mb{E} \ra L \big( [H \rtimes_{\alpha}G \rra G] \big);$$
		\item[(ii)] The pair $\omega_1, \omega_0$ satisfies the following conditions:
		\begin{equation}\label{E:Relmultipl}
			\begin{split}
				&\omega_{1 G}=s^{*}\omega_0,\\
				&t^{*}\omega_0=s^{*}\omega_0+\tau (\omega_{1 H}),\\
				&m^{*}\omega_{1 H}={\rm pr}_1^{*}\omega_{1 H}+{\rm pr}_2^{*}\omega_{1 H},
			\end{split},
		\end{equation}
		where the notations $\omega_{1G}$ and $\omega_{1H}$ are respectively the $L(G)$-valued and $L(H)$-valued components of $\omega_1$. Here, $m\colon E_1\times_{s, E_0, t}E_1 \ra E_1$ and $\pr_i\colon E_1\times_{s, E_0, t}E_1 \ra E_1, i\in \{1, 2\}$, are respectively the composition map and projection maps to the first and second components.
	\end{itemize}
\end{lemma}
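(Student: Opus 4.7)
The approach is to translate the functoriality of $\omega=(\omega_1,\omega_0)\colon T\mb{E}\ra L(\mb{G})$ into concrete equations of differential forms, using the explicit Lie crossed module description of the Lie $2$-algebra $L(\mb{G})=[L(H)\oplus L(G)\rra L(G)]$ recalled in Subsection~\ref{Lie 2-algebra as Lie crossed module}: source $(A,B)\mapsto B$, target $(A,B)\mapsto \tau(A)+B$, unit $B\mapsto (0,B)$ and composition $(A_2,B_2)\circ(A_1,B_1)=(A_2+A_1,B_1)$ (with $s(A_2,B_2)=t(A_1,B_1)$). Smoothness of $\omega$ is automatic since $\omega_0,\omega_1$ are smooth $1$-forms; what is at stake is that $\omega$ intertwines the structure maps of $T\mb{E}$ and $L(\mb{G})$.

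\textbf{(i)$\Rightarrow$(ii).} Evaluating the source identity $s_{L(\mb{G})}\circ\omega_1=\omega_0\circ s_{*,\mb{E}}$ on a tangent vector $v\in TE_1$ immediately produces $\omega_{1G}=s^{*}\omega_0$. Inserting this into the target identity $t_{L(\mb{G})}\circ\omega_1=\omega_0\circ t_{*,\mb{E}}$ and using $t_{L(\mb{G})}(A,B)=\tau(A)+B$ delivers $t^{*}\omega_0=s^{*}\omega_0+\tau(\omega_{1H})$. Finally, taking the $L(H)$-component of the composition identity $\omega_1\circ m_{*,\mb{E}}=c_{L(\mb{G})}\circ(\omega_1\times_{\omega_0}\omega_1)$ gives $m^{*}\omega_{1H}=\pr_1^{*}\omega_{1H}+\pr_2^{*}\omega_{1H}$.

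\textbf{(ii)$\Rightarrow$(i).} I would verify source, target, unit, composition, and inverse compatibility in turn. The source and target axioms follow directly from the first two equations of \eqref{E:Relmultipl}. For the composition axiom, the $L(H)$-component is precisely the third equation, while the $L(G)$-component is automatic: since $s_{*,\mb{E}}(v_2\circ v_1)=s_{*,\mb{E}}(v_1)$ and $\omega_{1G}=s^{*}\omega_0$, we get $\omega_{1G}(v_2\circ v_1)=\omega_{1G}(v_1)$, matching the composition law in $L(\mb{G})$. The inverse axiom is then automatic for a map between groupoids compatible with source, target, units and composition.

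\textbf{The delicate point.} The compatibility with units is not an explicit hypothesis in (ii), and deriving it will be the main step. I must show $\omega_1(u_{*,p}w)=(0,\omega_0(w))$ for every $w\in T_pE_0$. The $L(G)$-component is forced by $\omega_{1G}=s^{*}\omega_0$ together with $s\circ u=\mathrm{Id}_{E_0}$. For the $L(H)$-component, I will exploit that composition with a unit is the identity in $\mb{E}$: pick any $v\in T_\gamma E_1$ with $t_{*,\gamma}(v)=w$ (possible because $t$ is a submersion, hence $t_{*}$ is fibrewise surjective). Then $(u_{*,p}w,v)$ is a composable pair in $T\mb{E}$ with $m_{*}(u_{*,p}w,v)=v$, and applying the third equation of \eqref{E:Relmultipl} yields $\omega_{1H}(v)=\omega_{1H}(u_{*,p}w)+\omega_{1H}(v)$, whence $\omega_{1H}(u_{*,p}w)=0$. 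Submersivity of the structure maps of a Lie groupoid (\Cref{Lie groupoids definition}) is exactly what makes this step work, and I expect this unit-from-composition argument to require the most care in the write-up.
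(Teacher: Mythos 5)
Your proof is correct and follows essentially the same route as the paper's: both directions consist of unpacking the functoriality of $\omega=(\omega_1,\omega_0)$ against the explicit structure maps of $L(\mb{G})=[L(H)\oplus L(G)\rra L(G)]$, which yields exactly the three identities of \eqref{E:Relmultipl}. Your extra step deriving unit compatibility from the composition condition is valid and fills in a point the paper leaves implicit (it is in fact automatic, since $\omega_1(u_{*,p}w)$ is an idempotent endomorphism of $\omega_0(w)$ in the groupoid $L(\mb{G})$ and hence an identity).
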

\begin{proof}:
	\subsection*{(i) $\Rightarrow$ (ii)}
	1st condition in \Cref{E:Relmultipl} directly follows from the source consistency of $\omega$. The second condition follows from the first, and the target consistency of $\omega$. 3rd condition is a direct consequence of the compatibility of $\omega$ with the composition map.
	
	\subsection*{(ii) $\Rightarrow$ (i)} Source consistency is a direct consequence of the 1st condition in \Cref{E:Relmultipl}. Target consistency follows from plugging in the first condition in the second one. Compositional compatibility follows from the third condition combined with the first condition. Smoothness is evident from the definition of $\omega_0$ and $\omega_1$. 
\end{proof}

\begin{remark}\label{multiplicative 1-form}
	In particular the 3rd condition in \Cref{E:Relmultipl} implies 
	$\omega_{1 H}$ is an $L(H)$-valued multiplicative form on the Lie groupoid $\mb{E}$. When $G$ is trivial in the Lie crossed module set-up, we get back the definition of a multiplicative form mentioned in  \cite{MR2565034}.
\end{remark}

\begin{remark}\label{Remark::Relation to existing Lie 2-algebra valued differential forms}
	It is worth mentioning  \cite{MR3894086} for a different, but related notion of an  $L(\mb{G})$-valued differential form on Lie groupoid $\mb{E}$, where the author considers the nerve $N(\mb{E})$ of the Lie groupoid $\mb{E}$ (\Cref{Definition: Nerve of a Lie groupoid}), and defines an $L(\mb{G})$-valued differential form as a `suitably chosen' subcomplex of the double complex $T^p:=\bigoplus_{p=i+j+k}\Omega^i({E_j}, {\mathfrak G}_{k}),$ where ${\mathfrak G}_{-1}=L(G), {\mathfrak G}_0=L(H), {\mathfrak G}_i=0\, \forall\, i\neq -1\, \rm{or}\, 0.$ Although the choice of the subcomplex
	described above is particularly motivated by the connection structure on a  principal $2$-bundle over a manifold, the
	motivation for our definition of $L(\mb{G})$-valued differential forms (\Cref{Definition:LGvaluedformOnLiegroupoi}) is to find an infinitesimal representation of the strict and semi-strict connection arising out of splitting of the associated Atiyah sequence (\Cref{strict and semi-strict connection definition}). Furthermore, observe that if we attach an $L(H)$-valued differential $2$-form on $E_0$ with our differential form $\omega$ in 
	\Cref{Definition:LGvaluedformOnLiegroupoi}, we get a differential $1$-form as defined in \cite{MR3894086}. 
\end{remark}
\begin{example}\label{E:Hemultiplicative}
	Let $\mb{E}$ be a Lie groupoid. Suppose the Lie $2$-group $\widehat H:=[H\rra {e}]$, associated to an abelian Lie group $H$ (\Cref{Ex:ordinary}) acts on $\mb{E}.$   Then it is easy to see that an $L(\widehat H)$-valued 
	$1$-form on $\mb{E}$ is  same as an $L(H)$-valued multiplicative $1$-form on $\mb{E}$ (see \Cref{multiplicative 1-form}). If the $L(H)$-valued multiplicative $1$-form on $\mb{E}$ is $H$-equivariant, then  the corresponding $L(\widehat H)$-valued $1$-form will  be $\widehat{H}$-equivariant 1-form on $\mb{E}.$
\end{example}
\begin{example}
	Given an action of a Lie group $G$ on a manifold $M,$  an $L(G)$-valued  $1$-form is same as an $L([G\rra G])$-valued $1$-form on the discrete Lie groupoid $[M\rra M].$ Also, if the 1-form on $M$ is $G$-equivariant, it is immediate to see that the corresponding 
	$L([G\rra G])$-valued $1$-form on the Lie groupoid $[M\rra M]$ is $[G\rra G]$ (\Cref{Ex:Discretecrossedmodule}) equivariant as per the \Cref{Definition:LGvaluedformOnLiegroupoi}.
\end{example}

\begin{example}\label{Ex:LGGvalued}
	Let the Lie $2$-group $[G\rra G]$ acts on a Lie  groupoid $\mb{E}$. Then an $L(\mb{G})$-valued $1$-form on the Lie groupoid $\mb{E}$ is same as
	an $L(G)$-valued $1$-form on $E_0$ satisfying $t^*\omega=s^*\omega.$ It is obvious that the equivariancy of one implies the equivariancy of the other.
\end{example}

\begin{example}\label{special decorated connections}
	Consider a Lie crossed module $(G, H, \tau, \alpha)$ and a Lie groupoid $\mb{E}$. Then there is another Lie groupoid $[E_1\times H\rra E_0]$  with the following structure maps:
	\begin{itemize}
		\item source, $(\gamma,h) \mapsto s(\gamma)$,
		\item target, $(\gamma,h) \mapsto t(\gamma) \tau(h^{-1})$,
		\item for composable $(\gamma_2,h_2), (\gamma_1, h_1)$, define $(\gamma_2, h_2)\circ (\gamma_1, h_1)=\bigl((\gamma_2\tau(h_1)) \circ \gamma_1, h_2h_1\bigr)$,
		\item unit, $p \mapsto (1_p,e)$,
		\item inverse, $(\gamma,h) \mapsto (\gamma^{-1} \tau(h^{-1}), h^{-1}\bigr).$
	\end{itemize}
	Now, assume an action of the discrete Lie 2-group $[G \rra G]$ on $\mb{E}$ (\Cref{Ex:LGGvalued}). This induces an action of $[H \rtimes_{\alpha}G \rra G]$ on $[E_1\times H\rra E_0]$, defined as $(p, g)\mapsto p g$ on objects and $ \big((\gamma, h),(h', g') \big)=(\gamma g,  \alpha_{g^{-1}}(h'^{-1}\, h))$ on morphisms. Suppose $\omega$ is an 
	$L(G)$-valued $G$-equivariant $1$-form on $E_0$, satisfying $s^*\omega=t^*\omega.$ Then the $L(H \rtimes_{\alpha}G)$-valued 1-form $\widetilde \omega$,  defined as $${\widetilde \omega}_{\gamma,\, h}(X, \mathfrak K):={\rm{ad}}_{(h, e)}\omega(s_{*\gamma}(X))-\mathfrak{K}\cdot h^{-1},$$ and $\omega$ gives an $L(\mb{G})$-valued $\mb{G}$-equivariant $1$-form on  
	Lie  groupoid $[E_1\times H\rra E_0].$   Later in this chapter, we will see that this particular example stems from a broader and more general construction.
\end{example}
The following definition is natural.
\begin{definition}[Groupoid of $\mb{G}$-equivariant $L(\mb{G})$-valued $1$-forms]\label{Def:Equivardiffcateg}
	Given an action of a Lie 2-group $\mb{G}$ on a Lie groupoid $\mb{E}$, the \textit{ {groupoid} of $\mb{G}$-equivariant $L(\mb{G})$-valued $1$-forms} on $\mb{E}$, denoted $\Omega_{\mb{E}^{\mb{G}}},$ is defined below:
	\begin{itemize}
		\item objects are given by $\mb{G}$-equivariant $1$-forms on $\mb{E}$, 
		\item morphisms are given by smooth natural isomorphisms  $\eta\colon \omega \Longrightarrow \omega'$,  such that $\eta\bigl((p, v)\cdot g\bigr)=\eta(p, v)\cdot 1_g$ for all $(p, v) \in TE_0, g \in G_0$, and $\eta:TE_0\ra L(G_1)$ is an $L(G_1)$-valued $1$-form on $E_0$. 
	\end{itemize}
\end{definition}

Before presenting a categorified analog of the one-one correspondence between connections as splitting of the Atiyah sequence and connections as differential 1-forms (\Cref{Equivalent characterisations of classical connection}), we make the following observation:
\begin{remark}\label{Associated G-equivariant L(G)-valued 1-form}
	A morphism of VB-groupoids $R \colon {\rm At}(\mb{E})\ra {\rm Ad}(\mb{E})$ 
	\begin{equation}\nonumber
		\begin{tikzcd}[sep=small]
			{\rm At}(\mb{E}) \arrow[rr, "R"] \arrow[dd,xshift=0.75ex,"\pi"]&  & {\rm{Ad}}({\mb{E}}) \arrow[dd,xshift=0.75ex,"\pi"] \\
			&  &                \\
			\mb{X} \arrow[rr]            &  & \mb{X}          
		\end{tikzcd},
	\end{equation}
	is of the following form
	\begin{equation}\nonumber
		\begin{split}
			%&S\colon {\rm At}(\mb{E})\ra {\rm Ad}(\mb{E}),\\
			&[(p, v)]\mapsto [(p, \omega(p, v))],  \forall [p, v]\in {\rm At}(E_0),\\
			& [(\widetilde \gamma, \widetilde X)]\mapsto [(\widetilde \gamma, \omega(\widetilde \gamma, \widetilde X))], \forall [(\widetilde \gamma, \widetilde X)]\in  {\rm At}(E_1)
		\end{split}
	\end{equation}
	and hence, defines a $\mb{G}$-equivariant $L(\mb{G})$-valued $1$-form  $\omega\colon T\mb{E} \rightarrow L(\mb{G}).$ Of course, the converse also holds. 
\end{remark}

\begin{proposition}\label{Prop:Corresconndiffform}
	For a Lie 2-group $\mb{G}$, let $\pi\colon \mb{E} \rightarrow \mb{X}$ be a  principal $\mb{G}$-bundle over a Lie groupoid $\mb{X}.$ Let $R \colon {\rm At}(\mb{E}) \ra {\rm Ad}(\mb{E})$ be a morphism of VB-groupoids and $\omega\colon T\mb{E} \rightarrow L(\mb{G})$ the associated  $\mb{G}$-equivariant $L(\mb{G})$-valued differential $1$-form  on the Lie groupoid $\mb{E}.$ Then we have the following:
	
	\begin{enumerate}[(i)]
		\item  $R \colon {\rm At}(\mb{E}) \ra {\rm Ad}(\mb{E})$ is  a strict connection as in \Cref{strict and semi-strict connection definition} if and only if  the following diagram  of morphisms of Lie groupoids
		\begin{equation} \label{Dia:condver}
			\begin{tikzcd}
				T{\mb{E}} \arrow[r, "\omega"]                  & L(\mb{G}) \\
				\mb{E} \times L(\mb{G}) \arrow[u, "\delta"] \arrow[ru, "\rm{pr_2}"'] &  
			\end{tikzcd},
		\end{equation}
		commutes on the nose. 
		
		\item $R \colon {\rm At}(\mb{E}) \ra {\rm Ad}(\mb{E})$ is  a semi-strict connection if and only if 
		the  Diagram~\ref{Dia:condver}  commutes up to a  $\mb{G}$-equivariant, fiber-wise linear natural isomorphism.
	\end{enumerate}
	\begin{proof}
		\begin{itemize}
			\item[(i)] Let $R$ be a strict connection. By definition, we have $R \circ j^{/\mb{G}}=1_{{\rm Ad}(\mb{E})}.$ From \Cref{Associated G-equivariant L(G)-valued 1-form}, there is an associated   $\mb{G}$-invariant $L(\mb{G})$ valued 1-form $(\omega_1, \omega_0)=\omega\colon T\mb{E} \rightarrow L(\mb{G}) $, which defines a pair of classical connections on princiapl $G_0$-bundle $E_0\ra X_0$ and principal $G_1$-bundle $E_1\ra X_1$ respectively. 
			
			So, by \Cref{Equivalent characterisations of classical connection},
			$\omega_0, \omega_1$ satisfy
			\[\omega_i\bigl(x_i, \delta_{x_i}(A_i)\bigr)=A_i, \qquad i\in \{0, 1\},
			\]  
			for all $x_i\in E_i, A_i\in L(G_i),$ giving the commutation relation in Diagram  \ref{Dia:condver}. 
			
			The converse also follows straightforwardly.
			
			\item[(ii)]  Now, let $R$ be a semi-strict connection. Suppose  $\epsilon\colon R \circ j^{/\mb{G}} \Longrightarrow 1_{{\rm Ad}(\mb{E})}$ is a $2$ isomorphism in $2$-$\rm VBGpd(\mb{X})$: 
			\begin{equation}\nonumber
				\begin{split}
					&\epsilon_{([p, B])}\colon R([p, \delta_p(B)])\longrightarrow [(p, B)].
				\end{split}
			\end{equation}
			
			Since  $\pi(\epsilon([p, B])=1_{\pi(p)},$ we claim that there is a unique $\bigl(\omega(p, {\delta_p(B)})\xrightarrow{\kappa_{(p, B)}}B\bigr)\in L(G_1)$ such that $\eta([p, B])=[(1_p, \kappa_{(p, B)})],$ for a chosen $p\in E_0, B\in L(G_0).$ 
			To prove the claim, observe that if $\epsilon([p, B])=[(\gamma_{p, B}, \alpha_{p, B})],$ for $\gamma_{p, B}\in E_1, \alpha_{p, B}\in L(G_1),$ then $[s(\gamma_{p, B})]=[p]$ implies $\exists! g\in G_0$ such that $s(\gamma_{p, B})g=p.$ It means 
			$\epsilon([p, B])=[(\gamma_{p, B}1_g, {\rm{ad}}_{{1_g}^{-1}}(\alpha_{p, B}))],$ with $\gamma_{p, B}1_g$ (see \Cref{Notation conventions}) with source $p$, and it projects down to $1_{\pi(p)}=\pi(1_p)$ in the principal $G_1$-bundle $E_1\ra X_1.$ Thus, choosing the unique element of
			$G_1$ to translate $\gamma_{p, B}1_g$ to $1_p,$ we get the unique $\biggl(\omega(p, {\delta_p(B)})\xrightarrow{\kappa_{(p, B)}}B\biggr)\in L(G_1)$ which satisfy $\eta([p, B])=[(1_p, \kappa_{(p, B)})].$
			It can be easily seen that $\kappa\colon \omega\circ \delta \Longrightarrow \pr_2$ is a natural isomorphism. Also, since $\epsilon$ is smooth, so is $\kappa$. 
			Now suppose if we represent the equivalence class $[(p, B)]$ by some other element $(p', B')=\bigl(p g, {\rm{ad}}_{g^{-1}}(B)\bigr),$ we will get  $\epsilon([p', B'])=[(1_{p'}, \kappa_{(p', B')})]$ which implies $(1_{p'}, \kappa_{(p', B')})\sim (1_p, \kappa_{(p, B)}).$ This implies $\bigl(1_{p'}, \kappa_{(p', B')}\bigr)=\bigl(1_p, \kappa_{(p, B)}\bigr)\cdot 1_g$ [follows easily from the fact that $p'=p g$ and freeness of action on the fiber of $E_1\ra X_1$]. This provides us the equivariancy of $\kappa:$
			$$\kappa\bigl(p g, {\rm{ad}}_{g^{-1}}(B)\bigr)={\rm{ad}}_{{1_g}^{-1}}\bigl(\kappa(p, B)\bigr).$$
			
			On the converse direction, given a $\mb{G}$-equivariant, fiber-wise linear  natural transformation $\kappa\colon \omega\circ \delta \Longrightarrow \pr_2,$ we define $\epsilon\colon R \circ j^{/\mb{G}} \Longrightarrow 1_{{\rm Ad}(\mb{E})}$ as  $\epsilon([p, B]):=[(1_p, \kappa_{(p, B)})].$ 
			We claim this map is well-defined. To see this, note that if $(p', B')\in [(p, A)],$ i.e $p'=pg, B'={\rm{ad}}_{g}(B)$ for some $g\in G,$ then the equivariancy of $\kappa$ implies $(1_{p'}, \kappa_{(p', B')})=(1_p, \kappa_{(p, B)})1_g.$ Hence,
			$[(1_{p'}, \kappa_{(p', B')})]=[(1_p, \kappa_{(p, B)})].$ $\epsilon$ satisfies $\pi(\epsilon_{[p, B]})=1_{\pi(p)}.$
			The map is clearly smooth. It is a straightforward verification that $\eta$ is a natural transformation and a morphism in $2$-$\rm VBGpd(\mb{X})$.
		\end{itemize}
	\end{proof}
\end{proposition}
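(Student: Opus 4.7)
The plan is to leverage Remark~\ref{Associated G-equivariant L(G)-valued 1-form}, which already encodes a VB-groupoid morphism $R\colon \mathrm{At}(\mathbb{E})\to \mathrm{Ad}(\mathbb{E})$ in terms of the associated $\mathbb{G}$-equivariant $L(\mathbb{G})$-valued $1$-form $\omega$ via $R([p,v])=[(p,\omega(p,v))]$ and $R([\widetilde\gamma,\widetilde X])=[(\widetilde\gamma,\omega(\widetilde\gamma,\widetilde X))]$. With this translation in hand, both parts reduce to unpacking the definitions of the composite $R\circ \delta^{/\mathbb{G}}$ and comparing it to the identity morphism on $\mathrm{Ad}(\mathbb{E})$, using the freeness of the $\mathbb{G}$-action to pin down representatives uniquely.

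For part (i), my first step would be to chase an element: $R\circ \delta^{/\mathbb{G}}([p,B])=[(p,\omega(p,\delta_p(B)))]$, and the condition $R\circ \delta^{/\mathbb{G}}=1_{\mathrm{Ad}(\mathbb{E})}$ means this equals $[(p,B)]$ on the nose. By the freeness of the $G_0$-action on $E_0$ (the principal bundle structure) the equivalence class equality forces $\omega(p,\delta_p(B))=B$; the analogous argument at the morphism level gives $\omega(\widetilde\gamma,\delta_{\widetilde\gamma}(K))=K$. These two identities are exactly the commutativity of Diagram~\ref{Dia:condver} at the object and morphism levels. The converse is immediate by reading the chain of equalities backwards. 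Since the identifications use only freeness plus the $\mathbb{G}$-equivariance packaged in Remark~\ref{Associated G-equivariant L(G)-valued 1-form}, no further work is required here.

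For part (ii), the idea is to convert a $2$-isomorphism $\epsilon\colon R\circ\delta^{/\mathbb{G}}\Longrightarrow 1_{\mathrm{Ad}(\mathbb{E})}$ in $2\text{-}\mathrm{VBGpd}(\mathbb{X})$ into a natural isomorphism $\kappa\colon \omega\circ\delta\Longrightarrow \mathrm{pr}_2$, and vice versa. Given $\epsilon$, for each $(p,B)\in E_0\times L(G_0)$ the arrow $\epsilon([p,B])$ lies in $\mathrm{Ad}(E_1)$ and projects to $1_{\pi(p)}$ in $\mathbb{X}$; freeness of the $G_1$-action on $E_1$ lets me pick the unique representative whose first component is $1_p$, producing an element $\kappa_{(p,B)}\in L(G_1)$ with $s(\kappa_{(p,B)})=\omega(p,\delta_p(B))$ and $t(\kappa_{(p,B)})=B$. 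Independence of the choice of representative for $[p,B]$ together with the required compatibility of $\epsilon$ with the $\mathbb{G}$-action will give the $\mathbb{G}$-equivariance and fiber-wise linearity of $\kappa$. Conversely, given such a $\kappa$, one defines $\epsilon([p,B]):=[(1_p,\kappa_{(p,B)})]$ and checks well-definedness using the equivariance hypothesis on $\kappa$, then verifies naturality and the VB-groupoid $2$-morphism axioms.

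The main obstacle I anticipate is bookkeeping around the quotient structure of $\mathrm{Ad}(\mathbb{E})$ and $\mathrm{At}(\mathbb{E})$: one must show that the canonical choice of representative $(1_p,\kappa_{(p,B)})$ really does behave naturally under changing $(p,B)\mapsto (pg,\mathrm{ad}_{g^{-1}}(B))$, and that this is equivalent, not just implied by, the equivariance clause of a $2$-morphism in $2\text{-}\mathrm{VBGpd}(\mathbb{X})$. Once this correspondence $\epsilon\leftrightarrow \kappa$ is established to be bijective and to preserve all relevant structure, the proposition follows directly.
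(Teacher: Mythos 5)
Your proposal is correct and follows essentially the same route as the paper: part (ii) is exactly the paper's argument (pass between $\epsilon$ and $\kappa$ by using freeness of the $G_1$-action to normalize the representative to one with first component $1_p$, then check $\mathbb{G}$-equivariance for well-definedness in both directions). For part (i) you element-chase the quotient classes directly via freeness, whereas the paper cites the classical splitting/connection-$1$-form correspondence for the bundles $E_i\to X_i$, but this is the same underlying computation, so no substantive difference.
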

Now, we are ready to make the following definition:
\begin{definition}[Strict and semistrict connection 1-forms]\label{strict ans semi strict connetion 1-forms}
	For a Lie 2-group $\mb{G}$, let $\pi\colon \mb{E} \rightarrow \mb{X}$ be a principal $\mb{G}$-bundle over a Lie groupoid $\mb{X}.$ We will call a $\mb{G}$-equivariant $L(\mb{G})$-valued $1$-form  $\omega\colon T\mb{E} \rightarrow L(\mb{G})$ as a \textit{strict connection $1$-form} if it satisfies the property (i) in \Cref{Prop:Corresconndiffform} and a \textit{semi-strict connection $1$-form} if it satisfies the property (ii) in \Cref{Prop:Corresconndiffform}. 
\end{definition}

 Since strict (resp. semi-strict) connection 1-forms on a principal $\mb{G}$-bundle $\pi \colon \mb{E} \ra \mb{X}$ over a Lie groupoid $\mb{X}$ are defined as morphisms of Lie groupoids $T\mb{E} \ra L(\mb{G})$, their collection have natural groupoid structures as functor categories. More precisely, we have the following:
\begin{definition} [Groupoid of strict and semi-strict connection $1$-forms]\label{Def:Semisemistriccat}
	For a Lie 2-group $\mb{G}$, let $\pi\colon \mb{E} \rightarrow \mb{X}$ be a principal $\mb{G}$-bundle over a Lie groupoid $\mb{X}.$ The \textit{groupoid of strict and semi-strict connections} is defined below:
	\begin{itemize}
		\item   objects are respectively the strict connection $1$-forms and semi-strict 
		connection $1$-forms,
		\item  morphisms are the smooth natural isomorphisms  $\eta\colon \omega \Longrightarrow \omega'$ such that $\eta\bigl((p, v)\cdot g\bigr)=\eta(p, v)\cdot 1_g$ for all $(p, v) \in TE_0$ and $g \in G_0.$
	\end{itemize}
	We denote the groupoid of strict and semi-strict connections respectively by $\Omega_\mb{E}^{\rm{strict}}$ and $\Omega_\mb{E}^{\rm{semi}}.$ 
\end{definition}
It is clear that we have the following sequence of categories
$$\Omega_\mb{E}^{\rm{strict}} \subset \Omega_\mb{E}^{\rm{semi}}\subset \Omega_{\mb{E}^{\mb{G}}}.$$

For further investigating the natural isomorphism $\kappa\colon \omega\circ \delta \Longrightarrow \pr_2$ in the Diagram \ref{Dia:condver},
for a semi-strict connection $\omega\colon T\mb{E}\ra L(\mb{G})$, let us express the Lie 2-group $\mb{G}$ by its Lie crossed module $(G, H, \tau, \alpha)$, i.e $\mb{G}:=[H \rtimes_{\alpha}G \rra G]$.
So, for each $(p, B)\in E_0\times L(G_0)$ we have a ${\kappa{(p, B)}}$  of the form $\kappa(p, B)=(\kappa_{\omega}(p, B), \omega\bigl(p, {\delta_p(B)}\bigr),$ where $\kappa_{\omega}(p, B)\in L(H)$ and 
\begin{equation}\label{semi-strict connection: condition on omega_0}
	\omega\bigl(p, {\delta_p(B)}\bigr)-B=-\tau\bigl(\kappa_{\omega}(p, B)\bigr)
\end{equation}
Hence, $\kappa_{\omega}(p, B)$ measures the deviation  $\omega\bigl(p, {\delta_p(B)}\bigr)-B.$  Now, given a $\kappa$ there is a smooth fiber-wise linear map $\kappa_{\omega}\colon E_0\times L(G)\ra L(H)$. The following condition follows from the equivariancy of $\kappa$
\begin{equation}\label{E:equilambda}
	\kappa_{\omega}\bigl(p g, {\rm{ad}}_{g^{-1}} B\bigr)=\alpha_{g^{-1}}\bigl(\kappa_{\omega}(p, B)\bigr).
\end{equation}
As $\kappa$ is a natural transformation, for $\bigl(A\xrightarrow{K}B\bigr)\in L(G_1)$  and $\bigl(p\xrightarrow{\widetilde \gamma}q\bigr)\in L(E_1)$
the following diagram commutes,
\[
\begin{tikzcd}[sep=small]
	\omega_{0, p}\bigl(\delta_p(A)\bigr) \arrow[dd, "{\kappa(p, A)}"'] \arrow[rrr, "\omega_{1, \widetilde \gamma}\bigl((\delta_{\widetilde \gamma})(K) \bigr)"] &  &  & \omega_{0, p}\bigl(\delta_q(B)\bigr)  \arrow[dd, "{\kappa(q, B)}"] \\
	&  &  &                                                 \\
	A \arrow[rrr, "K"]                                                                            &  &  & B                                              
\end{tikzcd},\]
and hence, we arrive at the following condition
\begin{equation} \label{semi-strict connection: condition on omega_1}		 
	\omega_{1, \tilde{\gamma}}\bigl(\delta_{\tilde{\gamma}}(K)\bigr)-K=\bigl(\kappa_{\omega}(p, A)-\kappa_{\omega}(q, B),-\tau(\kappa_{\omega}(p, A))\bigr).
\end{equation}
%Thers is a Lie $2$-sub algebra $L(\mb{G})^\tau:=[L(H)\rtimes \tau\bigl(L(H)\bigr)\rra \tau\bigl(L(H)\bigr)$] of $L(\mb{G})=[L(H)\rtimes L(G)\rra L(G)].$ Observe that $L(\mb{G})^\tau$ is invariant under the adjoint action of $\mb{G}.$

As a consequence of \Cref{semi-strict connection: condition on omega_1}, we get the following uniqueness property:
\begin{lemma}
	For a Lie crossed module $(G, H,\tau, \alpha)$,	let $\omega$ be a semi-strict connection $1$-form on a principal $[H \rtimes_{\alpha}G \rra G]$-bundle over a Lie groupoid $\mb{X}$. Then the natural transformation $\kappa$ in Diagram \ref{Dia:condver} is unique.
\end{lemma}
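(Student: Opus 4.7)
The plan is to establish uniqueness by a pointwise comparison. Suppose $\kappa, \kappa'$ are two natural transformations witnessing the $2$-commutation of \Cref{Dia:condver}. Since both $\kappa(p,B)$ and $\kappa'(p,B)$ are morphisms $\omega(p, \delta_p(B))\to B$ in the Lie $2$-algebra $L(\mb{G})$, and since the morphisms with prescribed source and target in $L([H\rtimes_{\alpha}G\rra G])$ form a torsor under $\ker(\tau)\subset L(H)$, writing $\kappa(p,B)=(\kappa_\omega(p,B),\omega(p,\delta_p(B)))$ and similarly for $\kappa'$, both $L(H)$-components automatically satisfy \Cref{semi-strict connection: condition on omega_0}. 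Hence the difference $\nu(p,B):=\kappa_\omega(p,B)-\kappa'_\omega(p,B)$ takes values in $\ker(\tau)$, and the goal is to show $\nu\equiv 0$.

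Next, apply the naturality condition \Cref{semi-strict connection: condition on omega_1} to the morphism $(1_p, K)$ in $\mb{E}\times L(\mb{G})$ with $K=(X,0)\colon 0\to \tau(X)$. This produces the explicit relation
\[
\kappa_\omega\bigl(p,\tau(X)\bigr)=X-\omega_{1,1_p}\bigl(\delta_{1_p}(X,0)\bigr)_H,
\]
whose right-hand side depends only on $\omega$. Since $\kappa'_\omega$ satisfies the same identity and $\kappa_\omega(p,\cdot)$ is fibre-wise linear, we conclude $\nu(p,B)=0$ for every $B\in\mathrm{Im}(\tau)$. Using naturality at $(\tilde{\gamma},1_B)$ for any $\tilde{\gamma}\colon p\to q$ in $E_1$ then yields $\nu(p,B)=\nu(q,B)$, while the equivariance relation \Cref{E:equilambda} gives $\alpha_g\bigl(\nu(p,B)\bigr)=\nu(p,\mathrm{ad}_g B)$ for $g\in G_0$.

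The main obstacle is to propagate the vanishing of $\nu(p,\cdot)$ from $\mathrm{Im}(\tau)$ to the whole of $L(G_0)$. The plan is to combine the equivariance relation above with the Peiffer identity $\alpha(\tau(h))(A)=\mathrm{ad}_h(A)$ from \Cref{E:Peifferdiff}: fibre-wise linearity forces $\nu(p,\cdot)$ to descend to a linear map $L(G_0)/\mathrm{Im}(\tau)\to\ker(\tau)$, while compatibility with the infinitesimal $H$-action obtained by differentiating $\alpha$ along the bundle fibres constrains this descended map so strictly that it must vanish. Once $\nu\equiv 0$ is established, the equality $\kappa=\kappa'$ in $L(\mb{G})$ follows at once from the decomposition introduced in the first paragraph, completing the proof.
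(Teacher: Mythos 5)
Your first two paragraphs are essentially correct and in fact make explicit what the paper itself leaves implicit: the paper offers no detailed argument for this lemma, asserting it directly as a consequence of \Cref{semi-strict connection: condition on omega_1}, and your computation shows precisely what that identity yields. From \Cref{semi-strict connection: condition on omega_0} the difference $\nu(p,B):=\kappa_{\omega}(p,B)-\kappa'_{\omega}(p,B)$ lands in $\ker(\tau)$; evaluating \Cref{semi-strict connection: condition on omega_1} at $(1_p,(X,0))$ pins $\kappa_{\omega}(p,\tau(X))$ down in terms of $\omega$, so $\nu(p,\cdot)$ vanishes on $\mathrm{Im}(\tau)$; and evaluating it at $(\widetilde{\gamma},1_B)$ gives $\nu(p,B)=\nu(q,B)$ along arrows of $\mb{E}$. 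One small slip: \Cref{E:equilambda} reads $\nu(pg,\mathrm{ad}_{g^{-1}}B)=\alpha_{g^{-1}}\bigl(\nu(p,B)\bigr)$, so the base point moves together with $B$; your relation $\alpha_g\bigl(\nu(p,B)\bigr)=\nu(p,\mathrm{ad}_gB)$ is not what equivariance gives, since $p$ and $pg$ need not be joined by a morphism of $\mb{E}$.

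The genuine gap is exactly the step you flag as ``the main obstacle'': the passage from $\mathrm{Im}(\tau)$ to all of $L(G_0)$ is not an argument but a declaration that the Peiffer identity and equivariance ``constrain the descended map so strictly that it must vanish.'' They do not. The identity $\alpha(\tau(h))(A)=\mathrm{ad}_h(A)$ from \Cref{E:Peifferdiff} only governs the action of $\mathrm{Im}(\tau)$ and carries no information transverse to $\mathrm{Im}(\tau)$, and the equivariance relation can be vacuous. Concretely, take the Lie crossed module with $G$ and $H$ abelian, $\tau$ trivial and $\alpha$ trivial, a decorated bundle over a discrete base carrying a strict connection $\omega$ (so $\omega\circ\delta=\pr_2$ and the identity transformation is one valid $\kappa$), and any nonzero linear map $\lambda\colon L(G)\ra L(H)$; then $\kappa'(p,B):=\kappa(p,B)+(\lambda(B),0)$ is smooth, fibre-wise linear, is a morphism $\omega(p,\delta_p(B))\ra B$, and satisfies \Cref{semi-strict connection: condition on omega_0}, \Cref{semi-strict connection: condition on omega_1} and \Cref{E:equilambda}, yet $\kappa'\neq\kappa$. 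Hence no argument using only the facts you have assembled (fibre-wise linearity, the source/target constraint, naturality, equivariance) can force $\nu\equiv 0$: to complete a proof you must either bring in some further property of $\kappa$ not recorded in these identities, or add a hypothesis such as surjectivity of $\tau_{*}\colon L(H)\ra L(G)$ (equivalently, the absence of nonzero invariant linear maps $L(G_0)/\mathrm{Im}(\tau)\ra\ker(\tau)$), in which case your second paragraph already finishes the job. As written, the proposal does not establish the statement, and the example above indicates that the difficulty is not merely technical.
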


Summarising the discussion on the role of $\kappa$, we conclude the following:
\begin{proposition}\label{Prop:Semisirictnatural}
	For a Lie crossed module $(G, H, \tau, \alpha)$, let $\pi \colon \mb{E} \ra \mb{X}$ be a principal $\mb{G}:=[H \rtimes_{\alpha}G \rra G]$-bundle over a Lie groupoid $\mb{X}$. A $\mb{G}$-equivariant $1$-form $\omega\colon T\mb{E}\ra L(\mb{G})$ is a semi-strict connection $1$-form if and only if the functor 
	$(\omega\circ \delta-\pr_2):={\widehat \kappa}_{\omega}$  is of the form $\widehat{\kappa}_{\omega}(p, B)=\tau(\kappa_\omega(p,B))$ and $\widehat{\kappa}_{\omega}(\gamma, K)=\bigl(\kappa_{\omega}(p, A)-\kappa_{\omega}(q, B),-\tau(\kappa_{\omega}(p, A))\bigr)$ for a smooth, fiber wise linear map $\kappa_{\omega}\colon E_0 \times L(\mb{G}) \ra L(H)$ satisfying the \Cref{E:equilambda}. Moreover, $\omega$ is a strict connection $1$-form if and only if  $\omega\circ \delta-\pr_2$ is the zero functor.
\end{proposition}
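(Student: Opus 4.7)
The plan is to reduce the proposition to Proposition~\ref{Prop:Corresconndiffform} by unpacking the natural isomorphism witnessing a semi-strict connection in terms of the Lie crossed module decomposition $L(\mb{G}) = [L(H) \oplus L(G) \rra L(G)]$ recalled in Subsection~\ref{Lie 2-algebra as Lie crossed module}. For the forward direction, I would start by noting that if $\omega$ is semi-strict then Proposition~\ref{Prop:Corresconndiffform}(ii) supplies a smooth, $\mb{G}$-equivariant, fiber-wise linear natural isomorphism $\kappa \colon \omega \circ \delta \Longrightarrow \pr_2$. For $(p, B) \in E_0 \times L(G)$, the arrow $\kappa(p, B) \in L(G_1)$ has source $\omega(p, \delta_p(B))$ and target $B$. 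Using the source-target description of $L(\mb{G})$---namely $s(A_0, B_0) = B_0$ and $t(A_0, B_0) = B_0 + \tau(A_0)$---one writes $\kappa(p, B) = (\kappa_\omega(p, B), \omega(p, \delta_p(B)))$ for a uniquely determined $\kappa_\omega(p, B) \in L(H)$. Matching targets yields the stated object-level formula for $\widehat{\kappa}_\omega(p, B)$ in terms of $\tau(\kappa_\omega(p, B))$, and the smoothness, fiber-wise linearity, and $\mb{G}$-equivariance of $\kappa$ transfer verbatim to the map $\kappa_\omega$, the latter giving precisely \eqref{E:equilambda}.

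The morphism-level formula comes from the naturality square applied to an arrow $(\gamma, K) \colon (p, A) \to (q, B)$ in $\mb{E} \times L(\mb{G})$. Writing $K = (K_H, A)$ with $B = A + \tau(K_H)$, functoriality of $\omega$ and of $\delta$ forces $\omega(\gamma, \delta_\gamma(K))$ to have $L(G)$-component $\omega(p, \delta_p(A))$; call its $L(H)$-component $W_H$. Expanding the composition rule of $L(\mb{G})$ in the naturality equation
\[
\kappa(q, B) \circ \omega(\gamma, \delta_\gamma(K)) = K \circ \kappa(p, A),
\]
the $L(G)$-components agree automatically, while the $L(H)$-components give $\kappa_\omega(q, B) + W_H = K_H + \kappa_\omega(p, A)$. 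Subtracting $K$ from $\omega(\gamma, \delta_\gamma(K))$ and invoking the object-level identity at $(p, A)$ produces exactly the claimed expression for $\widehat{\kappa}_\omega(\gamma, K)$.

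For the converse, given a smooth fiber-wise linear $\kappa_\omega \colon E_0 \times L(G) \ra L(H)$ satisfying \eqref{E:equilambda} and making $\widehat{\kappa}_\omega$ take the stated form, I would define $\kappa(p, B) := (\kappa_\omega(p, B), \omega(p, \delta_p(B)))$. The object-level hypothesis on $\widehat{\kappa}_\omega$ guarantees that source and target of $\kappa(p, B)$ are $\omega(p, \delta_p(B))$ and $B$ respectively; the morphism-level hypothesis, combined with the composition law in $L(\mb{G})$, runs the naturality computation of the preceding paragraph backwards and certifies naturality of $\kappa$. Invertibility of $\kappa(p, B)$ is automatic in the Lie groupoid $L(\mb{G})$ (see \Cref{lemma: Groupoid Cartesian}), smoothness and fiber-wise linearity are built in, and $\mb{G}$-equivariance of $\kappa$ reduces to \eqref{E:equilambda}. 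An application of Proposition~\ref{Prop:Corresconndiffform}(ii) then yields that $\omega$ is semi-strict.

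The strict case is an immediate corollary: by Proposition~\ref{Prop:Corresconndiffform}(i), $\omega$ is strict iff Diagram~\ref{Dia:condver} commutes on the nose, i.e., iff $\omega \circ \delta - \pr_2$ is the zero functor. The main technical obstacle will be the book-keeping in the naturality calculation---carefully tracking source-target compatibilities under composition in $L(\mb{G})$ and confirming that the morphism-level hypothesis on $\widehat{\kappa}_\omega$ is exactly what makes the naturality square close---but once the Lie crossed module decomposition is in place, the rest is essentially forced.
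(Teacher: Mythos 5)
Your proposal is correct and follows essentially the same route as the paper: the paper's derivation preceding the proposition likewise decomposes the natural isomorphism $\kappa$ from Proposition~\ref{Prop:Corresconndiffform}(ii) via the crossed-module splitting of $L(\mb{G})$, reads off the object-level identity $\omega(p,\delta_p(B))-B=-\tau(\kappa_\omega(p,B))$ and \eqref{E:equilambda} from source--target matching and equivariance, and extracts the morphism-level formula from the same naturality square you compute. Your explicit verification of the converse (rebuilding $\kappa$ from $\kappa_\omega$ and checking naturality, smoothness, linearity and equivariance) is the only part the paper leaves implicit, and it is carried out correctly.
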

Before prescribing a systematic way to construct semi-strict connections, we need to prove a pair of technical identities. Although, in our paper \cite{MR4403617} we left out the technical details of the proof, here we decide to fill in the gaps. Adhering to the notational convention as in \Cref{Remark:notations}, we state the following lemma.
\begin{lemma}\label{Lemma:Identityfor later} 		
	Let $(G, H, \tau, \alpha)$ be a Lie crossed module. Then,
	\begin{enumerate}
		\item for any $h_2, h_1\in H$ and $B\in L(G),$ we have 
		$$h_2\cdot \biggl({\bar \alpha}(h_2^{-1})({\rm ad}_{\tau(h_1)}B )\biggr)+h_1\cdot \biggl({\bar \alpha}(h_1^{-1}) (B) \biggr)=h_2h_1 \cdot \biggl({\bar \alpha}(h_1^{-1}h_2^{-1}) (B)\biggr).$$
		\item For any $g\in G, h\in H, A\in L(H)$
		$$\alpha_{g^{-1}}(h^{-1})\,\cdot \bigl({\bar \alpha}_{\alpha_{g^{-1}}(h)}   {({{\rm{ad}}}_{g^{-1}}(\tau(A)\bigr)}
		+\alpha_{g^{-1}}\bigl(A\bigr) =\alpha_{g^{-1}}({\rm{ad}}_{h^{-1}}(A)).$$
		
	\end{enumerate}
	
\end{lemma}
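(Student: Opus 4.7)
Both identities are statements about elements of $L(H)$ that should follow by differentiating Lie crossed module identities at the group level, and then applying the Peiffer relations \Cref{E:Peiffer} together with the chain rule. The plan is therefore the following two-step recipe: (i) lift each identity to a smooth group-level identity built out of the operations $\tau$, $\alpha$, the differential of which precisely involves the maps $\bar\alpha, \alpha_{g^{-1}}$, ${\rm ad}$, ${\rm Ad}$; (ii) differentiate at a well-chosen basepoint and read off the desired formula in $L(H)$.

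For identity (1) I plan to work with the smooth map
\[
\Phi\colon G\to H, \qquad \Phi(g)=(h_2h_1)\,\alpha_g\bigl((h_2h_1)^{-1}\bigr),
\]
for which $\Phi(e)=e$, and factor it as $\Phi(g)=\Xi_2(g)\cdot \Xi_1(g)$ with
\[
\Xi_2(g):=h_2\,\alpha_{\tau(h_1)g\tau(h_1)^{-1}}(h_2^{-1}), \qquad \Xi_1(g):=h_1\,\alpha_g(h_1^{-1}).
\]
The verification of this factorisation is a direct rewriting using $\alpha_{\tau(h_1^{-1})}(y)=h_1^{-1}y h_1$ (the first Peiffer identity in \Cref{E:Peiffer}) and the fact that $\alpha$ is an action. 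Since $\Xi_1(e)=\Xi_2(e)=e$, the Leibniz rule in $H$ gives $d\Phi|_e(B)=d\Xi_2|_e(B)+d\Xi_1|_e(B)$ inside $L(H)$. Direct computation using the chain rule yields $d\Xi_1|_e(B)=h_1\cdot\bar\alpha(h_1^{-1})(B)$ and $d\Xi_2|_e(B)=h_2\cdot\bar\alpha(h_2^{-1})\bigl({\rm Ad}_{\tau(h_1)}(B)\bigr)$, while the unfactored form gives $d\Phi|_e(B)=h_2h_1\cdot\bar\alpha(h_1^{-1}h_2^{-1})(B)$. Equating these is identity (1).

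For identity (2) I plan to consider the smooth map
\[
\Psi\colon G\to H, \qquad \Psi(g_0)=\alpha_{g^{-1}}(h^{-1})\cdot\alpha_{g_0}\bigl(\alpha_{g^{-1}}(h)\bigr),
\]
which again satisfies $\Psi(e)=e$. The key observation is that if we restrict $\Psi$ along the curve $\gamma(t)=g^{-1}\tau(e^{tA})g$ in $G$, then using $\alpha$ being an action together with the first Peiffer identity $\alpha_{\tau(y)}(z)=yzy^{-1}$, we obtain the closed-form
\[
\Psi(\gamma(t))=\alpha_{g^{-1}}(h^{-1}e^{tA}he^{-tA}).
\]
Since $\dot\gamma(0)={\rm Ad}_{g^{-1}}(\tau(A))$, differentiating the left-hand side at $t=0$ produces $\alpha_{g^{-1}}(h^{-1})\cdot\bar\alpha\bigl(\alpha_{g^{-1}}(h)\bigr)\bigl({\rm ad}_{g^{-1}}(\tau(A))\bigr)$, whereas differentiating the right-hand side at $t=0$ (with $h^{-1}e^{tA}he^{-tA}$ passing through $e$ with derivative ${\rm Ad}_{h^{-1}}(A)-A$) yields $\alpha_{g^{-1}}({\rm ad}_{h^{-1}}(A))-\alpha_{g^{-1}}(A)$. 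Rearranging gives identity (2).

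The only real bookkeeping hurdle will be to keep track of the tangent spaces and the left/right translation conventions of \Cref{Notation conventions}: for $X\in T_yH$ the symbol $y^{-1}\cdot X$ denotes the left translate lying in $L(H)$, and the computations above implicitly use the standard compatibility between this operation, right multiplication, and conjugation in $H$. Aside from this, both proofs are routine once the correct group-level identities and the curve $t\mapsto g^{-1}\tau(e^{tA})g$ are identified; the Peiffer relations of \Cref{E:Peiffer} and \Cref{E:Peifferdiff} do the rest.
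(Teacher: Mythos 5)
Your proposal is correct and follows essentially the same route as the paper: in both parts you establish a group-level identity via the Peiffer relations and then differentiate at the identity — for (1) your factorisation $\Phi=\Xi_2\cdot\Xi_1$ is exactly the group identity the paper verifies before taking the infinitesimal version, and for (2) restricting along the curve $t\mapsto g^{-1}\tau(e^{tA})g$ amounts to the paper's differentiation of its group identity in the $h'$-variable, specialised to $h'=e^{tA}$. The only quibble is notational: the relation $\alpha_{\tau(y)}(z)=yzy^{-1}$ that you invoke is the second Peiffer identity in \Cref{E:Peiffer}, not the first.
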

\begin{proof}
	\begin{enumerate}
		\item	We will show that 
		for any $h_2, h_1\in H$ and $g\in G,$ we have 
		$$\bigg[h_2 \biggl({\bar \alpha}(h_2^{-1})({\rm ad}_{\tau(h_1)}(g) )\biggr)\bigg] \bigg[h_1 \biggl({\bar \alpha}(h_1^{-1}) (g) \biggr)\bigg]=h_2h_1 \biggl({\bar \alpha}(h_1^{-1}h_2^{-1}) (g)\biggr).$$
		Then, the required identity follows directly as the infinitesimal version of the above equation on both sides. 
		\begin{equation}\nonumber
			\begin{split}
				&h_2 \bigg[\biggl({\bar \alpha}(h_2^{-1})({\rm ad}_{\tau(h_1)}(g) )\biggr)\bigg] h_1\bigg[ \biggl({\bar \alpha}(h_1^{-1}) (g) \biggr)\bigg]\\
				&=h_2 \bigg[\biggl({\alpha}({\rm ad}_{\tau(h_1)}(g))(h_2^{-1}))\biggr)\bigg] h_1\bigg[ \biggl({\bar \alpha}(h_1^{-1}) (g) \biggr)\bigg]\\
				&=h_2 \bigg[\biggl({\alpha}\bigl(\tau(h_1)g\tau(h_1)^{-1}\bigr)(h_2^{-1}))\biggr)\bigg] h_1\bigg[ \biggl({\bar \alpha}(h_1^{-1}) (g) \biggr)\bigg]\\
				&=h_2 \bigg[\biggl({\alpha}\bigl(\tau(h_1)g)\underbrace{\bigl(\alpha(\tau(h_1)^{-1}\bigr)(h_2^{-1})}_{h_1^{-1}h_2^{-1}h_1 \textit{\rm by}\, \eqref{E:Peiffer}}\bigr)\biggr)\bigg] h_1\bigg[ \biggl({\bar \alpha}(h_1^{-1}) (g) \biggr)\bigg]\\
				&=h_2 \bigg[\biggl({\alpha}\bigl(\tau(h_1)g)\bigl((h_1^{-1}h_2^{-1}h_1)\bigr)\biggr)\bigg] h_1\bigg[ \biggl({\bar \alpha}(h_1^{-1}) (g) \biggr)\bigg]\\
				&=h_2 \bigg[\biggl({\alpha}(\tau(h_1))\bigl(\alpha(g)(h_1^{-1}h_2^{-1}h_1)\bigr)\biggr)\bigg] h_1\bigg[ \biggl({\bar \alpha}(h_1^{-1}) (g) \biggr)\bigg]\\
				&=h_2 \bigg[\underbrace{\biggl({\alpha}(\tau(h_1))\bigl(\alpha(g)(h_1^{-1}h_2^{-1}h_1)\bigr)\biggr)}_{h_1 \alpha(g)(h_1^{-1} h_2^{-1} h_1) h_1^{-1} \textit{\rm by}\,  \eqref{E:Peiffer}}\bigg] h_1\bigg[ \biggl({\bar \alpha}(h_1^{-1}) (g) \biggr)\bigg]\\
				&=h_2 \bigg[h_1 \alpha(g)(h_1^{-1} h_2^{-1} h_1) h_1^{-1}\bigg] h_1\bigg[ \biggl({\bar \alpha}(h_1^{-1}) (g) \biggr)\bigg]\\
				&=h_2 h_1 \alpha(g)(h_1^{-1} h_2^{-1} h_1) [h_1^{-1} h_1] {\alpha}(g)(h_1^{-1})\\
				&=h_2 h_1 \alpha(g)(h_1^{-1} h_2^{-1} h_1) {\alpha}(g)(h_1^{-1})\\
				&=h_2 h_1 \alpha(g)(h_1^{-1} h_2^{-1} )\\
				&=h_2h_1  \biggl({\bar \alpha}(h_1^{-1}h_2^{-1}) (g)\biggr). 
			\end{split}
		\end{equation}
		Note that in the fourth, sixth, and eleventh steps, we have used the fact that $\alpha$ is an action of $G$ on $H$.

		\item A straightforward verification gives  $$\big[\alpha_{g^{-1}}(h^{-1})\,\cdot \bigl({\bar \alpha}_{\alpha_{g^{-1}}(h)}   {({{\rm{ad}}}_{g^{-1}}(\tau(h')\bigr)}\bigr]
		\big[\alpha_{g^{-1}}\bigl(h'\bigr)\big] =\alpha_{g^{-1}}({\rm{ad}}_{h^{-1}}(h')),$$ for any $h, h'\in H, g\in G.$ Then the identity follows as the infinitesimal version of the above equation.

	\end{enumerate}
\end{proof}

\begin{corollary}\label{cor:stricconnto semi}
	For a Lie crossed module $(G, H, \tau, \alpha)$, let $\omega=(\omega_1,\, \omega_0)\colon T\mb{E}\ra L(\mb{G})$ be a strict connection $1$-form on a principal $\mb{G}:=[H \rtimes_{\alpha}G \rra G]$-bundle $\pi \colon \mb{E} \ra \mb{X}$ over a Lie groupoid $\mb{X}$.
	Suppose $\lambda\colon T E_0\ra L(H)$ is
	an $L(H)$-valued $1$-form on $E_0$ that satisfies the equivariance property
	$\lambda(p g, v\cdot g)=\alpha_{g^{-1}}\bigl(\lambda(p, v)\bigr)$ for all $(p,v) \in TE_0$ and $g \in G$.
	Then the pair $(\tilde{\omega}_1, \tilde{\omega}_0)$ defined by ${\widetilde \omega}_1=\omega_1 + \tau (s^{*}\lambda)+ t^{*}\lambda-s^{*}\lambda$ and ${\widetilde \omega}_0=\omega_0 + \tau (\lambda)$ defines a semi-strict connection $\widetilde \omega=({\widetilde \omega}_1, {\widetilde \omega}_0)\colon T\mb{E}\ra L(\mb{G})$ on $\pi \colon \mb{E} \ra \mb{X}$. 
\end{corollary}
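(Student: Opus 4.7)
The strategy will be to verify in turn that (a) $\widetilde\omega=(\widetilde\omega_1,\widetilde\omega_0)$ is a morphism of Lie groupoids $T\mb{E}\to L(\mb{G})$, (b) it is $\mb{G}$-equivariant, and finally (c) it is semi-strict with the explicit deviation measured by $\lambda$. For step (a), I plan to split $\widetilde\omega_1$ according to $L(G_1)=L(H)\oplus L(G)$ as $\widetilde\omega_{1H}=\omega_{1H}+t^{*}\lambda-s^{*}\lambda$ and $\widetilde\omega_{1G}=\omega_{1G}+\tau(s^{*}\lambda)$, and then check the three conditions of \Cref{Multiplicative form condition}. The first two follow mechanically from those for $\omega$ together with $\tau\circ\lambda = \tau(\lambda)$. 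The third (multiplicativity of $\widetilde\omega_{1H}$) reduces, after applying the corresponding identity for $\omega_{1H}$, to showing $m^{*}(t^{*}\lambda-s^{*}\lambda)=\pr_{1}^{*}(t^{*}\lambda-s^{*}\lambda)+\pr_{2}^{*}(t^{*}\lambda-s^{*}\lambda)$, which follows from the groupoid identities $t\circ m=t\circ\pr_{1}$, $s\circ m=s\circ\pr_{2}$, and the crucial cancellation $\pr_{1}^{*}s^{*}\lambda=\pr_{2}^{*}t^{*}\lambda$ coming from $s\circ\pr_{1}=t\circ\pr_{2}$.

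For step (c), the plan is to define $\kappa_{\widetilde\omega}(p,B):=-\lambda(p,\delta_{p}(B))$ and apply \Cref{Prop:Semisirictnatural}. The map is evidently smooth and fiberwise linear. Using the strict-connection identity $\omega_{0}(p,\delta_{p}(B))=B$, one gets $\widetilde\omega_{0}(p,\delta_{p}(B))-B=\tau(\lambda(p,\delta_{p}(B)))=-\tau(\kappa_{\widetilde\omega}(p,B))$, matching \Cref{semi-strict connection: condition on omega_0}. The equivariance $\kappa_{\widetilde\omega}(pg,\mathrm{ad}_{g^{-1}}(B))=\alpha_{g^{-1}}(\kappa_{\widetilde\omega}(p,B))$ follows from the hypothesis on $\lambda$ combined with the $\mb{G}$-equivariance of $\delta$ recorded in \Cref{Vertical vector field generating functor}. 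For the morphism-level identity \Cref{semi-strict connection: condition on omega_1}, for $\tilde\gamma:p\to q$ and $K\in L(G_{1})$ with $s_{*,e}(K)=A$, $t_{*,e}(K)=B$, the functoriality of $\delta$ yields $s_{*,\tilde\gamma}(\delta_{\tilde\gamma}(K))=\delta_{p}(A)$ and $t_{*,\tilde\gamma}(\delta_{\tilde\gamma}(K))=\delta_{q}(B)$, so $s^{*}\lambda=-\kappa_{\widetilde\omega}(p,A)$ and $t^{*}\lambda=-\kappa_{\widetilde\omega}(q,B)$ at $(\tilde\gamma,\delta_{\tilde\gamma}(K))$. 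Substituting into $\widetilde\omega_{1}-K$ (using $\omega_{1}(\tilde\gamma,\delta_{\tilde\gamma}(K))=K$) gives exactly $\bigl(\kappa_{\widetilde\omega}(p,A)-\kappa_{\widetilde\omega}(q,B),\,-\tau(\kappa_{\widetilde\omega}(p,A))\bigr)$, as required.

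The main technical obstacle is step (b), the $G_{1}=H\rtimes_{\alpha}G$-equivariance of $\widetilde\omega_{1}$. For $k=(h,g)\in G_{1}$ with $s(k)=g$, $t(k)=\tau(h)g$, the pullbacks $s^{*}\lambda$ and $t^{*}\lambda$ pick up factors $\alpha_{g^{-1}}$ and $\alpha_{(\tau(h)g)^{-1}}$ respectively, while $\mathrm{ad}_{k^{-1}}$ acting on $L(H)\oplus L(G)$ mixes the two factors according to \Cref{E:Adjonalgebras}. The plan is to expand both $\widetilde\omega_{1}(\tilde\gamma k,v\cdot k)$ and $\mathrm{ad}_{k^{-1}}(\widetilde\omega_{1}(\tilde\gamma,v))$ fully in the $L(H)\oplus L(G)$ decomposition, cancel the contributions from $\omega_{1}$ using its known equivariance, and then reduce the residual cross-terms (involving $\tau$, $\bar\alpha$, and $\mathrm{ad}$ applied to $\lambda(q,t_{*}(v))$ and $\lambda(p,s_{*}(v))$) to the two identities in \Cref{Lemma:Identityfor later}. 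I expect that identity (1) handles the $H$-component and identity (2) handles the interaction of the $H$- and $G$-parts of $k$; these identities were in fact formulated precisely for this purpose and should close the computation.

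Finally, once (a), (b), (c) are established, \Cref{Prop:Corresconndiffform}(ii) (equivalently \Cref{Prop:Semisirictnatural}) immediately certifies $\widetilde\omega$ as a semi-strict connection $1$-form on $\pi\colon\mb{E}\to\mb{X}$, completing the argument.
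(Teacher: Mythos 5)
Your proposal is correct and follows essentially the same route as the paper: the paper declares functoriality obvious (you check it via \Cref{Multiplicative form condition}), reduces everything to the $\mb{G}$-equivariance of the correction term $\Lambda=\widetilde{\omega}-\omega$, closes that computation with \Cref{E:Adjonalgebras} and \Cref{Lemma:Identityfor later}, and the semi-strict deviation is exactly your $\kappa_{\widetilde{\omega}}(p,B)=-\lambda\bigl(p,\delta_p(B)\bigr)$ via \Cref{Prop:Semisirictnatural}. The only minor discrepancy is predictive: in the paper's morphism-level equivariance computation only identity (2) of \Cref{Lemma:Identityfor later} is needed (identity (1) is used elsewhere, for decorated connections), so your expected use of identity (1) is superfluous but harmless.
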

\begin{proof}
	The functoriality of $({\widetilde \omega}_1, {\widetilde \omega}_0)$ is obvious. As $\omega_{0 p} (\delta_p(A))=A,$ for $A\in L(G_0),$ only thing that we need to verify   is the $\mb{G}$-equivariancy of the functor 
	\begin{equation}\nonumber
		\begin{split}
			&\Lambda\colon T{\mb{E}}\ra L(\mb{G}),\\
			&(p, v)\mapsto \tau\bigl(\lambda (p, v)\bigr),\\
			&(\widetilde \gamma, \widetilde X)\mapsto  \tau \bigl(\lambda(s(\widetilde \gamma), s_{*, \widetilde \gamma}(X) )\bigr)+\lambda\bigl(t(\widetilde \gamma), t_{*, \widetilde \gamma}(X)\bigr) -\lambda\bigl(s(\widetilde \gamma), s_{*, \widetilde \gamma}(X)\bigr).
		\end{split}
	\end{equation}
	Then $\Lambda\bigl((p, v)g\bigr)=\tau\bigl(\lambda(p g, v\cdot g)\bigr)={\rm{ad}}_{g^{-1}}\bigl(\Lambda(p, v)\bigr).$ In order to show the equivariancy at the morphism level, consider $s(\widetilde \gamma, \widetilde X)=(p, v), t(\widetilde \gamma, \widetilde X)=(q, w).$ Then, we have
	\begin{equation}\nonumber
		\begin{split}
			&\Lambda\bigl((\widetilde \gamma, \widetilde X) (h, g)\bigr)\\
			&=\tau \bigl(\lambda(p g, v g)\bigr)-\lambda(p g, v g)+\lambda(q \tau(h) g, w \tau(h) g)\\
			&={\rm{ad}}_{(e, g)^{-1}}\tau \bigl(\lambda(p , v)\bigr)-\alpha_{g^{-1}}\bigl(\lambda(p, v)\bigr)+\alpha_{g^{-1}\tau(h^{-1})}\bigl(\lambda(q, w)\bigr)\\
			&={\rm{ad}}_{(e, g)^{-1}}\tau \bigl(\lambda(p , v)\bigr)-\alpha_{g^{-1}}\bigl(\lambda(p, v)\bigr)+\underbrace{\alpha_{g^{-1}}\bigl({\rm{ad}}_{h^{-1}}(\lambda(q, w))\bigr)}_{{\rm{ad}}_{(h, g)^{-1}}(\lambda(q, w))}\\
			&=\underbrace{{\rm{ad}}_{(h, g)^{-1}}\tau \bigl(\lambda(p , v)\bigr)-\alpha_{g^{-1}}(h^{-1})\,\cdot \bigl({\bar \alpha}_{\alpha_{g^{-1}}(h)}{({\rm ad}_{g^{-1}}(\tau(\lambda(p, v))))}\bigr)}_{{\rm{ad}}_{(e, g)^{-1}}\tau \bigl(\lambda(p , v)\bigr) [\textit {using formulae in \Cref{E:Adjonalgebras}}] }\\
			&\underbrace{-{\rm{ad}}_{(h, g)^{-1}}\bigl((\lambda(p , v) \bigr)+\bigl(\alpha_{g^{-1}}({\rm{ad}}_{h^{-1}}(\lambda(p, v)))-\alpha_{g^{-1}}\bigl(\lambda(p, v)\bigr) \bigr)}_{-\alpha_{g^{-1}}\bigl(\lambda(p, v)\bigr) [\textit{using formulae in \Cref{E:Adjonalgebras}}]}+{\rm{ad}}_{(h, g)^{-1}}(\lambda(q, w))\\
			&={\rm{ad}}_{(h, g)^{-1}}\tau \bigl(\lambda(p , v)\bigr)-{\rm{ad}}_{(h, g)^{-1}}\bigl((\lambda(p , v) \bigr)+{\rm{ad}}_{(h, g)^{-1}}(\lambda(q, w))\\
			&\underbrace{+[-\alpha_{g^{-1}}(h^{-1})\,\cdot \bigl({\bar \alpha}_{\alpha_{g^{-1}}(h)}   {({{\rm{ad}}}_{g^{-1}}(\tau(\lambda(p, v))))\bigr)}
				+\bigl(\alpha_{g^{-1}}({\rm{ad}}_{h^{-1}}(\lambda(p, v)))   - \alpha_{g^{-1}}\bigl(\lambda(p, v)\bigr) \bigr)]}_{\textit{vanishes by (2) of Lemma~\ref{Lemma:Identityfor later}}}\\
			&={\rm{ad}}_{(h, g)^{-1}}\tau \bigl(\lambda(p , v)\bigr)-{\rm{ad}}_{(h, g)^{-1}}\bigl((\lambda(p , v) \bigr)+{\rm{ad}}_{(h, g)^{-1}}(\lambda(q, w)).
		\end{split}
	\end{equation}
	Hence, we showed $\tilde{\omega}$ is a semi-strict conenction on $\pi \colon \mb{E} \ra \mb{X}$.
\end{proof}	

\begin{example}\label{Classical connection as 2-connection}
	For a Lie group $G$, let $\pi \colon P \ra M$ be a traditional principal $G$-bundle over a smooth manifold $M$ (\Cref{Definition: Principal G-bundle}). Any connection 1-form $\omega$ (\Cref{Section: Connection structures on a principal bundle}) on $P\ra M$ defines a strict connection 1-form $(\omega, \omega)$ on the principal $[G\rra G]$-bundle  
	$[P\rra P]\ra [M\rra M]$ over the discrete Lie groupoid $[M\rra M]$. It is obvious that, in this case, a strict connection is the same as a semi-strict connection.
\end{example}
\begin{example}\label{Ex:GGbundleconnection}
	For a Lie group $G$, let $\pi \colon \mb{E} \ra \mb{X}$ be a principal $[G \rra G]$-bundle over a Lie groupoid $\mb{X}$. Recall, we observed in \Cref{Principal 2-bundle with discrete Lie 2 group} that the Lie groupoid $\mb{E}$ is same as the semi-direct product groupoid $[s^*E_0\rra E_0]$ of the underlying action of $\mb{X}$ on $E_0$ (see \Cref{Equivalence of principal G bundles and principal G groupoids} for the precise description of the action).  Then 
	a strict connection $\omega$ on the principal $[G\rra G]$-bundle $\pi \colon [s^*E_0\rra E_0] \ra [X_1 \rra X_0] $ is a connection on the principal $G$-bundle $\pi_0 \colon E_0 \ra X_0$ such that $s^*\omega=t^*\omega$. As the Lie crossed module corresponding to $[G \rra G], $ is  $(G, \lbrace e \rbrace),$ by \Cref{Prop:Semisirictnatural}, a semi-strict connection is the same as a strict connection when the structure group is a discrete  Lie $2$-group

	%	Recall, we observed in \Cref{Principal 2-bundle with discrete Lie 2 group} that $\pi_0 \colon E_0\ra X_0$ is a principal $G$-bundle over the Lie groupoid $\mb{X}$ and the Lie groupoid $\mb{E}$ is same as $[s^*E_0\rra E_0].$ Then 
	%	a strict connection $\omega$ on $[G\rra G]$-bundle $[E_1\rra E_0]\ra [X_1\rra X_0]$ is  a connection on the principal $G$-bundle $E_0\ra X_0$ such that $s^*\omega=t^*\omega$. As the Lie crossed module corresponding to $[G \rra G], $ is  $(G, \lbrace e \rbrace),$ by \Cref{Prop:Semisirictnatural}, a semi-strict connection is the same as a strict connection when the structure group is a discrete  Lie $2$-group
\end{example}
\begin{remark}
	Observe that in \Cref{Ex:GGbundleconnection}, we get back the definition of a connection on a principal $G$-bundle over a Lie groupoid $\mb{X}$, as mentioned in \Cref{Connections on a principal bundle over a Lie groupoid}.
\end{remark}
\begin{example}\label{E:Example of principal 2-bundle ordinary conn}
	For an abelian Lie group $H$, consider the principal  $[H\rra e]$-bundle $\pi \colon [E_1\rra X_0] \ra [X_1\rra X_0]$  over a Lie groupoid $\mb{X}$ (see \Cref{E:Example of principal 2-bundle ordinary}). Then, any strict connection is a classical connection $\omega$ on the principal $H$-bundle $E_1\ra X_1$ such that $\omega$ is a multiplicative $1$-form on the Lie groupoid $[E_1\rra X_0]$. Moreover, it can be readily observed that any $L(H)$-valued $H$-equivariant multiplicative 1-form on $[E_1 \rra X_0]$  defines a semi-strict connection (see \Cref{E:Hemultiplicative}).
\end{example}

\begin{example}\label{E:Exampleprincipalpairlie2conn }
	For a connection $\omega_0$ on a principal $G$-bundle $E_0\ra X_0,$ we define a strict connection on the principal $[G\times G\rra G]$-bundle $[E_1\rra E_0]$ over a Lie groupoid $\mb{X}$ in \Cref{E:Exampleprincipalpairlie2} as
	\begin{equation}\label{no meaning 1}
		\omega_{1 (p, \gamma, q)}(v_1, \mc{X}, v_2)=\bigl(\omega_0(v_1), \omega_0(v_2)\bigr)
	\end{equation}
	for all $(v_1, \mc{X}, v_2)\in T_{(p, \gamma, q)}E_1.$ Given an $L(G)$-valued $1$-form $\lambda$ on $E_0$ that satisfies the condition $\lambda(p.g, v.g)= {\rm{ad}}_{g^{-1}}( \lambda(p.v))$, we have a semi-strict connection 1-form given by  $\tilde{\omega}_0(v)=\omega_0(v) + \lambda(v)$, $\tilde{\omega}_{1 (p, \gamma, q)}(v_1, \mc{X}, v_2)=\bigl(\tilde{\omega}_0(v_1), \tilde{\omega}_0(v_2)\bigr)$.
\end{example}
The following example shows that a connection 1-form on a principal 2-bundle over a Lie groupoid behaves well with the pullback along morphisms of principal 2-bundles over the base, just like its classical counterpart (\Cref{Pullbackconn}).

\begin{example}\label{Lemma:Pullback connection}
	For a Lie 2-group $\mb{G}$, let $\pi \colon \mb{E} \ra \mb{X}$ and $\pi' \colon \mb{E}' \ra \mb{X}$ be a pair of principal $\mb{G}$-bundles over a Lie groupoid $\mb{X}$. Suppose $F =(F_1, F_0): \mb{E} \ra \mb{E}'$ be a morphism of prinicpal $\mb{G}$-bundles over $\mb{X}$.  If $\omega:=(\omega_1,\omega_0):T\mb{E}' \ra L(\mb{G})$ is a strict (resp. semi-strict) connection on $\mb{E}'$ then $F^{*}\omega:= (F_1^{*}\omega_1, F_0^{*}\omega_0):T \mb{E} \ra L(\mb{G})$ is a strict (resp. semi-strict) connection on principal $\mb{G}$-bundle $\pi \colon \mb{E} \ra \mb{X}$. To see this note that the differentials of $F_1$ and $F_0$ induce the morphism of Lie groupoids $F_{*}:=(F_{1_{*}}, F_{0_{*}}): T \mb{E} \ra T \mb{E}' $. Then the required connection on the principal $\mb{G}$ bundle $\mb{E} \ra \mb{X}$ is given by $\omega \circ F_{*}: T \mb{E} \ra L(\mb{G})$. We call $F^{*}\omega$ the \textit{pull-back connection of $\omega$ along $F$}.
\end{example}

\subsection{Categorical correspondence between connections as splittings and connections as Lie 2-algebra valued 1-forms}\label{Categorical correspondence strict and semistrict}
In \Cref{Prop:Corresconndiffform}, we have proved a one-one correspondence between strict  (resp. semi-strict) connections and strict (resp. semi-strict) connection $1$-forms for a principal $2$-bundle over a Lie groupoid. In this subsection, we obtain our first main result of this chapter by extending this correspondence to their respective categories.

\begin{theorem} \label{strict connection=strict forms}
	For a Lie 2-group $\mb{G}$, let $\pi \colon \mb{E} \ra \mb{X}$ be a principal $\mb{G}$-bundle over a Lie groupoid $\mb{X}$.
	\begin{enumerate}[(i)]
		\item The categories $C^{\rm{semi}}_{\mb{E}}$ and $\Omega_{\mb{E}}^{\rm{semi}}$ are isomorphic. 	
		\item The categories $C^{\rm{strict}}_{\mb{E}}$ and $\Omega_{\mb{E}}^{\rm{strict}}$ are isomorphic. 	
	\end{enumerate}	
	\begin{proof} 
		\begin{enumerate}[(i)]
			\item We have already seen the object level correspondence in \Cref{Prop:Corresconndiffform}. Let $(G,H, \tau, \alpha)$ be the associated Lie crossed module of $\mb{G}$, i.e $\mb{G}:=[H \rtimes_{\alpha}G \rra G]$. Suppose $R, R'\colon {\rm At}(\mb{E})\ra {\rm{Ad}}(\mb{E})$ are a pair of strict connections, and let $\eta\colon R \Longrightarrow R'$ a natural transformation such 
			that $\pi\bigl(\eta([p, v])\bigr)=1_{\pi(p)},$ for any $[(p, v)]\in {\rm At}(E_0)=TE_0/G_0.$ Suppose $\omega, \omega'\colon T\mb{E}\ra L(\mb{G})$ are respective semi-strict connection $1$-forms of $R$ and $R'.$ Observe that $R[(p, v)]=[\bigl(p, \omega(p, v)\bigr)], R'[(p, v)]=[\bigl(p, \omega'(p, v)\bigr)].$ Let $\eta_{[p, v]}\colon [\bigl(p, \omega(p, v)\bigr)]\ra [\bigl(p, \omega'(p, v)\bigr)].$ Our claim is that there exists $\omega(p, v)\xrightarrow{\bar \eta(p, v)}\omega'(p, v)\in L(G_1)$ such that $[\bigl(1_p, \bar \eta (p, v)\bigr)]=\eta_{[p, v]}.$	In order to show this, suppose $[(\gamma, K)]\in \eta_{[p, v]}.$ This implies that there exist $g, g'\in G_0$ such that $s(\gamma) g=p, t(\gamma) g'=p, {\rm{ad}}_{g^{-1}}\bigl(s(K)\bigr)=\omega(p, v)$ and  ${\rm{ad}}_{g'^{-1}}\bigl(t(K)\bigr)=\omega'(p, v).$ Then $(\gamma, K)\cdot 1_g=(\gamma 1_g, {\rm{ad}}_{{1_g}^{-1}}(K)):=(\gamma_p, K_{\omega})\in  \eta_{[p, v]}.$ Now, since $\pi\bigl(\eta([p, v])\bigr)=1_{\pi(p)},$ 	we get $\pi(\gamma_p)=\pi(1_p).$ Hence, both $\gamma_p, 1_p$ are elements of the same fiber on the $G_1$-bundle $E_1\ra X_1$ with the same source $p$ and thus there exists a unique $h\in H$ such that $\gamma_p (h, e)=1_p$. 
			
			Comparing the targets $t(\gamma_p)\tau(h)=t(\gamma)g \tau(h)=p=t(\gamma) g'$, we get $g'=g\tau(h)=g\tau(h)g^{-1}g=\tau(\alpha_g(h)) g.$ So, 
			$(\gamma_p, K_{\omega})\cdot (h, e)=\bigl(1_p, {\rm{ad}}_{(h, e)^{-1}}(K_{\omega})\bigr)	\in    \eta_{[p, v]}.$	Note that ${\rm{ad}}_{(h, e)^{-1}} K_{\omega}={\rm{ad}}_{(h, e)^{-1}} {\rm{ad}}_{{1_g}^{-1}}(K)={\rm{ad}}_{(h, e)^{-1}} {\rm{ad}}_{{(e, g)}^{-1}}(K)={\rm{ad}}_{(h^{-1}, g^{-1})} K.$ Using \Cref{E:Adjonalgebras} we immediately see $s({\rm{ad}}_{(h^{-1}, g^{-1})} K)={\rm{ad}}_{g^{-1}}\bigl(s(K)\bigr)=\omega(p, v)$ and $t({\rm{ad}}_{(h^{-1}, g^{-1})} K)={\rm{ad}}_{\tau(h^{-1})g^{-1}}\bigl(t(K)\bigr)={\rm{ad}}_{g'^{-1}}(t(K))=\omega'(p, v).$ We define $$\bar{\eta}(p, v):={\rm{ad}}_{(h, e)^{-1}} K_{\omega}.$$
			It is an easy verification that $\bar \eta$ defines a smooth natural transformation $\omega \Longrightarrow \omega'.$ 
			Now, if $(p', v')=(p, v) \theta$, for some $\theta \in G_0,$ then $(p',  v')\in [(p, v)].$ The above construction then shows $\bigl(1_{p'}, \bar \eta(p', v')\bigr)\in [\bigl(1_p, \bar \eta(p, v)\bigr)]$ and  
			$$\bigl(1_{p'}, \bar \eta(p', v')\bigr)=\bigl(1_p, \bar \eta(p, v)\bigr) 1_{\theta}=\bigl(1_{p \theta}, {\rm{ad}}_{1_{\theta}^{-1}}\bar \eta(p, v)\bigr).$$ 
			Thus, we obtain the $\mb{G}$ equivariancy condition: ${\rm{ad}}_{1_{\theta}^{-1}}\bigl(\bar\eta(p, v)\bigr)=\bar \eta(p \theta , v \theta).$
			
			Conversely, if $\bar \eta\colon \omega\Longrightarrow \omega'$ is a smooth, $\mb{G}$-equivariant natural transformation between the semi-strict connection $1$-forms, we define a natural transformation between the corresponding semi-strict connections 
			$R, R'\colon {\rm At}(\mb{E})\longrightarrow {\rm{Ad}}{\mb{E}},$ as $\eta_{[p, v]}=[\bigl(1_p, \bar \eta(p, v)\bigr)].$ 
			Well-definedness of the map comes from the $\mb{G}$-equivariancy. Now $\pi(\eta_{[(p, v)]})=\pi (1_{p})=1_{\pi(p)}.$ Hence, we get a smooth natural transformation $\eta\colon R \Longrightarrow R'$ which satisfies $\pi(\eta_{[(p, v)]})=1_{\pi(p)}.$
			
			It is straightforward to check that both the maps are functorial and inverses of each other. 
			
			\item The proof is almost similar to the semi-strict connection case.
		\end{enumerate}
	\end{proof}
\end{theorem}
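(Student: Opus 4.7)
The plan is to promote the object-level bijection of \Cref{Prop:Corresconndiffform} to an isomorphism of categories. Since objects of $C^{\rm{semi}}_{\mb{E}}$ (resp.\ $C^{\rm{strict}}_{\mb{E}}$) correspond bijectively to objects of $\Omega^{\rm{semi}}_{\mb{E}}$ (resp.\ $\Omega^{\rm{strict}}_{\mb{E}}$), the real content is the construction of mutually inverse bijections on hom-sets. I will treat case (i) in detail; case (ii) will follow by restriction since a strict connection is precisely a semi-strict one whose deviation functor in \Cref{Prop:Semisirictnatural} vanishes, and the morphism constructions below are insensitive to this distinction.

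Let me fix a Lie crossed module $(G,H,\tau,\alpha)$ with $\mb{G}=[H\rtimes_{\alpha}G\rra G]$, a pair of semi-strict connections $R,R'\colon {\rm At}(\mb{E})\to {\rm Ad}(\mb{E})$, and the corresponding semi-strict connection $1$-forms $\omega,\omega'\colon T\mb{E}\to L(\mb{G})$. Recall that $R[(p,v)]=[(p,\omega(p,v))]$ and similarly for $R'$, by \Cref{Associated G-equivariant L(G)-valued 1-form}. Given a $2$-morphism $\eta\colon R\Longrightarrow R'$ in $2\text{-}{\rm VBGpd}(\mb{X})$, the constraint $\pi\bigl(\eta_{[p,v]}\bigr)=1_{\pi(p)}$ allows us to pick a canonical representative of the equivalence class $\eta_{[p,v]}\in{\rm Ad}(E_1)$ as follows. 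Any representative $(\gamma,K)$ has $s(\gamma)\in[p]$, so after acting by a unique $1_g$ we may assume $s(\gamma)=p$; then $\pi(\gamma)=1_{\pi(p)}$ forces $\gamma$ and $1_p$ to lie in the same fiber of $\pi_1\colon E_1\to X_1$, yielding a unique $h\in H$ with $\gamma(h,e)=1_p$. Translating $(\gamma,K)$ by $(h,e)$ produces a representative of the form $(1_p,\bar\eta(p,v))$; using the adjoint action formulae \eqref{E:Adjonalgebras} one checks $s(\bar\eta(p,v))=\omega(p,v)$ and $t(\bar\eta(p,v))=\omega'(p,v)$. I set $\bar\eta(p,v):=$ this unique element of $L(G_1)$. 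The $\mb{G}$-equivariance $\bar\eta(p\theta,v\cdot\theta)={\rm ad}_{1_{\theta}^{-1}}\bar\eta(p,v)$ then follows from the well-definedness of $\eta$ on the equivalence class $[(p,v)]=[(p\theta,v\cdot\theta)]$, and smoothness of $\bar\eta$ is inherited from that of $\eta$.

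Conversely, given a smooth $\mb{G}$-equivariant natural transformation $\bar\eta\colon\omega\Longrightarrow\omega'$ of $L(\mb{G})$-valued $1$-forms, define $\eta_{[(p,v)]}:=[(1_p,\bar\eta(p,v))]\in{\rm Ad}(E_1)$. The $\mb{G}$-equivariance of $\bar\eta$ is exactly what guarantees well-definedness on equivalence classes, and the identity $\pi(1_p)=1_{\pi(p)}$ ensures $\pi(\eta_{[(p,v)]})=1_{\pi(p)}$, so that $\eta$ is a $2$-morphism in $2\text{-}{\rm VBGpd}(\mb{X})$. Naturality of $\eta$ with respect to arrows in ${\rm At}(\mb{E})$ follows from naturality of $\bar\eta$ with respect to arrows in $T\mb{E}$, using \eqref{E:AtCompo}. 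The two constructions $\eta\mapsto\bar\eta$ and $\bar\eta\mapsto\eta$ are inverse to each other by construction: starting from $\bar\eta$, the canonical representative of $[(1_p,\bar\eta(p,v))]$ is $(1_p,\bar\eta(p,v))$ itself; starting from $\eta$, the element $\bar\eta(p,v)$ is characterised uniquely by the representative procedure above. Functoriality (compatibility with vertical composition of $2$-morphisms) is immediate since vertical composition on both sides is just pointwise composition in $L(G_1)$ respectively in ${\rm Ad}(E_1)$ at representatives of the form $(1_p,-)$, and the identity $2$-morphisms clearly correspond. This establishes (i); for (ii), observe that $R\circ\delta^{/\mb{G}}=1_{{\rm Ad}(\mb{E})}$ on the nose iff $\omega\circ\delta=\pr_2$ on the nose by \Cref{Prop:Corresconndiffform}(i), so the bijection restricts to the strict subcategories.

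The main obstacle is the bookkeeping in the canonical-representative argument: one must track the successive translations by $1_g$ and $(h,e)$ and use both Peiffer identities and the adjoint-action formulae \eqref{E:Adjonalgebras} to verify that the resulting $\bar\eta(p,v)$ has source $\omega(p,v)$ and target $\omega'(p,v)$, and that the $\mb{G}$-equivariance is precisely the relation $1_{p\theta}=1_p\cdot 1_{\theta}$ lifted to the infinitesimal level. Everything else (smoothness, functoriality, mutual inversion) is routine once this canonical normal form is in place.
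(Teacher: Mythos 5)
Your proposal is correct and follows essentially the same route as the paper's own proof: you promote the object-level bijection of \Cref{Prop:Corresconndiffform} to hom-sets by normalising each class $\eta_{[p,v]}$ to a representative of the form $(1_p,\bar\eta(p,v))$ via successive translations by $1_g$ and $(h,e)$, verify source/target with \eqref{E:Adjonalgebras}, read off $\mb{G}$-equivariance from well-definedness on classes, and invert the construction in the evident way. Your handling of (ii) by observing that the morphism-level constructions are insensitive to strictness is a slightly cleaner packaging of the paper's "almost similar" remark, but it is not a genuinely different argument.
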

\begin{remark}
	It should be noted that being an isomorphism of categories, \Cref{strict connection=strict forms} induces bijections both on objects and morphisms.
\end{remark}

The following description was not included in our paper \cite{MR4403617}.

\begin{definition}\label{Def:Setstrict connection=strict forms}
	For a Lie 2-group $\mb{G}$, let $\pi \colon \mb{E} \ra \mb{X}$ be a principal $\mb{G}$-bundle over a Lie groupoid $\mb{X}$. Then we define  ${\bar C}_{\mb{E}}^{\rm{strict}},{\bar C}_{\mb{E}}^{\rm{semi}},{\bar \Omega}_{\mb{E}}^{\rm{semi}}$, and ${\bar \Omega}_{\mb{E}}^{\rm{strict}}$	as the respective collections of connected components of the categories  
	${ C}_{\mb{E}}^{\rm{strict}},{C}_{\mb{E}}^{\rm{semi}},{ \Omega}_{\mb{E}}^{\rm{semi}}$, and ${ \Omega}_{\mb{E}}^{\rm{strict}}$.
\end{definition}

Then, as a direct consequence of \Cref{strict connection=strict forms}, we get the following correspondence.
\begin{corollary} \label{Cor:Setstrict connection=strict forms}
	For a Lie 2-group $\mb{G}$, let $\pi \colon \mb{E} \ra \mb{X}$ be a principal $\mb{G}$-bundle over a Lie groupoid $\mb{X}$. Then we have the following correspondences:
	\begin{itemize}
		\item[(a)]  There exists a bijection between ${\bar C}_{\mb{E}}^{\rm{strict}}$ and ${\bar \Omega}_{\mb{E}}^{\rm{strict}}$.
		\item[(b)] There exists a bijection between ${\bar C}_{\mb{E}}^{\rm{semi}}$ and ${\bar \Omega}_{\mb{E}}^{\rm{semi}}$.	
	\end{itemize}
\end{corollary}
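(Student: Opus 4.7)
The plan is to derive both bijections as direct consequences of the preceding Theorem (the one labeled \texttt{strict connection=strict forms}), which establishes isomorphisms of categories $C^{\rm{strict}}_{\mb{E}} \cong \Omega_{\mb{E}}^{\rm{strict}}$ and $C^{\rm{semi}}_{\mb{E}} \cong \Omega_{\mb{E}}^{\rm{semi}}$. Since all four categories in question are groupoids (morphisms are natural isomorphisms of VB-groupoids or of $L(\mb{G})$-valued $1$-forms), the set of connected components coincides with the set of isomorphism classes of objects, and this construction is functorial in the sense that any functor $F \colon \mc{A} \to \mc{B}$ between groupoids descends to a map $\bar{F} \colon \bar{\mc{A}} \to \bar{\mc{B}}$ on connected components.

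First, I would make explicit the functors from the preceding theorem. For part (a), recall that the isomorphism $\mc{F}^{\rm{strict}} \colon C^{\rm{strict}}_{\mb{E}} \to \Omega_{\mb{E}}^{\rm{strict}}$ sends a strict connection $R \colon \rm{At}(\mb{E}) \to \rm{Ad}(\mb{E})$ to its associated strict connection $1$-form $\omega \colon T\mb{E} \to L(\mb{G})$ (as in Remark~\ref{Associated G-equivariant L(G)-valued 1-form}), and sends a $2$-morphism $\eta \colon R \Rightarrow R'$ to the natural isomorphism $\bar{\eta} \colon \omega \Rightarrow \omega'$ constructed in the theorem. Similarly for part (b) with the semi-strict analogue $\mc{F}^{\rm{semi}}$.

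Next, I would verify well-definedness of the induced maps on connected components. If $R$ and $R'$ lie in the same connected component of $C^{\rm{strict}}_{\mb{E}}$, then by definition there exists a $2$-morphism $\eta \colon R \Rightarrow R'$ in $C^{\rm{strict}}_{\mb{E}}$, whose image $\mc{F}^{\rm{strict}}(\eta) \colon \mc{F}^{\rm{strict}}(R) \Rightarrow \mc{F}^{\rm{strict}}(R')$ places the corresponding $1$-forms in the same connected component of $\Omega_{\mb{E}}^{\rm{strict}}$. Thus $\bar{\mc{F}}^{\rm{strict}} \colon \bar{C}^{\rm{strict}}_{\mb{E}} \to \bar{\Omega}^{\rm{strict}}_{\mb{E}}$, $[R] \mapsto [\mc{F}^{\rm{strict}}(R)]$, is well-defined. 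The same argument applied to the inverse functor $(\mc{F}^{\rm{strict}})^{-1}$ produces a well-defined map in the opposite direction, and the identities $\mc{F}^{\rm{strict}} \circ (\mc{F}^{\rm{strict}})^{-1} = \rm{id}$ and $(\mc{F}^{\rm{strict}})^{-1} \circ \mc{F}^{\rm{strict}} = \rm{id}$ descend to show that the two induced maps on connected components are mutually inverse bijections. Part (b) follows by the identical argument applied to $\mc{F}^{\rm{semi}}$.

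There is essentially no serious obstacle here, since the statement is a formal consequence of the categorical isomorphism already established; the only point requiring care is simply to confirm that passage to connected components is functorial and sends isomorphisms of groupoids to bijections of sets, which is standard. The proof will therefore be very short, essentially one paragraph after quoting the preceding theorem.
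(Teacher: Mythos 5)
Your proposal is correct and follows exactly the paper's route: the paper states the corollary as a direct consequence of Theorem~\ref{strict connection=strict forms}, and your argument simply spells out the standard fact that an isomorphism of categories descends to a bijection on connected components. Nothing further is needed.
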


\subsection{Connections on decorated principal 2-bundles over Lie groupoids}\label{SS:Conndeco}
In this subsection, we show a way to construct connection structures on decorated principal 2-bundles (\Cref{SS:Decorated}) from a connection data on the underlying principal Lie group bundle over the base Lie groupoid (\Cref{Connections on a principal bundle over a Lie groupoid}). However, we already observed a particular instance of the construction in \Cref{special decorated connections}.

For a Lie crossed module $(G, H, \tau, \alpha)$ and a principal $G$-bundle $\bigl(\pi\colon E_0 \rightarrow X_0, \mu, \mb{X} \bigr)$ over a Lie groupoid $\mb{X}$, consider the associated decorated principal $[H \rtimes_{\alpha}G \rra G]$-bundle $\pi^{\rm{dec}} \colon \mb{E}^{\rm dec} \ra \mb{X}$ (\Cref{Prop:Decoliegpd}).
Before we begin with the construction, we need to compute the differentials of some structure maps and the action maps associated to the Lie groupoid $\mb{E}^{\rm{dec}}$.
The source, target, and the composition map of the tangent Lie groupoid $\bigl[T(s^{*}E_0)^{\rm{dec}} \rightrightarrows TE_0\bigr]$ (\Cref{Tangent Lie groupoid})
can be computed as follows:
\begin{equation}\label{Differential of tangent dec}
	\begin{split}
		&{s}_{* ((\gamma, p), h)}((\mc{X}, v), \mathfrak K)=v,\\
		&{t}_{* ((\gamma, p), h)}((\mc{X}, v), \mathfrak K)=\mu_{* (\gamma, p)} (\mc{X}, v)\cdot \tau(h^{-1})-{\delta_{\mu(\gamma, p)\tau({h}^{-1})}
			( \tau(\mathfrak K)\cdot \tau(h^{-1}))},\\
		&\bigl((\gamma_2, p_2, h_2), (\mc{X}_2, v_2, {\mathfrak K}_2)\bigr)\circ \bigl((\gamma_1, p_1, h_1), (\mc{X}_1, v_1, {\mathfrak K}_1)\bigr)\\
		&=\bigl((\gamma_2\circ \gamma_1, p_1, h_2 h_1), (\mc{X}_2\circ \mc{X}_1, v_1, h_2\cdot {\mathfrak K}_1+{\mathfrak K}_2\cdot h_1\bigr).
	\end{split}
\end{equation}
Here, we adhered to the notational convention adopted in \Cref{Tangent Lie groupoid}, i.e. $\mc{X}_2 \circ \mc{X}_1= m_{*,(\gamma_2,\gamma_1)}(\mc{X}_2,\mc{X}_1)$, where $m$ is the composition map of the Lie groupoid $\mb{E}^{{\rm{dec}}}$.

Observe that the differential of the target map $t$ can be computed by applying the chain-rule in the composition of maps given below:
\[s^*E_0\times H\xra{({\rm Id},^{-1})}s^*E_0\times H\xra{(\mu,\tau)}E_0\times G\xra{} E_0.\]
To compute the vertical vector field generating functor $\delta\colon \mb{E}^{\rm dec}\times L(\mb{G})\to T(\mb{E}^{\rm dec})$ with respect to the action defined in  \Cref{Prop:Decoliegpd}, observe that for a fixed $((\gamma, p), h)\in s^*E_0\times H,$ the map $\delta_{((\gamma, p), h)}\colon H\rtimes G\ra s^*E_0\times H$ is given by $(h',\, g)\mapsto \bigl(\gamma, pg, \alpha_{g^{-1}} (h'^{-1}h)\bigr)$. The first coordinate is the constant map, the second coordinate is the right translation map, whereas the third coordinate can be written as the composition of the following maps:
\[H\times G\xra{^{-1},^{-1}}H\times G\xra{(-,-)}G\times H\xra{({\rm Id}, R_h)}G\times H\xra{\alpha}H.\] 
Then, by applying the chain-rule, we obtain the differential of $\delta$ computed below:
\begin{equation}\label{E:verdeco}
	\begin{split}
		\delta\colon \mb{E}^{\rm dec}\times L(\mb{G})&\ra T(\mb{E}^{\rm dec})\\
		(p, B) &\mapsto \delta_p(B)\\
		\bigl((\gamma, p, h) (A, B)\bigr)&\mapsto  \delta_p(B)-\bar{\alpha}_h(B)-A\cdot h.
	\end{split}
\end{equation}	
The differential of the right action of $\mb{G}$ on $\mb{E}$ are given as, for fixed $g\in G, (h', g)\in G_1,$
\begin{equation}\label{E:Hordeco} 
	\begin{split}
		v&\mapsto v g\\
		(\mc{X}, v, \mathfrak K)&\mapsto \bigl(\mc{X}, v g, \alpha_{g^{-1}}(h'^{-1}\cdot \mathfrak K)\bigr),	
	\end{split}
\end{equation}		
for $v\in T_p E_0, \bigl((\mc{X}, v), \mathfrak K\bigr)\in T_{((\gamma,\, p), h)}\bigl(s^{*}E_0^{\rm dec}\bigr).$
Note that here, we are using the notations of \Cref{Remark:notations}. 

Now, we are ready to begin the construction!

Suppose the underlying principal $G$-bundle $\bigl(\pi\colon E_0 \rightarrow X_0, \mu, [X_1 \rightrightarrows X_0]\bigr)$ over the Lie groupoid $\mb{X}$ admits a connection $\omega$ (\Cref{Connections on a principal bundle over a Lie groupoid}), that is $\omega$ is a connection on the pricnipal $G$-bundle $ \pi \colon E_0 \rightarrow X_0$ over the manifold $X_0$, which satisfies the condition
\begin{equation}\label{Source target connection condition}
	s^*\omega=t^*\omega,
\end{equation}
where $s, t\colon [s^*E_0\rra E_0$] are given by $s\colon (\gamma, p)\mapsto p,$ $t\colon (\gamma, p)\mapsto \mu(\gamma, p)$ respectivey. \Cref{Source target connection condition} is equivalent to the following:
\begin{equation}\label{E:pullconcond}
	\omega_p(v)=\omega_{\mu(\gamma, p)}\bigl(\mu_{*, (\gamma, \, p)}(\mc{X}, v)\bigr),
\end{equation}
for any $(\gamma, p)\in s^{*}E_0, (\mc{X}, v)\in T_{(\gamma, p)}\bigl(s^* E_0\bigr)$.

Now, we define an $L(H\rtimes G)$-valued differential 1-form on $s^*E_0\times H$ as follows: 
\begin{equation}\label{omega dec definition}
	\omega^{\rm dec}_{(\gamma, p,\, h)}(\mc{X}, v, \mathfrak K)={\rm{ad}}_{(h,\, e)}\bigl(\omega_p(v)\bigr)
	-\mathfrak{K} \cdot h^{-1}.
\end{equation}
We will show that $(\omega^{\rm dec}, \omega)$ is a strict connection $1$-form on the principal $\mb{G}$-bundle $\pi^{{\rm{dec}}} \colon \mb{E}^{\rm dec} \ra \mb{X}$ over $\mb{X}.$ To prove our claim, observe that $\omega^{\rm dec}$ can be equivalently writen as
$$\omega^{\rm dec}_{((\gamma,\, p), h)}(\mc{X}, v, \mathfrak K)={\rm{ad}}_{(h, e)}\bigl((s^*\omega)_{((\gamma, p), h)}((\mc{X}, v), \mathfrak K)\bigr)-{\Theta}_h(\mathfrak K),$$  
where $\Theta$ is the Maurer-Cartan  form on $H,$ $\bigl((\gamma, p), h\bigr)\in s^* E_0\times H$ and $\bigl((\mc{X}, v), \mathfrak K\bigr)\in T_{((\gamma,\, p), h)}(s^* E_0\times H)=T_{(\gamma, p)}s^{*}E_0\oplus T_hH.$

In the next lemma, we verify the functoriality of $(\omega^{\rm dec}, \omega)$.
\begin{lemma}\label{lemma:Funcdecconn}
	$(\omega^{\rm dec}, \omega)$ define a functor $T\mb{E}^{\rm dec}\ra L(\mb{G}).$
	\begin{proof}
		From \Cref{E:Adjonalgebras} we obtain
		\begin{equation}\nonumber
			\begin{split}
				&\omega^{\rm dec}_{(\gamma , p, h)}(\mc{X}, v, \mathfrak K)\\
				&={\rm{ad}}_{(h, e)}\bigl(\omega_p(v)\bigr)- \mathfrak{K}\cdot h^{-1}\\
				&=\underbrace{\omega_p(v)}_{\in L(G)}+\underbrace{h\cdot ({\bar \alpha}_{h^{-1}}({\omega_p(v)}))  -\mathfrak{K}\cdot h^{-1}}_{\in L(H)}.
			\end{split}
		\end{equation}
		
		Let $\bigl((\gamma_2, p_2, h_2), (\mc{X}_2, v_2, {\mathfrak K}_2)\bigr)$ and  $\bigl((\gamma_1, p_1, h_1), (\mc{X}_1, v_1, {\mathfrak K}_1)\bigr)$ be a pair of  morphisms in $T{\mb{E}}^{\rm dec}$ such that $$s \Big( \bigl((\gamma_2, p_2, h_2), (\mc{X}_2, v_2, {\mathfrak K}_2)\bigr) \Big)= t \Big(\bigl((\gamma_1, p_1, h_1), (\mc{X}_1, v_1, {\mathfrak K}_1) \Big)$$ which by \Cref{Differential of tangent dec}, imply the following:
		\begin{equation}\label{E:sour-targetmatch}
			\begin{split}
				&p_2=\mu(\gamma_1, p_1)\tau(h_1)^{-1}\\
				&v_2=\mu_{* (\gamma_1, p_1)} (\mc{X}_1, v_1)\cdot \tau(h_1^{-1})-\delta_{\mu(\gamma_1, p_1)\tau({h_1}^{-1})}(\tau({\mathfrak K}_1)\cdot\tau(h_1^{-1})).
			\end{split}
		\end{equation}
		To verify the source-target consistency, observe
		\begin{equation}\label{E:sourcetargetexpli}
			\begin{split}
				&s\bigl(\omega^{\rm dec}_{(\gamma_2 , p_2, h_2)}(\mc{X}_2, v_2, \mathfrak K_2)\bigr)=\omega_{p_2}(v_2),\\
				&t\bigl(\omega^{\rm dec}_{(\gamma_1 , p_1, h_1)}(\mc{X}_1, v_1, \mathfrak K_1)\bigr)=\omega_{p_1}(v_1)+\tau (h_1^{-1}\cdot ({\bar \alpha}_{h_1}({\omega_{p_1}(v_1)}))
				-\mathfrak{K}_1\cdot h_{1}^{-1}).
			\end{split}
		\end{equation}
		Plugging \Cref{E:sour-targetmatch} into the above equations and observing that $\omega$ is a connection  on the principal $G$-bundle $\pi_0 \colon E_0\ra X_0$ satisfying $\omega_{(\mu(\gamma_1, p_1))}(\mu_{*}(\mc{X}_1, v_1))=\omega_{p_1}(v_1)$ (see \Cref{E:pullconcond}), we get the required source-target consistency. 
		
		For verifying the composition, we compute separately,
		$$\bigl(\omega^{\rm dec}_{(\gamma_2 , p_2, h_2)}(\mc{X}_2, v_2, \mathfrak K_2)\bigr)\circ \bigl(\omega^{\rm dec}_{(\gamma_1 , p_1, h_1)}(\mc{X}_1, v_1, \mathfrak K_1)\bigr)$$
		and $$\omega^{\rm dec}_{(\gamma_2 , p_2, h_2)\circ (\gamma_1, p_1, h_1)}\bigl((\mc{X}_2, v_2, \mathfrak K_2)\circ (\mc{X}_1, v_1, \mathfrak K_1)\bigr),$$ using last of the equations in \Cref{Differential of tangent dec}. As previously, we plug in the relations in \Cref{E:sourcetargetexpli} and use the properties of $\omega.$ Following expressions are not difficult to obtain:
		\begin{equation}\label{E:onecompo}
			\begin{split}
				&\bigl(\omega^{\rm dec}_{(\gamma_2 , p_2, h_2)}(\mc{X}_2, v_2, \mathfrak K_2)\bigr)\circ \bigl(\omega^{\rm dec}_{(\gamma_1 , p_1, h_1)}(\mc{X}_1, v_1, \mathfrak K_1)\bigr)\\
				=&\omega_{p_1}(v_1) 
				-\mathfrak{K}_1\cdot h_{1}^{-1}  -\mathfrak{K}_2\cdot h_{2}^{-1}\\
				+& \bigg[h_2\cdot \biggl({\bar \alpha}(h_2^{-1})({\rm ad}_{\tau(h_1)}(\omega_{p_1}(v_1)) )\biggr)+h_1\cdot \big({\bar \alpha}(h_1^{-1}) (\omega_{p_1}(v_1)) \big)\bigg]
			\end{split}
		\end{equation}
		and
		\begin{equation}\label{E:secondcompo}
			\begin{split}
				&\omega^{\rm dec}_{(\gamma_2 , p_2, h_2)\circ (\gamma_1, p_1, h_1)}\bigl((\mc{X}_2, v_2, \mathfrak K_2)\circ (\mc{X}_1, v_1, \mathfrak K_1)\bigr)\\
				=& \omega_{p_1}(v_1)-\mathfrak{K}_1\cdot h_{1}^{-1}  -\mathfrak{K}_2\cdot h_{2}^{-1}
				+ \bigg[h_2h_1 \cdot \big({\bar \alpha}(h_1^{-1}h_2^{-1}) (\omega_{p_1}(v_1))\big)\bigg].		
			\end{split}
		\end{equation}
		Then, \Cref{Lemma:Identityfor later} ensures the terms inside [\, ] on both equations are the same. Compatibility of $(\omega^{\rm dec}, \omega)$ with the unit map is a straightforward verification. Hence, we conclude $(\omega^{\rm dec}, \omega)$ is a functor $T\mb{E}^{\rm dec}\ra L(\mb{G}).$
	\end{proof}
\end{lemma}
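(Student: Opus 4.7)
The strategy is to decompose the $L(H\rtimes G)$-valued form $\omega^{\rm dec}$ into its $L(G)$- and $L(H)$-components and then check the three functoriality requirements (source/target consistency, compatibility with units, and compatibility with composition) separately on each component. Using formula \eqref{E:Adjonalgebras} one rewrites
\[
\omega^{\rm dec}_{(\gamma, p, h)}(\mc{X}, v, \mathfrak K)
 = \underbrace{\omega_p(v)}_{\in L(G)} \;+\; \underbrace{h\cdot \bigl({\bar \alpha}_{h^{-1}}(\omega_p(v))\bigr) - \mathfrak K\cdot h^{-1}}_{\in L(H)},
\]
which makes the analysis transparent: the $L(G)$-component depends only on the connection $\omega$ on $E_0$, while the $L(H)$-component captures the $h$-twisting plus the Maurer--Cartan contribution of $H$.

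Source and target consistency can be obtained by direct substitution. Using the differentials in \Cref{Differential of tangent dec}, the source of a morphism reduces to $(p_2,v_2)$ and the target to an explicit expression involving $\mu_{*,(\gamma_1,p_1)}(\mc{X}_1,v_1)\cdot \tau(h_1^{-1})$ and vertical derivatives. The crucial input is \eqref{E:pullconcond} (equivalently $s^*\omega=t^*\omega$), which identifies $\omega_{\mu(\gamma_1,p_1)}(\mu_{*,(\gamma_1,p_1)}(\mc{X}_1,v_1))$ with $\omega_{p_1}(v_1)$; together with $\omega_{p}(\delta_p(B)) = B$ (the vertical axiom for the classical connection) this exactly matches the source/target formulas
$s(\omega^{\rm dec}_{(\gamma_2,p_2,h_2)}(\mc{X}_2,v_2,\mathfrak K_2))=\omega_{p_2}(v_2)$
and
$t(\omega^{\rm dec}_{(\gamma_1,p_1,h_1)}(\mc{X}_1,v_1,\mathfrak K_1))=\omega_{p_1}(v_1) + \tau(\text{$L(H)$-part})$ obtained from the $L(H\rtimes G)$ Lie groupoid structure. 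Compatibility with units is then automatic, since $\omega^{\rm dec}$ is built from $R$-linear data and evaluates to $(0,0)$ on zero tangent vectors at identity elements.

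The main obstacle is the compositional compatibility. Given a composable pair $\bigl((\gamma_i,p_i,h_i),(\mc{X}_i,v_i,\mathfrak K_i)\bigr)$ with the matching condition \eqref{E:sour-targetmatch}, one computes the two sides $\omega^{\rm dec}_{\text{(composite)}}$ and $\omega^{\rm dec}_{\text{(2nd)}}\circ \omega^{\rm dec}_{\text{(1st)}}$ using the composition law \eqref{Differential of tangent dec} for $T\mb{E}^{\rm dec}$ and the composition law \eqref{E:Commutator} for $L(H\rtimes G)$. The $L(G)$-components agree after once more invoking \eqref{E:pullconcond}, and the three ``obvious'' terms in $L(H)$ (namely $-\mathfrak K_1\cdot h_1^{-1}-\mathfrak K_2\cdot h_2^{-1}$ and the inherited scalar pieces) also match on the nose. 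The essential difficulty is to show that the remaining $L(H)$-terms coincide; one is
\[
h_2\cdot\bigl({\bar\alpha}(h_2^{-1})({\rm ad}_{\tau(h_1)}\omega_{p_1}(v_1))\bigr) + h_1\cdot \bigl({\bar\alpha}(h_1^{-1})(\omega_{p_1}(v_1))\bigr),
\]
whereas the other is
\[
h_2 h_1\cdot \bigl({\bar\alpha}(h_1^{-1}h_2^{-1})(\omega_{p_1}(v_1))\bigr).
\]
That these are equal is precisely the content of \Cref{Lemma:Identityfor later}(1), which is the Lie-algebraic reflection of a Peiffer-type identity in $H\rtimes G$. Invoking this identity closes the argument, and $(\omega^{\rm dec},\omega)$ is verified to be a functor.
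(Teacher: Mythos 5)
Your proposal is correct and follows essentially the same route as the paper: the same decomposition of $\omega^{\rm dec}$ into $L(G)$- and $L(H)$-components via \Cref{E:Adjonalgebras}, the same use of \Cref{Differential of tangent dec} and \Cref{E:pullconcond} for source–target consistency, and the same reduction of compositional compatibility to \Cref{Lemma:Identityfor later}(1). The only cosmetic difference is your brief justification of unit compatibility, which the paper likewise leaves as a straightforward check.
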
	
The next result will establish the compatibility of $\omega^{\rm{dec}}$ with $H \rtimes_{\alpha}G$-action and the fundamental vector field.
\begin{lemma}\label{lemma:Connectioncdecconn}
	$\omega^{\rm dec}$ is a connection on $H \rtimes_{\alpha}G=G_1$-bundle  $s^*E_0\times H\ra X_1.$
	\begin{proof}	
		Using \Cref{E:Hordeco,E:verdeco}  one can easily verify
		$$\omega^{\rm dec}_{((\gamma,\, p),\, h)\cdot (h',\, g')}\bigl((\mc{X},\, v,\, \mathfrak K))\cdot (h',\, g')\bigr)={\rm{ad}}_{(h'\, g')^{-1}}\bigl(\omega^{\rm dec}_{((\gamma,\, p),\, h)}\bigl((\mc{X},\, v),\, \mathfrak K\bigr)\bigr)$$
		and $$\omega^{\rm dec}_{((\gamma,\, p),\, \, h)}\bigl(\delta^{(A+B)}_{((\gamma, \, p),\,\, h))})=A+ B$$ for $A\in L(H), B\in L(G).$
\end{proof}	\end{lemma}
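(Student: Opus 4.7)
I need to verify the two defining properties of a classical connection $1$-form for $\omega^{\rm dec}$ on the principal $(H\rtimes_{\alpha}G)$-bundle $s^{*}E_0\times H\to X_1$: the equivariance $\omega^{\rm dec}_{q\cdot k}(\xi\cdot k)={\rm ad}_{k^{-1}}\bigl(\omega^{\rm dec}_{q}(\xi)\bigr)$ for $k\in H\rtimes_{\alpha}G$, and the reproduction property $\omega^{\rm dec}_{q}\bigl(\delta^{(A,B)}_q\bigr)=A+B$ for $(A,B)\in L(H)\oplus L(G)$. Both are to be read off from the explicit formula $\omega^{\rm dec}_{(\gamma,p,h)}(\mc{X},v,\mathfrak K)={\rm ad}_{(h,e)}\bigl(\omega_p(v)\bigr)-\mathfrak{K}\cdot h^{-1}$.

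For equivariance, I would substitute the differential of the right action from \Cref{E:Hordeco} into the left-hand side, so that
\[
\omega^{\rm dec}_{(\gamma,\, pg',\, \alpha_{g'^{-1}}(h'^{-1}h))}\bigl(\mc{X},\, v g',\, \alpha_{g'^{-1}}(h'^{-1}\cdot \mathfrak K)\bigr)
\]
becomes a sum of two terms: an adjoint-type term ${\rm ad}_{(\alpha_{g'^{-1}}(h'^{-1}h),\,e)}\bigl(\omega_{pg'}(vg')\bigr)$ and a Maurer-Cartan-type term $-\alpha_{g'^{-1}}(h'^{-1}\cdot\mathfrak K)\cdot\alpha_{g'^{-1}}(h'^{-1}h)^{-1}$. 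I would then use the classical $G$-equivariance of $\omega$ (which gives $\omega_{pg'}(vg')={\rm ad}_{g'^{-1}}\omega_p(v)$) to rewrite the first term, and the multiplicativity ${\rm ad}_{k_1}\circ {\rm ad}_{k_2}={\rm ad}_{k_1 k_2}$ together with the group law \Cref{E:grouppro} to identify it with ${\rm ad}_{(h'g')^{-1}}{\rm ad}_{(h,e)}\omega_p(v)$. For the Maurer-Cartan-type term, I would split ${\rm ad}_{(h'g')^{-1}}(\mathfrak K\cdot h^{-1})$ using the explicit formulae in \Cref{E:Adjonalgebras} and match it to the $H$-component, invoking the differential Peiffer identities \eqref{E:Peifferdiff} to collapse the $\bar\alpha$-terms that appear.

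For the fundamental vector field condition, I would plug the formula for $\delta$ from \Cref{E:verdeco} into $\omega^{\rm dec}$. Writing $\delta^{(A,B)}_{(\gamma,p,h)}=\bigl(0,\delta_p(B),-\bar\alpha_h(B)-A\cdot h\bigr)$ and applying the defining formula yields
\[
\omega^{\rm dec}_{(\gamma,p,h)}\bigl(\delta^{(A,B)}\bigr)={\rm ad}_{(h,e)}\bigl(\omega_p(\delta_p(B))\bigr)-\bigl(-\bar\alpha_h(B)-A\cdot h\bigr)\cdot h^{-1}.
\]
The classical connection property of $\omega$ gives $\omega_p(\delta_p(B))=B$; then using $-\bar\alpha_h(B)\cdot h^{-1}$ to cancel the $L(H)$-component of ${\rm ad}_{(h,e)}B$ (via the third formula of \Cref{E:Adjonalgebras}) leaves exactly $B+A$, as required.

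The main technical obstacle is the equivariance calculation, where the $L(H)$ and $L(G)$ components of the adjoint action by an element $(h',g')\in H\rtimes_{\alpha}G$ mix: bookkeeping the $\alpha_{g'^{-1}}$, $\bar\alpha_{h^{-1}}$, and $\tau$ factors requires a careful appeal to \Cref{E:Peifferdiff} in the same spirit as \Cref{Lemma:Identityfor later}(2). Once that identity is deployed, the terms assemble into ${\rm ad}_{(h'g')^{-1}}\bigl(\omega^{\rm dec}_{(\gamma,p,h)}(\mc{X},v,\mathfrak K)\bigr)$, closing the proof.
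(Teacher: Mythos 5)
Your proposal is correct and follows essentially the same route as the paper, which simply asserts that the two connection axioms "can easily be verified" from \Cref{E:Hordeco} and \Cref{E:verdeco}; you fill in exactly those verifications (equivariance via the differentiated action and the $G$-equivariance of $\omega$, and the fundamental-vector-field condition via \Cref{E:verdeco} together with the ${\rm ad}_{(h,e)}(0,B)$ formula of \Cref{E:Adjonalgebras} and the cancellation $h\cdot\bar\alpha_{h^{-1}}(B)=-\bar\alpha_h(B)\cdot h^{-1}$). Only cosmetic quibbles: the cancellation in the second computation uses the second (not third) formula of \Cref{E:Adjonalgebras}, and the equivariance check can be closed by the multiplicativity ${\rm ad}_{(h',g')^{-1}}{\rm ad}_{(h,e)}={\rm ad}_{(h',g')^{-1}(h,e)}$ together with $\alpha_{g'^{-1}}$ being a homomorphism, without actually needing \Cref{E:Peifferdiff}.
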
	
Combining \Cref{lemma:Funcdecconn} and \Cref{lemma:Connectioncdecconn}, we summarise the discussion above in the following proposition:
\begin{proposition}\label{Prop:ConOnDeco}
	Let $\bigl(\pi\colon E_0 \rightarrow X_0, \mu, \mb{X} \bigr)$ 
	be a principal $G$-bundle over a Lie groupoid $\mb{X}$ and $\omega$ a connection (\Cref{Connections on a principal bundle over a Lie groupoid}) on it. Consider a Lie crossed module $(G, H, \tau, \alpha)$. Then  $(\omega^{\rm dec}, \omega)$  is a strict connection $1$-form on the corresponding decorated principal $[H \rtimes_{\alpha}G \rra G]$-bundle $\pi^{{\rm{dec}}} \colon \mb{E}^{\rm dec} \ra \mb{X}$, where $\omega^{\rm dec}$ is as defined in \Cref{omega dec definition}. By the appropriate modification of the connection $(\omega^{\rm{dec}},\omega)$ using \Cref{cor:stricconnto semi}, we get a semi-strict connection.
\end{proposition}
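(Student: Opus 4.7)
The plan is to decompose the claim into two independent assertions about the pair $(\omega^{\rm dec}, \omega)$: first, that it assembles into a morphism of Lie groupoids $T\mathbb{E}^{\rm dec}\to L(\mathbb{G})$ (i.e., the groupoid-theoretic content of $L(\mathbb{G})$-valued $1$-forms via \Cref{Multiplicative form condition}); and second, that each component individually satisfies the properties of a classical connection (vertical vector field condition and equivariance) with respect to the relevant principal bundle. Once both are shown, \Cref{Prop:Corresconndiffform}(i) immediately yields the strict connection statement, and the semi-strict version then follows by perturbing with an arbitrary $L(H)$-valued equivariant $1$-form $\lambda$ on $E_0$ via \Cref{cor:stricconnto semi}.

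First I would rewrite $\omega^{\rm dec}_{(\gamma,p,h)}(\mathcal{X},v,\mathfrak{K})={\rm ad}_{(h,e)}\bigl((s^{*}\omega)_{((\gamma,p),h)}(\mathcal{X},v,\mathfrak{K})\bigr)-\Theta_h(\mathfrak{K})$ in terms of the Maurer--Cartan form $\Theta$ on $H$, to cleanly separate the $G$-bundle part (pulled back from the base) from the $H$-part (Maurer--Cartan term). Then using the formulas in \Cref{Differential of tangent dec} for $s_{*},t_{*},m_{*}$ on $T\mathbb{E}^{\rm dec}$, together with the hypothesis $s^{*}\omega=t^{*}\omega$ (equivalently \Cref{E:pullconcond}), source- and target-consistency of $(\omega^{\rm dec},\omega)$ as a functor reduces to a direct substitution in the formulas from \Cref{E:sourcetargetexpli}. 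The compositional compatibility is the more delicate step: after substituting \Cref{E:sour-targetmatch} into both sides, the identity reduces to showing that
\[
h_{2}\cdot\bigl(\bar{\alpha}(h_{2}^{-1})({\rm ad}_{\tau(h_{1})}\omega_{p_1}(v_1))\bigr)+h_{1}\cdot\bigl(\bar{\alpha}(h_{1}^{-1})\omega_{p_1}(v_1)\bigr)=h_{2}h_{1}\cdot\bigl(\bar{\alpha}(h_{1}^{-1}h_{2}^{-1})\omega_{p_1}(v_1)\bigr),
\]
which is precisely part (1) of \Cref{Lemma:Identityfor later}. Compatibility with unit maps is immediate from $\omega^{\rm dec}_{(1_{\pi(p)},p,e)}(0,0,0)=0$.

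Second, I would verify that $\omega^{\rm dec}$ is a classical connection on the principal $(H\rtimes_{\alpha}G)$-bundle $s^{*}E_0\times H\to X_1$. Using the differentials of the right $\mathbb{G}$-action in \Cref{E:Hordeco} combined with the adjoint formulas \eqref{E:Adjongroups}, \eqref{E:Adjonalgebras}, the equivariance $\omega^{\rm dec}_{((\gamma,p),h)\cdot(h',g')}\bigl((\mathcal{X},v,\mathfrak{K})\cdot(h',g')\bigr)={\rm ad}_{(h',g')^{-1}}\omega^{\rm dec}_{((\gamma,p),h)}(\mathcal{X},v,\mathfrak{K})$ becomes a straightforward (if tedious) calculation in the crossed module identities \Cref{E:Peiffer}. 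The vertical vector field condition $\omega^{\rm dec}(\delta^{A+B})=A+B$ for $A\in L(H)$, $B\in L(G)$ follows by substituting \Cref{E:verdeco} into the definition of $\omega^{\rm dec}$: the $\bar{\alpha}_h(B)$ and $A\cdot h$ terms combine with ${\rm ad}_{(h,e)}\omega_p(\delta_p(B))$ and $-\mathfrak{K}\cdot h^{-1}$ to give $A+B$ after one application of the Peiffer identities.

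The combination of these two parts realizes $(\omega^{\rm dec},\omega)$ as a $\mathbb{G}$-equivariant $L(\mathbb{G})$-valued $1$-form whose associated morphism of VB-groupoids via \Cref{Associated G-equivariant L(G)-valued 1-form} splits $\delta^{/\mathbb{G}}$ on the nose. By \Cref{Prop:Corresconndiffform}(i), this is a strict connection. The semi-strict statement is then immediate: given any $L(H)$-valued $1$-form $\lambda$ on $E_0$ satisfying the equivariance $\lambda(pg,v\cdot g)=\alpha_{g^{-1}}\lambda(p,v)$, \Cref{cor:stricconnto semi} produces a semi-strict connection $(\widetilde{\omega^{\rm dec}},\widetilde{\omega})$ with $\widetilde{\omega}=\omega+\tau(\lambda)$ and $\widetilde{\omega^{\rm dec}}=\omega^{\rm dec}+\tau(s^{*}\lambda)+t^{*}\lambda-s^{*}\lambda$. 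The main obstacle is bookkeeping for the compositional identity in step one; once \Cref{Lemma:Identityfor later} is in hand, every other verification is a direct unwinding of \Cref{Differential of tangent dec}, \Cref{E:verdeco}, and \Cref{E:Hordeco}.
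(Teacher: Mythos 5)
Your proposal is correct and follows essentially the same route as the paper: the paper likewise splits the verification into functoriality of $(\omega^{\rm dec},\omega)$ (reducing the compositional check to part (1) of \Cref{Lemma:Identityfor later} after substituting the source--target matching conditions and using $s^{*}\omega=t^{*}\omega$) and the classical connection properties of $\omega^{\rm dec}$ on the $(H\rtimes_{\alpha}G)$-bundle via \Cref{E:verdeco} and \Cref{E:Hordeco}, then invokes \Cref{cor:stricconnto semi} for the semi-strict statement. The only nitpick is that unit-map compatibility is not literally "evaluation at the zero vector" but the equally easy check that $\omega^{\rm dec}$ applied to $u_{*,p}(v)$ returns $1_{\omega_p(v)}=(0,\omega_p(v))$, since $h=e$ kills both the adjoint twist and the Maurer--Cartan term.
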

\begin{corollary}\label{Ex:EoXodecobundleconnect}
	Every principal $2$-bundle over a discrete Lie groupoid admits a strict connection.
	\begin{proof}For a Lie 2-group $\mb{G}$, let $\pi \colon \mb{E} \ra [M \rra M]$  be a principal $\mb{G}$-bundle over a discrete Lie groupoid $[M\rra M]$. By \Cref{Corollary:discreteisdecorated}, every such principal $2$-bundle is a decorated principal $\mb{G}$-bundle. Then, \Cref{Prop:ConOnDeco} ensures that any choice of a connection on the underlying principal $G_0$-bundle over $M$ induces a strict connection on $\pi \colon \mb{E} \ra [M \rra M]$. 
	\end{proof}
\end{corollary}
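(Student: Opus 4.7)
My plan is to chain together the structural result \Cref{Corollary:discreteisdecorated} with the explicit construction of \Cref{Prop:ConOnDeco}. Given a principal $\mb{G}$-bundle $\pi \colon \mb{E} \ra [M\rra M]$ over a discrete Lie groupoid, I would first invoke \Cref{Corollary:discreteisdecorated} to identify $\pi$ (up to isomorphism of principal $\mb{G}$-bundles, via \Cref{Lem:isodecgeneral}) with the decorated principal $\mb{G}$-bundle $\pi^{\rm{dec}} \colon \mb{E}^{\rm{dec}} \ra [M\rra M]$ built from the underlying principal $G_0$-bundle $\pi_0 \colon E_0 \ra M$ and the Lie crossed module $(G_0, H_0, \tau, \alpha)$ associated to $\mb{G}$ (see \Cref{Ex:EoXodecobundle}). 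Since isomorphic principal $2$-bundles have isomorphic Atiyah sequences, \Cref{Lemma:Pullback connection} lets me transfer a strict connection from $\mb{E}^{\rm{dec}}$ back to $\mb{E}$, so it suffices to produce a strict connection on the decorated bundle.

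Next I would apply \Cref{Prop:ConOnDeco}, which requires a connection on the underlying principal $G_0$-bundle over the base Lie groupoid in the sense of \Cref{Connections on a principal bundle over a Lie groupoid}. Concretely, I need a classical connection $\omega \colon TE_0 \ra L(G_0)$ on $\pi_0 \colon E_0 \ra M$ such that $s^{*}\omega = t^{*}\omega$, where $s,t$ are the structure maps of $[s^{*}E_0 \rra E_0]$ built from the trivial action of $[M\rra M]$ on $E_0$ (namely $(1_{\pi_0(p)}, p)\mapsto p$). Because the base Lie groupoid is discrete and the action is the identity, the anchor relation forces $s^{*}E_0 \cong E_0$, and under this identification both $s$ and $t$ coincide with the identity on $E_0$. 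Therefore the multiplicativity condition $s^{*}\omega = t^{*}\omega$ is automatic for every classical connection $\omega$ on $\pi_0 \colon E_0 \ra M$, so choosing any such $\omega$ (which exists by the standard partition-of-unity argument for principal bundles over manifolds) provides the required input data.

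Finally, I would feed $\omega$ into \Cref{Prop:ConOnDeco} to obtain the strict connection $1$-form $(\omega^{\rm{dec}}, \omega)\colon T\mb{E}^{\rm{dec}} \ra L(\mb{G})$ on the decorated bundle, and transfer it back along the isomorphism of \Cref{Lem:isodecgeneral} to get a strict connection on the original $\pi \colon \mb{E}\ra [M\rra M]$. By the categorical correspondence of \Cref{strict connection=strict forms}, this strict $1$-form gives a splitting of the Atiyah sequence in the sense of \Cref{strict and semi-strict connection definition}. There is no real obstacle here beyond bookkeeping; the only mild subtlety is verifying that the multiplicativity condition $s^{*}\omega = t^{*}\omega$ is vacuous over a discrete base, which is exactly the simplification that distinguishes the discrete-base case from the general situation addressed by \Cref{Existence of connection on a Lie group bundle over a groupoid}.
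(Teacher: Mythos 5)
Your proposal is correct and follows essentially the same route as the paper: identify the bundle as a decorated one via \Cref{Corollary:discreteisdecorated} and then feed any classical connection on the underlying principal $G_0$-bundle into \Cref{Prop:ConOnDeco}. The extra details you supply — that the multiplicativity condition $s^{*}\omega=t^{*}\omega$ is vacuous because the $[M\rra M]$-action is trivial, and that the strict connection transfers back along the isomorphism of \Cref{Lem:isodecgeneral} via \Cref{Lemma:Pullback connection} — are exactly the points the paper leaves implicit.
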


The pair of groups $({\mathbb R}^n\rtimes G, G),$ where $G\subset GL(n, {\mathbb R})$ is of special interest in  Cartan geometry (See \cite{kobayashi1956connections},\cite{slovak2009parabolic}, \cite{cattafi2021cartan}). Here, we casually note an observation that relates our construction with that of Cartan connections (without going into the specific details).  	

\begin{example}
	Let $(G, H, \tau, \alpha)$ be a Lie crossed module and $E \ra M$ a principal $G$-bundle over a manifold $M$. Consider the principal $[H \rtimes_{\alpha}G 
	\rra G]$-bundle $[(E \times H)\rra E]$ over the Lie groupoid $[M \rra M]$ (see \Cref{Ex:EoXodecobundle}). Suppose $\theta$ is a \textit{Cartan connection} for the pair  $\bigl(G\subset {\ker(\tau)\rtimes G}\, , \ker(\tau)\rtimes G\bigr)$ on the principal $G$-bundle $E \ra M$ (\cite[Definition $2.2$]{cattafi2021cartan}), that is a $L(\ker(\tau)\rtimes G)$-valued $1$-form $\theta$ on $E$ which satisfies the following properties
	\begin{equation}\nonumber
		\begin{split}
			&\theta_p\colon T_p E\ra L(\ker(\tau)\rtimes G) \, {\rm is\, a \, linear\,  isomorphism \, for \, every}\, p\in E_0,\\ 
			&\theta_{p g}(v \cdot g)={\rm{ad}}_{(e, g)^{-1}}\bigl(\theta(v)\bigr),\qquad \forall p\in E, g\in G,\\
			&\theta_p \bigl(\delta_p(B)\bigr)=B, \qquad \forall B\in L(G).
		\end{split}
	\end{equation}

	Then we get a connection 
	\begin{equation}\nonumber
		\widetilde \theta_{(p, h)}(v, \mathfrak K)={\rm{ad}}_{(h, e)}\bigl(\theta_p(v)\bigr)-\mathfrak{K}\cdot h^{-1}
	\end{equation}	
	on the principal $H \rtimes_{\alpha}G$-bundle $[(E \times H)\rra E].$ The source maps $E \times H\ra E, (p, h)\mapsto p$ and $H \rtimes_{\alpha}G \to G, (h, g)\mapsto g$ define a map from the principal $H \rtimes_{\alpha}G$-bundle $E\times H\ra M$ to the principal $G$-bundle $E \ra M.$	
	By push forwarding the connection 	$\widetilde \theta$ on 
	$H \rtimes_{\alpha}G$-bundle $E \times H\ra M$ along this map of principal bundles we obtain a connection $\theta_0$ on the $G$-bundle $E \ra M.$ Then $(\widetilde \theta, \theta_0)$ defines a strict connection $1$-form on the principal $[H \rtimes_{\alpha}G 
	\rra G]$-bundle $[E \times H\rra E]$ over $[M \rra M].$ 
\end{example}

\subsection{On the existence of connection structures on a principal $2$-bundle over a Lie groupoid}\label{Existence of strict and semistrict connections}
In this subsection, we deduce an existential criterion for connection structures on a principal $2$-bundle over a Lie groupoid.

For a Lie group $G$ and a Lie groupoid $\mb{X}$, we have already seen that a principal $G$-bundle $\bigl(E_G \ra X_0, \mu, \mb{X}\bigr)$ over a proper \'etale Lie groupoid $\mb{X}$ (\Cref{proper etale Lie groupoid}) admits a connection (\Cref{Existence of connection on a Lie group bundle over a groupoid}). This means, there exists a connection $\omega$ on the principal $G$-bundle $E_G\ra X_0$ which satisfies
$$\omega_p=\omega_{\mu(\gamma,\, p)}.$$ 
%Consider a Lie crossed module $(G, H, \tau, \alpha)$. We observed in \Cref{Prop:ConOnDeco} that if a principal $G$-bundle $\bigl(E_G \ra X_0, \mu, \mb{X}\bigr)$ over $\mb{X}$ admits a connection then the corresponding decorated principal $[H \rtimes_{\alpha}G \rra G]$-bundle $\mb{E}^{\rm dec}$ over $\mb{X}$ admits a strict connection.
In the following proposition, we show that the \Cref{prop:Characterisdecorated} provides us criteria for the existence of 
a strict connection and hence, for a semi-strict connection, on a principal Lie $2$-group bundle over a proper \'etale Lie groupoid.
\begin{proposition}\label{Existence}
	For a Lie crossed module $(G, H, \tau, \alpha)$, let $\pi \colon \mb{E} \ra \mb{X}$ be a principal $\mb{G}=[H \rtimes_{\alpha} G \rra G]$-bundle over a proper \'etale Lie groupoid $\mb{X}$. If $\pi \colon \mb{E} \ra \mb{X}$ admits a categorical connection, it also admits a strict connection.
\end{proposition}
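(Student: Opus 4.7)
The plan is to chain together three results already established in the chapter: the characterisation of principal $2$-bundles admitting categorical connections as decorated principal $2$-bundles (\Cref{prop:Characterisdecorated}), the existence of connections on principal Lie group bundles over proper \'etale Lie groupoids (\Cref{Existence of connection on a Lie group bundle over a groupoid}), and the explicit construction of strict connections on decorated principal $2$-bundles from such data (\Cref{Prop:ConOnDeco}).

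First, since $\pi \colon \mb{E} \ra \mb{X}$ admits a categorical connection $\mathcal{C}$, \Cref{prop:Characterisdecorated} (together with \Cref{Lem:isodecgeneral}) gives an isomorphism of principal $\mb{G}$-bundles over $\mb{X}$
\[
\theta \colon \mb{E}^{\rm dec} \xrightarrow{\ \cong\ } \mb{E},
\]
where $\pi^{\rm dec}\colon \mb{E}^{\rm dec}\ra \mb{X}$ is the decorated principal $\mb{G}$-bundle associated to the underlying principal $G$-bundle $\bigl(\pi_0\colon E_0\ra X_0, \mu_{\mathcal C}, \mb{X}\bigr)$ obtained from $\mathcal{C}$ via \Cref{actionwithcatcon}. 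Next, since $\mb{X}$ is proper \'etale, \Cref{Existence of connection on a Lie group bundle over a groupoid} produces a connection $\omega$ on the underlying principal $G$-bundle over $\mb{X}$, i.e.\ a classical connection on $\pi_0 \colon E_0 \ra X_0$ satisfying $s^{*}\omega = t^{*}\omega$ in the sense of \Cref{Connections on a principal bundle over a Lie groupoid}.

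Then I would apply \Cref{Prop:ConOnDeco} to this data: combining $(G,H,\tau,\alpha)$, the principal $G$-bundle $(\pi_0, \mu_{\mathcal C}, \mb{X})$ and the connection $\omega$ yields the strict connection $1$-form $(\omega^{\rm dec}, \omega)\colon T\mb{E}^{\rm dec} \ra L(\mb{G})$ on the decorated principal $\mb{G}$-bundle, defined by the explicit formula
\[
\omega^{\rm dec}_{(\gamma, p, h)}(\mathcal X, v, \mathfrak K) = {\rm ad}_{(h,e)}\bigl(\omega_p(v)\bigr) - \mathfrak K \cdot h^{-1}.
\]
Finally, to obtain a strict connection on the original bundle $\pi \colon \mb{E} \ra \mb{X}$, I would transport $(\omega^{\rm dec}, \omega)$ across the isomorphism $\theta$ using the pullback-along-a-bundle-morphism construction of \Cref{Lemma:Pullback connection} applied to $\theta^{-1}$: the $1$-form $(\theta^{-1})^{*}(\omega^{\rm dec}, \omega)$ is then a strict connection $1$-form on $\pi \colon \mb{E} \ra \mb{X}$, hence a strict connection in the sense of \Cref{strict and semi-strict connection definition} by the equivalence \Cref{strict connection=strict forms}.

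I do not anticipate any serious obstacle; every step is an invocation of a previously established result, and the only mildly technical point is the verification that pullback preserves strictness. This is immediate, because the pullback is defined by precomposition with the tangent functor $\theta^{-1}_{*}$, which commutes with the vertical vector field functor $\delta$ by the $\mb{G}$-equivariance of $\theta^{-1}$, so the defining condition $\omega \circ \delta = \pr_2$ of \Cref{Prop:Corresconndiffform} is preserved. The same argument shows that, by combining with \Cref{cor:stricconnto semi}, one also obtains semi-strict connections on $\pi \colon \mb{E} \ra \mb{X}$ under the same hypotheses.
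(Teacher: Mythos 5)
Your proposal is correct and follows essentially the same route as the paper's own proof: identify $\mb{E}$ with the decorated bundle via \Cref{Lem:isodecgeneral}, invoke \Cref{Existence of connection on a Lie group bundle over a groupoid} for the proper \'etale base, build the strict connection $(\omega^{\rm dec},\omega)$ via \Cref{Prop:ConOnDeco}, and transfer it back through the isomorphism using \Cref{Lemma:Pullback connection}. The only difference is that you spell out the (already-established) fact that pullback along a bundle morphism preserves strictness, which the paper simply cites.
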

\begin{proof}
	Since the principal $[H \rtimes_{\alpha} G \rra G]$-bundle $\pi \colon \mb{E} \ra \mb{X}$ admits a categorical connection, by \Cref{Lem:isodecgeneral} we have a decorated principal $\mb{G}$-bundle $\mb{E}^{\rm{dec}}:=[s^{*}E_0 \times H \rra E_0] $ over $\mb{X}$,  isomorphic to the principal $\mb{G}$-bundle  $\mb{E} \ra \mb{X}$. As $\mb{X}$ is a proper \'etale Lie groupoid, by \Cref{Existence of connection on a Lie group bundle over a groupoid}, the underlying principal $G$-bundle $E_0$ over $\mb{X}$ admits a connection $\omega$, and hence, by \Cref{Prop:ConOnDeco}, we have a strict connection 1-form $(\omega^{\rm{dec}},\omega)$ on $\pi^{{\rm{dec}}} \colon \mb{E}^{\rm{dec}} \ra \mb{X}$. Then, \Cref{Lemma:Pullback connection} completes the proof.
\end{proof}

\section{Gauge 2-group and its action on the category of connections}\label{Gauge 2-group and its action on the category of connections}
In \Cref{Definoition: Gauge group of a principal bundle}, we have seen the notion of a gauge group of a principal bundle over a manifold. We will see two kinds of generalization here: \textit{gauge 2-group} and \textit{extended gauge 2-group}, in the framework of a principal 2-bundle over a Lie groupoid. The first one offers a direct generalization of the classical one. In contrast, one obtains the second one by intertwining the first one with certain Lie algebra-valued 1-forms that result in a larger symmetry. A slightly weaker notion of the first one can be found in \cite{MR2805195}.
These 2-groups act on the category of strict and semi-strict connections \Cref{Def:Semisemistriccat}, a categorification of the classical one that we already saw in \Cref{Classical action of gauge group on connections}. We see, in particular, that the extended gauge 2-group fits very naturally in the framework of semi-strict connections (\Cref{strict and semi-strict connection definition}).

\subsection{Gauge 2-group of a principal 2-bundle over a Lie groupoid}\label{Gauge 2-group of a principal 2-bundle over a Lie groupoid}

\begin{definition}[Gauge $2$-group]\label{Remark:Gauge2-group}
	For a Lie 2-group $\mb{G}$, the \textit{gauge $2$-group of a principal $\mb{G}$-bundle $\mb{E}\ra \mb{X}$ over a Lie groupoid $\mb{X}$} is defined as the strict automorphism 2-group of the object $\pi \colon \mb{E} \ra \mb{X}$ in the strict $2$-groupoid ${\rm{Bun}}(\mb{X},\mb{G})$ (\Cref{strict 2-groupoid of principal 2-bundles}), which we denote by $\mc{G}(\mb{E})$.
\end{definition}
\footnote{Although we discussed Lie 2-groups extensively, we have not formally defined a strict 2-group. A \textit{strict 2-group} is a group object (\Cref{Group object}) in Cat, the category of small categories; see \cite{MR2068521} for details.}\subsection*{Strict 2-group structure of $\mc{G}(\mb{E})$}
Group products on objects and morphisms are respectively given by 
$(F, F')\mapsto F' \circ F$ and $(\Psi\colon F_1 \Longrightarrow F_2, \Psi '\colon F_1'  \Longrightarrow F_2 ')\mapsto \bigl(\Psi' \circ_{H}\Psi \bigr)\colon F_1'\circ F_1 \Longrightarrow F_2'\circ F_2.$  Here $\Psi\circ_H\Psi'$ is the horizontal composition of natural transformations:
$$\bigl(\Psi '\circ_H\Psi\bigr)_p=\Psi '_{F_2(p)}\circ F_1'(\Psi_p).$$

For readers interested in the proof of `any automorphism 2-group of an object in a strict 2-category is a strict 2-group', we refer to \textbf{Example 8.1.1} of \cite{MR2068521}.

We call the objects and morphisms of $\mc{G}(\mb{E})$ as the \textit{gauge transformations} (\textit{gauge transformations}) and the \textit{2-gauge transformations} of $\pi \colon \mb{E} \ra \mb{X}$, respectively. 

Explicitly, a gauge transformation is a $\mb{G}$-equivariant isomorphism of Lie groupoids $F \colon \mb{E} \rightarrow \mb{E}$ such that it preserves the fiber in the sense, $F\circ \pi=\pi.$ Obviously, a gauge transformation on $\pi\colon \mb{E} \rightarrow \mb{X}$ defines a pair of traditional gauge transformations  on the principal $G_0$-bundle $E_0\ra X_0$ and the principal $G_1$-bundle $E_1\ra X_1$, respectively (\Cref{subsection: gauge group of a principal bundle}). While a 2-gauge transformation between two such 1-gauge transformations $F, F' \colon \mb{E} \ra \mb{E}'$ is given by a smooth natural isomorphism $\eta \colon F \Longrightarrow  F'$ satisfying  
$\eta (p \, g)=\eta(p)1_g,$ and $\pi \bigl(\eta(p)\bigr)=1_{\pi(p)}$
for all $p\in E_0, g\in G_0.$

\subsection*{An alternative but equivalent description of a gauge 2-group}

Recall in \Cref{subsection: gauge group of a principal bundle}, we saw that for a Lie group $G$, the gauge group $\mc{G}(E)$ of a principal $G$-bundle $E \ra M$ over a manifold $M$ has an equivalent description in terms of certain $G$-valued smooth maps defined on the manifold $E$. Here, we see an analog for a principal 2-bundle over a Lie groupoid.

For a Lie 2-group $\mb{G}$, let $\pi \colon \mb{E} \ra \mb{X}$ be a principal $\mb{G}$-bundle over $\mb{X}$. Consider the category whose objects (gauge transformations) are morphisms of Lie groupoids $\sigma\colon \mb{E} \ra \mb{G}$ such that the diagram 
\[	\begin{tikzcd}[sep=small]
	\mb{E}\times \mb{G}\arrow[dd, ""'] \arrow[rrr, "\sigma\times {}^{-1}"] &  &  & \mb{G}\times \mb{G}  \arrow[dd, "{{\rm{Ad}}}"] \\
	&  &  &                                                 \\
	\mb{E} \arrow[rrr, "\sigma"]                                                                            &  &  & \mb{G}                                             
\end{tikzcd}\]
commutes on the nose, and morphisms (2-gauge transformations) are given by natural transformations $\Phi\colon \sigma_1\Longrightarrow \sigma_2$ which satisfies $\Phi(p g)={1_g}^{-1}\Phi(p) {1_g}$ for all $p \in E_0, g \in G_0$. We denote this category by $C^{\infty}(\mb{E},\mb{G})^{\mb{G}}$.
The strict $2$-group structure 
in $C^\infty(\mb{E},\mb{G})^{\mb{G}}$ is induced by pointwise multiplication. The  $2$-group  isomorphism between $\mc{G}(\mb{E})$ and $C^{\infty}(\mb{E},\mb{G})^{\mb{G}}$ is given as
\begin{equation}\label{E:gaugeequi}
	\begin{split}
		&F(x_i)=x_i\sigma(x_i), \forall x_i\in E_i, i\in \{0, 1\}\\
		&	\Psi(p)=1_p\Phi(p), \forall p\in E_0.
	\end{split}
\end{equation}
Consider the Lie 2-group $[H \rtimes_{\alpha}G \rra G]$ associated to a Lie crossed module $(G, H, \tau, \alpha)$. Then any object $\sigma\in C^{\infty}(\mb{E},\mb{G})^{\mb{G}}$ is of the form 
$\sigma(p)\in G$ and $\sigma(\widetilde \gamma)=(\sigma_{\widetilde \gamma}, g_{\widetilde \gamma})\in H \rtimes_{\alpha}G.$ The functoriality of $\sigma$ induces the following conditions on $g_{\widetilde \gamma}\in G, \sigma_{\widetilde \gamma}\in H$ :
\begin{equation}\label{E:Functorgauge}
	\begin{split}
		&g_{\widetilde \gamma}=\sigma_{s(\widetilde \gamma)},\\
		&\sigma_{t(\widetilde \gamma)}=\tau(\sigma_{\widetilde \gamma})\sigma_{s(\widetilde \gamma)},\\
		&\sigma_{{\widetilde \gamma}_2} \sigma_{{\widetilde \gamma}_1}=\sigma_{{\widetilde \gamma}_2\circ {\widetilde \gamma}_1}.
	\end{split}
\end{equation}
Using \Cref{E:Adjongroups}, one arrives at the following $\mb{G}$-equivariancy condition on $(\sigma_{\widetilde \gamma}, \sigma_p)$:
\begin{equation}\label{E:Gaugequi}
	\begin{split}
		&\sigma_{p \, g}={\rm{Ad}}_{g^{-1}}(\sigma_p)\\
		&\sigma_{{\widetilde \gamma}\, (h, g)}=\alpha_{g^{-1}}\bigl({\rm{Ad}}_{h^{-1}}\,(\sigma_{\widetilde \gamma})\bigr).
	\end{split}
\end{equation}
Let $\sigma, \sigma '\colon \mb{E}\ra \mb{G}$ be a pair of objects in $C^{\infty}(\mb{E},\mb{G})^{\mb{G}}$ and $\widetilde \Phi\colon \sigma \Longrightarrow \sigma '$ a morphism in $C^{\infty}(\mb{E},\mb{G})^{\mb{G}}.$ Then ${\widetilde \Phi}(p)\colon \sigma(p)\ra \sigma '(p)$ is of the form $( \Phi_{p},\sigma_p),$ where $\Phi(p)\in H$ satisfies the following conditions:
\begin{equation}\label{E:Natugauge}
	\begin{split}
		&\sigma'(p)=\tau(\Phi(p))\, \sigma(p),\\
		&\sigma'_{\widetilde \gamma}=\Phi_q\, \sigma_{\widetilde \gamma}\, {\Phi_p}^{-1},\\
		&\Phi_{p\, g}=\alpha_{g^{-1}}(\Phi_p),
	\end{split}
\end{equation}
for all $p\xrightarrow{\widetilde \gamma}q\in E_1, g\in G.$

Although our definition of gauge 2-group is a straightforward categorification of the classical one (\Cref{Definoition: Gauge group of a principal bundle}), we will see an enrichment that does not have a counterpart in the classical framework.

\subsection*{Categorical connections and gauge transformations}

For a Lie croseed module $(G, H, \tau, \alpha)$, let $\pi \colon \mb{E} \ra \mb{X}$ be a principal $\mb{G}:=[H \rtimes_{\alpha}G \rra G]$-bundle over a Lie groupoid $\mb{X}$, which admits a  categorical connection (\Cref{Def:categorical connection}). Observe that gauge transformations act naturally on the set of categorical connections by the following action:
\begin{equation}\label{E:Gaugeactcatcon}
	({\mathcal C}\cdot F) (\gamma, p):=F\bigl(\mathcal C(\gamma, F^{-1}(p))\bigr),
\end{equation}
where $F\colon \mb{E}\ra \mb{E}$ is a gauge transformation,  $\mathcal{C}$ is a categorical connection and $(\gamma, p)\in s^* E_0$. Then the corresponding transformation of the underlying action $\mu_{\mc{C}}$ (\Cref{underlying principal G-bundle}) is described by
$\mu_{\mc{C}}(\gamma,p)\mapsto {\mu}_F (\gamma, p):=F\bigl(\mu_{\mc{C}}(\gamma, F^{-1}(p))\bigr).$

Now, we choose  a categorical connection $\mc{C}$ to identify $\pi \colon \mb{E} \ra \mb{X}$ as a decorated  principal $[H \rtimes_{\alpha}G \rra G]$-bundle $\pi^{\rm{dec}} \colon \mb{E}^{\rm{dec}} \ra \mb{X}$ (see \Cref{Lem:isodecgeneral}).  
Suppose $\overline{\mb{E}}^{\rm{dec}}$ is the decorated principal $[H \rtimes_{\alpha}G \rra G]$-bundle corresponding to the action ${\mu}_F.$ Observe that both $\overline{\mb{E}}^{\rm{dec}}$ and  ${\mb{E}}^{\rm{dec}}$ have same set of objects and morphisms. 
\begin{proposition}
	Let $\theta_{{\mathcal C}\cdot F}\colon \overline{\mb{E}}^{\rm{dec}}\ra \mb{E}$ be the map defined in \Cref{Lem:isodecgeneral}.	
	Suppose $\sigma\colon {\mb{E}}\ra \mb{G}$ is a gauge transformation. Then $\sigma\circ \theta_{{\mathcal C}\cdot F} \colon \overline{\mb{E}}^{\rm{dec}}\ra \mb{G}$ defines a gauge transformation on $\overline{\mb{E}}^{\rm{dec}}.$
	\begin{proof}  It is an easy verification that $\sigma\circ \theta_{{\mathcal C}\cdot F} $ is a functor $\overline{\mb{E}}^{\rm{dec}}\ra \mb{G}$ satisfying conditions in \Cref{E:Gaugequi}. 
	\end{proof}	
\end{proposition}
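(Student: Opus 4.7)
The plan is to verify two things: that $\sigma\circ\theta_{\mathcal{C}\cdot F}$ is a morphism of Lie groupoids, and that it satisfies the Ad-equivariance conditions of \Cref{E:Gaugequi} (rewritten for the new bundle $\overline{\mb{E}}^{\rm{dec}}$). The functoriality is immediate: by \Cref{Lem:isodecgeneral}, $\theta_{\mathcal{C}\cdot F}$ is an isomorphism of principal $\mb{G}$-bundles over $\mb{X}$, hence in particular a morphism of Lie groupoids; $\sigma$ is a morphism of Lie groupoids by hypothesis; and composition of such is a morphism of Lie groupoids. Smoothness at both object and morphism levels is inherited from the two composed maps.

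The real content is the equivariance. The key structural fact to exploit is that $\theta_{\mathcal{C}\cdot F}$, being a morphism of principal $\mb{G}$-bundles (\Cref{Lem:isodecgeneral}), is $\mb{G}$-equivariant: $\theta_{\mathcal{C}\cdot F}(p\cdot g)=\theta_{\mathcal{C}\cdot F}(p)\cdot g$ for $p\in E_0$, $g\in G_0$, and analogously $\theta_{\mathcal{C}\cdot F}(\widetilde{\gamma}\cdot(h,g))=\theta_{\mathcal{C}\cdot F}(\widetilde{\gamma})\cdot(h,g)$ on morphisms. Plugging this into the two equations of \Cref{E:Gaugequi} for $\sigma$ applied at $\theta_{\mathcal{C}\cdot F}(p\cdot g)$ and $\theta_{\mathcal{C}\cdot F}(\widetilde{\gamma}\cdot(h,g))$, one obtains
\[
(\sigma\circ\theta_{\mathcal{C}\cdot F})(p\cdot g)={\rm{Ad}}_{g^{-1}}\bigl((\sigma\circ\theta_{\mathcal{C}\cdot F})(p)\bigr),
\]
\[
(\sigma\circ\theta_{\mathcal{C}\cdot F})(\widetilde{\gamma}\cdot(h,g))=\alpha_{g^{-1}}\bigl({\rm{Ad}}_{h^{-1}}((\sigma\circ\theta_{\mathcal{C}\cdot F})(\widetilde{\gamma}))\bigr),
\]
which are exactly the conditions of \Cref{E:Gaugequi} for $\sigma\circ\theta_{\mathcal{C}\cdot F}$.

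I do not anticipate any substantive obstacle: everything is a two-line chase once one notes that $\theta_{\mathcal{C}\cdot F}$ is $\mb{G}$-equivariant on both objects and morphisms. The only mild subtlety is keeping straight that although $\overline{\mb{E}}^{\rm{dec}}$ and $\mb{E}^{\rm{dec}}$ share the same underlying sets of objects and morphisms, their groupoid structures (and in particular their target and composition maps) differ, being controlled by $\mu_F$ versus $\mu_{\mathcal{C}}$; however, this discrepancy is absorbed in the definition of $\theta_{\mathcal{C}\cdot F}$ itself (it is defined using $\mathcal{C}\cdot F$ as the cleavage), so no separate check is needed beyond invoking \Cref{Lem:isodecgeneral}. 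Thus the proposition reduces, as the paper states, to a direct verification.
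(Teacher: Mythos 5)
Your proposal is correct and follows essentially the same route as the paper, whose proof is just the one-line remark that the composite is a functor satisfying \Cref{E:Gaugequi}; your write-up simply fills in that verification via the $\mb{G}$-equivariance and functoriality of $\theta_{\mathcal{C}\cdot F}$ guaranteed by \Cref{Lem:isodecgeneral}. The only cosmetic point is that your second displayed identity should be read as the condition on the $H$-components $\sigma_{\widetilde\gamma}$ as in \Cref{E:Gaugequi} (equivalently, full ${\rm Ad}_{(h,g)^{-1}}$-equivariance of the $G_1$-valued map on morphisms), which is what the composition actually yields.
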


\subsection*{Gauge transformations on decorated principal 2-bundles}
There is a simpler way to describe a gauge transformation on a decorated principal 2-bundle. To see this consider a principal decorated principal $[H \rtimes_{\alpha}G \rra G]$-bundle $ \pi^{{\rm{dec}}} \colon \mb{E} \ra \mb{X}$ associated to a Lie crossed module $(G, H, \tau, \alpha)$ and a principal $G$-bundle $(\pi \colon E \ra X_0, \mu, \mb{X} )$ over a Lie groupoid $\mb{X}$ (\Cref{Prop:Decoliegpd}). Observe that $\bigl(\gamma, p, h\bigr)=\bigl(\gamma, p, e\bigr)\, (h^{-1}, e)$ for $(\gamma,p) \in s^{*}E_0$ and $h \in H$. Hence, for any gauge transformation $F\colon {\mb{E}}^{\rm{dec}}\ra{\mb{E}}^{\rm{dec}}$ we have $$F(\gamma, p, h)=F(\gamma, p, e) (h^{-1}, e)$$ for $(\gamma,p) \in s^{*}E_0$ and $h \in H$. Thus the gauge transformation   $F\colon {\mb{E}}^{\rm{dec}}\ra{\mb{E}}^{\rm{dec}}$ is fully determined by a morphism of Lie groupoids $\bar{F} \colon [s^* E\rra E] \ra  [s^*E_0\times H \rra E]$ satifying $\bar{F}_0(p\, g)=\bar{F}_0(p)g$ for all $p \in E$, $g \in G$ and $\bar{F}_1(\gamma, p\, g)=\bar{F}_1(\gamma, p)\, 1_g$ for $(\gamma,p) \in s^{*}E_0$ and $g \in G$. Now, consider the inclusion $1\colon [G\rra G]\hookrightarrow [H\rtimes G\rra G]$. Then, it is clear that $(\bar{F},1)$ is a morphism of principal $2$-bundles $[s^*E \rra E] \ra [s^{*}E \times H \rra E]$ from the principal $[G\rra G]$-bundle $[s^*E \rra E] \ra \mb{X}$ to 
to the $[H\rtimes G\rra G]$-bundle $\mb{E}^{\rm{dec}}\ra \mb{X}$. The association is given by 
\begin{equation}\nonumber
	\begin{split}
		&F(p)=F_0(p);\\
		&F(\gamma, p, h)=F_0(\gamma, p)(h^{-1}, e).
	\end{split}
\end{equation}
Equivalently, if $\sigma\in C^{\infty}(\mb{E}^{\rm dec},\mb{G})^{\mb{G}}$ corresponds to the gauge transformation $F\colon {\mb{E}}^{\rm{dec}}\ra{\mb{E}}^{\rm{dec}},$ then $\sigma$ is fully determined  by a morphism of Lie groupoids $\bar{\sigma} \colon [s^*E \rra E] \ra [H \rtimes_{\alpha}G \rra G]$ satisfying $\bar{\sigma}_0(p\, g)=g^{-1}\bar{\sigma}_0(p) g$ for all $p \in E, g \in G$ and $\bar{\sigma}_1(\gamma, p \, g)=1_g^{-1}\bar{\sigma}_1(\gamma, p) 1_g$ for $(\gamma,p) \in s^{*}E$ and $g \in G$.
Then, $\sigma$ can be described as
\begin{equation}\nonumber
	\begin{split}
		&\sigma(p)=\bar{\sigma}_0(p),\\
		&\sigma\bigl((\gamma, p), h\bigr)=(h, e)\, \bar{\sigma}_1(\gamma, p)\, (h^{-1}, e).
	\end{split}
\end{equation}
The following proposition summarizes the above discussion:

\begin{proposition}\label{Prop:Decogauge}
	Let $\pi^{\rm{dec}} \colon \mb{E}^{\rm{dec}} \ra \mb{X}$ be the decorated principal $[H \rtimes_{\alpha}G \rra G]$-bundle associated to a Lie crossed module $(G, H, \tau, \alpha)$ and a principal $G$-bundle $(E \ra X_0\, \mu, \mb{X})$ over a Lie groupoid $\mb{X}$.
	Then a gauge transformation $\sigma\colon {\mb{E}}^{\rm{dec}}\ra{\mb{G}}$  is completely determined by a morphism of Lie groupoids $\bar{\sigma} \colon [s^*E \rra E] \ra [H \rtimes_{\alpha}G \rra G]$ satisfying $\bar{\sigma}_0(p\, g)=g^{-1}\bar{\sigma}_0(p) g$ for all $p \in E, g \in G$ and $\bar{\sigma}_1(\gamma, p \, g)=1_g^{-1}\bar{\sigma}_1(\gamma, p) 1_g$ for $(\gamma,p) \in s^{*}E$, $g \in G$ and  is given as
	\begin{equation}\nonumber
		\begin{split}
			&\sigma(p)=\bar{\sigma}_0(p),\\
			&\sigma\bigl((\gamma, p), h\bigr)=(h, e)\, \bar{\sigma}_1(\gamma, p)\, (h^{-1}, e).
		\end{split}
	\end{equation}
	Equivalently, it is completely determined by 
	a morphism of principal 2-bundles $(\bar{F},1) \colon [s^*E \rra E] \ra [s^{*}E \times H \rra E]$ over $\mb{X}$ satisfying $\bar{F}_0(p\, g)=\bar{F}_0(p)g$ for all $p \in E$, $g \in G$ and $\bar{F}_1(\gamma, p\, g)=\bar{F}_1(\gamma, p)\, 1_g$ for $(\gamma,p) \in s^{*}E_0$, $g \in G$ and is given by
	\begin{equation}\nonumber
		\begin{split}
			&F(p)=F_0(p);\\
			&F(\gamma, p, h)=F_0(\gamma, p)(h^{-1}, e),
		\end{split}
	\end{equation} 
	where $F \colon \mb{E}^{\rm{dec}} \ra \mb{E}^{\rm{dec}}$ is the gauge transformation corresponding to $\sigma$.
\end{proposition}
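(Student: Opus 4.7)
The plan is to exploit the canonical categorical connection $\mc{C}\colon (\gamma,p)\mapsto (\gamma,p,e)$ on $\mb{E}^{\rm dec}$ (see \Cref{Ex:CatconnDeco}), which identifies the semi-direct product Lie groupoid $[s^{*}E\rra E]$ as a sub-bundle of $\mb{E}^{\rm dec}$ with reduced structure Lie $2$-group $[G\rra G]$. First I would start with a gauge transformation $\sigma\in C^{\infty}(\mb{E}^{\rm dec},\mb{G})^{\mb{G}}$ and define $\bar\sigma$ by restriction along $\mc{C}$, namely $\bar\sigma_{0}(p):=\sigma(p)$ and $\bar\sigma_{1}(\gamma,p):=\sigma(\gamma,p,e)$. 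Functoriality of $\bar\sigma$ is inherited from that of $\sigma$, and the required equivariance conditions $\bar\sigma_{0}(pg)=g^{-1}\bar\sigma_{0}(p)g$ and $\bar\sigma_{1}(\gamma,pg)=1_{g}^{-1}\bar\sigma_{1}(\gamma,p)1_{g}$ are direct consequences of the $\mb{G}$-equivariance \Cref{E:Gaugequi} combined with the formulas in \Cref{E:Adjongroups}.

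The key step is then to recover $\sigma$ from $\bar\sigma$. For this I use the factorization $(\gamma,p,h)=(\gamma,p,e)\,(h^{-1},e)$ (valid in $\mb{E}^{\rm dec}$ by the action formula in \Cref{E:Actionondeco}) together with the $\mb{G}$-equivariance of $\sigma$ as a map $\mb{E}^{\rm dec}\to\mb{G}$. Applying \Cref{E:Gaugequi} to $(h^{-1},e)\in H\rtimes_{\alpha}G$ and using that the first coordinate of the semidirect product is $H$, a short computation with \Cref{E:Adjongroups} yields
\[
\sigma(\gamma,p,h)=\mathrm{Ad}_{(h,e)}\bigl(\sigma(\gamma,p,e)\bigr)=(h,e)\,\bar\sigma_{1}(\gamma,p)\,(h^{-1},e),
\]
which is the asserted explicit formula. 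The object level identity $\sigma(p)=\bar\sigma_{0}(p)$ is tautological.

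Conversely, given $\bar\sigma$ satisfying the two equivariance conditions, one defines $\sigma$ by the formulas above and checks (i) functoriality, (ii) the $\mb{G}$-equivariance in \Cref{E:Gaugequi}, and (iii) that $\sigma$ lands in $\mb{G}$ (i.e., source/target consistency with the structure maps of $\mb{E}^{\rm dec}$ described in \Cref{Prop:Decoliegpd}). Functoriality reduces to the identity $(h_{2}h_{1},e)=(h_{2},e)(h_{1},e)$ together with the functoriality of $\bar\sigma$ and the interchange law \Cref{E:Identitiesechangeinverse}. The equivariance of $\sigma$ follows by combining the equivariance of $\bar\sigma$ with the adjoint formulas in \Cref{E:Adjongroups}, since conjugation by $(h,e)$ commutes appropriately with the $G$-action via $\alpha$. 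Finally, the statement about $(\bar F,1)$ as a morphism of principal $2$-bundles is obtained from the $2$-group isomorphism \Cref{E:gaugeequi} between $\mc{G}(\mb{E})$ and $C^{\infty}(\mb{E},\mb{G})^{\mb{G}}$, applied pointwise: $F(\gamma,p,h)=(\gamma,p,h)\,\sigma(\gamma,p,h)$ simplifies, using the Peiffer identities \Cref{E:Peiffer} and the explicit form of $\sigma$, to the asserted expression $F_{0}(\gamma,p)(h^{-1},e)$.

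The main obstacle is the bookkeeping in the second step: carefully unwinding the adjoint action ${\rm Ad}_{(h,e)}$ on an element of $H\rtimes_{\alpha}G$ whose $G$-component is non-trivial, and verifying that the resulting expression is indeed of the required form while remaining consistent with the source–target structure of $\mb{E}^{\rm dec}$. Everything else is routine once this identification is set up.
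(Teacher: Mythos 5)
Your proposal is correct and follows essentially the same route as the paper: the paper also rests on the factorization $(\gamma,p,h)=(\gamma,p,e)\,(h^{-1},e)$ together with the $\mb{G}$-equivariance of $F$ (equivalently of $\sigma$ via \Cref{E:gaugeequi}), so that the gauge transformation is determined by its restriction to the canonically embedded subgroupoid $[s^{*}E\rra E]$ and recovered by conjugation with $(h,e)$. The only cosmetic difference is that the paper presents this as a short computation rather than invoking the categorical connection explicitly, and your appeal to the Peiffer identities in the last step is unnecessary — plain cancellation in $H\rtimes_{\alpha}G$ suffices.
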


\begin{remark}
	While we touched upon gauge transformations within the context of categorical principal 2-bundles, an exploration of such transformations in the general framework of quasi-principal 2-bundles (\Cref{Definition:Quasicategorical Connection}) is yet to be made.
\end{remark}

\begin{example}
	Let $G$ be a Lie group. Every gauge transformation (as defined in \Cref{subsection: gauge group of a principal bundle}) on a principal $G$-bundle $ \pi \colon E\ra M$ over a manifold $M$ defines a gauge transformation on the principal $[G\rra G]$-bundle $(\pi, \pi) \colon [E\rra E] \ra [M \rra M]$ over the discrete Lie groupoid $[M \rra M]$ and vice versa. 
\end{example}

\begin{example}
	Let $G$ be a Lie group. We know that a principal $[G\rra G]$-bundle $[E_1\rra E_0]\ra [X_1\rra X_0]$  can be identified with the principal $[G \rra G]$-bundle $[s^*E_0\rra E_0]\ra [X_1\rra X_0]$ defined with respect to 
	an action $\mu\colon s^*E_0\ra E_0$ as described in \Cref{Equivalence of principal G bundles and principal G groupoids}. Then, every gauge transformation on a principal $[G\rra G]$-bundle $[E_1\rra E_0]\ra [X_1\rra X_0]$ is a gauge transformation $F\colon E_0\ra E_0$ on the principal $G$-bundle $E_0\ra X_0$ such that it satisfies $F\big(\mu (\gamma, p)\big)=\mu (\gamma,F(p))$.
\end{example}

\begin{example}\label{Ex:EoXodecobundlegaguge}
	Consider the decorated principal $[H \rtimes_{\alpha}G \rra G]$-bundle  ${E}^{\rm dec}:=[E\times H\rra E] \ra [M \rra M]$ over the discrete Lie groupoid $[M \rra M]$ associated to a Lie crossed module $(G, H, \tau, \alpha)$ constructed in \Cref{Ex:EoXodecobundle}. Then, any gauge transformation $F\colon E\ra E$ on the principal $G$-bundle $E\ra M$ over the manifold $M$ defines a gauge transformation $F^{\rm{dec}} \colon {E}^{\rm dec}\ra E^{\rm dec}$ on the $[H \rtimes_{\alpha}G \rra G]$-bundle $E^{\rm{dec}} \ra [M \rra M]$, given by $p\mapsto F(p)$ and $(p, h)\mapsto \bigl(F(p), h\bigr),$ for $p\in E, h\in H.$
\end{example} 

\begin{example}\label{E:Example of principal 2-bundle ordinary Guage}
	For an abelian Lie group $H$, consider the principal $[H\rra e]$-bundle  $[E_1\rra X_0]$ over a Lie groupoid $[X_1\rra X_0]$ (\Cref{E:Example of principal 2-bundle ordinary}).  Then a gauge transformation $F\colon E_1\ra E_1$ on the (classical) principal $H$-bundle $E_1\ra X_1$ determines a gauge transformation on the principal $[H\rra e]$-bundle $[E_1\rra X_0]$ over the Lie groupoid $[X_1\rra X_0]$, given by $x\mapsto x$ and $\widetilde \gamma \mapsto F(\widetilde \gamma),$ for $x\in X_0, \widetilde \gamma \in E_1$.  
\end{example}

\subsection{Action of a gauge $2$-group on the category of connections} \label{Action of Gauge $2$-group on the category of Connections}	
Recall that a gauge group of a classical principal bundle acts on connection 1-forms by pulling it back along gauge transformations (see \Cref{Classical gauge group action}). Here, we will see a generalization of this fact in the framework of a principal 2-bundle over a Lie groupoid equipped with strict or semi-strict connections.

For a Lie 2-group $\mb{G}$, let $\pi\colon \mb{E} \rightarrow \mb{X}$ be a principal $\mb{G}$-bundle over a Lie groupoid $\mb{X}$. Consider a gauge transformation  $F\colon \mb{E}\ra \mb{E}$, i.e an object of $\mc{G}(\mb{E})$ (\Cref{Remark:Gauge2-group}). Then, $F$ induces a $\mb{G}$-equivariant map between the corresponding tangent bundle  by taking the differentials of $F$ on both objects and morphisms, defined as
\begin{equation}\label{Object diff}
	\begin{split}
		F_*\colon& T\mb{E}\ra T\mb{E}\\
		&(p, v)\mapsto F_{*}\bigl(p, v\bigr):=\bigl(p, F_{*, p}(v)\bigr),\\
		& (\widetilde \gamma, \widetilde X)\mapsto  F_{*}(\widetilde \gamma, \widetilde X):=\bigl(\widetilde \gamma, F_{*, \widetilde \gamma}(\widetilde X)\bigr).
	\end{split}
\end{equation}
for any $(p, v)\in TE_0, (\widetilde \gamma, \widetilde X) \in TE_1.$  

On the other hand,  if $\Psi\colon F \Longrightarrow F'$ is a morphism in $\mathcal{G}(\mb{E}),$ then the differential of $\Psi$ yields a $\mb{G}$-equivariant natural isomorphism $\Psi_*\colon F_*\Longrightarrow F_*':$
\begin{equation}\label{Morphism diff}
	(p, v)\mapsto \Psi_*(p, v):=\bigl(1_p, \Psi_{*, p}(v)\bigr)\colon F_*(p, v)\longrightarrow F'_*(p, v).
\end{equation}
In the following proposition, we show that \Cref{Object diff} and \Cref{Morphism diff} together induces an action of the strict 2-group $\mc{G}(\mb{E})$ on the groupoids $\Omega_{\mb{E}}^{\rm{semi}}$ and $\Omega_{\mb{E}}^{\rm{strict}}\,$(\Cref{Def:Semisemistriccat}).
\begin{proposition}\label{Prop:Actiongaugeconnecat}
	For a Lie 2-group $\mb{G}$, let $\pi\colon \mb{E} \rightarrow \mb{X}$ be a principal $\mb{G}$-bundle over a Lie groupoid $\mb{X}$. Then the gauge 2-group $\mc{G}(\mb{E})$ acts on the category $\Omega_{\mb{E}}^{\rm{semi}}$ by  
	\begin{equation}\label{E:gauge2action}
		\begin{split}
			&(F,\omega) \mapsto \omega\circ F_*^{-1} \colon T\mb{E}\ra L(\mb{G}),\\
			&\bigl(\Psi\colon F \Rightarrow F',\zeta\colon \omega \Rightarrow \omega'\bigr) \mapsto \zeta\circ_{H} {\Psi_*}^{-1}\colon \omega\circ F_{*}^{-1}\Rightarrow  \omega\circ F_{*}^{-1},
		\end{split}
	\end{equation}  
	where $\circ_H$ denotes the horizontal composition of natural transformations. Moreover, the action restricts to $\Omega_{\mb{E}}^{\rm{strict}}\subset \Omega_{\mb{E}}^{\rm{semi}}.$

\end{proposition}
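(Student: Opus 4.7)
The plan is to verify that the assignments in \eqref{E:gauge2action} define a strict $2$-group action of $\mc{G}(\mb{E})$ on $\Omega_{\mb{E}}^{\rm{semi}}$, and that the construction restricts to $\Omega_{\mb{E}}^{\rm{strict}}$. The key observation underpinning every step is that a gauge transformation $F$ is $\mb{G}$-equivariant and fiber-preserving, so after differentiation it satisfies the following interchange with the fundamental vector field functor $\delta\colon \mb{E}\times L(\mb{G})\to T\mb{E}$ of \Cref{Vertical vector field generating functor}:
\begin{equation}\label{E:Fdeltaintertwin}
F_{*}\circ \delta \,=\, \delta\circ (F\times \mathrm{id}_{L(\mb{G})}),
\end{equation}
at both object and morphism level. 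Indeed, this follows by differentiating the equivariance $F(xg)=F(x)g$ at $g=e$ for $x\in E_0$, and the analogous relation for $x\in E_1$, $g\in G_1$. Equation \eqref{E:Fdeltaintertwin} will be the backbone of the argument.

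First I would check well-definedness on objects. Given $\omega\in \Omega_{\mb{E}}^{\rm{semi}}$ and a gauge transformation $F$, the composite $\omega\circ F_*^{-1}$ is a morphism of Lie groupoids $T\mb{E}\to L(\mb{G})$ restricting to $L(G_i)$-valued $1$-forms on $E_i$ for $i=0,1$, so it is an $L(\mb{G})$-valued $1$-form in the sense of \Cref{Definition:LGvaluedformOnLiegroupoi}. Its $\mb{G}$-equivariance follows from the $\mb{G}$-equivariance of $F_*^{-1}$ combined with that of $\omega$. To verify the semi-strict condition of \Cref{Prop:Corresconndiffform}(ii), I would use \eqref{E:Fdeltaintertwin} applied to $F^{-1}$ to rewrite $(\omega\circ F_*^{-1})\circ \delta = \omega\circ \delta\circ (F^{-1}\times \mathrm{id}_{L(\mb{G})})$; whiskering the natural isomorphism $\kappa\colon \omega\circ\delta\Longrightarrow \pr_2$ (supplied by \Cref{Prop:Semisirictnatural}) on the right with $F^{-1}\times \mathrm{id}_{L(\mb{G})}$ then yields a natural isomorphism $(\omega\circ F_*^{-1})\circ \delta\Longrightarrow \pr_2\circ (F^{-1}\times \mathrm{id})=\pr_2$. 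Its $\mb{G}$-equivariance and fiber-wise linearity are inherited from those of $\kappa$ together with the $\mb{G}$-equivariance of $F^{-1}$. The same whiskering calculation shows that if $\kappa$ is the identity (i.e.\ $\omega$ is a strict connection $1$-form) then so is the induced natural transformation, giving the restriction to $\Omega_{\mb{E}}^{\rm{strict}}$.

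Next I would handle morphisms. For $\Psi\colon F\Longrightarrow F'$ in $\mc{G}(\mb{E})$ and $\zeta\colon \omega\Longrightarrow \omega'$ in $\Omega_{\mb{E}}^{\rm{semi}}$, I need to show that $\zeta\circ_H \Psi_*^{-1}$ is an arrow of $\Omega_{\mb{E}}^{\rm{semi}}$ in the sense of \Cref{Def:Semisemistriccat}, i.e.\ a smooth natural isomorphism between the Lie $2$-group equivariant $1$-forms $\omega\circ F_*^{-1}$ and $\omega'\circ F'^{-1}_*$ whose underlying map $TE_0\to L(G_1)$ is an $L(G_1)$-valued $1$-form and which satisfies the $\mb{G}$-equivariance condition $\eta\bigl((p,v)\cdot g\bigr)=\eta(p,v)\cdot 1_g$. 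Smoothness and naturality are automatic from horizontal composition; the equivariance properties follow by combining the corresponding properties of $\zeta$ (with respect to $\mb{G}$) and of $\Psi_*^{-1}$ (arising from the $\mb{G}$-equivariance condition on $2$-gauge transformations recalled after \Cref{Remark:Gauge2-group}). Functoriality of the assignment in $\zeta$ and $\Psi$ in each variable reduces to the interchange law for horizontal and vertical composition of natural transformations.

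Finally, to exhibit this data as a strict $2$-group action one needs the identity and compatibility axioms. The identity gauge transformation $\mathrm{id}_{\mb{E}}$ acts trivially since $(\mathrm{id}_{\mb{E}})_*^{-1}=\mathrm{id}_{T\mb{E}}$. Given a second gauge transformation $F'$, the group product in $\mc{G}(\mb{E})$ is $(F,F')\mapsto F'\circ F$, and compatibility amounts to $\omega\circ (F'\circ F)_*^{-1}=(\omega\circ F'^{-1}_*)\circ F_*^{-1}$, which is immediate from the chain rule $(F'\circ F)_*=F'_*\circ F_*$. The corresponding equality at the level of $2$-morphisms follows from the functoriality of horizontal composition, using $(\Psi'\circ_H \Psi)_* = \Psi'_*\circ_H \Psi_*$. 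The main technical obstacle is the careful verification at the semi-strict level that the natural isomorphism witnessing the vertical condition transforms correctly under the action; this is exactly where the identity \eqref{E:Fdeltaintertwin} (and its morphism-level counterpart) is indispensable, and it is the step I would write out in full detail.
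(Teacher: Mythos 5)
Your proposal is correct and follows essentially the same route as the paper: the crux in both is that gauge transformations intertwine with the fundamental vector field functor ($F_{*}\circ\delta=\delta\circ(F\times\mathrm{id})$), so the witnessing isomorphism $\kappa$ for $\omega$ transports to $(p,B)\mapsto\kappa(F^{-1}(p),B)$ — your right-whiskering by $F^{-1}\times\mathrm{id}$ is exactly the paper's pointwise definition of $\kappa_*$ — with equivariance, naturality, and the strict case handled the same way. The extra verification of the strict $2$-group action axioms is routine and consistent with what the paper leaves implicit.
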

\begin{proof}
	Consider an object  $\omega$ and a morphism $\bigl(\zeta\colon \omega\Longrightarrow \omega'\bigr)$ in $\Omega_{\mb{E}}^{\rm{semi}}$. Since the $\mb{G}$-equivariance of $\zeta\colon \omega \Longrightarrow \omega'$ and  $\Psi\colon F \Longrightarrow F'$ induce the $\mb{G}$-equivariance of $\zeta\circ_{H} {\Psi_*}^{-1}$ for a morphism $\Psi$ in $\Omega_{\mb{E}}^{\rm{semi}}$, the only thing that is non-trivial to show is $$\omega\circ F_*^{-1}\in  \Omega_{\mb{E}}^{\rm{semi}}$$ for $F \in \mc{G}(\mb{E})$. To prove it,  we need to show that if there exists a $\mb{G}$-equivariant natural isomorphism  $\kappa\colon \omega \circ \delta \Longrightarrow \rm{pr_2}$ then there exists a $\mb{G}$-equivariant  natural isomorphism  $\kappa^{*}\colon (\omega\circ F_*^{-1}) \circ \delta \Longrightarrow \rm{pr_2},$
	where  $\delta$ is the vertical vector field generating functor for the action of the Lie 2-group $\mb{G}$ on $\mb{E}$ as mentioned in \Cref{Vertical vector field generating functor}. To show this, define $\kappa_{*}\colon \bigl(\omega\circ F_{*}^{-1}\bigr)\circ \delta  \Longrightarrow {\rm pr_2} $ by $$(p, B) \mapsto \kappa\bigl(F^{-1}(p), B\bigr),$$	for $(p, B) \in E_0 \times L(G_0)$. Observe that as a gauge transformation preserves the vertical vector field,  $F^{-1}_{*,\, p}(\delta_p(B))=\delta_{F^{-1}(p)}(B),$ we have 
	$s\bigl(\kappa_{*}(p, B)\bigr)=s\bigl(\kappa\bigl(F^{-1}(p), B\bigr)\bigr)=\omega(\delta_{F^{-1}(p)}(B))=\bigl(\omega\circ F_*^{-1}\bigr)\circ \delta(p, B)$ and $t\bigl(\kappa_{*}(p, B)\bigr)=t\bigl(\kappa\bigl(F^{-1}(p), B\bigr)\bigr)=B.$ Hence, we get $$\kappa_{*}(p, B)\colon \bigl(\omega\circ F_{*}^{-1}\bigr)\circ \delta(p, B) \rightarrow B.$$ Now, since $\kappa\colon \omega \circ \delta \Longrightarrow \rm{pr_2}$ is a natural transformation, for any  $(p, \, B)\xrightarrow{(\widetilde \gamma,\, K)}(q,\, B')\in E_1\times L(G_1),$ we get
	\begin{equation}\nonumber
		K \circ \kappa(F^{-1}(p),\,  B)= \kappa(F^{-1}(q), \,B') \circ \omega\bigl(F^{-1}(\gamma), \delta_{F^{-1}(\gamma)}(K)\bigr). 
	\end{equation}
	Substituting $\delta_{F^{-1}(\widetilde \gamma)}(K)$ above by $F^{-1}_{*,\, \widetilde \gamma}\bigl(\delta_{\widetilde \gamma}(K)\bigr)$, we complete our verification that $\kappa_{*}$ is indeed a natural transformation. Moreover, the following verifies the $\mb{G}$-equivariancy of $\kappa_*,$
	$$\kappa_* \bigl((p, \, B)\, g\bigr)=\kappa \bigl((F^{-1}(p), B) \, g\bigr)=\kappa (F^{-1}(p), B)\, 1_g=\kappa_*(p, \, B)\, 1_g.$$ This proves our claim and hence, $\omega\circ F_*^{-1}\in  \Omega_{\mb{E}}^{\rm{semi}}$. 
	
	Now, observe that from the definition itself, it is obvious that if $\omega\in \Omega_{\mb{E}}^{\rm{strict}}$  then $\omega\circ F_*^{-1}\in  \Omega_{\mb{E}}^{\rm{strict}}$. Thus, it follows immediately that the action of $\mc{G}(\mb{E})$ on $\Omega_{\mb{E}}^{\rm{semi}}$ restricts to $\Omega_{\mb{E}}^{\rm{strict}}\subset \Omega_{\mb{E}}^{\rm{semi}}$.
\end{proof}
\begin{remark}
	If the gauge transformation $F$ in \Cref{Prop:Actiongaugeconnecat} is represented by $\sigma\in C^{\infty}(\mb{E},\mb{G})^{\mb{G}},$ then its action on a strict connection $1$-form $\omega^{\rm st}$ and a semi-strict connection $1$-form $\omega^{\rm se}$ are respectively expressed as
	\begin{equation}\label{E:gaugewithgroup}
		\begin{split}
			&\omega^{\rm st}\mapsto \rm{Ad}_{\sigma}\omega^{\rm st}-(d\sigma) \sigma^{-1},\\
			&\omega^{\rm se}\mapsto \rm{Ad}_{\sigma}\omega^{\rm se}-{\widehat\kappa}_{\omega^{\rm se}}\circ (d\sigma) \sigma^{-1}-(d\sigma) \sigma^{-1},
		\end{split}
	\end{equation}
	where ${\widehat\kappa}_{\omega^{\rm se}}\colon \mb{E}\times L({\mb{G}})\longrightarrow  L({\mb{G}})^{\tau}=[L(H)\rtimes \tau\bigl(L(H)\bigr)\rra \tau\bigl(L(H)\bigr)]$
	is as  in \Cref{Prop:Semisirictnatural}. Note that everything is defined point-wise here.
\end{remark}

	\begin{remark}
	Since by \Cref{strict connection=strict forms}, the categories $\Omega_{\mb{E}}^{\rm{strict}}$ and $\Omega_{\mb{E}}^{\rm{semi}}$ respectively are isomorphic to $C^{\rm{strict}}_\mb{E}$ and $C^{\rm{semi}}_\mb{E},$ the action in  \Cref{Prop:Actiongaugeconnecat} induces actions of $\mathcal{G}(\mb{E})$ on $C^{\rm{strict}}_\mb{E}$ and $C^{\rm{semi}}_\mb{E}.$ This induced action is defined as $(R, F) \mapsto \overline{F} \circ R \circ \overline{T F^{-1}},$
	where $\overline{F}\colon {\rm Ad}(\mb{E}) \rightarrow {\rm Ad}(\mb{E}), [x_i, A_i]$ to $[F(x_i), A_i]$ for any $[x_i, A_i]\in \rm{Ad} (E_i), i\in \{0, 1\}$ and $\overline{TF}\colon {\rm At}(\mb{E}) \rightarrow {\rm At}(\mb{E}), [x_i, v_i]$ to $[F(x_i), F_{{*, x_i}}(v_i)]$ for any $[x_i, v_i]\in {\rm At}(E_i), i\in \{0, 1\}$. Similarly, there is an induced action at the level of morphisms, which is not difficult to write down.
\end{remark}	
As we have defined for connections (see \Cref{Def:Setstrict connection=strict forms}), considering the set of categorical connected components $\bar{\mc{G}}(\mb{E})$ of the gauge $2$-group $\mc{G}(\mb{E})$ for a principal $\mb{G}$-bundle $\pi\colon \mb{E}\ra \mb{X}$, one obtains the following action.
\begin{corollary}
	For a Lie 2-group $\mb{G}$, let $\pi\colon \mb{E} \rightarrow \mb{X}$ be a principal $\mb{G}$-bundle over a Lie groupoid $\mb{X}$. Then \Cref{E:gauge2action}		
	defines an action of ${\bar{\mathcal{G}}(\mb{E})}$   on the category $\bar{\Omega_{\mb{E}}}^{\rm{semi}}$, the set of categorical connected components of $\Omega_{\mb{E}}^{\rm{semi}}$. Moreover, the action restricts to $\bar{\Omega}_{\mb{E}}^{\rm{strict}}\subset \bar{\Omega}_{\mb{E}}^{\rm{semi}}.$ 
\end{corollary}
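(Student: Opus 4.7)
The plan is to deduce this corollary by a formal descent argument from \Cref{Prop:Actiongaugeconnecat}, which already gives us the full 2-group action. Since $\mc{G}(\mb{E})$ is a strict 2-group, the set $\bar{\mc{G}}(\mb{E})$ of connected components (i.e.\ isomorphism classes of objects) inherits a group structure from the horizontal composition of gauge transformations, with inverse induced from $F\mapsto F^{-1}$. Similarly, $\bar{\Omega}_{\mb{E}}^{\rm{semi}}$ is just a set (the isomorphism classes in the groupoid $\Omega_{\mb{E}}^{\rm{semi}}$). Therefore, it suffices to show that the assignment $([F], [\omega])\mapsto [\omega\circ F_{*}^{-1}]$ is well-defined, and that the usual action axioms hold.

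First I would establish well-definedness in both slots. Given $F, F'\in \mc{G}(\mb{E})$ with a $2$-gauge transformation $\Psi\colon F\Longrightarrow F'$ and $\omega,\omega'\in \Omega_{\mb{E}}^{\rm{semi}}$ with a morphism $\zeta\colon \omega\Longrightarrow \omega',$ the formula \Cref{E:gauge2action} produces the morphism
\begin{equation*}
\zeta\circ_H \Psi_*^{-1}\colon \omega\circ F_{*}^{-1}\Longrightarrow \omega'\circ {F'_{*}}^{-1}
\end{equation*}
in $\Omega_{\mb{E}}^{\rm{semi}}$. Specializing to $\zeta=\mathrm{id}_{\omega}$ proves independence from the representative in $[F]$, and specializing to $\Psi=\mathrm{id}_{F}$ proves independence from the representative in $[\omega]$. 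This is precisely the content of the morphism-level formula already verified in \Cref{Prop:Actiongaugeconnecat}, so no new computation is required.

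Next I would verify the group action axioms at the level of connected components. The identity of $\bar{\mc{G}}(\mb{E})$ is the class $[1_{\mb{E}}]$, and $1_{\mb{E}*}=1_{T\mb{E}}$, so $\omega\circ 1_{T\mb{E}}^{-1}=\omega$ and hence $[1_{\mb{E}}]\cdot [\omega]=[\omega].$ For compatibility with composition, note that for $F, F'\in \mc{G}(\mb{E})$ we have $(F'\circ F)_{*}=F'_{*}\circ F_{*}$ by the chain rule on both object and morphism components, so
\begin{equation*}
\omega\circ (F'\circ F)_{*}^{-1}=\bigl(\omega\circ {F'}_{*}^{-1}\bigr)\circ F_{*}^{-1},
\end{equation*}
which, after passing to classes, gives $[F'\circ F]\cdot [\omega]=[F]\cdot\bigl([F']\cdot [\omega]\bigr)$ (or the opposite order, depending on the convention fixed for how $\bar{\mc{G}}(\mb{E})$ is regarded as a group; either way the axiom holds on the nose in $\Omega_{\mb{E}}^{\rm{semi}}$ before quotienting).

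Finally, the restriction to $\bar{\Omega}_{\mb{E}}^{\rm{strict}}$ is immediate: \Cref{Prop:Actiongaugeconnecat} already establishes that the $\mc{G}(\mb{E})$-action preserves $\Omega_{\mb{E}}^{\rm{strict}}$ as a subcategory of $\Omega_{\mb{E}}^{\rm{semi}}$, so the induced map on connected components preserves the image of $\bar{\Omega}_{\mb{E}}^{\rm{strict}}\subset \bar{\Omega}_{\mb{E}}^{\rm{semi}}.$ There is essentially no obstacle in this proof; the only mild care needed is in checking that the chain-rule identity $(F'\circ F)_{*}=F'_{*}\circ F_{*}$ holds \emph{functorially} on $T\mb{E}$ (i.e.\ at both the object and morphism levels of the tangent Lie groupoid), which follows directly from the definitions in \Cref{Object diff} and the functoriality of taking tangents.
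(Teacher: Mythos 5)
Your proposal is correct and is essentially the paper's own (implicit) argument: the corollary is stated there as an immediate consequence of \Cref{Prop:Actiongaugeconnecat}, and your descent to connected components — well-definedness in both slots via the morphism-level formula $\zeta\circ_H\Psi_*^{-1}$ with $\zeta=\mathrm{id}_\omega$ or $\Psi=\mathrm{id}_F$, unit and composition via the chain rule, and the restriction to strict connections — is exactly that reasoning spelled out. One small correction: since $(F'\circ F)_*^{-1}=F_*^{-1}\circ F'^{-1}_*$, the compatibility identity should read $\omega\circ(F'\circ F)_*^{-1}=\bigl(\omega\circ F_*^{-1}\bigr)\circ F'^{-1}_*$, i.e.\ $[F'\circ F]\cdot[\omega]=[F']\cdot\bigl([F]\cdot[\omega]\bigr)$, rather than the grouping you displayed, but as you yourself note this is only a matter of the composition convention and does not affect the conclusion.
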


\subsection{An extended symmetry of semi-strict connections}\label{An extended symmetry of semi-strict connections}
In this section, we observe a particular larger gauge symmetry that the semi-strict connections enjoy. It is interesting that such extended gauge symmetries have already appeared in literature (see \cite{MR3645839} or \cite{martins2011lie}) to describe various physical systems. However, we show here that they fit appropriately in the framework of semi-strict connections. A description of the action of such extended gauge transformations on the semi-strict connections forms our second main result of this chapter.
\subsection*{Description of the extended gauge 2-group of a principal 2-bundle over a Lie groupoid}
We begin by defining specific Lie algebra valued 1-forms, which we intertwine with the elements of the gauge 2-group to produce these extended symmetries.

Let $\mb{G}:=[H \rtimes_{\alpha}G \rra G]$ be a Lie 2-group associated to a Lie crossed module $(G, H, \tau, \alpha)$. Suppose $\pi \colon \mb{E} \ra \mb{X}$ is a principal $\mb{G}$-bundle over a Lie grouipoid $\mb{X}$. Let $\Omega^{G}\bigl({E_0}, L(H)\bigr)$ be the set of $G$-equivariant $L(H)$-valued smooth $1$-forms on $E_0$, or to be more precise consider 
\begin{equation}\label{Extended objects}
	\Omega^{G}\bigl({E_0}, L(H)\bigr):=\bigl\{\lambda\colon TE_0\ra L(H)\big|\, \lambda_{p\cdot g}(v\cdot g)=\alpha_{g^{-1}}\bigl(\lambda_p(v)\bigr)\bigr\}.
\end{equation}
Every such $\lambda\in \Omega^{G}\bigl({E_0}, L(H)\bigr)$ induces a $\mb{G}$-equivariant
$1$-form $\overline\lambda \colon T\mb{E}\ra L(\mb{G})$ on $\mb{E}$ taking values in the sub Lie-groupoid $L(\mb{G})^{\tau}=\big[L(H)\rtimes \tau\bigl(L(H)\bigr)\rra \tau\bigl(L(H)\bigr)\big]\subset L(\mb{G}),$ defined as
\begin{equation}\label{E:lambdabar}
	\begin{split}
		&\overline \lambda_0:=\tau(\lambda)\colon TE_0\ra \tau\bigl(L(H)\bigr),\\
		&\overline \lambda_1:=\bigl(t^*\lambda-s^*\lambda+(\tau(s^*\lambda)\bigr)  \colon TE_1\ra L(H)\oplus \tau(L(H).
	\end{split}
\end{equation}
It is easy to see that the collection of these  $\mb{G}$-equivariant
$1$-forms $\overline\lambda \colon T\mb{E}\ra L(\mb{G})$ (as defined in \Cref{E:lambdabar})   forms a category, whose morphisms are fiber-wise linear natural transformations. We denote this category by $\widehat{\Omega}^{\mb{G}}(E_0,L(H))$.

Now, by intertwining the category  $\widehat{\Omega}^{\mb{G}}(E_0,L(H))$ with the gauge 
$2$-group $\mc{G}(\mb{E})$ (\Cref{Remark:Gauge2-group}), it is not difficult to verify that we get a strict $2$-group
$\mc{G}^{\rm ext}:=\mc{G}{(\mb{E})}\times  \widehat{\Omega}^{\mb{G}}(E_0,L(H))$, whose group products are defined as
\begin{equation}\label{E:semidirectgengauge}\begin{split}
		&(F, \lambda)(F', \lambda ')=\bigl(F F ', \lambda+(\lambda'\circ F_*^{-1})\bigr),\\
		&(\Psi, \theta)(\Psi', \theta ')=\bigl(\Psi \Psi ', \theta+(\theta'\circ_H \Psi_*^{-1})\bigr).
\end{split}\end{equation}
Observe that the  2-vector space stuctures of $L(\mb{G})$ (\Cref{Lie 2-algebra}) is necessary to ensure that $\mc{G}^{\rm ext}$ is indeed a strict $2$-group. Also, note that we have made the following obvious identification in \Cref{E:semidirectgengauge}:
$${\rm{Obj}} \big( \mc{G}^{\rm ext} \big) \simeq {\rm{Obj}} \big( \mc{G}{(\mb{E})} \big) \times  {\Omega}^{G}(E_0,L(H)),  (\textit{see}\,\, \Cref{Extended objects}).$$ As a conclusion to the above description, we get the following definition that captures a larger gauge symmetry than we have for the gauge 2-group of a principal 2-bundle over a Lie groupoid.
\begin{definition}
	
	Consider the Lie 2-group $\mb{G}:=[H \rtimes_{\alpha}G \rra G]$ associated to a Lie crossed module $(G, H, \tau, \alpha)$. Suppose $\pi \colon \mb{E} \ra \mb{X}$  be a principal $\mb{G}$-bundle over a Lie grouipoid $\mb{X}$. Then, the strict 2-group $\mc{G}^{\rm ext}:=\mc{G}{(\mb{E})}\times  \widehat{\Omega}^{\mb{G}}(E_0,L(H))$ (as described in \Cref{E:semidirectgengauge}) is defined as \textit{the extended gauge 2-group of $\pi \colon \mb{E} \ra \mb{X}$.}
\end{definition}
\subsection*{Action of the extended gauge 2-group on the groupoid of connections}
The \Cref{Prop:Semisirictnatural} allows us to define a left action of the extended gauge 2-group $\mc{G}^{\rm ext}$ on  $\Omega_{\mb{E}}^{\rm{semi}}$, given as follows:
\begin{equation}\label{E:GengaugeactdefwithF}
	\begin{split}
		&\bigl((F, \lambda),\omega\bigr)\mapsto \omega\circ F_*^{-1}+\overline \lambda\colon T\mb{E}\ra L(\mb{G}),\\
		&\bigl((\Psi, \theta),\zeta \bigr)\mapsto \zeta\circ_H \Psi_*^{-1}+ \theta.
\end{split}\end{equation}
Using the identification made in \Cref{E:gaugeequi}, the action above can also be expressed as 
\begin{equation}\label{E:Gengaugeactdef}
	\begin{split}
		\bigl((\sigma, \lambda),\omega \bigr)\mapsto \bigl(\rm{Ad}_{\sigma}(\omega)-\kappa_{\omega}\circ (d\sigma) \sigma^{-1}-(d\sigma) \sigma^{-1}+\overline \lambda\bigr)\colon T\mb{E}\ra L(\mb{G}),
	\end{split}
\end{equation}
where $\sigma\in C^{\infty}(\mb{E},\mb{G})^{\mb{G}}$. Similarly, one can write down at the level of morphisms.

We call the action above (\Cref{E:GengaugeactdefwithF}, \Cref{E:Gengaugeactdef})  an \textit{extended gauge transformation}. By \Cref{cor:stricconnto semi}, it is evident that this action does not restrict to $\Omega_{\mb{E}}^{\rm{strict}}$. In particular, the action given in 
\Cref{E:GengaugeactdefwithF} is of importance in higher gauge theory and related areas of physics (for example, in $2$-BF theories). \footnote{ The \textit{BF theory}, a topological field theory, was introduced  to serve as a basis for studying background-free theories \cite{MR1022521}. A 2-BF theory is a BF theory involving Lie 2-groups, see \cite{martins2011lie}. Typically,
the theory involves a Lie algebra $L(G)$ valued differential $1$-form on a principal $G$-bundle $P$ over a $4$-dimensional  manifold, along with an $L(H)$-valued $2$-form, where $\ghta$ is a Lie crossed module. The $1$ and $2$-forms are prescribed to behave certain way under the gauge transformations. Though BF theory does not have any particular relevance with this thesis, it is interesting  to observe that the  global gauge transformation obtained in \Cref{E:GengaugeactdefwithF} coincides with that of differential $1$-form in a $BF$ theory. For more details on BF theory we refer \cite{MR1770708}.} In this context, we refer to the work of Martins and Picken \cite{MR2661492, martins2011fundamental, MR2764890} and works of Martins and Mikovi\' c \cite{martins2011lie}. Examining the following example could provide valuable insights.

\begin{example}\label{Ex:EoXodecobundleconnectgen}
	Consider the decorated principal-$[H \rtimes_{\alpha}G \rra G]$-bundle  ${E}^{\rm dec}:=[E\times H\rra E] \ra [M \rra M]$ over the discrete Lie groupoid $[M \rra M]$ associated to a Lie crossed module $(G, H, \tau, \alpha)$ constructed in \Cref{Ex:EoXodecobundle}. It is easy see that any gauge transformation $\sigma\colon {E}^{\rm dec}\ra \mb{G}$ on  ${E}^{\rm dec}=[E\times H\rra E]$ is expressed as a gauge transformation $\sigma_0\colon E\ra G$ on the $G$-bundle $E\ra M,$ as $\sigma(p)=\sigma_0(p), \sigma(p, h)=\bigl(h, \,\sigma_0(p)\bigr)$, (see \Cref{Ex:EoXodecobundlegaguge}). Hence, an extended gauge transformation $(\sigma, \lambda)$ is given by a classical gauge transformation $\sigma_0\colon E\ra G$ on the $G$-bundle   $E\ra M$ and a $G$-equivariant $L(H)$-valued $1$-form $\lambda\in  \Omega^{G}\bigl({E}, L(H)\bigr)$ on $E.$
	\Cref{Ex:EoXodecobundleconnect} prescribes a way to construct a strict connection $\overline\omega:=(\omega^{\rm dec}, \omega_0)$ from any connection $\omega_0$ on the principal $G$-bundle $E\ra M$. Then the transformation of $\overline\omega$ under the action of $(\sigma, \lambda)$ is given by
	$$\overline\omega \mapsto \rm{Ad}_{\sigma}\overline \omega-(d\sigma)\sigma^{-1}+\overline\lambda.$$
	which reads in detail as
	\begin{equation}\label{E:gaugehighergaue}
		\begin{split}
			\omega_0(p)&\mapsto {\rm{Ad}}_{\sigma_0(p)}\omega_0(p)-(d\sigma_0(p))\sigma_0(p)^{-1}+\tau(\lambda(p)),\\
			\omega^{\rm dec}(p,\, h) &\mapsto {\rm{Ad}}_{(h,\,\sigma_0(p))}\omega^{\rm dec}(p,\, h)-\biggl(d\bigl(h,\, \sigma_0(p)\bigr)\biggr)\bigl(h,\, \sigma_0(p)\bigr)^{-1}\\
			&\hskip 3 cm +\bigl(\tau(\lambda(p))-\lambda(p)+{\rm{Ad}}_h\lambda({p})\bigr).
		\end{split}
	\end{equation}
\end{example}
The gauge transformation that we get here in \Cref{E:gaugehighergaue}
is precisely the gauge transformation of a connection $1$-form in the higher gauge theories (see \cite{MR3645839} or \cite{martins2011lie}).

\chapter{Parallel transport on principal 2-bundles and VB-groupoids}\label{Chapter: Parallel transport on quasi-principal 2-bundles} 

% Main chapter title

 % Change X to a consecutive number; for referencing this chapter elsewhere, use \ref{ChapterX}

\lhead{Chapter 6. \emph{Parallel transport on quasi-principal 2-bundles and associated VB-groupoids}} % Change X to a consecutive number; this is for the header on each page - perhaps a shortened title

%----------------------------------------------------------------------------------------
%	SECTION 1
%----------------------------------------------------------------------------------------

The results of this chapter stand as concrete examples to demonstrate how ideas of \Cref{chapter 2-bundles} can combine with the concepts of \Cref{chapter 2-bundles copnnection} generically. To be more specific, we will see an instance of how a differential geometric notion of connection-induced horizontal path lifting property combines with a categorical notion of cartesian lifts in a fibered category to produce a differential geometric theory of parallel transport in the framework of principal 2-bundles over Lie groupoids. Further, we test our theory by developing a notion of parallel transport on VB-groupoids (\Cref{subsection VB groupoids}).

Like any other reasonable higher notion of parallel transport, our main result of this chapter categorifies the traditional parallel transport functor (\Cref{Transport functor})  together with the smoothness property it enjoys (\Cref{Smoothness of traditional parallel transport}):
\begin{equation}\label{classic transport rough}
	\begin{split}
		T_{\omega} \colon & \Pi_{{\rm{thin}}}(M) \ra G {\rm{-}} {\rm{Tor}}\\
		& x \mapsto \pi^{-1}(x), x \in M\\
		& [\alpha] \mapsto {\rm{Tr}}_{\omega}^{\alpha}, \alpha \in \frac{PM}{\sim}.
	\end{split}
\end{equation}
However, what sets us apart from the rest is that here we directly replace the left-hand side of \Cref{classic transport rough} i.e. $\Pi_{{\rm{thin}}}(M)$ by developing a notion of \textit{thin fundamental groupoid of a Lie groupoid} $\Pi_{{\rm{thin}}}(\mb{X})$, that enjoys suitable smoothness properties and at the same time generalizes the classical one (\Cref{Definition:Thin homotopy groupoid of a manifold}). Also, being in the framework of Lie 2-group bundle, the right-hand side of \Cref{classic transport rough} is replaced accordingly.
The main non-triviality of our approach is in `making sense' of \Cref{classic transport rough} in the framework of quasi-principal 2-bundles over Lie groupoids equipped with connection structures developed in \Cref{chapter 2-bundles copnnection}, and then showing that `our generalization' is a reasonable choice.

The contents of this chapter is mostly borrowed from our paper \cite{chatterjee2023parallel}.

\section{Lazy Haefliger paths and the thin fundamental groupoid of a Lie groupoid}\label{Lazy Haefliger paths and the thin fundamental groupoid of a Lie groupoid}
This section introduces a notion of a thin fundamental groupoid of a Lie groupoid, a generalization of the classical one (\Cref{Thin homotopy groupoid of a manifold}), and imposes a diffeological structure on it.

We begin with the definition of a `lazy Haefliger path'.

\subsection{Lazy Haefliger paths}\label{Lazy haefliger paths}
\begin{definition}\label{Definition:Haefliger path}
	Suppose $\mb{X}$ is a Lie groupoid and let $x,y \in X_0$. A \textit{lazy $\mb{X}$-path} or a \textit{lazy Haefliger path} $\Gamma$ \textit{from $x$ to $y$} is defined as a sequence $\Gamma :=(\gamma_0, \alpha_1,\gamma_1, \cdots,\alpha_n, \gamma_n)$ for some $n \in \mb{N}$ where 
	\begin{itemize}
		\item[(i)] $\alpha_i:[0,1] \ra X_0$ is a path with sitting instants (see \Cref{Thin homotopy groupoid of a manifold}) for all $1 \leq i \leq n$ and 
		\item[(ii)] $\gamma_i \in X_1$ for all $0 \leq i \leq n$, 
	\end{itemize}
	satisfying the following conditions:
	\begin{itemize}
		\item[(a)] $s(\gamma_0)=x$ and $t(\gamma_n)=y$;
		\item[(b)] $s(\gamma_i)= \alpha_i(1)$ for all $0 < i \leq n$;
		\item[(c)] $t(\gamma_i)= \alpha_{i+1}(0)$ for all $0 \leq i < n$.
	\end{itemize}
\end{definition}

We call $\Gamma $ a \textit{lazy $\mb{X}$-path of order $n$}. We define the \textit{source of $\Gamma$} as  $s(\gamma_0)=x$ and the \textit{target of $\Gamma$} as $t(\gamma_n)=y$. Given a Lie groupoid $\mb{X}$, we shall denote the set of all lazy $\mb{X}$-paths of all orders by $P\mb{X}$. If we remove the sitting instants condition from (i), we recover the existing notion of a Haefliger path as in \Cref{Haefliger definition}.

We are specifically interested in certain equivalence classes of lazy Haefliger paths. Observe that such equivalences are similar to the one discussed in \Cref{Equivalence Haefliger}.
\begin{definition}\label{Definition: Equivalence of X-paths}
	A lazy $\mb{X}$-path $\Gamma:=(\gamma_0, \alpha_1,\gamma_1, \cdots,\alpha_n, \gamma_n)$ is said to be \textit{equivalent} to another lazy $\mb{X}$-path $\bar{\Gamma}$, if one is obtained from the other by a finite sequence of all or some of the following operations:
	\begin{itemize}
		\item[(1)] \textit{Removing/adding a constant path}, that is if $\alpha_{i+1}$ is a constant path in the lazy $\mb{X}$-path $\Gamma$, then by removing it we obtain the lazy $\mb{X}$-path $(\gamma_0, \alpha_1,\gamma_1,..,\gamma_{i+1} \circ \gamma_i, \cdots,\alpha_n, \gamma_n)$, where $i \in \lbrace 1,2, \cdots n-1 \rbrace$. Replacing the word `removing' by `adding' one obtains the condition for `adding a constant path'.
		\[\begin{tikzcd}
			\cdot \arrow[r, "\gamma_i"] & \cdot \arrow["\alpha_{i+1}=\rm{constant}"', dotted, loop, distance=2em, in=305, out=235] \arrow[r, "\gamma_{i+1}"] & \cdot
		\end{tikzcd}\]
		\item[(2)] \textit{Removing/adding an idenitity morphism}, that is if $\gamma_i$ is an identity morphism in the lazy $\mb{X}$-path $\Gamma$, then by removing it we obtain a lazy $\mb{X}$-path $(\gamma_0, \alpha_1,\gamma_1,..,\alpha_{i+1} * \alpha_i, \cdots,\alpha_n, \gamma_n)$, where $*$ is the concatenation of paths and $i \in \lbrace 1,2,..,n-1 \rbrace$. Replacing the word `removing' by `adding' one obtains the condition for `adding an identity morphism'.
		\[\begin{tikzcd}
			\cdot \arrow[r, "\alpha_i", dotted] & \cdot \arrow["\gamma_i=\rm{identity}"', loop, distance=2em, in=305, out=235] \arrow[r, "\alpha_{i+1}", dotted] & \cdot
		\end{tikzcd}\]
		
		\item[(3)] \textit{Replacing $\alpha_i$ by $t \circ \zeta_i$, replacing $\gamma_{i-1}$ by $\zeta_i (0) \circ \gamma_{i-1}$ and $\gamma_{i}$ by $\gamma_i \circ (\zeta_i(1))^{-1}$ for a given path $\zeta_i \colon [0,1] \ra X_1$ with sitting instants, such that $s \circ \zeta_i= \alpha_i$} and $i \in \lbrace 1,2, \cdots, n \rbrace$, that is the portion $(\gamma_{i-1}, \alpha_i, \gamma_i)$ of the lazy $\mb{X}$-path $\Gamma$ is replaced by the portion $\big( \zeta_i(0) \circ \gamma_{i-1}, t \circ \zeta_i, \gamma_i \circ (\zeta(1))^{-1} \big)$ to obtain a lazy $\mb{X}$-path $$(\gamma_0, \alpha_1,\gamma_1,..,\alpha_{i-1},\underbrace{\zeta_i(0) \circ \gamma_{i-1}, t \circ \zeta_i, \gamma_i \circ (\zeta(1))^{-1}},\alpha_{i+1}, \cdots,\alpha_n, \gamma_n).$$ 
		See the diagram below: 
		\[
		\begin{tikzcd}
			\cdot \arrow[r, "\gamma_{i-1}"] & \cdot \arrow[r, "\alpha_i", dotted] \arrow[d, "\zeta_i(0)"'] & \cdot \arrow[r, "\gamma_i"]  & . \\
			& \cdot \arrow[r, "t \circ \zeta_i"', dotted]                & \cdot \arrow[u, "\zeta_i(1)^{-1}"'] &  
		\end{tikzcd}\]
	\end{itemize}
	
\end{definition}
Adhering to the same conventions as in \cite{MR2772614} and \cite{MR2166083}, we say the operations in \Cref{Definition: Equivalence of X-paths} as \textit{equivalences}.

Now, we introduce a notion of \textit{thin deformation of lazy $\mb{X}$-paths}, which can be regarded as a modified thin homotopy analog of the existing notion of deformation of $\mb{X}$-paths discussed in \Cref{deformation}.
\begin{definition}\label{Definition: Thin deformation}
	A \textit{thin deformation} from a lazy $\mb{X}$-path $\Gamma:=(\gamma_0, \alpha_1,\gamma_1, \cdots,\alpha_n, \gamma_n)$ to a lazy $\mb{X}$-path $\Gamma':=(\gamma'_0, \alpha'_1, \gamma'_1 \cdots, \alpha'_n, \gamma'_n)$ of the same order is given by a sequence of smooth paths $ \lbrace \zeta_i: [0,1] \ra X_1 \rbrace_{i =0,1, \cdots,n}$ with $\zeta_i(0)= \gamma_i$ and $\zeta_i(1)= \gamma'_i$  such that 
	\begin{enumerate}[(i)]
		\item$\lbrace \zeta_i: [0,1] \ra X_1 \rbrace_{i =0,1, \cdots,n}$ are  paths with sitting instants;
		\item$\alpha_i$ is thin homotopic to $(s \circ \zeta_i)^{-1} * \alpha_i' * (t \circ \zeta_{i-1})$ for all $i=1,2 \cdots .,n$, where $*$ is the concatenation of paths, as illustrated by the following diagram: 
		\begin{equation}\nonumber
			\begin{tikzcd}
				\cdot \arrow[r] \arrow[r, "\gamma_{i-1}"] & \cdot \arrow[r, "\alpha_i", dotted] \arrow[d, "t \circ \zeta_{i-1}"', dotted] & \cdot \arrow[r, "\gamma_i"]                          & \cdot \\
				\cdot \arrow[r, "\gamma{\rm{'}}_{i-1}"']          & \cdot \arrow[r, "\alpha_i{\rm{'}}"', dotted]                        & \cdot \arrow[u, "(s \circ \zeta_i)^{-1}"', dotted] \arrow[r, "\gamma{\rm{'}}_i"'] & \cdot
			\end{tikzcd};
		\end{equation}
		\item$s \circ \zeta_0$ and $t \circ \zeta_n$ are constant paths in $X_0$.
	\end{enumerate}
	%	We say $\Gamma$ is \textit{thinly deformic} to $\Gamma'$.
\end{definition}
\begin{proposition}\label{thin homotopy of X-paths is an equivalence relation}
	For any Lie groupoid $\mb{X}$, \Cref{Definition: Thin deformation} defines an equivalence relation on $P\mb{X}$.
\end{proposition}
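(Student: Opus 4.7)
The plan is to verify reflexivity, symmetry, and transitivity in turn, using the standard facts that thin homotopy of paths in $X_0$ is an equivalence relation, that paths with sitting instants behave well under concatenation, and that composition with a constant path or with the reverse of a path yields a thin-homotopic path. Throughout, the family of smoothing paths $\{\zeta_i\}$ will be built from obvious candidates, and condition $(ii)$ of \Cref{Definition: Thin deformation} will follow from elementary manipulations with the defining thin homotopy on the $\alpha_i$'s.

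For reflexivity, given $\Gamma=(\gamma_0,\alpha_1,\gamma_1,\ldots,\alpha_n,\gamma_n)$, set $\zeta_i\equiv\gamma_i$ (the constant path) for each $i$. These are clearly smooth paths in $X_1$ with sitting instants; the paths $s\circ\zeta_i$ and $t\circ\zeta_i$ are constant at $s(\gamma_i)=\alpha_i(1)$ and $t(\gamma_i)=\alpha_{i+1}(0)$ respectively, so the composite $(s\circ\zeta_i)^{-1}*\alpha_i*(t\circ\zeta_{i-1})$ equals $\alpha_i$ prefixed and postfixed by constant paths at its own endpoints; this is thin-homotopic to $\alpha_i$. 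Condition $(iii)$ is immediate since $s\circ\zeta_0$ and $t\circ\zeta_n$ are already constant. For symmetry, given a thin deformation $\{\zeta_i\}$ from $\Gamma$ to $\Gamma'$, set $\zeta'_i(u):=\zeta_i(1-u)$; clearly $\zeta'_i(0)=\gamma'_i$, $\zeta'_i(1)=\gamma_i$, and sitting instants are preserved under reversal. One then observes that $s\circ\zeta'_i=(s\circ\zeta_i)^{-1}$ as paths, so $(s\circ\zeta'_i)^{-1}=s\circ\zeta_i$, and similarly for $t\circ\zeta'_{i-1}$. The required thin-homotopy $\alpha'_i\sim (s\circ\zeta'_i)^{-1}*\alpha_i*(t\circ\zeta'_{i-1})$ is obtained from the given thin-homotopy $\alpha_i\sim (s\circ\zeta_i)^{-1}*\alpha'_i*(t\circ\zeta_{i-1})$ by concatenating with $s\circ\zeta_i$ on the left and $(t\circ\zeta_{i-1})^{-1}$ on the right and then using the identities $\beta*\beta^{-1}\sim\mathrm{const}$ and $\alpha*\mathrm{const}\sim\alpha$. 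Condition $(iii)$ is preserved since the reverse of a constant path is constant.

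For transitivity, suppose $\{\zeta_i\}$ is a thin deformation from $\Gamma$ to $\Gamma'$ and $\{\eta_i\}$ is one from $\Gamma'$ to $\Gamma''$. Define $\xi_i:=\zeta_i*\eta_i$ as the standard concatenation of paths in $X_1$; the sitting-instants condition on both $\zeta_i$ and $\eta_i$ guarantees $\xi_i$ is smooth, and $\xi_i(0)=\gamma_i$, $\xi_i(1)=\gamma''_i$. Then $s\circ\xi_i=(s\circ\zeta_i)*(s\circ\eta_i)$, and consequently $(s\circ\xi_i)^{-1}$ is thin-homotopic to the appropriate concatenation of $(s\circ\eta_i)^{-1}$ and $(s\circ\zeta_i)^{-1}$; similarly for $t\circ\xi_{i-1}$. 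Chaining the two given thin-homotopies, one obtains
\[
\alpha_i\sim (s\circ\zeta_i)^{-1}*\alpha'_i*(t\circ\zeta_{i-1}) \sim (s\circ\zeta_i)^{-1}*(s\circ\eta_i)^{-1}*\alpha''_i*(t\circ\eta_{i-1})*(t\circ\zeta_{i-1}),
\]
and by associativity of concatenation up to thin homotopy the right-hand side is thin-homotopic to $(s\circ\xi_i)^{-1}*\alpha''_i*(t\circ\xi_{i-1})$, verifying condition $(ii)$ for $\{\xi_i\}$. Condition $(iii)$ follows because the concatenation of two constant paths at the same point is constant.

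The only step that needs genuine care is the handling of associativity and the interaction of path reversal with concatenation in condition $(ii)$ for transitivity; all the required identities, however, are standard consequences of the thin-homotopy relation on $X_0$-paths with sitting instants, so there is no substantial obstacle, just a careful bookkeeping of which concatenations match up at which endpoints.
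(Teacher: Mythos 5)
Your proposal is correct and follows essentially the same route as the paper's own proof: reflexivity via constant paths in $X_1$, symmetry via path reversal using $s\circ\zeta_i^{-1}=(s\circ\zeta_i)^{-1}$ and $t\circ\zeta_i^{-1}=(t\circ\zeta_i)^{-1}$, and transitivity via concatenation of the deformation families. The only difference is that you spell out the whiskering and cancellation arguments verifying condition (ii) of \Cref{Definition: Thin deformation}, which the paper leaves implicit; this is harmless extra detail, not a change of method.
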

\begin{proof} We prove the equivalence relation on $P\mb{X}$ as follows:

	Reflexive:
 
	Consider a lazy $\mb{X}$-path $\Gamma :=(\gamma_0, \alpha_1,\gamma_1,...,\alpha_n, \gamma_n)$ from $x$ to $y$. A thin deformation from $\Gamma$ to itself is given by the sequence of constant paths $ \lbrace \zeta_{i} \colon [0,1] \ra X_1 \rbrace_{i=0,1...,n}$ defined as $r \mapsto \gamma_i$ for all $r \in [0,1]$.
	
Symmetric:

	Let $ \lbrace \zeta_{i} \colon [0,1] \ra X_1 \rbrace_{i=0,1...,n}$  be a thin deformation from a lazy $\mb{X}$-path $\Gamma :=(\gamma_0, \alpha_1,\gamma_1,...,\alpha_n, \gamma_n)$ to another lazy $\mb{X}$-path $\Gamma' :=(\gamma'_0, \alpha'_1, \gamma'_1..., \alpha'_n, \gamma'_n)$. It follows immediately from the observation $s \circ \zeta_i^{-1}=(s \circ \zeta_i)^{-1}$ and $t \circ \zeta_i^{-1}=(t \circ \zeta_i)^{-1}$ that $ \lbrace \zeta_{i}^{-1} \colon [0,1] \ra X_1 \rbrace_{i=0,1...,n}$  defined as $r \mapsto \zeta(1-r)$, (see \Cref{Notation conventions}), is a thin deformation from $\Gamma'$ to $\Gamma$.
	
	Transitive:
 
	Let $ \lbrace \zeta_{i} \colon [0,1] \ra X_1 \rbrace_{i=0,1...,n}$  be a thin deformation from a lazy $\mb{X}$-path $\Gamma :=(\gamma_0, \alpha_1,\gamma_1,...,\alpha_n, \gamma_n)$ to a lazy $\mb{X}$-path $\Gamma' :=(\gamma'_0, \alpha'_1, \gamma'_1..., \alpha'_n, \gamma'_n)$ and $ \lbrace \delta_{i} \colon [0,1] \ra X_1 \rbrace_{i=0,1...,n}$  be another from  $\Gamma'$ to  $\Gamma'' :=(\gamma''_0, \alpha''_1, \gamma''_1..., \alpha''_n, \gamma''_n)$. We claim that  $ \lbrace \delta_{i} * \zeta_i \colon [0,1] \ra X_1 \rbrace_{i=0,1...,n}$  is a thin deformation from $\Gamma$ to $\Gamma''$ and it follows straightforwardly from the following:
	\begin{itemize}
		\item[(i)] $s(\delta_i)*t(\zeta_i)=s(\delta_i * \zeta_i)$, 
		\item[(ii)] $t(\delta_i)*t(\zeta_i)=t(\delta_i * \zeta_i)$,
		\item[(iii)] $(\delta_i * \zeta_i)^{-1}= \zeta_i^{-1}* \delta_i^{-1}$
	\end{itemize} 
	for all $i= 0,1,..,n$.
	%	Then, $ \lbrace \delta_{i} * \zeta_i \colon [0,1] \ra X_1 \rbrace_{i=0,1...,n}$  is a thin deformation from $\Gamma$ to $\Gamma''$. 
	%				
\end{proof} 

%		Now, we are equipped to define a notion of thin homotopy of lazy $\mb{X}$-paths, which we do next:
With a similar spirit as in \Cref{homotopy of Xpaths}, we define the following:
\begin{definition}\label{Definition: Thin homotopy of X-paths}
	A \textit{lazy $\mb{X}$-path thin homotopy} is defined as the equivalence relation on $P\mb{X}$ generated by the equivalence relations in \Cref{Definition: Equivalence of X-paths} and \Cref{thin homotopy of X-paths is an equivalence relation}.
\end{definition}

The corresponding quotient set will be denoted as $\frac{P\mb{X}}{\sim}$. In particular, a pair of lazy $\mb{X}$-paths (with fixed endpoints) is related by lazy $\mb{X}$-path thin homotopy if one is obtained from the other by a finite sequence of equivalences and thin deformations.

 \subsection{Thin fundamental groupoid of a Lie groupoid}\label{Thin fundamental groupoid of a Lie groupoid}
\begin{proposition}\label{Propostioni:thin fundamental groupoid of a Lie groupoid}
	For any Lie groupoid $\mb{X}= [X_1 \rra X_0]$, there is a groupoid $\Pi_{\rm{thin}}(\mb{X})$,  whose object set is $X_0$ and the morphism set is $\frac{P\mb{X}}{\sim}$. The structure maps are given as follows:
	\begin{enumerate}[(i)]
		\item Source: $s: \frac{P\mb{X}}{\sim} \ra X_0$ is defined by $[\Gamma = (\gamma_0, \alpha_1,\gamma_1, \cdots,\alpha_n, \gamma_n)] \mapsto s(\gamma_0);$ 
		\item Target: $t: \frac{P\mb{X}}{\sim} \ra X_0$ is defined by $[\Gamma = (\gamma_0, \alpha_1,\gamma_1, \cdots, \alpha_n, \gamma_n)] \mapsto t(\gamma_n);$ 
		\item Composition: if $s([\Gamma'= (\gamma'_0, \alpha'_1,\gamma'_1, \cdots,\alpha'_n, \gamma'_n)])= t([\Gamma = (\gamma_0, \alpha_1,\gamma_1, \cdots,\alpha_m, \gamma_m)])$, then define
		\begin{equation}\nonumber
  \begin{split}
  		[(\gamma'_0, \alpha'_1,\gamma'_1, \cdots,\alpha'_n, \gamma'_n)] \circ [(\gamma_0, \alpha_1,\gamma_1, \cdots,\alpha_m, \gamma_m)] & \\:=
       [(\gamma_0, \alpha_1,\gamma_1, \cdots,\alpha_m, \gamma'_0 \circ  \gamma_m, \alpha'_1,\gamma'_1, \cdots,\alpha'_n, \gamma'_n)];
   \end{split}
		\end{equation}
		\item Unit: $u : X_0 \ra  \frac{P\mb{X}}{\sim}$ is given by $x \mapsto [(1_x, c_x,1_x)]$ where $c_x \colon [0,1] \ra X_0$ is the constant path at $x \in X_0$;
		\item Inverse: $\mathfrak{i} \colon  \frac{P\mb{X}}{\sim} \ra  \frac{P\mb{X}}{\sim}$ is given by
		\begin{equation}\nonumber
			[(\gamma_0, \alpha_1,\gamma_1, \cdots,\gamma_{n-1},\alpha_n, \gamma_n)] \mapsto [(\gamma_n^{-1}, \alpha^{-1}_{n},\gamma^{-1}_{n-1}, \cdots,\gamma^{-1}_1, \alpha^{-1}_1, \gamma^{-1}_0)].
		\end{equation} (see \Cref{Notation conventions})
	\end{enumerate}
\end{proposition}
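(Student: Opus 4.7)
The plan is to verify that $\Pi_{\rm{thin}}(\mb{X})$ is a well-defined groupoid in two stages: first, that each structure map respects the equivalence relation in \Cref{Definition: Thin homotopy of X-paths}, and second, that the groupoid axioms (associativity, unitality, invertibility) hold on the quotient $\frac{P\mb{X}}{\sim}$.

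For well-definedness of source and target, I would check invariance under each of the three equivalences in \Cref{Definition: Equivalence of X-paths} and under thin deformations (\Cref{Definition: Thin deformation}). The three equivalences touch only interior $\alpha_i$'s and interior $\gamma_i$'s (or preserve $\gamma_0, \gamma_n$), so $s(\gamma_0)$ and $t(\gamma_n)$ are preserved on the nose. For a thin deformation $\{\zeta_i\}_{i=0}^{n}$ the condition that $s\circ\zeta_0$ and $t\circ\zeta_n$ are constant paths in $X_0$ forces $s(\gamma_0)=s(\gamma'_0)$ and $t(\gamma_n)=t(\gamma'_n)$, giving the invariance.

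For well-definedness of composition, the key step is: if $\Gamma_1\sim \Gamma'_1$ and $\Gamma_2\sim \Gamma'_2$ with matching endpoints, then $\Gamma_2\circ\Gamma_1\sim \Gamma'_2\circ\Gamma'_1$. I would reduce this to a single replacement on one factor at a time and then dispose of each elementary move separately. The three equivalences of \Cref{Definition: Equivalence of X-paths} performed inside $\Gamma_1$ (resp.\ $\Gamma_2$) are obviously still valid elementary equivalences of the concatenated lazy $\mb{X}$-path. The subtle case is a thin deformation $\{\zeta_i\}$ of $\Gamma_1$: at the concatenation index, the condition $t\circ\zeta_n = \mathrm{const}$ ensures that the newly introduced composed arrow $\gamma_0' \circ \gamma_n$ is unaffected at the junction, and the sequence $\{\zeta_0,\dots,\zeta_n, c_{\gamma'_0},\widetilde\zeta_1,\dots,\widetilde\zeta_m\}$ (where $c_{\gamma'_0}$ is the constant path at $\gamma_0'$ and $\widetilde\zeta_j$ are constant paths at the components of $\Gamma_2$) forms a valid thin deformation of the composite. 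The symmetric argument handles deformations of $\Gamma_2$. Well-definedness of the unit map is immediate (constant lazy path at a point), and well-definedness of the inverse follows from the fact that reversing a thin deformation or equivalence term-by-term (using $\zeta_i^{-1}$ and reversing the order of indices) produces an equivalence/thin deformation between the inverses.

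For the groupoid axioms, associativity on the nose at the representative level is clear because concatenation of lists is associative. Left/right unitality requires showing $(1_x, c_x, 1_x)\circ \Gamma \sim \Gamma$ whenever $s(\Gamma)=x$: the composite inserts a constant path $c_x$ and an identity arrow $1_x$ adjacent to $\gamma_0$, both of which are removed by moves (1) and (2) of \Cref{Definition: Equivalence of X-paths}. Finally, to verify $\mathfrak{i}(\Gamma)\circ\Gamma \sim 1_{s(\Gamma)}$, one reduces the concatenated lazy path step by step: each adjacent pair $\gamma_i^{-1}\circ(\cdots)\circ\gamma_i$ contracts via the arrow-replacement move (3) with $\zeta_i$ taken as the constant path at $\gamma_i$, and each back-and-forth path $\alpha_i^{-1}*\alpha_i$ is killed by a thin homotopy (supported on a segment of rank $\le 1$, which is why the thin-homotopy restriction is essential). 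The main technical obstacle is the compatibility of thin deformations with composition at the gluing index; once I have verified that the constant-endpoint requirements in \Cref{Definition: Thin deformation} propagate correctly across concatenation, the rest is bookkeeping.
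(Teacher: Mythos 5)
Your proposal is correct and follows essentially the same route as the paper's own proof: reduce well-definedness of composition to one elementary move or thin deformation at a time, pad the untouched factor with constant deformation paths in $X_1$, and verify the groupoid axioms using moves (1)--(2) together with thin homotopies of the form $\alpha^{-1}*\alpha \simeq c$ (the paper in fact dismisses unit and inverse as immediate, so your extra detail there is fine). One small bookkeeping remark: in your thin deformation of the composite, the junction arrow $\gamma_0'\circ\gamma_n$ should be assigned the single path $r\mapsto \gamma_0'\circ\zeta_n(r)$ (well defined because $t\circ\zeta_n$ is constant) rather than the two separate entries $\zeta_n$ and $c_{\gamma_0'}$, which is exactly how the paper composes deformations at the junction; with that adjustment, and your (correct) observation that associativity already holds on the nose for representatives, the argument matches the paper's.
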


\begin{proof}
	From the definition itself, it follows immediately that $s$ and $t$ are well-defined. Now, observe that in order to ensure the well-definedness of the composition map, it is sufficient to consider only the following four cases:
	\begin{itemize}
		\item[(i)] If $\tilde{\Gamma}'$ is obtained from $\Gamma'$ by an equivalence and if $\tilde{\Gamma}$ is obtained from $\Gamma$ by an equivalence, then $\Gamma' \circ \Gamma$ is lazy $\mb{X}$-path thin homotopic to $\tilde{\Gamma}' \circ \tilde{\Gamma}$.
		\item[(ii)] If $\tilde{\Gamma}'$ is obtained from $\Gamma'$ by a thin deformation and if $\tilde{\Gamma}$ is obtained from $\Gamma$ by a thin deformation, then $\Gamma' \circ \Gamma$ is lazy $\mb{X}$-path thin homotopic to $\tilde{\Gamma}' \circ \tilde{\Gamma}$.
		\item[(iii)] If $\tilde{\Gamma}'$ is obtained from $\Gamma'$ by an equivalence and if $\tilde{\Gamma}$ is obtained from $\Gamma$ by a thin deformation, then $\Gamma' \circ \Gamma$ is lazy $\mb{X}$-path thin homotopic to $\tilde{\Gamma}' \circ \tilde{\Gamma}$.
		\item[(iv)]If $\tilde{\Gamma}'$ is obtained from $\Gamma'$ by a thin deformation and if $\tilde{\Gamma}$ is obtained from $\Gamma$ by an equivalence, then $\Gamma' \circ \Gamma$ is lazy $\mb{X}$-path thin homotopic to $\tilde{\Gamma}' \circ \tilde{\Gamma}$.
	\end{itemize}

Case (i):

	From the way the composition is defined, it is clear that successively executing the same operations (See \Cref{Definition: Equivalence of X-paths}) on $\Gamma' \circ \Gamma$,  which were used to obtain $\tilde{\Gamma}$ from $\Gamma$ and $\tilde{\Gamma}'$ from $\Gamma'$,  will produce $\tilde{\Gamma}' \circ \tilde{\Gamma}$.
 
Case (ii):

Note that if $ \lbrace \zeta_{i} \colon [0,1] \ra X_1 \rbrace_{i=0,1...,n}$ and $ \lbrace \zeta_{i}' \colon [0,1] \ra X_1 \rbrace_{i=0,1...,n}$ are thin deformations from $\Gamma$ to $\tilde{\Gamma}$ and $\Gamma'$ to $\tilde{\Gamma}'$ respectively, then $ \lbrace \zeta_0, \zeta_1,...,\zeta_{m-1}, \zeta_0' \circ \zeta_m, \zeta'_1,\zeta'_2,...,\zeta_n \rbrace$ is a thin deformation from $\Gamma' \circ \Gamma$ to $\tilde{\Gamma}' \circ \tilde{\Gamma}$.  	
	
 Case (iii): 
 
 Let $\lbrace \zeta_{i} \colon [0,1] \ra X_1 \rbrace_{i=0,1...,n}$ be a thin deformation from $\Gamma$ to $\tilde{\Gamma}$ and  $\epsilon$ be an equivalence operation on $\Gamma'$ to obtain $\tilde{\Gamma}'$. Then, a thin deformation from $\Gamma' \circ \Gamma$ to $\Gamma' \circ \tilde{\Gamma}$ is given by $d:=\lbrace \zeta_0, \zeta_1,..,\zeta_m, c_{\gamma_0'},c_{\gamma_1'},...,c_{\gamma_n'}   \rbrace $, where $\lbrace c_{\gamma_i'}\rbrace_{i=0,1...n}$ are constant paths in $X_1$ defined by $c_{\gamma_i'}(r)=\gamma_i'$ for all $r \in [0,1]$. Then, we obtain $\tilde{\Gamma}' \circ \tilde{\Gamma}$  by applying the equivalence operation $\epsilon$ on $\Gamma' \circ \tilde{\Gamma}$.
	
Case (iv):

In an exact similar way as in case(iii), we can verify the case(iv).

Associativity of the composition:

	Let $\Gamma, \Gamma', \Gamma''$ be a sequence of three composable lazy $\mb{X}$-paths. To verify the associativity of composition, note that it is sufficient to consider the following three cases:
	\begin{itemize}
		\item[(a)] When $\Gamma$, $\Gamma'$ and $\Gamma''$ are of the form $(\gamma)$,  $(\gamma')$ and $(\gamma'')$ respectively.
		\item[(b)] When $\Gamma$, $\Gamma'$ and $\Gamma''$ are of the form $(\rm{id}, \alpha, \rm{id})$, $(\rm{id}, \alpha', \rm{id})$ and $(\rm{id}, \alpha'', \rm{id})$ respectively, where $\rm{id}$ are the identity morphisms.
		\item[(c)] When $\Gamma$, $\Gamma'$ and $\Gamma''$ are not of the form defined in (a) and (b).
	\end{itemize}
	(a) is a direct consequence of the associativity of the composition in the Lie groupid $\mb{X}$. Since for any three composable paths $\alpha, \beta, \gamma$ with sitting instants in $X_0$, the path  $(\gamma* \beta)* \alpha$ is thin homotopic to $\gamma*(\beta* \alpha)$, hence, (b) follows. (c) follows directly from the definition of composition itself. 
	
	The unit map and the inverse map verifications are immediate. 
	
	Hence we showed $\Pi_{\rm{thin}}(\mb{X})$ is a groupoid.
\end{proof}

\begin{definition} \label{Thin homotopy groupod of a Lie groupoid}
	The Lie groupoid $\Pi_{\rm{thin}}(\mb{X})$ is defined as the \textit{thin fundamental groupoid of the Lie groupoid $\mb{X}$.}		
\end{definition}
The following result ensures that our notion of thin homotopy groupoid of the Lie groupoid is a reasonable generalization of the classical one:

\begin{proposition}\label{Higher analogue of the classical one}
	For any smooth manifold $M$, $\Pi_{\rm{thin}}(M)$  is same as $\Pi_{\rm{thin}}([M \rra M])$. 
\end{proposition}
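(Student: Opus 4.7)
The plan is to construct an isomorphism of groupoids between $\Pi_{\rm{thin}}(M)$ and $\Pi_{\rm{thin}}([M \rra M])$ that is the identity on objects. Since in the discrete Lie groupoid $[M \rra M]$ every morphism is an identity $1_x$ (and $s=t=\mathrm{id}_M$), a lazy $[M\rra M]$-path is nothing but a sequence $\bigl(1_{x_0}, \alpha_1, 1_{x_1}, \ldots, \alpha_n, 1_{x_n}\bigr)$ in which consecutive $\alpha_i$'s are concatenable. I therefore define a functor
\[
\Phi \colon \Pi_{\rm{thin}}(M) \ra \Pi_{\rm{thin}}([M \rra M]), \qquad [\alpha] \mapsto \bigl[(1_{\alpha(0)}, \alpha, 1_{\alpha(1)})\bigr],
\]
and a candidate inverse
\[
\Psi \colon \Pi_{\rm{thin}}([M\rra M]) \ra \Pi_{\rm{thin}}(M), \qquad \bigl[(1_{x_0}, \alpha_1, 1_{x_1}, \ldots, \alpha_n, 1_{x_n})\bigr] \mapsto [\alpha_n * \cdots * \alpha_1],
\]
where $*$ denotes concatenation of lazy paths in $M$. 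The first step is to check that $\Phi$ is well defined: any classical thin homotopy $H\colon \alpha \Rightarrow \beta$ in $M$ gives rise to a thin deformation (\Cref{Definition: Thin deformation}) of the pair of lazy $\mb{X}$-paths $\bigl(1_{\alpha(0)}, \alpha, 1_{\alpha(1)}\bigr)$ and $\bigl(1_{\beta(0)}, \beta, 1_{\beta(1)}\bigr)$ by taking $\zeta_0, \zeta_1$ to be the constant paths at the respective identity morphisms; the rank-one classical thin homotopy condition on $\alpha, \beta$ is then exactly the condition required on the $\alpha_i$'s.

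The second step is the well-definedness of $\Psi$. Here I would analyze each of the three equivalences in \Cref{Definition: Equivalence of X-paths}: equivalence (1) amounts to deleting a constant sub-path of the concatenation (which is a classical thin homotopy); equivalence (2) literally leaves the concatenated path unchanged; and equivalence (3) is vacuous for $[M\rra M]$ since the constraint $s\circ \zeta_i = \alpha_i$ combined with $s = \mathrm{id}$ forces $\zeta_i = \alpha_i$, after which the replacement produces the same path again. For a thin deformation $\{\zeta_i\}$, the condition that $\alpha_i$ is thin homotopic to $\zeta_i^{-1} * \alpha'_i * \zeta_{i-1}$ combined with the standard fact that $\zeta_{i-1}*\zeta_{i-1}^{-1}$ is thin homotopic to a constant path lets me assemble the individual thin homotopies into a thin homotopy
\[
\alpha_n * \cdots * \alpha_1 \;\sim\; \zeta_n^{-1} * \alpha'_n * \alpha'_{n-1} * \cdots * \alpha'_1 * \zeta_0 \;\sim\; \alpha'_n * \cdots * \alpha'_1,
\]
where in the last step I use that $\zeta_0, \zeta_n$ are constant by hypothesis.

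The third step is to check that $\Phi$ and $\Psi$ are mutual inverses. The composition $\Psi\circ \Phi$ is immediately the identity on lazy path classes. For $\Phi\circ \Psi$, the iterated application of equivalence (2) collapses all the intermediate identity arrows of a representative $\bigl(1_{x_0}, \alpha_1, 1_{x_1}, \ldots, \alpha_n, 1_{x_n}\bigr)$ into the single concatenation $\alpha_n * \cdots * \alpha_1$, yielding precisely $\Phi\circ\Psi\bigl[(1_{x_0}, \alpha_1, 1_{x_1}, \ldots, \alpha_n, 1_{x_n})\bigr]$. Finally, I would verify compatibility with the source, target, unit, composition, and inverse maps of the two groupoids; composition in particular is a short computation since $\Phi([\beta])\circ \Phi([\alpha]) = \bigl[(1_{\alpha(0)}, \alpha, 1_{\alpha(1)}, \beta, 1_{\beta(1)})\bigr]$ collapses via equivalence (2) to $\bigl[(1_{\alpha(0)}, \beta*\alpha, 1_{\beta(1)})\bigr] = \Phi([\beta]\circ[\alpha])$.

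I expect the main obstacle to lie in making the piecewise-to-global argument for the thin deformation rigorous while keeping careful track of sitting instants. Specifically, showing that the classical thin homotopies for each $\alpha_i$ can be glued with the cancellations $\zeta_{i-1}*\zeta_{i-1}^{-1}$ into one single thin homotopy (i.e.\ a single smooth $[0,1]^2\to M$ of rank $\leq 1$ with the right sitting-instants behavior) is the technical heart of the proof, and requires either an explicit parametrization or appeal to standard reparametrization lemmas for thin homotopies; all other steps are essentially bookkeeping.
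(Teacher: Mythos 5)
Your proposal is correct and follows essentially the same route as the paper, which simply identifies a point $x$ with $(1_x,c_x,1_x)$ and a path $\alpha$ with $(1_{\alpha(0)},\alpha,1_{\alpha(1)})$ and declares the two thin fundamental groupoids the same; your maps $\Phi$ and $\Psi$ are exactly this identification, spelled out with the well-definedness checks the paper leaves implicit.
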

\begin{proof}
	Since any element $x$ and a path $\alpha$ in a manifold $M$ can respectively be identified with lazy $[M \rra M]$-paths $(1_x, c_x, 1_x)$ and $(1_{\alpha(0)}, \alpha, 1_{\alpha(1)})$ for a constant path $c_x$ at $x$, the $\Pi_{{\rm{thin}}}(M \rra M)$ is same as $\Pi_{{\rm{thin}}}(M)$.
\end{proof}
In the following subsection, we will put a \textit{smooth structure} on the thin fundamental groupoid of a Lie groupoid (\Cref{Thin homotopy groupod of a Lie groupoid}). In particular, we will show that $\Pi_{\rm{thin}}(\mb{X})$ is a diffeological groupoid for any Lie grouopoid $\mb{X}$.

 	\subsection{Smoothness of the thin fundamental groupoid of a Lie groupoid}\label{Subsection Smoothness of thin fundamental groupoid of a Lie groupoid}
Given a Lie groupoid $\mb{X}$, define an infinite sequence of sets as $\lbrace P\mb{X}_n \rbrace_{n \in \mb{N} \cup \lbrace 0 \rbrace}$, where $P\mb{X}_0:=X_1$ and $P\mb{X}_n:= X_1 \times_{t,X_0,ev_0}PX_0 \times_{ev_1,X_0,s}X_1 \times_{t,X_0,ev_0} \cdots \times_{t,X_0,ev_0}PX_0 \times_{ev_1,X_0,s} \times X_1$, for $n \in \mb{N}$. It is obvious from the definition itself that $P\mb{X}_n$ has a natural identification with the set of lazy $\mb{X}$-paths of order $n$ for each $n \in \mb{N}$. Thus, as a set
$P\mb{X}= \cup_{i \in \mb{N} \cup \lbrace 0 \rbrace}P\mb{X}_i=\sqcup_{i \in \mb{N} \cup \lbrace 0 \rbrace}P\mb{X}_i$. 
\begin{proposition}\label{PX is a diffeological space}
	For any Lie groupoid $\mb{X}$, the set of lazy $\mb{X}$-paths $P\mb{X}$ is a diffeological space.
\end{proposition}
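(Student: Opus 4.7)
The plan is to build the diffeology on $P\mb{X}$ in two stages, first for each piece $P\mb{X}_n$ of fixed order and then for the disjoint union. The key observation is that all three ingredients appearing in the definition of $P\mb{X}_n$, namely the manifold $X_1$, the path space $PX_0$, and the fiber product construction, are already available as diffeological spaces from the preliminaries. Specifically, $X_1$ is a smooth manifold and hence carries its canonical diffeology as in the manifold example (\Cref{manifold diffeology}), the space $PX_0$ of lazy paths in the manifold $X_0$ carries the path space diffeology (\Cref{Path space diffeology}), and the structure maps $s, t \colon X_1 \to X_0$ and $ev_0, ev_1 \colon PX_0 \to X_0$ are all maps of diffeological spaces (by smoothness of $s, t$ and by \Cref{Path space diffeology} respectively).

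First, I would treat the base case $P\mb{X}_0 = X_1$, which is a diffeological space by \Cref{manifold diffeology}. For $n \geq 1$, I would construct the diffeology on $P\mb{X}_n$ by iteratively applying the fiber product diffeology from \Cref{Fibre product diffeology}. Concretely, starting from the leftmost pair $X_1 \times_{t, X_0, ev_0} PX_0$ along the diffeological maps $t$ and $ev_0$, this produces a diffeological space; then fiber it with $X_1$ along $ev_1$ and $s$ to get a diffeological space; and continue in this manner alternating between $X_1$ and $PX_0$ factors, each time using the fact that the relevant gluing map is a map of diffeological spaces. After $n$ such steps, $P\mb{X}_n$ is equipped with a canonical diffeology.

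Finally, I would assemble these pieces using the sum diffeology from \Cref{sum diffeology} on the family $\{P\mb{X}_n\}_{n \in \mb{N} \cup \{0\}}$ to obtain a diffeology on the disjoint union $P\mb{X} = \sqcup_{i \in \mb{N} \cup \{0\}} P\mb{X}_i$. Explicitly, a plot $p \colon U \to P\mb{X}$ is declared smooth if and only if every point of $U$ admits an open neighborhood on which $p$ factors through some $P\mb{X}_n$ as a plot of the iterated fiber product diffeology.

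Since each step of the construction is a straightforward application of a named example from the preliminaries, I do not foresee a genuine obstacle here; the proof is essentially an organizational check that the fiber products involved are well-defined within the category of diffeological spaces (which they are, since \textbf{Diffeol} has all limits and the gluing maps $s, t, ev_0, ev_1$ are morphisms of diffeological spaces). The only mild subtlety is bookkeeping: verifying that the diffeology obtained on $P\mb{X}_n$ does not depend on the order in which the successive fiber products are formed, which follows from the universal property of limits in \textbf{Diffeol}.
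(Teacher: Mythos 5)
Your proposal is correct and follows essentially the same route as the paper: the paper also equips each $P\mb{X}_n$ with the fiber product diffeology (using that $s,t,ev_0,ev_1$ are maps of diffeological spaces by \Cref{manifold diffeology} and \Cref{Path space diffeology}) and then puts the sum diffeology of \Cref{sum diffeology} on the disjoint union $P\mb{X}=\sqcup_n P\mb{X}_n$. The only cosmetic difference is that the paper writes the multi-fold fiber product diffeology on $P\mb{X}_n$ directly as a compatibility condition on tuples of plots, rather than building it by iterated binary fiber products as you do.
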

\begin{proof} 
	By \Cref{manifold diffeology} and \Cref{Path space diffeology}, respectively, source-target and evaluation maps are maps of diffeological spaces. Thus, the fiber product diffeology (\Cref{Fibre product diffeology}) ensures that for each $n \in \mb{N}$, $P\mb{X}_n$ is a diffeologial space with diffeology given by
	$D_{P\mb{X}_n}:= \biggl\{(p^0_{X_1},p^{1}_{PX_0},p^1_{X_1}, \cdots,p^{n}_{PX_0},p^n_{X_1}) \in D_{X_1} \times D_{PX_0} \times D_{X_1} \times \cdots \times D_{PX_0} \times D_{X_1}: t \circ p^0_{X_1}=ev_0 \circ p^{1}_{PX_0}, ev_1 \circ p^{1}_{PX_0}=s \circ p^1_{X_1}, \cdots, ev_1 \circ p^{n}_{PX_0} =s \circ p^n_{X_1} \biggr\}$. Then, \Cref{sum diffeology} induces the sum diffeology on $P\mb{X}$.
\end{proof}
\begin{corollary}\label{Diffeology of quotient of Xpath space}
	$\frac{P\mb{X}}{\sim}$ is a diffeological space.	
\end{corollary}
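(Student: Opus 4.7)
The plan is to deduce this as an immediate consequence of the quotient diffeology construction (\Cref{quotient diffeology}) applied to the diffeological space $P\mb{X}$ established in \Cref{PX is a diffeological space}. First I would note that lazy $\mb{X}$-path thin homotopy, as formulated in \Cref{Definition: Thin homotopy of X-paths}, is the equivalence relation on the set $P\mb{X}$ generated by the operations in \Cref{Definition: Equivalence of X-paths} together with the thin deformations of \Cref{thin homotopy of X-paths is an equivalence relation}; in particular, $\sim$ is a well-defined equivalence relation on the underlying set of the diffeological space $P\mb{X}$.

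Next, I would invoke \Cref{quotient diffeology}, which endows the set $\frac{P\mb{X}}{\sim}$ with the canonical quotient diffeology
\[
D_{\frac{P\mb{X}}{\sim}} = \bigl\{\, p\colon U \to \tfrac{P\mb{X}}{\sim} \,\big|\, \text{locally around each } u\in U,\, p = q\circ \bar{p} \text{ for some } \bar{p}\in D_{P\mb{X}}\,\bigr\},
\]
where $q\colon P\mb{X}\to \frac{P\mb{X}}{\sim}$ is the quotient map. By the very construction of this diffeology, the three defining axioms of a diffeology (covering by constants, stability under smooth reparametrization, and the local-to-global condition) are automatically satisfied. Hence $\bigl(\frac{P\mb{X}}{\sim}, D_{\frac{P\mb{X}}{\sim}}\bigr)$ is a diffeological space.

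This is essentially a one-line corollary, and there is no substantive obstacle to overcome: the entire work has been done in \Cref{PX is a diffeological space} (where the sum-of-fiber-product diffeology on $P\mb{X}$ was constructed) and in \Cref{quotient diffeology} (which guarantees that quotients of diffeological spaces by arbitrary equivalence relations are again diffeological). The only thing worth highlighting is that, unlike in the manifold category, we do not need any regularity hypothesis on the equivalence relation $\sim$ for the quotient to carry a natural smooth structure — this is precisely one of the advantages of the diffeological framework, and is what motivates its use in the present context where $\frac{P\mb{X}}{\sim}$ is expected to be highly singular.
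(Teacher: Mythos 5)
Your proposal is correct and follows exactly the paper's own argument: the corollary is deduced by equipping $\frac{P\mb{X}}{\sim}$ with the quotient diffeology of \Cref{quotient diffeology} applied to the diffeological space $P\mb{X}$ from \Cref{PX is a diffeological space}. Your additional remarks (that $\sim$ is a genuine equivalence relation and that no regularity hypothesis is needed in the diffeological setting) are accurate but not required beyond the one-line deduction the paper gives.
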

\begin{proof}
	An immediate consequence of \Cref{PX is a diffeological space} and \Cref{quotient diffeology}.
\end{proof}
\begin{lemma}\label{composition of PX}
	For any Lie groupoid $\mb{X}$, 
	\begin{itemize}
		\item[(a)] the multiplication map 
		\begin{equation}\nonumber
			\begin{split}
				\tilde{m} \colon  & P\mb{X} \times_{s,X_0,t} P\mb{X} \ra P\mb{X}\\
				& \Big((\gamma'_0, \alpha'_1, \cdots,\alpha'_n, \gamma'_n), (\gamma_0, \alpha_1, \cdots,\alpha_m, \gamma_m) \Big) \mapsto (\gamma_0, \alpha_1, \cdots,\alpha_m, \gamma'_0 \circ  \gamma_m, \alpha'_1, \cdots,\alpha'_n, \gamma'_n),
			\end{split}
		\end{equation}
		\item[(b)] the unit map 
		\begin{equation}\nonumber
			\begin{split}
				\tilde{u} \colon & X_0 \ra P\mb{X}\\
				& x \mapsto (1_x,c_x,1_x),	
			\end{split}
		\end{equation}
		where $c_x$ is the constant path at $x$,
		\item[(c)] the inverse map
		\begin{equation}\nonumber
			\begin{split}
				\tilde{\mathfrak{i}} \colon & P\mb{X} \ra P\mb{X}\\
				& (\gamma_0, \alpha_1,\gamma_1, \cdots,\gamma_{n-1},\alpha_n, \gamma_n) \mapsto (\gamma_n^{-1}, \alpha^{-1}_{n},\gamma^{-1}_{n-1}, \cdots,\gamma^{-1}_1, \alpha^{-1}_1, \gamma^{-1}_0),
			\end{split}
		\end{equation}
	\end{itemize}
	are maps of diffeological spaces, (see \Cref{Notation conventions}).		
\end{lemma}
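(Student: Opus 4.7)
\textbf{Proof plan for Lemma~\ref{composition of PX}.}

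The plan is to use the explicit description of the diffeology on $P\mb{X}$ given in the proof of \Cref{PX is a diffeological space}. Recall that $P\mb{X}=\sqcup_{n\in\mb{N}\cup\{0\}}P\mb{X}_n$ carries the sum diffeology (\Cref{sum diffeology}) of the fiber-product diffeologies on each component $P\mb{X}_n$. By the very definition of the sum diffeology, any plot $p\colon U\to P\mb{X}$ is locally (on a cover of $U$) a plot into some fixed $P\mb{X}_n$. Moreover, the fiber-product diffeology (\Cref{Fibre product diffeology}) characterises its plots as tuples $(p^0_{X_1},p^1_{PX_0},\dots,p^n_{X_1})$ of component plots satisfying the obvious matching conditions with the source, target, and evaluation maps, all of which are maps of diffeological spaces. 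Consequently, for each of the three maps $\tilde m$, $\tilde u$, $\tilde{\mathfrak i}$, it suffices to check smoothness after restricting the source to a single fiber-product stratum, and then to check smoothness of each component separately (using \Cref{Technical 1} to pass through the quotient/fiber-product universal property when needed).

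For part (a), the map $\tilde m$ restricts to $P\mb{X}_n\times_{s,X_0,t}P\mb{X}_m\to P\mb{X}_{n+m}$. On a plot, all but one component are literally the identity on the constituent plots $p^i_{X_1}$, $p^j_{PX_0}$; the single nontrivial component replaces the pair $(p'{}^0_{X_1},p^m_{X_1})$ of $X_1$-valued plots (which are composable by the fiber-product condition) by $m_{\mb{X}}\circ(p'{}^0_{X_1},p^m_{X_1})$, where $m_{\mb{X}}\colon X_1\times_{s,X_0,t}X_1\to X_1$ is the Lie groupoid composition. Since $m_{\mb{X}}$ is smooth (as $\mb{X}$ is a Lie groupoid) and smooth maps between manifolds are maps of diffeological spaces (\Cref{manifold diffeology}), the resulting tuple is again a fiber-product plot, so $\tilde m$ is smooth.

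For part (b), the map $\tilde u$ lands in $P\mb{X}_1$, so its smoothness reduces by the fiber-product description to smoothness of the three components $x\mapsto 1_x$, $x\mapsto c_x$, $x\mapsto 1_x$. The outer two are the unit $u_{\mb{X}}\colon X_0\to X_1$ of the Lie groupoid, which is smooth. The middle one is the constant-path map $X_0\to PX_0$, whose smoothness as a diffeological map is immediate: composing with a plot $p\colon U\to X_0$ produces the function $(u,t)\mapsto p(u)$ on $U\times[0,1]$, which is clearly smooth (and constant in $t$, hence trivially a path with sitting instants). For part (c), the map $\tilde{\mathfrak i}$ restricts to $P\mb{X}_n\to P\mb{X}_n$, and component-wise it is built from the Lie-groupoid inverse $\mathfrak i\colon X_1\to X_1$ (smooth, a diffeomorphism) and the path-reversal $PX_0\to PX_0$, $\alpha\mapsto\alpha^{-1}$; the latter is smooth because for any plot $p\colon U\to PX_0$ the associated map $U\times[0,1]\to X_0$ is precomposed with the diffeomorphism $(u,t)\mapsto(u,1-t)$.

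The only genuinely subtle point is the handling of the sum diffeology together with the matching conditions in the fiber product: one must verify that after restricting to a single stratum and substituting the outputs of the smooth Lie-groupoid maps, the source--target matching conditions required for the output to lie in the appropriate $P\mb{X}_k$ are automatically preserved. This is a bookkeeping check that follows from the fact that $m_{\mb{X}}$ and $\mathfrak i_{\mb{X}}$ are functorial (so source and target of the composite/inverse are determined correctly) together with the corresponding compatibility with evaluation-at-endpoints for paths with sitting instants. Once this is in place, the smoothness of $\tilde m$, $\tilde u$, $\tilde{\mathfrak i}$ as maps of diffeological spaces follows.
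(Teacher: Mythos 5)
Your proposal is correct and takes essentially the same route as the paper's own proof: localize a plot to a single stratum $P\mb{X}_n$ using the sum diffeology, then note that $\tilde m$, $\tilde u$, $\tilde{\mathfrak i}$ act componentwise via the smooth structure maps of $\mb{X}$ together with the constant-path and path-reversal operations on $PX_0$, so plots are sent to plots (the paper simply records this componentwise step as immediate). The only cosmetic difference is your parenthetical appeal to \Cref{Technical 1}, which is unnecessary here since no quotient diffeology is involved, but this does not affect the argument.
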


\begin{proof}:
	Suppose $(p,p') \colon U \ra P\mb{X} \times_{s,X_0,t} P\mb{X}$ is a plot in $P\mb{X} \times_{s,X_0,t} P\mb{X}$ and $x \in U$. By the definition of sum diffeolgy (\Cref{sum diffeology}), there exist open neighbourhoods $U^{n}_x$ and $U^{n'}_x$ of $x$ and indexes $n,n' \in \mb{N} \cup \lbrace 0 \rbrace$ such that $p|_{U^{n}_x} \in D_{P\mb{X}_n}$ and $p|_{U^{n'}_x} \in D_{P\mb{X}_{n'}}$. Thus, by the definition of $\tilde{m}$, it is obvious that $ \big(m \circ (p,p') \big)|_{U_x} \in D_{P\mb{X}_{n+n'}}$, where $U_x=U^{n'}_x \cap U^{n}_x$, and hence, $\tilde{m}$ is smooth, and this proves (a). \\(b) and (c) can be proved using similar techniques as we used in the proof of (a). 
\end{proof}

The following proposition shows that the thin fundamental groupoid of a Lie groupoid (\Cref{Thin homotopy groupod of a Lie groupoid}) is a diffeological groupoid (\Cref{Definition: Diffeological groupoid}).
\begin{proposition}\label{Thin fundamental groupoid of a Lie groupoid is a diffeological groupoid}
	For any Lie groupoid $\mb{X}$, $\Pi_{\rm{thin}}(\mb{X})$ (\Cref{Thin homotopy groupod of a Lie groupoid}) is a diffeological groupoid.
\end{proposition}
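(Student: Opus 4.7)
The plan is to verify that the object set $X_0$ and the morphism set $\frac{P\mb{X}}{\sim}$ of $\Pi_{\rm{thin}}(\mb{X})$ are diffeological spaces, and then show that each of the five structure maps (source, target, unit, inverse, composition) is a map of diffeological spaces. The first condition is already taken care of: $X_0$ carries its manifold diffeology (\Cref{manifold diffeology}), and $\frac{P\mb{X}}{\sim}$ is a diffeological space by \Cref{Diffeology of quotient of Xpath space}, where the diffeology is the quotient of the sum-of-fibered-product diffeology on $P\mb{X}$ coming from \Cref{PX is a diffeological space}. Throughout, I will denote the quotient map by $q \colon P\mb{X} \ra \frac{P\mb{X}}{\sim}$.

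For the source and target maps $s,t \colon \frac{P\mb{X}}{\sim} \ra X_0$, I would apply \Cref{Technical 1}: it suffices to check that $s \circ q$ and $t \circ q$ are maps of diffeological spaces from $P\mb{X}$ to $X_0$. But on a plot landing in a single component $P\mb{X}_n$, these compositions reduce to the smooth maps $\rm{pr}_{X_1} \circ \rm{pr}_{\textrm{first factor}}$ followed by the source (respectively target) map of $\mb{X}$, which are smooth by construction. The unit map $u \colon X_0 \ra \frac{P\mb{X}}{\sim}$ factors as $q \circ \tilde{u}$, where $\tilde{u}$ was shown to be a map of diffeological spaces in \Cref{composition of PX}(b), and $q$ is trivially smooth. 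Similarly, the inverse $\mathfrak{i} \colon \frac{P\mb{X}}{\sim} \ra \frac{P\mb{X}}{\sim}$, via \Cref{Technical 1}, is smooth because $\mathfrak{i} \circ q = q \circ \tilde{\mathfrak{i}}$, and the right-hand side is a composition of maps of diffeological spaces by \Cref{composition of PX}(c).

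The main obstacle, and the step requiring the most care, is the smoothness of the composition map $m \colon \frac{P\mb{X}}{\sim} \times_{s, X_0, t} \frac{P\mb{X}}{\sim} \ra \frac{P\mb{X}}{\sim}$. Here the fiber-product diffeology is taken in $\mathbf{Diffeol}$ as in \Cref{Fibre product diffeology}. My approach is to show that the canonical map
\begin{equation}\nonumber
Q \colon P\mb{X} \times_{s, X_0, t} P\mb{X} \ra \frac{P\mb{X}}{\sim} \times_{s, X_0, t} \frac{P\mb{X}}{\sim}, \quad (\Gamma', \Gamma) \mapsto \bigl([\Gamma'], [\Gamma]\bigr),
\end{equation}
is itself a quotient map of diffeological spaces, after which \Cref{Technical 1} reduces the smoothness of $m$ to the smoothness of $m \circ Q = q \circ \tilde{m}$; this latter composition is smooth by \Cref{composition of PX}(a). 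To see that $Q$ is a quotient map, I would take any plot $(p_1, p_2) \colon U \ra \frac{P\mb{X}}{\sim} \times_{s,X_0,t} \frac{P\mb{X}}{\sim}$, localize around a point $u \in U$ using the definition of the quotient diffeology (\Cref{quotient diffeology}) applied separately to $p_1$ and $p_2$, and note that the compatibility $s \circ p_1 = t \circ p_2$ lifts on a (possibly smaller) neighbourhood to the fibered product because the source and target maps of $\mb{X}$ are submersions. This produces the required local lift of the plot to $P\mb{X} \times_{s,X_0,t} P\mb{X}$.

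Combining these verifications, all five structure maps are maps of diffeological spaces, and hence by \Cref{Definition: Diffeological groupoid}, $\Pi_{\rm{thin}}(\mb{X})$ is a diffeological groupoid. The key technical points I expect to spend time on are (i) unpacking the sum diffeology on $P\mb{X}$ so that plots can be handled component-wise in each $P\mb{X}_n$, and (ii) the verification that $Q$ is a quotient map of diffeological spaces, which is the only nontrivial piece since the other structure maps descend directly via the quotient and \Cref{composition of PX}.
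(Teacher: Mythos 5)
Your overall route is essentially the paper's: source and target via \Cref{Technical 1}, unit and inverse via the commutation $u=q\circ\tilde u$ and $\mathfrak i\circ q=q\circ\tilde{\mathfrak i}$ using \Cref{composition of PX}, and composition by locally lifting a plot of $\frac{P\mb X}{\sim}\times_{s,X_0,t}\frac{P\mb X}{\sim}$ to $P\mb X\times_{s,X_0,t}P\mb X$ and using $m\circ Q=q\circ\tilde m$; your explicit packaging of this last step as ``$Q$ is a quotient (subduction)'' is just the paper's argument made slightly more structural. The one place your justification goes astray is the claim that the local lifts $(\bar p_1,\bar p_2)$ land in the fibered product ``because the source and target maps of $\mb X$ are submersions.'' Submersivity of $s,t\colon X_1\to X_0$ is irrelevant here, and as stated it would not give you the exact equality $s\circ\bar p_1=t\circ\bar p_2$ that membership in the fibered product requires (a submersion argument could at best produce nearby sections, not force your already chosen lifts to match). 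The correct and much simpler reason is that the source and target of a lazy $\mb X$-path are invariant under lazy $\mb X$-path thin homotopy, so the source and target maps of $\Pi_{\rm thin}(\mb X)$ satisfy $s\circ q=\tilde s$ and $t\circ q=\tilde t$ on $P\mb X$; hence for any local lifts one gets $\tilde s\circ\bar p_1=s\circ p_1=t\circ p_2=\tilde t\circ\bar p_2$ on the nose, with no shrinking of the neighbourhood and no hypothesis on $\mb X$ beyond what is already built into \Cref{Propostioni:thin fundamental groupoid of a Lie groupoid}. With that substitution your proof is complete and agrees with the paper's.
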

\begin{proof}
	As we have already shown, $\frac{P\mb{X}}{\sim}$ is a diffeological space (\Cref{Diffeology of quotient of Xpath space}), the only thing that is left to be shown is the smoothness of the structure maps, i.e., the structure maps descent to maps of diffeological spaces.
	
	\Cref{Technical 1} ensures that the source-target are maps of diffeologial spaces. Now, suppose $(p_1,p_2) \colon U \ra \frac{P {X}}{\sim} \times_{s,X_0,t} \frac{P \mb{X}}{\sim}$ is a plot of $\frac{P \mb{X}}{\sim} \times_{s,X_0,t} \frac{P \mb{X}}{\sim}$. Thus, by the definition of quotient diffeology, there is a cover $ \lbrace U_i \rbrace$ of $U$ such that for each $i$, we have 
	\begin{itemize}
		\item a plot $\bar{p}^{i}_1 \colon U_i \ra P \mb{X}$ and $q \circ \bar{p}^{i}_1 =p_1|_{U_i}$ and
		\item  a plot $\bar{p}^{i}_2 \colon U_i \ra P\mb{X}$ and $q \circ \bar{p}^{i}_2 =p_2|_{U_i}$,
	\end{itemize}
	where $q$ is the quotient map. Hence,  it is clear that $(\bar{p}^{i}_1, \bar{p}^{i}_2 ) \colon U_i \ra P\mb{X} \times_{s,X_0,t} P\mb{X}$ is a plot of $P\mb{X} \times_{s,X_0,t} P\mb{X}$. Then, the  commutativity of the diagram below
	\[
	\begin{tikzcd}
		P\mb{X} \times_{s,X_0,t} P\mb{X} \arrow[d, "{(q,q)}"'] \arrow[r, "\tilde{m}"] & P\mb{X} \arrow[d, "{q}"] \\
		\frac{P \mb{X}}{\sim} \times_{s,X_0,t} \frac{P \mb{X}}{\sim} \arrow[r, "m"']                & \frac{P \mb{X}}{\sim}               
	\end{tikzcd}\]
	guarantees that the composition is a map of diffeological spaces. Here, $\tilde{m}$ is the multiplication map defined in \Cref{composition of PX}.
	
	The smoothness of the unit map and the inverse map can be verified in a similar fashion using, respectively, the commutativity of the following diagrams: 
	
	\[
	\begin{tikzcd}
		X_0 \arrow[d, "{\rm{id}}"'] \arrow[r, "\tilde{u}"] & P\mb{X} \arrow[d, "{q}"] \\
		X_0 \arrow[r, "u"']                & \frac{P \mb{X}}{\sim}               
	\end{tikzcd}\]  
	
	and
	
	\[
	\begin{tikzcd}
		P\mb{X} \arrow[d, "{\rm{q}}"'] \arrow[r, "\tilde{i}"] & P\mb{X} \arrow[d, "{q}"] \\
		\frac{P \mb{X}}{\sim} \arrow[r, "i"']                & \frac{P \mb{X}}{\sim}               
	\end{tikzcd},\]  
	
	where $\tilde{u}$ and $\tilde{i}$  are the maps defined in \Cref{composition of PX}.
	
	Hence, $\Pi_{\rm{thin}}(\mb{X})$ is a diffeological groupoid.

\end{proof}

We end the section with a simple observation:

\begin{lemma}\label{Morphism of thin homotopy groupoid}
	Any morphism of Lie groupoids $F \colon \mb{Y} \ra \mb{X}$ induces a morphism of diffeological groupoids (\Cref{Definition:Morphism of diffeological grouopoids}) between the respective thin fundamental groupoids, $$F_{\rm{thin}} \colon \Pi_{\rm{thin}}(\mb{Y}) \ra \Pi_{\rm{thin}}(\mb{X}),$$ defined as $y \mapsto F(y)$ for each $y \in Y_0$ and a class of lazy $\mb{Y}$-path $[\Gamma]:=[(\gamma_0, \alpha_1, \gamma_1, \cdots \alpha_n,\gamma_n)]$ goes to the class of lazy $\mb{X}$-path $[F(\Gamma)]:=[ \big( F(\gamma_0), F \circ \alpha_1, F(\gamma_1), \cdots ,F \circ \alpha_n, F(\gamma_n) \big)]$. 
\end{lemma}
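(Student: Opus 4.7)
The plan is to verify in order: (i) well-definedness on lazy $\mathbb{Y}$-path thin homotopy classes, (ii) functoriality, and (iii) smoothness with respect to the diffeologies of Corollary~\ref{Diffeology of quotient of Xpath space}. The definition of $F_{\rm{thin}}$ on objects and morphisms is forced by the statement, so the content lies entirely in these three verifications.

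For well-definedness, I would first define an auxiliary set-theoretic map $\widetilde{F} \colon P\mb{Y} \to P\mb{X}$ by $(\gamma_0, \alpha_1, \dots, \alpha_n, \gamma_n) \mapsto \bigl(F_1(\gamma_0), F_0\circ \alpha_1, \dots, F_0 \circ \alpha_n, F_1(\gamma_n)\bigr)$, and check it sends the generators of the equivalence relation from Definition~\ref{Definition: Thin homotopy of X-paths} to themselves. Each of the three equivalence operations in Definition~\ref{Definition: Equivalence of X-paths} is preserved because $F$ is a functor: a constant path $\alpha_i$ maps to the constant path $F_0 \circ \alpha_i$, an identity $1_{\alpha_i(1)}$ maps to $F_1(1_{\alpha_i(1)}) = 1_{F_0(\alpha_i(1))}$ by unit-preservation, and if $\zeta_i \colon [0,1]\to Y_1$ witnesses the third operation then $F_1 \circ \zeta_i$ witnesses the corresponding operation on $\widetilde{F}(\Gamma)$, using $s \circ F_1 = F_0 \circ s$, $t \circ F_1 = F_0 \circ t$ and compatibility of $F_1$ with composition and inverses in $\mb{Y}$ and $\mb{X}$. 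Thin deformations $\{\zeta_i\}$ in the sense of Definition~\ref{Definition: Thin deformation} are pushed forward to $\{F_1 \circ \zeta_i\}$; the required thin homotopy of the underlying paths in $X_0$ is obtained by postcomposing the given thin homotopy in $Y_0$ with $F_0$ (thin homotopies are preserved by smooth maps since the rank of the differential can only drop under composition). Hence $\widetilde{F}$ descends to $F_{\rm{thin}} \colon \tfrac{P\mb{Y}}{\sim} \to \tfrac{P\mb{X}}{\sim}$.

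Functoriality is then immediate: source and target consistency follow directly from $s\circ F_1 = F_0 \circ s$, $t \circ F_1 = F_0 \circ t$; the composition law in $\Pi_{\rm{thin}}$ (Proposition~\ref{Propostioni:thin fundamental groupoid of a Lie groupoid}) glues two lazy paths by composing their middle arrows in the Lie groupoid, and this gluing commutes with $\widetilde{F}$ because $F_1$ respects composition; unit and inverse compatibility use $F_1(1_y) = 1_{F_0(y)}$ and $F_1(\gamma^{-1}) = F_1(\gamma)^{-1}$.

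For the smoothness clause, by Proposition~\ref{Thin fundamental groupoid of a Lie groupoid is a diffeological groupoid} I would argue through the quotient-diffeology criterion of \Cref{Technical 1}. On the manifold pieces $P\mb{Y}_n$ that assemble into $P\mb{Y}$ (Proposition~\ref{PX is a diffeological space}), the set-theoretic map $\widetilde{F}$ restricts to a map $P\mb{Y}_n \to P\mb{X}_n$ built componentwise out of $F_1$ on the $Y_1$-factors and out of the post-composition map $\alpha \mapsto F_0 \circ \alpha$ on the $PY_0$-factors. The latter is a map of diffeological spaces because $F_0$ is smooth and post-composition preserves the path-space diffeology of \Cref{Path space diffeology}, and the former is smooth since $F_1$ is smooth; the fiber-product and sum diffeologies then give smoothness of $\widetilde{F}\colon P\mb{Y} \to P\mb{X}$. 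Finally, composing with the quotient $P\mb{X} \to \tfrac{P\mb{X}}{\sim}$ and invoking \Cref{Technical 1} for the quotient map $P\mb{Y} \to \tfrac{P\mb{Y}}{\sim}$ shows that $F_{\rm{thin}}$ on morphisms is a map of diffeological spaces, completing the proof. The only mildly delicate step is verifying the third equivalence of Definition~\ref{Definition: Equivalence of X-paths} and the thin deformation case — everything else is bookkeeping from the functoriality of $F$.
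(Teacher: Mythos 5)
Your proposal is correct, and it supplies exactly the verification the paper leaves implicit: the lemma is stated there as a ``simple observation'' with no written proof, and your argument is the natural one built from the machinery already in place. Pushing forward along $F$ and checking the generators of the equivalence relation (the three operations of \Cref{Definition: Equivalence of X-paths} and thin deformations, using that rank cannot increase under postcomposition with $F_0$) handles well-definedness and functoriality, and the smoothness argument via the componentwise smoothness of $\widetilde{F}$ on the pieces $P\mb{Y}_n$ together with the quotient-diffeology criterion of \Cref{Technical 1} is precisely how the paper's own diffeological setup (\Cref{PX is a diffeological space}, \Cref{Thin fundamental groupoid of a Lie groupoid is a diffeological groupoid}) is meant to be used.
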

Recall, we discussed a similar result for fundamental groupoids in \Cref{Induced morphism between haefliger groupoids}.

\section{Parallel transport on a quasi-principal 2-bundle along a lazy Haefliger path}\label{Parallel transport on a principal 2-bundle over a Lie groupooid}
This section introduces the notion of parallel transport along a lazy Haefliger path (\Cref{Definition:Haefliger path}). We proceed in three steps:

Let $\mb{G}$ be a Lie 2-group. Consider a strict connection $\omega$ on a quasi-principal $\mb{G}$-bundle $(\pi \colon \mb{E} \ra \mb{X} ,\mc{C})$ and a lazy $\mb{X}$-path $\Gamma= (\gamma_0, \alpha_1,\gamma_1, \cdots,\alpha_n, \gamma_n)$. 
\begin{itemize}
	\item[Step 1:] For every element $ x_i\xrightarrow{\gamma_i} y_i$ in $X_1$, we will define a $\mb{G}$-equivariant isomorhism of Lie groupoids $T_{\mc{C}, \pi} \colon \pi^{-1}(y_i) \ra \pi^{-1}(x_i)$ induced by the quasi connection $\mc{C}$.
	\item[Step 2:] For every path $\alpha_i \colon [0,1] \ra X_0$ in $X_0$, we will define a $\mb{G}$-equivariant isomorphism of Lie groupoids $T_{\omega}^{\alpha_i} \colon \pi^{-1}(x'_i) \ra \pi^{-1}(y'_i)$ induced from the strict connection $\omega$, where $x'_i= \alpha_i(0)$ and $y_i'= \alpha_i(1)$.
	\item[Step 3:] We will compose the above $\mb{G}$-equivariant isomorphisms of Lie groupoids successively to get $$T_{(\Gamma, \mc{C}, \omega)}: = T_{\mc{C}, \pi}(\gamma_n^{-1}) \circ  T_{\omega}^{\alpha_n} \circ \cdots \circ T_{\omega}^{\alpha_1} \circ  T_{\mc{C}, \pi}(\gamma_0^{-1}).$$
\end{itemize}
The novelty of this approach lies in showing how the functorial nature of our connection structures sync to the underlying fibrational structure of our principal 2-bundles.

\subsection{Step-1 (Transport along morphisms):}\label{Step1}
To suit our purpose, first, we reinterpret some notions discussed in \Cref{subsection Fibered categories}.

\subsection*{Fibered categories and pseudofunctors- Revisited.}\label{Fibered categories revisted}

Following is a direct consequence of the Axiom of Choice:

\begin{lemma}\label{fibered category in terms of function}
	A category $\pi \colon \mb{E} \ra \mb{X}$ over $\mb{X}$ is a fibered category over $\mb{X}$ (\Cref{Definition of fibered categories}) if and only if there exists a function $\bar{\mc{C}} \colon X_1 \times_{t,X_0,\pi_0}E_0 \ra \rm{Cart}(\mb{E})$ such that $\pi_1 \big( \bar{\mc{C}}(\gamma,p) \big)= \gamma$ and $t \big( \bar{\mc{C}}(\gamma,p)\big)=p$, for all $(\gamma,p) \in X_1 \times_{t,X_0,\pi_0}E_0 $, where $\rm{Cart}(\mb{E})$ is the set of cartesian morphisms in $\mb{E}$.
\end{lemma}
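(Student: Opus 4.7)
The plan is to observe that this lemma is essentially a packaging of the existential condition in the definition of a fibered category (\Cref{Definition of fibered categories}) into a choice function, using the Axiom of Choice. So both directions are almost tautological, and the main work is simply to write out the translation carefully.

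For the forward direction, suppose $\pi \colon \mb{E} \to \mb{X}$ is a fibered category over $\mb{X}$. Then by \Cref{Definition of fibered categories}, for every pair $(\gamma, p) \in X_1 \times_{t, X_0, \pi_0} E_0$ the set
\[
S_{(\gamma, p)} := \bigl\{ \widetilde{\gamma} \in \mathrm{Cart}(\mb{E}) \,:\, \pi_1(\widetilde{\gamma}) = \gamma \text{ and } t(\widetilde{\gamma}) = p \bigr\}
\]
is nonempty. Applying the Axiom of Choice to the family $\{ S_{(\gamma, p)} \}$ indexed by $X_1 \times_{t, X_0, \pi_0} E_0$, one obtains a function $\bar{\mc{C}}$ with the stated property. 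Note that $\bar{\mc{C}}$ need not be unique; any selection works.

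For the converse, suppose such a function $\bar{\mc{C}}$ exists. Then for any morphism $\gamma$ in $\mb{X}$ and any object $p \in E_0$ with $\pi_0(p) = t(\gamma)$, the element $\bar{\mc{C}}(\gamma, p) \in \mathrm{Cart}(\mb{E})$ is by construction a cartesian morphism satisfying $\pi_1(\bar{\mc{C}}(\gamma, p)) = \gamma$ and $t(\bar{\mc{C}}(\gamma, p)) = p$. This is exactly the condition in \Cref{Definition of fibered categories}, so $\pi \colon \mb{E} \to \mb{X}$ is a fibered category over $\mb{X}$.

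There is no real obstacle; the only conceptual point worth emphasising is that a function $\bar{\mc{C}}$ as in the statement is precisely a cleavage in the sense of \Cref{Definition cleavage} (the set-theoretic version of our quasi/categorical connection), and the lemma is therefore the standard observation that the existence of a cleavage is equivalent to being a fibered category. This reformulation sets the stage for Step-1 of the parallel transport construction, where a quasi connection on a principal 2-bundle will play the role of a smooth cleavage and will be used to transport fibres along morphisms of the base Lie groupoid.
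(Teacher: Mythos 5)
Your proof is correct and matches the paper's intent exactly: the paper simply remarks that the lemma is a direct consequence of the Axiom of Choice, and your write-up spells out that selection argument in the forward direction and the tautological converse. Nothing is missing.
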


In particular, when both $\mb{E}$ and $\mb{X}$ are groupoids, we get the following proposition as a direct consequence of \Cref{lemma: Groupoid Cartesian}.	
\begin{lemma}\label{Cartesian morphisms in groupoids}
	If $\pi \colon \mb{E} \ra \mb{X}$ is  a category over $\mb{X}$ such that both $\mb{E}$ and $\mb{X}$ are groupoids, then $\rm{Cart}(\mb{E})= E_1$.
\end{lemma}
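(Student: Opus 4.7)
The plan is to observe that the statement is essentially a direct specialization of \Cref{lemma: Groupoid Cartesian} to the situation where both the total and base categories are groupoids. The inclusion $\rm{Cart}(\mb{E}) \subseteq E_1$ is immediate from the definition, since cartesian morphisms are by definition arrows of $\mb{E}$. So the content of the lemma lies in proving the reverse inclusion $E_1 \subseteq \rm{Cart}(\mb{E})$.

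For the reverse inclusion, I would pick an arbitrary arrow $\delta \in E_1$ and argue as follows. Since $\mb{X}$ is a groupoid by hypothesis, the image $\pi_1(\delta) \in X_1$ is invertible in $\mb{X}$. This places us exactly in the hypothesis of \Cref{lemma: Groupoid Cartesian}, which asserts that under the condition that $\pi_1(\delta)$ is invertible, $\delta$ is cartesian if and only if $\delta$ is invertible in $\mb{E}$. But since $\mb{E}$ is a groupoid, $\delta$ is automatically invertible, so the equivalence gives that $\delta$ is cartesian, i.e. $\delta \in \rm{Cart}(\mb{E})$.

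There is essentially no obstacle here; the whole content of the lemma has been isolated into \Cref{lemma: Groupoid Cartesian}, and what remains is a trivial two-line deduction applying the hypothesis that both $\mb{E}$ and $\mb{X}$ are groupoids. Combining the two inclusions yields the desired equality $\rm{Cart}(\mb{E}) = E_1$.
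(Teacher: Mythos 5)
Your proposal is correct and follows exactly the paper's route: the paper states this lemma as a direct consequence of \Cref{lemma: Groupoid Cartesian}, using that invertibility of $\pi_1(\delta)$ (since $\mb{X}$ is a groupoid) and invertibility of $\delta$ (since $\mb{E}$ is a groupoid) force every arrow to be cartesian. Nothing is missing.
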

Hence, we get the following characterization:

\begin{lemma}\label{Characterisation of a fibered category in terms of sections}
	If $\mb{E}:=[E_1 \rra E_0]$ and $\mb{X}:=[X_1 \rra X_0]$ are groupoids, then the category $\pi \colon \mb{E} \ra \mb{X}$ over $\mb{X}$ is a fibered category over $\mb{X}$ if and only if the map $P \colon E_1 \ra X_1 \times_{s,X_0,\pi_0}E_0$, $\gamma \mapsto (\pi_1(\gamma), s(\gamma))$ admits a section.
\end{lemma}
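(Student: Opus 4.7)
The plan is to reduce the statement to \Cref{fibered category in terms of function} together with \Cref{Cartesian morphisms in groupoids}, using only the groupoid inversion $\mathfrak{i}$ to swap source and target conditions. The only substantive point is that the cartesian lifts produced by \Cref{fibered category in terms of function} are indexed by the pullback $X_1\times_{t,X_0,\pi_0}E_0$ and land with \emph{target} equal to the chosen object $p$, whereas the section of $P$ in the present statement is indexed by $X_1\times_{s,X_0,\pi_0}E_0$ and should land with \emph{source} equal to $p$. Since both $\mb{E}$ and $\mb{X}$ are groupoids, inversion gives a canonical diffeomorphism between these two pullbacks, and this is exactly where the equivalence will come from.

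For the forward implication, assume $\pi\colon\mb{E}\to\mb{X}$ is a fibered category over $\mb{X}$. By \Cref{fibered category in terms of function} combined with \Cref{Cartesian morphisms in groupoids}, there exists a function
\[
\bar{\mathcal{C}}\colon X_1\times_{t,X_0,\pi_0}E_0\longrightarrow E_1
\]
satisfying $\pi_1(\bar{\mathcal{C}}(\gamma,p))=\gamma$ and $t(\bar{\mathcal{C}}(\gamma,p))=p$. I would then define
\[
\sigma\colon X_1\times_{s,X_0,\pi_0}E_0\longrightarrow E_1,\qquad \sigma(\gamma,p):=\bigl(\bar{\mathcal{C}}(\gamma^{-1},p)\bigr)^{-1},
\]
noting that $(\gamma^{-1},p)$ does lie in $X_1\times_{t,X_0,\pi_0}E_0$ precisely because $(\gamma,p)\in X_1\times_{s,X_0,\pi_0}E_0$, i.e.\ $\pi_0(p)=s(\gamma)=t(\gamma^{-1})$. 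A direct check using functoriality of $\pi$ gives $\pi_1(\sigma(\gamma,p))=\gamma$ and $s(\sigma(\gamma,p))=t(\bar{\mathcal{C}}(\gamma^{-1},p))=p$, so $P\circ\sigma=\mathrm{id}$.

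For the reverse implication, suppose $P$ admits a section $\sigma$. I would define
\[
\bar{\mathcal{C}}\colon X_1\times_{t,X_0,\pi_0}E_0\longrightarrow E_1,\qquad \bar{\mathcal{C}}(\gamma,p):=\bigl(\sigma(\gamma^{-1},p)\bigr)^{-1},
\]
and verify $\pi_1(\bar{\mathcal{C}}(\gamma,p))=\gamma$ and $t(\bar{\mathcal{C}}(\gamma,p))=p$ by the same inversion argument. Since $\mb{E}$ and $\mb{X}$ are groupoids, \Cref{Cartesian morphisms in groupoids} tells us that every morphism of $\mb{E}$ is cartesian, so $\bar{\mathcal{C}}$ takes values in $\mathrm{Cart}(\mb{E})$. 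An application of \Cref{fibered category in terms of function} then concludes that $\pi\colon\mb{E}\to\mb{X}$ is a fibered category over $\mb{X}$.

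The argument is essentially bookkeeping with inverses; I do not anticipate any genuine obstacle. The only thing to be careful about is verifying that the pullback condition is preserved under $\gamma\mapsto\gamma^{-1}$, which follows immediately from the groupoid identities $s(\gamma^{-1})=t(\gamma)$ and $t(\gamma^{-1})=s(\gamma)$, and checking that $\sigma$ and $\bar{\mathcal{C}}$ as defined above are mutual inverses of the correspondence (which is not needed for the stated equivalence, but makes the proof cleaner).
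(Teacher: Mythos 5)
Your proposal is correct and follows essentially the same route as the paper's proof: both directions are handled by the inversion trick $\mathcal{C}(\gamma,p)\mapsto\bar{\mathcal{C}}(\gamma^{-1},p)^{-1}$ relating the two pullbacks, combined with \Cref{fibered category in terms of function} and \Cref{Cartesian morphisms in groupoids}. The only difference is that you spell out a couple of verifications (the pullback condition under inversion and the explicit use of \Cref{Cartesian morphisms in groupoids} in the converse) that the paper leaves implicit.
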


\begin{proof}
	If $\pi \colon \mb{E} \ra \mb{X}$ is a fibered category over $\mb{X}$, then by \Cref{fibered category in terms of function} and \Cref{Cartesian morphisms in groupoids}, we have a function $\bar{\mc{C}} \colon X_1 \times_{t,X_0,\pi_0}E_0 \ra E_1$ such that $\pi_1 \big( \bar{\mc{C}}(\gamma,p) \big)= \gamma$ and $t \big( \bar{\mc{C}}(\gamma,p)\big)=p$, for all $(\gamma,p) \in X_1 \times_{t,X_0,\pi_0}E_0 $. Since $\mb{E}$ and  $\mb{X}$ are groupoids, the following map is well defined
	\begin{equation}\nonumber
		\begin{split}
			\mc{C} \colon & X_1 \times_{s,X_0,\pi_0}E_0 \ra E_1\\
			& (\gamma,p) \mapsto \bar{\mc{C}}(\gamma^{-1},p)^{-1}.
		\end{split}
	\end{equation}
	Now, $P \circ \mc{C}(\gamma,p)= \big( \pi_1(\bar{\mc{C}}(\gamma^{-1},p)^{-1}), s(\bar{\mc{C}}(\gamma^{-1},p)^{-1}) \big)= (\gamma,p)$.  Hence, $\mc{C}$ is a section of $P$.
	
	Conversely, let us assume $P$ admits a section $\mc{C} \colon X_1 \times_{s,X_0,\pi_0}E_0 \ra E_1$. Then, consider the function 
	\begin{equation}\nonumber
		\begin{split}
			\bar{\mc{C}} \colon & X_1 \times_{t,X_0,\pi_0}E_0 \ra E_1\\
			& (\gamma,p) \mapsto \mc{C}(\gamma^{-1},p)^{-1}
		\end{split}
	\end{equation}
	Since, $\pi_1 \big( \bar{\mc{C}}(\gamma,p) \big)=\gamma$ and $t\big( \bar{\mc{C}}(\gamma,p) \big)=p$, we get that $\pi \colon \mb{E} \ra \mb{X}$ is a fibered category over $\mb{X}$. 
\end{proof}

The above association defines a pair of one-one correspondences, as we see below:
\begin{proposition}\label{cleavage and section correspondence}
	Let $\mb{E}$ and $\mb{X}$ be groupoids. Then, for any fibered category $\pi \colon \mb{E} \ra \mb{X}$ we have the following one-one correspondences:
	\begin{itemize}
		\item[(i)] the set of cleavages (\Cref{Definition cleavage}) on $\pi \colon \mb{E} \ra \mb{X}$ is in one-one correspondence with the set of sections $\mc{C} \colon X_1 \times_{s,X_0,\pi_0}E_0 \ra E_1$ of the map $P \colon E_1 \ra X_1 \times_{s,X_0,\pi_0}E_0$, $\gamma \mapsto (\pi_1(\gamma), s(\gamma))$,
		\item[(ii)] the set of splitting cleavages (\Cref{Definition splitting cleavage}) on $\pi \colon \mb{E} \ra \mb{X}$ is in one-one correspondence with the set of sections $\mc{C} \colon X_1 \times_{s,X_0,\pi_0}E_0 \ra E_1$ of the map $P \colon E_1 \ra X_1 \times_{s,X_0,\pi_0}E_0$ such that 
		\begin{itemize}
			\item[(a)] $\mathcal{C}(1_x,p)=1_p$  for any $x\in X_0$ and $p\in \pi_{0}^{-1}(x)$;
			\item[(b)] if $(\gamma_2, p_2), (\gamma_1, p_1) \in {s}^{*}E_0$ such that ${s}(\gamma_2)={t}(\gamma_1)$ and $p_2=t\bigl({\mathcal C}(\gamma_1, p_1)\bigr),$ then $\mathcal{C}(\gamma_2 \circ \gamma_1 , p_1)= \mathcal{C}(\gamma_2, p_2) \circ \mathcal{C}(\gamma_1, p_1)$.
		\end{itemize}
	\end{itemize}
\end{proposition}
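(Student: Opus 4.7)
The plan is to leverage the bijection already established in \Cref{Characterisation of a fibered category in terms of sections} and its proof, and then to verify, by direct calculation in the groupoid setting, that the extra axioms on cleavages correspond to the extra axioms (a), (b) imposed on sections. Throughout, the key technical device is that in a groupoid every morphism is invertible and the contravariance of inversion with respect to composition, $(\gamma_2 \circ \gamma_1)^{-1} = \gamma_1^{-1} \circ \gamma_2^{-1}$, lets one convert conditions formulated in the ``target'' convention (as is natural for cleavages) into conditions in the ``source'' convention (as is natural for our sections of $P$).

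For part (i), the plan is as follows. By \Cref{Cartesian morphisms in groupoids} we have $\rm{Cart}(\mb{E}) = E_1$, so a cleavage on $\pi\colon \mb{E}\to\mb{X}$ is exactly a function $K \colon X_1 \times_{t, X_0, \pi_0} E_0 \to E_1$ such that $\pi_1(K(\gamma,p))=\gamma$ and $t(K(\gamma,p))=p$. The assignments $\mc{C}(\gamma,p) := K(\gamma^{-1},p)^{-1}$ and $K(\gamma,p) := \mc{C}(\gamma^{-1},p)^{-1}$ are mutually inverse, and one checks immediately that the source/target/$\pi_1$ conditions are interchanged correctly under groupoid inversion, so that $\mc{C}$ is a section of $P$ if and only if $K$ is a cleavage. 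This gives the desired bijection in (i).

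For part (ii), the plan is to show that this same bijection restricts to an identification of splitting cleavages with those sections satisfying (a) and (b). Condition (a), $\mc{C}(1_x,p) = 1_p$, is immediate, since $1_x^{-1}=1_x$ and $1_p^{-1}=1_p$ force $K(1_x,p)=1_p$ if and only if $\mc{C}(1_x,p)=1_p$. Condition (b) is where the bookkeeping has to be carried out carefully: under $K(\gamma,p) = \mc{C}(\gamma^{-1},p)^{-1}$, the splitting identity
\[
K(\gamma_2 \circ \gamma_1, p_1) = K(\gamma_1, p_1) \circ K(\gamma_2, p_2), \qquad p_2 = s\bigl(K(\gamma_1,p_1)\bigr),
\]
is rewritten, using $(\gamma_2\circ\gamma_1)^{-1} = \gamma_1^{-1}\circ \gamma_2^{-1}$ and $(\eta\circ\zeta)^{-1}=\zeta^{-1}\circ\eta^{-1}$ in $\mb{E}$, as
\[
\mc{C}(\gamma_1^{-1}\circ\gamma_2^{-1}, p_1) = \mc{C}(\gamma_2^{-1}, p_2) \circ \mc{C}(\gamma_1^{-1}, p_1).
\]
Relabeling $\gamma_i^{-1} \leftrightarrow \gamma_i$, together with the observation $s(K(\gamma_1,p_1)) = s(\mc{C}(\gamma_1^{-1},p_1)^{-1}) = t(\mc{C}(\gamma_1^{-1},p_1))$, produces exactly condition (b) of the proposition. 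Conversely, running the same computation backwards shows that (a) and (b) together imply that the associated $K$ is a splitting cleavage.

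No conceptual obstacle is anticipated; the entire argument reduces to tracking inverses. The mildly delicate step is (b), where one must make sure that the pair $(\gamma_2,p_2)$ and the compatibility condition $p_2=s(K(\gamma_1,p_1))$ in the cleavage convention translate precisely to $p_2 = t(\mc{C}(\gamma_1,p_1))$ in the section convention, but this is exactly what groupoid inversion delivers.
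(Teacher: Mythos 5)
Your proposal is correct and takes essentially the same route as the paper: the paper's proof consists of the single observation that the two assignments from \Cref{Characterisation of a fibered category in terms of sections}, namely $K(\gamma,p)=\mc{C}(\gamma^{-1},p)^{-1}$ and $\mc{C}(\gamma,p)=K(\gamma^{-1},p)^{-1}$, are mutually inverse, which is precisely the bijection you exploit. Your explicit check that the splitting axioms transfer under groupoid inversion to conditions (a) and (b) is the content the paper leaves implicit (and the composability-consistent reading of \Cref{Definition splitting cleavage} is exactly the one your relabeling lands on), so there is no gap.
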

\begin{proof}
	The proof follows by observing that the association defined in \Cref{Characterisation of a fibered category in terms of sections} are inverse to each other.
\end{proof}

The correspondence \Cref{cleavage and section correspondence} gives a simple yet convenient way of reinterpreting the pseudofunctor $\mc{F} \colon \mc{X}^{{\rm{op}}} \ra {\rm{Cat}}$ associated to a fibered category $\pi \colon \mc{E} \ra \mc{X}$, (as constructed in \Cref{Construction of a pseudofunctor from a fibered category}) when $\mc{E}$ and $\mc{X}$ are groupoids, as we see below:
\begin{proposition}\label{Revisted Pseudo}
	Suppose $\mc{E}:=[E_1 \rra E_0]$ and $\mc{X}:=[X_1 \rra X_0]$  be groupoids. Let $\pi \colon \mc{E} \ra \mc{X}$ be a fibered category over $\mc{X}$ with a cleavage $K$ and let $\mc{C} \colon X_1 \times_{s,X_0,\pi_0}E_0 \ra E_1$ be the associated section of the map $\mc{P} \colon E_1 \ra X_1 \times_{s,X_0,\pi_0}E_0$ corresponding to $K$. Then the pseudofucntor $\mc{F} \colon \mc{X}^{{\rm{op}}} \ra {\rm{Cat}}$  associated to $\pi \colon \mc{E} \ra \mc{X}$ and $K$ is given by the following data:

	\begin{itemize}
		\item[(a)] each $x \in X_0$ is assigned to a groupoid $\mc{F}(x):= \pi^{-1}(x)$,
		\item[(b)]each morphism $x \xrightarrow {\gamma} y$ is assigned  to a functor
		\begin{equation}\nonumber
			\begin{split}
				\mc{F}(\gamma) \colon & \pi^{-1}(y) \ra \pi^{-1}(x)\\
				& p \mapsto t \big(\mc{C}(\gamma^{-1},p) \big);\\
				& (p \xrightarrow[]{\zeta}q) \mapsto \mc{C}(\gamma^{-1},q) \circ \zeta \circ \big(\mc{C}(\gamma^{-1},p) \big)^{-1},
			\end{split}
		\end{equation} 
		\item[(c)]for each $x \in X_0$, we have a natural isomorphism 
		\begin{equation}\nonumber
			\begin{split}
				I_{x} \colon & \mc{F}(1_x) \Longrightarrow 1_{\mc{F}(x)}\\
				& p \mapsto \bigg( t \big(\mc{C}(1_x,p)\big) \xrightarrow[]{\mc{C}(1_x,p)^{-1}}p \bigg),
			\end{split}
		\end{equation}
		\item[(d)]for each pair of composable arrows 
		\begin{tikzcd}
			x \arrow[r, "\gamma_{1}"] & y \arrow[r, "\gamma_2"] & z
		\end{tikzcd},
		we have a natural isomorphism 
		\begin{equation}\nonumber
			\begin{split}
				\alpha_{\gamma_1, \gamma_2} \colon & \mc{F}(\gamma_1) \circ \mc{F}(\gamma_2) \Longrightarrow \mc{F}\gamma_2 \circ \gamma_1)\\
				& p \mapsto \mc{C}(\gamma_1^{-1} \circ \gamma_2^{-1},p) \circ \mc{C}(\gamma_2^{-1},p)^{-1} \circ \mc{C}\big(\gamma_1^{-1}, t(\mc{C}(\gamma_2^{-1},p)) \big)^{-1},
			\end{split}
		\end{equation}
	\end{itemize}
	such that $\alpha_{\gamma_1,\gamma_2}$ and $I_x$ satisfy the necessary coherence laws of \Cref{Coherence diagram 2} and \Cref{Coherence diagram 1} respectively.
\end{proposition}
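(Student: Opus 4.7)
The plan is to prove the proposition by directly translating the classical construction of the pseudofunctor (recalled earlier in the subsection ``Construction of a pseudofunctor from a fibered category'') from cleavage-language $K$ to section-language $\mc{C}$, using the correspondence established in \Cref{cleavage and section correspondence} and \Cref{Characterisation of a fibered category in terms of sections}. That correspondence gives the key substitution formula
\[
K(\gamma, q) \;=\; \mc{C}(\gamma^{-1}, q)^{-1}, \qquad \mc{C}(\gamma, p) \;=\; K(\gamma^{-1}, p)^{-1},
\]
valid because $\mc{E}$ and $\mc{X}$ are groupoids, so every cartesian morphism is invertible (\Cref{lemma: Groupoid Cartesian}, \Cref{Cartesian morphisms in groupoids}). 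All four data items (a)--(d) will then be obtained by plugging this substitution into the original formulas, and the two coherence laws (\Cref{Coherence diagram 1} and \Cref{Coherence diagram 2}) will be inherited automatically from the classical construction.

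Concretely, the steps I would carry out are: \textbf{(i)} For the object assignment, simply take $\mc{F}(x) = \pi^{-1}(x)$, which is identical in both languages. \textbf{(ii)} For an arrow $\gamma \colon x \to y$ in $\mc{X}$ and $p \in \pi^{-1}(y)$, recall that in the $K$-formulation $\mc{F}(\gamma)(p) := s(K(\gamma, p))$; applying the substitution gives $s(\mc{C}(\gamma^{-1}, p)^{-1}) = t(\mc{C}(\gamma^{-1}, p))$, which is the claimed formula. For an arrow $\zeta \colon p \to q$ in $\pi^{-1}(y)$, the classical $\mc{F}(\gamma)(\zeta)$ is uniquely characterised by $K(\gamma, q) \circ \mc{F}(\gamma)(\zeta) = \zeta \circ K(\gamma, p)$; in a groupoid we can solve this as $\mc{F}(\gamma)(\zeta) = K(\gamma, q)^{-1} \circ \zeta \circ K(\gamma, p) = \mc{C}(\gamma^{-1}, q) \circ \zeta \circ \mc{C}(\gamma^{-1}, p)^{-1}$. \textbf{(iii)} For the unit natural isomorphism, the classical $I_x(p) := K(1_x, p)$ becomes $\mc{C}(1_x, p)^{-1}$, a morphism $t(\mc{C}(1_x, p)) \to p$ in $\pi^{-1}(x)$. \textbf{(iv)} For the associator, write out the classical $\alpha_{\gamma_1,\gamma_2}(p)$ as the unique arrow satisfying $K(\gamma_2 \circ \gamma_1, p) \circ \alpha_{\gamma_1,\gamma_2}(p) = K(\gamma_2, p) \circ K\!\bigl(\gamma_1, s(K(\gamma_2, p))\bigr)$; invert in the groupoid and substitute $K = (\mc{C} \circ \mathrm{inv})^{-1}$ to get exactly the stated formula, using that $\gamma_2^{*}(p) = t(\mc{C}(\gamma_2^{-1}, p))$.

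Finally, I would verify that the coherence diagrams \Cref{Coherence diagram 1} and \Cref{Coherence diagram 2} still commute in this rewritten form. However, since the maps $I_x$ and $\alpha_{\gamma_1, \gamma_2}$ are literally the same morphisms in $\mc{E}$ as in the classical construction (only their expressions have been rewritten), these coherence laws transfer automatically; no fresh verification is needed beyond noting that the substitution $K \leftrightarrow \mc{C}$ is a well-defined bijection (\Cref{cleavage and section correspondence}). I do not anticipate any genuine obstacle here: the entire content is a notational reformulation, and the main care required is bookkeeping of sources, targets and inverses under the substitution --- in particular ensuring that $\pi_1(\zeta) = 1_y$ for morphisms $\zeta$ in the fibre $\pi^{-1}(y)$ so that the composites $\mc{C}(\gamma^{-1}, q) \circ \zeta \circ \mc{C}(\gamma^{-1}, p)^{-1}$ indeed lie in $\pi^{-1}(x)$, which follows directly from the defining property $\pi_1(\mc{C}(\gamma^{-1}, -)) = \gamma^{-1}$ of the section $\mc{C}$.
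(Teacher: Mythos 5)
Your proposal is correct and matches the paper's intent exactly: the paper presents this proposition as a direct reinterpretation of the classical pseudofunctor construction via the bijection $K(\gamma,p)=\mc{C}(\gamma^{-1},p)^{-1}$ from \Cref{cleavage and section correspondence}, which is precisely the substitution you carry out, including the correct rewriting of the associator and the observation that the coherence laws transfer unchanged. No gaps.
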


Equipped with the interpretations discussed above (\Cref{Fibered categories revisted}), we are now ready to complete Step-1.

\subsection*{Transport along morphisms:}		
\begin{definition}\label{Lie 2-group torsor}
	Given a Lie 2-group $\mb{G}$, a \textit{$\mb{G}$-torsor} is defined as a Lie groupoid $\mb{X}$ with an action of $\mb{G}$ such that manifolds $X_0$ and $X_1$ are $G_0$-torsor and $G_1$-torsor respectively. 
\end{definition}
Collection of $\mb{G}$-torsors, $\mb{G}$-equivariant morphisms of Lie groupoids (\Cref{Equivariant morphism of Lie gorupoid}) and $\mb{G}$-equivariant natural transformations (\Cref{Equivariant 2-morphism of Lie groupoid}) form a 2-groupoid which we denote by $\mb{G}$-Tor.
\begin{example}
	For a Lie 2-group $\mb{G}$, let  $\pi \colon \mb{E} \ra \mb{X}$ be a principal $\mb{G}$-bundle over a Lie groupoid $\mb{X}$. Then, for any $x \in X_0$, the \textit{fibre} $\pi^{-1}(x):=[\pi_1^{-1}(1_x) \rra \pi_0^{-1}(x_0)]$ is a $\mb{G}$-torsor.
\end{example}

Now, as a consequence of \Cref{Revisted Pseudo}, we get the following:

\begin{proposition}\label{T_C}
	Let $\mb{G}$ be a Lie 2-group. For a quasi-principal $\mb{G}$-bundle $(\pi \colon \mb{E} \ra \mb{X}, \mc{C})$ over a Lie groupoid $\mb{X}$, there is an associated \textit{$\mb{G}$-${\rm{Tor}}$-valued pseudofunctor } $T_{\mc{C}} \colon \mb{X}^{\rm{op}} \ra \mb{G}$-Tor. Explicitly,
	\begin{itemize}
		\item[(a)]each $x \in X_0$ is assigned to the $\mb{G}$-Torsor $T_{\mc{C}}(x):= \pi^{-1}(x)$,
		\item[(b)]each morphism $x \xrightarrow {\gamma} y$ is assigned  to an isomorphism of $\mb{G}$-torsors
		\begin{equation}\label{Pseudomor}
			\begin{split}
				T_{\mc{C}}(\gamma) \colon & \pi^{-1}(y) \ra \pi^{-1}(x)\\
				& p \mapsto \mu_{\mc{C}}(\gamma^{-1},p));\\
				& (p \xrightarrow[]{\zeta}q) \mapsto \mc{C}(\gamma^{-1},q) \circ \zeta \circ \big(\mc{C}(\gamma^{-1},p) \big)^{-1},
			\end{split}
		\end{equation} 
		\item[(c)]for each $x \in X_0$, we have a smooth $\mb{G}$-equivariant natural isomorphism 
		\begin{equation}\label{Pseudonat1}
			\begin{split}
				I_{x} \colon & T_{\mc{C}}(1_x) \Longrightarrow 1_{\pi^{-1}(x)}\\
				& p \mapsto \bigg( \mu_{\mc{C}}(1_x,p) \xrightarrow[]{\mc{C}(1_x,p)^{-1}}p \bigg),
			\end{split}
		\end{equation}
		\item[(d)]for each pair of composable arrows 
		\begin{tikzcd}
			x \arrow[r, "\gamma_{1}"] & y \arrow[r, "\gamma_2"] & z
		\end{tikzcd},
		we have a smooth $\mb{G}$-equivariant natural isomorphism 
		\begin{equation}\label{pseudonat2}
			\begin{split}
				\alpha_{\gamma_1, \gamma_2} \colon & T_{\mc{C}}(\gamma_1) \circ T_{\mc{C}}(\gamma_2) \Longrightarrow T_{\mc{C}}(\gamma_2 \circ \gamma_1)\\
				& p \mapsto \mc{C}(\gamma_1^{-1} \circ \gamma_2^{-1},p) \circ \mc{C}(\gamma_2^{-1},p)^{-1} \circ \mc{C}\big(\gamma_1^{-1}, t(\mc{C}(\gamma_2^{-1},p)) \big)^{-1},
			\end{split}
		\end{equation}
	\end{itemize}
	such that $\alpha_{\gamma_1,\gamma_2}$ and $I_x$ satisfy the necessary coherence laws of \Cref{Coherence diagram 2} and \Cref{Coherence diagram 1} respectively.
\end{proposition}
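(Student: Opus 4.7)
The plan is to reduce Proposition \ref{T_C} to the already-established Proposition \ref{Revisted Pseudo}, and then check the additional enrichment data: $\mb{G}$-equivariance, isomorphism-of-$\mb{G}$-torsors property of the morphism assignments, and smoothness of the natural isomorphisms. The starting observation is that by Proposition \ref{Proposition: Quasi-Cat}, a quasi connection $\mc{C}\colon s^{*}E_0\to E_1$ is, in particular, a set-theoretic section of the map $P\colon E_1\to X_1\times_{s,X_0,\pi_0}E_0$, $\gamma\mapsto(\pi_1(\gamma),s(\gamma))$. Since $\mb{E}$ and $\mb{X}$ are groupoids, Proposition \ref{cleavage and section correspondence} then converts $\mc{C}$ into a cleavage $K_{\mc{C}}$ on the underlying fibered category $\pi\colon \mb{E}\to\mb{X}$, so that Proposition \ref{Revisted Pseudo} immediately provides a pseudofunctor $\mc{F}\colon\mb{X}^{\mathrm{op}}\to\mathrm{Cat}$ defined by exactly the formulas in \eqref{Pseudomor}, \eqref{Pseudonat1}, \eqref{pseudonat2}. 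In particular, the coherence laws \eqref{Coherence diagram 1} and \eqref{Coherence diagram 2} are already verified in that setting. The identification $t\bigl(\mc{C}(\gamma^{-1},p)\bigr)=\mu_{\mc{C}}(\gamma^{-1},p)$ used on objects is just the definition of $\mu_{\mc{C}}$ (see Example \ref{underlying quasi-principal bundle}).

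The remaining content is the enrichment to $\mb{G}$-Tor. First, I would verify that for each $x\in X_0$ the fibre $\pi^{-1}(x)=[\pi_1^{-1}(1_x)\rra\pi_0^{-1}(x)]$ is indeed a $\mb{G}$-torsor: both $\pi_1^{-1}(1_x)$ and $\pi_0^{-1}(x)$ are torsors over $G_1$ and $G_0$ respectively since $\pi_1$ and $\pi_0$ are principal bundles, and the restricted action makes it into a groupoid object of $\mb{G}$-Tor. Then, for each $\gamma\in X_1$, I would check that the functor $T_{\mc{C}}(\gamma)$ in \eqref{Pseudomor} is $\mb{G}$-equivariant: on objects, $T_{\mc{C}}(\gamma)(pg)=\mu_{\mc{C}}(\gamma^{-1},pg)=\mu_{\mc{C}}(\gamma^{-1},p)g=T_{\mc{C}}(\gamma)(p)\, g$, where the middle equality uses the $G_0$-equivariance of $\mc{C}$ (condition (iii) of Definition \ref{Definition:Quasicategorical Connection} combined with application of $t$); on morphisms, the $G_1$-equivariance follows from the $G_0$-equivariance of $\mc{C}$, the identity $\mc{C}(\gamma,p\, g)=\mc{C}(\gamma,p)\cdot 1_g$, and the interchange law \eqref{E:Identitiesechangeinverse}. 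Bijectivity (and hence being an isomorphism of $\mb{G}$-torsors) on both objects and morphisms follows fibrewise, since $\gamma\mapsto\gamma^{-1}$ is a bijection and $\mu_{\mc{C}}(\gamma^{-1},-)$ is its own inverse up to the natural isomorphism $\alpha_{\gamma,\gamma^{-1}}$ (which, by the pseudofunctoriality already obtained, witnesses invertibility up to 2-isomorphism in $\mb{G}$-Tor).

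Next, I would verify the $\mb{G}$-equivariance and smoothness of the natural isomorphisms $I_x$ and $\alpha_{\gamma_1,\gamma_2}$. For $I_x$: the formula $I_x(p)=\mc{C}(1_x,p)^{-1}$ is $G_0$-equivariant because $\mc{C}(1_x,pg)=\mc{C}(1_x,p)\cdot 1_g$, and the Lie-groupoid inverse satisfies $(\delta\cdot 1_g)^{-1}=\delta^{-1}\cdot 1_g$. Smoothness is the assertion that $p\mapsto \mc{C}(1_x,p)^{-1}$ is smooth as a map $\pi_0^{-1}(x)\to\pi_1^{-1}(1_x)$, which follows from the smoothness of $\mc{C}$ and of the groupoid inverse map $\mathfrak i$ on $\mb{E}$. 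For $\alpha_{\gamma_1,\gamma_2}$: the formula in \eqref{pseudonat2} is a composition of three smooth maps (two applications of $\mc{C}$, one inverse), evaluated on $p\in\pi_0^{-1}(z)$, so the map $p\mapsto \alpha_{\gamma_1,\gamma_2}(p)$ is smooth. $\mb{G}$-equivariance again reduces via \eqref{E:Identitiesechangeinverse} and the $G_0$-equivariance of $\mc{C}$ applied at each of the three factors; assembling them using the interchange law yields the desired $G_1$-equivariance.

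The main obstacle, in practice, is bookkeeping: carefully tracking the arguments of $\mc{C}$ after composing with inverses (since we are translating from the general cartesian-arrow language of \ref{Revisted Pseudo}, which uses $\bar{\mc{C}}(\gamma,p)$ with target $p$, to our fixed-source convention $\mc{C}(\gamma,p)$ of Definition \ref{Definition:Quasicategorical Connection}) and verifying that all equivariance checks for $\alpha_{\gamma_1,\gamma_2}$ propagate correctly through the three-fold composition. No deep new input is required beyond the $G_0$-equivariance of $\mc{C}$, the interchange identity \eqref{E:Identitiesechangeinverse}, and the smoothness of the structure maps of $\mb{E}$ and $\mb{G}$; the coherence diagrams \eqref{Coherence diagram 1} and \eqref{Coherence diagram 2} themselves are imported wholesale from Proposition \ref{Revisted Pseudo}.
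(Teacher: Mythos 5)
Your proposal is correct and follows essentially the same route as the paper, which likewise obtains $T_{\mc{C}}$ directly as a consequence of \Cref{Revisted Pseudo} applied (via \Cref{cleavage and section correspondence}) to the cleavage determined by $\mc{C}$, with the $\mb{G}$-Tor enrichment (equivariance, smoothness, torsor isomorphisms) checked just as you outline. The only quibble is a citation slip: the identity $\mc{C}(\gamma,pg)=\mc{C}(\gamma,p)\cdot 1_g$ comes from $\mc{C}$ being a morphism of principal bundles along $u\colon G_0\to G_1$ in \Cref{Definition:Quasicategorical Connection} (which has no ``condition (iii)''), not from the categorical-connection axioms of \Cref{Def:categorical connection}.
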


\begin{definition}\label{transport along morphism}
	For a Lie 2-group $\mb{G}$, let $(\pi \colon \mb{E} \ra \mb{X}, \mc{C})$ be a quasi-principal $\mb{G}$-bundle over a Lie groupoid $\mb{X}$. If $T_{\mc{C}} \colon \mb{X}^{\rm{op}} \ra \mb{G}$-Tor is the associated $\mb{G}$-Tor valued pseudofunctor (as defined in \Cref{T_C}), then given $\gamma \in X_1$, the $\mb{G}$-equivariant isomorphism of Lie groupoids
	\begin{equation}\nonumber
		\begin{split}
			T_{\mc{C}}(\gamma) \colon & \pi^{-1}(y) \ra \pi^{-1}(x)\\
			& p \mapsto \mu_{\mc{C}}(\gamma^{-1},p))\\
			& (\zeta \colon p \ra q) \mapsto \mc{C}(\gamma^{-1},q) \circ \zeta \circ \big(\mc{C}(\gamma^{-1},p) \big)^{-1}
		\end{split}
	\end{equation}
	is defined as the \textit{transport on $(\pi \colon \mb{E} \ra \mb{X}, \mc{C})$ along the morphism $\gamma$}.
\end{definition}

	\subsection{Step-2 (Transport along paths):}\label{Step2}
First, we recall the notational convention we used in \Cref{Parallel transport on a principal bundle} to describe the traditional connection-induced horizontal lift of paths and the corresponding parallel transport map. 

\textbf{Some conventions and notations:}
For a Lie group $G$, let $A$ be a connection on a traditional principal $G$-bundle $\pi: E \ra M$ over a manifold $M$. Then,  given a  smooth path  $\alpha:[0,1] \ra M$, for each point $p \in \pi^{-1}(\alpha(0))$,  we denote the unique horizontal lift of the path $\alpha$ starting from $p$ by $\tilde{\alpha}_{A}^{p}$, and the associated parallel transport map by ${\rm{Tr}}_A^{\alpha} \colon \pi^{-1}(\alpha(0)) \ra \pi^{-1}(\alpha(1))$, see \Cref{Parallel transport aloing a path}.

%First, we fix some notations and conventions for the classical notion of parallel transport on a principal bundle (\Cref{Parallel transport on a principal bundle}).
%
%\textbf{Some conventions and notations:}
%For a Lie group $G$, let $A$ be a connection on a principal $G$-bundle $\pi : E \ra M$ over a manifold $M$. We will denote the unique horizontal lift of a lazy path $\alpha \colon [0,1] \ra M$ starting from a point $p \in \pi^{-1}(\alpha(0))$ by the notation $\tilde{\alpha}_{A}^{p}$ and the induced parallel transport map by $\rm{Tr}_A^{\alpha} \colon \pi^{-1}(\alpha(0)) \ra \pi^{-1}(\alpha(1)), p \mapsto \tilde{\alpha}_{A}^{p}(1)$.
%Then,  given a smooth path  $\alpha:[0,1] \ra M$ with sitting instances, for each point $p \in \pi^{-1}(\alpha(0))$ the unique horizontal lift of the path $\alpha$ will be denoted by $\tilde{\alpha}_{A}^{p}$,  starting from $p$. The associated  parallel transport map will be denoted by $\rm{Tr}_A^{\alpha} \colon \pi^{-1}(\alpha(0)) \ra \pi^{-1}(\alpha(1))$ defined by $\rm{Tr}_A^{\alpha}:=\tilde{\alpha}_{A}^{p}(1)$.

%Using the conventions mentioned above, we observe a consequence of the functorial nature of strict connections (\Cref{strict ans semi strict connetion 1-forms}) below:

The following is a consequence of the underlying functorial nature of our connection structures (\Cref{strict ans semi strict connetion 1-forms}).

\begin{lemma}\label{Lemma: source-target strict transport}
	For a Lie 2-group $\mb{G}$, let $\pi: \mb{E} \ra \mb{X}$ be a prinicpal $\mb{G}$-bundle over a Lie groupoid $\mb{X}$. Any strict connection $\omega: T\mb{E} \ra L(\mb{G})$ induces the follwoing:
	
	For any path $\zeta \colon [0,1] \ra X_1$ and $\alpha:[0,1] \ra X_0$, we have the following identities:
	\begin{itemize}
		\item[(1)] ${\rm{Tr}}_{\omega_0}^{s \circ \zeta}(s (\delta)) = s ({\rm{Tr}}_{\omega_1}^{\zeta}( \delta))$ for each $\delta \in \pi_1^{-1}(\zeta(0))$.
		\item[(2)]  ${\rm{Tr}}_{\omega_0}^{t \circ \zeta}(t (\delta)) = t ({\rm{Tr}}_{\omega_1}^{\zeta}( \delta))$ for each $\delta \in \pi_1^{-1}(\zeta(0))$.
		\item[(3)] ${\rm{Tr}}_{\omega_1}^{u \circ \alpha}(u(p))= u ({{\rm{Tr}}_{\omega_0}^{\alpha}(p)})$ for each $p \in \pi_0^{-1}(\alpha(0))$.
	\end{itemize}
\end{lemma}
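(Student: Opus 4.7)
\medskip

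\noindent\textbf{Proof proposal.} The plan is to exploit the functoriality of the strict connection $\omega=(\omega_1, \omega_0)\colon T\mb{E}\to L(\mb{G})$ together with the uniqueness of horizontal lifts of smooth paths on a classical principal bundle. Recall that $\omega$ being a morphism of Lie groupoids implies in particular the commutation relations
\begin{equation}\nonumber
s_{L(\mb{G})}\circ \omega_1 = \omega_0 \circ s_{T\mb{E}},\quad t_{L(\mb{G})}\circ \omega_1 = \omega_0\circ t_{T\mb{E}},\quad u_{L(\mb{G})}\circ \omega_0=\omega_1\circ u_{T\mb{E}},
\end{equation}
where the subscripts denote the structure maps of the tangent Lie groupoid $T\mb{E}$ (see \Cref{Tangent Lie groupoid}) and those of the Lie 2-algebra $L(\mb{G})$ (see \Cref{Lie 2-algebra as Lie crossed module}). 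These identities, combined with the fact that $\omega_0$ and $\omega_1$ are classical connection 1-forms (see the discussion in \Cref{Prop:Corresconndiffware} and \Cref{strict ans semi strict connetion 1-forms}), will yield the claims almost immediately.

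\smallskip

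To prove \textbf{(1)}, I will begin with the horizontal lift $\tilde{\zeta}_{\omega_1}^{\delta}\colon [0,1]\to E_1$ of $\zeta$ determined by the initial condition $\tilde{\zeta}_{\omega_1}^{\delta}(0)=\delta$ and the horizontality condition $\omega_1\bigl((\tilde{\zeta}_{\omega_1}^{\delta})'(r)\bigr)=0$ for all $r\in [0,1]$, as guaranteed by the unique horizontal path lifting property recalled in \Cref{subsection:Parallel transport of a connection along a path}. Then I will consider the smooth path $s\circ \tilde{\zeta}_{\omega_1}^{\delta}\colon [0,1]\to E_0$ and verify three things: that it starts at $s(\delta)$ (obvious), that it projects under $\pi_0$ to $s\circ \zeta$ (which follows from $\pi\circ \mathbf{s}=s\circ \pi$ at the groupoid level), and that it is horizontal with respect to $\omega_0$. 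The horizontality check is the only non-trivial step and proceeds by the chain rule: $\omega_0\bigl((s\circ \tilde{\zeta}_{\omega_1}^{\delta})'(r)\bigr)=\omega_0\bigl(s_{*}(\tilde{\zeta}_{\omega_1}^{\delta})'(r)\bigr)=s_{L(\mb{G})}\bigl(\omega_1((\tilde{\zeta}_{\omega_1}^{\delta})'(r))\bigr)=s_{L(\mb{G})}(0)=0$, where the middle equality is exactly the source-consistency of $\omega$. By uniqueness of the horizontal lift in the principal $G_0$-bundle $\pi_0\colon E_0\to X_0$, the path $s\circ \tilde{\zeta}_{\omega_1}^{\delta}$ must coincide with $\tilde{(s\circ \zeta)}_{\omega_0}^{s(\delta)}$. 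Evaluating both sides at $r=1$ yields the identity in \textbf{(1)}.

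\smallskip

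Claims \textbf{(2)} and \textbf{(3)} will follow by the same recipe: replace $s$ by $t$ (using the target-consistency of $\omega$) for \textbf{(2)}, and reverse the roles of $E_0$ and $E_1$, starting from the horizontal lift $\tilde{\alpha}_{\omega_0}^{p}$ and pushing it forward via the unit map $u$ (using the unit-consistency of $\omega$) for \textbf{(3)}. I anticipate no genuine obstacle here, as the only analytic content is the traditional existence and uniqueness of horizontal lifts recalled in \Cref{Parallel transport on a principal bundle}; the novelty is purely categorical and amounts to transporting the vanishing of $\omega_1$ (or $\omega_0$) along the respective structure maps of the Lie groupoid $L(\mb{G})$. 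The proof is therefore short and essentially a direct exploitation of the fact that strict connections are, by definition, morphisms of Lie groupoids into the Lie 2-algebra of the structure Lie 2-group.
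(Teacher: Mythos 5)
Your proposal is correct and follows essentially the same route as the paper's proof: reduce each identity to an equality of paths, check that the image of the horizontal lift under the relevant structure map ($s$, $t$, or $u$) has the right initial point, projects to the right base path, and is horizontal by the functoriality of $\omega$, and then conclude by uniqueness of horizontal lifts. The only difference is cosmetic — you spell out the chain-rule computation $\omega_0\bigl(s_*(\cdot)\bigr)=s\bigl(\omega_1(\cdot)\bigr)=0$ that the paper leaves as "immediate from functoriality."
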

\begin{proof}
	Observe that to prove the above identities, it is sufficient to show the following:
	\begin{itemize}
		\item[(i)]$\widetilde{s \circ \zeta}_{\omega_0}^{s(\delta)}= s \circ \tilde{\zeta}_{\omega_1}^{\delta}$;
		\item[(ii)]$\widetilde{t \circ \zeta}_{\omega_0}^{t(\delta)}= t \circ \tilde{\zeta}_{\omega_1}^{\delta}$;
		\item[(iii)]$\widetilde{u \circ \alpha}_{\omega_1}^{1_{p}}= u \circ {\tilde{\alpha}_{\omega_0}^{p}}$.
	\end{itemize}
 
Proof of (i):

	Observe that $s \circ \tilde{\zeta}_{\omega_1}^{\delta}(0)= s(\delta)$ and $\pi_0(s \circ \tilde{\zeta}_{\omega_1}^{\delta})= s(\pi_1(\tilde{\zeta}_{\omega_1}^{\delta}))= s \circ \zeta$.  From the functoriality of $\omega$ it is immediate that for any $r \in [0,1]$, $(s \circ \tilde{\zeta}_{\omega_1}^{\delta})_{*,r}(\frac{d}{dt}|_r) $ is horizontal. Hence, by the uniqueness of horizontal path lifting, we have $\widetilde{s \circ \zeta}_{\omega_0}^{s(\delta)}= s \circ \tilde{\zeta}_{\omega_1}^{\delta}$.
	
	One can prove (ii) and (iii) using exactly similar techniques as in (i).
\end{proof}

As a consequence of \Cref{Lemma: source-target strict transport}, we obtain the following:

\begin{proposition}\label{Proposition: Parallel transport on principal 2-bundles}
	Given a Lie 2-group $\mb{G}$, let $\pi: \mb{E} \ra \mb{X}$ be a prinicpal $\mb{G}$-bundle over a Lie groupoid $\mb{X}$ equipped with a strict connection $\omega$. Then for any given  path $\alpha: [0,1] \ra X_0$, there is a $\mb{G}$-equivariant isomorphism of Lie groupoids
	\begin{equation}\nonumber
		\begin{split}
			T^{\alpha}_{\omega} \colon & \pi^{-1}(x) \ra \pi^{1}(y)\\
			& p \mapsto {\rm{Tr}}_{\omega_0}^\alpha(p)\\
			& \gamma \mapsto {\rm{Tr}}_{\omega_1}^ {u \circ \alpha}(\gamma)
		\end{split}
	\end{equation}
	for all $p \in \pi_{0}^{-1}(x)$ and $\gamma \in \pi_{1}^{-1}(1_{x})$, where $\alpha(0)=x$ and $\alpha(1)=y$.
\end{proposition}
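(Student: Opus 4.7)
The plan is to verify the four properties separately: well-definedness on objects and morphisms, functoriality, $\mb{G}$-equivariance, and invertibility.

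First I would check well-definedness. If $p \in \pi_0^{-1}(x)$, then classical parallel transport on the principal $G_0$-bundle $\pi_0 \colon E_0 \to X_0$ yields ${\rm{Tr}}_{\omega_0}^\alpha(p) \in \pi_0^{-1}(y)$. Similarly, since $u\circ\alpha \colon [0,1] \to X_1$ is a path from $1_x$ to $1_y,$ classical parallel transport on the principal $G_1$-bundle $\pi_1 \colon E_1 \to X_1$ maps $\pi_1^{-1}(1_x)$ to $\pi_1^{-1}(1_y)$. The source and target consistency are then immediate from parts (1) and (2) of \Cref{Lemma: source-target strict transport}: for $\gamma \in \pi_1^{-1}(1_x)$ with source $p$, take $\zeta = u\circ\alpha$, so $s\circ\zeta = \alpha$ and Lemma~\ref{Lemma: source-target strict transport}(1) yields $s({\rm Tr}_{\omega_1}^{u\circ\alpha}(\gamma)) = {\rm Tr}_{\omega_0}^\alpha(s(\gamma))$, and analogously for targets.

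The principal step is checking compatibility with composition in the fiber, and this is where I expect the main obstacle to lie. Let $\gamma_2, \gamma_1 \in \pi_1^{-1}(1_x)$ be composable and consider the horizontal lifts $\widetilde{u\circ\alpha}_{\omega_1}^{\gamma_i} \colon [0,1] \to E_1$. By the source/target identities of \Cref{Lemma: source-target strict transport}, the pointwise pair $\bigl(\widetilde{u\circ\alpha}_{\omega_1}^{\gamma_2}(r),\widetilde{u\circ\alpha}_{\omega_1}^{\gamma_1}(r)\bigr)$ is composable in $\mb{E}$ for every $r$, so the smooth path
\begin{equation*}
r \mapsto \widetilde{u\circ\alpha}_{\omega_1}^{\gamma_2}(r) \circ \widetilde{u\circ\alpha}_{\omega_1}^{\gamma_1}(r)
\end{equation*}
is well-defined, starts at $\gamma_2 \circ \gamma_1$, and projects down to $u\circ\alpha$ via $\pi_1$. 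I would then argue this path is again horizontal with respect to $\omega_1$: its tangent is $m_{*}\bigl(\widetilde{u\circ\alpha}_{\omega_1}^{\gamma_2}{}'(r),\widetilde{u\circ\alpha}_{\omega_1}^{\gamma_1}{}'(r)\bigr)$, and since $\omega\colon T\mb{E}\to L(\mb{G})$ is a morphism of Lie groupoids (\Cref{strict ans semi strict connetion 1-forms}), functoriality gives $\omega_1\circ m_* = m_*\circ(\omega_1\times\omega_1)$ on composable pairs. Applying this to the two horizontal tangent vectors, which both map to $0 \in L(G_1)$, produces $0\circ 0 = 0$ using the composition rule in the Lie $2$-algebra (see \Cref{Lie 2-algebra as Lie crossed module}). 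Uniqueness of horizontal lifts then yields $\widetilde{u\circ\alpha}_{\omega_1}^{\gamma_2\circ\gamma_1}(r) = \widetilde{u\circ\alpha}_{\omega_1}^{\gamma_2}(r) \circ \widetilde{u\circ\alpha}_{\omega_1}^{\gamma_1}(r),$ and evaluation at $r=1$ gives compositionality of $T^\alpha_\omega$. Compatibility with units reduces at once to \Cref{Lemma: source-target strict transport}(3).

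For $\mb{G}$-equivariance, I would simply invoke the classical $G_i$-equivariance of the parallel transport maps on $E_0$ and $E_1$ respectively; since $\mb{G}$-equivariance of a functor on a $\mb{G}$-torsor decomposes into equivariance at the object and morphism levels, this is immediate. Finally, invertibility follows by running the same construction on the reversed path $\alpha^{-1}$: using the classical identity ${\rm Tr}_{\omega_i}^{\alpha^{-1}} = ({\rm Tr}_{\omega_i}^{\alpha})^{-1}$ on both $E_0$ and $E_1$, the functor $T^{\alpha^{-1}}_\omega$ is a two-sided inverse to $T^\alpha_\omega$, completing the proof.
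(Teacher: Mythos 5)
Your proposal is correct and follows essentially the same route as the paper: source, target and unit compatibility are reduced to \Cref{Lemma: source-target strict transport}, compositionality is obtained from functoriality of $\omega$ (and of $\pi$ and the composition map) together with uniqueness of horizontal lifts, and $\mb{G}$-equivariance, smoothness and invertibility are inherited from the classical theory. The only difference is that you write out in full the horizontality argument for the pointwise-composed lift, which the paper's proof of item (d) only sketches.
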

\begin{proof}

	To prove $T^{\alpha}_{\omega}$ is a functor, we need to show the following:
	\begin{itemize}
		\item[(a)] $s(T^{\alpha}_{\omega}(\gamma))= T^{\alpha}_{\omega}(s(\gamma)) $ for all $\gamma \in \pi^{-1}(x)$,
		\item[(b)] $t(T^{\alpha}_{\omega}(\gamma))= T^{\alpha}_{\omega}(t(\gamma)) $ for all $\gamma \in \pi^{-1}(x)$,
		\item[(c)] $T_{\omega}^{\alpha}(u(p))= u(T_{\omega}^{\alpha}(p))$ for all $p \in \pi^{-1}(x)$,
		\item[(d)] $T_{\omega}^{\alpha}(\gamma_2 \circ \gamma_1)= T_{\omega}^{\alpha}(\gamma_2) \circ T_{\omega}^{\alpha}(\gamma_1)$ for all suitable $\gamma_1, \gamma_2 \in \pi^{-1}(x)$.
	\end{itemize}
	Note that to prove (a), (b) and (c) it is sufficient to show $$s({\rm{Tr}}_{\omega_1}^{u \circ \alpha}(\gamma))= {\rm{Tr}} _{\omega_0}^{s( u \circ \alpha)}(s(\gamma)),$$  $$t({\rm{Tr}}_{\omega_1}^{u \circ \alpha}(\gamma))= {\rm{Tr}} _{\omega_0}^{t( u \circ \alpha)}(t(\gamma))$$ and 
	$$ {\rm{Tr}}_{\omega_1}^{u \circ \alpha}(u(p)) = u({\rm{Tr}}_{\omega_0}^{\alpha}(p)),$$ all of which follow  directly from \Cref{Lemma: source-target strict transport}. Whereas to prove (d), it is sufficient to show $$\widetilde{u \circ \alpha}_{\omega_1}^{\gamma_2 \circ \gamma_1} = \widetilde{u \circ \alpha}_{\omega_1}^{\gamma_2} \circ \widetilde{u \circ \alpha}_{\omega_1}^{\gamma_1},$$ which can be established in a precisely similar way as in the proof of \Cref{Lemma: source-target strict transport}, using the functoriality of $\pi$, $\omega$ and the uniqueness of horizontal lifting of paths. Smoothness and $\mb{G}$-equivariance of $T^{\alpha}_{\omega}$ directly follow from the traditional theory (\Cref{Parallel transport on a principal bundle}).
\end{proof} 	
\begin{remark}
	When the Lie 2-group $\mb{G}$ is expressed in terms of a Lie crossed module $(G,H, \tau, \alpha)$ i.e when $\mb{G}= [H \rtimes_{\alpha} G \rra G]$, then the definition of $T^{\alpha}_{\omega}$ in \Cref{Proposition: Parallel transport on principal 2-bundles} can be expessed as
	\begin{equation}\nonumber
		\begin{split}
			T^{\alpha}_{\omega} \colon & \pi^{-1}(x) \ra \pi^{1}(y)\\
			& p \mapsto {\rm{Tr}}_{\omega_0}^\alpha(p)\\
			& \gamma \mapsto 1_{{\rm{Tr}}_{\omega_0}^\alpha(r)} (h,e)
		\end{split}
	\end{equation}
	where $s(\gamma)=r$ and $h$ is the unique element in $H$ such that $\gamma= 1_r (h,e)$.
\end{remark}
\begin{definition}\label{transport along path}
	For a Lie 2 group $\mb{G}$, let $\pi: \mb{E} \ra \mb{X}$ be a prinicpal $\mb{G}$-bundle over a Lie groupoid $\mb{X}$  with a strict connection $\omega$. Then given a path $\alpha: [0,1] \ra X_0$, the associated $\mb{G}$-equivariant isomorphism of Lie groupoids $T^{\alpha}_{\omega} \colon  \pi^{-1}(x) \ra \pi^{1}(y)$ defined in \Cref{Proposition: Parallel transport on principal 2-bundles} is called the \textit{transport of $\pi: \mb{E} \ra \mb{X}$ along the path $\alpha$.}
\end{definition}

\subsection{Step-3 (Definition of parallel transport along a lazy Haefliger path):}\label{Step3}
Combining the results of step 1 and step 2, we are now equipped to define a notion of parallel transport on a quasi-principal 2-bundle along a lazy Haefliger path.
\begin{definition}\label{Definition: Parallel transport of X-paths}
	Let $\mb{G}$ be a Lie 2-group. Given a quasi-principal $\mb{G}$-bundle  $(\pi: \mb{E} \ra \mb{X}, \mc{C})$, a strict connection $\omega$ and a lazy $\mb{X}$-path $\Gamma :=(\gamma_0, \alpha_1, \gamma_1, \cdots, \alpha_n, \gamma_n)$ from $x$ to $y$,  the $\mb{G}$-equivariant isomorphism of Lie groupoids $T_{(\Gamma, \mc{C}, \omega)}: = T_{\mc{C}, \pi}(\gamma_n^{-1}) \circ  T_{\omega}^{\alpha_n} \circ \cdots \circ T_{\omega}^{\alpha_1} \circ  T_{\mc{C}, \pi}(\gamma_0^{-1}) $ is defined as the \textit{$(\mc{C}, \omega)$-parallel transport along the lazy $\mb{X}$-path $\Gamma$.}
\end{definition}
\begin{example}[Classical principal bundle]
	Let $\pi \colon [E \rra E] \ra [M \rra M]$ be a principal $[G \rra G]$ bundle  over a discrete Lie groupoid $[M \rra M]$, equipped with the strict connection $\omega:= (\omega,\omega)$ (\Cref{Classical connection as 2-connection}) and the unique categorical connection $\mc{C}$, $(1_x,p) \mapsto 1_p$ for  all $(p,x)$ satisfying $\pi(p)=x$ (\Cref{unique Cat connection on bundle over discrete space}).  Since any lazy $[M \rra M]$-path $\Gamma$ is of the form $(1_{\alpha(0)}, \alpha, 1_{\alpha(1)})$, we have $T_{(\Gamma, \mc{C}, \omega)}= T^{\alpha}_{\omega}$.
\end{example}
\begin{example}[Principal 2-bundle over a manifold]\label{Parallel transport on a Principal 2-bundle over a manifold}
	
	As mentioned earlier in \Cref{Our 2-bundle and Waldrorf}, for a Lie 2-group $ \mb{G}:=[H \rtimes_{\alpha}G \rra G]$, when our base Lie groupoid is of the form $[M \rra M]$, our principal $\mb{G}$-bundle (\Cref{Definition:principal $2$-bundle over Liegroupoid}) coincides with the definition of a principal $\mb{G}$-bundle over a manifold $M$ as defined in \textbf{Definition 3.1.1}, \cite{MR3894086}. Also, it is known that such principal $\mb{G}$-bundle is of the form $\pi \colon [E \times H \rra E] \rra [M \rra M]$ (\Cref{Ex:EoXodecobundle}). Now, \Cref{Ex:EoXodecobundleconnect} implies that any classical connection $\omega \in \Omega(E, L(G))$ on the principal bundle $E \ra M$ induces a strict connection $\bar{\omega}$ on $\pi \colon [E \times H \rra E] \rra [M \rra M]$. On the other hand, $\omega$ also defines a connection $\tilde{\omega}$ in the sense of Waldorf's \textbf{Definition 5.1.1}, \cite{MR3894086} as shown in \textbf{Example 5.1.11}, \cite{MR3894086}. Now, for an arbitrary lazy $[M \rra M]$-path $\Gamma=(1_{\alpha(0)}, \alpha, 1_{\alpha(1)})$ and a quasi connection  $\mc{C}$ (whose existence is ensured by  \Cref{qusi bundle over discrete groupoid}), consider the $\mb{G}$-equivariant anafunctor $\mc{F}_{ \Gamma, \mc{C}, \bar{\omega}}$ induced by $\mc{T}_{(\Gamma, \mc{C}, \bar{\omega})}$ (see \Cref{Lie 2-group equivariant anafunctor} and for the details on the construction of a $\mb{G}$-equivariant anafunctor from a $\mb{G}$-equivariant morphism of Lie groupoids, see \Cref{Functor induced anafunctor}). Then it follows from \textbf{Section 5.2}, \cite{MR3917427}, that $\mc{F}_{ \Gamma, \mc{C}, \bar{\omega}}$ and the parallel transport $\mb{G}$-equivariant anafunctor associated to the path $\alpha$ in $X_0$ and the conection $\tilde{\omega}$ (as defined in \textbf{Propoition 3.26}, \cite{MR3917427}) are related by a $\mb{G}$-equivaraint transformation of anafunctors (\Cref{Lie 2-group equivariant anafunctor}, and see \textbf{Definition 2.4.1(c)}, \cite{MR3894086} for details).

\end{example}
		\section{Lazy $\mb{X}$-path thin homotopy invariance of the parallel transport}\label{Lazy Xpaththinhomotopy}
The purpose of this section is to establish a lazy $\mb{X}$-path thin homotopy invariance (\Cref{Definition: Thin homotopy of X-paths}) for our parallel transport notion introduced in \Cref{Definition: Parallel transport of X-paths}. The intended invariance will be proved in two steps.

\subsection*{Step-1 (Invariance under equivalences defined in  \Cref{Definition: Equivalence of X-paths})}

\begin{proposition}\label{Proposition: Equivalence invariance of parallel transport}
	For a Lie crossed module $(G,H, \tau, \alpha)$, let $(\pi: \mb{E} \ra \mb{X}, \mc{C})$ be a quasi-principal $\mb{G}:=[H \rtimes_{\alpha} G \rra G]$-bundle with a strict connection $\omega: T\mb{E} \ra L(\mb{G})$. If a lazy $\mb{X}$-path $\Gamma  :=(\gamma_0, \alpha_1,\gamma_1, \cdots,\alpha_n, \gamma_n)$ is equivalent (see \Cref{Definition: Equivalence of X-paths}) to a lazy $\mb{X}$-path $\Gamma'$, then there is a smooth $\mb{G}$-equivariant natural isomorphism between $T_{(\Gamma, \mc{C}, \omega)}$ and $T_{(\Gamma', \mc{C}, \omega)}$.
\end{proposition}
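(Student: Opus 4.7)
The plan is to reduce to the case where $\Gamma'$ differs from $\Gamma$ by a single application of one of the three equivalence operations in \Cref{Definition: Equivalence of X-paths}; transitivity of natural isomorphism will then yield the general statement. In each case we will construct an explicit smooth $\mb{G}$-equivariant natural isomorphism $T_{(\Gamma, \mc{C}, \omega)} \Longrightarrow T_{(\Gamma', \mc{C}, \omega)}$, using the pseudofunctor structure of $T_{\mc{C}}$ from \Cref{T_C} (its unitors $I_x$ and compositors $\alpha_{\gamma_1,\gamma_2}$) together with the functorial properties of the strict connection $\omega$ recorded in \Cref{Lemma: source-target strict transport}.

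For operation (1), if $\alpha_{i+1}$ is constant at some $x \in X_0$, then its horizontal lifts in $E_0$ and $E_1$ are also constant, so by construction $T_{\omega}^{\alpha_{i+1}} = 1_{\pi^{-1}(x)}$. The two parallel transport composites then differ only in that $T_{\mc{C},\pi}(\gamma_{i+1}^{-1}) \circ T_{\mc{C},\pi}(\gamma_i^{-1})$ is replaced by $T_{\mc{C},\pi}((\gamma_{i+1} \circ \gamma_i)^{-1})$, and the needed natural isomorphism is exactly the compositor $\alpha_{\gamma_{i+1}^{-1}, \gamma_i^{-1}}$ of the pseudofunctor $T_{\mc{C}}$, whiskered with the remaining factors. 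For operation (2), if $\gamma_i = 1_{x_i}$, one uses $I_{x_i}: T_{\mc{C}}(1_{x_i}) \Longrightarrow 1_{\pi^{-1}(x_i)}$ from \Cref{T_C} together with the classical concatenation identity ${\rm Tr}^{\alpha_{i+1} * \alpha_i}_{\omega_0} = {\rm Tr}^{\alpha_{i+1}}_{\omega_0} \circ {\rm Tr}^{\alpha_i}_{\omega_0}$, which by \Cref{Lemma: source-target strict transport} lifts to an equality $T_{\omega}^{\alpha_{i+1} * \alpha_i} = T_{\omega}^{\alpha_{i+1}} \circ T_{\omega}^{\alpha_i}$ of $\mb{G}$-equivariant functors between fibres; whiskering $I_{x_i}$ with these gives the desired isomorphism.

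Operation (3) is the main case. The idea is to horizontally lift $\zeta_i : [0,1] \to X_1$ via $\omega_1$. For each $q \in \pi_0^{-1}(\alpha_i(0))$, set $\delta_q := \mc{C}(\zeta_i(0), q) \in E_1$ and form the $\omega_1$-horizontal lift $\widetilde{\zeta_i}^{\delta_q}_{\omega_1}$ of $\zeta_i$ starting at $\delta_q$. By \Cref{Lemma: source-target strict transport}, applying $s$ and $t$ to this lift recovers the classical $\omega_0$-transports along $s \circ \zeta_i$ and $t \circ \zeta_i$, so the endpoint $\widetilde{\zeta_i}^{\delta_q}_{\omega_1}(1)$ supplies, on every object $q$, a morphism of $E_1$ that intertwines the two composite functors. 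This will define the component of the natural isomorphism at $q$; naturality on morphisms of the fibre groupoid follows from the functoriality of $\omega$ together with the interchange law \eqref{E:Identitiesechangeinverse}, while $\mb{G}$-equivariance and smoothness descend from the corresponding properties of $\omega$, $\mc{C}$, and the classical parallel transport.

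The hard part will be operation (3): one must arrange the horizontal lift of $\zeta_i$ and the values $\mc{C}(\zeta_i(0), -), \mc{C}(\zeta_i(1), -)$ so that the resulting morphism in $E_1$ genuinely interpolates the entire composite $T_{\mc{C},\pi}(\gamma_i^{-1}) \circ T_{\omega}^{\alpha_i} \circ T_{\mc{C},\pi}(\gamma_{i-1}^{-1})$ with its replacement $T_{\mc{C},\pi}((\gamma_i \circ \zeta_i(1)^{-1})^{-1}) \circ T_{\omega}^{t \circ \zeta_i} \circ T_{\mc{C},\pi}((\zeta_i(0) \circ \gamma_{i-1})^{-1})$. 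This regrouping will force the introduction of several pseudofunctor compositors $\alpha_{-,-}$ coming from the quasi connection, and organising them coherently is where the bulk of the verification lies; once done, all smoothness and equivariance assertions follow automatically from the building blocks already established in Step 1 and Step 2 of \Cref{Parallel transport on a principal 2-bundle over a Lie groupooid}.
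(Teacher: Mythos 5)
Your strategy coincides with the paper's: you split over the three operations of \Cref{Definition: Equivalence of X-paths}, dispose of operations (1) and (2) with the unitor $I_x$ and compositor $\alpha_{\gamma_1,\gamma_2}$ of the pseudofunctor $T_{\mc{C}}$ from \Cref{T_C} together with \Cref{Lemma: source-target strict transport}, and attack operation (3) by $\omega_1$-horizontally lifting the path $\zeta_i$ in $X_1$ with initial value supplied by the quasi connection; the reduction of operation (3) to the single square comparing $T_{\mc{C}}(\gamma')\circ T_{\omega}^{t\circ\zeta_i}$ with $T_{\omega}^{s\circ\zeta_i}\circ T_{\mc{C}}(\gamma)$ via repeated use of the compositors is also exactly how the paper proceeds (its \Cref{Magic}).

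However, your case (3) has a genuine gap at the decisive step. The endpoint $\widetilde{\zeta_i}^{\,\delta_q}_{\omega_1}(1)$ of your horizontal lift lies in $\pi_1^{-1}\bigl(\zeta_i(1)\bigr)$, i.e.\ it covers $\gamma'$, whereas a component of a natural transformation between the two composite functors must be a morphism of the fibre groupoid over $x'$, i.e.\ an element of $\pi_1^{-1}(1_{x'})$; so that endpoint cannot itself ``intertwine the two composite functors''. You must still divide it by the cartesian lift at the far end, and this division is precisely where the content of the paper's proof lies: it transports $\mc{C}(\gamma^{-1},p)$ along $\mathfrak{i}\circ\zeta_i$, uses freeness of the $G_1$-action to extract the unique $h_p\in H$ with $\mc{C}\bigl(\gamma'^{-1},{\rm{Tr}}^{t\circ\zeta_i}_{\omega_0}(p)\bigr)(h_p,e)={\rm{Tr}}^{\mathfrak{i}\circ\zeta_i}_{\omega_1}\bigl(\mc{C}(\gamma^{-1},p)\bigr)$, and only then defines the component $\eta_p:=1_{\mu_{\mc{C}}(\gamma'^{-1},{\rm{Tr}}^{t\circ\zeta_i}_{\omega_0}(p))}(h_p,e)$, which does lie over $1_{x'}$ (see \Cref{equivalence invariance} and \Cref{Equivalence h}). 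Relatedly, your assertion that smoothness and naturality then ``follow automatically'' is optimistic: smoothness of $p\mapsto\eta_p$ reduces to smoothness of $p\mapsto h_p$, which the paper obtains by factoring through the inverse of the division diffeomorphism $E_1\times(H\rtimes_{\alpha}G)\to E_1\times_{\pi_1,X_1,\pi_1}E_1$ coming from principality of $\pi_1$, and the naturality square is verified by an explicit computation using the Peiffer identities \eqref{E:Peiffer} and the interchange law \eqref{E:Identitiesechangeinverse}. None of this changes your route---it is the paper's route---but until the components are defined as above and these verifications are carried out, case (3), and hence the proposition, is not proved.
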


\begin{proof}
	As the equivalence relation in \Cref{Definition: Equivalence of X-paths} is generated by the operations (1), (2), and (3) of \Cref{Definition: Equivalence of X-paths}, it is sufficient for us to verify the following three cases:
	\begin{itemize}
		\item[(A)] If $\Gamma'$ is obtained from $\Gamma $ by the $(1)$ of \Cref{Definition: Equivalence of X-paths}, then there is a smooth $\mb{G}$-equivariant natural isomorphism between $T_{(\Gamma, \mc{C}, \omega)}$ and $T_{(\Gamma', \mc{C}, \omega)}$.
		\item[(B)] If $\Gamma'$ is obtained from $\Gamma$ by the $(2)$ of \Cref{Definition: Equivalence of X-paths}, then there is a smooth $\mb{G}$-equivariant natural isomorphism between $T_{(\Gamma, \mc{C}, \omega)}$ and $T_{(\Gamma', \mc{C}, \omega)}$.
		\item[(C)] If $\Gamma'$ is obtained from $\Gamma$ by the $(3)$ of \Cref{Definition: Equivalence of X-paths}, then there is a smooth $\mb{G}$-equivariant natural isomorphism between $T_{(\Gamma, \mc{C}, \omega)}$ and $T_{(\Gamma', \mc{C}, \omega)}$.
	\end{itemize}

 Proof of A:

Using notations as in the $(1)$ of \Cref{Definition: Equivalence of X-paths}, consider the diagram below:

 \[\begin{tikzcd}
			\cdot \arrow[r, "\gamma_i"] & \cdot \arrow["\alpha_{i+1}=\rm{constant}"', dotted, loop, distance=2em, in=305, out=235] \arrow[r, "\gamma_{i+1}"] & \cdot
		\end{tikzcd}\]
From the above diagram and \Cref{pseudonat2}, it is clear that there is a smooth $\mb{G}$-equivariant natural isomorphism between $T_{\mc{C}}(\gamma^{-1}_{i+1}) \circ T^{\alpha_i}_{\omega} \circ T_{\mc{C}}(\gamma^{-1}_{i})$ and $T_{\mc{C}}(\gamma^{-1}_{i+1}) \circ T_{\mc{C}}(\gamma^{-1}_{i})$ as $T^{\alpha_i}_{\omega}$ is identity.

Proof of (B):
 
  Using \Cref{Pseudonat1}, one can prove (B) using a similar technique as in the proof of (A).

Proof of (C):
	
	Suppose $\Gamma'$ is obtained from $\Gamma :=(\gamma_0, \alpha_1,\gamma_1, \cdots,\alpha_n, \gamma_n)$  by $(3) $ of \Cref{Definition: Equivalence of X-paths}. That means, given a path $\zeta_i \colon [0,1] \ra X_1$ with sitting instants, such that $s \circ \zeta_i= \alpha_i$, we replace $\alpha_i$ by $t \circ \zeta_i$,  $\gamma_{i-1}$ by $\zeta_i (0) \circ \gamma_{i-1}$ and $\gamma_{i}$ by $\gamma_i \circ (\zeta_i(1))^{-1}$, $i \in \lbrace 1,2, \cdots, n \rbrace$ to obtain $\Gamma'$. We need to show that
	\begin{equation}\label{Magic}
		T_{\omega}^{\alpha_i} \cong T_{\mc{C}}(\gamma') \circ T_{\omega}^{t \circ \zeta_i} \circ T_{\mc{C}}(\gamma^{-1}),
	\end{equation}
	where $\cong$ is a smooth $\mb{G}$-equivariant natural isomorphism, and $\gamma:= x \xrightarrow {\zeta_{i}(0)} y$, $\gamma':= x' \xrightarrow {\zeta_{i}(1)} y'$ are elements of $X_1$. Then, by the repetitive use of \Cref{T_C}, this is equivalent to showing that  given a square
	\begin{equation}\label{Haefliger square}
		\begin{tikzcd}
			x \arrow[r, "s \circ \zeta_i", dotted] \arrow[d, phantom] \arrow[d, "\gamma"'] & x' \arrow[d, "\gamma'"] \arrow[d] \arrow[d] \arrow[d] \\
			y \arrow[r, "t \circ \zeta_i"', dotted]                                   & y'                                             
		\end{tikzcd}
	\end{equation}
	
	the following square
	\[
	\begin{tikzcd}
		\pi_0^{-1}(y) \arrow[r, "T_{\omega}^{t \circ \zeta_i}"] \arrow[d, phantom] \arrow[d, "T_{\mc{C}}(\gamma)"'] & \pi_0^{-1}(y') \arrow[d, "T_{\mc{C}}(\gamma')"] \arrow[d] \arrow[d] \arrow[d] \\
		\pi_0^{-1}(x) \arrow[r, "T_{\omega}^{s \circ \zeta_i}"']                                   & \pi_0^{-1}(x')                                            
	\end{tikzcd}\]
	
	commutes upto a smooth $\mb{G}$-equivariant natural isomorphism. The dotted lines in \Cref{Haefliger square} represent the paths $s \circ \zeta_i: [0,1] \ra X_0$ and $t \circ \zeta_i: [0,1] \ra X_0$. We claim that the following is our desired smooth $\mb{G}$-equivariant natural isomorphism $\eta \colon  T_{\mc{C}}(\gamma') \circ T_{\omega}^{t \circ \zeta_i} \Longrightarrow T_{\omega}^{s \circ \zeta_i} \circ T_{\mc{C}}(\gamma)$:
	\begin{equation}\label{equivalence invariance}
		p \mapsto \eta_{p} := 1_{\mu_{\mc{C}} \big( \gamma'^{-1}, {\rm{Tr}}_{\omega_0}^{t \circ \zeta_i}(p) \big)} (h_p,e),
	\end{equation}
	where $h_p$ is the unique element in $H$ such that
	\begin{equation}\label{Equivalence h}
		\mc{C}\Big( \gamma'^{-1}, {\rm{Tr}}_{\omega_0}^{t \circ \zeta_i}(p) \Big)(h_p,e)= {\rm{Tr}}^{\mathfrak{i} \circ \zeta_{i}}_{\omega_1}\Big(\mc{C}(\gamma^{-1},p)\Big),
	\end{equation}
	where $\mathfrak{i} \colon X_1 \ra X_1$ is the inverse map. Now, observe from the definition of $\eta_p$ that to show the assignment $p \mapsto \eta_p$ is smooth, it is sufficient for us to prove the smoothness of the map $\mc{Q} \colon E_0 \ra H$ defined as $p \mapsto h_p$. The smoothness of $\mc{Q}$ then follows from the following sequence of composable smooth maps:

\[\begin{tikzcd}
\pi_0^{-1}(y) \arrow[r, "\bar{\mc{Q}}"] & E_1 \times_{\pi_1,X_1,\pi_1} E_1 \arrow[r, "\mc{Z}^{-1}"] & E_1 \times (H \rtimes_{\alpha}G) \arrow[r, "{\rm{pr}}_2"] & H \rtimes_{\alpha}G
\end{tikzcd}\]
where, $\bar{\mc{Q}}$ is defined by $p \mapsto \Bigg( \mc{C}\Big( \gamma'^{-1}, {\rm{Tr}}_{\omega_0}^{t \circ \zeta_i}(p) \Big), {\rm{Tr}}^{\mathfrak{i} \circ \zeta_{i}}_{\omega_1}\Big(\mc{C}(\gamma^{-1},p)\Big) \Bigg)$, $\mc{Z}$ is the diffeomorphism as defined  in the (i) of  
\Cref{Definition: Principal G-bundle}, and ${\rm{pr}}_2$ is the usual 2nd projection map.

 The source consistency of $\eta_p$ is clear from the construction. We check the target consistency as follows:
	\begin{equation}\nonumber
		\begin{split}
			\,\,&t(\eta_p)\\
			&= t((1_{\mu_{\mc{C}}(\gamma'^{-1}, {\rm{Tr}}_{\omega_0}^{t \circ \zeta_i}(p))} (h_p,e)))\\
			&= \mu_{\mc{C}}(\gamma'^{-1}, {\rm{Tr}}_{\omega_0}^{t \circ \zeta_i}(p))) \tau(h_p) \\
			&= t\bigg(\mc{C}\Big(\gamma'^{-1}, {\rm{Tr}}_{\omega_0}^{t \circ \zeta_i}(p)\Big)(h_p,e) \bigg)\\
			&= t\bigg({\rm{Tr}}^{\mathfrak{i} \circ \zeta_{i}}_{\omega_1}\Big(\mc{C}(\gamma^{-1},p)\Big)\bigg) \quad [\textit{by  \Cref{Equivalence h}}].
		\end{split}
	\end{equation}
	Thus, using \Cref{Lemma: source-target strict transport}, we get
	\begin{equation}\label{Target}
		t(\eta_p)= t\bigg({\rm{Tr}}^{\mathfrak{i} \circ \zeta_{i}}_{\omega_1} \Big(\mc{C}(\gamma^{-1},p)\Big)\bigg)= {\rm{Tr}}_{\omega_0}^{s \circ \zeta_i}\Big(\mu_{\mc{C}}(\gamma^{-1},p)\Big).
	\end{equation}
 
	$\mb{G}$-equivariance:
 
	Since
	\begin{equation}\label{E11}
		(h_p,e)=1_g(h_{pg},e)1_g^{-1}\,\, [\textit{by \Cref{Equivalence h}}]
	\end{equation}
	and 
	\begin{equation}\nonumber
		\eta_{pg}= 1_{\mu_{\mc{C}}\big(\gamma'^{-1}, {\rm{Tr}}_{\omega_0}^{t \circ \zeta_i}(p)\big)}1_g (h_{pg},e)\,\, [\textit{by \Cref{equivalence invariance},}],
	\end{equation}
	we have
	\begin{equation}\nonumber
		\eta_{pg}=\eta_p1_g\,\, [\textit{by \Cref{E11}}].
	\end{equation}
	
	Verification of the naturality square:
 
	To ensure that $\eta$ satisfies the naturality square, that is for every $  p \xrightarrow {\delta} q \in \pi^{-1}(y)$, 
	%	\begin{equation}\nonumber
		
		$\begin{tikzcd}\label{Natural diagram}
			T_{\mc{C}}(\gamma{\rm{'}}) \circ T_{\omega}^{t \circ \zeta_i}(p) \arrow[r, "\eta_{p}"] \arrow[d, " T_{\mc{C}}(\gamma{\rm{'}}) \circ T_{\omega}^{t \circ \zeta_i}(\delta)"'] & T_{\omega}^{s \circ \zeta_i} \circ T_{\mc{C}}(\gamma)(p) \arrow[d, "T_{\omega}^{s \circ \zeta_i} \circ T_{\mc{C}}(\gamma)(\delta)"] \\
			T_{\mc{C}}(\gamma{\rm{'}}) \circ T_{\omega}^{t \circ \zeta_i}(q) \arrow[r, "\eta_{q}"']                & T_{\omega}^{s \circ \zeta_i} \circ T_{\mc{C}}(\gamma)(q)              
		\end{tikzcd}\,\, {\rm{commutes,}}$
		%		\end{equation}
	\begin{equation}\label{121}
		\eta_q \circ \big( T_{\mc{C}}(\gamma') \circ T_{\omega}^{t \circ \zeta_i}(\delta) \big)= \big(T_{\omega}^{s \circ \zeta_i} \circ T_{\mc{C}}(\gamma)(\delta) \big) \circ \eta_p.
	\end{equation}
	%	that is for every $\delta: p \ra q \in \pi^{-1}(y)$, 
	%	\begin{equation}\label{121}
		%			\eta_q \circ \big( T_{\mc{C}}(\gamma') \circ T_{\omega}^{t \circ \zeta_i}(\delta) \big)= \big(T_{\omega}^{s \circ \zeta_i} \circ T_{\mc{C}}(\gamma)(\delta) \big) \circ \eta_p.
		%		\end{equation}
	As $\delta=1_p(h,e)$ for a unique $h \in H$, we have
	\begin{equation}\label{111}
		q= p \tau(h),
	\end{equation}
	\begin{equation}\label{211}
		T_{\mc{C}}(\gamma') \circ T_{\omega}^{t \circ \zeta_i}(\delta) = 1_{\mu_{C}\big(\gamma'^{-1}, {\rm{Tr}}_{\omega_0}^{t \circ \zeta_i}(p)\big)}(h, e),
	\end{equation} and
	\begin{equation}\label{311}
		{\rm{Tr}}_{\omega_0}^{s \circ \zeta_i} \circ T_{\mc{C}}(\gamma)(\delta)= 1_{{\rm{Tr}}_{\omega_0}^{s \circ \zeta_i}\big(\mu_{\mc{C}}(\gamma^{-1},p)\big)}(h,e) .
	\end{equation} 
	By comparing the left-hand 
	\begin{equation}\nonumber
		\begin{split}
			&\eta_q \circ \big( T_{\mc{C}}(\gamma') \circ T_{\omega}^{t \circ \zeta_i}(\delta) \big)\\
			&= \underbrace{1_{\mu_{\mc{C}}\big(\gamma'^{-1}, {\rm{Tr}}_{\omega_0}^{t \circ \zeta_i}(p)\big)} (h_p,e) \big(e, \tau(h)\big)}_{\quad \eta_q= \eta_p 1_{\tau(h)} [\textit{\Cref{111}}]} \circ \underbrace{1_{\mu_{C}\big(\gamma'^{-1}, {\rm{Tr}}_{\omega_0}^{t \circ \zeta_i}(p)\big)}(h, e)}_{{\textit{by \Cref{211}}.}} \\
			&= 1_{\mu_{\mc{C}}\big(\gamma'^{-1}, {\rm{Tr}}_{\omega_0}^{t \circ \zeta_i}(p)\big)}(h_ph,e) \quad [\textit{by functoriality of the action}]
		\end{split}
	\end{equation}
	and the right-hand sides
	\begin{equation}\nonumber
		\begin{split}
			& {\rm{Tr}}_{\omega_0}^{s \circ \zeta_i} \circ T_{\mc{C}}(\gamma)(\delta) \circ \eta_p\\
			&= \underbrace{1_{{\rm{Tr}}_{\omega_0}^{s \circ \zeta_i}\big(\mu_{\mc{C}}(\gamma^{-1},p)\big)}(h,e)}_{ {\textit{by \Cref{311}}}} \circ \underbrace{1_{\mu_{\mc{C}}\big(\gamma'^{-1}, {\rm{Tr}}_{\omega_0}^{t \circ \zeta_i}(p)\big)} (h_p,e)}_{\textit{by the definition in \Cref{equivalence invariance}}} \\
			&= 1_{t\big(\eta(p)\big)}(h,e) \circ 1_{\mu_{\mc{C}}\big(\gamma'^{-1}, {\rm{Tr}}_{\omega_0}^{t \circ \zeta_i}(p)\big)} (h_p,e)  \quad [\textit{As} \, \, t(\eta_p)= {\rm{Tr}}_{\omega_0}^{s \circ \zeta_i}\big(\mu_{\mc{C}}(\gamma^{-1},p)\big), \textit{by \Cref{Target}} ]\\
			&=  \underbrace{1_{ \mu_{\mc{C}}\big(\gamma'^{-1}, {\rm{Tr}}_{\omega_0}^{t \circ \zeta_i}(p)\big) \tau(h_p)}}_{{\textit{by \Cref{equivalence invariance}}}}(h,e) \circ 1_{\mu_{\mc{C}}\big(\gamma'^{-1}, {\rm{Tr}}_{\omega_0}^{t \circ \zeta_i}(p)\big)} (h_p,e) \\
			&= \underbrace{1_{ \mu_{\mc{C}}\big(\gamma'^{-1}, {\rm{Tr}}_{\omega_0}^{t \circ \zeta_i}(p)\big)}\big(e,\tau(h_p)\big)}_{\textit{by \Cref{E:Identitiesechangeinverse}}}(h,e) \circ 1_{\mu_{\mc{C}}\big(\gamma'^{-1}, {\rm{Tr}}_{\omega_0}^{t \circ \zeta_i}(p)\big)}(h_p,e) \\
			&= 1_{ \mu_{\mc{C}}\big(\gamma'^{-1}, {\rm{Tr}}_{\omega_0}^{t \circ \zeta_i}(p)\big)}\underbrace{\big(h_phh_p^{-1}, \tau(h_p)\big)}_{{\textit{by \Cref{E:Peiffer}.}}} \circ 1_{\mu_{\mc{C}}\big(\gamma'^{-1}, {\rm{Tr}}_{\omega_0}^{t \circ \zeta_i}(p)\big)} (h_p,e)\\
			&= 1_{\mu_{\mc{C}}\big(\gamma'^{-1}, {\rm{Tr}}_{\omega_0}^{t \circ \zeta_i}(p)\big)}(h_ph,e) [\textit{by \Cref{E:Identitiesechangeinverse}}]
		\end{split}
	\end{equation}
	of the \Cref{121}, we conclude the naturality of $\eta$.
\end{proof}

\subsection*{Step-2 (Invariance under the thin deformation defined in \Cref{Definition: Thin deformation})}
The following porposition establishes the invariance of parallel transport on a quasi-principal 2-bundle under the thin deformation of lazy Haefliger paths (\Cref{Definition: Thin deformation}).
\begin{proposition}\label{Proposition:  X-path thin homotopy invariance of parallel transport}
	For a Lie crossed module $(G, H, \tau, \alpha)$, let $(\pi: \mb{E} \ra \mb{X}, \mc{C})$ be a quasi-principal $\mb{G}:=[H \rtimes_{\alpha} G \rra G]$-bundle with a strict connection $\omega: T\mb{E} \ra L(\mb{G})$. If there is a thin deformation from a lazy $\mb{X}$-path $\Gamma  :=(\gamma_0, \alpha_1,\gamma_1, \cdots,\alpha_n, \gamma_n)$ to another lazy $\mb{X}$-path $\Gamma'$, then there is a smooth $\mb{G}$-equivariant natural isomorphism between $T_{(\Gamma, \mc{C}, \omega)}$ and $T_{(\Gamma', \mc{C}, \omega)}$.
\end{proposition}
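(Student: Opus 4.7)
The plan is to reduce the statement to the combination of two ingredients already at our disposal: the classical thin homotopy invariance of traditional parallel transport (recalled in \Cref{thin homotopy invariance of classical transport}) and the smooth $\mb{G}$-equivariant natural isomorphism produced in case (C) of the proof of \Cref{Proposition: Equivalence invariance of parallel transport}, i.e.\ the "square" isomorphism of \eqref{Magic}. Given a thin deformation $\{\zeta_i\colon[0,1]\to X_1\}_{i=0,\ldots,n}$ from $\Gamma$ to $\Gamma'$, each $\zeta_i$ provides precisely the data needed to apply that square construction to the commutative square in $\mb{X}$ with vertical arrows $\gamma_i=\zeta_i(0)$ and $\gamma_i'=\zeta_i(1)$ and horizontal paths $s\circ\zeta_i$ and $t\circ\zeta_i$ in $X_0$. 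Thus, for each $i$ we obtain a smooth $\mb{G}$-equivariant natural isomorphism
\[
\eta_i\colon\, T_{\mathcal C}(\gamma_i'^{-1})\circ T_\omega^{(t\circ\zeta_i)^{-1}}\;\Longrightarrow\; T_\omega^{(s\circ\zeta_i)^{-1}}\circ T_{\mathcal C}(\gamma_i^{-1}),
\]
obtained by taking inverses in \eqref{Magic}. This is the key bridge.

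Next, I would rewrite $T_{(\Gamma,\mathcal C,\omega)}$ using classical thin homotopy invariance: since by hypothesis $\alpha_i\sim_{\rm thin}(s\circ\zeta_i)^{-1}\ast\alpha_i'\ast(t\circ\zeta_{i-1})$, the functoriality of the strict connection $\omega$ on both $E_0\to X_0$ and $E_1\to X_1$ (via \Cref{Lemma: source-target strict transport}) together with the classical identity for horizontal lifts yields the equality of $\mb{G}$-equivariant isomorphisms of Lie groupoids
\[
T_\omega^{\alpha_i}\;=\;T_\omega^{(s\circ\zeta_i)^{-1}}\circ T_\omega^{\alpha_i'}\circ T_\omega^{t\circ\zeta_{i-1}}.
\]
Substituting this for each $i$ in the expression
\[
T_{(\Gamma,\mathcal C,\omega)}=T_{\mathcal C}(\gamma_n^{-1})\circ T_\omega^{\alpha_n}\circ T_{\mathcal C}(\gamma_{n-1}^{-1})\circ\cdots\circ T_\omega^{\alpha_1}\circ T_{\mathcal C}(\gamma_0^{-1}),
\]
groups the factors into consecutive triples of the form $T_\omega^{t\circ\zeta_{i-1}}\circ T_{\mathcal C}(\gamma_{i-1}^{-1})\circ T_\omega^{(s\circ\zeta_{i-1})^{-1}}$ sandwiched between the translated strict-connection transports $T_\omega^{\alpha_i'}$.

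To each such triple I would apply $\eta_{i-1}$ to rewrite
\[
T_\omega^{t\circ\zeta_{i-1}}\circ\Bigl(T_{\mathcal C}(\gamma_{i-1}^{-1})\circ T_\omega^{(s\circ\zeta_{i-1})^{-1}}\Bigr)\;\cong\;T_\omega^{t\circ\zeta_{i-1}}\circ T_\omega^{(t\circ\zeta_{i-1})^{-1}}\circ T_{\mathcal C}(\gamma_{i-1}'^{-1})\;=\;T_{\mathcal C}(\gamma_{i-1}'^{-1}),
\]
the last equality coming from functoriality of classical parallel transport. The boundary factors $T_{\mathcal C}(\gamma_n^{-1})\circ T_\omega^{(s\circ\zeta_n)^{-1}}$ and $T_\omega^{t\circ\zeta_0}\circ T_{\mathcal C}(\gamma_0^{-1})$ are handled by the same procedure, now using condition~(iii) of \Cref{Definition: Thin deformation}: since $t\circ\zeta_n$ and $s\circ\zeta_0$ are constant, the corresponding $T_\omega$ factors are identities, so $\eta_n$ and $\eta_0$ respectively collapse these boundary terms into $T_{\mathcal C}(\gamma_n'^{-1})$ and $T_{\mathcal C}(\gamma_0'^{-1})$. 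Pasting all these natural isomorphisms via horizontal composition and using the interchange law produces a single smooth $\mb{G}$-equivariant natural isomorphism $T_{(\Gamma,\mathcal C,\omega)}\Rightarrow T_{(\Gamma',\mathcal C,\omega)}$, as desired.

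The main obstacle I foresee is purely bookkeeping: one must verify that the pasting of the $\eta_i$'s is coherent, i.e.\ that the various whiskerings by the classical transports $T_\omega^{\alpha_i'}$ and the translations by $T_\omega^{t\circ\zeta_{i-1}}$ yield a single well-defined natural transformation (this is a combinatorial consequence of the functoriality of horizontal composition, but must be written out carefully). Smoothness and $\mb{G}$-equivariance at each step follow verbatim from the corresponding verifications for $\eta$ in the proof of \Cref{Proposition: Equivalence invariance of parallel transport}, because those arguments depended only on the smoothness of $\mathcal C$, the smoothness of the classical parallel transport, and the freeness of the $H\rtimes_\alpha G$-action on $E_1$; none of which are affected by having multiple "square bridges" in sequence. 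No essentially new technical ingredient is required beyond what already appears in Section~\ref{Lazy Xpaththinhomotopy}.
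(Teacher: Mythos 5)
Your argument is correct and is essentially the paper's own proof: the paper likewise combines classical thin-homotopy invariance of parallel transport (giving $T_\omega^{\alpha_i'}=T_\omega^{s\circ\zeta_i}\circ T_\omega^{\alpha_i}\circ T_\omega^{(t\circ\zeta_{i-1})^{-1}}$ from the thin homotopies $u\circ H_i$) with the square isomorphism established in case (C) of \Cref{Proposition: Equivalence invariance of parallel transport} (giving $T_{\mc{C}}(\gamma_i'^{-1})\cong T_\omega^{t\circ\zeta_i}\circ T_{\mc{C}}(\gamma_i^{-1})\circ T_\omega^{(s\circ\zeta_i)^{-1}}$), and then pastes these together exactly as you do. The only blemish is that in your displayed definition of $\eta_i$ the two factors on each side appear in the wrong order, so the composites as written are not composable; the correctly ordered statement, which is what you in fact use in the subsequent rewriting of the triples and of the boundary terms, is $T_{\mc{C}}(\gamma_i^{-1})\circ T_\omega^{(s\circ\zeta_i)^{-1}}\Longrightarrow T_\omega^{(t\circ\zeta_i)^{-1}}\circ T_{\mc{C}}(\gamma_i'^{-1})$.
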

\begin{proof}
	Suppose $ \lbrace \zeta_i: I \ra X_1 \rbrace_{i =0,1,...,n}$ is a thin deformation from the lazy $\mb{X}$-path $\Gamma :=(\gamma_0, \alpha_1,\gamma_1, \cdots ,\alpha_n, \gamma_n)$ to the lazy $\mb{X}$-path $\Gamma' :=(\gamma_0', \alpha_1',\gamma_1', \cdots,\alpha_n', \gamma_n')$. Let $s(\Gamma)=s(\Gamma')=x$ and $t(\Gamma)=t(\Gamma')=y$. Let us illustrate the thin deformation $ \lbrace \zeta_i: I \ra X_1 \rbrace_{i =0,1,...,n}$ by the following diagram:
	\begin{equation}\label{Main thin deformation diagram}
		\begin{tikzcd}
			& {\cdot} \arrow[r, "\alpha_1", dotted] \arrow[dd, "t \circ \zeta_0"', dotted] & {\cdot} & {\cdot} \arrow[dd, "s \circ \zeta_{i-1}"', dotted] \arrow[r, "\gamma_{i-1}"] & {\cdot} \arrow[r, "\alpha_i", dotted] \arrow[dd, "t \circ \zeta_{i-1}"', dotted] & {\cdot} \arrow[dd, "s \circ \zeta_i"', dotted] \arrow[r, "\gamma_i"] & {\cdot} \arrow[dd, "t \circ \zeta_i", dotted] & {\cdot} \arrow[rd, "\gamma_n"] \arrow[dd, "s \circ \zeta_n", dotted] &   \\
			x \arrow[ru, "\gamma_0"'] \arrow[rd, "\gamma_0'"'] &                                                    &    &                                            &                                                    &                                            &                            &                                            & y \\
			& {\cdot} \arrow[r, "\alpha_1'"', dotted]                         & {\cdot} & {\cdot} \arrow[r, "\gamma_{i-1}'"']                         & {\cdot} \arrow[r, "\alpha_i'"', dotted]                         & {\cdot} \arrow[r, "\gamma_i'"']                         & {\cdot}                         & {\cdot} \arrow[ru, "\gamma_n'"']                        &  
		\end{tikzcd},
	\end{equation}
	
	where the solid arrows are elements of $X_1$, and the dotted arrows are paths in $X_0$.  %Note that the squares and triangles formed by dotted arrows in the above diagrams are induced by the smooth homotopies $H_i$, and the squares formed by horizontal solid arrows and vertical dotted arrows in the above diagrams are induced by the paths $\zeta_i: I \ra X_1$.
	
	Suppose $H_i: I \times I \ra X_0$  are thin homotopies from $\alpha_i$ to $ (s \circ \zeta_i)^{-1}* \alpha'_i *(t \circ \zeta_{i-1})$  for all $i=1,...,n$. Then, $$u \circ H_i : I \times I \ra X_1$$ is a thin homotopy from $u \circ \alpha_i$ to $u \circ \big( (s \circ \zeta_i)^{-1}* \alpha'_i *(t \circ \zeta_{i-1}) \big)$ in $X_1$ for each $i$, as the rank of $u \circ H_i $ is less than rank of $H_i$ at all points. From the thin homotopy invariance of  the parallel transport in classical principal bundles (\Cref{thin homotopy invariance of classical transport}), we get the following family of equations:
	\begin{equation}\label{Magic1}
		T_{\omega}^{\alpha'_i}=T_{\omega}^{( s \circ \zeta_i)} \circ T_{\omega} ^{\alpha_i} \circ T_{\omega}^{t \circ \zeta_{i-1}^{-1}}
	\end{equation}
	obtained from a family of diagrams of the form
	\begin{equation}\label{deformation diagram} 
		\begin{tikzcd}
			\cdot \arrow[r, "\alpha_i", dotted] \arrow[d, "t \circ \zeta_{i-1}"', dotted] & \cdot                         \\
			\cdot \arrow[r, "\alpha_{i}^{{\rm{'}}}"', dotted]                        & \cdot \arrow[u, "(s \circ \zeta_i)^{-1}"', dotted]
		\end{tikzcd}
	\end{equation}
	for $i=1,2....,n$.
	
	Also, observe that the family of diagrams of the form
	\begin{equation}\label{equivalence diagram}
		\begin{tikzcd}
			\cdot \arrow[r, "\gamma_i"] \arrow[d, "s \circ \zeta_i"', dotted] & \cdot                         \\
			\cdot \arrow[r, "\gamma_i^{{\rm{'}}}"']                        & \cdot \arrow[u, "(t \circ \zeta_i)^{-1}"', dotted]
		\end{tikzcd}
	\end{equation}
	induce the following family of smooth $\mb{G}$-equivariant natural isomorphisms 
	\begin{equation}\label{Magic2} 
		T_{\mc{C}}(\gamma_i'^{-1}) \cong T_{\omega}^{t \circ \zeta_{i}} \circ T_{\mc{C}}(\gamma_i^{-1}) \circ T_{\omega}^{(s \circ \zeta_i)^{-1}}  
	\end{equation} 
	for $i=1,..,n-1$, by the same argument  as we have used to prove \Cref{Magic}. 
	
	 As a consequence of \Cref{Magic1} and \Cref{Magic2}, we conclude $T_{(\Gamma, \mc{C}, \omega)} \cong T_{(\Gamma', \mc{C}, \omega)}$.						
\end{proof}
Combining \Cref{Proposition: Equivalence invariance of parallel transport} and \Cref{Proposition:  X-path thin homotopy invariance of parallel transport}, we arrive at our intended lazy $\mb{X}$-path thin homotopy invariance, that we formally state below:
\begin{theorem}\label{lazy Xpath thin homotopy invariance}
	For a Lie crossed module $(G, H, \tau, \alpha)$, let $(\pi: \mb{E} \ra \mb{X}, \mc{C})$ be a quasi-principal $\mb{G}:=[H \rtimes_{\alpha} G \rra G]$-bundle with a strict connection $\omega: T\mb{E} \ra L(\mb{G})$. If a lazy $\mb{X}$-path $\Gamma  :=(\gamma_0, \alpha_1,\gamma_1, \cdots ,\alpha_n, \gamma_n)$ is lazy $\mb{X}$-path thin homotopic to a lazy $\mb{X}$-path $\Gamma'$, then there is a smooth $\mb{G}$-equivariant natural isomorphism between $T_{(\Gamma, \mc{C}, \omega)}$ and $T_{(\Gamma', \mc{C}, \omega)}$.
\end{theorem}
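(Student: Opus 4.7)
The plan is to reduce the statement to the two invariance results already established, namely \Cref{Proposition: Equivalence invariance of parallel transport} (equivalence invariance) and \Cref{Proposition:  X-path thin homotopy invariance of parallel transport} (thin deformation invariance). By \Cref{Definition: Thin homotopy of X-paths}, lazy $\mathbb{X}$-path thin homotopy is the equivalence relation on $P\mathbb{X}$ \emph{generated} by the three equivalence operations of \Cref{Definition: Equivalence of X-paths} and the thin deformations of \Cref{Definition: Thin deformation}. Hence $\Gamma \sim \Gamma'$ if and only if there is a finite chain $\Gamma = \Gamma^{(0)}, \Gamma^{(1)}, \dots, \Gamma^{(N)} = \Gamma'$ in $P\mathbb{X}$ such that each consecutive pair $\Gamma^{(k)}, \Gamma^{(k+1)}$ differs either by a single equivalence operation of \Cref{Definition: Equivalence of X-paths} or by a single thin deformation.

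For each such step, the required smooth $\mathbb{G}$-equivariant natural isomorphism between parallel transports is already at hand: if $\Gamma^{(k+1)}$ is obtained from $\Gamma^{(k)}$ by an equivalence operation, then \Cref{Proposition: Equivalence invariance of parallel transport} supplies a smooth $\mathbb{G}$-equivariant natural isomorphism $\eta_{k}\colon T_{(\Gamma^{(k)}, \mathcal{C}, \omega)} \Longrightarrow T_{(\Gamma^{(k+1)}, \mathcal{C}, \omega)}$; if the step is a thin deformation, \Cref{Proposition:  X-path thin homotopy invariance of parallel transport} supplies such an $\eta_{k}$ in exactly the same form. Composing these isomorphisms vertically,
\[
\eta := \eta_{N-1} \circ \eta_{N-2} \circ \cdots \circ \eta_{1} \circ \eta_{0},
\]
yields a smooth $\mathbb{G}$-equivariant natural isomorphism from $T_{(\Gamma, \mathcal{C}, \omega)}$ to $T_{(\Gamma', \mathcal{C}, \omega)}$. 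The fact that the vertical composition of smooth $\mathbb{G}$-equivariant natural isomorphisms between $\mathbb{G}$-equivariant isomorphisms of Lie groupoids is again smooth and $\mathbb{G}$-equivariant is routine, since smoothness and $\mathbb{G}$-equivariance are both preserved under pointwise composition in the groupoid $\mathbb{G}\text{-}\mathrm{Tor}$.

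Since the heavy lifting is done by \Cref{Proposition: Equivalence invariance of parallel transport} and \Cref{Proposition:  X-path thin homotopy invariance of parallel transport}, there is essentially no obstacle remaining; the only point requiring mild care is that lazy $\mathbb{X}$-path thin homotopy is defined as the \emph{generated} equivalence relation, so one must explicitly invoke the inductive chain description above rather than assuming the two elementary relations are already transitive in combination. Once this is noted, the proof is a direct application of the two preceding propositions together with the closure of smooth $\mathbb{G}$-equivariant natural isomorphisms under vertical composition.
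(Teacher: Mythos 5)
Your proposal is correct and follows exactly the route the paper takes: the theorem is obtained by combining \Cref{Proposition: Equivalence invariance of parallel transport} and \Cref{Proposition:  X-path thin homotopy invariance of parallel transport}, applied step by step along the finite chain generating the lazy $\mathbb{X}$-path thin homotopy, and composing the resulting smooth $\mathbb{G}$-equivariant natural isomorphisms. Your explicit spelling out of the chain decomposition and the closure under vertical composition is a slightly more careful write-up of the same argument.
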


\section{Parallel transport functor of a quasi-principal 2-bundle}\label{Parallel transport functor of a quasi-principal 2-bundle}
With the aid of \Cref{lazy Xpath thin homotopy invariance}, in this section, we construct the parallel transport functor on a quasi-principal 2-bundle over a Lie groupoid (\Cref{Definition:Quasicategorical Connection}). We then obtain our main result of this chapter by establishing its naturality with respect to the connection preserving morphisms and thereby extending the parallel transport functor to a functor between the parallel transport functor category and the groupoid of quasi-principal 2-bundles equipped with connections. We also validate the sanity of our construction by showing that our parallel transport functor enjoys a crucial smoothness property. Furthermore, as a side result, we also establish its naturality with respect to the strong fibered product constructions (\Cref{strong fibered products}).

In order to define the parallel transport functor on a quasi principal 2-bundle, it is necessary to introduce a quotient category $\overline{\mb{G} {\rm{-Tor}}}$ of $\mb{G}$-Tor (\Cref{Lie 2-group torsor}).

\begin{definition}\label{Quotiented G-tor}
	Given a Lie 2-group $\mb{G}$, the category	$\overline{\mb{G} {\rm{-Tor}}}$ is defined as the quotient category of $\mb{G}$-Tor obtained from the congruence relation given as follows: For each pair of $\mb{G}$-torsors $\mb{X}, \mb{Y}$, the equivalence relation on  $\rm{Hom}_{\mb{G} {\rm{-}} {\rm{Tor}}}(\mb{X}, \mb{Y})$ is given by the existence of a smooth $\mb{G}$-equivariant natural isomorphism.
\end{definition}

\begin{theorem}\label{Theorem: Parallel transport on 2-bundles}
	Given a quasi-principal $\mb{G}:=[H \rtimes_{\alpha} G \rra G]$-bundle $(\pi: \mb{E} \ra \mb{X}, \mc{C})$  with a strict connection $\omega: T\mb{E} \ra L(\mb{G})$, there is a functor
	\begin{equation}\nonumber
		\begin{split}
			\mc{T}_{\mc{C}, \omega} \colon & \Pi_{\rm{thin}}(\mb{X}) \ra \overline{\mb{G} \rm{-Tor}}\\
			& x \mapsto \pi^{-1}(x),\\
			& [\Gamma] \mapsto [T_{(\Gamma, \mc{C}, \omega)}].
		\end{split}
	\end{equation}
\end{theorem}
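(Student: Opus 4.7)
The assignment on objects is well defined since for each $x\in X_0$ the fibre $\pi^{-1}(x)=[\pi_1^{-1}(1_x)\rra \pi_0^{-1}(x)]$ is a $\mb{G}$-torsor. For the assignment on morphisms, the first task is to verify that the class $[T_{(\Gamma,\mathcal{C},\omega)}]$ in $\overline{\mb{G}\text{-Tor}}$ (\Cref{Quotiented G-tor}) depends only on the lazy $\mb{X}$-path thin homotopy class of $\Gamma$. This is precisely the content of \Cref{lazy Xpath thin homotopy invariance}: if $\Gamma\sim\Gamma'$ then $T_{(\Gamma,\mathcal{C},\omega)}$ and $T_{(\Gamma',\mathcal{C},\omega)}$ are related by a smooth $\mb{G}$-equivariant natural isomorphism, and hence are identified in the quotient. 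Consequently $\mathcal{T}_{\mathcal{C},\omega}([\Gamma])$ is well defined.

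The functoriality requires compatibility with units and with the composition defined in \Cref{Propostioni:thin fundamental groupoid of a Lie groupoid}. For units, the class $1_x$ is represented by $(1_x,c_x,1_x)$; thus
\[
T_{((1_x,c_x,1_x),\mathcal{C},\omega)}=T_{\mathcal{C}}(1_x)\circ T^{c_x}_{\omega}\circ T_{\mathcal{C}}(1_x).
\]
The middle factor equals $1_{\pi^{-1}(x)}$ because the unique horizontal lifts along a constant path are constant, while each outer factor is smoothly $\mb{G}$-equivariantly naturally isomorphic to $1_{\pi^{-1}(x)}$ via the natural isomorphism $I_x$ of \Cref{Pseudonat1}. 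Hence $\mathcal{T}_{\mathcal{C},\omega}(1_x)=[1_{\pi^{-1}(x)}]$ in $\overline{\mb{G}\text{-Tor}}$.

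For composition, let $\Gamma_1=(\gamma_0,\alpha_1,\dots,\alpha_m,\gamma_m)$ and $\Gamma_2=(\gamma'_0,\alpha'_1,\dots,\alpha'_n,\gamma'_n)$ be composable. By \Cref{Propostioni:thin fundamental groupoid of a Lie groupoid}, their composition is
\[
\Gamma_2\circ\Gamma_1=(\gamma_0,\alpha_1,\dots,\alpha_m,\gamma'_0\circ\gamma_m,\alpha'_1,\dots,\alpha'_n,\gamma'_n),
\]
so unwinding \Cref{Definition: Parallel transport of X-paths} gives
\[
T_{(\Gamma_2\circ\Gamma_1,\mathcal{C},\omega)}=T_{\mathcal{C}}(\gamma'^{-1}_n)\circ T^{\alpha'_n}_\omega\circ\cdots\circ T_{\mathcal{C}}(\gamma_m^{-1}\circ\gamma'^{-1}_0)\circ\cdots\circ T^{\alpha_1}_\omega\circ T_{\mathcal{C}}(\gamma_0^{-1}),
\]
whereas $T_{(\Gamma_2,\mathcal{C},\omega)}\circ T_{(\Gamma_1,\mathcal{C},\omega)}$ differs from the expression above only in that the block $T_{\mathcal{C}}(\gamma_m^{-1}\circ\gamma'^{-1}_0)$ is replaced by $T_{\mathcal{C}}(\gamma_m^{-1})\circ T_{\mathcal{C}}(\gamma'^{-1}_0)$. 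The coherence 2-cell $\alpha_{\gamma'^{-1}_0,\gamma_m^{-1}}$ from \Cref{pseudonat2} provides a smooth $\mb{G}$-equivariant natural isomorphism between these two composites, which then induces one between the full composites (whiskering by the fixed transport functors in between preserves smoothness and $\mb{G}$-equivariance). In $\overline{\mb{G}\text{-Tor}}$ this natural isomorphism collapses, yielding the required identity $\mathcal{T}_{\mathcal{C},\omega}([\Gamma_2]\circ[\Gamma_1])=\mathcal{T}_{\mathcal{C},\omega}([\Gamma_2])\circ\mathcal{T}_{\mathcal{C},\omega}([\Gamma_1])$.

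The main subtlety, and therefore the step requiring the most care, is the well-definedness on morphisms: one must check that every move in the generating list for lazy $\mb{X}$-path thin homotopy produces a \emph{smooth} $\mb{G}$-equivariant natural isomorphism between the resulting transports, so that the quotient $\overline{\mb{G}\text{-Tor}}$ actually sees the two as equal. This delicate verification has already been carried out in \Cref{Proposition: Equivalence invariance of parallel transport} and \Cref{Proposition:  X-path thin homotopy invariance of parallel transport}, combined in \Cref{lazy Xpath thin homotopy invariance}, so for this proof it suffices to invoke that theorem; the remainder of the argument reduces to the bookkeeping of the coherence isomorphisms $I_x$ and $\alpha_{\gamma_1,\gamma_2}$ from the pseudofunctor $T_{\mathcal{C}}$ of \Cref{T_C}.
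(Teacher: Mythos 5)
Your proposal is correct and takes essentially the same route as the paper's own proof: well-definedness is delegated to the lazy $\mb{X}$-path thin homotopy invariance theorem, unit compatibility to the coherence isomorphism $I_x$, and compatibility with composition to the coherence isomorphism $\alpha_{\gamma_1,\gamma_2}$ of the pseudofunctor $T_{\mc{C}}$, with the quotient $\overline{\mb{G}\text{-Tor}}$ collapsing these 2-cells. One cosmetic remark: in your comparison of the two composites the middle block of $T_{(\Gamma_2,\mc{C},\omega)}\circ T_{(\Gamma_1,\mc{C},\omega)}$ should be written $T_{\mc{C}}(\gamma'^{-1}_0)\circ T_{\mc{C}}(\gamma_m^{-1})$ (you reversed the factors), but the coherence cell $\alpha_{\gamma'^{-1}_0,\gamma_m^{-1}}$ you invoke is exactly the right one, so the argument stands.
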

\begin{proof}
	Well-definedness of $\mc{T}_{\mc{C}, \omega}$ is a direct consequence of \Cref{lazy Xpath thin homotopy invariance}. Source-target compatibilities of $\mc{T}_{\mc{C}, \omega} $ are obvious. Consistency with the unit map and the composition follow from  \Cref{Pseudonat1} and  \Cref{pseudonat2}, respectively.
\end{proof}
\begin{definition}\label{Parallel transport functor of quasi-principal 2-bundle}
	For a  Lie crossed module $(G, H, \tau, \alpha)$, let $(\pi: \mb{E} \ra \mb{X}, \mc{C})$ be a quasi-principal $\mb{G}:=[H \rtimes_{\alpha} G \rra G]$-bundle equipped with a strict connection $\omega: T\mb{E} \ra L(\mb{G})$. Then the functor $\mc{T}_{\mc{C, \omega}}$ is defined as the $(\mc{C}, \omega)$-\textit{parallel transport functor of the quasi-principal $\mb{G}$-bundle $(\pi \colon \mb{E} \ra \mb{X}, \mc{C})$}.
\end{definition}

\begin{remark}\label{Classical transport}
	Given a principal $[G \rra G]$-bundle $\pi \colon [E \rra E] \ra [M \rra M]$ over a discrete Lie groupoid $[M \rra M]$, endowed with the strict connection of the form $\omega:= (\omega,\omega)$ (\Cref{Classical connection as 2-connection}) and the unique categorical connection $\mc{C}$ (\Cref{unique Cat connection on bundle over discrete space}), the functor $\mc{T}_{\mc{C}, \omega}$ coincides with the classical one (\Cref{Transport functor}).
\end{remark}

\subsection{Naturality with respect to connection preserving morphisms}\label{Naturality with respect to connection preserving morphisms}
The following proposition will establish the naturality of \Cref{Parallel transport functor of quasi-principal 2-bundle} with respect to the connection preserving morphisms of quasi-principal 2-bundles.

\begin{proposition}\label{Naturality of parallel transport}
	Let $\mb{G}$ be a Lie 2-group. Suppose $\omega$ is a strict connection on $(\pi \colon \mb{E} \ra \mb{X}, \mc{C})$. 	Then, for any morphism of quasi-principal $\mb{G}$-bundles $$F \colon (\pi \colon \mb{E}' \ra \mb{X}, \mc{C}' ) \ra (\pi \colon \mb{E} \ra \mb{X}, \mc{C})$$ over a Lie groupoid $\mb{X}$, the functors $\mc{\tau}_{\mc{C}, \omega}$ and $\mc{\tau}_{\mc{C}', F^{*}\omega}$ (see \Cref{Lemma:Pullback connection}) are naturally isomorphic.
	%		Then, for any morphism of quasi-principal $\mb{G}$-bundles $$F \colon (\pi \colon \mb{E}' \ra \mb{X}, \mc{C}' ) \ra (\pi \colon \mb{E} \ra \mb{X}, \mc{C})$$ over a Lie groupoid $\mb{X}$, equipped with strict conections $\omega$ and pullback connection $F^{*}\omega$ (See \Cref{Lemma:Pullback connection}) respectively, the functors $\mc{\tau}_{\mc{C}, \omega}$ and $\mc{\tau}_{\mc{C}', F^{*}\omega}$ are naturally isomorphic.
\end{proposition}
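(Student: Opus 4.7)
The plan is to construct an explicit natural isomorphism $\eta \colon \mc{T}_{\mc{C}', F^{*}\omega} \Longrightarrow \mc{T}_{\mc{C}, \omega}$ by assigning to each $x \in X_0$ the fiberwise restriction $\eta_x := F|_{\pi'^{-1}(x)} \colon \pi'^{-1}(x) \to \pi^{-1}(x)$, which is a $\mb{G}$-equivariant morphism of Lie groupoids (hence a morphism in $\mb{G}\textup{-Tor}$, and therefore defines a morphism in $\overline{\mb{G}\textup{-Tor}}$). The naturality of $\eta$ will then amount to showing, for each lazy $\mb{X}$-path $\Gamma = (\gamma_0, \alpha_1, \gamma_1, \dots, \alpha_n, \gamma_n)$ from $x$ to $y$, that $\eta_y \circ T_{(\Gamma, \mc{C}', F^{*}\omega)}$ and $T_{(\Gamma, \mc{C}, \omega)} \circ \eta_x$ represent the same class in $\overline{\mb{G}\textup{-Tor}}$, i.e.\ are related by a smooth $\mb{G}$-equivariant natural isomorphism.

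Given the definition $T_{(\Gamma, \mc{C}, \omega)} = T_{\mc{C}}(\gamma_n^{-1}) \circ T^{\alpha_n}_{\omega} \circ \cdots \circ T^{\alpha_1}_{\omega} \circ T_{\mc{C}}(\gamma_0^{-1})$ from \Cref{Definition: Parallel transport of X-paths}, by repeatedly inserting $\eta$ between consecutive factors, it is enough to establish two building-block compatibilities: first, for every $\gamma \colon x \to y$ in $X_1$ the square
\[
\begin{tikzcd}
\pi'^{-1}(y) \arrow[r, "T_{\mc{C}'}(\gamma)"] \arrow[d, "\eta_y"'] & \pi'^{-1}(x) \arrow[d, "\eta_x"] \\
\pi^{-1}(y) \arrow[r, "T_{\mc{C}}(\gamma)"']                        & \pi^{-1}(x)
\end{tikzcd}
\]
commutes on the nose, and second, for every path $\alpha \colon [0,1] \to X_0$ with $\alpha(0)=x$, $\alpha(1)=y$, the analogous square with $T^{\alpha}_{F^{*}\omega}$ on top and $T^{\alpha}_{\omega}$ on the bottom commutes. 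The first compatibility is a direct consequence of the morphism condition $F_1(\mc{C}'(\gamma, p)) = \mc{C}(\gamma, F_0(p))$ from \Cref{Groupoid of quasi principal 2-bundles}, together with functoriality of $F$: both diagonals send $q$ to the unique image determined by the image of the cartesian lift, and on arrows one unwinds the formula \eqref{Pseudomor} using $F_1(\mc{C}'(\gamma^{-1}, -)) = \mc{C}(\gamma^{-1}, F_0(-))$.

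The second compatibility reduces to the classical fact \Cref{Classical Parallel transport and connection preserving morphism}, namely that parallel transport on an ordinary principal bundle commutes with connection-preserving bundle morphisms. Since $F^{*}\omega = \omega \circ F_{*}$ by \Cref{Lemma:Pullback connection}, the pair $(F_0, F_1)$ is connection-preserving at the level of both the $G_0$-bundle $E'_0 \to X_0$ and the $G_1$-bundle $E'_1 \to X_1$; applying \Cref{Classical Parallel transport and connection preserving morphism} to $\alpha$ in $X_0$ and to $u \circ \alpha$ in $X_1$ then gives $F_0 \circ {\rm Tr}^{\alpha}_{F^{*}\omega_0} = {\rm Tr}^{\alpha}_{\omega_0} \circ F_0$ and $F_1 \circ {\rm Tr}^{u \circ \alpha}_{F^{*}\omega_1} = {\rm Tr}^{u \circ \alpha}_{\omega_1} \circ F_1$, which together furnish the required strict commutativity of the second square via the description of $T^{\alpha}_{\omega}$ in \Cref{Proposition: Parallel transport on principal 2-bundles}.

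Composing these two commutative squares along the alternating factors of $\Gamma$ produces an equality $\eta_y \circ T_{(\Gamma, \mc{C}', F^{*}\omega)} = T_{(\Gamma, \mc{C}, \omega)} \circ \eta_x$ already at the level of $\mb{G}\textup{-Tor}$, so in particular after descending to $\overline{\mb{G}\textup{-Tor}}$. The main subtlety I anticipate is bookkeeping: one must ensure that the two different quasi-connections $\mc{C}, \mc{C}'$ enter only through the first square, while the pulled-back connection enters only through the second, and that the resulting assignment $x \mapsto \eta_x$ respects the equivalence relation defining $\overline{\mb{G}\textup{-Tor}}$. Granting this, $\eta$ is the desired natural isomorphism, and since each $\eta_x$ is an isomorphism of $\mb{G}$-torsors (as $F_0, F_1$ restrict to principal bundle isomorphisms on fibers), $\eta$ is invertible in $\overline{\mb{G}\textup{-Tor}}$.
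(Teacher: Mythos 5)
Your proposal is correct and follows essentially the same route as the paper: the paper's proof consists precisely of the two commuting squares you identify, with the $T_{\mc{C}'}(\gamma)$/$T_{\mc{C}}(\gamma)$ square coming from the compatibility condition $F_1(\mc{C}'(\gamma,p))=\mc{C}(\gamma,F_0(p))$ in \Cref{Groupoid of quasi principal 2-bundles} and the path square coming from \Cref{Classical Parallel transport and connection preserving morphism}. Your additional bookkeeping about composing the squares along the alternating factors of a lazy $\mb{X}$-path and descending to $\overline{\mb{G}\textup{-Tor}}$ just makes explicit what the paper leaves implicit.
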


\begin{proof}
	The proof follows from the observation that for every $x \xrightarrow {\gamma} y \in X_1$ and for every path $\alpha$ in $X_0$ (with sitting instants) from $p$ to $q$ respectively, the following two diagrams commute in the category of $\mb{G}$-torsors:
	
	\begin{tikzcd}\label{47}
		\pi'^{-1}(y) \arrow[r, "T_{\mc{C}'}(\gamma)"] \arrow[d, "F|_{\pi'^{-1}(y)}"'] & \pi'^{-1}(x) \arrow[d, "F|_{\pi'^{-1}(x)}"] \\
		\pi^{-1}(y) \arrow[r, "T_{\mc{C}}(\gamma)"']                & \pi^{-1}(x)               
	\end{tikzcd}
	\begin{tikzcd}\label{48}
		\pi'^{-1}(p) \arrow[r, "T_{F^{*}\omega}^{\alpha}"] \arrow[d, "F|_{\pi'^{-1}(p)}"'] & \pi'^{-1}(q) \arrow[d, "F|_{\pi'^{-1}(q)}"] \\
		\pi^{-1}(p) \arrow[r, "T_{\omega}^{\alpha}"']                & \pi^{-1}(q).               
	\end{tikzcd}
	
	Commutativity of the right square and the left square follow respectively from \Cref{Classical Parallel transport and connection preserving morphism} and \Cref{Groupoid of quasi principal 2-bundles}.
\end{proof}
The above proposition is a crucial step in obtaining the main result of this chapter, as we see next.

Given a Lie 2-group $\mb{G}$ and a Lie groupoid $\mb{X}$, let $\rm{Bun}_{\rm{quasi}}^{\nabla}(\mb{X}, \mb{G})$ be the category whose objects are quasi-principal $\mb{G}$-bundles equipped with strict connections over the Lie groupoid $\mb{X}$, and arrows are connection preserving morphisms (\Cref{Lemma:Pullback connection}). Suppose $\rm{Trans}(\mb{X},\mb{G})$ is the category whose objects are functors $T \colon \Pi_{\rm{thin}}(\mb{X}) \ra \overline{\mb{G} {\rm{-Tor}}}$ and arrows are natural transformations. Then, the following is an immediate consequence of  \Cref{Naturality of parallel transport}.
\begin{theorem}\label{Equivalence of quasi and functors}
	The map $\Big((\pi \colon \mb{E} \ra \mb{X}, \mc{C} ), \omega \Big) \mapsto \mc{T}_{\mc{C}, \omega}$ defines a functor
	\begin{equation}\nonumber
		\mc{F} \colon  \rm{Bun}_{\rm{quasi}}^{\nabla}(\mb{X}, \mb{G}) \ra \rm{Trans}(\mb{X},\mb{G}),
	\end{equation}
	where $\omega$ is the strict connection on $\pi \colon \mb{E} \ra \mb{X}$.
	
\end{theorem}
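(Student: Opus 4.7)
The plan is to unpack \Cref{Naturality of parallel transport} into a genuine assignment on morphisms and then verify functoriality. On objects, $\mc{F}$ is already fixed by \Cref{Theorem: Parallel transport on 2-bundles}: send $\big((\pi \colon \mb{E} \ra \mb{X}, \mc{C}), \omega\big)$ to the transport functor $\mc{T}_{\mc{C}, \omega} \colon \Pi_{\rm{thin}}(\mb{X}) \ra \overline{\mb{G} \rm{-Tor}}$. The real work lies in the morphism-level map.

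On morphisms, given a connection preserving morphism $F \colon \big((\pi' \colon \mb{E}' \ra \mb{X}, \mc{C}'), F^{*}\omega\big) \ra \big((\pi \colon \mb{E} \ra \mb{X}, \mc{C}), \omega\big)$ of quasi-principal $\mb{G}$-bundles over $\mb{X}$, I would define $\mc{F}(F) \colon \mc{T}_{\mc{C}', F^{*}\omega} \Longrightarrow \mc{T}_{\mc{C}, \omega}$ componentwise by $\mc{F}(F)_x := [F|_{\pi'^{-1}(x)}]$, the class in $\overline{\mb{G} \rm{-Tor}}$ of the fiberwise restriction. Since every morphism of $\mb{G}$-torsors is automatically an isomorphism, each component is a well-defined arrow in $\overline{\mb{G} \rm{-Tor}}$.

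Naturality of $\mc{F}(F)$ at an arrow $[\Gamma]$ of $\Pi_{\rm{thin}}(\mb{X})$ is where most of the effort goes. Writing out $T_{(\Gamma, \mc{C}, \omega)} = T_{\mc{C}}(\gamma_n^{-1}) \circ T_{\omega}^{\alpha_n} \circ \cdots \circ T_{\omega}^{\alpha_1} \circ T_{\mc{C}}(\gamma_0^{-1})$ as in \Cref{Definition: Parallel transport of X-paths}, I would reduce the naturality square to two atomic compatibilities, one involving $T_{\mc{C}}(\gamma)$ and one involving $T_{\omega}^{\alpha}$, by a straightforward induction on the order $n$. Both atomic squares already appear (and commute strictly in $\mb{G} \rm{-Tor}$) in the proof of \Cref{Naturality of parallel transport}, the first from \Cref{Groupoid of quasi principal 2-bundles} and the second from \Cref{Classical Parallel transport and connection preserving morphism}. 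Strict commutativity in $\mb{G} \rm{-Tor}$ descends to commutativity in $\overline{\mb{G} \rm{-Tor}}$, so the assembled square commutes as required, and independence of the chosen representative of $[\Gamma]$ follows from \Cref{lazy Xpath thin homotopy invariance}.

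For the functor axioms, the identity morphism on a quasi-principal $\mb{G}$-bundle restricts to the identity on each fiber, giving $\mc{F}(\mathrm{id}) = \mathrm{id}$; for a composable pair $F_2, F_1$ the strict equality $(F_2 \circ F_1)|_{\pi^{-1}(x)} = F_2|_{\pi^{-1}(x)} \circ F_1|_{\pi^{-1}(x)}$ yields $\mc{F}(F_2 \circ F_1) = \mc{F}(F_2) \circ \mc{F}(F_1)$ componentwise, hence as natural transformations. The main obstacle I anticipate is the inductive bookkeeping in the naturality verification: although each atomic factor of the transport commutes with $F$ thanks to the two squares in \Cref{Naturality of parallel transport}, assembling them over an arbitrary lazy Haefliger path and simultaneously keeping track of the thin homotopy class $[\Gamma]$ requires some care. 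Working in $\overline{\mb{G} \rm{-Tor}}$ rather than in $\mb{G} \rm{-Tor}$ is precisely what makes this bookkeeping tractable, since the smooth $\mb{G}$-equivariant natural isomorphisms produced along the way are collapsed to equalities.
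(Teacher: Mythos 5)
Your proposal is correct and follows essentially the same route as the paper: the paper derives the theorem directly from \Cref{Naturality of parallel transport}, whose proof rests on exactly the two strictly commuting squares (from \Cref{Groupoid of quasi principal 2-bundles} and \Cref{Classical Parallel transport and connection preserving morphism}) that you paste together along a lazy Haefliger path, with the fiberwise restrictions $[F|_{\pi'^{-1}(x)}]$ serving as the components of the natural transformation in $\overline{\mb{G}\rm{-Tor}}$. Your additional verification of the functor axioms and of independence of the representative of $[\Gamma]$ simply spells out details the paper leaves implicit.
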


%	\begin{remark}\label{Equivalence of quasi and functors}
	%		The result in \Cref{Naturality of parallel transport} can be extended in an obvious way to define a functor $\mc{F} \colon \rm{Bun}_{\rm{quasi}}^{\nabla}(\mb{X}, \mb{G}) \ra \rm{Trans}(\mb{X},\mb{G})$, where $\rm{Bun}_{\rm{quasi}}^{\nabla}(\mb{X}, \mb{G})$ is the category whose objects are quasi-principal -$\mb{G}$-bundles equipped with strict connections, over the Lie groupoid $\mb{X}$ and arrows are connection preserving morphisms of quasi-principal $\mb{G}$-bundles over $\mb{X}$. Whereas, objects of the category $\rm{Trans}(\mb{X},\mb{G})$ are functors $T \colon \Pi_{\rm{thin}}(\mb{X})^{\rm{op}} \ra \bar{\mb{G}}$-Tor and arrows are natural transformations.
	%	\end{remark}
\subsection{Naturality with respect to fibered products}\label{Naturality with respect to fibered products}
Let $\mb{G}$ be a Lie 2-group. By \Cref{Proposition: Strict pullback of Lie groupoids},	for any principal $\mb{G}$-bundle  $\pi: \mb{E} \ra \mb{X}$ and a morphism of Lie groupoids $F : \mb{Y} \ra \mb{X}$, the morphism of Lie groupoids ${\rm{pr}}_1: \mb{Y} \times_{F,\mb{X},\pi} \mb{E} \ra \mb{Y}$ is a principal $\mb{G}$-bundle over $\mb{Y}$, where $\mb{Y} \times_{F,\mb{X},\pi} \mb{E}$ is the strong fibered product of $\mb{Y}$ and $\mb{E}$ with respect to $F$ and $\pi$ (\Cref{strong fibered products}). We will denote this $\mb{G}$ bundle by $F^{*}\pi: F^{*}\mb{E} \ra \mb{Y}$.
\begin{equation}\label{pullback principal 2-bundle diagram}
	\begin{tikzcd}
		F^{*}\mb{E} \arrow[d, "F^{*}\pi"'] \arrow[r, "\rm{pr}_2"]                 & \mb{\mb{E}} \arrow[d, "\pi"] \\
		\mb{Y} \arrow[r, "F"']  & \mb{X}               
	\end{tikzcd}.
\end{equation}
\begin{remark}\label{Remark: morphism between string fibered products}
	The above strong fibered product construction naturally extends to a functor
	\begin{equation}\nonumber
		\begin{split}
			\mc{F} \colon & {\rm{Bun}}(\mb{X}, \mb{G}) \ra {\rm{Bun}}(\mb{Y}, \mb{G})\\
			& \big( \pi : \mb{E} \ra \mb{X} \big) \mapsto \big( F^{*}\pi: F^{*}\mb{E} \ra \mb{Y}\big)\\
			& \big( \phi: \mb{E} \ra \mb{E}' \big) \mapsto \big( \phi^{*} : F^{*}\mb{E} \ra \mb{F}^{*}\mb{E}' \big),
		\end{split}
	\end{equation}
	where $\phi_0^{*}(x,p) := (x, \phi_0(p))$  and $\phi_1^{*}(\gamma, \delta) := (\gamma, \phi_1(\delta))$. 
\end{remark}

%It is easy to verify that  given a prinicipal $\mb{G}$-bundle $\pi \colon \mb{E} \ra \mb{X}$ with quasi-connection $\mc{C}$ and a morphism of Lie groupoids $\phi \colon \mb{Y} \ra \mb{X}$, the map $$F^{*}\mc{C} \colon  s^{*}(F_0^{*}E_0) \ra F_1^{*}E_1$$ defined by $(\gamma,(x,p)) \ra (\gamma, C \big( F_1(\gamma),p) \big)$ for $\gamma \in X_1$ and $p \in E_0$ such that $F_0(s(\gamma))= \pi_0(p)$, defines a quasi- connection on the pull-back principal $\mb{G}$-bundle $F^{*}\pi: F^{*}\mb{E} \ra \mb{Y}$. Also, note that for any strict connection $\omega:=(\omega_1, \omega_0)$ on $\mb{E}$, the pull-back 1-forms $\rm{pr}_2^{*}\omega:=({\rm{pr}}_2^{*}\omega_1, {\rm{pr}}_2^{*}\omega_0)$(see \Cref{pullback principal 2-bundle diagram})  defines a strict connection 1-form on the pull-back principal  $\mb{G}$-bundle $F^{*}\mb{E}$ along $F$. The following result summarises the above discussion:
The following result is easy to prove.
\begin{lemma}\label{naturality lemma}
	If $(\pi \colon \mb{E} \ra \mb{X}, \mc{C})$ is a quasi-principal $\mb{G}$-bundle equipped with a strict connection $\omega$ and $F \colon \mb{Y} \ra \mb{X}$ is any morphism of Lie groupoids, then $(F^{*} \pi \colon \mb{F^{*}\mb{E}} \ra \mb{Y}, F^{*}\mc{C})$ is a quasi-principal $\mb{G}$-bundle with strict connection $\rm{pr}_2^{*}\omega$, where $F^{*}\mc{C} \colon  s^{*}(F_0^{*}E_0) \ra F_1^{*}E_1$ is defined by $\big(\gamma,(x,p)\big) \ra \bigg(\gamma, C \Big( F_1(\gamma),p\Big) \bigg)$ for $\gamma \in X_1$ and $p \in E_0$ satisfying $F_0(s(\gamma))= \pi_0(p)$.
\end{lemma}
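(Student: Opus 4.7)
The plan is to verify separately that (a) $F^{*}\mathcal{C}$ is a bona fide quasi connection on the pullback bundle $F^{*}\pi \colon F^{*}\mathbb{E} \to \mathbb{Y}$, and (b) $\mathrm{pr}_{2}^{*}\omega$ is a strict connection on the same bundle. Both parts are essentially bookkeeping, and the main thing to keep track of is that the fibered-product constraints line up correctly at each stage.

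For (a), I would first check well-definedness of $F^{*}\mathcal{C}$ as a map into $F_{1}^{*}E_{1} = Y_{1}\times_{F_{1},X_{1},\pi_{1}}E_{1}$. Given $(\gamma,(x,p))\in s^{*}(F_{0}^{*}E_{0})$, we have $x=s(\gamma)$ and $F_{0}(x)=\pi_{0}(p)$, hence $s(F_{1}(\gamma))=F_{0}(s(\gamma))=\pi_{0}(p)$, so $(F_{1}(\gamma),p)\in s^{*}E_{0}$ and $\mathcal{C}(F_{1}(\gamma),p)$ makes sense; moreover $\pi_{1}(\mathcal{C}(F_{1}(\gamma),p))=F_{1}(\gamma)$ since $\mathcal{C}$ is a section of $P\colon E_{1}\to s^{*}E_{0}$, so the pair $(\gamma,\mathcal{C}(F_{1}(\gamma),p))$ indeed lies in $F_{1}^{*}E_{1}$. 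A direct check that $P^{F^{*}}\circ F^{*}\mathcal{C}=\mathrm{id}$ then follows from $s(\mathcal{C}(F_{1}(\gamma),p))=p$. The $G_{0}$-equivariance along $u\colon G_{0}\to G_{1}$ reduces to the identity $\mathcal{C}(F_{1}(\gamma),pg)=\mathcal{C}(F_{1}(\gamma),p)\cdot 1_{g}$, which is exactly the equivariance of $\mathcal{C}$. Smoothness follows from smoothness of $\mathcal{C}$ and $F_{1}$.

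For (b), I would use \Cref{Prop:Corresconndiffform} to work with the equivalent $L(\mathbb{G})$-valued $1$-form description. Since $\mathrm{pr}_{2}\colon F^{*}\mathbb{E}\to\mathbb{E}$ is a $\mathbb{G}$-equivariant morphism of Lie groupoids lying over the groupoid morphism $F\colon\mathbb{Y}\to\mathbb{X}$, its tangent functor $(\mathrm{pr}_{2})_{*}\colon T(F^{*}\mathbb{E})\to T\mathbb{E}$ is also $\mathbb{G}$-equivariant, and therefore $\mathrm{pr}_{2}^{*}\omega:=\omega\circ(\mathrm{pr}_{2})_{*}$ defines a $\mathbb{G}$-equivariant $L(\mathbb{G})$-valued $1$-form on $F^{*}\mathbb{E}$ in the sense of \Cref{Definition:LGvaluedformOnLiegroupoi}. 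The strict connection condition amounts to checking $\mathrm{pr}_{2}^{*}\omega\circ\delta^{F^{*}\mathbb{E}}=\mathrm{pr}_{2}$ as functors on $F^{*}\mathbb{E}\times L(\mathbb{G})$; this follows because $\mathrm{pr}_{2}$ intertwines the fundamental vector field generating functors, i.e. $(\mathrm{pr}_{2})_{*}\circ\delta^{F^{*}\mathbb{E}}=\delta^{\mathbb{E}}\circ(\mathrm{pr}_{2}\times\mathrm{id})$, together with the corresponding identity $\omega\circ\delta^{\mathbb{E}}=\mathrm{pr}_{2}$ for $\omega$ itself.

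There is no real obstacle here; the whole argument is an exercise in unwinding the definition of the strong fibered product (\Cref{Proposition: Strict pullback of Lie groupoids}) and using that both $\mathrm{pr}_{2}$ and $F^{*}\pi$ arise as morphisms in the pullback square \Cref{pullback principal 2-bundle diagram}. The only mildly delicate point is keeping the source-target/fibered-product compatibilities straight so that $F^{*}\mathcal{C}$ actually lands in $F_{1}^{*}E_{1}$ and is a section of $P^{F^{*}}$, but this is dictated by the hypothesis $F_{0}(s(\gamma))=\pi_{0}(p)$ built into the definition.
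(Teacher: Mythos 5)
Your verification is correct and is exactly the routine unwinding the paper has in mind when it declares the lemma "easy to prove" and omits the argument: part (a) is the well-definedness/section/equivariance check for $F^{*}\mathcal{C}$, and part (b) follows from $\mathbb{G}$-equivariance of $(\mathrm{pr}_2)_*$ together with the intertwining of fundamental vector field functors and the strictness identity $\omega\circ\delta^{\mathbb{E}}=\mathrm{pr}_2$. No gaps.
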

\begin{remark}
	Observe that \Cref{naturality lemma} implies that the functor $\mc{F}$ in \Cref{Remark: morphism between string fibered products} restricts to a functor from the subcategory ${\rm{Bun}}_{{\rm{quasi}}}(\mb{X}, \mb{G}) \leq {\rm{Bun}}(\mb{X}, \mb{G})$ to the subcategory  ${\rm{Bun}}_{{\rm{quasi}}}(\mb{Y}, \mb{G}) \leq {\rm{Bun}}(\mb{Y}, \mb{G})$.
\end{remark}
The following result establishes the naturality of  \Cref{Parallel transport functor of quasi-principal 2-bundle} with respect to the strong fibered product of Lie groupoids.
\begin{proposition}\label{Pullback naturality}
	For a Lie 2-group $\mb{G}$, given a quasi-principal $\mb{G}$-bundle $(\pi \colon \mb{E} \ra \mb{X}, \mc{C})$ with a strict connection $\omega$ and a morphism of Lie groupoids $F \colon \mb{Y} \ra \mb{X}$, the functors $\mc{T}_{F^{*}\mc{C}, \rm{pr_2}^{*}\omega}$ and  $\mc{T}_{\mc{C}, \omega} \circ F_{\rm{thin}}$ (see \Cref{Morphism of thin homotopy groupoid}) are naturally isomorphic.
\end{proposition}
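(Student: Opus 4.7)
The plan is to construct the required natural isomorphism fiberwise using the second projection $\mathrm{pr}_2 \colon F^{*}\mb{E} \to \mb{E}$ and then check that the naturality square commutes by breaking down parallel transport along a lazy Haefliger path into its two constituent building blocks. Specifically, for each $y \in Y_0$, I would define
\[
\eta_y \colon (F^{*}\pi)^{-1}(y) \ra \pi^{-1}(F_0(y))
\]
to be the restriction of $\mathrm{pr}_2$ to the fiber, namely $(y, p) \mapsto p$ on objects and $(\gamma, \delta) \mapsto \delta$ on morphisms. Since $\mathrm{pr}_2$ is a morphism of principal $\mb{G}$-bundles from $F^{*}\pi \colon F^{*}\mb{E} \to \mb{Y}$ to $\pi \colon \mb{E} \to \mb{X}$ (along $F$), $\eta_y$ is a $\mb{G}$-equivariant isomorphism of $\mb{G}$-torsors and hence descends to an isomorphism in $\overline{\mb{G}\text{-Tor}}$.

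Next, given a lazy $\mb{Y}$-path $\Gamma = (\gamma_0, \alpha_1, \gamma_1, \dots, \alpha_n, \gamma_n)$ from $y$ to $y'$, recall from \Cref{Definition: Parallel transport of X-paths} that $T_{(\Gamma, F^{*}\mc{C}, \mathrm{pr}_2^{*}\omega)}$ is the composition
\[
T_{F^{*}\mc{C}}(\gamma_n^{-1}) \circ T_{\mathrm{pr}_2^{*}\omega}^{\alpha_n} \circ \cdots \circ T_{\mathrm{pr}_2^{*}\omega}^{\alpha_1} \circ T_{F^{*}\mc{C}}(\gamma_0^{-1}),
\]
while $T_{(F(\Gamma), \mc{C}, \omega)}$ has the analogous form with $\gamma_i$ replaced by $F_1(\gamma_i)$ and $\alpha_i$ replaced by $F_0 \circ \alpha_i$. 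So it suffices to prove the following two building-block compatibilities and concatenate them:
\begin{itemize}
\item[(a)] For every $\gamma \in Y_1$, the square
\[
\begin{tikzcd}
(F^{*}\pi)^{-1}(t(\gamma)) \arrow[r, "T_{F^{*}\mc{C}}(\gamma)"] \arrow[d, "\eta_{t(\gamma)}"'] & (F^{*}\pi)^{-1}(s(\gamma)) \arrow[d, "\eta_{s(\gamma)}"] \\
\pi^{-1}(F_0(t(\gamma))) \arrow[r, "T_{\mc{C}}(F_1(\gamma))"'] & \pi^{-1}(F_0(s(\gamma)))
\end{tikzcd}
\]
commutes on the nose.
\item[(b)] For every path $\alpha \colon [0,1] \to Y_0$ with sitting instants, the square
\[
\begin{tikzcd}
(F^{*}\pi)^{-1}(\alpha(0)) \arrow[r, "T_{\mathrm{pr}_2^{*}\omega}^{\alpha}"] \arrow[d, "\eta_{\alpha(0)}"'] & (F^{*}\pi)^{-1}(\alpha(1)) \arrow[d, "\eta_{\alpha(1)}"] \\
\pi^{-1}(F_0(\alpha(0))) \arrow[r, "T_{\omega}^{F_0 \circ \alpha}"'] & \pi^{-1}(F_0(\alpha(1)))
\end{tikzcd}
\]
commutes on the nose.
\end{itemize}

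Condition (a) is immediate from the explicit formula in \Cref{naturality lemma}, $F^{*}\mc{C}(\gamma, (x,p)) = (\gamma, \mc{C}(F_1(\gamma), p))$, combined with \Cref{Pseudomor} expressing $T_{\mc{C}}$ in terms of $\mc{C}$. Condition (b) is essentially \Cref{Classical Parallel transport and connection preserving morphism} applied to the bundle morphism $(\mathrm{pr}_2, F_0)$, first to the principal $G_0$-bundle component $(F^{*}\pi)_0$ and then to the $G_1$-bundle component $(F^{*}\pi)_1$, together with the compatibility verified in \Cref{Lemma: source-target strict transport} that shows the transport on $E_1$ along $u\circ \alpha$ is governed by the transport on $E_0$ along $\alpha$.

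The main step is really the bookkeeping in assembling (a) and (b) into the full naturality of $\eta$ along concatenated lazy Haefliger paths, but this is formal because $\eta_{\cdot}$ is defined fiberwise and commutes with each factor. Finally, to obtain a natural transformation in $\overline{\mb{G}\text{-Tor}}$ rather than in $\mb{G}\text{-Tor}$ itself, one only needs the equalities above to hold up to smooth $\mb{G}$-equivariant natural isomorphism, which is automatic. I do not anticipate a serious obstacle; the only care required is ensuring the $\mb{G}$-equivariance and smoothness of $\eta_y$ are preserved under the fiber restriction, both of which are inherited directly from $\mathrm{pr}_2$ being a morphism of principal $2$-bundles.
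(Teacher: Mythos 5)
Your proposal is correct and follows essentially the same route as the paper: the paper's proof also takes $\eta_y$ to be the class of $\mathrm{pr}_2$ restricted to the fiber and reduces naturality to exactly your two building-block squares (compatibility of $\eta$ with $T_{F^{*}\mc{C}}(\gamma)$ versus $T_{\mc{C}}(F(\gamma))$, and with $T_{\mathrm{pr}_2^{*}\omega}^{\alpha}$ versus $T_{\omega}^{F\circ\alpha}$), which it records as straightforward observations. Your extra justification of those squares (via the explicit formula for $F^{*}\mc{C}$ and the classical compatibility of parallel transport with bundle morphisms, suitably read over the base map $F_0$) only fills in details the paper leaves implicit.
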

\begin{proof}
	We claim that $\eta \colon Y_0 \ra (\overline{\mb{G} {\rm{-Tor}}})_1$ defined as $y \mapsto \eta_y := [{\rm{pr}}_2|_{(F^{*}\pi)^{-1}(y)}]$, is the required natural isomorphism, where ${\rm{pr}}_2 \colon F^{*}\mb{E} \ra \mb{E}$ is the 2nd projection functor from the strong fibererd product. Our claim follows from the following pair of straightforward observations:
	\begin{itemize}
		\item[(i)] For every $x \xrightarrow {\gamma} y \in Y_1$, we have
		\begin{equation}\nonumber
			[T_{\mc{C}}\big(F(\gamma)\big)] \circ \eta_{y}= \eta_x \circ [T_{F^{*}\mc{C}}(\gamma)],
		\end{equation}
		\item[(ii)]for every path (with sitting instants)  $\alpha \colon [0,1] \ra Y_0$ such that $\alpha(0)=a$ and $\alpha(1)=b$, we have 
		\begin{equation}\nonumber
			[T_{\omega}^{F(\alpha)^{-1}}] \circ \eta_{b}=\eta_a \circ [T_{{\rm{pr}}_2^{*}\omega}^{\alpha^{-1}}].
		\end{equation}
	\end{itemize}
\end{proof}
\subsection{Smoothness of the parallel transport functor of a quasi-principal 2-bundle}\label{Smoothness of parallel transport}
Let $\overline{\rm{Aut}(\mb{E})}$ denote the automorphism group of the $\mb{G}$-torsor $\mb{E}$ in the groupoid $\overline{\mb{G} {\rm{-Tor}}}$ (\Cref{Quotiented G-tor}). Note that the quotient functor $\mb{G}-{\rm{Tor}} \ra \overline{\mb{G}-{\rm{Tor}}}$ descends to a quotient map $q \colon {\rm{Aut}}(\mb{E}) \ra \overline{\rm{Aut}(\mb{E})}$. Before stating a smoothness condition for the parallel transport functor, we prove that  $\overline{\rm{Aut}(\mb{E})}$  is a diffeological group.
\subsection*{Smoothness of $\overline{\rm{Aut}(\mb{E})}$}
We start with the following observation:

\begin{lemma}\label{diffeology on AutE}
	For any $\mb{G}:=[H \rtimes_{\alpha}G \rra G]$-torsor $\mb{E}$, the group of automorphisms ${\rm{Aut}}(\mb{E}):= {\rm{Hom}}_{\mb{G}-{\rm{Tor}}} ( \mb{E}, \mb{E})$ is  canonicially isomorphic to the Lie group $G$.
\end{lemma}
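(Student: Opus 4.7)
The plan is to fix any point $z \in E_0$ and establish a bijective correspondence $\psi_z\colon {\rm{Aut}}(\mb{E}) \to G$ sending an automorphism $F$ to the unique element $g \in G$ with $F_0(z) = z\cdot g$; then to show that $\psi_z$ is a group homomorphism and that the induced Lie group structure on ${\rm{Aut}}(\mb{E})$ is independent of $z$. By \Cref{Automorphism group of the fibre} applied to the right $G$-torsor $E_0$, the object-level restriction $F \mapsto F_0$ identifies the group of $G$-equivariant diffeomorphisms of $E_0$ with $G$ via $\psi_z$, so the key step is to show that $F_1$ is uniquely determined by $F_0$. Functoriality of $F$ and of the action $\rho\colon \mb{E}\times\mb{G}\to\mb{E}$ applied to unit arrows gives
\[
F_1(1_z) = u\bigl(F_0(z)\bigr) = u(z\cdot g) = 1_z \cdot 1_g = 1_z \cdot (e,g).
\]
For any $\gamma \in E_1$, writing $\gamma = 1_z \cdot (h, g')$ by freeness and transitivity of the $H\rtimes_\alpha G$-action on $E_1$, the $\mb{G}$-equivariance of $F_1$ forces
\[
F_1(\gamma) = F_1(1_z) \cdot (h, g') = 1_z \cdot (e,g)(h,g') = 1_z \cdot \bigl(\alpha_g(h),\, g g'\bigr),
\]
so $F_1$ is entirely determined by $g$.

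For surjectivity of $\psi_z$, I would verify that for every $g \in G$, the formulae above define a genuine $\mb{G}$-equivariant smooth Lie groupoid automorphism $F_g$ of $\mb{E}$. Smoothness follows from the smoothness of the action, $\mb{G}$-equivariance from its associativity, and the compatibility with source, target, and unit maps reduce to $s_{\mb{G}}(e,g) = g = t_{\mb{G}}(e,g)$ and the identity $(e,g) = u_{\mb{G}}(g)$. The map $\psi_z$ is then visibly a group homomorphism, and transporting the Lie group structure of $G$ across $\psi_z$ equips ${\rm{Aut}}(\mb{E})$ with a Lie group structure; the standard argument from \Cref{Automorphism group of the fibre} then shows that a different choice $z' = z\cdot h$ only modifies $\psi_z$ by conjugation by $h$, so this Lie group structure on ${\rm{Aut}}(\mb{E})$ is intrinsic, while the isomorphism with $G$ is canonical up to inner automorphism.

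The main technical point will be the compositional compatibility $F_1(\gamma_2 \circ \gamma_1) = F_1(\gamma_2) \circ F_1(\gamma_1)$ for the candidate $F_g$. I plan to handle this by twice invoking the interchange law \Cref{E:Identitiesechangeinverse} together with the observation that $(e,g) = 1_g$ is itself an identity morphism in $\mb{G}=[H\rtimes_\alpha G \rra G]$, so $(e,g) \circ (e,g) = (e,g)$. Writing $\gamma_i = 1_z \cdot (h_i, g_i')$ for $i=1,2$ with $s(\gamma_2)=t(\gamma_1)$, interchange first gives $\gamma_2\circ\gamma_1 = 1_z \cdot \bigl((h_2,g_2')\circ(h_1,g_1')\bigr)$, and on the other side reduces
\[
F_1(\gamma_2)\circ F_1(\gamma_1) = 1_z \cdot \Bigl[\bigl((e,g)\circ(e,g)\bigr)\bigl((h_2,g_2')\circ(h_1,g_1')\bigr)\Bigr] = 1_z \cdot (e,g)\bigl((h_2,g_2')\circ(h_1,g_1')\bigr),
\]
which matches $F_1(\gamma_2\circ\gamma_1)$ by construction. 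Once these verifications are in place the lemma follows.
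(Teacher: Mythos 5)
Your proof is correct, and while it rests on the same two pillars as the paper's — the classical identification of $\rm{Aut}$ of a $G$-torsor with $G$ from \Cref{Automorphism group of the fibre}, plus the observation that the morphism-level map is forced by the object-level one together with $\mb{G}$-equivariance — the technical core is organised differently. The paper factors through the intermediate isomorphism ${\rm{Aut}}(\mb{E})\to{\rm{Aut}}(E_0)$, $F\mapsto F_0$, and decomposes each arrow relative to its own source, $\delta=1_{s(\delta)}(h_\delta,e)$ with $h_\delta\in H$ only; injectivity and the functoriality of the reconstructed $F_1$ then hinge on the multiplicativity identity $h_{\delta_2\circ\delta_1}=h_{\delta_1}h_{\delta_2}$. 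You instead map directly to $G$ via a single fixed basepoint, decompose all of $E_1$ over the one arrow $1_z$ using the full $H\rtimes_\alpha G$-torsor structure, and verify compatibility with composition by the double use of the interchange law \Cref{E:Identitiesechangeinverse} together with $(e,g)\circ(e,g)=(e,g)$; note that the composability of $(h_2,g_2')$ with $(h_1,g_1')$ needed there does follow from freeness of the $G$-action on $E_0$, so that step is sound. Your route makes the "left multiplication by $1_g$" description of an automorphism explicit and avoids introducing the family $h_\delta$, at the cost of carrying the fixed basepoint around; the paper's route keeps everything basepoint-free at the morphism level and isolates the reusable identity $h_{\delta_2\circ\delta_1}=h_{\delta_1}h_{\delta_2}$. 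Two small remarks: your appeal to "smoothness of the action" for the candidate $F_g$ implicitly uses smoothness of the division map of the $G_1$-torsor $E_1$, exactly the standard fact the paper also relies on (via \Cref{Automorphism group of the fibre}), so this is not a gap; and your closing statement that the isomorphism with $G$ is canonical only up to inner automorphism, while the induced Lie group structure is intrinsic, is in fact the more careful formulation of what the lemma needs.
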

\begin{proof}
	For any  Lie group $G$, a $G$-torsor $E$ and a point $z \in E$, we have a group isomorphism defined as 
	\begin{equation}\label{canonical Lie group structure on Aut}
		\begin{split}
			\psi_z \colon & {\rm{Aut}}(E):= {\rm{Hom}}_{G-{\rm{Tor}}}( E, E) \ra G\\
			& f \mapsto \delta \big(z,f(z)\big),
		\end{split}
	\end{equation}
	where $\delta \colon E \times E \ra G$ is a smooth map defined implicitly as $x \cdot \delta(x,y)=y$ (see \Cref{Automorphism group of the fibre}). This isomorphism is independent of the choice of $z$, and so, ${\rm{Aut}}(E)$ can be canonically identified as a Lie group (see \Cref{Automorphism group of the fibre}). Hence, it is sufficient to show that the following map 
	\begin{equation}\label{AutMbE}
		\begin{split}
			\theta \colon& {\rm{Aut}}(\mb{E}) \ra {\rm{Aut}}(E_0)\\
			& F:=(F_1,F_0) \mapsto F_0
		\end{split}
	\end{equation}
	is an isomorphism of groups. 
	
	It is obvious that $\theta$ is a group homomorphism.  To show $\theta$ is injective, let $\theta(F)=\theta(F')$ for $F, F' \in {\rm{Aut}}(\mb{E})$. Suppose $\delta \in E_1$. Then there exists unique $h_{\delta} \in H$, such that $\delta=1_{s(\delta)}(h_{\delta},e)$. Thus, $F_1(\delta)=F_1\big(1_{s(\delta)}(h_{\delta},e)\big)=1_{F'_0\big(s(\delta)\big)}(h_{\delta},e)=F'_1(\delta)$. 
	
	Now, suppose $f \in {\rm{Aut}}(E_0)$. For $\delta \in E_1$, define $F_1(\delta) :=1_{f\big(s(\delta)\big)}(h_{\delta},e)$. Note that as for any $(h,g) \in H \rtimes_{\alpha}G$ the following identity holds
	\begin{equation}\nonumber
		(h_{\delta (h,g)},e)=\big(\alpha_{g^{-1}}(h_{\delta}h),e \big),
	\end{equation}
	it follows $F_1$ is a  morphism of $H \rtimes_{\alpha}G$-torsor.
	Hence, to show $\theta$ is onto, it is enough to prove $(F_1,f)$ is a functor.  Note that the compatibility with the source, target, and unit maps are obvious while the consistency with the composition map follows from the observation that for any composable $\delta_2,\delta_1 \in E_1$, we have 
	\begin{equation}\nonumber
		h_{\delta_2 \circ \delta_1} = h_{\delta_1} h_{\delta_2}.
	\end{equation}

	\begin{proposition}\label{Autbar E}
		For any $\mb{G}:=[H \rtimes_{\alpha}G \rra G]$-torsor $\mb{E}$, the group $\overline{\rm{Aut}}(\mb{E})$ is isomorphic to the quotient group $G/\tau(H)$.  
	\end{proposition}
	\begin{proof}

		Consider the quotient map $q \colon {\rm{Aut}}(\mb{E}) \ra \overline{\rm{Aut}(\mb{E})}$. Observe that to show $\overline{{\rm{Aut}}(\mb{E})} \cong G/ \tau(H)$,  by the first isomorphism theorem it is enough to show
		\begin{equation}\nonumber
			\psi_z \circ \theta \big(\ker(q) \big)= \tau(H),
		\end{equation}
		for some $z \in E_0$, where $\psi_z$ and $\theta$ are maps as defined in \Cref{diffeology on AutE}.
		The inclusion $\psi_z \circ \theta (\ker(q)) \subseteq \tau(H)$ follows, since for any $F \in \ker(q)$, there is a smooth $\mb{G}$-equivariant natural isomorphism $\eta \colon \rm{Id}_{\mb{E}} \Longrightarrow F$ and thus we get the unique element $h_z \in H$ satisfying $\eta(z)=1_z(h_z,e)$, for which $\psi_z \circ \theta(F)= \tau(h_z)$. On the other hand, for any $h \in H$, one can define $f \colon E_0 \ra E_0$ as $z.g \mapsto z\tau(h)g$ for each $g \in G$, and hence we get an element $(F_1, f)  \in \rm{Aut}(\mb{E})$ (as in \Cref{AutMbE}). Then it is easy to see that $(F_1, f) \in \ker(q)$ as the prescription $z.g \mapsto 1_z(h,e)(e,g)$ for each $g \in G$ defines a smooth $\mb{G}$-equivariant natural isomorphim $\eta \colon {\rm{id}}_{\mb{E}} \Longrightarrow (F_1, f)$.
	\end{proof}
	
\end{proof}
\begin{corollary}\label{Aut Diff}
	For any $\mb{G}:=[H \rtimes_{\alpha}G \rra G]$-torsor $\mb{E}$, $\overline{\rm{Aut}}(\mb{E})$ is a diffeologial group.
\end{corollary}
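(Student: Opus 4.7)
The plan is to transport the diffeological group structure from $G/\tau(H)$ across the isomorphism established in \Cref{Autbar E}. Since $G$ is a Lie group, it is canonically a diffeological group by \Cref{Lie group diffeology}. The image $\tau(H) \leq G$ is a subgroup, and the quotient group $G/\tau(H)$ inherits the quotient diffeology from $G$, which by \Cref{quotient group diffeology} makes it a diffeological group (the multiplication and inversion maps descend from $G$ to $G/\tau(H)$ and remain smooth with respect to the quotient diffeology by \Cref{Technical 1}).

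Next, I would invoke \Cref{Autbar E}, which furnishes a group isomorphism $\overline{\rm{Aut}}(\mb{E}) \cong G/\tau(H)$. Pulling back the quotient diffeology along this isomorphism equips $\overline{\rm{Aut}}(\mb{E})$ with a diffeology for which the group operations are automatically smooth, since they correspond to those of $G/\tau(H)$. This is the only step required, and no obstacle is anticipated, as the work has already been done in \Cref{Autbar E}; the present statement is essentially a transport-of-structure corollary.
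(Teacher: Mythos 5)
Your proposal is correct and follows essentially the same route as the paper: identify $\overline{\rm{Aut}}(\mb{E})$ with $G/\tau(H)$ via \Cref{Autbar E} and transport the quotient diffeological group structure (\Cref{quotient group diffeology}) across that isomorphism. The extra remark about \Cref{Technical 1} is just an elaboration of why the quotient is a diffeological group, which the paper takes for granted.
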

\begin{proof}
	By \Cref{Autbar E}, $\overline{\rm{Aut}}(\mb{E})$ is isomorphic to $G/ \tau(H)$. Now, since $G$ is a Lie group, $\overline{\rm{Aut}(\mb{E})}$ is a diffeological group equipped with the quotient diffeology, see \Cref{quotient group diffeology}.
\end{proof}

Now, we are ready to show that the parallel transport functor of a quasi-principal 2-bundle (\Cref{Parallel transport functor of quasi-principal 2-bundle}) is smooth in an `appropriate sense', which will be made precise in the following theorem.
\subsection*{Smoothness of the parallel transport functor}
%Observe that \Cref{Aut Diff} ensures the sanity of the following smoothness condition.

\begin{theorem}\label{Smoothness of parallel transport functor}
	For a Lie crossed module $(G, H, \tau, \alpha)$, let $(\pi: \mb{E} \ra \mb{X}, \mc{C})$ be a quasi-principal $\mb{G}:=[H \rtimes_{\alpha} G \rra G]$-bundle with a strict connection $\omega: T\mb{E} \ra L(\mb{G})$. Then for each $x \in X_0$, the restriction map $\mc{T}_{{\mc{C}, \omega}}|_{{\Pi_{\rm{thin}}(\mb{X},x)}} \colon \Pi_{\rm{thin}}(\mb{X},x) \ra \overline{\rm{Aut}(\pi^{-1}(x))}$ is a map of diffeological spaces, where $\Pi_{\rm{thin}}(\mb{X},x)$ is the automorphiosm group of $x$ in the diffeological groupoid $\Pi_{\rm{thin}}(\mb{X})$.
\end{theorem}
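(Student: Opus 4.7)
The plan is to verify the diffeological smoothness directly from the defining plots, working one ``order'' at a time in the sum diffeology on $P\mb{X}$. By \Cref{Technical 1} applied to the quotient map $q\colon P\mb{X}\to \frac{P\mb{X}}{\sim}$, it is enough to prove that for every plot $p\colon U\to P\mb{X}$ whose image consists of lazy $\mb{X}$-loops at $x$, the composite $U\xrightarrow{p}P\mb{X}\xrightarrow{q}\Pi_{\rm{thin}}(\mb{X},x)\xrightarrow{\mc{T}_{\mc{C},\omega}|}\overline{\rm{Aut}(\pi^{-1}(x))}$ is a plot of the target. Since $P\mb{X}=\sqcup_{n} P\mb{X}_n$ carries the sum diffeology (\Cref{sum diffeology}), I may localise $U$ and assume that $p$ factors through a single $P\mb{X}_n$. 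Thus $p$ is given by a smooth tuple $(\gamma_0(u),\alpha_1(u),\gamma_1(u),\dots,\alpha_n(u),\gamma_n(u))$ in the fibred product $X_1\times_{X_0}PX_0\times_{X_0}X_1\times\cdots\times X_1$.

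The next step is to exploit the identifications \Cref{diffeology on AutE} and \Cref{Autbar E}: I fix a basepoint $z\in \pi_0^{-1}(x)$ and use $\psi_z\circ\theta\colon \rm{Aut}(\pi^{-1}(x))\to G$ together with the projection $\rm{pr}\colon G\to G/\tau(H)$, which represents the quotient map $\rm{Aut}(\pi^{-1}(x))\to \overline{\rm{Aut}(\pi^{-1}(x))}$ up to the canonical isomorphism. Because $\rm{pr}$ is a smooth Lie group homomorphism, and $G/\tau(H)$ carries the quotient diffeology (\Cref{quotient group diffeology}), it suffices to show that the lifted assignment
\begin{equation*}
U\ni u\;\longmapsto\; \delta\!\Big(z,\; T_{(\Gamma_{p(u)},\mc{C},\omega)}(z)\Big)\;\in G
\end{equation*}
is smooth in the ordinary sense, where $\delta\colon \pi_0^{-1}(x)\times\pi_0^{-1}(x)\to G$ is the smooth division map of the $G$-torsor $\pi_0^{-1}(x)$. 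Thus the problem is reduced to showing that the \emph{orbit map} $u\mapsto T_{(\Gamma_{p(u)},\mc{C},\omega)}(z)\in E_0$ is smooth as a map $U\to E_0$.

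Now I would unpack $T_{(\Gamma_{p(u)},\mc{C},\omega)}= T_{\mc{C}}(\gamma_n(u)^{-1})\circ T^{\alpha_n(u)}_\omega\circ\cdots\circ T^{\alpha_1(u)}_\omega\circ T_{\mc{C}}(\gamma_0(u)^{-1})$ and propagate $z$ through the composition. On objects, $T_{\mc{C}}(\gamma)$ is the map $q\mapsto \mu_{\mc{C}}(\gamma^{-1},q)=t\big(\mc{C}(\gamma^{-1},q)\big)$ (\Cref{Pseudomor} together with \Cref{underlying quasi-principal bundle}), which is smooth jointly in $(\gamma,q)$ by smoothness of $\mc{C}$; hence each ``morphism step'' sends a smooth $U$-family of points in $E_0$ to another such family. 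Between two morphism steps, the relevant operation is classical parallel transport $\rm{Tr}_{\omega_0}^{\alpha_i(u)}$ along a smoothly $U$-parametrised path $\alpha_i(u)$ starting from a smoothly $U$-parametrised point; the standard ODE-theoretic smoothness of parallel transport (the non-loop version underlying \Cref{Proposition:Smoothness of traditional parallel transport}) gives that the output depends smoothly on $u$. Iterating along the $n$ transport segments and the $n+1$ categorical jumps, the overall map $u\mapsto T_{(\Gamma_{p(u)},\mc{C},\omega)}(z)$ is a composition of smooth maps, hence smooth. Postcomposition with $\delta(z,\cdot)$ and with $\rm{pr}\colon G\to G/\tau(H)$ then finishes Step~5 and establishes the required plot.

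The main obstacle is Step~4, namely verifying cleanly the ``smooth family'' version of classical parallel transport needed to chain the $n$ blocks together: one needs that for a smooth $\alpha\colon U\times [0,1]\to X_0$ with sitting instants and a smooth $q\colon U\to E_0$ with $\pi_0\circ q=\alpha(\cdot,0)$, the map $u\mapsto \rm{Tr}_{\omega_0}^{\alpha(u,\cdot)}(q(u))$ is smooth. This is a standard consequence of smooth dependence of ODEs on parameters applied in local trivialisations of $\pi_0$, but some care is needed because the intermediate endpoints $t\circ\mc{C}(\gamma_i(u)^{-1},\cdot)$ and $\alpha_i(u)(1)$ vary with $u$, so local trivialisations must be patched along the image of $p$. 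The sitting instants of the $\alpha_i$ together with the smoothness of $\mc{C}$ make this patching routine, and the resulting estimate feeds directly into Step~5.
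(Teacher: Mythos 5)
Your proposal is correct and follows essentially the same route as the paper: reduce via the quotient-diffeology criterion (\Cref{Technical 1}) and the sum diffeology to a plot of fixed order $n$, translate smoothness of the $\rm{Aut}(\pi^{-1}(x))$-valued assignment into smoothness of the evaluation at a chosen basepoint $z$ through the canonical identification $\rm{Aut}(\pi^{-1}(x))\cong G$, and then chain the smoothness of $\mu_{\mc{C}}=t\circ\mc{C}$ with parameter-smoothness of classical parallel transport along the $n$ path segments. The family-smoothness step you single out as the main obstacle is precisely what the paper delegates to Lemma 3.13 of \cite{MR3521476}, so your ODE/local-trivialisation sketch is just an inlined proof of that cited lemma rather than a different argument.
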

\begin{proof}
	Suppose $P\mb{X}_x$ denotes the set of lazy $\mb{X}$-paths which start and end at $x \in X_0$. $P\mb{X}_x$ being a subset of $P\mb{X}$, is a diffeological space by (\Cref{subsapce diffeology}). Similarly, $\Pi_{\rm{thin}}(\mb{X},x)$ is also equipped with the subspace diffeology induced from the diffeology on $\frac{P\mb{X}}{\sim}$ (see  \Cref{quotient diffeology}). Let $q^{P\mb{X}_x} \colon P\mb{X}_x  \ra \Pi_{\rm{thin}}(\mb{X},x)$ be the quotient map. Observe that from \Cref{Technical 1}, it is sufficient to show that for any plot $ \big( p \colon U \ra P\mb{X}_x \big) \in D_{P\mb{X}_x}$, $\mc{T}_{{\mc{C}, \omega}}|_{\Pi_{\rm{thin}}(\mb{X},x)} \circ  q^{P\mb{X}_x} \circ p \in  D_{\overline{\rm{Aut}(\pi^{-1}(x))}}$.  Let $x \in U$, then by \Cref{sum diffeology}, there is an open neighbourhood $U_x$ around $x$ such that $p|_{U_x}$ is of the form 
	\begin{equation}\nonumber
		p|_{U_x}=(p^0_{X_1}, p^{1}_{PX_0},p^1_{X_1},\cdots, p^{n}_{PX_0}, p^n_{X_1}) \colon U \ra P\mb{X}_n 
	\end{equation}
	for some $n \in \mb{N} \cup \lbrace 0 \rbrace$. Note that the smoothness of the map 
	\begin{equation}\nonumber
		\begin{split}
			\theta \colon & U_x \ra {\rm{Aut}}\big(\pi^{-1}(x)\big)\\
			& u \mapsto T_{ \big( p|_{U_x}(u), \mc{C}, \omega \big)} \quad [\rm{see} \,\, \Cref{Definition: Parallel transport of X-paths}.]
		\end{split}
	\end{equation}
	will imply $\mc{T}_{{\mc{C}, \omega}}|_{\Pi_{\rm{thin}}(\mb{X},x)} \circ  q^{P\mb{X}_x} \circ p \in  D_{\overline{\rm{Aut}}(\pi^{-1}(x))}$.		
	
	Due to the smooth structure on ${\rm{Aut}}(\pi^{-1}(x))$ (\Cref{diffeology on AutE}), $\theta$ is smooth if and only if the following map 
	\begin{equation}\nonumber
		\begin{split}
			\bar{\theta} \colon & U_x \ra \pi_{0}^{-1}(x)\\
			& u \mapsto  \Big( T_{ \big( p|_{U_x}(u), \mc{C}, \omega \big)} \Big)_{0}(z)
		\end{split}
	\end{equation}
	is smooth for some choice of $z \in \pi^{-1}(x)$. But, the smoothness of $\bar{\theta}$ follows easily from the following sequence of facts:
	\begin{equation}\nonumber
		\begin{split}
			& 	U_x \ra \pi_{0}^{-1} \Big( t\big(p^0_{X_1}(u)\big) \Big), \, \, u \mapsto t \Big( \mc{C}\big(p^0_{X_1}(u),z\big) \Big) \,\, {\rm{is}} \, \, {\rm{smooth}},\,\, {\rm{and}}\\
			& U_x \ra \pi_0^{-1} \Big(ev_0\big(p^{1}_{PX_0}\big) \Big), \ \, u \mapsto {\rm{Tr}}_{\omega}^{p^{1}_{PX_0}(u)}\Big(t \big( \mc{C}(p^0_{X_1}(u),z \big) \Big)
		\end{split}
	\end{equation}
	is smooth due to \textbf{Lemma 3.13,} \cite{MR3521476}. Progressing in this fashion for the sequence of maps in $p|_{U_x}=(p^0_{X_1}, p^{1}_{PX_0},p^1_{X_1},\cdots, p^{n}_{PX_0}, p^n_{X_1}) \colon U \ra P\mb{X}_n $, we finish the proof.
\end{proof}
\begin{remark}\label{smoothness remark}
	The smoothness of $\mc{T}_{\mc{C}, \omega}$ in \Cref{Classical transport} obtained from \Cref{Smoothness of parallel transport functor} coincides with that of \Cref{Smoothness of traditional parallel transport} for the parallel transport functor of the classical principal $G$-bundle $\pi \colon E \ra M$ over the manifold $M$. 	Recall in \Cref{Equivalence of quasi and functors}, we defined a functor $\mc{F} \colon {\rm{Bun}}_{\rm{quasi}}^{\nabla}(\mb{X}, \mb{G}) \ra {\rm{Trans}}(\mb{X},\mb{G})$. Currently, it remains inconclusive whether $\mc{F}$ provides a categorified analog of \textbf{Theorem 4.1 }of \cite{MR3521476} or not, when we impose the above smoothness condition on the objects $T \colon \Pi_{\rm{thin}}(\mb{X}) \ra \overline{\mb{G} \rm{-Tor}}$ of ${\rm{Trans}}(\mb{X},\mb{G})$ i.e

 `\textit{for each $x \in X_0$, the restriction map $T |_{{\Pi_{\rm{thin}}(\mb{X},x)}} \colon \Pi_{\rm{thin}}(\mb{X},x) \ra \overline{\rm{Aut}(T(x))}$ is a map of diffeological spaces, where $\Pi_{\rm{thin}}(\mb{X},x)$ is the automorphiosm group of $x$ in the diffeological groupoid $\Pi_{\rm{thin}}(\mb{X})$ and $\overline{\rm{Aut}(T(x))}$ is as defined in the beginning of \Cref{Smoothness of parallel transport}}'. 

\end{remark}

\section{Induced parallel transport on VB-groupoids along lazy Haefliger paths}\label{Associated PAPER VERSION}
As an application of the theory developed in the preceding sections, here we study parallel transports on VB-groupoids along lazy Haefliger paths. 

\subsection*{Construction of a VB-groupoid associated to a principal 2-bundle over a Lie groupoid}
For a Lie 2-group $\mb{G}:=[G_1 \rra G_0]$, let $\pi \colon \mb{E} \ra \mb{X}$ be a principal $\mb{G}$-bundle over a Lie groupoid $\mb{X}$. Suppose there is a left action of $\mb{G}$ on a 2-vector space $\mb{V}:=[V_1 \rra V_0]$ as in \Cref{Action of a Lie 2-group on a vector 2-space}, such that it induces linear representations of Lie groups $G_1$ and $G_0$ on $V_1$ and $V_0$ respectively.  Then, by the usual associated vector bundle construction (as discussed in \Cref{Associate bundle}), we get a pair of vector bundles $\lbrace \pi^{\mb{V}}_i \colon \frac{E_i \times V_i}{G_i} \ra X_i \rbrace_{i=0,1}$, defined by $[p_i,v_i] \mapsto \pi_i(p)$ respectively. From the definition of the right action of $G_i$ on $E_i \times V_i$, $ (p_i,v_i)g \mapsto (p_ig_i, g_i^{-1}v_i)$ for $g_i \in G_i$,  it is obvious that the quotient map $E_i \times V_i \ra \frac{E_i \times V_i}{G_i}$ is a surjective submersion for each $i$. Hence, it follows that the maps $\frac{E_1 \times F_1}{G_1} \mapsto \frac{E_0 \times F_0}{G_0}$, $[\delta, \zeta] \mapsto [s(\delta), s(\zeta)]$ and $[\delta, \zeta] \mapsto [t(\delta), t(\zeta)]$ are surejective submersions. This ensures that the pair of  manifolds $\lbrace \frac{E_i \times V_i}{G_i} \rbrace_{i=0,1}$  defines a Lie groupoid $\frac{\mb{E} \times \mb{V}}{\mb{G}}:= [\frac{E_1 \times V_1}{G_1} \rra \frac{E_0 \times V_0}{G_0}]$ whose structure maps are given as
\begin{itemize}
	\item[(i)] Source: $s \colon \frac{E_1 \times V_1}{G_1} \mapsto \frac{E_0 \times V_0}{G_0}$ given by $[\delta, \zeta] \mapsto [s(\delta), s(\zeta)]$
	\item[(ii)] Target: $t \colon \frac{E_1 \times V_1}{G_1} \mapsto \frac{E_0 \times V_0}{G_0}$ given by $[\delta, \zeta] \mapsto [t(\delta), t(\zeta)]$
	\item[(iii)] Composition: If $s([\delta_2, \zeta_2])= t([\delta_1,\zeta_1])$, then define $[\delta_2,\zeta_2] \circ [\delta_1, \zeta_1]=[\delta_2 \circ \delta_1, \zeta_2 \circ \zeta_1]$
	\item[(iv)] Unit: $u \colon \frac{E_0 \times V_0}{G_0} \mapsto  \frac{E_1 \times V_1}{G_1}$ given by $[p,f] \mapsto [1_p, 1_f]$
	\item[(v)] Inverse: $\mathfrak{i} \colon \frac{E_1 \times V_1}{G_1} \ra \frac{E_1 \times V_1}{G_1}$ given by $[\delta, \zeta] \mapsto [\delta^{-1}, \zeta^{-1}]$.
\end{itemize}
To see that composition makes sense, observe that there exists $(\delta_2', \zeta_2') \in E_1 \times F_1$ and $(\delta_1', \zeta_1') \in E_1 \times F_1$ such that $[\delta_2',\zeta_2']=[\delta_2,\zeta_2]$, $[\delta_1',\zeta_1']=[\delta_1,\zeta_1]$, $s(\delta_2')=t(\delta_1')$ and $s(\zeta_2')=t(\zeta_1')$. Then, one has to use the functoriality of  Lie 2-group action to ensure that the multiplication map is well defined. Now, it is a straightforward but lengthy verification that the pair of vector bundles $ \lbrace \pi^{\mb{V}}_i \colon \frac{E_i \times V_i}{G_i} \ra X_i \rbrace_{i=0,1}$ defines a VB-groupoid $\pi^{\mb{V}} \colon \frac{\mb{E} \times \mb{V}}{\mb{G}} \ra \mb{X}$ over the Lie groupoid $\mb{X}$. We call it an \textit{associated VB-groupoid of $\pi \colon \mb{E} \ra \mb{X}$}.

\begin{remark}\label{general associated groupoid bundle construction}
	The above construction can be considered as a particular case of the associated groupoid bundle construction mentioned in the \textbf{Remark 3.13} of \cite{MR4621032}, where instead of a 2-vector space, the authors considered an ordinary Lie groupoid.
\end{remark}

\begin{example}[Adjoint VB-groupoid]
	 The adjoint VB-groupoid $\rm{Ad}(\mb{E})$ of a principal $\mb{G}$-bundle $\pi \colon \mb{E} \ra \mb{X}$,  as defined in the \Cref{Adjoint VB construction}, can be realized as an associated VB groupoid $\pi^{L(\mb{G})} \colon \frac{\mb{E} \times L(\mb{G})}{\mb{G}} \ra \mb{X}$ of $\pi \colon \mb{E} \ra \mb{X}$, with respect to the usual adjoint action of $\mb{G}$ on $L(\mb{G})$ (\Cref{Adj action}). 
\end{example}

The following observation is straightforward.
\begin{proposition}\label{Proposition: Associated VB-groupoidPaper}
	For a Lie 2-group $\mb{G}$, let $(\pi \colon \mb{E} \ra \mb{X}, \mc{C})$ be a  quasi-principal $\mb{G}$-bundle over a Lie groupoid $\mb{X}$. Suppose there is a left action of $\mb{G}$ on a 2-vector space $\mb{V}$. Then the associated VB-groupoid $\pi^{\mb{V}} \colon \frac{\mb{E} \times \mb{V}}{\mb{G}} \ra \mb{X}$  over $\mb{X}$ admits a linear cleavage, 
	\begin{equation}\nonumber
		\begin{split}
			\mc{C}^{\mb{V}} \colon & \Big( X_1 \times_{s, X_0, \pi^{\mb{V}}_0} \frac{E_0 \times V_0}{G_0}\Big) \ra \frac{E_1 \times V_1}{G_1}\\
			& \quad \hskip 2cm \big(\gamma, [p,v] \big) \mapsto [\mc{C}(\gamma,p), 1_v].
		\end{split}
	\end{equation}
	Furthermore,  if $\mc{C}$ is a unital, then so is $\mc{C}^{\mb{V}}$ and likewise if $\mc{C}$ is a categorical connection then $\mc{C}^{\mb{V}}$ is flat.
\end{proposition}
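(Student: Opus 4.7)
My plan is to verify the four required properties of $\mc{C}^{\mb{V}}$ in turn: well-definedness, being a section of the canonical projection $P^{\mb{V}}\colon \frac{E_1\times V_1}{G_1}\to X_1\times_{s,X_0,\pi_0^{\mb{V}}}\frac{E_0\times V_0}{G_0}$ that is linear on fibers, preservation of the unital condition, and preservation of flatness.

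For well-definedness, suppose $[p,v]=[p',v']$ in $\frac{E_0\times V_0}{G_0}$, so that $p'=pg$ and $v'=g^{-1}v$ for some $g\in G_0$. Using the $G_0$-equivariance of a quasi connection along the unit map (condition (iii) of \Cref{Def:categorical connection}, which also holds for quasi connections by \Cref{Definition:Quasicategorical Connection}), we have $\mc{C}(\gamma,pg)=\mc{C}(\gamma,p)\cdot 1_g$. Then $[\mc{C}(\gamma,p'),1_{v'}]=[\mc{C}(\gamma,p)\cdot 1_g,\,1_g^{-1}\cdot 1_v]=[\mc{C}(\gamma,p),1_v]$, where I use that the unit map $V_0\to V_1$ of the 2-vector space $\mb{V}$ is $G$-equivariant (a consequence of $\rho$ being a functor, \Cref{Action of a Lie 2-group on a vector 2-space}). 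That $\mc{C}^{\mb{V}}$ is a section of $P^{\mb{V}}$ follows directly: $P^{\mb{V}}([\mc{C}(\gamma,p),1_v])=(\pi_1(\mc{C}(\gamma,p)),[s(\mc{C}(\gamma,p)),s(1_v)])=(\gamma,[p,v])$, using that $\mc{C}$ itself is a section of $P\colon E_1\to s^{*}E_0$.

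Smoothness of $\mc{C}^{\mb{V}}$ follows from the smoothness of $\mc{C}$, the smoothness of the unit map of $\mb{V}$, and the fact that the quotient map $E_1\times V_1\to \frac{E_1\times V_1}{G_1}$ is a surjective submersion. For the linearity requirement from \Cref{Linear Cleavage}, note that $\mc{C}^{\mb{V}}$ acts on the second $V_0$-component only through the unit map $V_0\to V_1$, which is linear since $\mb{V}$ is internal to the category of vector spaces; hence $\mc{C}^{\mb{V}}$ is fibrewise linear over $X_1$. This completes the verification that $\mc{C}^{\mb{V}}$ is a linear cleavage.

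For the unital case, if $\mc{C}(1_{\pi(p)},p)=1_p$ for all $p\in E_0$, then $\mc{C}^{\mb{V}}(1_{\pi^{\mb{V}}([p,v])},[p,v])=[\mc{C}(1_{\pi(p)},p),1_v]=[1_p,1_v]=1_{[p,v]}$, where the last equality uses that the unit map of $\frac{\mb{E}\times\mb{V}}{\mb{G}}$ is induced by the unit maps of $\mb{E}$ and $\mb{V}$. The flat case will be the main technical step. Given composable $\gamma_2,\gamma_1\in X_1$ with $[p_2,v_2]=t(\mc{C}^{\mb{V}}(\gamma_1,[p_1,v_1]))=[t(\mc{C}(\gamma_1,p_1)),v_1]$, I will pick a representative with $p_2=t(\mc{C}(\gamma_1,p_1))\cdot g$ and $v_2=g^{-1}v_1$ for some $g\in G_0$, translate by $1_g$ on both factors to land on canonical representatives, and then apply the flatness of $\mc{C}$ (property (v) of \Cref{Def:categorical connection}) to the first coordinate while noting that $1_{v_2}\circ 1_{v_1}=1_{v_1}$ (after the same translation) in the second coordinate, giving $\mc{C}^{\mb{V}}(\gamma_2\circ\gamma_1,[p_1,v_1])=\mc{C}^{\mb{V}}(\gamma_2,[p_2,v_2])\circ \mc{C}^{\mb{V}}(\gamma_1,[p_1,v_1])$. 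The main obstacle here is bookkeeping the $G_0$-action across representatives so that the composition in $\frac{E_1\times V_1}{G_1}$ is legitimate; once the correct representatives are selected using the freeness of the $G_0$-action on $E_0$, the identity reduces to the cocycle identity for $\mc{C}$.
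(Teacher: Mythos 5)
Your verification is correct and amounts to exactly the routine check the paper has in mind: the paper states this proposition without proof, introducing it as a straightforward observation, and your treatment of well-definedness via $\mc{C}(\gamma,pg)=\mc{C}(\gamma,p)1_g$ and unit-preservation of the $\mb{G}$-action, of the section and linearity properties, and of the unital and flat cases (choosing the canonical representative $p_2=t(\mc{C}(\gamma_1,p_1))$, $v_2=v_1$ and using $1_{v_1}\circ 1_{v_1}=1_{v_1}$) is the intended argument. One cosmetic point: in the smoothness step, what makes $\mc{C}^{\mb{V}}$ smooth is that the $G_0$-invariant smooth map $(\gamma,p,v)\mapsto[\mc{C}(\gamma,p),1_v]$ descends through the quotient $X_1\times_{s,X_0,\pi_0\circ{\rm{pr}}_1}(E_0\times V_0)\ra X_1\times_{s,X_0,\pi^{\mb{V}}_0}\frac{E_0\times V_0}{G_0}$ on the domain, which is a surjective submersion, rather than through the quotient map on the codomain that you cite (the latter is only needed to know the lifted map is smooth).
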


Combining \Cref{Proposition: Associated VB-groupoidPaper} and \Cref{T_C}, we obtain a  \textit{2Vect-valued pseudofunctor} corresponding to an associated VB-groupoid of a quasi principal 2-bundle, as we see next. Using the notations as in \Cref{T_C}, we have the following:
\begin{proposition}\label{KLiegrpdvalued pseudoPaper}
	For a Lie 2-group $\mb{G}$, let $(\pi \colon \mb{E} \ra \mb{X}, \mc{C})$ be a  quasi-principal $\mb{G}$-bundle over a Lie groupoid $\mb{X}$ with a left action of $\mb{G}$ on a vector 2-space $\mb{V}$. Then there is a  \textit{2Vect-valued pseudofunctor}  $$T_{\mc{C}^{\mb{V}}} \colon \mb{X}^{\rm{op}} \ra {\rm{2}}{\rm{Vect}}$$ defined as
	\begin{itemize}
		\item[(i)] Each $x \in X_0$ is assigned to the vector 2-space $(\pi^{\mb{V}})^{-1}(x)$.
		\item[(ii)] Each morphism $x \xrightarrow{\gamma} y$ is assigned to an isomorphism of 2-vector spaces (i.e., a bijective functor internal to Vect) (see \Cref{Section: 2-vector space}), defined as
		\begin{equation}\label{Assosiate pseudo morphism}
			\begin{split}
				\gamma* \colon & (\pi^{\mb{V}})^{-1}(y)  \rightarrow(\pi^{\mb{V}})^{-1}(x)\\
				& [p,v] \mapsto[T_{\mc{C}}(\gamma)(p),v]\\
				& ([\delta, \zeta] \colon [p,v] \ra [q,v']) \mapsto [T_{\mc{C}}(\gamma)(\delta), \zeta],
			\end{split}
		\end{equation}
		where $T_{\mc{C}}(\gamma)$ is as defined in \Cref{Pseudomor}.
		\item[(iii)]  For each $x \in X_0$, we have a natural isomorphism internal to Vect
		\begin{equation}\nonumber
			\begin{split}
				I^{\mb{V}}_{x} & \colon 1_{x}^{*} \Longrightarrow 1_{\pi^{-1}(x)} \\
				& [p,v] \mapsto [I_x(p), 1_v],
			\end{split}
		\end{equation}
		where $I_{x}(p)$ is as defined in \Cref{Pseudonat1}.
		
		%   				$: [\mu(\mc{C}(1_x,p)), f] \ra [p,f]$  for each $[p,f] \in (\pi^{\mb{F}})^{-1}(x)$.
		
		\item[(iv)] For each pair of composable arrows 
		\begin{tikzcd}
			x \arrow[r, "\gamma_{1}"] & y \arrow[r, "\gamma_2"] & z
		\end{tikzcd},
		we have a natural isomorphism internal to Vect
		\begin{equation}\nonumber
			\begin{split}
				\alpha^{\mb{F}}_{\gamma_1, \gamma_2} \colon & \gamma_1^{*} \circ \gamma_2^{*} \Longrightarrow (\gamma_2 \circ \gamma_1)^{*}\\
				& [p,f] \mapsto [\alpha_{\gamma_1, \gamma_2}(p), 1_f].
			\end{split}
		\end{equation}
		where $\alpha_{\gamma_1, \gamma_2}$ is as defined in \Cref{pseudonat2}. 
		
	\end{itemize}
	Moreover, $\alpha^{\mb{F}}_{\gamma_1,\gamma_2}$ and $I^{\mb{F}}_x$ satisfy the necessary coherence laws for all composable $\gamma_1,\gamma_2 \in X_1$ and $x \in X_0$  respectively, as menitoned in \Cref{T_C}.
\end{proposition}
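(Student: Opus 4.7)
The plan is to leverage the $\mb{G}$-Tor-valued pseudofunctor $T_{\mc{C}}$ constructed in \Cref{T_C} and ``tensor'' it with the $\mb{G}$-action on $\mb{V}$ so as to push it down to the quotients $\frac{E_i \times V_i}{G_i}$. The key conceptual point is that the associated VB-groupoid construction $\pi^{\mb{V}}\colon \frac{\mb{E}\times \mb{V}}{\mb{G}}\ra \mb{X}$ is functorial in the fiber $\pi^{-1}(x)$, so that every $\mb{G}$-equivariant morphism of $\mb{G}$-torsors $F\colon \pi^{-1}(y)\ra \pi^{-1}(x)$ given by \Cref{T_C} induces a morphism $[p,v]\mapsto [F(p),v]$ between the corresponding fibers $(\pi^{\mb{V}})^{-1}(y)$ and $(\pi^{\mb{V}})^{-1}(x)$, and this induced morphism automatically respects the 2-vector space structure because the $G_i$-action on $V_i$ is linear by hypothesis.

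First, I would verify well-definedness of the object-level assignment $\gamma^*$ in \Cref{Assosiate pseudo morphism}: if $(p,v)\sim (p',v')$, i.e.\ $(p',v')=(pg_0, g_0^{-1}v)$ for some $g_0\in G_0$, then $T_{\mc{C}}(\gamma)(p')=T_{\mc{C}}(\gamma)(p) g_0$ by $G_0$-equivariance of $T_{\mc{C}}(\gamma)$, so $[T_{\mc{C}}(\gamma)(p'),v']=[T_{\mc{C}}(\gamma)(p),v]$. The analogous check at the morphism level uses the $G_1$-equivariance of $T_{\mc{C}}(\gamma)$ on morphisms. Functoriality of $\gamma^*$ (source/target/unit/composition consistency) is inherited from the functoriality of $T_{\mc{C}}(\gamma)\colon \pi^{-1}(y)\ra \pi^{-1}(x)$, combined with the functoriality of the quotients $E_i\times V_i \ra \frac{E_i\times V_i}{G_i}$; linearity of $\gamma^*_0$ and $\gamma^*_1$ as maps of vector spaces is then a direct consequence of the fact that the $G_0$- and $G_1$-actions on $V_0, V_1$ are linear representations, so the vector space structure descends and is preserved.

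Next, I would verify that the natural isomorphisms $I^{\mb{V}}_x$ and $\alpha^{\mb{V}}_{\gamma_1,\gamma_2}$ are well-defined on equivalence classes. For $I^{\mb{V}}_x$, the smooth $\mb{G}$-equivariance of $I_x$ in \Cref{Pseudonat1} together with the unital action $g_0\cdot 1_v = 1_{g_0 v}$ (which holds because the action functor sends $(1_{g_0},1_v)\mapsto 1_{g_0 v}$) ensures that $[I_x(p),1_v]$ is independent of the representative. An entirely analogous computation using the $\mb{G}$-equivariance of $\alpha_{\gamma_1,\gamma_2}$ from \Cref{pseudonat2} handles the associator. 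Naturality of $I^{\mb{V}}_x$ and $\alpha^{\mb{V}}_{\gamma_1,\gamma_2}$ as transformations internal to Vect then follows by unwinding definitions; the fiberwise linearity is again automatic because the $V_i$-components are always trivial morphisms $1_v$, so the transformations are constant in the $V$-direction and linear by default.

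The final step is to derive the two coherence diagrams in \Cref{Coherence diagram 1} and \Cref{Coherence diagram 2} for $(T_{\mc{C}^{\mb{V}}}, I^{\mb{V}}, \alpha^{\mb{V}})$ from the corresponding coherence diagrams satisfied by $(T_{\mc{C}}, I, \alpha)$, which are already given in \Cref{T_C}. Since all the $V$-components of our natural isomorphisms are the identity $1_v$, the coherence diagrams reduce on the $V$-slot to the trivial identity and on the $E$-slot to precisely the coherence diagrams for $T_{\mc{C}}$. The main obstacle here is purely notational/bookkeeping: one must be careful to track the equivalence classes through the chain of natural transformations and confirm that well-definedness and the $\mb{G}$-equivariance of each constituent map propagate consistently. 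No essentially new categorical content beyond \Cref{T_C} and the linearity of the $\mb{G}$-action on $\mb{V}$ is required.
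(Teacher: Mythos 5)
Your proposal is correct and matches the paper's (largely implicit) argument: the paper obtains the proposition by combining the linear cleavage $\mc{C}^{\mb{V}}$ on the associated VB-groupoid with the pseudofunctor $T_{\mc{C}}$ of \Cref{T_C}, which amounts to exactly the descent checks you carry out — well-definedness on equivalence classes via $\mb{G}$-equivariance of $T_{\mc{C}}(\gamma)$, $I_x$, $\alpha_{\gamma_1,\gamma_2}$ together with functoriality of the action (so $1_g$ acts on $1_v$ as $1_{gv}$), linearity because the $V$-slot is untouched, and coherence reducing to that of $T_{\mc{C}}$. No gap; the remaining work is the routine bookkeeping you describe.
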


\begin{remark}
	In the context of categorical principal bundles over path groupoids and categorical vector spaces, a cursory mention of an analog of the object level map in \Cref{Assosiate pseudo morphism} has been made in the \textbf{Section 11} of \cite{MR3126940}. However, our setup is different.
\end{remark}

The following result follows from the traditional notion of induced parallel transport on associated vector bundles (\Cref{Induced parallel transport on associated fibre bundles}) and \Cref{Proposition: Parallel transport on principal 2-bundles}. 
\begin{proposition}\label{Associated transport along path}
	For a Lie 2-group $\mb{G}$, let $(\pi \colon \mb{E} \ra \mb{X}, \mc{C})$ be a  quasi-principal $\mb{G}$-bundle over a Lie groupoid $\mb{X}$ equipped with a strict connection $\omega \colon T \mb{E} \ra L\mb{G}$. Suppose there is a left action of $\mb{G}$ on a 2-vector space $\mb{V}$. Then, given a path $\alpha \colon [0,1] \ra X_0$ from $x$ to $y$ in $X_0$, there is an isomorphism of 2-vector spaces $T_{\omega, \mb{V}}^{\alpha} \colon  (\pi^{\mb{V}})^{-1}(x) \ra (\pi^{\mb{V}})^{-1}(y)$ defined as 
	\begin{equation}\nonumber
		\begin{split}
			T_{\omega, \mb{V}}^{\alpha} \colon & (\pi^{\mb{V}})^{-1}(x) \ra (\pi^{\mb{V}})^{-1}(y)\\
			& [p,v] \mapsto [{\rm{Tr}}_{\omega_0}^{\alpha}(p), v],\\
			& [\delta, \zeta] \mapsto [{\rm{Tr}}_{\omega_1}^{u \circ \alpha}(\delta), \zeta].
		\end{split}
	\end{equation}
\end{proposition}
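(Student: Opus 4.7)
The plan is to transport the $\mb{G}$-equivariant isomorphism of Lie groupoids $T_{\omega}^{\alpha}\colon \pi^{-1}(x)\to \pi^{-1}(y)$ from Proposition \ref{Proposition: Parallel transport on principal 2-bundles} down to the associated VB-groupoid via the quotient maps, and then verify that the resulting map respects the 2-vector space structure. First, I would check well-definedness of $T^{\alpha}_{\omega,\mb{V}}$ at the object level: if $[p,v]=[p',v']$ in $(\pi^{\mb{V}})^{-1}(x)$, then there is a unique $g\in G_0$ with $p'=pg$ and $v'=g^{-1}v$, and the $G_0$-equivariance of the classical parallel transport ${\rm Tr}_{\omega_0}^{\alpha}$ gives ${\rm Tr}_{\omega_0}^{\alpha}(pg)={\rm Tr}_{\omega_0}^{\alpha}(p)\, g$, so $[{\rm Tr}_{\omega_0}^{\alpha}(pg),g^{-1}v]=[{\rm Tr}_{\omega_0}^{\alpha}(p),v]$. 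The analogous statement at the morphism level uses $G_1$-equivariance of ${\rm Tr}_{\omega_1}^{u\circ\alpha}$.

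Second, I would verify functoriality of $T^{\alpha}_{\omega,\mb{V}}$. Because $T_{\omega}^{\alpha}=(T_{\omega,1}^{\alpha},T_{\omega,0}^{\alpha})$ is already a functor $\pi^{-1}(x)\to\pi^{-1}(y)$, source, target, unit and composition compatibility pass to the quotients: for example, $s[\delta,\zeta]=[s(\delta),s(\zeta)]$ and Proposition \ref{Proposition: Parallel transport on principal 2-bundles} together with Lemma \ref{Lemma: source-target strict transport} yield $s({\rm Tr}_{\omega_1}^{u\circ\alpha}(\delta))={\rm Tr}_{\omega_0}^{\alpha}(s(\delta))$, and similarly for $\zeta$ in $\mb{V}$ (where the source on $V_1$ is linear and the action is compatible). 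Unit and composition compatibility at the level of $E_i$ come from $T_{\omega}^{\alpha}$, while on $V_i$ they are automatic because $\zeta$ is unchanged by transport.

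Third, I would check linearity, i.e.\ that both $T^{\alpha}_{\omega,\mb{V},0}\colon (\pi^{\mb{V}})^{-1}(x)_0\to(\pi^{\mb{V}})^{-1}(y)_0$ and $T^{\alpha}_{\omega,\mb{V},1}$ are linear maps of vector spaces. This reduces to the statement that scaling and addition in the fibres of associated vector bundles are well-defined through the quotient: since the $G_i$-action on $V_i$ is linear, the quotients $\tfrac{E_i\times V_i}{G_i}\to X_i$ inherit a linear bundle structure, and the map $[p,v]\mapsto[{\rm Tr}_{\omega_0}^{\alpha}(p),v]$ is linear on each fibre because one may fix a representative $p$ and then the map reduces to the identity in the $V_0$-coordinate after identifying fibres with $V_0$. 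This is precisely the classical statement, recalled in Subsection \ref{Induced parallel transport on associated fibre bundles}, that horizontal lifts induce linear parallel transport on associated vector bundles; the only new feature is that we are applying it simultaneously on the object and morphism layers.

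Finally, the inverse is given by $T^{\alpha^{-1}}_{\omega,\mb{V}}$, using that ${\rm Tr}_{\omega_0}^{\alpha^{-1}}={\rm Tr}_{\omega_0}^{\alpha}{}^{-1}$ and similarly for the morphism level; this shows $T^{\alpha}_{\omega,\mb{V}}$ is a bijective functor internal to Vect, hence an isomorphism of 2-vector spaces. The main subtlety to be careful about is that even though the morphism-level formula $[\delta,\zeta]\mapsto[{\rm Tr}_{\omega_1}^{u\circ\alpha}(\delta),\zeta]$ looks like it leaves $\zeta$ untouched, the well-definedness requires the interaction between the $G_1$-action on $E_1$ (via ${\rm Tr}$) and on $V_1$ (linearly), so the bookkeeping of representatives is where one must be most careful; everything else follows mechanically from Proposition \ref{Proposition: Parallel transport on principal 2-bundles} and the associated-bundle construction of Proposition \ref{Proposition: Associated VB-groupoidPaper}.
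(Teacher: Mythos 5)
Your proposal is correct and follows essentially the same route as the paper, which simply invokes the classical induced parallel transport on associated bundles (Subsection \ref{Induced parallel transport on associated fibre bundles}) together with Proposition \ref{Proposition: Parallel transport on principal 2-bundles}; your write-up just makes explicit the equivariance, functoriality and linearity checks that the paper leaves implicit. No gaps.
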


Combining \Cref{KLiegrpdvalued pseudoPaper} and \Cref{Associated transport along path}, we arrive at a notion of parallel transport on an associated VB-groupoid of a quasi principal 2-bundle with a strict connection along a lazy Haefliger path, as we see below:
\begin{definition}\label{Comega associated transportpaper}
	Suppose a Lie 2-group $\mb{G}$ acts on a 2-vector space $\mb{V}$, and $(\pi \colon \mb{E} \ra \mb{X}, \mc{C})$ be a  quasi-principal $\mb{G}$-bundle over a Lie groupoid $\mb{X}$, with a strict connection  $\omega \colon T \mb{E} \ra L(\mb{G})$. Then the isomorphism of 2-vector spaces $T^{\mb{V}}_{(\Gamma, \mc{C}, \omega)}:= T_{\mc{C}^{\mb{V}}}(\gamma_n^{-1}) \circ  T_{\omega, \mb{V}}^{\alpha_n} \circ\cdots \circ T_{\omega,\mb{V}}^{\alpha_1} \circ  T_{\mc{C}^{\mb{V}}}(\gamma_0^{-1})$  will be called 
	the \textit{$(\mc{C}, \omega)$-parallel transport on the associated VB-groupoid $\pi^{\mb{V}}$  along the lazy $\mb{X}$-path $\Gamma =(\gamma_0, \alpha_1,\gamma_1,\cdots, \alpha_n, \gamma_n)$}.
\end{definition}
\begin{remark}
	Using  \Cref{Definition: Parallel transport of X-paths}, $T^{\mb{V}}_{(\Gamma, \mc{C}, \omega)}$ in the above definition can be expressed in terms of $T_{\Gamma, \mc{C}, \omega}$ as follows:
	\begin{equation}\nonumber
		\begin{split}
			T^{\mb{V}}_{(\Gamma, \mc{C}, \omega)} \colon & (\pi^{\mb{V}})^{-1}(x) \ra (\pi^{\mb{V}})^{-1}(y)\\
			& [p,v] \mapsto [T_{(\Gamma, \mc{C}, \omega)}(p), v],\\
			& [\delta,\zeta] \mapsto [T_{(\Gamma, \mc{C}, \omega)}(\delta), \zeta].
		\end{split}
	\end{equation}
\end{remark}

Furthermore, the following theorem is an immediate consequence of \Cref{Theorem: Parallel transport on 2-bundles} and \Cref{Comega associated transportpaper}.
\begin{theorem}\label{Parallel transport on associated VB}
	For a Lie 2-group $\mb{G}$, let $(\pi \colon \mb{E} \ra \mb{X}, \mc{C})$ be a  quasi-principal $\mb{G}$-bundle over a Lie groupoid $\mb{X}$ with strict connection $\omega$. Suppose there is a  left action of $\mb{G}$ on a 2-vector space $\mb{V}$. Then there is a functor given by
	\begin{equation}\nonumber
		\begin{split}
			\mc{T}_{\mc{C}, \omega} \colon & \Pi_{\rm{thin}}(\mb{X}) \ra \overline{{\rm{2Vect}}}\\
			& x \mapsto (\pi^{\mb{V}})^{-1}(x),\\
			&[\Gamma] \mapsto [T^{\mb{V}}_{(\Gamma, \mc{C}, \omega)}].
		\end{split}
	\end{equation}
\end{theorem}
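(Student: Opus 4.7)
The strategy is to mimic the construction behind Theorem \ref{Theorem: Parallel transport on 2-bundles}, replacing the target $\overline{\mb{G} \rm{-Tor}}$ by an analogous quotient category $\overline{\rm{2Vect}}$, whose congruence identifies two internal functors $F, F' \colon \mb{V} \to \mb{W}$ of 2-vector spaces whenever they are related by a smooth natural isomorphism internal to $\rm{Vect}$. Since each fibre $(\pi^{\mb{V}})^{-1}(x)$ of the associated VB-groupoid is a 2-vector space, the object assignment $x \mapsto (\pi^{\mb{V}})^{-1}(x)$ is immediately well-defined; the real content is (a) descent of $[\Gamma] \mapsto [T^{\mb{V}}_{(\Gamma, \mc{C}, \omega)}]$ to lazy $\mb{X}$-path thin homotopy classes and (b) functoriality (compatibility with units and composition).

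For (a), I would invoke Theorem \ref{lazy Xpath thin homotopy invariance}, which furnishes, for lazy $\mb{X}$-path thin homotopic $\Gamma$ and $\Gamma'$, a smooth $\mb{G}$-equivariant natural isomorphism $\eta \colon T_{(\Gamma, \mc{C}, \omega)} \Rightarrow T_{(\Gamma', \mc{C}, \omega)}$. Taking the cue from the construction of $I^{\mb{V}}_x$ and $\alpha^{\mb{V}}_{\gamma_1, \gamma_2}$ in Proposition \ref{KLiegrpdvalued pseudoPaper}, I would define a candidate internal natural isomorphism $\eta^{\mb{V}} \colon T^{\mb{V}}_{(\Gamma, \mc{C}, \omega)} \Rightarrow T^{\mb{V}}_{(\Gamma', \mc{C}, \omega)}$ by $\eta^{\mb{V}}([p, v]) := [\eta(p), 1_v]$. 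The $\mb{G}$-equivariance of $\eta$ ensures well-definedness on the equivalence classes defining $\frac{E_i \times V_i}{G_i}$; linearity in the $v$-slot is immediate because $\eta^{\mb{V}}$ is the identity on the $V$-factor; and smoothness is inherited from $\eta$. Consequently $[T^{\mb{V}}_{(\Gamma, \mc{C}, \omega)}] = [T^{\mb{V}}_{(\Gamma', \mc{C}, \omega)}]$ in $\overline{\rm{2Vect}}$.

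For (b), both the unit and composition axioms reduce to invoking the natural isomorphisms $I^{\mb{V}}_{x}$ and $\alpha^{\mb{V}}_{\gamma_1, \gamma_2}$ of Proposition \ref{KLiegrpdvalued pseudoPaper} on the discrete pieces $\gamma_i$ of a lazy Haefliger path, together with the classical identities ${\rm{Tr}}_{\omega_0}^{\alpha_2 * \alpha_1} = {\rm{Tr}}_{\omega_0}^{\alpha_2} \circ {\rm{Tr}}_{\omega_0}^{\alpha_1}$ and the identity behaviour of constant paths, applied componentwise in the $E$-factor while the $V$-factor is transported rigidly, as in Proposition \ref{Associated transport along path} and Definition \ref{Comega associated transportpaper}. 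Splicing these segmentwise along a representative lazy $\mb{X}$-path produces the required compatibilities in $\overline{\rm{2Vect}}$, along the exact same lines as the proof of Theorem \ref{Theorem: Parallel transport on 2-bundles}.

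The main obstacle I anticipate is the bookkeeping needed to certify that $\eta^{\mb{V}}$ and the constituent morphisms $\gamma^*$, $T^{\alpha}_{\omega, \mb{V}}$ are genuine internal functors and natural isomorphisms in $\rm{2Vect}$, rather than merely set-theoretic ones. The non-triviality is hidden in the morphism-level formula $[\delta, \zeta] \mapsto [T_{(\Gamma, \mc{C}, \omega)}(\delta), \zeta]$, which must be unpacked against the quotient structure of $\frac{E_1 \times V_1}{G_1}$ to confirm fibrewise linearity and functoriality; the key observation making this routine is that all the non-trivial action happens on the $E$-factor while the $V$-factor is carried along by identities, so linearity is free once well-definedness on equivalence classes is established via the $\mb{G}$-equivariance provided by Theorem \ref{lazy Xpath thin homotopy invariance} and Theorem \ref{Theorem: Parallel transport on 2-bundles}.
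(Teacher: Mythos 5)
Your proposal is correct and follows essentially the same route the paper takes: the paper treats the theorem as an immediate consequence of Theorem \ref{Theorem: Parallel transport on 2-bundles} together with Definition \ref{Comega associated transportpaper} (and the remark expressing $T^{\mb{V}}_{(\Gamma,\mc{C},\omega)}$ in terms of $T_{(\Gamma,\mc{C},\omega)}$), which is precisely your mechanism of transporting the $\mb{G}$-equivariant natural isomorphisms and functoriality data from the principal level to the associated VB-groupoid via $[\eta(p),1_v]$. Your write-up simply makes explicit the well-definedness and internal-linearity checks that the paper leaves implicit.
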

where $\overline{{\rm{2Vect}}}$ is a category whose objects are 2-vector spaces and morphisms are functors internal to Vect identified up to a natural isomorphism internal to Vect (see \Cref{Section: 2-vector space}).
\begin{remark}
	Although our discussion was restricted to the notion of parallel transport on an associated VB-groupoid of a quasi-principal 2-bundle equipped with a strict connection, one can generalize the results discussed in this section straightforwardly to obtain a notion of parallel transport on an associated groupoid bundle mentioned in \Cref{general associated groupoid bundle construction}.
\end{remark}

% Chapter Template

\chapter{Future directions of research}\label{Future} 

% Main chapter title
 % Change X to a consecutive number; for referencing this chapter elsewhere, use \ref{ChapterX}

\lhead{Chapter 7. \emph{Future directions of research}} % Change X to a consecutive number; this is for the header on each page - perhaps a shortened title

%----------------------------------------------------------------------------------------
%	SECTION 1
%----------------------------------------------------------------------------------------

This chapter explores some research possibilities that may stem from the findings in this thesis.

\section{Characterizations of a pseudo-principal Lie crossed module bundle over a Lie groupoid}\label{Characterization of a pseudo-principal Lie crossed module bundle over a Lie groupoid}
In \Cref{Different characterizations of principal Lie bundles over Lie groupoids}, we discussed several equivalent ways to characterize a principal Lie group bundle over a Lie groupoid. It would be interesting to obtain similar characterizations for a pseudo-principal Lie crossed module-bundle over a Lie groupoid (\Cref{Definition:PseudoprincipalLiecrossedmodulebundle}) and extend the results of \cite{MR2270285} and \cite{MR3521476} suitably.  Particularly, expressing it as an anafunctor could be significant in obtaining a notion of \textit{quasi-principal 2-bundle  over a differentiable stack} and relating our notion of principal 2-bundles to the one discussed in \cite{MR3480061}.

\section{Higher parallel transport theory for a principal Lie 2-group bundle over a Lie groupoid}\label{Higher parallel transport}
In this thesis, we restricted ourselves to parallel transport along a lazy Haefliger path. To check the strength of the ideas developed in (\Cref{Chapter: Parallel transport on quasi-principal 2-bundles}) for a Higher gauge theory framework, one should obtain a notion of a \textit{bigon in a Lie groupoid} that generalizes the concept of a bigon in a smooth manifold (\cite{MR3917427}) and develops a suitable higher parallel transport theory for our principal 2-bundles which relates to the one in \cite{MR3917427} when the base Lie groupoid is discrete.  

\section{Construction of a quasi-principal 2-bundle with a strict connection from a transport functor/ 2-functor}
Recall in \Cref{Equivalence of quasi and functors}, we extended our parallel transport construction (\Cref{Theorem: Parallel transport on 2-bundles}) to  a functor 
\begin{equation}\nonumber
	\mc{F} \colon  \rm{Bun}_{\rm{quasi}}^{\nabla}(\mb{X}, \mb{G}) \ra \rm{Trans}(\mb{X},\mb{G}).
\end{equation}
Now, consider a functor $T \colon \Pi_{\rm{thin}}(\mb{X}) \ra \overline{\mb{G} {\rm{-Tor}}}$, that is an object  in $\rm{Trans}(\mb{X},\mb{G})$ such that it satisfies the smoothness property mentioned in \Cref{smoothness remark}. It would be interesting to see whether one can construct a quasi-principal $\mb{G}$-bundle $(\pi \colon \mb{E} 
\ra \mb{X}, \mc{C})$ with a strict connection $\omega$ from the functor $T$ satisfying the said  smoothness condition. If not, what extra information do we need to construct one? Finally, one may consider obtaining a higher analog of (\textbf{Theorem 4.1 } \cite{MR3521476}) and  (\textbf{Theorem 6.11}, \textbf{Theorem 6.13}, \cite{MR3917427}).
\section{Semi-strict connection induced parallel transport along a Haefliger path}
Although the notion of semi-strict connection (\Cref{strict ans semi strict connetion 1-forms}) seems to be an interesting consequence of our categorified framework, especially through its relation to gauge transformations (\Cref{An extended symmetry of semi-strict connections}), its role in the parallel transport theory is not yet explored. It would be interesting to extend the constructions of (\Cref{Chapter: Parallel transport on quasi-principal 2-bundles}) in the framework of semi-strict connections.
\section{Local gauge theory of a principal 2-bundle over a Lie groupoid } 
We have yet to explore the local aspects of the ideas developed in this thesis. It may be interesting to relate the local theory to non-abelian cocyle gerbes.

%----------------------------------------------------------------------------------------
%	THESIS CONTENT - APPENDICES
%----------------------------------------------------------------------------------------

\addtocontents{toc}{\vspace{2em}} % Add a gap in the Contents, for aesthetics

\appendix % Cue to tell LaTeX that the following 'chapters' are Appendices

% Include the appendices of the thesis as separate files from the Appendices folder
% Uncomment the lines as you write the Appendices

%\input{Appendices/AppendixA}
%\input{Appendices/AppendixB}
%\input{Appendices/AppendixC}

%\addtocontents{toc}{\vspace{2em}} % Add a gap in the Contents, for aesthetics

\backmatter
%----------------------------------------------------------------------------------------
%	BIBLIOGRAPHY
%----------------------------------------------------------------------------------------
%\nocite{*}
%\label{Bibliography}
\lhead{\emph{Bibliography}} % Change the page header to say "Bibliography"

\bibliographystyle{amsplain} % Use the "custom" BibTeX style for formatting the Bibliography

\bibliography{Bibliography} % The references (bibliography) information are stored in the file named "Bibliography.bib"

\newpage
\thispagestyle{plain}\chapter*{Publications arising out of the PhD
	thesis\hfill} \addcontentsline{toc}{chapter}{Publications arising
	out of the PhD thesis}
%\begin{center}\textbf{\Large Publications arising out of the PhD thesis}\end{center}{\huge \par}
%\bigskip{}
\textbf{Published papers}
		\begin{itemize}
			\item[(1)] Saikat Chatterjee, Adittya Chaudhuri, and Praphulla Koushik, \textit{Atiyah sequence and gauge transformations of a principal 2-bundle over a Lie groupoid.} J. Geom. Phys., 176:Paper No. 104509, 29,
			2022 (\cite{MR4403617}).
			\end{itemize}
			\textbf{Preprints}
			\begin{itemize}
				\item[(1)] Saikat Chatterjee and Adittya Chaudhuri, \textit{Parallel transport on a Lie 2-group bundle over a Lie
				groupoid along Haefliger paths}, 2023, 	arXiv:2309.05355  (\cite{chatterjee2023parallel}).
				\end{itemize}

\end{document}